\newcommand{\clearemptydoublepage}{\newpage{\pagestyle{empty}\cleardoublepage}}
\def\baselinestretch{1.3}
\def\mainmatter{\cleardoublepage\setcounter{page}{1}
\def\baselinestretch{1.3}\normalfont\pagenumbering{arabic}}
\def\backmatter{\cleardoublepage
\def\baselinestretch{1.3}\normalfont}
\newcounter{Hequation}
\g@addto@macro\equation{\stepcounter{Hequation}}
\theoremstyle{plain}
\newtheorem{theoalph}{Theor\`eme}
\newtheorem{thmalph}[theoalph]{Th\'eor\`eme}
\newtheorem{coralph}[theoalph]{Corollaire}
\theoremstyle{definition}
\theoremstyle{remark}
\theoremstyle{plain}
\newtheorem{thmchap}{Th\'eor\`eme}[chapter]
\newtheorem{thm}[thmchap]{Th\'eor\`eme}
\newtheorem{pro}[thmchap]{Proposition}
\newtheorem{lem}[thmchap]{Lemme}
\newtheorem{cor}[thmchap]{Corollaire}
\newtheorem{prob}{Probl\`eme}
\theoremstyle{definition}
\newtheorem{defin}[thmchap]{D\'efinition}
\theoremstyle{remark}
\newtheorem{rem}[thmchap]{Remarque}
\newtheorem{rems}[thmchap]{Remarques}
\newtheorem{eg}[thmchap]{Exemple}
\numberwithin{equation}{section}       % Number formulas within sections
\newcommand{\reqnomode}{\tagsleft@false\let\veqno\@@eqno}
\newcommand{\leqnomode}{\tagsleft@true\let\veqno\@@leqno}
\def\og{\leavevmode\raise.3ex\hbox{$\scriptscriptstyle\langle\!\langle$~}}
\def\fg{\leavevmode\raise.3ex\hbox{~$\!\scriptscriptstyle\,\rangle\!\rangle$}}
\newcommand{\N}{\mathbb{N}}
\newcommand{\Z}{\mathbb{Z}}
\newcommand{\C}{\mathbb{C}}
\newcommand{\sph}{\mathbb{P}^{1}_{\mathbb{C}}}
\newcommand{\pp}{\mathbb{P}^{2}_{\mathbb{C}}}
\newcommand{\pd}{\mathbb{\check{P}}^{2}_{\mathbb{C}}}
\newcommand\pgcd{\mathrm{pgcd}}
\newcommand\X{\mathrm{X}}
\newcommand\T{\mathrm{T}}
\newcommand\Sing{\mathrm{Sing}}
\newcommand\Tang{\mathrm{Tang}}
\newcommand\Leg{\mathrm{Leg}}
\newcommand\IF{\mathrm{I}_{\mathcal{F}}}
\newcommand\IinvF{\mathrm{I}_{\mathcal{F}}^{\mathrm{inv}}}
\newcommand\ItrF{\mathrm{I}_{\mathcal{F}}^{\hspace{0.2mm}\mathrm{tr}}}
\newcommand\IH{\mathrm{I}_{\mathcal{H}}}
\newcommand\IinvH{\mathrm{I}_{\mathcal{H}}^{\mathrm{inv}}}
\newcommand\ItrH{\mathrm{I}_{\mathcal{H}}^{\hspace{0.2mm}\mathrm{tr}}}
\newcommand\Cinv{\mathrm{C}_{\hspace{-0.3mm}\mathcal{H}}}
\newcommand\Dtr{\mathrm{D}_{\hspace{-0.3mm}\mathcal{H}}}
\newcommand\radH{\Sigma_{\mathcal{H}}^{\mathrm{rad}}}
\newcommand\radHd{\check{\Sigma}_{\mathcal{H}}^{\mathrm{rad}}}
\newcommand*{\transp}[2][-3mu]{\ensuremath{\mskip1mu\prescript{\smash{\mathrm t\mkern#1}}{}{\mathstrut#2}}}
\newcommand\F{\mathcal{F}}
\newcommand\W{\mathcal{W}}
\newcommand\G{\mathcal{G}}
\newcommand\Gunderline{{\mspace{2mu}\underline{\mspace{-2mu}\mathcal{G}\mspace{-2mu}}\mspace{2mu}}}
\newcommand\omegaoverline{{\mspace{2mu}\overline{\mspace{-1.4mu}\omega\mspace{-1.4mu}}\mspace{2mu}}}
\newcommand\Omegaoverline{{\mspace{2mu}\overline{\mspace{-1.4mu}\Omega\mspace{-1.4mu}}\mspace{2mu}}}
\newcommand\thetaoverline{{\mspace{2mu}\overline{\mspace{-1.4mu}\theta\mspace{-1.4mu}}\mspace{2mu}}}
\let\original@addcontentsline\addcontentsline
\newcommand{\dummy@addcontentsline}[3]{}
\newcommand{\DeactivateToc}{\let\addcontentsline\dummy@addcontentsline}
\newcommand{\ActivateToc}{\let\addcontentsline\original@addcontentsline}
\begin{document}

\frontmatter
% Pour basculer la numérotation des pages en chiffres romains, jusqu'à la
% table des matières.

\begin{titlepage}
\enlargethispage{20\baselineskip}
\includegraphics[trim=3.65cm 0in -0cm 4.7cm]{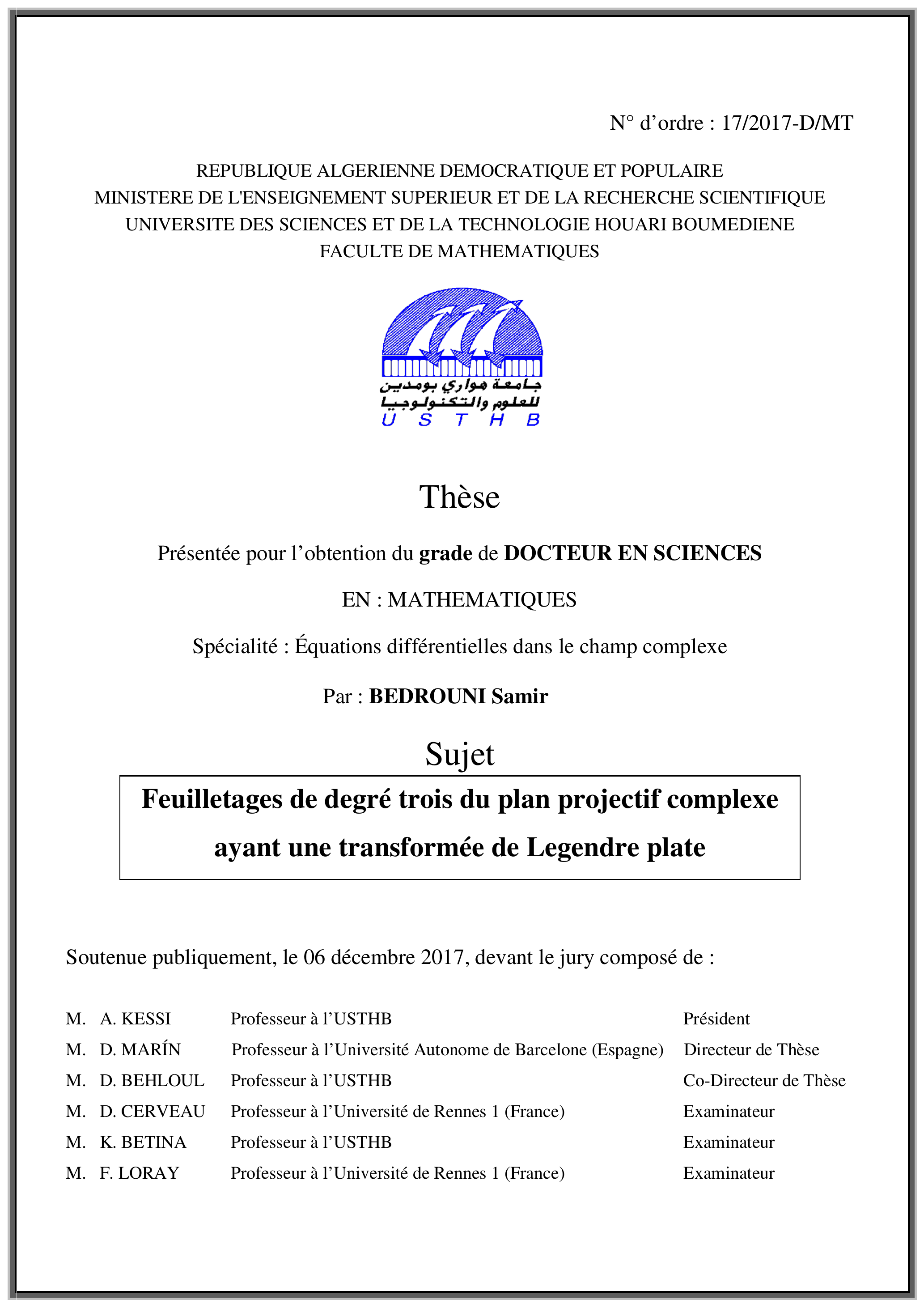}
\end{titlepage}

\title{Feuilletages de degré trois du plan projectif complexe ayant une transformée de Legendre plate}

\author{Samir \textsc{Bedrouni}}

\address{Facult\'e de Math\'ematiques, USTHB, BP $32$, El-Alia, $16111$ Bab-Ezzouar, Alger, Alg\'erie}
\email{sbedrouni@usthb.dz}
%\urladdr{http://perso.usthb.dz/~sbedrouni/}
%\date{\today}

\begin{abstract}
\selectlanguage{french}
\noindent L'ensemble $\mathbf{F}(d)$ des feuilletages de degré $d$ du plan projectif complexe s'identifie à un ouvert de Zariski dans un espace projectif de dimension $(d+2)^2-2$ sur lequel agit le groupe $\mathrm{Aut}(\pp)$. Le sous-ensemble $\mathbf{FP}(d)$ de $\mathbf{F}(d)$ formé des feuilletages de $\mathbf{F}(d)$ ayant une transformée de \textsc{Legendre} (tissu dual) plate est un fermé de Zariski de $\mathbf{F}(d)$.

\noindent Dans cette thèse nous étudions les feuilletages de $\mathbf{FP}(d)$ et nous tentons de mieux comprendre la structure topologique de $\mathbf{FP}(3)$.

\noindent Dans un premier temps, nous établissons quelques résultats généraux sur la platitude du $d$-tissu dual d'un feuilletage homogène de degré $d$ et nous décrivons quelques exemples explicites. Nous verrons également qu'il est possible, sous certaines hypothèses, de ramener l'étude de la platitude du tissu dual d'un feuilletage inhomogène au cadre homogène.

\noindent Dans un deuxième temps, nous classifions à automorphisme de $\pp$ près les éléments de $\mathbf{FP}(3).$ Plus précisément, nous montrons qu'à automorphisme près il y a $16$ feuilletages de degré $3$ ayant une transformée de \textsc{Legendre} plate. De cette classification nous déduisons que $\mathbf{FP}(3)$ possède exactement 12 composantes irréductibles.
%\medskip
%
%\noindent\textit{\textbf{Mots clefs.  --- }} feuilletage, tissu, platitude, transformation de \textsc{Legendre}.
\end{abstract}
%\subjclass{14C21, 32S65, 53A60}
\keywords{feuilletage, tissu, platitude, transformation de \textsc{Legendre}.}

\pdfbookmark[0]{D\'{e}dicaces}{dedicaces}
\newpage\thispagestyle{empty}

% Dédicaces
\hfill
\vspace{9cm}
\begin{minipage}{15.3cm}
\begin{flushright}
$\begin{array}{llll}
\text{\sl{\`A la m\'{e}moire de mon fr\`{e}re \og \hspace{-0.1mm}\textsc{Rabie}\hspace{-0.1mm}\fg }}\\
\text{\sl{J'aurais tant aim\'{e} que tu sois l\`{a} aujourd'hui, mais le destin en a d\'{e}cid\'{e} autrement.}}\\
\text{\sl{Puisse Dieu le Tout-Puissant t'accorder sa sainte miséricorde et t'accueillir dans son vaste paradis,}}
\end{array}$

S. B.
\end{flushright}
\end{minipage}

\pdfbookmark[0]{Remerciements}{remerciements}
\DeactivateToc
\chapter*{Remerciements}
\ActivateToc

Je tiens à exprimer mes vifs remerciements à mon Directeur de thèse, Monsieur David \textsc{Mar\'{\i}n}, Professeur à l'Université Autonome de Barcelone, pour m'avoir proposé ce sujet,~pour son aide précieuse, ses encouragements et ses conseils avisés tout au long de ces six années de thèse. Je le remercie également pour m'avoir accueilli au sein de son laboratoire durant dix-huit mois où j'ai eu la possibilité de travailler dans un excellent environnement.
\vspace{2mm}

Je témoigne ma profonde reconnaissance à mon Co-directeur de thèse, Monsieur Djilali \textsc{Behloul}, Professeur à l'USTHB, pour m'avoir aidé dans les démarches administratives. Merci Monsieur \textsc{Behloul}.
\vspace{2mm}

J'adresse mes sincères remerciements aux membres du jury de ma thèse: Monsieur Arezki \textsc{Kessi}, Professeur à l'USTHB, pour avoir accepté de présider ce jury et Messieurs Dominique \textsc{Cerveau}, Professeur à l'Université de Rennes $1$, Kamel \textsc{Betina}, Professeur à l'USTHB, Frank \textsc{Loray}, Professeur à l'Université de Rennes $1$ et Directeur de recherche au CNRS, pour m'avoir fait l'honneur de rapporter ce travail.
\vspace{2mm}

J'exprime ma profonde gratitude à Monsieur Djamel \textsc{Smaï}, Maître Assistant à l'USTHB, pour sa générosité intellectuelle, ses encouragements et ses conseils précieux lors de la rédaction~de ce manuscrit. Un grand merci Monsieur \textsc{Smaï}.
\vspace{2mm}

Mon séjour de dix-huit mois à l'UAB a été possible grâce à une bourse algérienne, qui m'a été allouée par le programme PNE, auquel j'adresse ma profonde reconnaissance.
\vspace{2mm}

Je ne citerai pas, mais ils se reconnaîtront, tous mes amis et collègues de l'USTHB et de l'UAB, qui m'ont soutenu, même de loin. Je leur en suis reconnaissant.
\vspace{2mm}

Enfin je voudrais remercier ma famille, en particulier mes parents, mes frères \textsc{Yacine}, \textsc{Ali}, \textsc{Khalil}, \textsc{Mohamed}, \textsc{Youcef} et ma s{\oe}ur \textsc{Fatima} pour leur soutien moral et leurs encouragements durant toutes ces années de thèse.

\maketitle

\setcounter{tocdepth}{2}
\pdfbookmark[0]{Table des matières}{tablematieres}

\begin{small}

\tableofcontents

\end{small}

\mainmatter

\chapter*{Introduction}

Un $d$-tissu (régulier) $\W$ de $(\mathbb{C}^2,0)$ est la donnée d'une famille $\{\mathscr{F}_1,\mathscr{F}_2,\ldots,\mathscr{F}_d\}$ de feuilletages holomorphes réguliers de $(\mathbb{C}^2,0)$ deux à deux transverses en l'origine. Le premier résultat significatif dans l'étude des tissus a été obtenu par W. \textsc{Blaschke} et J. \textsc{Dubourdieu} autour des années $1920$. Ils ont montré (\cite{BD28}) que tout $3$-tissu régulier $\mathcal{W}$ de $(\mathbb{C}^2,0)$ est conjugué, via un isomorphisme analytique de $(\mathbb{C}^2,0)$, au $3$-tissu trivial défini par $\mathrm{d}x.\mathrm{d}y.\mathrm{d}(x+y)$, et cela sous l'hypothèse d'annulation d'une $2$-forme différentielle $K(\mathcal{W})$ connue sous le nom de courbure de \textsc{Blaschke} de $\mathcal{W}$. La courbure d'un $d$-tissu $\W$ avec~$d>3$ se définit comme la somme des courbures de \textsc{Blaschke} des sous-$3$-tissus de $\W$. Un tissu de courbure nulle est dit plat. Cette notion est utile pour la classification des tissus de rang maximal; en effet un résultat de N. \textsc{Mih\u{a}ileanu}~montre que la platitude est une condition nécessaire pour la maximalité du rang, \emph{voir} par exemple \cite{Hen06,Rip07}.

\noindent Depuis peu, l'étude des tissus globaux holomorphes définis sur les surfaces complexes a été réactualisée, \emph{voir} par exemple \cite{CL07,PP09,MP13}. Dans cette thèse, nous nous intéressons aux tissus du plan projectif complexe. Un $d$-tissu (global) sur $\pp$ est donné dans une carte affine $(x,y)$ par une équation différentielle algébrique $F(x,y,y')=0$, où $F(x,y,p)=\sum_{i=0}^{d}a_{i}(x,y)p^{d-i}\in\mathbb{C}[x,y,p]$ est un polynôme réduit à coefficient $a_0$ non identiquement nul. Au voisinage de tout point $z_{0}=(x_{0},y_{0})$ tel que $a_{0}(x_{0},y_{0})\Delta(x_{0},y_{0})\neq 0$, où $\Delta(x,y)$ est le $p$-discriminant de $F$, les courbes intégrales de cette équation définissent un $d$-tissu régulier de $(\C^{2},z_{0})$.

\noindent La courbure d'un tissu $\W$ sur $\pp$ est une $2$-forme méromorphe à pôles le long du discriminant $\Delta(\W)$. La platitude d'un tissu $\W$ sur $\pp$ se caractérise par l'holomorphie de sa courbure $K(\W)$ le long des points génériques de $\Delta(\W)$.

\noindent D.~\textsc{Mar\'{\i}n} et J. \textsc{Pereira} ont montré, dans \cite{MP13}, comment on peut associer à tout feuilletage $\F$ de degré $d$ sur $\pp$, un $d$-tissu sur le plan projectif dual $\pd$, appelé {\sl transformée de \textsc{Legendre}} de $\F$ et noté $\Leg\F$; les feuilles de $\Leg\F$ sont les droites tangentes aux feuilles de $\F.$ Plus explicitement, soit $(p,q)$ la carte affine de $\pd$ associée à la droite $\{y=px-q\}\subset{\mathbb{P}^{2}_{\mathbb{C}}}$; si $\F$ est défini par une $1$-forme $\omega=A(x,y)\mathrm{d}x+B(x,y)\mathrm{d}y,$ où $A,B\in\mathbb{C}[x,y],$ $\mathrm{pgcd}(A,B)=1$, alors $\Leg\F$ est donné par l'équation différentielle algébrique
\[
\check{F}(p,q,x):=A(x,px-q)+pB(x,px-q)=0, \qquad \text{avec} \qquad x=\frac{\mathrm{d}q}{\mathrm{d}p}.
\]

\noindent L'ensemble $\mathbf{F}(d)$ des feuilletages de degré $d$ sur $\pp$ s'identifie à un ouvert de \textsc{Zariski} de l'espace projectif $\mathbb{P}_{\C}^{(d+2)^{2}-2}$. Le groupe des automorphismes de $\pp$ agit sur $\mathbf{F}(d)$; l'orbite d'un élément $\F\in\mathbf{F}(d)$ sous l'action de $\mathrm{Aut}(\pp)=\mathrm{PGL}_3(\mathbb{C})$ sera notée $\mathcal{O}(\F)$. Le sous-ensemble $\mathbf{FP}(d)$ de $\mathbf{F}(d)$ formé des $\F\in\mathbf{F}(d)$ tels que $\Leg\F$ soit plat est un fermé de \textsc{Zariski} de $\mathbf{F}(d)$.

\noindent Dans \cite{MP13} les auteurs posent un problème concernant la géométrie des tissus de $\pp$ qui, dans le cadre des feuilletages de $\pp$, consiste en la description de certaines composantes irréductibles de $\mathbf{FP}(d).$ Le premier cas non trivial que l'on rencontre est celui où $d=3$; on dispose actuellement d'une caractérisation théorique (\cite[Théorème~4.5]{BFM13}) des éléments de $\mathbf{FP}(3)$, mais ce résultat reste insuffisant pour mieux comprendre la structure topologique de $\mathbf{FP}(3).$

\noindent C'est pour cela que nous nous proposons, dans un premier temps, d'étudier la platitude de la transformée de \textsc{Legendre} d'un \textsl{feuilletage homogène} de degré $d$ quelconque, feuilletage homogène au sens où il est invariant par homothétie. Plus précisément, il s'agit d'établir, pour un tel feuilletage $\mathcal{H}\in\mathbf{F}(d)$, quelques critères effectifs de l'holomorphie de la courbure de $\Leg\mathcal{H}$ (Théorèmes~\ref{thm:Barycentre} et \ref{thm:Divergence}). Ces critères nous permettront de décrire certains feuilletages homogènes appartenant à $\mathbf{FP}(d)$ pour $d$ arbitraire (Propositions
\ref{pro:omega1-omega2}, \ref{pro:omega3-omega4} et \ref{pro:omega5-omega6}). De plus nous verrons (Proposition~\ref{pro:F-dégénère-H}) que l'étude de la platitude de la transformée de \textsc{Legendre} d'un feuilletage inhomogène se ramène, sous certaines hypothèses, au cadre homogène.

\noindent Dans un deuxième temps, en se basant sur ces résultats et en faisant une étude de la platitude du $3$-tissu dual d'un feuilletage $\F\in\mathbf{F}(3)$ suivant la nature de ses singularités, nous obtenons la classification à automorphisme de $\pp$ pr\`es des éléments de $\mathbf{FP}(3).$
\begin{thmalph}[\rm{Théorème~\ref{thm:classification}}]\label{thmalph:classification}
{\sl \`A automorphisme de $\pp$ pr\`es, il y a seize feuilletages de degré trois $\mathcal{H}_1,\ldots,\mathcal{H}_{11},\mathcal{F}_1,\ldots,\mathcal{F}_{5}$ sur le plan projectif complexe ayant une transformée de \textsc{Legendre} plate. Ils sont d\'ecrits respectivement en carte affine par les $1$-formes suivantes
\begin{itemize}
\item [\texttt{1. }]  \hspace{1mm} $\omega_1\hspace{1mm}=y^3\mathrm{d}x-x^3\mathrm{d}y$;
\item [\texttt{2. }]  \hspace{1mm} $\omega_2\hspace{1mm}=x^3\mathrm{d}x-y^3\mathrm{d}y$;
\item [\texttt{3. }]  \hspace{1mm} $\omega_3\hspace{1mm}=y^2(3x+y)\mathrm{d}x-x^2(x+3y)\mathrm{d}y$;
\item [\texttt{4. }]\hspace{1mm}   $\omega_4\hspace{1mm}=y^2(3x+y)\mathrm{d}x+x^2(x+3y)\mathrm{d}y$;
\item [\texttt{5. }]\hspace{1mm} $\omega_{5}\hspace{1mm}=2y^3\mathrm{d}x+x^2(3y-2x)\mathrm{d}y$;
\item [\texttt{6. }]\hspace{1mm} $\omega_{6}\hspace{1mm}=(4x^3-6x^2y+4y^3)\mathrm{d}x+x^2(3y-2x)\mathrm{d}y$;
\item [\texttt{7. }]\hspace{1mm} $\omega_{7}\hspace{1mm}=y^3\mathrm{d}x+x(3y^2-x^2)\mathrm{d}y$;
\item [\texttt{8. }]\hspace{1mm} $\omega_{8}\hspace{1mm}=x(x^2-3y^2)\mathrm{d}x-4y^3\mathrm{d}y$;
\item [\texttt{9. }]\hspace{1mm} $\omega_{9}\hspace{1mm}=y^{2}\left((-3+\mathrm{i}\sqrt{3})x+2y\right)\mathrm{d}x+
                                                       x^{2}\left((1+\mathrm{i}\sqrt{3})x-2\mathrm{i}\sqrt{3}y\right)\mathrm{d}y$;
\item [\texttt{10. }]\hspace{-1mm} $\omega_{10}=(3x+\sqrt{3}y)y^2\mathrm{d}x+(3y-\sqrt{3}x)x^2\mathrm{d}y$;
\item [\texttt{11. }]\hspace{-1mm} $\omega_{11}=(3x^3+3\sqrt{3}x^2y+3xy^2+\sqrt{3}y^3)\mathrm{d}x+(\sqrt{3}x^3+3x^2y+3\sqrt{3}xy^2+3y^3)\mathrm{d}y$;
\item [\texttt{12. }]\hspace{-1mm} $\omegaoverline_{1}\hspace{1mm}=y^{3}\mathrm{d}x+x^{3}(x\mathrm{d}y-y\mathrm{d}x)$;
\item [\texttt{13. }]\hspace{-1mm} $\omegaoverline_{2}\hspace{1mm}=x^{3}\mathrm{d}x+y^{3}(x\mathrm{d}y-y\mathrm{d}x)$;
\item [\texttt{14. }]\hspace{-1mm} $\omegaoverline_{3}\hspace{1mm}=(x^{3}-x)\mathrm{d}y-(y^{3}-y)\mathrm{d}x$;
\item [\texttt{15. }]\hspace{-1mm} $\omegaoverline_{4}\hspace{1mm}=(x^{3}+y^{3})\mathrm{d}x+x^{3}(x\mathrm{d}y-y\mathrm{d}x)$;
\item [\texttt{16. }]\hspace{-1mm} $\omegaoverline_{5}\hspace{1mm}=y^{2}(y\mathrm{d}x+2x\mathrm{d}y)+x^{3}(x\mathrm{d}y-y\mathrm{d}x)$.
\end{itemize}
}
\end{thmalph}

\noindent Les orbites de $\F_1$ et $\F_2$ sont toutes deux de dimension $6$ qui est la dimension minimale possible, et ce en tout degré supérieur ou égal~à~$2$ (\cite[Proposition~2.3]{CDGBM10}). D.~\textsc{Cerveau}, J.~\textsc{D\'eserti}, D.~\textsc{Garba Belko} et R.~\textsc{Meziani} ont montré qu'en degré $2$ il y a exactement deux orbites de dimension $6$ (\cite[Proposition 2.7]{CDGBM10}). Le Théorème~\ref{thmalph:classification} nous permet d'établir un résultat similaire en degré $3$:
\begin{coralph}[\rm{Corollaire~\ref{cor:dim-min}}]\label{coralph:dim-min}
{\sl \`A automorphisme de $\pp$ près, les feuilletages $\mathcal{F}_{1}$ et $\mathcal{F}_{2}$ sont les seuls feuilletages qui réalisent la dimension minimale des orbites en degré $3.$}
\end{coralph}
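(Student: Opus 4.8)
The plan is to derive the statement from Th\'eor\`eme~\ref{thmalph:classification} together with the orbit--stabilizer relation. For $\F\in\mathbf{F}(d)$ write $\mathrm{Iso}(\F)=\{\varphi\in\mathrm{Aut}(\pp):\varphi^{*}\F=\F\}$ for the isotropy subgroup; since $\dim\mathrm{Aut}(\pp)=\dim\mathrm{PGL}_{3}(\mathbb{C})=8$, one has $\dim\mathcal{O}(\F)=8-\dim\mathrm{Iso}(\F)$. By \cite[Proposition~2.3]{CDGBM10}, $\dim\mathcal{O}(\F)\geq 6$, i.e. $\dim\mathrm{Iso}(\F)\leq 2$, in every degree $d\geq 2$; hence, in degree $3$, the foliation $\F$ realizes the minimal orbit dimension if and only if $\dim\mathrm{Iso}(\F)=2$, equivalently if and only if $\F$ is invariant under some connected $2$-dimensional subgroup $G\subset\mathrm{PGL}_{3}(\mathbb{C})$. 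One inclusion is already recorded before the statement, namely that $\mathcal{O}(\F_{1})$ and $\mathcal{O}(\F_{2})$ have dimension $6$. For the converse I would first establish that $\dim\mathrm{Iso}(\F)=2$ forces $\F\in\mathbf{FP}(3)$, and then go through the sixteen models of Th\'eor\`eme~\ref{thmalph:classification}.

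The crux, and the step I expect to be the most delicate, is the implication $\dim\mathrm{Iso}(\F)=2\Rightarrow\F\in\mathbf{FP}(3)$. Put $G=\mathrm{Iso}(\F)^{\circ}$; then $G$ acts on $\pd$ preserving the $3$-tissu $\Leg\F$, so by naturality of the curvature $K(\Leg\F)$ is a $G$-invariant meromorphic $2$-form on $\pd$, with poles supported on $\Delta(\Leg\F)$. As $\pd$ carries no nonzero holomorphic $2$-form, flatness of $\Leg\F$ amounts to $K(\Leg\F)\equiv 0$. Assume first that $G$ has a dense orbit $U\subset\pd$: then $U\cong G/H$ with $H$ finite, the quotient map $\pi\colon G\to G/H=U$ is a local biholomorphism, and $\pi^{*}(\Leg\F)$ is a left-$G$-invariant $3$-tissu on $G$. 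Going through the $2$-dimensional connected complex Lie groups (that is $\mathbb{G}_{a}^{2}$, $\mathbb{G}_{m}^{2}$, $\mathbb{G}_{a}\times\mathbb{G}_{m}$, and the affine group $\mathbb{G}_{a}\rtimes\mathbb{G}_{m}$), one checks that on each of them every left-invariant line field is, in suitable coordinates, a family of parallel straight lines; hence $\pi^{*}(\Leg\F)$ is, in such coordinates, a superposition of three families of parallel lines, i.e. a trivial hexagonal $3$-tissu, whose curvature vanishes. Thus $K(\Leg\F)$ vanishes on $U$, hence identically, so $\F\in\mathbf{FP}(3)$. It then remains to treat the subgroups $G$ that have no dense orbit on $\pd$ (they preserve a pencil of lines or a line of $\pp$), and to show by a short case analysis over the finitely many such subgroups, up to conjugacy, that any invariant degree-$3$ foliation still has a flat \textsc{Legendre} transform, in practice by checking that it reduces to an essentially lower-dimensional object, or via the reduction to the homogeneous setting and the effective flatness criteria established earlier. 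Carrying out this degenerate case cleanly, together with the bookkeeping of the relevant subgroups of $\mathrm{PGL}_{3}(\mathbb{C})$, is where the real work lies.

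Granting this, any $\F\in\mathbf{F}(3)$ with $\dim\mathcal{O}(\F)=6$ is, up to an automorphism of $\pp$, one of $\mathcal{H}_{1},\ldots,\mathcal{H}_{11},\F_{1},\ldots,\F_{5}$, and it suffices to compute the isotropy algebra of each (the Lie algebra of those $V$ in the Lie algebra of $\mathrm{PGL}_{3}(\mathbb{C})$ with $[V,X]\wedge X=0$, where $X$ is a polynomial vector field defining $\F$ in the affine chart). Each homogeneous model $\mathcal{H}_{i}$ is invariant under the radial field $x\px+y\py$, so $\dim\mathrm{Iso}(\mathcal{H}_{i})\geq 1$; a direct computation shows that the radial field spans the whole isotropy algebra, whence $\dim\mathcal{O}(\mathcal{H}_{i})=7$ for $i=1,\ldots,11$. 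For $\F_{3}$ (the model $\omegaoverline_{3}$) the foliation has a full grid of invariant lines, forcing $\mathrm{Iso}(\F_{3})$ to be finite, so $\dim\mathcal{O}(\F_{3})=8$; for $\F_{4}$ and $\F_{5}$ one finds a one-dimensional isotropy algebra, hence $\dim\mathcal{O}(\F_{4})=\dim\mathcal{O}(\F_{5})=7$. Finally, for $\F_{1}$ (resp. $\F_{2}$) the isotropy algebra is two-dimensional, spanned by the diagonal field $2x\px+3y\py$ (resp. $4x\px+3y\py$) and by $x^{2}\px+xy\py=x(x\px+y\py)$, the bracket of these two generators being a nonzero multiple of the second; hence $\dim\mathcal{O}(\F_{1})=\dim\mathcal{O}(\F_{2})=6$. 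Therefore $\F_{1}$ and $\F_{2}$ are, up to automorphism of $\pp$, the only degree-$3$ foliations realizing the minimal orbit dimension, which is the assertion of the corollary.
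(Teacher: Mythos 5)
Your overall route is the paper's: translate the minimal orbit dimension into $\dim\mathrm{Iso}(\F)=2$, deduce that $\Leg\F$ is flat, then read off the answer from Th\'eor\`eme~\ref{thm:classification} together with the orbit dimensions (your isotropy computations for the sixteen models agree with Proposition~\ref{pro:isotropies} and with the remarks preceding the corollary). The genuine gap is in the central implication $\dim\mathrm{Iso}(\F)=2\Rightarrow\F\in\mathbf{FP}(3)$, and it is the one you flag yourself: your parallelization argument only works when the connected isotropy group, acting on $\pd$ through $\mathrm{Iso}(\Leg\F)$, has a dense orbit, and the intransitive case is left as ``a short case analysis'' that is never carried out. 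That case is not vacuous: $\mathrm{PGL}_{3}(\mathbb{C})$ does contain $2$-dimensional connected subgroups with no dense orbit on the plane, for instance the abelian unipotent group acting in an affine chart by $(x,y)\mapsto(x,\,y+a+bx)$, all of whose orbits are contained in the lines of a pencil. So you must either show that no degree-three foliation has an isotropy group of this intransitive type, or prove flatness of the dual web for it directly; asserting that such a foliation ``reduces to an essentially lower-dimensional object'' is not an argument, and this is exactly where your proof stops being a proof.

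The paper closes this gap with a single citation instead of a case analysis: by \'E.~Cartan~\cite{Car08}, a germ of regular planar $3$-web whose algebra of infinitesimal automorphisms has dimension at least $2$ is parallelizable (equivalently, the symmetry algebra of a non-hexagonal $3$-web has dimension at most $1$), and no transitivity hypothesis is needed. Since the isomorphism $\tau\colon\mathrm{Aut}(\pp)\to\mathrm{Aut}(\pd)$ restricts to an isomorphism between $\mathrm{Iso}(\F)$ and $\mathrm{Iso}(\Leg\F)$ (remark following Proposition~\ref{pro:isotropies}), a $2$-dimensional $\mathrm{Iso}(\F)$ yields, at every point $m\in\pd\smallsetminus\Delta(\Leg\F)$, a $2$-dimensional algebra of infinitesimal symmetries of the germ $\W_m$ of $\Leg\F$; hence $K(\Leg\F)$ vanishes off the discriminant and therefore identically, so $\F\in\mathbf{FP}(3)$, and the conclusion follows from Th\'eor\`eme~\ref{thm:classification} and Proposition~\ref{pro:isotropies} exactly as you say. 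Your verification that every left-invariant $3$-web on a $2$-dimensional complex Lie group is the trivial (hexagonal) web is correct and is essentially the classical proof of the transitive case of Cartan's statement; to make your argument complete you must either prove the intransitive case as well, or invoke Cartan's theorem as the paper does.
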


\noindent A.~\textsc{Beltr\'{a}n}, M.~\textsc{Falla~Luza} et D.~\textsc{Mar\'{\i}n} ont montré dans \cite{BFM13} que $\mathbf{FP}(3)$ contient l'ensemble des feuilletages $\F\in\mathbf{F}(3)$ dont les feuilles qui ne sont pas des droites n'ont pas de points d'inflexion; ces feuilletages sont dits \textsl{convexes}. De ces travaux (\cite[Corollaire~4.7]{BFM13}) et du Théorème~\ref{thmalph:classification}, nous déduisons la classification à automorphisme de $\pp$ pr\`es des feuilletages convexes de degré $3$ de $\pp.$
\begin{coralph}[\rm{Corollaire~\ref{cor:class-convexe-3}}]\label{coralph:class-convexe-3}
{\sl \`A automorphisme de $\pp$ près, il y a quatre feuilletages convexes de degré trois sur le plan projectif complexe, à savoir les feuilletages  $\mathcal{H}_{1},\mathcal{H}_{\hspace{0.2mm}3},\F_1$ et $\F_3.$
}
\end{coralph}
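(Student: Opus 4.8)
Le plan est de combiner le Th\'eor\`eme~\ref{thmalph:classification} avec le r\'esultat de A.~\textsc{Beltr\'{a}n}, M.~\textsc{Falla~Luza} et D.~\textsc{Mar\'{\i}n} d'apr\`es lequel tout feuilletage convexe de degr\'e trois sur $\pp$ poss\`ede une transform\'ee de \textsc{Legendre} plate, autrement dit appartient \`a $\mathbf{FP}(3)$ (\cite[Corollaire~4.7]{BFM13}). On s'appuiera aussi sur la caract\'erisation suivante de la convexit\'e: un feuilletage $\F\in\mathbf{F}(d)$ est convexe si et seulement si son diviseur d'inflexion $\IF$, qui est de degr\'e $3d$, est totalement invariant par $\F$ --- c'est-\`a-dire $\IF=\IinvF$ est un produit de $3d$ droites invariantes compt\'ees avec multiplicit\'e, ou encore la partie transverse $\ItrF$ est r\'eduite \`a une constante. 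Puisque la convexit\'e entra\^{\i}ne la platitude de la transform\'ee de \textsc{Legendre}, le Th\'eor\`eme~\ref{thmalph:classification} ram\`ene toute la preuve \`a la d\'etermination, parmi les seize feuilletages $\mathcal{H}_1,\ldots,\mathcal{H}_{11},\F_1,\ldots,\F_5$, de ceux qui sont convexes.

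Concr\`etement, pour chacune des $1$-formes $\omega_1,\ldots,\omega_{11},\omegaoverline_1,\ldots,\omegaoverline_5$, on calculerait le diviseur d'inflexion $\IF$ du feuilletage associ\'e --- donn\'e en carte affine par une expression polynomiale explicite form\'ee \`a partir du champ de vecteurs dual du feuilletage, \emph{voir} \cite{MP13,BFM13} ---, on l'homog\'en\'eiserait en une courbe de degr\'e neuf de $\pp$, puis on la factoriserait en rep\'erant les facteurs lin\'eaires qui d\'efinissent une droite invariante par le feuilletage; ce dernier est convexe exactement lorsque tous les facteurs le sont. Pour les onze mod\`eles homog\`enes $\mathcal{H}_1,\ldots,\mathcal{H}_{11}$ ce calcul se simplifie gr\^ace \`a la description des feuilletages homog\`enes obtenue au Chapitre~1: $\IF$ y est la r\'eunion de la droite \`a l'infini, compt\'ee avec une certaine multiplicit\'e $a$, et d'un c\^one de sommet l'origine, r\'eunion de $9-a$ droites passant par l'origine; la convexit\'e \'equivaut \`a ce que ces $9-a$ droites figurent toutes parmi les droites invariantes du feuilletage. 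Pour les cinq mod\`eles inhomog\`enes $\omegaoverline_1,\ldots,\omegaoverline_5$ on factoriserait directement.

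Ces calculs devraient montrer que $\ItrF$ est constant --- donc que $\IF$ est un produit de neuf droites invariantes --- pr\'ecis\'ement pour $\omega_1$, $\omega_3$, $\omegaoverline_1$ et $\omegaoverline_3$, autrement dit pour les feuilletages $\mathcal{H}_1$, $\mathcal{H}_3$, $\F_1$ et $\F_3$, et fournir, pour chacun des douze feuilletages restants, une composante de $\ItrF$ de degr\'e strictement positif (form\'ee des points d'inflexion des feuilles g\'en\'eriques qui ne sont pas des droites), ce qui montre qu'il n'est pas convexe. Comme les seize orbites fournies par le Th\'eor\`eme~\ref{thmalph:classification} sont deux \`a deux distinctes, on conclurait qu'\`a automorphisme de $\pp$ pr\`es il existe exactement quatre feuilletages convexes de degr\'e trois sur le plan projectif complexe, \`a savoir $\mathcal{H}_1$, $\mathcal{H}_3$, $\F_1$ et $\F_3$.

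Le point d\'elicat est de nature calculatoire plut\^ot que conceptuelle: il faut factoriser seize polyn\^omes explicites de degr\'e neuf (sur un corps contenant $\mathrm{i}$ et $\sqrt{3}$) et, dans les quatre cas convexes, contr\^oler avec soin la multiplicit\'e de chaque droite composant $\IF$ --- notamment celle de la droite \`a l'infini --- afin de v\'erifier que la partie invariante $\IinvF$ atteint bien le degr\'e neuf tout entier. Les situations les plus subtiles seront celles o\`u $\IF$ n'est pas r\'eduit, ou celles o\`u $\IF$ n'est qu'\og presque \fg totalement invariant, o\`u il faudra prendre garde \`a ne pas confondre une droite invariante de multiplicit\'e \'elev\'ee avec une authentique composante transverse de $\ItrF$.
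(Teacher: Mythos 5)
Votre démarche est essentiellement celle du texte: on invoque \cite[Corollaire~4.7]{BFM13} pour conclure que tout feuilletage convexe de degré trois appartient à $\mathbf{FP}(3)$, donc est linéairement conjugué à l'un des seize modèles du Théorème~\ref{thmalph:classification}, puis on identifie parmi eux les convexes, à savoir $\mathcal{H}_1$, $\mathcal{H}_{\hspace{0.2mm}3}$, $\F_1$ et $\F_3$. La seule différence est de présentation: le texte s'appuie sur les types $\mathcal{T}_{\mathcal{H}_{\hspace{0.2mm}i}}$ (absence de composantes $\mathrm{T}_k$) et sur la remarque déjà établie que $\F_j$ est convexe si et seulement si $j\in\{1,3\}$, là où vous proposez de recalculer et factoriser directement les seize diviseurs d'inflexion, ce qui revient au même.
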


\noindent Ce corollaire est un analogue en degré $3$ d'un résultat sur les feuilletages de degré $2$ dû à C.~\textsc{Favre} et J.~\textsc{Pereira} (\cite[Proposition~7.4]{FP15}).
\medskip

\noindent On sait d'après \cite[Théorème~3]{MP13} que l'adhérence dans $\mathbf{F}(3)$ de l'orbite $\mathcal{O}(\F_3)$ du feuilletage $\F_3$, dit feuilletage de \textsc{Fermat} de degré $3,$ est une composante irréductible de $\mathbf{FP}(3)$; à l'heure actuelle, à notre connaissance, c'est le seul exemple explicite de composante irréductible de $\mathbf{FP}(3)$ connu dans la littérature. En étudiant les relations d'incidence entre les adhérences des orbites de $\mathcal{H}_{\hspace{0.2mm}i}$ et $\F_j,$ nous obtenons la décomposition de $\mathbf{FP}(3)$ en ses composantes irréductibles.

\begin{thmalph}[\rm{Théorème~\ref{thm:12-Composantes irréductibles}}]\label{thmalph:12-Composantes irréductibles}
{\sl Les adhérences étant prises dans $\mathbf{F}(3),$ nous avons
\begin{align*}
&
\overline{\mathcal{O}(\F_1)}=\mathcal{O}(\F_1),&&
\overline{\mathcal{O}(\F_2)}=\mathcal{O}(\F_2),\\
&
\overline{\mathcal{O}(\F_3)}=\mathcal{O}(\F_1)\cup\mathcal{O}(\mathcal{H}_{1})\cup\mathcal{O}(\mathcal{H}_{\hspace{0.2mm}3})\cup\mathcal{O}(\F_3),&&
\overline{\mathcal{O}(\F_4)}=\mathcal{O}(\F_1)\cup\mathcal{O}(\F_2)\cup\mathcal{O}(\F_4),\\
&
\overline{\mathcal{O}(\mathcal{H}_{1})}=\mathcal{O}(\F_1)\cup\mathcal{O}(\mathcal{H}_{1}),&&
\overline{\mathcal{O}(\mathcal{H}_{\hspace{0.2mm}2})}=\mathcal{O}(\F_2)\cup\mathcal{O}(\mathcal{H}_{\hspace{0.2mm}2}),\\
&
\overline{\mathcal{O}(\mathcal{H}_{\hspace{0.2mm}3})}=\mathcal{O}(\F_1)\cup\mathcal{O}(\mathcal{H}_{\hspace{0.2mm}3}),&&
\overline{\mathcal{O}(\mathcal{H}_{\hspace{0.2mm}8})}=\mathcal{O}(\F_2)\cup\mathcal{O}(\mathcal{H}_{\hspace{0.2mm}8}),\\
&
\overline{\mathcal{O}(\mathcal{H}_{\hspace{0.2mm}5})}=\mathcal{O}(\F_1)\cup\mathcal{O}(\mathcal{H}_{\hspace{0.2mm}5}),&&
\overline{\mathcal{O}(\mathcal{H}_{\hspace{0.2mm}4})}\supset\mathcal{O}(\F_2)\cup\mathcal{O}(\mathcal{H}_{\hspace{0.2mm}4}),\\
&
\overline{\mathcal{O}(\mathcal{H}_{\hspace{0.2mm}7})}\supset\mathcal{O}(\F_1)\cup\mathcal{O}(\mathcal{H}_{\hspace{0.2mm}7}),&&
\overline{\mathcal{O}(\mathcal{H}_{\hspace{0.2mm}6})}\supset\mathcal{O}(\F_2)\cup\mathcal{O}(\mathcal{H}_{\hspace{0.2mm}6}),\\
&
\overline{\mathcal{O}(\mathcal{H}_{\hspace{0.2mm}9})}\subset\mathcal{O}(\F_1)\cup\mathcal{O}(\mathcal{H}_{\hspace{0.2mm}9}),&&
\overline{\mathcal{O}(\mathcal{H}_{\hspace{0.2mm}11})}\subset\mathcal{O}(\F_2)\cup\mathcal{O}(\mathcal{H}_{11}),\\
&
\overline{\mathcal{O}(\mathcal{H}_{\hspace{0.2mm}10})}\subset\mathcal{O}(\F_1)\cup\mathcal{O}(\F_2)\cup\mathcal{O}(\mathcal{H}_{\hspace{0.2mm}10}),&&
\overline{\mathcal{O}(\F_5)}\subset\mathcal{O}(\F_2)\cup\mathcal{O}(\F_5)
\end{align*}
avec
\begin{align*}
&\dim\mathcal{O}(\F_{1})=6,&&&\dim\mathcal{O}(\F_{2})=6,&&& \dim\mathcal{O}(\mathcal{H}_{\hspace{0.2mm}i})=7, i=1,\ldots,11,\\
&
\dim\mathcal{O}(\F_{4})=7,&&& \dim\mathcal{O}(\F_{5})=7,&&& \dim\mathcal{O}(\F_{3})=8.
\end{align*}
En particulier:
\begin{itemize}
  \item  l'ensemble $\mathbf{FP}(3)$ possède exactement douze composantes irréductibles, à savoir $\overline{\mathcal{O}(\F_3)},\,$ $\overline{\mathcal{O}(\F_4)},\,$ $\overline{\mathcal{O}(\F_5)},\,$ $\overline{\mathcal{O}(\mathcal{H}_{\hspace{0.2mm}2})},\,$ $\overline{\mathcal{O}(\mathcal{H}_{\hspace{0.4mm}k})},\,k=4,5,\ldots,11$;
  \item  l'ensemble des feuilletages convexes de degré trois de $\pp$ est exactement l'adhérence $\overline{\mathcal{O}(\F_3)}$ de $\mathcal{O}(\F_3)$ (c'est donc un fermé irréductible de~$\mathbf{F}(3)).$
\end{itemize}
}
\end{thmalph}
\smallskip

\noindent On sait, d'après \cite{Per01}, que tout feuilletage de degré $d\geq1$ sur $\pp$ ne peut avoir plus de $3d$ droites invariantes (distinctes). Lorsque cette borne est atteinte pour $\F\in\mathbf{F}(d)$, alors $\F$ est nécessairement~convexe; dans ce cas on dit que $\F$ est \textsl{convexe réduit}. Dans \cite{MP13} les auteurs ont étudié les feuilletages de $\mathbf{F}(d)$ qui sont convexes réduits; ils ont montré que l'ensemble formé de tels feuilletages est contenu dans $\mathbf{FP}(d)$, \emph{voir} \cite[Théorème 2]{MP13}. \`{A} notre connaissance les seuls feuilletages convexes réduits connus dans la littérature sont ceux qui sont présentés dans \cite[Table~1.1]{MP13}: le feuilletage de \textsc{Fermat} de degré $d$ défini par la $1$-forme $$(x^{d}-x)\mathrm{d}y-(y^{d}-y)\mathrm{d}x$$ et les trois feuilletages donnés par les $1$-formes
\[(2x^{3}-y^{3}-1)y\mathrm{d}x+(2y^{3}-x^{3}-1)x\mathrm{d}y\,,\]
\[(y^2-1)(y^2- (\sqrt{5}-2)^2)(y+\sqrt{5}x) \mathrm{d}x-(x^2-1)(x^2- (\sqrt{5}-2)^2)(x+\sqrt{5}y) \mathrm{d}y \, ,\]
\[(y^3-1)(y^3+7x^3+1) y \mathrm{d}x-(x^3-1)(x^3+7 y^3+1) x \mathrm{d}y\,,\]
qui sont de degré  $4$, $5$ et $7$ respectivement. Dans \cite[Problème~9.1]{MP13} les auteurs posent la question suivante: y a-t-il d'autres feuilletages convexes réduits? Comme application du Théorème~\ref{thmalph:classification} nous donnons une réponse négative en degré trois à ce problème.
\begin{coralph}[\rm{Corollaire~\ref{cor:convexe-réduit-3}}]\label{coralph:convexe-réduit-3}
{\sl Tout feuilletage convexe réduit de degré trois de $\pp$ est linéairement conjugué au feuilletage de \textsc{Fermat} $\F_3$.}
\end{coralph}
\bigskip
\vspace{2mm}

\noindent\textmd{\textbf{Organisation de la thèse}.  --- } Cette thèse est divisée en trois chapitres suivis d'un appendice. Le Chapitre \ref{chap1:Préliminaires} est un rappel sur les feuilletages du plan projectif complexe, la théorie locale des tissus plans et les tissus globaux sur une surface complexe.
\smallskip

\noindent Dans le Chapitre~\ref{chap2:homogène}, nous étudions les feuilletages homogènes du plan projectif complexe ayant une transformée de \textsc{Legendre} plate. Plus précisément, nous établissons quelques résultats~généraux (Théorèmes~\ref{thm:holomo-G(I^tr)}, \ref{thm:Barycentre} et \ref{thm:Divergence}) sur la platitude du $d$-tissu dual d'un feuilletage homogène de degré $d$ et nous décrivons quelques exemples explicites (Propositions
\ref{pro:omega1-omega2}, \ref{pro:omega3-omega4} et \ref{pro:omega5-omega6}).
\smallskip

\noindent Dans le Chapitre~\ref{chap3:classification}, nous étudions les feuilletages de degré $3$ du plan projectif complexe ayant une transformée de \textsc{Legendre} plate et nous démontrons les Théorèmes~\ref{thmalph:classification}, \ref{thmalph:12-Composantes irréductibles} ainsi que les Corollaires~\ref{coralph:dim-min}, \ref{coralph:class-convexe-3}, \ref{coralph:convexe-réduit-3}.
\smallskip

\noindent Enfin, dans l'Appendice~\ref{Dém:pro:cas-nilpotent-noeud-ordre-2}, nous donnons une démonstration d'un résultat (Proposition~\ref{pro:cas-nilpotent-noeud-ordre-2}) utilisé au Chapitre~\ref{chap3:classification} pour démontrer le Théorème~\ref{thmalph:classification}.

\clearemptydoublepage
\chapter{Préliminaires}\label{chap1:Préliminaires}

\section{Feuilletages du plan projectif complexe}\vspace{6mm}

\begin{defin}\label{def:feuilletage}
Un \textbf{\textit{feuilletage holomorphe singulier}} $\mathcal{F}$ sur une surface complexe $S$ est la donnée d'un recouvrement ouvert $(V_{j})_{j\in J}$ de $S$ et d'une collection de $1$-formes holomorphes $\omega_{j}\in \Omega^1 (V_{j}),$ à zéros isolés, tels que sur chaque intersection non triviale \begin{small}$V_{j} \cap V_{k}$\end{small} on ait $\omega_{j}=g_{jk} \omega_{k}$ où les $g_{jk} \in \mathcal O^* (V_{j}\cap V_{k})$ sont des unités holomorphes. Le \textbf{\textit{lieu singulier}} $\mathrm{Sing}\mathcal{F}$ de $\mathcal{F}$ est défini par
\begin{align*}
& \mathrm{Sing}\mathcal{F}\cap\,V_{j}=\mathrm{Sing}\omega_j&& \text{où}&&\mathrm{Sing}\omega_j=\{p\in\,V_j:\omega_j(p)=0\},(j\in\,J).
\end{align*}
\end{defin}

\noindent La surface sur laquelle nous travaillerons dans cette thèse est le plan projectif complexe, \textit{i.e.} $S=\mathbb{P}^{2}_{\mathbb{C}}.$ Soit $$\pi\hspace{1mm}\colon\mathbb{C}^3\setminus\{0\}\to\mathbb{P}^2_{\mathbb{C}}$$ la projection canonique. Si $\mathcal{F}$ est un feuilletage holomorphe sur $\mathbb{P}^{2}_{\mathbb{C}}$ associé aux données $(V_{k},\omega_{k}),$ on peut définir sur $\mathbb{C}^3\smallsetminus\{0\}$ le feuilletage $\pi^{*}\mathcal{F}$ associé aux données $(\pi^{-1}(V_{k}),\pi^{*}\omega_{k}).$ Un résultat de H. \textsc{Cartan} \cite{Car79} assure la trivialité du groupe de cohomologie $\mathrm{H}^1(\mathbb C^{3}\smallsetminus\{0\},\mathcal O^{*})$. De sorte que le feuilletage $\pi^{*} \mathcal F$ est défini par une $1$-forme globale $\omega \in \Omega^{1}(\mathbb C^{3}\smallsetminus\{0\})$ qui, par le théorème d'extension d'\textsc{Hartogs}, s'étend holomorphiquement à l'origine de $\mathbb C^{3}$; ainsi $\pi^{*}\mathcal F$ s'étend en un feuilletage de $\mathbb C^{3}$ défini par une $1$-forme notée encore $\omega\in\Omega^1(\mathbb C^{3})$. Par construction le champ radial $$\mathrm{R}=x\frac{\partial{}}{\partial{x}}+y\frac{\partial{}}{\partial{y}}+z\frac{\partial{}}{\partial{z}},$$ tangent aux fibres de $\pi$, est tangent à $\pi^{*}\mathcal F$ et donc $i_{\mathrm{R}}\omega=0$. On en déduit que, à unité multiplicative près, $\omega$ est homogène: $$\omega=\omega_{d+1}=p(x,y,z)\mathrm{d}x+q(x,y,z)\mathrm{d}y+r(x,y,z)\mathrm{d}z,$$ où $p,q$ et $r$ sont des polynômes homogènes de degré $d+1$, satisfaisant la condition d'\textsc{Euler}: $$xp+yq+zr=0$$ et $\pgcd(p,q,r)=1$ (\textit{i.e.} $\mathrm{Sing}\omega_{d+1}$ est formé de points isolés). Cet énoncé de type GAGA dit qu'un feuilletage holomorphe sur $\mathbb P_{\mathbb C}^{2}$ est en fait alg\'ebrique puisque défini en coordonnées homogènes par une $1$-forme homogène.

\noindent Dualement le feuilletage $\mathcal{F}$ peut aussi être défini par un champ de vecteurs homogène
$$\mathrm{Z}=\mathrm{Z}_{d}=A(x,y,z)\frac{\partial{}}{\partial{x}}+B(x,y,z)\frac{\partial{}}{\partial{y}}+C(x,y,z)\frac{\partial{}}{\partial{z}},$$
les coefficients $A,B$ et $C$ désignent des polynômes homogènes de degré $d$ sans facteur commun. La relation entre $\mathrm{Z}$ et $\omega$ est donnée par $$\omega=i_{\mathrm{R}}i_{\mathrm{Z}}(\mathrm{d}x\wedge\mathrm{d}y\wedge\mathrm{d}z)=A\alpha+B\beta+C\gamma,$$
avec $\alpha=y\mathrm{d}z-z\mathrm{d}y,\quad \beta=z\mathrm{d}x-x\mathrm{d}z,\quad \gamma=x\mathrm{d}y-y\mathrm{d}x.$
\vspace{2mm}

\noindent L'entier $d$ est par définition le \textbf{\textit{degré}} du feuilletage $\mathcal{F}$; on le note $\deg\mathcal{F}.$
\vspace{2mm}

\noindent Soit $\mathcal{C}\subset\pp$ une courbe algébrique d'équation homogène $F(x,y,z)=0.$ On dit que $\mathcal{C}$ est une \textbf{\textit{courbe invariante}} par $\mathcal{F}$ si $\mathcal{C}\smallsetminus\Sing\F$ est une réunion de feuilles, feuilles au sens ordinaire, du feuilletage régulier $\F|_{\pp\smallsetminus\Sing\F}$. Ceci se traduit en termes algébriques par: la $2$-forme $\omega\wedge\mathrm{d}F$ est divisible par $F$, {\it i.e.} s'annule sur toute composante irréductible de $\mathcal{C}.$

\noindent Lorsque toutes les composantes irréductibles de $\mathcal{C}$ ne sont pas $\F$-invariantes et si $p$ est un point quelconque de $\mathcal{C}$, on peut définir un invariant $\Tang(\F,\mathcal{C},p)$, représentant l'\textbf{\textit{ordre de tangence}} de $\F$ avec $\mathcal{C}$ en $p$, comme suit. Fixons une carte locale $(x,y)$ telle que $p=(0,0)$; soient $f(x,y)=0$ une équation locale (réduite) de $\mathcal{C}$ au voisinage de $p$ et $\X$ un champ de vecteurs définissant le germe de $\F$ en $p.$ Désignons par $\X(f)$ la dérivée de \textsc{Lie} de $f$ le long de $\X$ et par $\langle f,\X(f)\rangle$ l'id\'eal de $\C\{x,y\}$ engendr\'e par $f$ et $\X(f)$; alors
$$\Tang(\F,\mathcal{C},p)=\dim_\mathbb{C}\frac{\C\{x,y\}}{\langle f,\X(f)\rangle}.$$
On voit sans peine que cette définition est bien posée, et $\Tang(\F,\mathcal{C},p)<+\infty$ par la non-invariance de~$\mathcal{C}.$ De plus, $\Tang(\F,\mathcal{C},p)=0$ si et seulement si $\mathcal{C}$ est régulière en $p$ et $\F$ est transverse à $\mathcal{C}$ en $p.$ Donc il y a seulement un nombre fini de points $p$ où $\Tang(\F,\mathcal{C},p)>0$ et on peut poser
$$\Tang(\F,\mathcal{C})=\sum_{p\in\mathcal{C}}\Tang(\F,\mathcal{C},p)\in\N.$$
\newpage
\hfill
\vspace{2mm}

\noindent Il est facile de vérifier que si $i\hspace{1mm}\colon\,L\to\mathbb{P}^2_{\mathbb{C}}$ est l'inclusion d'une droite générique du plan projectif $\mathbb{P}^{2}_{\mathbb{C}},$ alors $\deg\mathcal{F}$ coïncide avec le nombre de points singuliers de la restriction $i^{*}\mathcal{F}$ ou, ce qui revient au même, le nombre de points de tangence de $\mathcal{F}$ avec $L.$
\begin{center}
\begin{tabular}{ccc}
\begin{tikzpicture}[x=1.0cm,y=1.0cm,scale=0.26]
%\draw[->,color=blue](-10,0)--(10,0);
%\draw[->,color=blue](0,-10)--(0,10);
%\draw(8,8)circle(6cm) ;
\draw[line width=0.8 pt](8,8)--(8,2);
\draw[line width=0.8 pt](8,8)--(3,4.5);
\draw[line width=0.8 pt](8,8)--(4.5,3);
\draw[line width=0.8 pt](8,8)--(6.5,2);
\draw[line width=0.8 pt](8,8)--(11.5,3);
\draw[line width=0.8 pt](8,8)--(13,4.5);
\draw[line width=0.8 pt](8,8)--(9.5,2);
\draw[line width=1.2 pt](3.5,5.5)--(12,6);
\draw[fill=black!100](8,8)circle(0.15cm);
\draw(5.4,0.9)node[right]{degr\'{e} 0};
\end{tikzpicture}
\hspace{1cm}&\hspace{1cm}
\begin{tikzpicture}[x=1.0cm,y=1.0cm,scale=0.4]
%\draw[->,color=blue](-10,0)--(10,0);
%\draw[->,color=blue](0,-10)--(0,10);
\draw[line width=0.8 pt](4.1,4)ellipse(1cm and 0.6cm);
\draw[line width=0.8 pt](4.2,4)ellipse(1.4cm and 0.8cm);
\draw[line width=0.8 pt](4.4,4)ellipse(2cm and 1.2cm);
\draw[line width=0.8 pt](4.4,4)ellipse(2.4cm and 1.5cm);
\draw[line width=0.8 pt](4.7,4)ellipse(3cm and 1.9cm);
\draw[line width=1.2 pt](3,6.)--(2.5,2);
\draw[fill=gray!100](2.76,4) circle (0.1cm) ;
\draw(3,1.4)node[right]{degr\'{e} 1};
\end{tikzpicture}
\hspace{1cm}&\hspace{1cm}
\begin{tikzpicture}[x=1.0cm,y=1.0cm,scale=0.3,rotate=30]
%\draw[->,color=blue] (-10,0)--(10,0);
%\draw[->,color=blue] (0,-10)--(0,10);
\draw[rotate=-45,line width=0.8 pt,domain=-0.3:4.7,variable=\x,black]plot({\x},{1/3*\x^3-2*\x^2+3*\x+6});
\draw[rotate=-46,line width=0.8 pt,domain=-0.3:4.7,variable=\x,black]plot({\x},{1/3*\x^3-2*\x^2+3*\x+5.5});
\draw[rotate=-47,line width=0.8 pt,domain=-0.3:4.7,variable=\x,black]plot({\x},{1/3*\x^3-2*\x^2+3*\x+5});
\draw[rotate=-48,line width=0.8 pt,domain=-0.3:4.7,variable=\x,black]plot({\x},{1/3*\x^3-2*\x^2+3*\x+4.5});
\draw[rotate=-49,line width=0.8 pt,domain=-0.3:4.7,variable=\x,black]plot({\x},{1/3*\x^3-2*\x^2+3*\x+4});
\draw[rotate=-50,line width=0.8 pt,domain=-0.3:4.7,variable=\x,black]plot({\x},{1/3*\x^3-2*\x^2+3*\x+3.5});
\draw[rotate=2,line width=1.2 pt](2.6,3.81)--(9.1,0.43);
\draw(2.5,0.1)node[right]{degr\'{e} 2};
\end{tikzpicture}
\end{tabular}
\end{center}
En d'autres termes, pour toute droite $L\subset\pp$ non invariante par $\F,$ on a
\begin{equation}\label{equa:Tang(F,L)=d}
\deg\F=\Tang(\F,L)
\end{equation}

\noindent Soient $\F$ un feuilletage de degré $d$ sur $\pp$ et $s\in\Sing\F$ un point singulier de $\F.$ Nous allons introduire plusieurs notions locales attachées au couple $(\mathcal{F},s).$ Le germe de $\F$ en $s$ est défini, à multiplication par une unité de l'anneau local $\mathcal{O}(\mathbb{C}^2,s)$ en $s$ près, par un champ de vecteurs $\X=A(x,y)\frac{\partial{}}{\partial{x}}+B(x,y)\frac{\partial{}}{\partial{y}}$.
\begin{enumerate}
\item [\textbf{\textit{1.}}] L'\textbf{\textit{ordre d'annulation}} $\nu(\mathcal{F},s)$ de $\mathcal{F}$ en $s,$ encore appelé \textbf{\textit{multiplicité algébrique}}, est donné par $$\nu(\mathcal{F},s)=\min\{\nu(A,s),\nu(B,s)\},$$ où $\nu(g,s)$ désigne l'ordre d'annulation de la fonction $g$ en $s$.

\item [\textbf{\textit{2.}}] Notons $\mathfrak{L}_s$ la famille des droites non invariantes par $\F$ et qui passent par $s.$ Pour toute droite $\ell_s$ de~$\mathfrak{L}_s,$ on a l'encadrement $1\leq\Tang(\F,\ell_s,s)\leq d$. Ceci nous permet d'associer au couple~$(\mathcal{F},s)$ les entiers naturels (invariants) suivants
    \begin{align*}
    &\hspace{0.8cm}\tau(\mathcal{F},s)=\min\{\Tang(\F,\ell_s,s)\hspace{1mm}\vert\hspace{1mm}\ell_s\in\mathfrak{L}_s\},&&\hspace{0.5cm}
    \kappa(\mathcal{F},s)=\max\{\Tang(\F,\ell_s,s)\hspace{1mm}\vert\hspace{1mm}\ell_s\in\mathfrak{L}_s\}.
    \end{align*}
    L'invariant $\tau(\mathcal{F},s)$ représente l'ordre de tangence de $\mathcal{F}$ avec une droite générique passant par $s$. Il est facile de montrer que
    $$\tau(\mathcal{F},s)=\min\{k\geq\nu(\mathcal{F},s)\hspace{1mm}\colon\det(J^{k}_{s}\,\mathrm{X},\mathrm{R}_{s})\neq0\},$$ où $J^{k}_{s}\,\mathrm{X}$ désigne le $k$-jet de $\mathrm{X}$ en $s$ et $\mathrm{R}_{s}$ le champ radial centré en $s.$

\item [\textbf{\textit{3.}}] La singularité $s$ de $\mathcal{F}$ est dite \textbf{\textit{radiale}} si $\nu(\mathcal{F},s)=1$ et si de plus $\tau(\mathcal{F},s)\geq2$. Si tel est le cas, l'entier naturel $\tau(\mathcal{F},s)-1$, compris entre $1$ et $d-1,$ est appelé l'\textbf{\textit{ordre de radialité}} de $s.$ Autrement dit, la singularité $s$ est radiale d'ordre $n-1\geq1$ si le champ décrivant $\mathcal{F}$ est du type $$\X=C_{n-2}(x,y)\cdot \mathrm{R}_{s}+\X_{n}+\text{termes de plus haut degré},$$ où $C_{n-2}$ est un polynôme de degré~$\leq n-2$, $\X_{n}$ est un champ homogène de degré $n$, avec $C_{n-2}(s)\neq0$\, et \,$\X_{n}\nparallel\mathrm{R}_{s}$.
    \begin{rem}\label{rem:sum-tau<d}
    Soit $L$ une droite non invariante par $\F.$ Il résulte de la formule~(\ref{equa:Tang(F,L)=d}) que l'on a
    \reqnomode
    \begin{equation}\label{equa:sum-tau<d}
    d\geq\sum_{s\in\Sing\F\cap L}\tau(\F,s)
    \end{equation}
    En particulier, si $\F$ possède deux singularités radiales $s_1,s_2$ d'ordre maximal $d-1$, alors la droite $(s_1s_2)$ est nécessairement invariante par $\F$.
    \end{rem}

\item [\textbf{\textit{4.}}] Le \textbf{\textit{nombre de \textsc{Milnor}}} $\mu(\mathcal{F},s)$ de $\mathcal{F}$ en $s$ est l'entier $$\mu(\mathcal{F},s)=\dim_\mathbb{C}\frac{\mathcal{O}(\mathbb{C}^2,s)}{\langle A,B\rangle},$$ où $\langle A,B\rangle$ désigne l'id\'eal de $\mathcal{O}(\mathbb{C}^2,s)$ engendr\'e par $A$ et $B$.
    \begin{rems}\label{rems:mu>nu^2-Darboux}
    \begin{itemize}
    \item []\hspace{-0.6cm}(i)\hspace{1mm} On a toujours l'inégalité $\mu(\F,s)\geq \nu(A,s)\cdot\nu(B,s),$ \emph{voir} par
    exemple \cite[Pag. 158]{Fis01}. On en déduit en particulier l'inégalité
    \reqnomode
    \begin{equation}\label{equa:mu>nu^2}
    \mu(\mathcal{F},s)\geq\nu(\F,s)^2
    \end{equation}
    \item [(ii)] Le feuilletage $\F$ possède $d^{2}+d+1$ singularités comptées avec multiplicité:
    \begin{equation}\label{equa:Darboux}
    \sum_{s\in\mathrm{Sing}\mathcal{F}}\mu(\mathcal{F},s)=d^{2}+d+1
    \end{equation}
    Cela implique, en particulier, qu'il n'y a pas de feuilletage régulier ({\it i.e.} sans singularité) sur $\mathbb{P}^{2}_{\mathbb{C}}.$ Pour une preuve de cette formule, \emph{voir} par exemple \cite[Proposition~9.2]{CCD13}.
    \item [(iii)] Tout feuilletage sur $\pp$ de degré $d\geq2$ a au plus une singularité de multiplicité algébrique maximale $d.$ Ceci résulte immédiatement des formules (\ref{equa:mu>nu^2}) et (\ref{equa:Darboux}).
    \end{itemize}
    \end{rems}

\item [\textbf{\textit{5.}}] La singularité $s$ est dite \textbf{\textit{non-dégénérée}} si $\mu(\F,s)=1$, c'est équivalent de dire que la partie linéaire $J^{1}_{s}\mathrm{X}$ de $\mathrm{X}$ possède deux valeurs propres $\lambda,\mu$ non nulles. La quantité $\mathrm{BB}(\F,s)=\frac{\lambda}{\mu}+\frac{\mu}{\lambda}+2$ est appelée l'\textbf{\textit{invariant de \textsc{Baum-Bott}}} de $\F$ en $s$ (\emph{voir} \cite{BB72}). D'après \cite{CS82} il passe par $s$ au moins un germe de courbe $\mathcal{C}$ invariante par $\F$; à isomorphisme local près, on peut se ramener à $s=(0,0)$,\, $\mathrm{T}_{s}\mathcal{C}=\{y=\,0\,\}$ et $J^{1}_{s}\mathrm{X}=\lambda x\frac{\partial}{\partial x}+(\varepsilon x+\mu\hspace{0.1mm}y)\frac{\partial}{\partial y}$, où l'on peut prendre $\varepsilon=0$ si $\lambda\neq\mu$. La quantité $\mathrm{CS}(\F,\mathcal{C},s)=\frac{\lambda}{\mu}$ est appelée l'\textbf{\textit{indice de \textsc{Camacho-Sad}}} de $\F$ en $s$ par rapport à $\mathcal{C}$.
    \begin{rems}\label{rems:BB-CS}
    \begin{itemize}
    \item []\hspace{-0.6cm}(i)\hspace{1mm} L'invariant de \textsc{Baum-Bott} peut se définir via la géométrie différentielle pour n'importe quelle
    singularité (même dégénérée avec deux valeurs propres nulles). Même remarque pour l'indice de \textsc{Camacho-Sad}.
    \item [(ii)] Le feuilletage $\F$ satisfait la formule de \textsc{Baum-Bott} (\cite[Pag. 34]{Bru00})
    \reqnomode
    \begin{equation}\label{equa:BB}
    \sum_{s\in\mathrm{Sing}\F}\mathrm{BB}(\F,s)=(d+2)^2
    \end{equation}
    \item [(iii)] Si $\F$ admet une droite invariante $L$, alors (\cite[Pag. 37]{Bru00})
    \reqnomode
    \begin{equation}\label{equa:sum-CS=1}
    1=\sum_{s\in\Sing\F\cap L}\mathrm{CS}(\F,L,s)
    \end{equation}
    \end{itemize}
    \end{rems}
\end{enumerate}
\bigskip

\noindent Soit $\mathcal{F}$ un feuilletage de degré $d$ sur $\mathbb{P}^{2}_{\mathbb{C}}$ donné par un champ de vecteurs $\mathrm{Z}$ homogène de degré~$d.$ Le \textbf{\textit{diviseur d'inflexion}} de $\mathcal{F},$ noté $\IF,$ est le diviseur défini par l'équation
\leqnomode
\begin{equation}\label{equa:ext1}
\left| \begin{array}{ccc}
x   &  \mathrm{Z}(x)   &  \mathrm{Z}^2(x) \\
y   &  \mathrm{Z}(y)   &  \mathrm{Z}^2(y)  \\
z   &  \mathrm{Z}(z)   &  \mathrm{Z}^2(z)
\end{array} \right|=0.
\end{equation}
Ce diviseur a été étudié dans \cite{Per01} dans un contexte plus général. En particulier, les propriétés suivantes ont été prouvées.

\begin{enumerate}
\item [\textbf{\textit{1.}}] Sur $\mathbb{P}^{2}_{\mathbb{C}}\smallsetminus\mathrm{Sing}\mathcal{F},$ $\IF$ coïncide avec la courbe décrite par les points d'inflexion des feuilles de $\mathcal{F}$;
\item [\textbf{\textit{2.}}] Si $\mathcal{C}$ est une courbe algébrique irréductible invariante par $\mathcal{F},$ alors $\mathcal{C}\subset \IF$ si et seulement si $\mathcal{C}$ est une droite invariante;
\item [\textbf{\textit{3.}}] $\IF$ peut se décomposer en $\IF=\IinvF+\ItrF,$ où le support de $\IinvF$ est constitué de l'ensemble des droites invariantes par $\mathcal{F}$ et où le support de $\ItrF$ est l'adhérence des points d'inflexion qui sont isolés le long des feuilles de $\mathcal{F}$;
\item [\textbf{\textit{4.}}] Le degré du diviseur $\IF$ est $3d.$
\end{enumerate}

\begin{defin}\label{def:convexe}
Un feuilletage $\mathcal{F}$ sur $\pp$ sera dit \textbf{\textit{convexe}} si son diviseur d'inflexion $\IF$ est invariant par $\mathcal{F},$ {\it i.e.} si $\IF$ est le produit de droites invariantes.
\end{defin}

\begin{eg}
Le feuilletage $\mathcal{F}$ de degré $3$ sur $\mathbb{P}^{2}_{\mathbb{C}}$ défini par le champ $\mathrm{Z}=x^3\frac{\partial{}}{\partial{x}}+y^3\frac{\partial{}}{\partial{y}}+z^3\frac{\partial{}}{\partial{z}}$ est convexe car $\IF=\{xyz(x-y)(x+y)(x-z)(x+z)(y-z)(y+z)=0\}=\IinvF.$
\end{eg}
\begin{eg}
Le feuilletage $\mathcal{F}$ de degré $3$ sur $\mathbb{P}^{2}_{\mathbb{C}}$ donné par le champ $\mathrm{Z}=y^3\frac{\partial{}}{\partial{x}}+x^3\frac{\partial{}}{\partial{y}}+z^3\frac{\partial{}}{\partial{z}}$ n'est pas convexe car $\ItrF=\{(xy-z^2)(xy+z^2)=0\}\neq\emptyset.$
\end{eg}
\bigskip

\noindent Soit $\mathcal{F}$ un feuilletage sur $\mathbb{P}^{2}_{\mathbb{C}}$ et notons $\mathbb{\check{P}}^{2}_{\mathbb{C}}$ le plan projectif dual. L'\textbf{\textit{application de \textsc{Gauss}}} associée à $\mathcal{F}$ est l'application rationnelle $\mathcal{G}_{\mathcal{F}}\hspace{1mm}\colon\mathbb{P}^{2}_{\mathbb{C}}\dashrightarrow \mathbb{\check{P}}^{2}_{\mathbb{C}}$ définie par $\mathcal{G}_{\mathcal{F}}(p)=\mathrm{T}_{\hspace{-0.4mm}p}\mathcal{F},$ où $\mathrm{T}_{\hspace{-0.4mm}p}\mathcal{F}$ désigne la droite tangente à la feuille de $\mathcal{F}$ en $p.$ Si le feuilletage $\mathcal{F}$ est donné par une $1$-forme $\omega=a(x,y,z)\mathrm{d}x+b(x,y,z)\mathrm{d}y+c(x,y,z)\mathrm{d}z,$ l'application de \textsc{Gauss} est donnée par $$\mathcal{G}_{\mathcal{F}}(p)=[a(p):b(p):c(p)].$$
Posons la définition suivante, qui nous sera utile plus loin. Soit $\mathcal{C}\subset\pp$ une courbe passant par certains points singuliers de $\F$; on définit $\mathcal{G}_{\mathcal{F}}(\mathcal{C})$ comme étant l'adhérence de $\mathcal{G}_{\F}(\mathcal{C}\setminus\Sing\F)$.

\begin{rem}
Soient $(x,y)$ une carte affine de $\mathbb{P}^{2}_{\mathbb{C}}$ et $(p,q)$ la carte affine de $\mathbb{\check{P}}^{2}_{\mathbb{C}}$ correspondant à la droite $\{y=px+q\}\subset\mathbb{P}^{2}_{\mathbb{C}}.$ Si $\mathcal{F}$ est un feuilletage défini par le champ de vecteurs $\X=A(x,y)\frac{\partial{}}{\partial{x}}+B(x,y)\frac{\partial{}}{\partial{y}},$ l'équation de la droite tangente à $\mathcal{F}$ en un point $(x_{0},y_{0})\not\in\mathrm{Sing}\mathcal{F}$ est $B(x_{0},y_{0})x-A(x_{0},y_{0})y+(A(x_{0},y_{0})y_{0}-B(x_{0},y_{0})x_{0})=0$; ainsi l'application de \textsc{Gauss} $\mathcal{G}_{\mathcal{F}}$ associée à $\mathcal{F}$ s'écrit dans ces coordonnées $$(x,y)\longmapsto\left(\frac{B(x,y)}{A(x,y)},y-\frac{B(x,y)}{A(x,y)}x\right).$$
\end{rem}
%\begin{eg}
%Si $\mathcal{F}$ est le feuilletage défini sur $\mathbb{P}^{2}_{\mathbb{C}}$ par le champ $\X=x(x^{2}-3y^{2})\frac{\partial{}}{\partial{x}}-2y^{3}\frac{\partial{}}{\partial{y}},$ alors l'application de \textsc{Gauss} associée est $\mathcal{G}_{\mathcal{F}}(x,y)=\left(-\frac{2y^{3}}{x(x^{2}-3y^{2})},\frac{y(x-y)(x+y)}{x^{2}-3y^{2}}\right).$ On voit que $$\mathcal{G}_{\mathcal{F}}(\{y=kx\})=\{(p,q)=(k,0)\},\quad \text{si} \quad k(k-1)(k+1)=0\hspace{1mm};$$
%qui sont les images de droites invariantes par $\mathcal{F}$ qui passent par l'origine des coordonnées.
%\end{eg}
\newpage
\section{\'{E}léments de la théorie locale des tissus plans}\vspace{6mm}

\Subsection{Germes de tissus plans réguliers}\vspace{3mm}

\begin{defin}
Un \textbf{\textit{germe de $k$-tissu régulier}} $\mathcal{W}=\mathcal{F}_{1}\boxtimes\cdots\boxtimes\mathcal{F}_{k}$ de $(\mathbb{C}^{2},0)$ est une collection de $k$ germes de feuilletages holomorphes réguliers de $(\mathbb{C}^{2},0)$ deux à deux en \textbf{\textit{position générale}} en l'origine, c'est-à-dire $\mathrm{T}_{0}\mathcal{F}_{i}\neq \mathrm{T}_{0}\mathcal{F}_{j}$ pour $1\leq i<j\leq k.$
\end{defin}

\noindent Soit $\mathcal{W}=\mathcal{F}_{1}\boxtimes\cdots\boxtimes\mathcal{F}_{k}$ un germe de $k$-tissu régulier de $(\mathbb{C}^{2},0).$ Usuellement les germes des feuilletages $\mathcal{F}_{i}$ sont définis par des germes de $1$-formes holomorphes $\omega_i\in\Omega^{1}(\mathbb{C}^{2},0)$ uniques à des inversibles près de $\mathcal{O}(\mathbb{C}^{2},0)$, non nuls en $0.$ L'hypothèse sur les $\mathcal{F}_{i}$ d'être en position générale se traduit alors par $$\omega_i\wedge\omega_j(0)\neq0 \quad \text{pour} \quad 1\leq i<j\leq k.$$ Le tissu $\mathcal{W}$ peut donc être vu comme un élément de $\mathrm{Sym}^{k}\Omega^{1}(\mathbb{C}^{2},0)$ que l'on notera $\mathcal{W}=\mathcal{W}(\omega_1\cdot\omega_2\cdot\cdots\cdot\omega_k)$ ou encore $\mathcal{W}=\mathcal{W}(\omega_1,\ldots,\omega_k).$

\noindent Grâce au théorème de \textsc{Frobenius} (\emph{voir} par exemple \cite[Théorème 2.4]{CCD13}), les $k$ feuilles du tissu $\mathcal{W}$ sont les courbes de niveau $\{F_{i}(x,y)=\text{cte}\}$ d'éléments $F_{i}\in \mathcal{O}(\mathbb{C}^{2},0)$ vérifiant $F_{i}(0)=0$ et tels que l'on ait $$\mathrm{d}F_{i}\wedge\mathrm{d}F_{j}(0)\neq0\quad \text{pour} \quad 1\leq i<j\leq k.$$ On désignera encore par $\mathcal{W}=\mathcal{W}(F_{1},\ldots,F_{k})$ un tissu donné par ses feuilles.
\vspace{2mm}

\noindent La théorie locale des tissus plans s'interesse principalement à la classification des germes de tissus plans modulo l'action naturelle de $\mathrm{Diff}(\mathbb{C}^{2},0),$ le groupe des germes de biholomorphismes à l'origine de $\mathbb{C}^{2}.$ Si $\varphi\in\mathrm{Diff}(\mathbb{C}^{2},0)$ est un germe de biholomorphisme alors l'action naturelle dont on vient de parler est donnée par $$\varphi^{*}\mathcal{W}(\omega_1\cdot\cdots\cdot\omega_k)=\mathcal{W}(\varphi^{*}(\omega_1\cdot\cdots\cdot\omega_k)).$$
\vspace{2mm}

\noindent Soient $\mathcal{W}=\mathcal{W}(\omega_1\cdots\omega_k)$ et $\mathcal{W'}=\mathcal{W}(\omega_1'\cdots\omega_k')$ deux germes de $k$-tissus de $(\mathbb{C}^{2},0).$ On dira qu'ils sont \textbf{\textit{équivalents}} s'il existe un germe de biholomorphisme $\varphi\in\mathrm{Diff}(\mathbb{C}^{2},0)$ et une unité $u\in\mathcal{O}^*(\mathbb{C}^{2},0)$ tels que $$\varphi^{*}(\omega_1\cdots\omega_k)=u.\omega_1'\cdots\omega_k'.$$
Le théorème d'inversion local implique que tout $1$-tissu (resp. $2$-tissu) régulier de $(\mathbb{C}^{2},0)$ est équivalent au tissu des droites $x=$cte (resp. $x=$cte et $y=$cte).
\begin{center}
\begin{tabular}{c}
\hspace{1.2cm}
\begin{tikzpicture}[x=2.5cm,y=1.cm,scale=0.3,rotate=0]
%\draw[->,color=blue] (-10,0) -- (10,0);
%\draw[->,color=blue] (0,-10) -- (0,10);
%-----------------------------------2-tissu----------------------------------------------------------:
\begin{scope}[shift={(-3.1,0)}]
\draw[line width=0.7pt,color=black] (-1,4) arc (-138:-177:-14cm);
\begin{scope}[shift={(0.3,0)}]
\draw[line width=0.7pt,color=black] (-1,4) arc (-138:-177:-13cm);
\end{scope}
\begin{scope}[shift={(0.6,0)}]
\draw[line width=0.7pt,color=black] (-1,4) arc (-138:-177:-12cm);
\end{scope}
\begin{scope}[shift={(0.9,0)}]
\draw[line width=0.7pt,color=black] (-1,4) arc (-138:-177:-11cm);
\end{scope}
\begin{scope}[shift={(-0.1,-0.7)}]
\draw[line width=0.7pt,color=black] (-1,4) arc (-138:-177:-12.5cm);
\end{scope}
\begin{scope}[shift={(-0.35,-1)}]
\draw[line width=0.7pt,color=black] (-1,4) arc (-138:-177:-12cm);
\end{scope}
%----------------------------------------------------------------------------------------------------:
\draw[rotate=115,line width=0.7pt,color=black] (-0.5,4) arc (-160:-177:-30cm);
\begin{scope}[shift={(-0.1,0.7)}]
\draw[rotate=115,line width=0.7pt,color=black] (-0.5,3.8) arc (-160:-177:-28cm);
\end{scope}
\begin{scope}[shift={(-0.2,1.4)}]
\draw[rotate=115,line width=0.7pt,color=black] (-0.5,3.6) arc (-160:-177:-26cm);
\end{scope}
\begin{scope}[shift={(-0.3,2.1)}]
\draw[rotate=115,line width=0.7pt,color=black] (-0.5,3.4) arc (-160:-177:-24cm);
\end{scope}
\begin{scope}[shift={(0.1,-0.7)}]
\draw[rotate=115,line width=0.7pt,color=black] (-0.5,3.8) arc (-160:-177:-28cm);
\end{scope}
\begin{scope}[shift={(0.2,-1.4)}]
\draw[rotate=115,line width=0.7pt,color=black] (-0.5,3.6) arc (-160:-177:-26cm);
\end{scope}
\end{scope}
%-----------------------------------2-pinceaux----------------------------------------------------------:
\begin{scope}[shift={(3.5,0)}]
\draw[line width=0.7pt](0,4.5)    -- (0,-4.5);
\draw[line width=0.7pt](0.3,4.4)  -- (0.3,-4.4);
\draw[line width=0.7pt](0.6,4.2)  -- (0.6,-4.2);
\draw[line width=0.7pt](0.9,4.3)  -- (0.9,-4.3);
\draw[line width=0.7pt](-0.3,4.1) -- (-0.3,-4.1);
\draw[line width=0.7pt](-0.6,4.4) -- (-0.6,-4.4);
\draw[line width=0.7pt](-0.9,4.3) -- (-0.9,-4.3);
\draw[line width=0.7pt](-1.2,4.1) -- (-1.2,-4.1);
%-------------------------------------------------------------------------------------------------------:
\draw[line width=0.7pt](1.7,0)    -- (-2.,0);
\draw[line width=0.7pt](1.6,0.8)  -- (-1.9,0.8);
\draw[line width=0.7pt](1.65,1.6)  -- (-1.95,1.6);
\draw[line width=0.7pt](1.55,2.4)  -- (-1.85,2.4);
\draw[line width=0.7pt](1.65,-0.8) -- (-1.95,-0.8);
\draw[line width=0.7pt](1.6,-1.6) -- (-1.9,-1.6);
\draw[line width=0.7pt](1.55,-2.4) -- (-1.85,-2.4);
\end{scope}
%-------------------------------------------------------------------------------------------------------:
\begin{scope}[shift={(0.,0.8)}]
\draw[rotate=30,thick,->,color=black] (-1.3,0.5) arc (-100:-140:-8.cm);
\end{scope}
\draw(-0.4,0.5)node[right]{$\sim$};
\draw(-7.5,-7.5)node[right]{Tous les germes de $2$-tissus plans r\'{e}guliers sont \'{e}quivalents (modulo $\mathrm{Diff}(\mathbb{C}^{2},0)).$};
%-------------------------------------------------------------------------------------------------------:
\end{tikzpicture}
\end{tabular}
\end{center}
\noindent Le cas des $3$-tissus est plus intéressant. Un $3$-tissu $\mathcal{W}$ de $(\mathbb{C}^{2},0)$ est dit \textbf{\textit{parallélisable}} s'il est équivalent au tissu des droites $x=$cte, $y=$cte, $x+y=$cte. Se pose alors la question: sous quelle condition peut-on affirmer qu'un tel tissu est parallélisable? La réponse à cette question est donnée par le \og théorème de structure des $3$-tissus parallélisables \fg, que nous énoncerons ci-dessous sans démonstration.

\Subsection{Courbure de \textsc{Blaschke}, hexagonalité et structure des $3$-tissus parallélisables}\label{subsec:structure-3-tissu}\vspace{3mm}

\subsubsection{Courbure de \textsc{Blaschke}}

Soit $\mathcal{W}=\mathcal{W}(\omega_{1},\omega_{2},\omega_{3})$ un $3$-tissu régulier de $(\mathbb{C}^{2},0).$ L'hypothèse de position générale assure alors que les $1$-formes $\omega_{1},\omega_{2}$ et $\omega_{3}$ définissant ce tissu sont indépendantes deux à deux en $0$; il existe donc $\rho_{1}$ et $\rho_{2}$ nécessairement inversibles dans $\mathcal{O}(\mathbb{C}^{2},0)$ tels que $$\omega_{3}=\rho_{1}\omega_{1}+\rho_{2}\omega_{2}.$$ Comme les $1$-formes $\omega$ et $\rho\omega$ définissent le même feuilletage si $\rho$ est inversible dans $\mathcal{O}(\mathbb{C}^{2},0),$ on supposera ici que $$\omega_{1}+\omega_{2}+\omega_{3}=0$$ et on dira que ces trois $1$-formes sont \textbf{\textit{normalisées}}. On définit alors la $2$-forme (non singulière) $\Omega=\omega_{1}\wedge\omega_{2}$ et dans ce cas, on a les égalités $$\Omega=\omega_{1}\wedge\omega_{2}=\omega_{2}\wedge\omega_{3}=\omega_{3}\wedge\omega_{1}.$$ Si l'on pose $\omega'_{i}=\rho_{i}\omega_{i},i=1,2,3,$ où $\rho_{i}$ est inversible dans $\mathcal{O}(\mathbb{C}^{2},0),$ avec $\sum_{i=1}^{3}\omega'_{i}=0$ alors $$\Omega':=\omega'_{1}\wedge\omega'_{2}=\rho_{1}\rho_{2}\Omega=\rho_{2}\rho_{3}\Omega=\rho_{1}\rho_{3}\Omega,$$ ce qui prouve que $\rho_{1}=\rho_{2}=\rho_{3}$ et si $\rho$ désigne cette valeur commune, on a $$\Omega'=\rho^{2}\Omega.$$ Puisque dans $\mathbb{C}^{2},$ on a pour toute $1$-forme $\omega$ l'identité $\omega\wedge\mathrm{d}\omega=0,$ le théorème de \textsc{Frobenius} donne l'existence pour $i=1,2$ et $3$ d'une fonction inversible $g_{i}\in\mathcal{O}^*(\mathbb{C}^{2},0),$ et d'une fonction $F_{i}\in \mathcal{O}(\mathbb{C}^{2},0)$ vérifiant $F_{i}(0)=0$ et $\mathrm{d}F_{i}(0)\neq0$ telles que $$\omega_{i}=g_{i}\mathrm{d}F_{i}.$$ On peut donc écrire $$\mathrm{d}\omega_{i}=\mathrm{d}g_{i}\wedge\mathrm{d}F_{i}=\frac{\mathrm{d}g_{i}}{g_{i}}\wedge\omega_{i}$$ et puisque le système des $\omega_{i}$ est normalisé, on a $$\sum_{i=1}^{3}\frac{\mathrm{d}g_{i}}{g_{i}}\wedge\omega_{i}=0.$$ Puisque $-\omega_{3}=\omega_{1}+\omega_{2},$ on a $$\left(\frac{\mathrm{d}g_{1}}{g_{1}}-\frac{\mathrm{d}g_{3}}{g_{3}}\right)\wedge\omega_{1}+\left(\frac{\mathrm{d}g_{2}}{g_{2}}-\frac{\mathrm{d}g_{3}}{g_{3}}\right)
\wedge\omega_{2}=0.$$ Les deux formes $\omega_{1}$ et $\omega_{2}$ étant indépendantes, le lemme de \'{E}.~\textsc{Cartan} (\emph{voir} par exemple \cite{Car94}) assure l'existence de $\alpha,\beta$ et $\delta$ dans $\mathcal{O}(\mathbb{C}^{2},0)$ telles que
$$
\left\{
\begin{array}{c}
\dfrac{\mathrm{d}g_{1}}{g_{1}}-\dfrac{\mathrm{d}g_{3}}{g_{3}}=\alpha\omega_{1}+\beta\omega_{2}\\
\raisebox{-3mm}{$\dfrac{\mathrm{d}g_{2}}{g_{2}}-\dfrac{\mathrm{d}g_{3}}{g_{3}}=\beta\omega_{1}+\delta\omega_{2}$}
\end{array}
\right.
$$
On vérifie alors que l'on a $$\frac{\mathrm{d}g_{1}}{g_{1}}+(\beta-\alpha)\omega_{1}=\frac{\mathrm{d}g_{2}}{g_{2}}+(\beta-\delta)
\omega_{2}=\frac{\mathrm{d}g_{3}}{g_{3}}-\beta\omega_{3}\hspace{1mm};$$
la $1$-forme ainsi définie sera notée $\gamma$ et vérifie
$$\hspace{3cm} \mathrm{d}\omega_{i}=\gamma\wedge\omega_{i},\qquad  \forall\hspace{0.1cm}i\in\{1,2,3\}.$$
Pour une autre normalisation $\sum_{i=1}^{3}\omega'_{i}=0,$ où l'on a vu que $\omega'_{i}=\rho\omega_{i},$ on a $$\mathrm{d}\omega'_{i}=\gamma'\wedge\omega'_{i}=\mathrm{d}\rho\wedge\omega_{i}+\rho\mathrm{d}\omega_{i}=\rho\gamma'\wedge\omega_{i}.$$ Comme les $1$-formes $\omega_{1}$ et $\omega_{2}$ engendrent $\Omega^{1}(\mathbb{C}^{2},0),$ on a $$\gamma'=\gamma+\frac{\mathrm{d}\rho}{\rho}\hspace{1mm};$$ ainsi, par différentiation de cette dernière égalité, on obtient
\begin{equation}\label{normalisation}
\mathrm{d}\gamma'=\mathrm{d}\gamma
\end{equation}
\begin{defin}
On appelle \textbf{\textit{courbure de \textsc{Blaschke}}} du $3$-tissu $\mathcal{W}$ la $2$-forme $K(\mathcal{W})=\mathrm{d}\gamma.$
\end{defin}
\noindent Il résulte immédiatement de l'égalité \eqref{normalisation} que la courbure de \textsc{Blaschke} ne dépend que du tissu et non du choix de la normalisation qui a permis de la définir.
\begin{rem}\label{rem:calcul-kw}
La $2$-forme $\Omega$ que nous avons construite étant non singulière, on peut écrire $\mathrm{d}\omega_{i}=h_{i}\Omega,$ où $h_{i}\in\mathcal{O}(\mathbb{C}^{2},0)$ pour $i=1,2,3.$ On vérifie alors que $$\gamma=h_{2}\omega_{1}-h_{1}\omega_{2}=h_{3}\omega_{2}-h_{2}\omega_{3}=h_{1}\omega_{3}-h_{3}\omega_{1}.$$
\end{rem}
\begin{eg}
Le $3$-tissu $\mathcal{W}(x,y,x+y)$ est de courbure nulle. En effet, avec la normalisation $\mathrm{d}x+\mathrm{d}y-\mathrm{d}(x+y)=0,$ on obtient $\Omega=\mathrm{d}x\wedge\mathrm{d}y$; ainsi $h_{1}=h_{2}=h_{3}=0$ ce qui prouve que $\gamma=0.$
\end{eg}
\begin{eg}
Pour un $3$-tissu $\mathcal{W}=\mathcal{W}(F_{1},F_{2},F_{3}),$ où $F_{1},F_{2}$ et $F_{3}$ sont dans $\mathcal{O}(\mathbb{C}^{2},0),$ l'hypothèse de position générale et le théorème d'inversion locale permettent de montrer que l'application $$\varphi=(F_{1},F_{2})$$ de $\mathbb{C}^{2}$ dans lui-même est un difféomorphisme local. Il existe donc une fonction $f$ dans $\mathcal{O}(\mathbb{C}^{2},0),$ nulle en $0,$ telle que $\mathcal{W}$ soit défini, via $\varphi,$ par le triplet $$\left(x=\text{cte},y=\text{cte},f(x,y)=\text{cte}\right).$$ On peut alors calculer la courbure d'un tel tissu \og rectifié \fg: $\mathcal{W}_{f}=\mathcal{W}(x,y,f(x,y)).$ En effet, on a la normalisation suivante $$\partial_{x}(f)\mathrm{d}x+\partial_{y}(f)\mathrm{d}y-\mathrm{d}f=0\hspace{1mm};$$ par suite $\Omega=\partial_{x}(f)\partial_{y}(f)\mathrm{d}x\wedge\mathrm{d}y,$ et avec les notations de la Remarque \ref{rem:calcul-kw}, on a $$h_{1}=-h_{2}=\frac{\partial_{x}\partial_{y}(f)}{\partial_{x}(f)\partial_{y}(f)} \qquad\text{et}\qquad h_{3}=0 .$$ On en déduit que $$\gamma=\frac{\partial_{x}\partial_{y}(f)}{\partial_{y}(f)}\mathrm{d}x+\frac{\partial_{x}\partial_{y}(f)}{\partial_{x}(f)}\mathrm{d}y$$ et donc la courbure $K(\mathcal{W}_{f})$ de $\mathcal{W}_{f}$ s'exprime sous la forme $$K(\mathcal{W}_{f})=\mathrm{d}\gamma=\left\{\partial_{x}\left(\frac{\partial_{x}\partial_{y}(f)}{\partial_{x}(f)}\right)-\partial_{y}\left(\frac{\partial_{x}
\partial_{y}(f)}{\partial_{y}(f)}\right)\right\}\mathrm{d}x\wedge\mathrm{d}y$$ et on vérifie que l'on peut l'écrire abusivement $$K(\mathcal{W}_{f})=\partial_{x}\partial_{y}\log\frac{\partial_{x}(f)}{\partial_{y}(f)}\mathrm{d}x\wedge\mathrm{d}y.$$
\end{eg}
\begin{rem}\label{rem:inv-kw-nul}
La nullité de la courbure de \textsc{Blaschke} est invariante sous l'action de $\mathrm{Diff}(\mathbb{C}^{2},0).$ En effet, si $\varphi$ désigne un isomorphisme analytique de $(\mathbb{C}^{2},0),$ alors $$K(\varphi^{*}\mathcal{W})=\varphi^{*}K(\mathcal{W}).$$ Ceci résulte du fait que l'image réciproque de la normalisation est une normalisation.
\end{rem}

\subsubsection{Tissus hexagonaux}

Soit $\mathcal{W}=\mathcal{F}_{1}\boxtimes\mathcal{F}_{2}\boxtimes\mathcal{F}_{3}$ un germe de $3$-tissu régulier de $(\mathbb{C}^{2},0).$ Soient $L_{1},L_{2}$ et $L_{3}$ les trois feuilles passant par $0$ de $\mathcal{F}_{1},\mathcal{F}_{2}$ et $\mathcal{F}_{3}$ respectivement. Si $p$ est un point de $L_{1}$ suffisamment proche de l'origine, la feuille de $\mathcal{F}_{3}$ passant par $p$ intersecte $L_{2}$ en un point qu'on notera $h_{12}(p).$ On vérifie que l'application $p\longmapsto h_{12}(p)$ définit un germe holomorphe de $(L_{1},0)$ dans $(L_{2},0).$

\noindent Plus généralement, quand $i,j$ et $k$ sont tels que $\{i,j,k\}=\{1,2,3\},$ en se déplaçant le long des feuilles de $\mathcal{F}_{i},$ on peut associer à tout point $p$ de $L_{j}$ (suffisamment proche de l'origine) un point $h_{jk}(p)$ de $L_{k}$; cela nous définit un germe d'application holomorphe $h_{jk}\hspace{1mm}\colon(L_{j},0)\longrightarrow (L_{k},0).$ Par composition on obtient un germe d'application
$$H_{1}=h_{31}\circ h_{23}\circ h_{12}\circ h_{31}\circ h_{23}\circ h_{12}\hspace{1mm}\colon(L_{1},0)\longrightarrow(L_{1},0).$$
Pour $p\in L_{1}$ (toujours supposé suffisamment proche de l'origine), l'image $q=H_{1}(p)$ de $p$ par $H_{1}$ est obtenue en traçant un \og hexagone \fg autour de l'origine en se déplaçant le long des feuilles de $\mathcal{W}:$
\begin{center}
\begin{tikzpicture}[x=1.cm,y=1.cm,scale=0.142,rotate=0]
%\draw[->,color=blue] (-30,0) -- (30,0);
%\draw[->,color=blue] (0,-30) -- (0,30);
%--------------------------------------------------------------------------------------------------------------------------:
\draw[line width=1.2pt]plot[smooth, tension=.7] coordinates {(-15,3)(-8,1)(0,0)(8,1)(15,3)};
\draw [line width=1.2pt]plot[smooth, tension=.7] coordinates {(-10,-10)(-4,-6)(0,0) (4,6) (10,10)};
\draw [line width=1.2pt]plot[smooth, tension=.7] coordinates {(11,-9)(4,-6)(0,0)(-4,6)(-5,12)};
\begin{scope}[shift={(9,3.2)}]
\draw[densely dotted,line width=0.8pt]plot[smooth, tension=.7] coordinates {(11,-9)(4,-6)(0,0)(-4,6)(-5,14)};
\end{scope}
\begin{scope}[shift={(0,7)}]
\draw[densely dotted,line width=0.8pt]plot[smooth, tension=.7] coordinates {(-15,3)(-8,1)(0,0)(8,1)(15,3)};
\end{scope}
\begin{scope}[shift={(-5.6,5.5)}]
\draw[densely dotted,line width=0.8pt]plot[smooth, tension=.7] coordinates {(-10,-10)(-4,-6)(0,0) (4,6) (11,11)};
\end{scope}
\begin{scope}[shift={(-4.7,-4)}]
\draw[densely dotted,line width=0.8pt]plot[smooth, tension=.7] coordinates {(11,-9)(4,-6)(0,0)(-4,6)(-5,12)(-5.1,15)};
\end{scope}
\begin{scope}[shift={(0,-6)}]
\draw[densely dotted,line width=0.8pt]plot[smooth, tension=.7] coordinates {(-15,3)(-8,1)(0,0)(8,1)(15,3)};
\end{scope}
\begin{scope}[shift={(5,-3)}]
\draw[densely dotted,line width=0.8pt]plot[smooth, tension=.7] coordinates {(-10,-10)(-4,-6)(0,0) (4,6) (10,10)};
\end{scope}
%--------------------------------------------------------------------------------------------------------------------------:
\draw[color=red, line width=1.5pt]plot[smooth,tension=.7] coordinates {(10,1.5)(9,3.2)(5.8,7.4)};
\draw[color=red, line width=1.5pt]plot[smooth,tension=.7] coordinates {(5.8,7.4)(0,7)(-4.5,7.4)};
\draw[color=red, line width=1.5pt]plot[smooth,tension=.7] coordinates {(-4.5,7.4)(-5.6,5.5)(-8.3,1.1)};
\draw[color=red, line width=1.5pt]plot[smooth,tension=.7] coordinates {(-8.3,1.1)(-4.7,-4)(-3.6,-5.8)};
\draw[color=red, line width=1.5pt]plot[smooth,tension=.7] coordinates {(-3.6,-5.8)(0,-6)(3.5,-5.8)};
\draw[color=red, line width=1.5pt]plot[smooth,tension=.7] coordinates {(3.5,-5.8)(5,-3)(7.2,0.8)};
%--------------------------------------------------------------------------------------------------------------------------:
\draw[color=black,fill=black!100] (0,0) circle (0.6cm);
\draw[color=red,fill=red!100] (10,1.5) circle (0.6cm);
\draw[color=red,fill=red!100] (5.8,7.4) circle (0.6cm);
\draw[color=red,fill=red!100] (-4.5,7.4) circle (0.6cm);
\draw[color=red,fill=red!100] (-8.3,1.1) circle (0.6cm);
\draw[color=red,fill=red!100] (-3.6,-5.8) circle (0.6cm);
\draw[color=red,fill=red!100] (3.5,-5.8) circle (0.6cm);
\draw[color=red,fill=red!100] (7.2,0.8) circle (0.6cm);
%--------------------------------------------------------------------------------------------------------------------------:
\draw(14.8,3.4)node[right]{$L_{1}$};
\draw(9.5,11.1)node[right]{$L_{2}$};
\draw(-6.5,13.1)node[right]{$L_{3}$};
\draw(-1.6,1.9)node[right]{$0$};
\draw(9.1,3)node[right]{$p$};
\draw(5.6,2.5)node[right]{$q$};
%--------------------------------------------------------------------------------------------------------------------------:
\end{tikzpicture}
\end{center}

\begin{defin}
Le $3$-tissu $\mathcal{W}$ est dit \textbf{\textit{hexagonal}} si tout hexagone tracé en partant de suffisamment près de l'origine se referme ({\it i.e.} si le germe $H_{1}$ défini ci-dessus est l'identité). Plus généralement un $k$-tissu est dit \textbf{\textit{hexagonal}} si tous ses sous-$3$-tissus le sont.
\end{defin}
\begin{eg}
Le $3$-tissu $\mathcal{W}(x,y,x+y)$ est hexagonal comme le montre la figure qui suit.
\vspace{0.35cm}

\begin{center}
\begin{tikzpicture}[x=1.1cm,y=1.cm,scale=0.3]
%\draw[->,color=blue] (-10,0) -- (10,0);
%\draw[->,color=blue] (0,-10) -- (0,10);
%--------------------------------------------------------------------------------------------------------------------------:
\draw[line width=1.pt](6,-7) -- (6,6);
\draw[line width=1.pt](6,6) -- (-6,6);
\draw[line width=1.pt](-6,6) -- (-6,-7);
\draw[line width=1.pt](-6,-7) -- (6,-7);
%--------------------------------------------------------------------------------------------------------------------------:
\begin{scope}[shift={(-1,0)}]
\draw[line width=0.6pt](6,-7) -- (6,6);
\end{scope}
\begin{scope}[shift={(-2,0)}]
\draw[line width=0.6pt](6,-7) -- (6,6);
\end{scope}
\begin{scope}[shift={(-3,0)}]
\draw[line width=0.6pt](6,-7) -- (6,6);
\end{scope}
\begin{scope}[shift={(-4,0)}]
\draw[line width=0.6pt](6,-7) -- (6,6);
\end{scope}
\begin{scope}[shift={(-5,0)}]
\draw[line width=0.6pt](6,-7) -- (6,6);
\end{scope}
\begin{scope}[shift={(-6,0)}]
\draw[line width=0.6pt](6,-7) -- (6,6);
\end{scope}
\begin{scope}[shift={(-7,0)}]
\draw[line width=0.6pt](6,-7) -- (6,6);
\end{scope}
\begin{scope}[shift={(-8,0)}]
\draw[line width=0.6pt](6,-7) -- (6,6);
\end{scope}
\begin{scope}[shift={(-9,0)}]
\draw[line width=0.6pt](6,-7) -- (6,6);
\end{scope}
\begin{scope}[shift={(-10,0)}]
\draw[line width=0.6pt](6,-7) -- (6,6);
\end{scope}
\begin{scope}[shift={(-11,0)}]
\draw[line width=0.6pt](6,-7) -- (6,6);
\end{scope}
%--------------------------------------------------------------------------------------------------------------------------:
\begin{scope}[shift={(0,-1)}]
\draw[line width=0.6pt](6,6) -- (-6,6);
\end{scope}
\begin{scope}[shift={(0,-2)}]
\draw[line width=0.6pt](6,6) -- (-6,6);
\end{scope}
\begin{scope}[shift={(0,-3)}]
\draw[line width=0.6pt](6,6) -- (-6,6);
\end{scope}
\begin{scope}[shift={(0,-4)}]
\draw[line width=0.6pt](6,6) -- (-6,6);
\end{scope}
\begin{scope}[shift={(0,-5)}]
\draw[line width=0.6pt](6,6) -- (-6,6);
\end{scope}
\begin{scope}[shift={(0,-6)}]
\draw[line width=0.6pt](6,6) -- (-6,6);
\end{scope}
\begin{scope}[shift={(0,-7)}]
\draw[line width=0.6pt](6,6) -- (-6,6);
\end{scope}
\begin{scope}[shift={(0,-8)}]
\draw[line width=0.6pt](6,6) -- (-6,6);
\end{scope}
\begin{scope}[shift={(0,-9)}]
\draw[line width=0.6pt](6,6) -- (-6,6);
\end{scope}
\begin{scope}[shift={(0,-10)}]
\draw[line width=0.6pt](6,6) -- (-6,6);
\end{scope}
\begin{scope}[shift={(0,-11)}]
\draw[line width=0.6pt](6,6) -- (-6,6);
\end{scope}
\begin{scope}[shift={(0,-12)}]
\draw[line width=0.6pt](6,6) -- (-6,6);
\end{scope}
%--------------------------------------------------------------------------------------------------------------------------:
\draw[line width=0.6pt](5,6) -- (6,5);
\draw[line width=0.6pt](3,6) -- (6,3);
\draw[line width=0.6pt](1,6) -- (6,1);
\draw[line width=0.6pt](-1,6) -- (6,-1);
\draw[line width=0.6pt](-3,6) -- (6,-3);
\draw[line width=0.6pt](-5,6) -- (6,-5);
\draw[line width=0.6pt](-4,-7) -- (-6,-5);
\draw[line width=0.6pt](-2,-7) -- (-6,-3);
\draw[line width=0.6pt](0,-7) -- (-6,-1);
\draw[line width=0.6pt](2,-7) -- (-6,1);
\draw[line width=0.6pt](4,-7) -- (-6,3);
\draw[line width=0.6pt](6,-7) -- (-6,5);
%--------------------------------------------------------------------------------------------------------------------------:
\draw[color=red,fill=red!100] (0,0) circle (0.35cm);
%--------------------------------------------------------------------------------------------------------------------------:
\draw[color=red, line width=1.75pt]plot[smooth,tension=0.] coordinates { (1,-1) (1,0) (0,1) (-1,1) (-1,0) (0,-1) (1,-1)};
\draw[color=red, line width=1.75pt]plot[smooth,tension=0.] coordinates { (3,-3) (3,0) (0,3) (-3,3) (-3,0) (0,-3) (3,-3)};
\draw[color=red, line width=1.75pt]plot[smooth,tension=0.] coordinates { (5,-5) (5,0) (0,5) (-5,5) (-5,0) (0,-5) (5,-5)};
%--------------------------------------------------------------------------------------------------------------------------:
\draw(-7,-9.7)node[right]{$\mathcal{W}(x,y,x+y)$ est hexagonal.};
\end{tikzpicture}
\end{center}
\end{eg}
\vspace{5mm}

\noindent Le théorème classique suivant caractérise les $3$-tissus réguliers de $(\mathbb{C}^{2},0)$ (\emph{cf.} \cite{PP09}).

\begin{thm}[Structure des $3$-tissus parallélisables]\label{th:stru-3-tiss-parall}
{\sl Soit $\mathcal{W}=\mathcal{F}_{1}\boxtimes\mathcal{F}_{2}\boxtimes\mathcal{F}_{3}$ un $3$-tissu régulier de $(\mathbb{C}^{2},0).$ Les assertions suivantes sont équivalentes:
\begin{enumerate}
\item [\textit{(1)}] Le tissu $\mathcal{W}$ est parallélisable;
\item [\textit{(2)}] Le tissu $\mathcal{W}$ est hexagonal;
\item [\textit{(3)}] Le tissu $\mathcal{W}$ est de courbure de \textsc{Blaschke} nulle, {\it i.e.} $K(\mathcal{W})=0$;
\item [\textit{(4)}] Pour $i=1,2,3,$ il existe une $1$-forme fermée $\eta_{i}$ définissant $\mathcal{F}_{i}$ telle que $\eta_{1}+\eta_{2}+\eta_{3}=0.$
\end{enumerate}}
\end{thm}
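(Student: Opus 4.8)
Le plan consiste à établir la chaîne d'implications $(1)\Rightarrow(2)\Rightarrow(3)\Rightarrow(4)\Rightarrow(1)$, dont trois seront essentiellement formelles, toute la substance étant concentrée dans l'implication $(2)\Rightarrow(3)$ --- le contenu même du théorème de \textsc{Blaschke}--\textsc{Dubourdieu}.

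Pour $(1)\Rightarrow(2)$, je partirais de l'exemple traité plus haut, qui montre que le tissu trivial $\mathcal{W}(x,y,x+y)$ est hexagonal, et j'invoquerais le fait que l'hexagonalité est invariante sous l'action de $\mathrm{Diff}(\mathbb{C}^{2},0)$, puisque la fermeture des hexagones ne fait intervenir que les feuilles des trois feuilletages. Pour $(3)\Rightarrow(4)$, je fixerais une normalisation $\omega_{1}+\omega_{2}+\omega_{3}=0$ et la $1$-forme $\gamma$ associée ($\mathrm{d}\omega_{i}=\gamma\wedge\omega_{i}$) ; comme $\mathrm{d}\gamma=K(\mathcal{W})=0$, le lemme de \textsc{Poincar\'e} donnerait une unité $g\in\mathcal{O}^{*}(\mathbb{C}^{2},0)$ avec $\gamma=\mathrm{d}g/g$, et les formes $\eta_{i}=\omega_{i}/g$ conviendraient : elles définissent $\mathcal{F}_{i}$, vérifient $\eta_{1}+\eta_{2}+\eta_{3}=(\omega_{1}+\omega_{2}+\omega_{3})/g=0$ et $\mathrm{d}\eta_{i}=\tfrac{1}{g}\bigl(\mathrm{d}\omega_{i}-\tfrac{\mathrm{d}g}{g}\wedge\omega_{i}\bigr)=\tfrac{1}{g}(\gamma-\gamma)\wedge\omega_{i}=0$. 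Pour $(4)\Rightarrow(1)$, le lemme de \textsc{Poincar\'e} fournirait des $F_{i}\in\mathcal{O}(\mathbb{C}^{2},0)$ avec $F_{i}(0)=0$, $\eta_{i}=\mathrm{d}F_{i}$, d'où $F_{1}+F_{2}+F_{3}=0$ ; la position générale assurant $\mathrm{d}F_{1}\wedge\mathrm{d}F_{2}(0)\neq0$, l'application $\varphi=(F_{1},F_{2})$ serait un germe de biholomorphisme de $(\mathbb{C}^{2},0)$ transformant $\mathcal{W}$ en $\mathcal{W}(x,y,x+y)$, les feuilles de $\mathcal{F}_{1},\mathcal{F}_{2},\mathcal{F}_{3}$ devenant respectivement $\{x=\text{cte}\}$, $\{y=\text{cte}\}$, $\{x+y=\text{cte}\}$.

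Le c{\oe}ur de la preuve est l'implication $(2)\Rightarrow(3)$. Par le théorème d'inversion locale, je commencerais par redresser $\mathcal{F}_{1}$ et $\mathcal{F}_{2}$, mettant $\mathcal{W}$ sous la forme normale $\mathcal{W}_{f}=\mathcal{W}(x,y,f(x,y))$ avec $f(0)=0$ et $\partial_{x}f(0)\,\partial_{y}f(0)\neq0$, pour laquelle on dispose de la formule $K(\mathcal{W}_{f})=\partial_{x}\partial_{y}\log\!\bigl(\partial_{x}f/\partial_{y}f\bigr)\,\mathrm{d}x\wedge\mathrm{d}y$. Je fixerais les feuilles $L_{1}=\{x=0\}$, $L_{2}=\{y=0\}$, $L_{3}=\{f=0\}$ passant par l'origine, j'exprimerais les germes d'holonomie $h_{12},h_{23},h_{31}$ uniquement au moyen de $f$, puis j'écrirais le germe composé $H_{1}=h_{31}\circ h_{23}\circ h_{12}\circ h_{31}\circ h_{23}\circ h_{12}$ de $(L_{1},0)$ dans lui-même. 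En développant chaque $h_{jk}$ à l'ordre trois en son argument et en composant, j'obtiendrais un développement de la forme $H_{1}(t)=t+c(f)\,t^{3}+\mathrm{O}(t^{4})$, où le coefficient $c(f)$ est, à une constante multiplicative non nulle près, la valeur en $0$ de la densité de courbure $\partial_{x}\partial_{y}\log\!\bigl(\partial_{x}f/\partial_{y}f\bigr)$. L'hexagonalité équivalant à $H_{1}=\mathrm{id}$, cette densité s'annulerait en l'origine ; en appliquant le même argument en chaque point voisin de $0$ (la forme normale et la formule de courbure étant locales), j'en déduirais $K(\mathcal{W}_{f})\equiv0$, puis $K(\mathcal{W})=0$ grâce à l'invariance de l'annulation de la courbure de \textsc{Blaschke} sous $\mathrm{Diff}(\mathbb{C}^{2},0)$ (Remarque~\ref{rem:inv-kw-nul}).

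La difficulté principale résidera entièrement dans ce développement à l'ordre trois : il faut organiser la composition des six germes d'holonomie assez soigneusement pour constater que la première correction non nulle à l'identité est cubique, puis en extraire le coefficient et l'identifier à la courbure de \textsc{Blaschke}. C'est précisément le calcul original de \textsc{Blaschke} et \textsc{Dubourdieu}. Les trois autres implications ne reposent que sur le lemme de \textsc{Poincar\'e} et sur l'invariance déjà rappelée, et ne présentent aucune difficulté.
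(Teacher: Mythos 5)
The paper itself offers no proof of this statement: it is introduced as a classical theorem stated explicitly without demonstration, with a pointer to \cite{PP09}. So there is no internal argument to compare yours with, and I can only judge your plan on its own terms. The three ``formal'' implications are correct and standard: for $(1)\Rightarrow(2)$, hexagonality of the trivial web plus invariance under $\mathrm{Diff}(\mathbb{C}^{2},0)$; for $(3)\Rightarrow(4)$, Poincar\'e gives $\gamma=\mathrm{d}h$, and $g=\mathrm{e}^{h}$, $\eta_{i}=\omega_{i}/g$ do exactly what you say; for $(4)\Rightarrow(1)$, the primitives $F_{i}$ satisfy $F_{1}+F_{2}+F_{3}=0$ and $(F_{1},F_{2})$ rectifies the web.

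The genuine gap sits in $(2)\Rightarrow(3)$, precisely at the words ``en appliquant le m\^eme argument en chaque point voisin de $0$''. The third-order expansion of the hexagon holonomy gives the vanishing of the curvature density only at the point on whose three leaves the hexagons are anchored; to repeat the argument at a nearby point $q$ you need the hexagons anchored at $q$ to close, and the hypothesis as the paper phrases it (the germ $H_{1}$ at the origin equals the identity) says nothing about those. This cannot be argued away: after normalizing $f(x,0)=x$, $f(0,y)=y$, the composed holonomy is $H_{1}=g\circ g$, where $g$ is defined by $f(t,g(t))=0$, so $H_{1}$ only depends on the single leaf $\{f=0\}$. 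Taking $f(x,y)=(x+y)(1+x^{2}y^{2})$ gives $g(t)=-t$, hence every hexagon anchored at the origin closes, while $K(\mathcal{W}_{f})=\partial_{x}\partial_{y}\log\left(\partial_{x}f/\partial_{y}f\right)\mathrm{d}x\wedge\mathrm{d}y$ has leading term $6(y^{2}-x^{2})\,\mathrm{d}x\wedge\mathrm{d}y\not\equiv0$. So hexagonality must be taken in its classical sense --- all sufficiently small hexagons anchored at \emph{every} point near the origin close, as in Blaschke--Bol and \cite{PP09}, which is surely what is intended here --- and your proof should say so explicitly, since with the literal origin-anchored reading the implication you are proving is false. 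Under the classical reading, your pointwise computation (the $t^{2}$-term of $H_{1}-\mathrm{id}$ vanishes automatically and the $t^{3}$-coefficient is a nonzero multiple of the curvature density at the anchor point) is indeed the Blaschke--Dubourdieu argument, and the cycle of implications closes correctly.
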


\noindent Deux tissus topologiquement équivalents ne peuvent être hexagonaux qu'en même temps. L'hexagonalité est donc une condition de fermeture de nature topologique. Le théorème précédent montre qu'elle est équivalente à l'annulation de la courbure de \textsc{Blaschke}, qui est une condition de nature différentielle. C'est ce fait remarquable qui a amené \textsc{Blaschke} à désigner sous l'appellation ``Topologie Fragen der Differentialgeometrie'' (``Questions topologiques de géométrie différentielle'') l'étude de ces questions de fermeture pour les tissus et de leurs liens avec la classification analytique des tissus. Par extension, c'est sous cette terminologie que furent publiés dans les années trente les travaux de l'école hambourgeoise sur les tissus.

\newpage

\section{Tissus globaux sur une surface complexe}\vspace{6mm}

\Subsection{Définitions et exemples}\label{subsec:tissu-global}\vspace{3mm}

\begin{defin}\label{def:germe-tissu-singulier}
Soit $k\geq1$ un entier. Un \textbf{\textit{germe de $k$-tissu singulier}} $\mathcal{W}$ de $(\mathbb{C}^{2},0)$ est défini par un germe de $k$-forme symétrique $\omega\in\mathrm{Sym}^{k}\Omega^{1}(\mathbb{C}^{2},0)$ satisfaisant les deux conditions:
\begin{enumerate}
\item [\textit{(1)}] le lieu singulier $\mathrm{Sing}\omega$ de $\omega$ est formé de points isolés;
\item [\textit{(2)}] en tout point générique $p\in(\mathbb{C}^{2},0),$ $\omega(p)$ se factorise en produit de $k$ formes linéaires deux à deux non colinéaires.
\end{enumerate}
Ajoutons que par définition deux telles $k$-formes symétriques $\omega$ et $\omega'$ définissent le même tissu si elles sont égales à unité multiplicative près: $\omega'=u\omega$ avec $u\in\mathcal{O}^*(\mathbb{C}^{2},0).$
\end{defin}

\begin{defin}
Soient $S$ une surface complexe et $k\geq1$ un entier. Un \textbf{\textit{$k$-tissu global}} $\mathcal{W}$ sur $S$ est la donnée d'un recouvrement ouvert $(U_{i})_{i\in I}$ de $S$ et d'une collection de $k$-formes symétriques $\omega_{i}\in \mathrm{Sym}^{k}\Omega^{1}_{S}(U_{i})$ tels que les propriétés suivantes soient satisfaites:
\begin{enumerate}
\item [($\mathfrak{a}$)] sur chaque intersection non triviale $U_{i}\cap U_{j}$ on a $\omega_{i}=g_{ij}\omega_{j}$ où les $g_{ij}\in \mathcal{O}^{*}_{S}(U_{i}\cap U_{j})$ sont des unités holomorphes;
\item [($\mathfrak{b}$)] le lieu singulier $\mathrm{Sing}\omega_{i}$ de $\omega_{i}$ est formé de points isolés;
\item [($\mathfrak{c}$)] en tout point générique $p$ de $U_{i},$ $\omega_{i}(p)$ se factorise en produit de $k$ formes linéaires deux à deux non colinéaires.
\end{enumerate}
L'ensemble des points de $S$ qui ne vérifient pas la propriété ($\mathfrak{c}$) est appelé le \textbf{\textit{discriminant}} de $\mathcal{W}$ et est noté $\Delta(\mathcal{W}).$ Le \textbf{\textit{lieu singulier}} $\mathrm{Sing}\mathcal{W}$ de $\mathcal{W}$ est défini par $\mathrm{Sing}\mathcal{W}\cap\,U_{i}=\mathrm{Sing}\omega_i$ et il est contenu dans $\Delta(\mathcal{W}).$
\end{defin}
\begin{rems}\label{rems:tissu-global}
\begin{itemize}
\item []\hspace{-0.8cm}(i) Lorsque $k=1$ on retrouve la définition habituelle d'un feuilletage holomorphe singulier $\mathcal{F}$ (Définition \ref{def:feuilletage}); dans ce cas $\Delta(\mathcal{F})=\mathrm{Sing}\mathcal{F}.$
\smallskip
  \item [(ii)] Les fonctions $g_{ij}$ forment un $1$-cocycle à valeurs dans $\mathcal{O}^{*}_{S}$ et déterminent un fibré en droites $L$ sur $S,$ appelé le \textbf{\textit{fibré normal}} de $\mathcal{W}.$ La collection $\{\omega_{i}\}$ définit une section de $\mathrm{Sym}^{k}\Omega^{1}_{S}\otimes L,$ \textit{i.e.} $\omega=\{\omega_{i}\}$ peut être vue comme un élément de $\mathrm{H}^0(S,\mathrm{Sym}^{k}\Omega^{1}_{S}\otimes L).$ Deux sections globales $\omega,\omega'\in\mathrm{H}^0(S,\mathrm{Sym}^{k}\Omega^{1}_{S}\otimes L)$ décrivent le même tissu si et seulement si elles sont égales à multiplication près par un élément $g\in\mathrm{H}^0(S,\mathcal{O}^{*}_{S}).$
  \smallskip
  \item [(iii)] Si $S$ est compacte, se donner un $k$-tissu global sur $S$ revient à se donner un élément $\omega$ de $\mathbb{P}\mathrm{H}^0(S,\mathrm{Sym}^{k}\Omega^{1}_{S}\otimes L)$ pour un certain fibré en droites $L\to S,$ tel qu'en tout point générique $p\in S$ le germe de $\omega$ en $p$ définisse un germe de $k$-tissu singulier, \textit{i.e.} que $\omega_{,p}$ vérifie les conditions \textit{(1)} et \textit{(2)} de la Définition \ref{def:germe-tissu-singulier}.
\end{itemize}
\end{rems}

\noindent Soit $\mathcal{W}$ un $k$-tissu global sur une surface complexe $S.$ Nous dirons que $\mathcal{W}$ est \textbf{\textit{décomposable}} s'il existe des tissus globaux $\mathcal{W}_{1},\mathcal{W}_{2}$ sur $S$ n'ayant pas de sous-tissus communs tels que $\mathcal{W}$ soit la superposition de $\mathcal{W}_{1}$ et $\mathcal{W}_{2},$ \textit{i.e.} que $\mathcal{W}=\mathcal{W}_{1}\boxtimes\mathcal{W}_{2}.$ Le $k$-tissu $\mathcal{W}$ sera dit \textbf{\textit{complètement décomposable}} si on peut l'écrire sous la forme $\mathcal{W}=\mathcal{F}_{1}\boxtimes\cdots\boxtimes\mathcal{F}_{k}$ pour certains feuilletages globaux $\mathcal{F}_{1},\ldots,\mathcal{F}_{k}$ sur $S.$ Notons que la restriction de $\mathcal{W}$ à un voisinage suffisamment petit de tout point générique $p\in S$ est complètement décomposable.

\noindent Soit $\Gamma\subset\Delta(\mathcal{W})$ une composante irréductible du discriminant de $\mathcal{W}.$ Nous dirons que $\Gamma$ est \textbf{\textit{invariante}} (resp. \textbf{\textit{totalement invariante}}) par $\mathcal{W}$ si, sur la partie régulière de $\Gamma,$ on a $\mathrm{T}\hspace{0.2mm}\Gamma\subset\mathrm{T}\mathcal{W}|_{\Gamma}$ (resp. $\mathrm{T}\hspace{0.2mm}\Gamma=\mathrm{T}\mathcal{W}|_{\Gamma}$). Lorsque $\mathcal{W}$ est un germe de tissu irréductible, ces deux notions coïncident.
\vspace{-4mm}

\begin{eg}[Tissus algébriques]
Soit $\mathcal{C}$ une courbe algébrique réduite de degré $k$ sur $\mathbb{P}^{2}_{\mathbb{C}},$ non nécessairement irréductible et éventuellement singulière. \`{A} cette courbe on va associer un $k$-tissu linéaire sur le dual $\mathbb{\check{P}}^{2}_{\mathbb{C}},$ linéaire au sens où ses feuilles sont des morceaux de droites. Cette construction est centrale et très classique en géométrie des tissus (\emph{voir} \cite[Chapitre 23]{BB38}).

\noindent On procède localement, en commençant par fixer une droite $\ell_{0}$ qui intersecte $\mathcal{C}_{\text{reg}}$ transversalement en $k$ points distincts. Sur la courbe le $0$-cycle $\mathcal{C}.\ell_{0}$ s'écrit donc \begin{small}$\mathcal{C}.\ell_{0}=p_{1}(\ell_{0})+\cdots+p_{k}(\ell_{0}),$\end{small} où les $p_{i}(\ell_{0})$ sont $k$ points distincts. Cette condition d'intersecter transversalement la partie régulière de $\mathcal{C}$ en $k$ points distincts est ouverte. Il va donc exister un voisinage $V_{0}$ de $\ell_{0}$ dans $\mathbb{\check{P}}^{2}_{\mathbb{C}}$ tel que cette condition de transversalité soit vérifiée dessus. Quitte à prendre ce voisinage simplement connexe et \og suffisamment petit \fg, on va pouvoir suivre de façon holomorphe les points d'intersection de $\ell$ avec $\mathcal{C}$ lorsque $\ell$ varie dans $V_{0}.$ Plus précisément, il existe $k$ applications holomorphes $p_{i}\hspace{1mm}\colon V_{0}\to \mathcal{C}_{\text{reg}},$ telles que, pour toute droite $\ell\in V_{0},$ on ait en tant que $0$-cycle: $$\mathcal{C}.\ell=p_{1}(\ell)+\cdots+p_{k}(\ell).$$

\noindent Pour $i=1,\ldots,k,$ notons alors $\mathcal{F}_{i}$ le feuilletage holomorphe de $V_{0}$ dont les feuilles sont les lignes de niveaux \{$p_{i}=$cte\}. Pour $\ell\in V_{0},$ la feuille de $\mathcal{F}_{i}$ passant par $\ell$ est le segment de droite $$\{\ell\in V_{0}\hspace{1mm}\colon p_{i}(\ell)\in\ell\}.$$

\noindent Ainsi \{$\mathcal{F}_{1},\ldots,\mathcal{F}_{k}$\} est un $k$-tissu linéaire sur le voisinage $V_{0}$ de $\ell_{0}$. Cette construction peut se
faire au voisinage de toute droite $\ell_{0}\in\mathbb{\check{P}}^{2}_{\mathbb{C}}\setminus\hspace{1mm}\mathcal{\check{\hspace{-1mm}C}}$ où $\hspace{1mm}\mathcal{\check{\hspace{-1mm}C}}$ désigne la courbe duale de $\mathcal{C}$ (c'est-à-dire la courbe de $\mathbb{\check{P}}^{2}_{\mathbb{C}}$ formée des droites qui sont tangentes à $\mathcal{C}$). Il est clair que les tissus ainsi obtenus se recollent pour former un $k$-tissu linéaire singulier sur $\mathbb{\check{P}}^{2}_{\mathbb{C}}.$ Ce $k$-tissu s'appelle le \textbf{\textit{tissu algébrique}} associé à la courbe $\mathcal{C}$; on le note $\mathcal{W}_{\mathcal{C}}.$

\noindent On peut montrer que le lieu singulier de $\mathcal{W}_{\mathcal{C}}$ est exactement la courbe duale $\hspace{1mm}\mathcal{\check{\hspace{-1mm}C}}.$

\noindent Il est possible de définir $\mathcal{W}_{\mathcal{C}}$ d'autres façons. Par exemple, dans le cas où aucune des composantes irréductibles de $\mathcal{C}$ n'est une droite, le fait que $\mathcal{C}$ est de degré $k$ équivaut à celui que sa courbe duale $\hspace{1mm}\mathcal{\check{\hspace{-1mm}C}}$ est de classe $k$, c'est-à-dire que par un point générique du dual $\mathbb{\check{P}}^{2}_{\mathbb{C}}$ passent $k$ tangentes distinctes à $\hspace{1mm}\mathcal{\check{\hspace{-1mm}C}}$. Ce sont exactement les feuilles de $\mathcal{W}_{\mathcal{C}}$ passant par ce point. Ainsi $\mathcal{W}_{\mathcal{C}}$ est le tissu sur $\mathbb{\check{P}}^{2}_{\mathbb{C}}$ formé des tangentes à la courbe de classe $k$ qu'est $\hspace{1mm}\mathcal{\check{\hspace{-1mm}C}}.$

\begin{center}
\begin{tikzpicture}[y=0.45pt,x=0.45pt,yscale=-1]
%--------------------------------------------------------------------------------------------------------------------------:
\path[fill=black] (-0.37583864,12.994694) node[above right] (text2991) {};
\path[draw=black,line join=round,line cap=round,line width=1.pt]
(311.4763,0.3943  ) .. controls (319.9890,15.5486 ) and (339.6642,50.5892 ) ..
(319.9326,58.8517 ) .. controls (300.2011,67.1142 ) and (254.7810,25.5011 ) ..
(223.4737,35.3786 ) .. controls (192.1475,45.2749 ) and (173.4119,82.6440 ) ..
(179.4254,133.7778) .. controls (185.4388,184.9303) and (208.4401,215.7833) ..
(238.6200,218.8442) .. controls (268.8186,221.9238) and (302.6441,176.9682) ..
(329.3286,175.0340) .. controls (356.0132,173.0998) and (372.0427,180.7239) ..
(398.8024,207.3330);
%--------------------------------------------------------------------------------------------------------------------------:
\path[draw=blue,line join=round,line cap=round,line width=0.pt]
(21.7611,154.6218) -- (267,5.0890);
\path[draw=blue,line join=round,line cap=round,line width=0.pt]
(10.3731,140.4817) -- (359.50,50.3451);
\path[draw=blue,line join=round,line cap=round,line width=0.pt]
(0.4134,110.6239 ) -- (401.4,183.7285);
\path[draw=blue,line join=round,line cap=round,line width=0.pt]
(6.8591,84.8317  ) -- (275.5,248.7677);
%--------------------------------------------------------------------------------------------------------------------------:
\node at (5.45cm,0.28cm) {$\mathcal{C}$};
\fill [color=black] (1.15cm,1.96cm) circle (2.5pt) ;
\node at (1.15cm,2.36cm) {\large$p$};
\draw(-80,290)node[right]{Les feuilles passant par $p$ du tissu associé à une courbe $\mathcal{C}$ de classe $4.$};
%--------------------------------------------------------------------------------------------------------------------------:
\end{tikzpicture}
\end{center}

\noindent Cette dernière construction permet d'obtenir de jolis dessins réels de tissus algébriques (\emph{voir} par exemple \cite[Pag. 29--30]{BB38}).
\end{eg}
\begin{eg}[Tissu associé à une équation différentielle implicite $F(x,y,y')=0$]
On considère une équation différentielle du premier ordre de la forme suivante:
\begin{equation}\label{tissu-equa-diff}
F(x,y,y'):=a_{0}(x,y)\cdot(y')^{k}+a_{1}(x,y)\cdot(y')^{k-1}+\cdots+a_{k}(x,y)=0,
\end{equation}
où les $a_{i},i=0,\ldots,k,$ sont des fonctions holomorphes sur un ouvert $U$ de $\mathbb{C}^{2}.$

\noindent On suppose que $F(x,y,p)\in\mathcal{O}(U)[p]$ est sans facteurs multiples et que $a_{0}\neq0\in\mathcal{O}(U).$ On note $R\in\mathcal{O}(U)$ le $p$-résultant de $F,$ \textit{i.e.}
\begin{align*}
& R:=\text{Result}(F,\partial_{p}(F))=(-1)^{\frac{k(k-1)}{2}}\cdot a_{0}\cdot \Delta
\end{align*}
où $\Delta$ est son $p$-discriminant.

\noindent En vertu du théorème de \textsc{Cauchy}, les $k$ courbes intégrales d'une équation différentielle de la forme \eqref{tissu-equa-diff} définissent un $k$-tissu régulier en dehors du lieu singulier défini par $\{R=0\}.$ En effet, si $m$ est un point de $U\setminus\{R=0\},$ il existe un voisinage $V_{m}$ de $m$ dans $U$ et $k$ applications holomorphes $(x,y)\mapsto p_{i}(x,y)$ telles que, sur $V_{m},$ on ait
\begin{align*}
& F(x,y,p)=a_{0}(x,y)\prod_{i=1}^{k}(p-p_{i}(x,y)) \qquad \text{avec} \qquad p_{i}(m)\neq p_{j}(m) \hspace{2mm}\text{si}\hspace{2mm}i\neq j.
\end{align*}
\noindent Au voisinage de $m,$ l'équation $F(x,y,y')=0$ admet $k$ solutions $(x,y)\mapsto y_{i}(x,y)$ qui sont des intégrales premières des feuilletages définis par les champs de vecteurs $\X_{i}=\partial_{x}+p_{i}\partial_{y}.$ On peut donc considérer le tissu $\mathcal{W}(y_{1},\ldots,y_{k})$ au voisinage de $m.$

\noindent La construction locale que l'on vient d'esquisser peut se faire au voisinage de tout point $m$ générique. Les tissus ainsi obtenus se recollent pour former un $k$-tissu global (singulier) sur $U.$ On peut ainsi entreprendre l'étude géométrique de l'équation $F(x,y,y')=0$ par celle du tissu qui lui est associé. Ce point de vue se généralise au système d'EDP du premier ordre (\emph{cf.} l'article \cite{Nak01} de I. \textsc{Nakai}, qui a par ailleurs obtenu certains résultats avec cette approche).

\noindent Il est clair que tout germe de $k$-tissu est susceptible d'être défini au moyen d'une équation différentielle du premier ordre et de degré $k$ en $y'.$ Cette \og définition implicite \fg a l'avantage de ne privilégier aucun des feuilletages qui composent un tissu. Ce point de vue dans l'étude des tissus a été développé par A. \textsc{Hénaut} \cite{Hen00,Hen06}.
\end{eg}

\Subsection{Barycentre d'un tissu}\vspace{3mm}

Soient $\mathcal{W}$ un $k$-tissu sur une surface complexe $S$ et $\mathcal{F}$ un feuilletage sur $S$ génériquement transverse à $\mathcal{W}.$ Nous allons construire un nouveau feuilletage sur $S$ noté $\beta_{\mathcal{F}}(\mathcal{W})$ et appelé \textbf{\textit{barycentre de}} $\mathcal{W}$ \textbf{\textit{par rapport à}} $\mathcal{F}.$

\noindent Soit $p$ un point générique de $S$; alors les directions tangentes $L_{1},\ldots,L_{k}$ des feuilles de $\mathcal{W}$ passant par $p$ sont toutes distinctes de $\mathrm{T}_{\hspace{-0.4mm}p}\mathcal{F}.$ Elles peuvent donc être considérées comme $k$ points dans $\mathbb{P}\mathrm{T}_{\hspace{-0.4mm}p}S\setminus \mathrm{T}_{\hspace{-0.4mm}p}\mathcal{F}$ qui admet une structure affine canonique. On peut donc considérer leur barycentre dans cette droite affine, que l'on note $\beta_{\mathrm{T}_{\hspace{-0.4mm}p}\mathcal{F}}(L_{1},\ldots,L_{k}).$ En faisant varier $p,$ on obtient ainsi un champ de droites sur $S.$ Celui-ci s'intègre alors, pour des raisons de dimension, en un feuilletage qui se prolonge en un feuilletage holomorphe singulier sur $S$ tout entière: par définition, c'est le feuilletage $\beta_{\mathcal{F}}(\mathcal{W}).$

\noindent On peut élargir cette définition du barycentre en remplaçant le feuilletage $\mathcal{F}$ par un $n$-tissu $\mathcal{W}'.$ L'idée est simple: si on écrit $\mathcal{W}'=\mathcal{F}_{1}\boxtimes\cdots\boxtimes\mathcal{F}_{n},$ on définit le $\mathcal{W}'$\textbf{\textit{-barycentre de}} $\mathcal{W}$ comme étant $$\beta_{\mathcal{W}'}(\mathcal{W})=\beta_{\mathcal{F}_{1}}(\mathcal{W})\boxtimes\cdots\boxtimes\beta_{\mathcal{F}_{n}}(\mathcal{W}).$$

\Subsection{Tissus sur $\mathbb{P}^{2}_{\mathbb{C}}$ et transformation de \textsc{Legendre}}\label{subsec:Legendre}\vspace{3mm}

Nous concentrerons maintenant notre attention sur les $k$-tissus globaux sur le plan projectif complexe. D'après la remarque (iii) du paragraphe \S\ref{subsec:tissu-global}, un $k$-tissu $\mathcal{W}$ sur $\mathbb{P}^{2}_{\mathbb{C}}$ est défini par un élément $\omega$ de $\mathbb{P}\mathrm{H}^0(\mathbb{P}^{2}_{\mathbb{C}},\mathrm{Sym}^{k}\Omega^{1}_{\mathbb{P}^{2}_{\mathbb{C}}}\otimes L)$ pour un certain fibré en droite $L\to\mathbb{P}^{2}_{\mathbb{C}},$ tel qu'en tout point générique $p\in \mathbb{P}^{2}_{\mathbb{C}}$ le germe $\omega_{,p}$ de $\omega$ en $p$ détermine un germe de $k$-tissu singulier au sens de la Définition \ref{def:germe-tissu-singulier}. Il est naturel d'écrire $L$ comme étant $\mathcal{O}_{\mathbb{P}^{2}_{\mathbb{C}}}(d+2k)$ vu que l'image réciproque de $\omega$ par une droite $\ell\subset \mathbb{P}^{2}_{\mathbb{C}}$ est une section de $\mathrm{Sym}^{k}\Omega^{1}_{\mathbb{P}^{1}_{\mathbb{C}}}(d+2k)=\mathcal{O}_{\mathbb{P}^{1}_{\mathbb{C}}}(d)$ et par conséquent pour $\ell$ générique l'entier $d,$ appelé le \textbf{\textit{degré}} du tissu, représente le nombre de tangences entre $\ell$ et le $k$-tissu $\mathcal{W}$; on le notera $\deg\mathcal{W}.$ L'espace des $k$-tissus sur $\mathbb{P}^{2}_{\mathbb{C}}$ de degré $d$ est alors un ouvert de $\mathbb{P}H^{0}(\mathbb{P}^{2}_{\mathbb{C}},\mathrm{Sym}^{k}\Omega^{1}_{\mathbb{P}^{2}_{\mathbb{C}}}(d+2k))$, qui sera noté $\mathbf{T}(k;d)$.

\noindent D'autre part, de la suite d'\textsc{Euler}
\[
0\longrightarrow \mathcal{O}_{\mathbb{P}^{2}_{\mathbb{C}}}\longrightarrow \mathcal{O}_{\mathbb{P}^{2}_{\mathbb{C}}}(1)^{\oplus3}\longrightarrow \mathrm{T}\mathbb{P}^{2}_{\mathbb{C}} \longrightarrow 0,
\]
\noindent on déduit la suite exacte suivante
\[
0\longrightarrow \mathrm{Sym}^{k-1}(\mathcal{O}_{\mathbb{P}^{2}_{\mathbb{C}}}(1)^{\oplus3})\otimes \mathcal{O}_{\mathbb{P}^{2}_{\mathbb{C}}} \longrightarrow \mathrm{Sym}^{k}(\mathcal{O}_{\mathbb{P}^{2}_{\mathbb{C}}}(1)^{\oplus3})\longrightarrow \mathrm{Sym}^{k}\mathrm{T}\mathbb{P}^{2}_{\mathbb{C}}\longrightarrow 0.
\]
\noindent Cela signifie qu'un $k$-tissu de degré $d$ sur $\pp$ peut se décrire par un polynôme bihomogène $P(x,y,z;a,b,c)$ de degré $d$ en $(x,y,z)$ et de degré $k$ en $(a,b,c).$ Plus précisément:
\begin{enumerate}
  \item [(i)] \begin{small}$\X=P(x,y,z;\frac{\partial{}}{\partial{x}},\frac{\partial{}}{\partial{y}},\frac{\partial{}}{\partial{z}})$\end{small} définit une section globale de \begin{small}$\mathrm{Sym}^{k}\mathrm{T}\mathbb{P}^{2}_{\mathbb{C}}(d-k)$;\end{small}
  \item [(ii)] \begin{small}$\omega=P(x,y,z;y\mathrm{d}z-z\mathrm{d}y,z\mathrm{d}x-x\mathrm{d}z,x\mathrm{d}y-y\mathrm{d}x)$\end{small} définit une section globale de \begin{small}$\mathrm{Sym}^{k}\Omega^{1}_{\mathbb{P}^{2}_{\mathbb{C}}}(d+2k).$\end{small}
\end{enumerate}

\noindent Deux tels polynômes $P(x,y,z;a,b,c)$ et $P'(x,y,z;a,b,c)$ définissent le même tissu si et seulement s'il existe $\lambda,\lambda'\in\mathbb{C}^{*}$ et un polyn\^ome bihomog\`ene $Q(x,y,z;a,b,c)$ de bidegr\'e $(d-1,k-1)$ tels que $$\lambda P(x,y,z;a,b,c)-\lambda' P'(x,y,z;a,b,c)=(ax+by+cz)Q(x,y,z;a,b,c).$$

\noindent En utilisant des coordonnées homogènes $(a:b:c)$ dans le plan projectif dual $\mathbb{\check{P}}^{2}_{\mathbb{C}}$ qui correspondent à la droite $\{ax+by+cz=0\}\subset{\mathbb{P}^{2}_{\mathbb{C}}}$, on peut écrire
\begin{align*}
\mathrm{T}^*_{(x:y:z)}\mathbb{P}^{2}_{\mathbb{C}}&=\{\omega=a\mathrm{d}x+b\mathrm{d}y+c\mathrm{d}z\in \mathrm{T}^*\mathbb{C}^{3}\hspace{1mm}\colon i_{R}\omega=0\} \\
&=\{a\mathrm{d}x+b\mathrm{d}y+c\mathrm{d}z\hspace{1mm}\colon ax+by+cz=0\}
\end{align*}
et la variété de contact $\mathbb{P}\mathrm{T}^{*}\mathbb{P}^{2}_{\mathbb{C}}$ s'identifie de façon naturelle à la variété d'incidence
\[
\mathcal{I}=\{((x:y:z),(a:b:c))\hspace{1mm}\vert\hspace{1mm} ax+by+cz=0\}\subset\mathbb{P}^{2}_{\mathbb{C}}\times\mathbb{\check{P}}^{2}_{\mathbb{C}}.
\]

\noindent Soient $\mathcal{W}$ un $k$-tissu de degré $d$ sur $\mathbb{P}^{2}_{\mathbb{C}}$ et \begin{small}$P(x,y,z;a,b,c)$\end{small} un polynôme bihomogène définissant $\mathcal{W}$. Alors, avec l'identification ci-dessus de $\mathbb{P}\mathrm{T}^{*}\mathbb{P}^{2}_{\mathbb{C}}$ à $\mathcal{I},$ le graphe de $\mathcal{W}$ dans $\mathbb{P}\mathrm{T}^{*}\mathbb{P}^{2}_{\mathbb{C}}$ est la surface
\[
S_{\mathcal{W}}=\{((x:y:z),(a:b:c))\in\mathbb{P}^{2}_{\mathbb{C}}\times\mathbb{\check{P}}^{2}_{\mathbb{C}}\hspace{1mm}\vert\hspace{1mm} ax+by+cz=0,P(x,y,z;a,b,c)=0\}\subset \mathcal{I}.
\]

\noindent Supposons que le tissu $\mathcal{W}$ soit irréductible de degré $d>0$ et considérons les restrictions $\pi$ et $\check{\pi}$ à $S_{\mathcal{W}}$ des projections naturelles de $\mathbb{P}^{2}_{\mathbb{C}}\times\mathbb{\check{P}}^{2}_{\mathbb{C}}$ sur $\mathbb{P}^{2}_{\mathbb{C}}$ et $\mathbb{\check{P}}^{2}_{\mathbb{C}}$ respectivement. Ces projections $\pi$ et $\check{\pi}$ sont des applications rationnelles de degrés $k$ et $d$ respectivement. La distribution de contact sur $\mathbb{P}\mathrm{T}^{*}\mathbb{P}^{2}_{\mathbb{C}}$ s'identifie à
\[
\mathcal{D}=\ker(a\mathrm{d}x+b\mathrm{d}y+c\mathrm{d}z)=\ker(x\mathrm{d}a+y\mathrm{d}b+z\mathrm{d}c).
\]
\noindent Le feuilletage $\mathcal{F}_{\mathcal{W}}$ induit par $\mathcal{D}$ sur $S_{\mathcal{W}}$ se projette par $\pi$ sur le $k$-tissu $\mathcal{W}$ et se projette par $\check{\pi}$ en un $d$-tissu $\hspace{1mm}\mathcal{\check{\hspace{-1mm}W}}$ sur $\mathbb{\check{P}}^{2}_{\mathbb{C}}.$ Le $d$-tissu $\hspace{1mm}\mathcal{\check{\hspace{-1mm}W}}$ s'appelle la \textbf{\textit{transformée de \textsc{Legendre}}} de $\mathcal{W}$ et se note $\mathrm{Leg}\mathcal{W}$.

\noindent Si $\mathcal{W}$ est décrit par \begin{Small}$P(x,y,z;\frac{\partial{}}{\partial{x}},\frac{\partial{}}{\partial{y}},\frac{\partial{}}{\partial{z}}),$\end{Small} ou respectivement par \begin{Small}$P(x,y,z;y\mathrm{d}z-z\mathrm{d}y,z\mathrm{d}x-x\mathrm{d}z,x\mathrm{d}y-y\mathrm{d}x),$\end{Small} alors sa transformée de \textsc{Legendre} $\mathrm{Leg}\mathcal{W}$ est décrite par \begin{Small}$P(\frac{\partial{}}{\partial{a}},\frac{\partial{}}{\partial{b}},\frac{\partial{}}{\partial{c}};a,b,c),$\end{Small} ou respectivement par \begin{Small}$P(b\mathrm{d}c-c\mathrm{d}b,c\mathrm{d}a-a\mathrm{d}c,a\mathrm{d}b-b\mathrm{d}a;a,b,c).$\end{Small}

\noindent En utilisant ces formules, on peut procéder pour définir la transformée de \textsc{Legendre} d'un $k$-tissu arbitraire de degré $d$ arbitraire. Notons que si $\mathcal{W}$ se décompose en produit de deux tissus $\mathcal{W}_{1}\boxtimes \mathcal{W}_{2}$, alors sa transformée de \textsc{Legendre} est le produit de $\mathrm{Leg}\mathcal{W}_{1}$ par $\mathrm{Leg}\mathcal{W}_{2}.$
\vspace{2mm}

\noindent En consultant des ouvrages classiques sur les équations différentielles, on peut trouver la transformation de \textsc{Legendre} comme une transformation involutive entre les équations différentielles polynomiales (\emph{voir} par exemple \cite{Inc44}).

\begin{rems}\label{rem:Propriétés-Leg}
Il est facile de vérifier les trois propriétés suivantes de la transformation de \textsc{Legendre}:
\begin{enumerate}
  \item [(i)] Fixons une droite générique $\ell$ de $\pp$. Alors $\Tang(\mathcal{W},\ell)=p_{1}+\cdots+p_{d}$, où $p_{i}\in\pp$. On peut penser à $\ell$ comme étant un point de $\pd$ et les $p_{i}$ comme étant des droites de $\pd$ passant par $\ell$. Alors $\T_{\hspace{-0.3mm}\ell}\Leg\W=\underset{i=1}{\overset{d}{\cup }}\T_{\hspace{-0.3mm}\ell}p_{i}$;
  \item [(ii)] Si $\mathcal{L}$ est une feuille de $\mathcal{W}$ qui n'est pas une droite, l'union des droites tangentes à $\mathcal{L}$ est une feuille de $\mathrm{Leg}\mathcal{W}$;
 \item [(iii)] Tout $k$-tissu sur $\pp$ peut se lire dans une carte affine donnée $(x,y)$ de $\pp$ par une équation différentielle $F(x,y;y')=0$ de degré $k$ en $y'$, à coefficients polynômiaux. Dans la carte affine $(p,q)$ de $\pd$ correspondant à la droite $\{y=px-q\}\subset{\mathbb{P}^{2}_{\mathbb{C}}},$ la transformée de \textsc{Legendre} d'un tel tissu est donnée par l'équation différentielle
\[
\check{F}(p,q;x):=F(x,px-q;p)=0, \qquad \text{avec} \qquad x=\frac{\mathrm{d}q}{\mathrm{d}p}.
\]
En particulier, pour un feuilletage défini par une $1$-forme $\omega=A(x,y)\mathrm{d}x+B(x,y)\mathrm{d}y,$ on peut prendre $F(x,y;y')=B(x,y)y'+A(x,y)$ et donc sa transformée de \textsc{Legendre} est décrite par
\[
 \check{F}(p,q;x)=A(x,px-q)+pB(x,px-q).
\]
\end{enumerate}
\end{rems}

\begin{lem}[\cite{BFM13}, \rm{Lemme~2.2}]\label{lem:Delta-Leg}
{\sl
Soit $\F$ un feuilletage sur $\pp.$ Notons $\check{\Sigma}_{\F}$ l'ensemble des droites duales des singularités spéciales $\footnote{\hspace{0.1cm} $\Sigma_{\F}$ peut aussi être défini comme l'ensemble des $s\in\Sing\F$ tels que $\nu(\F,s)\geq2$ ou $s$ est une singularité radiale de $\F$.}\Sigma_{\F}=\{s\in\Sing\F\hspace{0.8mm}:\hspace{0.8mm}\tau(\F,s)\geq2\}.$ Alors $$\Delta(\Leg\F)=\mathcal{G}_{\F}(\ItrF)\cup\check{\Sigma}_{\F}.$$
}
\end{lem}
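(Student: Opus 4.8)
The plan is to prove the two inclusions separately, in each case reducing the verification to a generic point of an irreducible component of the set at hand; this is legitimate because $\Delta(\Leg\F)$ is a discriminant divisor, hence of pure dimension one (possibly empty), while $\check{\Sigma}_{\F}$ is a finite union of lines of $\pd$ and $\mathcal{G}_{\F}(\ItrF)$ is a finite union of curves — no irreducible component of the support of $\ItrF$ being contracted by $\mathcal{G}_{\F}$, since a curve contracted by the Gauss map is necessarily an $\F$-invariant line and these are excluded from $\ItrF$. The main tool is the following extension of part (i) of Remarques~\ref{rem:Propri�t�s-Leg}: if $\ell\subset\pp$ is \emph{any} line that is not $\F$-invariant and $\Tang(\F,\ell)=\sum_{i=1}^{d}p_i$ is its tangency $0$-cycle with $\F$ (the $p_i\in\pp$ counted with multiplicity), then at the point $m\in\pd$ dual to $\ell$ the symmetric $d$-form defining $\Leg\F$ equals, up to a unit, the product of the $d$ linear forms cutting out the tangent directions at $m$ of the dual lines $\check{p_1},\dots,\check{p_d}$. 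This follows from the generic case by continuity, both the tangency cycle and the factorization varying algebraically with $\ell$; in particular a singular tangency point $p\in\Sing\F\cap\ell$ contributes the tangent direction of $\check{p}$ with multiplicity exactly $\Tang(\F,\ell,p)$. Since two distinct lines of $\pd$ meet transversally, the tangent directions of $\check{p_i}$ and $\check{p_j}$ at $m$ coincide if and only if $p_i=p_j$; hence, for $\ell$ not $\F$-invariant, $m\in\Delta(\Leg\F)$ precisely when the cycle $\Tang(\F,\ell)$ is not reduced, i.e. when $\Tang(\F,\ell,p)\ge 2$ for some $p\in\ell$.

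For the inclusion $\mathcal{G}_{\F}(\ItrF)\cup\check{\Sigma}_{\F}\subseteq\Delta(\Leg\F)$: if $s\in\Sigma_{\F}$ and $m$ is a generic point of the line $\check{s}\subset\pd$, then $\ell_m$ is a generic line through $s$, hence is not $\F$-invariant and satisfies $\Tang(\F,\ell_m,s)=\tau(\F,s)\ge 2$, so $m\in\Delta(\Leg\F)$ by the previous paragraph and $\check{s}\subseteq\Delta(\Leg\F)$ by closedness. If $\Gamma$ is an irreducible component of $\mathcal{G}_{\F}(\ItrF)$, its generic point has the form $m=\mathcal{G}_{\F}(p)$ with $p\notin\Sing\F$ lying on the support of $\ItrF$; for generic such $p$ the leaf $\mathcal{L}$ through $p$ is not a line and its tangent line $\ell_m=\T_p\mathcal{L}$ is not $\F$-invariant, and since $p$ is an inflection point of $\mathcal{L}$ the contact of $\mathcal{L}$ with $\ell_m$ at $p$ is $\ge 3$, equivalently $\Tang(\F,\ell_m,p)=(\mathcal{L}\cdot\ell_m)_p-1\ge 2$; thus $m\in\Delta(\Leg\F)$ and $\Gamma\subseteq\Delta(\Leg\F)$.

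For the reverse inclusion, let $\Gamma$ be an irreducible component of $\Delta(\Leg\F)$ and $m$ a generic point of it, so that $\ell:=\ell_m$ is not $\F$-invariant; by the first paragraph there is $p\in\ell$ with $\Tang(\F,\ell,p)\ge 2$. If $p\notin\Sing\F$, then $\ell=\T_p\F$ and the leaf through $p$ has contact $\ge 3$ with $\ell$ at $p$, so $p$ is an inflection point; as $\ell$ is not invariant the leaf through $p$ is not a line, hence $p$ lies on the support of $\ItrF$ and $m=\mathcal{G}_{\F}(p)\in\mathcal{G}_{\F}(\ItrF)$. If $p\in\Sing\F$, I first rule out that $p$ be non-special: for a fixed singularity $p$ with $\tau(\F,p)=1$ the set of lines $\ell'\ni p$ with $\Tang(\F,\ell',p)\ge 2$ is a closed proper — hence finite — subset of the pencil $\check{p}\cong\sph$ (by upper semicontinuity of $\Tang$ together with $\tau(\F,p)=1$), so, there being only finitely many singularities, only finitely many lines $\ell$ carry such a non-special singular tangency point, and their duals form a finite subset of $\pd$ that cannot contain the generic point of the positive-dimensional $\Gamma$. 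Hence $p\in\Sigma_{\F}$ and $m\in\check{p}\subseteq\check{\Sigma}_{\F}$; moreover the assignment $m\mapsto p$ takes values in the finite set $\Sigma_{\F}$, so it is constant on $\Gamma$, and $\Gamma$, being an irreducible curve contained in the line $\check{p}$, equals $\check{p}$. In all cases the generic point of $\Gamma$, hence (by closedness) all of $\Gamma$, lies in $\mathcal{G}_{\F}(\ItrF)\cup\check{\Sigma}_{\F}$, which gives the inclusion and completes the proof.

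The crux is the first paragraph — promoting part (i) of Remarques~\ref{rem:Propri�t�s-Leg} from a generic line to an arbitrary non-$\F$-invariant line, with the correct multiplicities, especially above a singular tangency point of $\F$ with $\ell$; this is precisely where one must use the local structure of the graph $S_{\F}\subset\mathbb{P}\T^{*}\pp$ of $\F$ and of its two projections onto $\pp$ and $\pd$. An alternative that sidesteps this subtlety is to argue entirely on the explicit equation $\check{F}(p,q,x)=A(x,px-q)+pB(x,px-q)$ of part (iii) of Remarques~\ref{rem:Propri�t�s-Leg}: after choosing the affine chart so that neither the line at infinity of $\pp$ nor its dual interferes, one computes the $x$-discriminant of $\check{F}$ and the locus where the leading coefficient of $\check{F}$ in $x$ vanishes, and identifies their irreducible factors with an equation of $\mathcal{G}_{\F}(\ItrF)$ (obtained by composing the Gauss map $(x,y)\mapsto(B/A,\,y-(B/A)x)$ with a reduced equation of $\ItrF$) and with the equations of the lines of $\check{\Sigma}_{\F}$; this trades the geometric delicacy for a longer but entirely mechanical resultant computation.
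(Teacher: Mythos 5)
Your proof is correct and follows essentially the same route as the paper's (brief) argument: identify $\Delta(\Leg\F)$ with the locus of non-invariant lines whose tangency cycle with $\F$ is non-reduced, then split the multiple tangency point into the regular case (inflection point, giving $\mathcal{G}_{\F}(\ItrF)$) and the singular case (dual lines of singularities, non-special ones contributing only finitely many lines); you merely spell out details the paper leaves implicit. The one step you flag as delicate — the factorization of the defining $d$-form at $\check{\ell}$ with the correct multiplicities — needs no continuity argument: restricting $\omega=A\,\mathrm{d}x+B\,\mathrm{d}y$ to $\ell=\{y=p_0x-q_0\}$ gives $\omega|_{\ell}=(A+p_0B)|_{y=p_0x-q_0}\,\mathrm{d}x$, whose zero divisor is exactly $\Tang(\F,\ell)$ and whose defining polynomial is exactly $\check{F}(p_0,q_0,x)$, so the dictionary with multiplicities is immediate from Remarque~\ref{rem:Propri�t�s-Leg}~(iii).
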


\begin{proof}[\sl D\'emonstration]
D'après la Remarque \ref{rem:Propriétés-Leg} (i), le support du discriminant de $\Leg\F$ contient l'image par l'application de \textsc{Gauss} de toute composante du diviseur d'inflexion transverse; de plus une composante irréductible de $\Delta(\Leg\F)$ qui n'est pas contenue dans $\G_{\F}(\ItrF)$ est nécessairement la droite duale d'un certain point singulier de $\F.$ Soit $s$ un point singulier de $\F$ tel que $\tau(\F,s)=1$; alors la seule possibilité pour que la droite $\check{s}$ duale de $s$ soit contenue dans $\Delta(\Leg\F)$ est que $\check{s}=\mathcal{G}_{\F}(\mathcal{C}),$ pour une certaine composante $\mathcal{C}$ du diviseur $\ItrF.$
\end{proof}

\begin{pro}[\cite{MP13}]\label{pro:MP13-3.3}
{\sl
Soit $\F$ un feuilletage de degré $d$ sur $\pp$ possédant une singularité radiale $s$ d'ordre $n-1.$ Notons $\ell$ la droite duale de $s.$ Alors, au voisinage d'un point générique de $\ell,$ le tissu $\Leg\F$ peut se décomposer comme le produit $\W_{n}\boxtimes\W_{d-n},$ où $\W_{n}$ est un $n$-tissu irréductible laissant $\ell$ invariante et $\W_{d-n}$ est un $(d-n)$-tissu transverse à $\ell.$ De plus, au voisinage d'un point générique de $\ell,$ on a $$\Delta(\Leg\F)=(n-1)\ell+\Delta(\W_{d-n}).$$
}
\end{pro}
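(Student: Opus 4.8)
The plan is to reduce everything to a local computation around a generic point of $\ell$. Choose an affine chart $(x,y)$ of $\pp$ in which $s$ is the origin; then, in the dual chart $(p,q)$ attached to the line $\{y=px-q\}$, the dual line $\ell=\check s$ is exactly $\{q=0\}$. Near $s$ the foliation is described by a field $\X=C_{n-2}(x,y)\,\mathrm{R}_{s}+\X_{n}+(\text{h.o.t.})$ with $C_{n-2}(s)\neq0$, $\X_n$ homogeneous of degree $n$ and $\X_n\nparallel\mathrm{R}_s$; dually one may take $\omega=A\,\mathrm dx+B\,\mathrm dy$ with $A=C_{n-2}\,y+(\X_n)_y+\cdots$ and $B=-(C_{n-2}\,x+(\X_n)_x+\cdots)$. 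The point of the incidence surface lying over $(p_0,0)\in\ell$ in the direction $x=0$ corresponds, via $y=px-q$, to $(x,y)$ near $s$, so only the germ of $\F$ at $s$ enters the description of $\Leg\F$ there.

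Now substitute $y=px-q$ into $\check F(p,q,x)=A(x,px-q)+p\,B(x,px-q)$ and group terms by the vanishing order at $s$ of the homogeneous parts of $\X$. The parts of degree $\le n-1$ all come from $C_{n-2}\,\mathrm{R}_s$ and contribute to $\check F$ only the term $-q\,C_{n-2}(x,px-q)=-C_{n-2}(s)\,q+(\text{monomials }x^iq^j,\ j\ge1,\ i+j\ge2)$; the part of degree $n$ contributes a form $\Phi_n(p;x,q)$, homogeneous of degree $n$ in $(x,q)$, with $\Phi_n(p;x,0)=\psi(p)\,x^n$ where $\psi$ is a nonzero polynomial — its non-vanishing is precisely the condition $\X_n\nparallel\mathrm{R}_s$; the parts of degree $>n$ contribute forms homogeneous of degree $>n$ in $(x,q)$. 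One checks that, besides the two monomials $q$ and $x^n$, every monomial $x^iq^j$ so produced satisfies $i+nj>n$. Hence
\[
\check F(p,q,x)=-C_{n-2}(s)\,q+\psi(p)\,x^n+\sum_{i+nj>n}c_{ij}(p)\,x^i q^j ,
\]
so $\check F(p_0,0,x)=x^n\bigl(\psi(p_0)+O(x)\bigr)$: for generic $p_0$ the root $x=0$ has multiplicity exactly $n$, and, $\Leg\F$ being a reduced $d$-web, there are $d-n$ further simple roots, all non-zero.

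By the Weierstrass preparation theorem at the direction $x=0$ (and ordinary factorisation at the $d-n$ simple roots) one obtains, near a generic point of $\ell$, a factorisation $\check F=(\text{unit})\cdot G_n\cdot G_{d-n}$ with $G_n$ monic of degree $n$ in $x$ whose roots tend to $0$ as $q\to0$, and $G_{d-n}$ of degree $d-n$ in $x$ whose roots stay away from $0$. Let $\W_n$ and $\W_{d-n}$ be the webs defined by $G_n$ and $G_{d-n}$; then $\Leg\F=\W_n\boxtimes\W_{d-n}$ near a generic point of $\ell$. Since $x=0$ is a root of $G_n(p,0,x)$ for every $p$ and $x=0$ is the direction of $\ell$, the line $\ell$ is invariant by $\W_n$; as the roots of $G_{d-n}$ near $\ell$ are non-zero, $\W_{d-n}$ is transverse to $\ell$. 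The Newton polygon of $\check F(p_0,q,x)$, viewed as a series in $(x,q)$ near the origin, has a single compact edge joining the axes, from $(0,1)$ to $(n,0)$, of slope $-1/n$ (all other monomials lie strictly above it); therefore the $n$ roots of $G_n$ form a single Puiseux branch $x\sim c(p)\,\zeta^{\,j}q^{1/n}$, $\zeta=e^{2\pi\mathrm{i}/n}$, cyclically permuted by the monodromy around $\ell$, whence $G_n$ is irreducible and $\W_n$ is an irreducible $n$-web.

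Finally, from these Puiseux expansions $\mathrm{disc}_x G_n=q^{\,n-1}\cdot(\text{unit})$ near $(p_0,0)$, i.e. $\Delta(\W_n)=(n-1)\ell$ near $\ell$; and near a generic point of $\ell$ the directions of $\W_n$ (of order $q^{1/n}$) and those of $\W_{d-n}$ (bounded away from $0$) are pairwise distinct, so $\W_n$ and $\W_{d-n}$ have no common tangent direction there, giving $\Delta(\Leg\F)=\Delta(\W_n)+\Delta(\W_{d-n})=(n-1)\ell+\Delta(\W_{d-n})$ near $\ell$. The main difficulty is the last discriminant count: one must justify that the $n$ small roots behave like $c\,q^{1/n}$ and no other exponent, which rests exactly on the interplay between the linear term $-C_{n-2}(s)\,q$ of $\check F$ (non-zero thanks to $\nu(\F,s)=1$) and the term $\psi(p)\,x^n$ (non-zero thanks to $\tau(\F,s)=n$ and $\X_n\nparallel\mathrm{R}_s$), together with the verification that all remaining monomials lie strictly above the Newton edge.
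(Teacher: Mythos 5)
This proposition is stated in the paper as a quotation from \cite{MP13} and is used as a black box: the memoir contains no proof of it, so there is nothing internal to compare your argument with. Judged on its own, your proof is correct in substance. The expansion $\check F(p,q,x)=-C_{n-2}(s)\,q+\psi(p)\,x^n+\sum_{i+nj>n}c_{ij}(p)\,x^iq^j$ is right: the radial parts of degree $k\le n-1$ only produce monomials with $j\ge1$, hence $i+nj=k+(n-1)j>n$ as soon as $k\ge2$, the unique $q$-monomial has the constant coefficient $-C_{n-2}(s)\neq0$ (this is where $\nu(\F,s)=1$ enters), the degree-$n$ part contributes $x^n$ with coefficient $\psi(p)=(\X_n)_y(1,p)-p(\X_n)_x(1,p)\not\equiv0$ exactly because $\X_n\nparallel\mathrm{R}_s$, and all remaining monomials satisfy $i+j>n$. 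The Newton polygon at a point $(p_0,0)$ with $\psi(p_0)\neq0$ then has the single compact edge from $(0,1)$ to $(n,0)$, which yields the Weierstrass factor $G_n$ with $G_n(p,0,x)=x^n$ (invariance of $\ell$), the cyclic monodromy on the $n$ Puiseux roots $x\sim c\,\zeta^kq^{1/n}$ (irreducibility of $\W_n$), $\mathrm{disc}_xG_n=q^{n-1}\cdot(\text{unit})$, and the separation from the complementary roots, which is all that is needed to conclude $\Delta(\Leg\F)=(n-1)\ell+\Delta(\W_{d-n})$ near a generic point of $\ell$.

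Two inaccuracies are worth flagging, neither fatal. First, your aside that the remaining $d-n$ roots are \emph{simple} at a generic point of $\ell$ is false in general: $\W_{d-n}$ may well be singular along $\ell$, and the paper's degree-$4$ example immediately after the proposition (where $\check O$ appears with multiplicity $2>1$ in $\Delta(\Leg\F)$) is precisely such a case; fortunately you only ever use that these roots stay bounded away from $x=0$, so nothing downstream breaks, but the sentence should be weakened. Second, your factorisation $\check F=(\text{unit})\cdot G_n\cdot G_{d-n}$ with $\deg_xG_{d-n}=d-n$ tacitly assumes that the coefficient of $x^d$ in $\check F$, namely $A_d(1,p)+pB_d(1,p)-qH_d(1,p)$, does not vanish identically on $q=0$; if the line at infinity of the chosen chart lies in the tangency curve between $\F$ and the pencil of lines through $s$, some of the complementary roots escape to $x=\infty$ along $\ell$ and your $G_{d-n}$ has too small a degree. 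This is only a chart artifact (directions tending to $x=\infty$ are still transverse to $\ell$, and the intrinsic decomposition and discriminant identity survive), and it disappears if you add the one-line remark that the affine chart is chosen so that $L_\infty$ avoids that tangency curve (a curve of degree $d+1$, so a generic chart works). With these two touch-ups the argument is complete and self-contained.
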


\noindent Cette proposition implique, en particulier, que la multiplicité de $\Delta(\Leg\F)$ le long de la droite $\ell$ est $\geq n-1;$ l'égalité est réalisée si et seulement si le tissu $\W_{d-n}$ est régulier au voisinage de tout point générique de $\ell.$ Voici un exemple où cette inégalité est stricte.

\begin{eg}
Considérons le feuilletage $\F$ de degré $4$ sur $\pp$ défini en carte affine par $$\omega=x\mathrm{d}y-y\mathrm{d}x+y^{2}(y+1)^{2}\mathrm{d}x\hspace{1mm};$$ on observe que l'origine $O=(0,0)$ est une singularité radiale de $\F$ d'ordre $1.$ Dans la carte affine $(p,q)$ de $\pd$ associée à la droite $\{y=px-q\}\subset{\pp},$ le tissu $\Leg\F$ est implicitement présenté par l'équation
\begin{Small}
\begin{align*}
p^{4}\cdot x^{4}-2p^{3}(2q-1)\cdot x^{3}+p^{2}(6q^{2}-6q+1)\cdot x^{2}-2qp(q-1)(2q-1)\cdot x+q(q^{3}-2q^{2}+q+1)=0, \qquad x=\frac{\mathrm{d}q}{\mathrm{d}p}.
\end{align*}
\end{Small}
Son discriminant est $$\Delta(\Leg\F)=16p^{12}q^{2}(16q+1)\hspace{1mm};$$ on remarque que la droite duale de l'origine $\check{O}=(q=0)$ est de multiplicité $2>1$ dans le discriminant de $\Leg\F.$
\end{eg}

\Subsection{Courbure et platitude}\label{subsec:Courbure-platitude}\vspace{3mm}

\hspace{-1.34mm}On commence par rappeler la définition de la courbure d'un $k$-tissu $\mathcal{W}.$ On suppose dans un premier temps que $\mathcal{W}$ est un germe de $k$-tissu de $(\mathbb{C}^{2},0)$ complètement décomposable, $\mathcal{W}=\mathcal{F}_{1}\boxtimes\cdots\boxtimes\mathcal{F}_{k}.$ Soit, pour tout $1\leq i\leq k,$ une $1$-forme $\omega_{i}$ à singularité isolée en $0$ définissant le feuilletage $\mathcal{F}_{i}.$ D'après \cite{PP08}, pour tout triplet $(r,s,t)$ avec $1\leq r<s<t\leq k,$ on définit $\eta_{rst}~=\eta(\mathcal{F}_{r}\boxtimes\mathcal{F}_{s}\boxtimes\mathcal{F}_{t})$ comme l'unique $1$-forme méromorphe satisfaisant les égalités suivantes:
\begin{equation}\label{equa:eta-rst}
{\left\{\begin{array}[c]{lll}
\mathrm{d}(\delta_{st}\,\omega_{r}) &=& \eta_{rst}\wedge\delta_{st}\,\omega_{r}\\
\mathrm{d}(\delta_{tr}\,\omega_{s}) &=& \eta_{rst}\wedge\delta_{tr}\,\omega_{s}\\
\mathrm{d}(\delta_{rs}\,\omega_{t}) &=& \eta_{rst}\wedge\delta_{rs}\,\omega_{t}
\end{array}
\right.}
\end{equation}
où $\delta_{ij}$ désigne la fonction définie par $\omega_{i}\wedge\omega_{j}=\delta_{ij}\,\mathrm{d}x\wedge\mathrm{d}y.$ Comme chacune des $1$-formes $\omega_{i}$ n'est définie qu'à multiplication près par un inversible de $\mathcal{O}(\mathbb{C}^{2},0),$ il en résulte que chacune des $1$-formes $\eta_{rst}$ est bien déterminée à l'addition près d'une $1$-forme holomorphe fermée. Ainsi la $1$-forme
\begin{equation}\label{equa:eta}
\hspace{7mm}\eta(\mathcal{W})=\eta(\mathcal{F}_{1}\boxtimes\cdots\boxtimes\mathcal{F}_{k})=\sum_{1\le r<s<t\le k}\eta_{rst}
\end{equation}
est bien définie à l'addition près d'une $1$-forme holomorphe fermée. La {\sl courbure} du tissu \, $\mathcal{W}=\mathcal{F}_{1}\boxtimes\cdots\boxtimes\mathcal{F}_{k}$ est par définition la $2$-forme
\begin{align*}
&K(\mathcal{W})=K(\mathcal{F}_{1}\boxtimes\cdots\boxtimes\mathcal{F}_{k})=\mathrm{d}\,\eta(\mathcal{W}).
\end{align*}

\noindent On peut vérifier que $K(\mathcal{W})$ est une $2$-forme méromorphe à pôles le long du discriminant $\Delta(\mathcal{W})$ de $\mathcal{W},$ canoniquement associée à $\mathcal{W}$; plus précisément, pour toute application holomorphe dominante $\varphi,$ on a $K(\varphi^{*}\mathcal{W})=\varphi^{*}K(\mathcal{W}).$

\noindent Si maintenant $\mathcal{W}$ est un $k$-tissu sur une surface complexe $S$ (non forcément complètement décomposable)\,, alors on peut le transformer en un $k$-tissu complètement décomposable au moyen d'un revêtement galoisien ramifié. L'invariance de la courbure de ce nouveau tissu par l'action du groupe de \textsc{Galois} permet de la redescendre en une $2$-forme méromorphe globale sur $S,$ à pôles le long du discriminant de $\mathcal{W}.$
\begin{defin}
Un $k$-tissu $\mathcal{W}$ est dit \textbf{\textit{plat}} si sa courbure $K(\mathcal{W})$ est identiquement nulle.
\end{defin}

\noindent L'intérêt de cette notion apparaît déjà pour les germes de $3$-tissus puisque dans ce cas la platitude est équivalente au fait d'être parallélisable (\emph{voir} \S\ref{subsec:structure-3-tissu}). Cette notion est également utile pour la classification des tissus de rang maximal. En effet, un résultat de \textsc{Mih\u{a}ileanu} montre que la platitude est une condition nécessaire pour la maximalité du rang. Pour plus de détails et des informations récentes, \emph{voir} \cite{Hen06,Rip07}.

\noindent Reprenons l'exemple du $k$-tissu $\W$ associé à l'équation différentielle (\ref{tissu-equa-diff}) avec $k=3$:
\begin{align*}
& F(x,y,p):=a_{0}(x,y)p^{3}+a_{1}(x,y)p^{2}+a_{2}(x,y)p+a_{3}(x,y)=0, \qquad   p=\frac{\mathrm{d}y}{\mathrm{d}x}.
\end{align*}
Rappelons une formule due à A.~\textsc{Hénaut} \cite{Hen00} qui donne la courbure du $3$-tissu $\W$ en fonction des coefficients $a_i.$ Posons
\begin{align*}
&
\hspace{-2.2cm}R:=\text{Result}(F,\partial_{p}(F))=
\left| \begin{array}{ccccc}
a_0  & a_1  &  a_2  & a_3  & 0 \\
0    & a_0  &  a_1  & a_2  & a_3   \\
3a_0 & 2a_1 &  a_2  & 0    & 0 \\
0    & 3a_0 & 2a_1  & a_2  & 0  \\
0    & 0    & 3a_0  & 2a_1 & a_2
\end{array} \right|\not\equiv0
\end{align*}
\begin{align*}
&
\alpha_1=
\left| \begin{array}{ccccc}
\partial_{y}(a_{0})                     & a_0 & -a_0  &  0    &  0   \\
\partial_{x}(a_{0})+\partial_{y}(a_{1}) & a_1 &  0    & -2a_0 &  0    \\
\partial_{x}(a_{1})+\partial_{y}(a_{2}) & a_2 &  a_2  & -a_1  & -3a_0  \\
\partial_{x}(a_{2})+\partial_{y}(a_{3}) & a_3 &  2a_3 &  0    & -2a_1   \\
\partial_{x}(a_{3})                     & 0   &  0    &  a_3  & -a_2
\end{array} \right|
\end{align*}
et
\begin{align*}
&
\alpha_2=
\left| \begin{array}{ccccc}
0   & \partial_{y}(a_{0})                     & -a_0  &  0    &  0  \\
a_0 & \partial_{x}(a_{0})+\partial_{y}(a_{1}) &  0    & -2a_0 &  0   \\
a_1 & \partial_{x}(a_{1})+\partial_{y}(a_{2}) &  a_2  & -a_1  & -3a_0  \\
a_2 & \partial_{x}(a_{2})+\partial_{y}(a_{3}) &  2a_3 &  0    & -2a_1   \\
a_3 & \partial_{x}(a_{3})                     &  0    &  a_3  & -a_2
\end{array} \right|\hspace{0.5mm};
\end{align*}
alors la courbure du $3$-tissu $\W$ est donnée par (\cite{Hen00})
\begin{equation}\label{equa:Formule-Henaut}
K(\W)=\left(\partial_{y}\left(\frac{\alpha_1}{R}\right)-\partial_{x}\left(\frac{\alpha_2}{R}\right)\right)\mathrm{d}x\wedge\mathrm{d}y.
\end{equation}
%\noindent Voici un algorithme
%%\footnote{\hspace{0.1cm} Il convient de mentionner d'ailleurs les travaux récents de D. \textsc{Mar\'{\i}n} qui a implémenté des procédures sur \verb"Maple" qui permettent de calculer de façon effective la courbure d'un $k$-tissu, pour $k$ assez petit.}
%réalisé en \verb|Maple| par O. \textsc{Ripoll} \cite{Rip05} qui permet de calculer la courbure d'un $3$-tissu $\mathcal{W}$ dès que l'on connaît une équation différentielle qui le définit:
%
%
%\begin{small}
%\begin{verbatim}
%restart:with(LinearAlgebra):
%kw:=proc(F) local a0,a1,a2,a3,R,alpha0,alpha1,alpha2,k:
%a3:=coeff(F,p,0):a2:=coeff(F,p,1):a1:=coeff(F,p,2):a0:=coeff(F,p,3):
%R:=Determinant(Matrix([[a0,a1,a2,a3,0],[0,a0,a1,a2,a3],[3*a0,2*a1,a2,0,0],
%[0,3*a0,2*a1,a2,0],[0,0,3*a0,2*a1,a2]]));
%alpha0:=[diff(a0,y),diff(a0,x)+diff(a1,y),diff(a1,x)+diff(a2,y),diff(a2,x)+
%diff(a3,y),diff(a3,x)];
%alpha1:=Determinant(Matrix([alpha0,[a0,a1,a2,a3,0],[-a0,0,a2,2*a3,0],
%[0,-2*a0,-a1,0,a3],[0,0,-3*a0,-2*a1,-a2]]));
%alpha2:=Determinant(Matrix([alpha0,[0,a0,a1,a2,a3],[-a0,0,a2,2*a3,0],
%[0,-2*a0,-a1,0,a3],[0,0,-3*a0,-2*a1,-a2]]));
%k:=simplify(diff(alpha2/R,x)+diff(alpha1/R,y)):
%end proc:
%\end{verbatim}
%\end{small}
\begin{eg}
Soit $\mathcal{W}$ le $3$-tissu de $\mathbb{C}^{2}$ défini par l'équation différentielle $$(y')^{3}+4xy\cdot y'-8y^{2}=0\hspace{1mm};$$ cette équation est un exemple pris par \textsc{Cauchy} pour illustrer la difficulté à cerner les solutions singulières d'une équation différentielle. Un calcul, utilisant la formule~(\ref{equa:Formule-Henaut}), montre que $$K(\mathcal{W})\equiv0.$$ Le tissu $\mathcal{W}$ est donc plat. Cependant, le $3$-tissu $\mathcal{W}'$ défini sur $\mathbb{C}^{2}$ par l'équation différentielle
\begin{equation*}\footnote{\hspace{0.1cm} Cette équation provient de la physique, formulée dans la thèse de G. \textsc{Mignard} (\cite[Pag. 10]{Mig99}).}
(y')^{3}-4y\cdot y'-4x=0
\end{equation*}
ne l'est pas puisque $$K(\mathcal{W}')=\frac{12\,x(54x^{2}-27y+36y^{2}+64y^{3})}{(27x^{2}-16y^{3})^{2}}\,\mathrm{d}x\wedge\mathrm{d}y\not\equiv0.$$
\end{eg}
\vspace{2mm}

\noindent Notons qu'un $k$-tissu $\mathcal{W}$ sur $\mathbb{P}^{2}_{\mathbb{C}}$ est plat si et seulement si sa courbure est holomorphe le long des points génériques des composantes irréductibles de $\Delta(\mathcal{W})$. Ceci résulte de la définition de $K(\mathcal{W})$ et du fait qu'il n'existe pas de $2$-forme holomorphe sur $\mathbb{P}^{2}_{\mathbb{C}}$ autre que la $2$-forme nulle.

\noindent Dans \cite[\S\hspace{0.1mm}2]{MP13}, les auteurs ont étudié l'holomorphie de la courbure d'un germe de tissu plan. Nous rappelons brièvement ci-dessous quelques-uns de leurs résultats qui nous seront utiles dans les chapitres ultérieurs.

\begin{thm}[\cite{MP13}]\label{thm:MP13-1}
{\sl
Soit $\W$ un germe de $(k+2)$-tissu de $(\mathbb{C}^{2},0)$ à discriminant non vide et lisse (mais pas nécessairement réduit). Supposons que $\W=\W_{2}\boxtimes\W_{k}$ où $\W_{2}$ est un $2$-tissu satisfaisant $\Delta(\W_{2})=\Delta(\W)$ et $\W_{k}$ est un $k$-tissu. La courbure de $\W$ est holomorphe le long de $\Delta(\W)$ si et seulement si $\Delta(\W)$ est invariant par $\W_{2}$ ou par $\beta_{\W_{2}}(\W_{k}),$ où $\beta_{\W_{2}}(\W_{k})$ désigne le $\W_{2}$-barycentre de $\W_{k}$.
}
\end{thm}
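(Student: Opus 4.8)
The plan is to reduce the two-sided implication to the computation of the \emph{principal polar part} of the curvature along the (smooth) discriminant, performed in a convenient local normal form, and then to recognise the resulting obstruction as exactly the announced alternative. So first I would normalise. Choose local coordinates $(x,y)$ on $(\mathbb{C}^2,0)$ with $\Delta(\W)=\{y=0\}$ set–theoretically, and write $\W_2=\F_1\boxtimes\F_2$. Since $\Delta(\W_2)=\Delta(\W)$, at a generic point of $\{y=0\}$ the only coincidence among the tangent directions of $\W$ is $\mathrm{T}\F_1=\mathrm{T}\F_2$; hence $\W_k$ is regular there and one may write $\W_k=\G_1\boxtimes\cdots\boxtimes\G_k$ locally, the $\G_i$ pairwise transverse and transverse to $\F_1,\F_2$. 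Pulling back by the ramified double cover $y=v^2$ — which, by $K(\varphi^{*}\W)=\varphi^{*}K(\W)$ and Galois invariance, does not change whether the curvature is holomorphic along the discriminant — one may further assume $\W$ completely decomposable near a generic point of the discriminant, with $\F_1,\F_2$ given by $\mathrm{d}y-(a\pm b)\,\mathrm{d}x$, where $a,b$ are holomorphic and $b$ vanishes exactly on $\{y=0\}$, and with $\G_i$ of holomorphic slope $s_i$.

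Next I would use additivity of the curvature over $3$-subwebs, $K(\W)=\sum_{r<s<t}K(\F_r\boxtimes\F_s\boxtimes\F_t)$ (relabelling the $k+2$ foliations), to cut the problem down. A $3$-subweb has its discriminant passing through a generic point of $\{y=0\}$ only if it contains both $\F_1$ and $\F_2$, i.e.\ only if it is of the form $\F_1\boxtimes\F_2\boxtimes\G_i$: every $3$-subweb containing at most one of $\F_1,\F_2$ is regular near a generic point of $\{y=0\}$, hence contributes a curvature that is holomorphic there. Therefore $K(\W)$ is holomorphic along $\Delta(\W)$ if and only if $\sum_{i=1}^{k}K(\F_1\boxtimes\F_2\boxtimes\G_i)$ is. For each $i$ I would solve the defining system \eqref{equa:eta-rst} for $\eta_{12i}=\eta(\F_1\boxtimes\F_2\boxtimes\G_i)$; the solution carries the factor $1/(p_1-p_2)=1/(2b)$, which is the only source of a pole, and I would then expand $\mathrm{d}\eta_{12i}$ and read off its principal polar part along $\{y=0\}$. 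I expect this principal part to depend on $\G_i$ only through $s_i$ and its first transversal derivative, in the combination $1/(s_i-a)$ — equivalently through the affine position of $\mathrm{T}\G_i$ with respect to the common direction of $\W_2$ along $\{y=0\}$ — which is precisely the datum defining the barycenters $\beta_{\F_1}(\W_k)$ and $\beta_{\F_2}(\W_k)$ (these two foliations coincide along $\{y=0\}$, since $\mathrm{T}\F_1=\mathrm{T}\F_2$ there, so $\Delta(\W)$ invariant by $\beta_{\W_2}(\W_k)$ is a single scalar condition along $\{y=0\}$, just as $\Delta(\W)$ invariant by $\W_2$ is).

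Then I would sum over $i$. Introducing the barycentric slope $s_{\ast}$ by $\tfrac{1}{s_{\ast}-a}=\tfrac1k\sum_{i}\tfrac{1}{s_i-a}$, the accumulated principal polar part of $\sum_i K(\F_1\boxtimes\F_2\boxtimes\G_i)$ becomes a single scalar expression on $\{y=0\}$ which I expect to factor as a product of two terms: one proportional to $a|_{y=0}$ (vanishing exactly when the common direction of $\W_2$ along $\Delta(\W)$ is tangent to $\Delta(\W)$, i.e.\ $\Delta(\W)$ invariant by $\W_2$) and one proportional to $s_{\ast}|_{y=0}$ (vanishing exactly when $\Delta(\W)$ is invariant by $\beta_{\W_2}(\W_k)$). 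The case $k=1$ — where $\beta_{\W_2}(\W_k)=\G_1$ and the statement is the known dichotomy for a $3$-web with smooth discriminant — is a useful consistency check for the factorisation. Finally one must verify that once this principal polar part vanishes the remaining lower-order polar terms vanish as well, so that no third alternative is created; this closes both implications.

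The hard part will be the explicit curvature computation: solving \eqref{equa:eta-rst} and extracting the polar expansion of $\mathrm{d}\eta_{12i}$ in a form clean enough that the summation over $i$ visibly produces the barycentric slope $s_{\ast}$ and the product factorisation, while correctly absorbing the (possibly non-reduced) multiplicity of $\Delta(\W)$ together with the square roots introduced by the double cover. A secondary difficulty is the bookkeeping required to show that the \emph{entire} polar part — not only its leading term — is killed precisely under the stated dichotomy, and to check that the two conditions are genuinely the vanishing loci of the two factors obtained, using the compatibility of "invariant by a $2$-web whose discriminant lies on $\Delta(\W)$" with its reduction to a single scalar identity along $\Delta(\W)$.
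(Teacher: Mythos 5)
First, a point of reference: the paper does not prove this statement at all — Théorème~\ref{thm:MP13-1} is recalled from \cite{MP13} (see the sentence introducing \S\ref{subsec:Courbure-platitude}: the results of \cite[\S 2]{MP13} are ``rappelés'' without demonstration), so there is no internal proof to compare yours to; the comparison is with the argument of Marín--Pereira. Your outline does reproduce the strategy of that argument: normalise at a generic point of the smooth discriminant with $\Delta(\W)=\{y=0\}$, use $\Delta(\W_2)=\Delta(\W)$ (as divisors) to see that $\W_k$ is regular and transverse to $\W_2$ there, pass to the double cover to get complete decomposability with $\F_{1,2}:\mathrm{d}y-(a\pm b)\mathrm{d}x$, use the decomposition \eqref{equa:eta} to discard all $3$-sous-tissus not containing both $\F_1,\F_2$, and read the obstruction off the polar part of $\sum_i K(\F_1\boxtimes\F_2\boxtimes\G_i)$, recognising the barycentric combination $\sum_i 1/(s_i-a)$ and hence $\beta_{\W_2}(\W_k)$. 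The identification of the two invariance conditions with $a|_{y=0}\equiv 0$ and $s_*|_{y=0}\equiv 0$ is also correct.

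The gap is that the decisive content of the theorem is precisely the part you leave as ``I expect'': (a) that the polar part of each $K(\F_1\boxtimes\F_2\boxtimes\G_i)$ obtained from \eqref{equa:eta-rst} depends on $\G_i$ only through $1/(s_i-a)$ (to leading order), and (b) that after summation the \emph{entire} polar part — not just the residue — vanishes exactly when $a\equiv 0$ or $s_*\equiv 0$ along $\{y=0\}$. Point (b) is not mere bookkeeping: when the discriminant is non-reduced, $b$ vanishes to order $m\geq 2$ (on the cover) and the curvature can have a pole of order greater than one along $\Delta(\W)$, so the ``si'' direction requires showing the cancellation of all polar coefficients under either hypothesis, and the ``seulement si'' direction requires knowing that the leading coefficient you compute really is the principal part and is a non-trivially factored product (of the shape $a\cdot\sum_i s_i/(s_i-a)$, equivalently $a\cdot s_*/(s_*-a)$ up to a unit). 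Until this computation is actually carried out — as it is, explicitly, in \cite{MP13} — neither implication is established by the write-up; everything before it is a correct reduction, but not yet a proof.
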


\noindent On en tire en particulier le corollaire suivant.

\begin{cor}\label{cor:MP13}
{\sl
Soit $\W$ un germe de $3$-tissu à l'origine de $\mathbb{C}^{2}$ de discriminant non vide et lisse. Supposons que $\W=\W_{2}\boxtimes\F$ où $\W_{2}$ est un $2$-tissu vérifiant $\Delta(\W_{2})=\Delta(\W)$ et $\F$ est un feuilletage. La courbure de $\W$ est holomorphe le long de $\Delta(\W)$ si et seulement si $\Delta(\W)$ est invariant par $\W_{2}$ ou par $\F.$
}
\end{cor}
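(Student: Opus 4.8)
The plan is to obtain this statement as the particular case $k=1$ of Théorème~\ref{thm:MP13-1}. A $3$-tissu of the form $\W=\W_2\boxtimes\F$, with $\W_2$ a $2$-tissu and $\F$ a feuilletage, is precisely a $(k+2)$-tissu with $k=1$ and $\W_k=\W_1=\F$. Hence, under the hypotheses made in the corollary ($\Delta(\W)$ non-empty and smooth, $\W=\W_2\boxtimes\W_1$, and $\Delta(\W_2)=\Delta(\W)$ — which are subsumed by those of the theorem, the theorem even allowing a non-reduced discriminant), Théorème~\ref{thm:MP13-1} applies directly and yields that $K(\W)$ is holomorphic along $\Delta(\W)$ if and only if $\Delta(\W)$ is invariant by $\W_2$ or by $\beta_{\W_2}(\W_1)$.

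The only point that then remains to be made explicit is that the $\W_2$-barycentre of the $1$-tissu $\F$ reduces to $\F$ itself. Indeed, recall the construction of the barycentre $\beta_{\mathcal{G}}(\mathcal{V})$ of a $k$-tissu $\mathcal{V}$ with respect to a foliation $\mathcal{G}$ generically transverse to it: at a generic point $p$, its tangent direction is the barycentre, computed in the affine line $\mathbb{P}\mathrm{T}_pS\setminus\mathrm{T}_p\mathcal{G}$, of the $k$ tangent directions of $\mathcal{V}$ at $p$. When $k=1$ there is a single such direction, which is its own barycentre; so $\beta_{\mathcal{G}}(\F)=\F$ for every $\mathcal{G}$ generically transverse to $\F$. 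Writing $\W_2=\mathcal{G}_1\boxtimes\mathcal{G}_2$ and using the product rule for the barycentre, $\beta_{\W_2}(\F)=\beta_{\mathcal{G}_1}(\F)\boxtimes\beta_{\mathcal{G}_2}(\F)$ has the same leaves as $\F$; in particular a curve is invariant by $\beta_{\W_2}(\F)$ if and only if it is invariant by $\F$.

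Substituting this identification into the conclusion of Théorème~\ref{thm:MP13-1} gives exactly the statement of Corollaire~\ref{cor:MP13}. I expect no genuine obstacle here: the entire substance is carried by Théorème~\ref{thm:MP13-1}, and the only care required is to treat correctly the degenerate $1$-tissu case of the barycentre.
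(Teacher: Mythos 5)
Votre argument est correct et suit exactement la voie du texte : le papier présente le Corollaire~\ref{cor:MP13} comme conséquence immédiate du Théorème~\ref{thm:MP13-1} (cas $k=1$), sans preuve écrite, et c'est précisément ce que vous faites. L'identification $\beta_{\W_2}(\F)=\F$ (le barycentre d'une seule direction étant elle-même, donc mêmes feuilles et même notion d'invariance) est bien le seul point à expliciter, et vous le traitez correctement.
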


\begin{eg}
Considérons le $3$-tissu $\W$ présenté par l'équation
\begin{align*}
&\underbrace{\mathrm{d}y}_{\mathcal F} \cdot\underbrace{\left((y-x^{2})y\mathrm{d}y^{2}-2y\mathrm{d}x\mathrm{d}y+\mathrm{d}x^{2}\right)}_{\mathcal W_2}=0.
\end{align*}
Pour ce tissu nous avons
\begin{align*}
&&\Delta(\W)=\Delta(\W_{2})=4y\hspace{0.1mm}x^{2}  \hspace{1cm} \text{et} \hspace{1cm} K(\W)=\dfrac{y}{x^{2}}\mathrm{d}x\wedge\mathrm{d}y.
\end{align*}
On voit que la courbure de $\W$ est holomorphe sur la composante $\{y=0\}\subset\Delta(\W),$ mais elle ne l'est pas sur la composante $\{x=0\}\subset\Delta(\W).$ Cela provient du fait que la droite $(y=0)$ est invariante par $\F$ et que la droite $(x=0)$ n'est ni $\F$-invariante, ni $\W_{2}$-invariante.
\end{eg}

\noindent Considérons un germe de $n$-tissu $\W_{n}$ à l'origine de $\mathbb{C}^{2}.$ Supposons que $\Delta(\W_{n})$ possède une composante irréductible $C$ totalement invariante par $\W_{n}.$ Il existe donc un système de coordonnées locales $(x,y)$ dans lequel $C=\{y=0\}$ et tel que $\W_{n}$ soit donné par
\begin{align*}
& \omega=\mathrm{d}y^{n}+y^{m}(a_{n-1}(x,y)\mathrm{d}y^{n-1}\mathrm{d}x+\cdots+a_{0}(x,y)\mathrm{d}x^{n})
\end{align*}
pour un certain entier $m\geq1.$

\begin{lem}[\cite{MP13}]\label{lem:MP13-2.5}
{\sl
Avec les notations ci-dessus on a $\hspace{0.5mm}\Delta(\W_{n})\geq (n-1)C.$ L'égalité est réalisée si et seulement si $m=1$ et $a_{0}(x,0)\neq0$; auquel cas $\W_{n}$ est irréductible.
}
\end{lem}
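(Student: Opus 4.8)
The plan is to translate the statement into the implicit differential equation attached to $\W_{n}$ and to run a Newton--Puiseux analysis of its $p$-discriminant along $C$. In the given coordinates $\W_{n}$ is the $n$-web defined by
$$F(x,y,p)=p^{n}+y^{m}\sum_{i=0}^{n-1}a_{i}(x,y)\,p^{i}\in\C\{x,y\}[p],\qquad p=\tfrac{\mathrm dy}{\mathrm dx},$$
which is monic in $p$; hence $\Delta(\W_{n})$ is the divisor of zeros of $\mathrm{Disc}_{p}F$, and the inequality $\Delta(\W_{n})\ge(n-1)C$ is precisely $\mathrm{ord}_{y}\!\bigl(\mathrm{Disc}_{p}F\bigr)\ge n-1$, all $y$-orders being computed at a fixed generic value $x=x_{0}$ (which reads off the multiplicity of $C=\{y=0\}$ in the divisor).

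First I would set aside the degenerate case $a_{0}\equiv0$: then $p\mid F$, and since $\W_{n}$ is a web one cannot have $p^{2}\mid F$, so $F=p\,\hat F$ with $\hat F(x,y,0)=y^{m}a_{1}(x,y)\not\equiv0$. From $\mathrm{Disc}_{p}(p\hat F)=\mathrm{Disc}_{p}(\hat F)\cdot\hat F(x,y,0)^{2}$ and the lemma applied (by induction on $n$) to the $(n-1)$-web defined by $\hat F$, one gets $\mathrm{ord}_{C}\Delta(\W_{n})\ge(n-2)+2m\ge n$; the inequality is then strict and, as $a_{0}(x,0)\equiv0$, the equality alternative is not at issue. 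So assume henceforth $a_{0}\not\equiv0$, i.e.\ $a_{0}(x_{0},0)\ne0$.

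Now comes the main case. As $F(x_{0},0,p)=p^{n}$, the $n$ Puiseux roots $p_{1}(y),\dots,p_{n}(y)$ of $F(x_{0},y,\cdot)$ all have positive $y$-order (and tend to $0$), and $\mathrm{ord}_{y}\mathrm{Disc}_{p}F=\sum_{i\ne j}\mathrm{ord}_{y}(p_{i}-p_{j})$. I would split the roots into their Galois orbits $O_{1},\dots,O_{r}$ under $y^{1/N}\mapsto\zeta\,y^{1/N}$, of sizes $e_{1}\le\cdots\le e_{r}$ with $\sum_{l}e_{l}=n$, and bound two kinds of contributions. For $p_{i},p_{j}$ in distinct orbits, $\mathrm{ord}_{y}(p_{i}-p_{j})\ge\min(\mathrm{ord}_{y}p_{i},\mathrm{ord}_{y}p_{j})\ge 1/\max(e_{l},e_{l'})$, so each pair of orbits contributes at least $2\min(e_{l},e_{l'})$. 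For $p_{i},p_{j}$ in a common orbit $O$ of size $e$, writing a generator as $\phi(y^{1/e})$ with $\phi(u)=\sum_{k\in K}\gamma_{k}u^{k}$, a direct computation yields $\sum_{i\ne j\in O}\mathrm{ord}_{y}(p_{i}-p_{j})=\sum_{c=1}^{e-1}m_{c}$ where $m_{c}=\min\{\,k\in K:e\nmid ck\,\}$; the key point --- a short arithmetic argument using that the orbit genuinely has size $e$ --- is that each $m_{c}$ is a well-defined positive integer, so the within-orbit total is $\ge e-1$, with equality if and only if $1\in K$, that is, the roots of $O$ have order exactly $1/e$. Adding up, $\mathrm{ord}_{y}\mathrm{Disc}_{p}F\ge\sum_{l}(e_{l}-1)+2\sum_{l<l'}\min(e_{l},e_{l'})=n-r+2\sum_{l}(r-l)e_{l}$, which equals $n-1$ when $r=1$ and is $\ge n+r-2\ge n$ when $r\ge2$ (using $\sum_{l}(r-l)e_{l}\ge\sum_{l<r}e_{l}\ge r-1$). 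Hence $\Delta(\W_{n})\ge(n-1)C$, and equality forces $r=1$, i.e.\ a single orbit of size $n$, so $F(x_{0},y,\cdot)$ --- and therefore $F$, being monic in $p$ --- is irreducible, i.e.\ $\W_{n}$ is an irreducible web; the within-orbit equality further forces every $p_{i}$ to have order $1/n$, whence $m+\mathrm{ord}_{y}a_{0}(x_{0},\cdot)=\mathrm{ord}_{y}\prod_{i}p_{i}=1$, so $m=1$ and $a_{0}(x,0)\not\equiv0$. For the converse, when $m=1$ and $a_{0}(x,0)\not\equiv0$ the Newton polygon of $F(x_{0},y,\cdot)$ reduces to the single edge from $(n,0)$ to $(0,1)$ of slope $-1/n$ with binomial --- hence separable --- edge polynomial $t^{n}+a_{0}(x_{0},0)$, so the roots form one orbit of size $n$ and order $1/n$, giving $\mathrm{ord}_{y}\mathrm{Disc}_{p}F=n-1$ and irreducibility.

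The only genuinely non-formal ingredient is the within-orbit estimate $\sum_{c=1}^{e-1}m_{c}\ge e-1$ together with the sharp description of its equality case; the reduction to $a_{0}\not\equiv0$ through multiplicativity of the discriminant, the handling of inter-orbit pairs, the final summation, and the Newton-polygon converse are all routine bookkeeping.
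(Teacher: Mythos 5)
The paper does not prove this lemma at all: it is quoted from \cite{MP13} and used as a black box (notably in Proposition~\ref{pro:MP13-2.6}), so there is no in-paper argument to compare yours with; your proof has to stand on its own, and it does. Your route --- translate the multiplicity of $C$ in $\Delta(\W_n)$ into $\mathrm{ord}_y$ of the $p$-discriminant of the monic equation at a generic $x_0$, dispose of $a_0\equiv0$ by multiplicativity of the discriminant and induction, and in the main case estimate $\sum_{i\ne j}\mathrm{ord}_y(p_i-p_j)$ by splitting the Puiseux roots into Galois orbits, with the within-orbit count $\sum_{c=1}^{e-1}m_c\ge e-1$ and its equality case, then read off $m+\mathrm{ord}_y a_0(x_0,\cdot)=1$ from the product of the roots and get irreducibility from the single orbit --- is correct, and the converse via the one-edge Newton polygon with binomial edge polynomial is also right. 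Three small points to tighten. First, the parenthetical \og i.e.\ $a_{0}(x_{0},0)\ne0$\fg\ after \og assume $a_0\not\equiv0$\fg\ is not what genericity of $x_0$ gives you; it gives $a_0(x_0,\cdot)\not\equiv0$, which is all you actually use later (your equality analysis correctly works with $\mathrm{ord}_y a_0(x_0,\cdot)$), so only the phrasing needs fixing. Second, the arithmetic fact you flag should be written out: if some $c\in\{1,\dots,e-1\}$ had $e\mid ck$ for every exponent $k$ of the orbit generator, then every exponent and $e$ would be divisible by $e/\gcd(c,e)>1$, so the series would live in $\C\{y^{1/e'}\}$ with $e'<e$ and the orbit could not have size $e$; as you say, this is short. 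Third, note that the \og equality\fg\ in the statement is to be read as the multiplicity of $C$ in $\Delta(\W_n)$ being exactly $n-1$ (this is exactly how the paper invokes it, \og de multiplicit\'e minimale $n-1$\fg\ in Proposition~\ref{pro:MP13-2.6}), which is precisely what you prove; a literal divisorial equality of germs at the origin would in addition require $a_0(0,0)\neq0$, a distinction worth one sentence in your write-up.
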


\begin{pro}[\cite{MP13}]\label{pro:MP13-2.6}
{\sl
Soit $\W_{n}$ un germe de $n$-tissu de $(\mathbb{C}^{2},0),\, n\geq2.$ Supposons que $\Delta(\W_{n})$ possède une composante irréductible $C$ totalement invariante par $\W_{n}$ et de multiplicité minimale $n-1.$ Soit $\W_{k-n}$ un germe de $(k-n)$-tissu régulier de $(\mathbb{C}^{2},0)$ transverse à $C.$ Alors la courbure de $\W=\W_{n}\boxtimes\W_{k-n}$ est holomorphe le long de $C.$
}
\end{pro}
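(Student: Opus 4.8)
The plan is to reduce the statement to a local computation near a generic point of $C$ and then to a ramified covering that makes the web completely decomposable. First I would choose coordinates $(x,y)$ with $C=\{y=0\}$ and invoke Lemme~\ref{lem:MP13-2.5}: since $C$ is totally invariant of minimal multiplicity $n-1$, near a generic point $\W_{n}$ is irreducible and of the form $\mathrm{d}y^{n}+y\bigl(a_{n-1}\mathrm{d}y^{n-1}\mathrm{d}x+\cdots+a_{0}\mathrm{d}x^{n}\bigr)$ with $a_{0}(x,0)\neq0$, while $\W_{k-n}=\G_{1}\boxtimes\cdots\boxtimes\G_{k-n}$ consists of $k-n$ regular foliations, pairwise transverse and each transverse to $C$; shrinking the neighbourhood, $a_{0}$ is a unit and the slopes $q_{j}$ of the $\G_{j}$ are holomorphic with $q_{j}(x,0)\neq0$ and $q_{i}(x,0)\neq q_{j}(x,0)$. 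Since $K(\varphi^{*}\W)=\varphi^{*}K(\W)$ for dominant $\varphi$, and since a degree-$n$ covering branched along $C$ multiplies pole orders along $C$ by $n$ whereas the branching locus contributes a factor $t^{n-1}$, one checks that $K(\W)$ is holomorphic along $C$ \emph{if and only if} $K(\sigma^{*}\W)$ is holomorphic along $E=\{t=0\}$, where $\sigma\colon(x,t)\mapsto(x,t^{n})$.

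Next I would describe $\sigma^{*}\W$ on the covering. Expanding the roots of the characteristic polynomial as Puiseux series $p_{i}(x,y)=\sum_{\ell\geq1}b_{i,\ell}(x)y^{\ell/n}$ (the Newton polygon having only the edge joining $(0,1)$ to $(n,0)$, whence $b_{i,1}(x)=\zeta_{n}^{i}(-a_{0}(x,0))^{1/n}\neq0$), one gets $p_{i}(x,t^{n})=t\,g_{i}(x,t)$ with $g_{i}$ holomorphic and $g_{i}|_{E}=b_{i,1}\neq0$; hence $\sigma^{*}\W_{n}=\F_{1}\boxtimes\cdots\boxtimes\F_{n}$ where $\F_{i}$ is the regular foliation $n\,t^{n-2}\mathrm{d}t-g_{i}\,\mathrm{d}x$, and likewise $\sigma^{*}\G_{j}$ is $n\,t^{n-1}\mathrm{d}t-q_{j}(x,t^{n})\,\mathrm{d}x$. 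Thus $\sigma^{*}\W$ is completely decomposable, and $\F_{1},\dots,\F_{n},\sigma^{*}\G_{1},\dots,\sigma^{*}\G_{k-n}$ are all tangent along $E$ to the vertical foliation $\mathcal{V}=\{x=\mathrm{cte}\}$ (to order $n-2$ for the $\F_{i}$, to order $n-1$ for the $\sigma^{*}\G_{j}$), with prescribed pairwise contact orders along $E$. Writing $K(\sigma^{*}\W)=\sum_{T}\mathrm{d}\eta_{T}$ over the sub-$3$-webs $T$, the sum over the triples lying entirely in $\sigma^{*}\W_{k-n}$ equals $\sigma^{*}K(\W_{k-n})$, which is holomorphic along $E$ since $\W_{k-n}$ is regular and $C\not\subset\Delta(\W_{k-n})$.

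It then remains to control $\sum_{T}\mathrm{d}\eta_{T}$ over the triples meeting $\{\F_{1},\dots,\F_{n}\}$; each such $T$ has $E$ in its discriminant, so the individual $\mathrm{d}\eta_{T}$ may have a pole along $E$, and the point is that their \emph{sum} does not. For the triples contained in $\{\F_{i}\}$ this is the special case $k=n$, i.e. the holomorphy of $K(\W_{n})$ along $C$; here I would exploit the explicit form of the $\F_{i}$ — in the simplest model $\W_{n}=\W(\mathrm{d}y^{n}-b_{1}(x)^{n}y\,\mathrm{d}x^{n})$ the $g_{i}$ depend only on $x$, $\F_{i}$ has first integral $t^{n-1}-G_{i}(x)$, and a direct computation via la Remarque~\ref{rem:calcul-kw} gives a curvature of the form $a'(x)\,\mathrm{d}x\wedge\mathrm{d}(t^{n-1})$ whose poles are carried by the vertical divisor $\{x\colon G'_{i}(x)=G'_{j}(x)\text{ for some }i\neq j\}$, disjoint from $E$; the general case follows by treating the coefficients $b_{i,\ell}(x)$, $\ell\geq2$, as higher-order-in-$t$ corrections that cannot create a pole along $E$.

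For the remaining \og mixtes \fg\ triples (two $\F$'s and one $\sigma^{*}\G_{j}$, or one $\F$ and two $\sigma^{*}\G_{j}$'s) I would either expand $\eta_{rst}$ from \eqref{equa:eta-rst} in terms of the slopes and check that the polar parts cancel after summation, or — when the resulting $2$- and $3$-subwebs have smooth discriminant — invoke le Th\'eor\`eme~\ref{thm:MP13-1} and le Corollaire~\ref{cor:MP13}, the relevant component being invariant by the $2$-subweb tangent to $C$. The hard part, and the real content of the proposition, is precisely this last step: showing that the poles of the mixed $\mathrm{d}\eta_{T}$ along $E$ compensate in the total sum — a divisibility statement about the numerator of the curvature which the normal form of Lemme~\ref{lem:MP13-2.5} is designed to make tractable.
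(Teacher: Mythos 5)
The paper itself gives no proof of this proposition: it is recalled from \cite{MP13} without demonstration, so your proposal can only be judged on its own terms. Your setup is sound and is the natural one: the normal form supplied by the Lemme~\ref{lem:MP13-2.5}, the ramified covering $\sigma(x,t)=(x,t^{n})$, the pole-order bookkeeping showing that $K(\W)$ is holomorphic along $C$ if and only if $K(\sigma^{*}\W)=\sigma^{*}K(\W)$ is holomorphic along $E=\{t=0\}$, the complete decomposition of $\sigma^{*}\W$ into the regular foliations $\F_{i}\colon n\,t^{n-2}\mathrm{d}t-g_{i}\,\mathrm{d}x=0$ and the $\sigma^{*}\G_{j}$, the splitting of $K(\sigma^{*}\W)$ into the three blocks of $3$-sous-tissus, and the holomorphy of the purely $\sigma^{*}\W_{k-n}$ block (it equals $\sigma^{*}K(\W_{k-n})$) are all correct.

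The gap is that the two blocks which carry the entire content of the statement are not established. For the triples contained in $\{\F_{1},\dots,\F_{n}\}$ you need precisely the case $k=n$ of the proposition; your argument there is a computation in the leading-order model $\mathrm{d}y^{n}-b_{1}(x)^{n}y\,\mathrm{d}x^{n}$ plus the assertion that the higher Puiseux coefficients $b_{\ell}(x)$, $\ell\geq2$, \og cannot create a pole along $E$ \fg. That is not an a priori order-of-vanishing fact: the functions $\delta_{ij}$ entering (\ref{equa:eta-rst}) vanish along $E$ (to order $n-2$ for pairs meeting the $\F_{i}$ when $n\geq3$, to order $n-1$ for pairs of $\sigma^{*}\G_{j}$), so every term of the expansion $g_{i}(x,t)=\sum_{\ell\geq1}b_{\ell}(x)\zeta^{i\ell}t^{\ell-1}$ can a priori contribute to the polar part of the $\eta_{rst}$, and holomorphy of their sum rests on explicit cancellations of sums over roots of unity, exactly of the kind carried out for $\tilde{\beta}_{ijk}(0)$ and $\tilde{\beta}'_{ijk}(0)$ in the proof of the Th\'eor\`eme~\ref{thm:Divergence}; this must be computed, not asserted. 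For the mixed triples you yourself declare the compensation of the polar parts to be \og the hard part \fg\ and leave it open, and the alternative you mention does not close it: the Th\'eor\`eme~\ref{thm:MP13-1} and the Corollaire~\ref{cor:MP13} require $\Delta(\W_{2})=\Delta(\W)$ along a smooth discriminant and only give a criterion for one subweb at a time, whereas here the individual mixed $3$-sous-tissus need not have holomorphic curvature along $E$ --- only the total sum does --- so these statements are not applicable as invoked. As written, your text is a correct reduction of the proposition to the key cancellation together with an admission that the cancellation is unproved; that is a genuine gap rather than a proof.
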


\noindent L'hypothèse que la composante $C\subset\Delta(\W_{n})$ est \textsl{de multiplicité minimale} $n-1$ est essentielle pour la validité de la Proposition \ref{pro:MP13-2.6}, comme le montre l'exemple ci-dessous.

\begin{eg}
Le $3$-tissu $\W$ de $(\mathbb{C}^{2},0)$ donné par $\omega=\mathrm{d}y^{3}+3y^{2}\mathrm{d}x^{2}\mathrm{d}y-2xy^{3}\mathrm{d}x^{3}$ a pour discriminant $\Delta(\W)=-108y^{6}(x^{2}+1)$ et pour courbure la $2$-forme $K(\W)=\dfrac{\raisebox{-0.4mm}{$2x$}}{3y(x^{2}+1)^{2}}\mathrm{d}x\wedge\mathrm{d}y$ (remarquons que $K(\W)$ n'est pas holomorphe sur la droite $y=0)$. La courbe $\{y=0\}\subset\Delta(\W)$ est totalement invariante par $\W$ mais elle n'est pas de multiplicité minimale $2$: on a pris ici $n=k=3.$

\end{eg}

\clearemptydoublepage
\chapter{Platitude du $d$-tissu dual d'un feuilletage homogène de degré $d$ sur $\mathbb{P}^{2}_{\hspace{-0.5mm}\mathbb{C}}$}\label{chap2:homogène}

\hspace{6mm}
\vspace{-1cm}
\section{Géométrie des feuilletages homogènes}

\begin{defin}\label{def:feuil-homog}
Un feuilletage de degré $d$ sur $\mathbb{P}^{2}_{\mathbb{C}}$ est dit \textbf{\textit{homogène}} s'il existe une carte affine $(x,y)$ de $\mathbb{P}^{2}_{\mathbb{C}}$ dans laquelle il est invariant sous l'action du groupe des homothéties complexes $(x,y)\longmapsto \lambda(x,y),\hspace{1mm} \lambda\in \mathbb{C}^{*}.$
\end{defin}

\noindent Un tel feuilletage $\mathcal{H}$ est alors défini par une $1$-forme $$\omega=A(x,y)\mathrm{d}x+B(x,y)\mathrm{d}y,$$ où $A$ et $B$ sont des polynômes homogènes de degré $d$ sans composante commune. Cette $1$-forme s'écrit en coordonnées homogènes
\begin{align*}
&\hspace{2mm} z\hspace{0.3mm}A(x,y)\mathrm{d}x+z\hspace{0.2mm}B(x,y)\mathrm{d}y-\left(x\hspace{0.2mm}A(x,y)+yB(x,y)\right)\mathrm{d}z\hspace{1mm};
\end{align*}
ainsi le feuilletage $\mathcal{H}$ a au plus $d+2$ singularités dont l'origine $O$ de la carte affine $z=1$ est le seul point singulier de $\mathcal{H}$ qui n'est pas situé sur la droite à l'infini $L_{\infty}=(z=0)$; de plus $\nu(\mathcal{H},O)=d.$

\noindent Dorénavant nous supposerons que $d\geq2$; dans ce cas, d'après la Remarque~\ref{rems:mu>nu^2-Darboux}~(iii), le point $O$ est la seule singularité de $\mathcal{H}$ de multiplicité algébrique $d.$

\noindent Le champ de vecteurs homogène $-B(x,y)\frac{\partial}{\partial x}+A(x,y)\frac{\partial}{\partial y}+0\frac{\partial}{\partial z}$ définit aussi le feuilletage $\mathcal{H}$ car est dans le noyau de la $1$-forme précédente. Notons, pour abréger, $A_x=\frac{\partial A}{\partial x},$ $A_y=\frac{\partial A}{\partial y}$, $B_x=\frac{\partial B}{\partial x},$ $B_y=\frac{\partial B}{\partial y}$; d'après la formule (\ref{equa:ext1}), le diviseur d'inflexion $\IH$ de $\mathcal{H}$ est donné par
\begin{Small}
\begin{equation*}
0=\left| \begin{array}{ccc}
x & \hspace{-1.5mm}-B &  BB_{x}-AB_{y} \\
y &  A &  AA_{y}-BA_{x}  \\
z &  0 &  0
\end{array} \right|
=z
\left| \begin{array}{cc}
-\frac{1}{d}(xB_{x}+yB_{y})                &  BB_{x}-AB_{y} \vspace{1.5mm}\\
\hspace{2.4mm}\frac{1}{d}(xA_{x}+yA_{y})   &  AA_{y}-BA_{x}
\end{array} \right|
=\frac{z}{d}(xA+yB)(A_{x}B_{y}-A_{y}B_{x})
=\frac{z}{d}\Cinv\Dtr,
\end{equation*}
\end{Small}
\hspace{-1mm}où $\Dtr=A_{x}B_{y}-A_{y}B_{x}\in\mathbb{C}[x,y]_{2d-2}$\, et \,$\Cinv=xA+yB\in\mathbb{C}[x,y]_{d+1}$ désigne le {\sl cône tangent} de $\mathcal{H}$ en l'origine $O$.

\noindent Il en résulte que:
\begin{enumerate}
\item [(i)] le support du diviseur $\IinvH$ est constitué des droites du cône tangent $\Cinv=0$ et de la droite à l'infini $L_{\infty}$;
\item [(ii)] le diviseur $\ItrH$ se décompose sous la forme $\ItrH=\prod_{i=1}^{n}T_{i}^{\rho_{i}-1}$ pour un certain nombre $n\leq \deg\Dtr=2d-2$ de droites $T_{i}$ passant par $O,$ $\rho_{i}-1$ étant l'ordre d'inflexion de la droite $T_{i}.$ Lorsque $\rho_{i}=2$ on parle d'une droite d'inflexion simple pour $\mathcal{H},$ lorsque $\rho_{i}=3$ d'une droite d'inflexion double, etc.
\end{enumerate}

\begin{pro}\label{pro:SingH}
{\sl Avec les notations précédentes, pour tout point singulier $s\in\mathrm{Sing}\mathcal{H}\cap L_{\infty},$ nous avons

\noindent\textbf{\textit{1.}} $\nu(\mathcal{H},s)=1;$

\noindent\textbf{\textit{2.}} la droite $L_{s}$ passant par l'origine $O$ et le point $s$ est invariante par $\mathcal{H}$ et elle apparaît avec multiplicité $\tau(\mathcal{H},s)-1$ dans le diviseur $\Dtr=0,$ {\it i.e.}
$$\Dtr=\ItrH\prod_{s\in\mathrm{Sing}\mathcal{H}\cap L_{\infty}}L_{s}^{\tau(\mathcal{H},s)-1}.$$
}
\end{pro}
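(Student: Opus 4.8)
The plan is to fix $s\in\Sing\mathcal{H}\cap L_\infty$ and move it to $s=[1:0:0]$ by a linear change of the affine coordinates $(x,y)$: such a change fixes $O$, preserves $L_\infty$, the homogeneity of $\mathcal{H}$ and the hypothesis $\pgcd(A,B)=1$, and leaves invariant all the quantities in the statement, so it is enough to argue for this particular $s$. After the reduction, the condition $s\in\Sing\mathcal{H}$ reads $A(1,0)=0$, i.e. $y\mid A$; I would write $A=y^{\alpha}\widehat{A}$ with $\alpha\geq1$ and $\widehat{A}(x,0)\neq0$. Then $\pgcd(A,B)=1$ forces $y\nmid B$, so that $B(x,0)=B_{0}\,x^{d}$ with $B_{0}\neq0$. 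This single fact, $B_{0}\neq0$, is what the coprimality hypothesis is really used for, and it will serve twice below.

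In the affine chart $(u,v)=(y/x,1/x)$ centred at $s$, the foliation is $\omega_{s}=v\,b(u)\,\mathrm{d}u-\bigl(a(u)+u\,b(u)\bigr)\mathrm{d}v$ with $a(u)=A(1,u)=u^{\alpha}\widehat{a}(u)$, $\widehat{a}(0)\neq0$, and $b(u)=B(1,u)$, $b(0)\neq0$. Since $v\,b(u)$ vanishes to order $1$ at $s$, one gets $\nu(\mathcal{H},s)=1$, which is assertion~1. For the dual vector field $\X=\bigl(a(u)+u\,b(u)\bigr)\partial_u+v\,b(u)\,\partial_v$ one has $\omega_{s}\wedge\mathrm{d}u=\bigl(a(u)+u\,b(u)\bigr)\mathrm{d}u\wedge\mathrm{d}v$, which vanishes along $\{u=0\}=L_{s}$ because $a(0)=0$; hence $L_{s}$ is $\mathcal{H}$-invariant. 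Finally I would identify $\tau(\mathcal{H},s)=\alpha$: every non-invariant line through $s$ is of the form $\{u=cv\}$, $c\in\mathbb{C}^{*}$ (the lines $\{u=0\}=L_{s}$ and $\{v=0\}=L_\infty$ being invariant), and the restriction of $\X(u-cv)$ to such a line equals $a(cv)=c^{\alpha}v^{\alpha}\widehat{a}(cv)$, which vanishes to order exactly $\alpha$; thus $\Tang(\mathcal{H},\{u=cv\},s)=\alpha$ for every line of $\mathfrak{L}_s$, whence $\tau(\mathcal{H},s)=\alpha$. (Equivalently, writing $A_i$ for the coefficient of $x^{d-i}y^{i}$ in $A$, one computes $\det(J^{k}_{s}\X,\mathrm{R}_{s})=v\sum_{1\le i\le k}A_{i}u^{i}$, which is $\equiv0$ for $k<\alpha$ and equals $A_{\alpha}u^{\alpha}v\not\equiv0$ for $k=\alpha$.)

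Next I would read off the multiplicity of $L_{s}$ in $\Dtr=A_{x}B_{y}-A_{y}B_{x}$. From $A=y^{\alpha}\widehat{A}$ one has $A_{x}=y^{\alpha}\widehat{A}_{x}$ and $A_{y}=y^{\alpha-1}\bigl(\alpha\widehat{A}+y\widehat{A}_{y}\bigr)$, hence
\[
\Dtr=y^{\alpha-1}\bigl(y\,\widehat{A}_{x}B_{y}-(\alpha\widehat{A}+y\widehat{A}_{y})B_{x}\bigr),
\]
and the bracketed factor restricted to $\{y=0\}$ equals $-\alpha\,\widehat{A}(x,0)\,B_{x}(x,0)=-\alpha\,d\,A_{\alpha}B_{0}\,x^{2d-\alpha-1}$, which is nonzero because $\alpha\geq1$, $d\geq2$ and $A_{\alpha}B_{0}\neq0$. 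Therefore $L_{s}=\{y=0\}$ divides $\Dtr$ with multiplicity exactly $\alpha-1=\tau(\mathcal{H},s)-1$.

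It remains to globalise. The homogeneous polynomial $\Dtr\in\mathbb{C}[x,y]_{2d-2}$ factors into lines through $O$, and I would compare its multiplicity with that of the claimed product along each such line. Recall $\IH$ is cut out by $z\,\Cinv\,\Dtr$ and that, by properties (i) and (ii) preceding the statement, $\IinvH$ is carried by $L_\infty$ and the lines of the tangent cone $\Cinv=0$, while $\ItrH=\prod_{i}T_{i}^{\rho_i-1}$ with the $T_{i}$ non-invariant lines through $O$. If a line $T$ through $O$ divides $\Dtr$ and is \emph{not} $\mathcal{H}$-invariant, then $T\subset\IH$, so $T$ is one of the $T_{i}$, and since $T$ divides neither $z$ nor $\Cinv$ its multiplicity in $\Dtr$ equals its multiplicity $\rho_i-1$ in $\IH$; conversely each $T_{i}$ divides $\Dtr$. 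Every remaining linear factor of $\Dtr$ is an invariant line through $O$; being the intersection of two invariant curves, its point at infinity lies in $\Sing\mathcal{H}\cap L_\infty$, so the line is $L_{s}$ for a unique such $s$, with multiplicity $\tau(\mathcal{H},s)-1$ in $\Dtr$ by the previous step (those $s$ with $\tau(\mathcal{H},s)=1$ contributing trivially). Collecting the two families gives $\Dtr=\ItrH\cdot\prod_{s\in\Sing\mathcal{H}\cap L_\infty}L_{s}^{\tau(\mathcal{H},s)-1}$, which is assertion~2. The main obstacle is the local bookkeeping around $s$: one must verify that the non-invariant lines through $s$ are exactly the $\{u=cv\}$, and carefully match $\tau(\mathcal{H},s)$ with the order of vanishing of $\Dtr$ along $L_{s}$ — both tied to the single integer $\alpha$ and to $B_{0}\neq0$.
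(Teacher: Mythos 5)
Your proof is correct and follows essentially the same route as the paper: you normalise the singular point at infinity, use $\pgcd(A,B)=1$ to make the complementary coefficient non-vanishing (giving $\nu(\mathcal{H},s)=1$), identify $\tau(\mathcal{H},s)$ with a vanishing order, and compute directly that this order minus one is the exact multiplicity of $L_{s}$ in $\Dtr$, the leading coefficient being nonzero thanks to Euler-type identities. The only differences are cosmetic — you place $s$ at $[1:0:0]$ and factor $A=y^{\alpha}\widehat{A}$ where the paper places $s$ at $[x_0:1:0]$ and expands $B$ along $(x-x_0y)^{\tau}$ — plus the explicit globalisation paragraph, which the paper leaves implicit in its preceding discussion of $\IinvH$ and $\ItrH$.
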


\begin{proof}[\sl D\'emonstration]
Soit $s$ un point singulier de $\mathcal{H}$ sur $L_{\infty}=(z=0)$. Sans perte de généralité, nous pouvons supposer que les coordonnées homogènes de $s$ sont de la forme $[x_0:1:0],\,x_0\in\mathbb{C}.$ Dans la carte affine $y=1$, $\mathcal{H}$ est décrit par la $1$-forme
\begin{align*}
&\theta=z\hspace{0.3mm}A(x,1)\mathrm{d}x-\left(x\hspace{0.2mm}A(x,1)+B(x,1)\right)\mathrm{d}z\hspace{1mm};
\end{align*}
la condition $s\in\Sing\mathcal{H}$ est équivalente à $B(x_0,1)=-x_0\hspace{0.4mm}A(x_0,1).$ L'égalité $\pgcd(A,B)=1$ implique alors que $A(x_{0},1)\neq0$; d'où $\nu(\mathcal{H},s)=1.$

\noindent Montrons la seconde assertion. Le fait que $$\theta=A(x,1)\left(z\mathrm{d}(x-x_0)-(x-x_0)\mathrm{d}z\right)-\left(x_0\hspace{0.4mm}A(x,1)+B(x,1)\right)\mathrm{d}z$$
entraîne que $$\tau:=\tau(\mathcal{H},s)=\min\{k\geq1:J^{k}_{x_0}(x_0\hspace{0.4mm}A(x,1)+B(x,1))\neq0\},$$
cela permet d'écrire $x_0\hspace{0.4mm}A(x,1)+B(x,1)=\sum_{k=\tau}^{d}c_{k}(x-x_0)^{k}$, avec $c_{\tau}\neq0$. Par suite
\begin{align*}
&& B(x,y)=(x-x_0y)^{\tau}P(x,y)-x_0\hspace{0.4mm}A(x,y), \quad \text{où} \quad P(x,y)=\sum_{k=0}^{d-\tau}c_{k+\tau}(x-x_0y)^{k}y^{d-\tau-k}.
\end{align*}
Un calcul élémentaire montre que $\Dtr=A_{x}B_{y}-A_{y}B_{x}$ est de la forme $\Dtr=-(x-x_0y)^{\tau-1}Q(x,y),$ avec $Q\in\mathbb{C}[x,y]$ et $$Q(x_0,1)=\tau P(x_0,1)\left(xA_{x}+yA_{y}\right)\Big|_{(x,y)=(x_0,1)}.$$
Comme $P(x_0,1)=c_{\tau}$\, et \,$xA_{x}+yA_{y}=dA$,\, $Q(x_0,1)=\tau\hspace{0.2mm}c_{\tau}\hspace{0.2mm}dA(x_0,1)\neq0.$
\end{proof}

\begin{defin}\label{def:type-homog}
Soit $\mathcal{H}$ un feuilletage homogène de degré $d$ sur $\pp$ ayant un certain nombre $m\leq d+1$ de singularités radiales $s_{i}$ d'ordre $\tau_{i}-1,$ $2\leq\tau_{i}\leq d$ pour $i=1,2,\ldots,m.$ Le support du diviseur $\ItrH$ est constitué d'un certain nombre $n\leq2d-2$ de droites d'inflexion transverses $T_{j}$ d'ordre $\rho_{j}-1,$ $2\leq\rho_{j}\leq d$ pour $j=1,2,\ldots,n.$ On définit le \textbf{\textit{type du feuilletage}} $\mathcal{H}$ par $$\mathcal{T}_\mathcal{H}=\sum\limits_{i=1}^{m}\mathrm{R}_{\tau_{i}-1}+\sum\limits_{j=1}^{n}\mathrm{T}_{\rho_{j}-1}=
\sum\limits_{k=1}^{d-1}(r_{k}\cdot\mathrm{R}_k+t_{k}\cdot\mathrm{T}_k)
\in\Z\left[\mathrm{R}_1,\mathrm{R}_2,\ldots,\mathrm{R}_{d-1},\mathrm{T}_1,\mathrm{T}_2,\ldots,\mathrm{T}_{d-1}\right]$$ et le \textbf{\textit{degré du type}} $\mathcal{T}_\mathcal{H}$ par $\deg\mathcal{T}_\mathcal{H}=\sum_{k=1}^{d-1}(r_{k}+t_{k})\in\N\setminus\{0,1\}$; c'est le nombre de droites distinctes qui composent le diviseur $\Dtr.$
\end{defin}

\begin{eg}
Considérons le feuilletage homogène $\mathcal{H}$ de degré $5$ sur $\pp$ défini par $$\omega=y^5\mathrm{d}x+2x^3(3x^2-5y^2)\mathrm{d}y.$$ Un calcul élémentaire conduit à
\begin{align*}
&& \mathrm{C}_{\mathcal{H}}=xy\left(6x^4-10x^2y^2+y^4\right) \qquad\text{et}\qquad \Dtr=150\hspace{0.15mm}x^2y^4(x-y)(x+y)\hspace{1mm};
\end{align*}
on constate que l'ensemble des singularités radiales de $\mathcal{H}$ est constitué des deux points $[0:1:0]$ et $[1:0:0]$; leurs ordres de radialité sont égaux respectivement à $2$ et $4.$ De plus le support du diviseur $\ItrH$ est formé des deux droites d'équations $x-y=0$ et $x+y=0$; ce sont des droites d'inflexion transverses simples. Donc le feuilletage $\mathcal{H}$ est du type $\mathcal{T}_\mathcal{H}=1\cdot\mathrm{R}_{2}+1\cdot\mathrm{R}_{4}+2\cdot\mathrm{T}_{1}$ et le degré de $\mathcal{T}_\mathcal{H}$ est $\deg\mathcal{T}_\mathcal{H}=4.$
\end{eg}

\noindent \`{A} tout feuilletage homogène $\mathcal{H}$ de degré $d$ sur $\pp$ on peut associer une application rationnelle $\Gunderline_{\mathcal{H}}\hspace{1mm}\colon\mathbb{P}^{1}_{\mathbb{C}}\rightarrow \mathbb{P}^{1}_{\mathbb{C}}$ de la façon suivante: si $\mathcal{H}$ est décrit par $\omega=A(x,y)\mathrm{d}x+B(x,y)\mathrm{d}y,$ $A$ et $B$ désignant des polynômes homogènes de degré $d$ sans facteur commun, on définit $\Gunderline_{\mathcal{H}}$ par $$\Gunderline_{\mathcal{H}}([x:y])=[-A(x,y):B(x,y)]\hspace{1mm};$$
il est clair que cette définition ne dépend pas du choix de la $1$-forme homogène $\omega$ décrivant le feuilletage $\mathcal{H}.$

\noindent Dorénavant nous noterons l'application $\Gunderline_{\mathcal{H}}$ simplement par $\Gunderline.$ Le feuilletage homogène $\mathcal{H}$ ainsi que son tissu dual $\mathrm{Leg}\mathcal{H}$ peuvent être décrits analytiquement en utilisant uniquement l'application $\Gunderline.$ En effet, la pente $p$ de $\mathrm{T}_{\hspace{-0.4mm}(x,y)}\mathcal{H}$ est donnée par $\Gunderline([x:y])=[p:1]$ et les pentes $x_{i}$ $(i=1,\ldots,d)$ de $\mathrm{T}_{\hspace{-0.4mm}(p,q)}\mathrm{Leg}\mathcal{H}$ sont données par $x_{i}=\dfrac{q}{p-p_{i}(p)},$ avec $\Gunderline^{-1}([p:1])=\{[p_{i}(p):1]\}.$

\noindent En carte affine $\mathbb{C}\subset\sph$ cette application s'écrit $\Gunderline\hspace{1mm}\colon z\mapsto-\dfrac{A(1,z)}{B(1,z)}.$ On a
\begin{align*}
\Gunderline(z)-z=-\dfrac{A(1,z)+zB(1,z)}{B(1,z)}=-\frac{\mathrm{C}_\mathcal{H}(1,z)}{B(1,z)}\hspace{1mm};
\end{align*}
\noindent de plus, les identités $dA=xA_{x}+yA_{y}$\hspace{1mm} et \hspace{1mm}$dB=xB_{x}+yB_{y}$
permettent de réécrire le diviseur $\Dtr$ sous la forme  $\Dtr=-\dfrac{\raisebox{-0.5mm}{$d$}}{\raisebox{0.7mm}{$x$}}\left(BA_{y}-AB_{y}\right)$
de sorte que
\begin{align*}
\hspace{5mm}\Gunderline'(z)=-\left(\frac{BA_{y}-AB_{y}}{B^{2}}\right)\Big|_{(x,y)=(1,z)}=\frac{\Dtr(1,z)}{dB^{2}(1,z)}.
\end{align*}
\noindent On en déduit immédiatement les propriétés suivantes:
\vspace{1.2mm}

\begin{itemize}
\item [\textbf{\textit{1.}}] les points fixes de $\Gunderline$ correspondent au cône tangent de $\mathcal{H}$ en l'origine $O$ (\textit{i.e.} $[a:b]\in\mathbb{P}^{1}_{\mathbb{C}}$ est fixe par $\Gunderline$ si et seulement si la droite d'équation $by-a\hspace{0.2mm}x=0$ est invariante par $\mathcal{H}$\hspace{0.25mm});
    \vspace{1.2mm}

\item [\textbf{\textit{2.}}] le point $[a:b]\in\mathbb{P}^{1}_{\mathbb{C}}$ est critique fixe par $\Gunderline$ si et seulement si le point $[b:a:0]\in L_{\infty}$ est singulier radial de $\mathcal{H}$. La multiplicité du point critique $[a:b]$ de $\Gunderline$ est exactement égale à l'ordre de radialité de la singularité à l'infini;
    \vspace{1.2mm}

\item [\textbf{\textit{3.}}] le point $[a:b]\in\mathbb{P}^{1}_{\mathbb{C}}$ est critique non fixe par $\Gunderline$ si et seulement si la droite d'équation $by-a\hspace{0.2mm}x=0$ est une droite d'inflexion transverse pour $\mathcal{H}.$ La multiplicité du point critique $[a:b]$ de $\Gunderline$ est précisément égale à l'ordre d'inflexion de cette droite.
\end{itemize}

\begin{rem}\label{rem:2d-2-R1}
Un feuilletage homogène de degré $d$ supérieur ou égal à $2$ sur $\pp$ ne peut avoir $d+1$ singularités radiales distinctes, ou, ce qui revient au même, ne peut être de type $$\sum_{k=1}^{d-1}(r_{k}\cdot\mathrm{R}_k+t_{k}\cdot\mathrm{T}_k), \qquad\text{avec}\hspace{1mm} \sum_{k=1}^{d-1}r_{k}=d+1.$$ Ceci découle du fait bien connu qu'une application rationnelle de la sphère de \textsc{Riemann} dans elle-même a au moins un point fixe non critique, \emph{voir} par exemple~\cite[Théorème 12.4]{Mil99}.
\end{rem}
\smallskip

\section{\'{E}tude de la platitude du tissu dual d'un feuilletage homogène}\label{sec:étude-platitude-homog}
\vspace{3mm}

La Proposition~$3.2$ de \cite{BFM13} est un critère de la platitude de la transformée de \textsc{Legendre} d'un feuilletage homogène de degré $3$. Notre premier résultat généralise ce critère en degré arbitraire.

\begin{thm}\label{thm:holomo-G(I^tr)}
{\sl Soit $\mathcal{H}$ un feuilletage homogène de degré $d\geq3$ sur $\pp.$ Alors le $d$-tissu $\Leg\mathcal{H}$ est plat si et seulement si sa courbure $K(\Leg\mathcal{H})$ est holomorphe sur $\G_{\mathcal{H}}(\ItrH).$
}
\end{thm}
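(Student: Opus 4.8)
The strategy is to play off two facts recalled in the text above: a $k$-web on $\pp$ is flat if and only if its curvature is holomorphic along the generic points of every irreducible component of its discriminant (there being no nonzero holomorphic $2$-form on $\pp$), and, by Lemma~\ref{lem:Delta-Leg}, $\Delta(\Leg\mathcal{H})=\G_{\mathcal{H}}(\ItrH)\cup\check{\Sigma}_{\mathcal{H}}$, where $\check{\Sigma}_{\mathcal{H}}$ is the union of the lines dual to the special singularities $s\in\Sigma_{\mathcal{H}}=\{s\in\Sing\mathcal{H}:\tau(\mathcal{H},s)\geq2\}$. The direction $\Rightarrow$ is trivial, since flatness means $K(\Leg\mathcal{H})\equiv0$. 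For $\Leftarrow$, assuming $K(\Leg\mathcal{H})$ holomorphic on $\G_{\mathcal{H}}(\ItrH)$, it suffices to show that $K(\Leg\mathcal{H})$ is \emph{automatically} holomorphic along each irreducible component of $\check{\Sigma}_{\mathcal{H}}$; flatness then follows from the characterization just quoted. Since $\deg\mathcal{H}=d\geq3$ one has $\nu(\mathcal{H},O)=d\geq2$, hence $\tau(\mathcal{H},O)\geq2$ and $O\in\Sigma_{\mathcal{H}}$, while Proposition~\ref{pro:SingH} shows that every other singularity of $\mathcal{H}$ lies on $L_{\infty}$, has $\nu=1$, and belongs to $\Sigma_{\mathcal{H}}$ exactly when it is radial. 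So the components of $\check{\Sigma}_{\mathcal{H}}$ are the line $\check{O}$ dual to the origin and the lines $\check{s}$ dual to the radial singularities $s$ of $\mathcal{H}$ on $L_{\infty}$, and there are two cases.

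\emph{Radial singularities $s\in L_{\infty}$ of order $n-1$ with $n<d$.} By Proposition~\ref{pro:MP13-3.3}, near a generic point of $\ell:=\check{s}$ the web $\Leg\mathcal{H}$ factors as $\W_{n}\boxtimes\W_{d-n}$, with $\W_{n}$ an irreducible $n$-web leaving $\ell$ invariant and $\W_{d-n}$ a $(d-n)$-web transverse to $\ell$, and $\Delta(\Leg\mathcal{H})=(n-1)\ell+\Delta(\W_{d-n})$ there. I would first check that $\ell$ occurs with multiplicity exactly $n-1$ in $\Delta(\Leg\mathcal{H})$: this comes from Proposition~\ref{pro:SingH}, which fixes the multiplicity $\tau(\mathcal{H},s)-1=n-1$ of the corresponding line $L_{s}$ in $\Dtr$, together with the explicit equation of $\Leg\mathcal{H}$ in the chart $(p,q)$. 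By the remark following Proposition~\ref{pro:MP13-3.3} this forces $\W_{d-n}$ to be regular (hence regular and transverse to $\ell$) near a generic point of $\ell$; combined with Proposition~\ref{pro:MP13-3.3} this gives $\Delta(\W_{n})=(n-1)\ell$ there, so by Lemma~\ref{lem:MP13-2.5} the curve $\ell$ is totally invariant by $\W_{n}$ with minimal multiplicity $n-1$. Proposition~\ref{pro:MP13-2.6}, applied with $C=\ell$, then gives that $K(\Leg\mathcal{H})=K(\W_{n}\boxtimes\W_{d-n})$ is holomorphic along $\ell$.

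\emph{The line $\check{O}$, and the radial singularities of maximal order $d-1$ on $L_{\infty}$.} In these situations $\Leg\mathcal{H}$ is, near a generic point of the relevant line, irreducible and totally invariant along it, so neither Proposition~\ref{pro:MP13-2.6} nor Theorem~\ref{thm:MP13-1} applies and one must exploit the homogeneity of $\mathcal{H}$ directly. For $\check{O}$ the cleanest input is that the $\C^{*}$-action $(x,y)\mapsto\lambda(x,y)$ on the affine chart of $\pp$ dualizes to $(p,q)\mapsto(p,\lambda q)$ on $\pd$, leaves $\Leg\mathcal{H}$ invariant, and has $\check{O}=\{q=0\}$ as its fixed-point set; functoriality of the curvature then forces $K(\Leg\mathcal{H})=\dfrac{g(p)}{q}\,\mathrm{d}p\wedge\mathrm{d}q$ near a generic point of $\check{O}$, for some $g$ holomorphic at generic $p$, so holomorphy along $\check{O}$ amounts to $g\equiv0$. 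To obtain this I would expand $\check{F}(p,q,x)=A(x,px-q)+pB(x,px-q)$ in powers of $q$ and use the homogeneity of $A,B$ to get the closed form $\check{F}(p,q,x)=x^{d}\,\Phi(p,q/x)$ with $\Phi(p,t)=A(1,p-t)+pB(1,p-t)$; thus near $\check{O}$ the $d$ leaves of $\Leg\mathcal{H}$ are the foliations $t_{j}(p)\,\mathrm{d}q-q\,\mathrm{d}p=0$ with $t_{j}(p)=p-p_{j}(p)$ and $\{p_{j}(p)\}=\Gunderline^{-1}(p)$. Feeding this presentation into the H\'enaut-type curvature formula — equivalently, passing to the coordinate in which the web reads $\mathrm{d}Q/\mathrm{d}p=1/t_{j}(p)$ — reduces $g\equiv0$ to an identity among the branches $p_{j}(p)$ of $\Gunderline^{-1}$, which holds because these are the fibre of the rational map $\Gunderline$; the same explicit model handles the lines $\check{s}$ dual to the radial singularities of order $d-1$, which in the chart $(p,q)$ are the vertical lines $\{p=p_{0}\}$ over the fixed critical points of $\Gunderline$.

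The one genuinely delicate point is this last computation showing that the residue $g(p)$ of $K(\Leg\mathcal{H})$ along $\check{O}$ (and along the order-$(d-1)$ lines) vanishes identically; everything else is bookkeeping with Lemma~\ref{lem:Delta-Leg}, Proposition~\ref{pro:SingH}, and the results of \cite{MP13} recalled above. Combining the cases, $K(\Leg\mathcal{H})$ is holomorphic along every component of $\Delta(\Leg\mathcal{H})=\G_{\mathcal{H}}(\ItrH)\cup\check{\Sigma}_{\mathcal{H}}$, hence $K(\Leg\mathcal{H})\equiv0$ and $\Leg\mathcal{H}$ is flat.
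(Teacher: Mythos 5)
Your skeleton for the radial lines (Lemma~\ref{lem:Delta-Leg}/\ref{lem:Delta-LegH} to identify the discriminant, then Proposition~\ref{pro:MP13-3.3} together with Lemma~\ref{lem:MP13-2.5} and Proposition~\ref{pro:MP13-2.6}) is exactly the paper's, but your treatment of the line $\check{O}$ contains a genuine error, and it is the crux of the theorem. By your own scaling computation, invariance of $\Leg\mathcal{H}$ under the dual action $(p,q)\mapsto(p,\lambda q)$ forces $K(\Leg\mathcal{H})=g(p)\,q^{-1}\,\mathrm{d}p\wedge\mathrm{d}q$ globally; hence holomorphy of $K(\Leg\mathcal{H})$ at generic points of $\check{O}=\{q=0\}$ is \emph{equivalent} to $g\equiv0$, i.e.\ to $K(\Leg\mathcal{H})\equiv0$, i.e.\ to flatness itself. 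It therefore cannot be \emph{automatic}, and the claimed identity among the branches $p_{j}(p)$ of $\Gunderline^{-1}$ that would give $g\equiv0$ unconditionally cannot exist: if it did, every homogeneous foliation of degree $d\geq3$ would belong to $\mathbf{FP}(d)$, contradicting Theorem~\ref{thm:Class-Homog3-Plat} and Propositions~\ref{pro:omega1-omega2}, \ref{pro:omega3-omega4}, \ref{pro:omega5-omega6}, which exhibit many homogeneous foliations with non-flat dual web. The correct use of homogeneity is the one in Lemma~\ref{lem:holomo-O} (from \cite{BFM13}): one first obtains holomorphy on all of $\pd\smallsetminus\check{O}$ (your radial-line step plus the hypothesis on $\G_{\mathcal{H}}(\ItrH)$), and only then does the scaling relation force the curvature, now a polynomial $2$-form in the affine chart whose line at infinity is $\check{O}$, to vanish identically; one never proves holomorphy along $\check{O}$ as a separate, unconditional statement.

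Two secondary points. First, your claim that $\check{s}$ occurs with multiplicity exactly $n-1$ in $\Delta(\Leg\mathcal{H})$ is only true when $\check{s}\not\subset\G_{\mathcal{H}}(\ItrH)$: a transverse inflection line lying in the same fibre of $\Gunderline$ as the fixed critical point raises the multiplicity of that vertical line. This is harmless, since such $\check{s}$ are covered by the hypothesis, but the restriction must be stated; the paper argues in the opposite order (the condition $\check{s}\not\subset\G_{\mathcal{H}}(\ItrH)$ gives regularity of $\W_{d-n}$, hence minimal multiplicity). Second, the radial singularities of maximal order $d-1$ require no separate computation: for them $\Gunderline^{-1}(p_{0})=\{p_{0}\}$, so $\check{s}$ is automatically disjoint from $\G_{\mathcal{H}}(\ItrH)$ and Proposition~\ref{pro:MP13-2.6} applies in the degenerate case $k=n$, which is how the paper treats them with no case distinction; note also that near a generic point of $\check{O}$ the web is completely decomposable, not irreducible. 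In any case, your sketch (feed the branches into formula~(\ref{equa:Formule-Henaut}) and invoke an identity on the fibre) is not a proof for those lines either; compare Theorem~\ref{thm:Divergence}, where the analogous residue along the image of a maximal-order inflection line is genuinely nonzero unless an extra condition holds.
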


\noindent La démonstration de ce théorème utilise les Lemmes~\ref{lem:Delta-LegH} et \ref{lem:holomo-O} qui suivent. Dans ces deux lemmes, $\mathcal{H}$ désigne un feuilletage homogène de degré $d\geq3$ sur $\pp$ défini, en carte affine $(x,y)$, par la $1$-forme
\begin{align*}
& \omega=A(x,y)\mathrm{d}x+B(x,y)\mathrm{d}y,\quad A,B\in\mathbb{C}[x,y]_{d},\hspace{2mm}\pgcd(A,B)=1.
\end{align*}
\newpage
\hfill
\medskip

\begin{lem}\label{lem:Delta-LegH}
{\sl Le discriminant de $\Leg\mathcal{H}$ se décompose en $$\Delta(\Leg\mathcal{H})=\G_{\mathcal{H}}(\ItrH)\cup\radHd\cup\check{O},$$ où $\radHd$ désigne l'ensemble des droites duales des points de $$\radH=\{s\in\Sing\mathcal{H}\hspace{0.8mm}:\hspace{0.8mm}\nu(\mathcal{H},s)=1,\,\tau(\mathcal{H},s)\geq2\}.$$
}
\end{lem}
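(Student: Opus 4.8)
The statement is essentially a specialization of Lemma~\ref{lem:Delta-Leg}, which for an arbitrary foliation $\F$ gives $\Delta(\Leg\F)=\G_{\F}(\ItrF)\cup\check{\Sigma}_{\F}$ with $\Sigma_{\F}=\{s\in\Sing\F:\tau(\F,s)\geq2\}$. So the whole point is to determine $\Sigma_{\mathcal{H}}$ explicitly for a homogeneous $\mathcal{H}$ of degree $d\geq3$ and to check that its dual is exactly $\radHd\cup\check O$. I would therefore organize the proof as a short computation of $\Sigma_{\mathcal{H}}$ followed by a direct appeal to Lemma~\ref{lem:Delta-Leg}.

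\textbf{Steps.} First I would recall the description of $\Sing\mathcal{H}$ obtained at the beginning of the section: $\Sing\mathcal{H}=\{O\}\cup(\Sing\mathcal{H}\cap L_\infty)$, where $O$ is the unique singular point lying in the affine chart $z=1$, with $\nu(\mathcal{H},O)=d$, and where, by Proposition~\ref{pro:SingH}, $\nu(\mathcal{H},s)=1$ for every $s\in\Sing\mathcal{H}\cap L_\infty$. Second, I would note that $O\in\Sigma_{\mathcal{H}}$: from the formula $\tau(\F,s)=\min\{k\geq\nu(\F,s):\det(J^{k}_{s}\X,\mathrm{R}_{s})\neq0\}$ one has $\tau(\mathcal{H},O)\geq\nu(\mathcal{H},O)=d\geq3$; and $O\notin\radH$ since $\nu(\mathcal{H},O)=d\neq1$. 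Third, I would prove the equality $\Sigma_{\mathcal{H}}\setminus\{O\}=\radH$. For the inclusion $\subseteq$: a point $s\in\Sigma_{\mathcal{H}}$ with $s\neq O$ must lie on $L_\infty$, hence $\nu(\mathcal{H},s)=1$ by Proposition~\ref{pro:SingH}, and together with $\tau(\mathcal{H},s)\geq2$ this is exactly the defining condition of $\radH$. For $\supseteq$: any $s\in\radH$ satisfies $\nu(\mathcal{H},s)=1$, so $s\neq O$ and $s\in\Sing\mathcal{H}\cap L_\infty$, while $\tau(\mathcal{H},s)\geq2$ places it in $\Sigma_{\mathcal{H}}$.

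\textbf{Conclusion.} Combining these observations, $\Sigma_{\mathcal{H}}=\{O\}\sqcup\radH$, so the set $\check{\Sigma}_{\mathcal{H}}$ of lines dual to the special singularities of $\mathcal{H}$ is $\check O\cup\radHd$. Feeding this into Lemma~\ref{lem:Delta-Leg} gives $\Delta(\Leg\mathcal{H})=\G_{\mathcal{H}}(\ItrH)\cup\check{\Sigma}_{\mathcal{H}}=\G_{\mathcal{H}}(\ItrH)\cup\radHd\cup\check O$, which is the assertion. I do not expect any genuine obstacle here: the proof is a repackaging of Lemma~\ref{lem:Delta-Leg}, and the only point that needs a word of justification is the inclusion $\radH\subset L_\infty$, which follows at once from $O$ being the unique singular point of $\mathcal{H}$ off $L_\infty$ together with $\nu(\mathcal{H},s)=1$ on $L_\infty$ (Proposition~\ref{pro:SingH}).
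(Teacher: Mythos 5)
Your argument is correct and is essentially the paper's proof: it applies Lemma~\ref{lem:Delta-Leg} and then uses the first assertion of Proposition~\ref{pro:SingH} (only $O$ has algebraic multiplicity $\geq 2$) to identify $\Sigma_{\mathcal{H}}=\{O\}\cup\radH$, hence $\check{\Sigma}_{\mathcal{H}}=\check{O}\cup\radHd$. Your extra remarks ($\tau\geq\nu$ at $O$, and $\radH\subset L_\infty$) are just explicit versions of steps the paper leaves implicit.
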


\begin{proof}[\sl D\'emonstration]
Par le Lemme \ref{lem:Delta-Leg}, $\Delta(\Leg\mathcal{H})=\mathcal{G}_{\mathcal{H}}(\ItrH)\cup\check{\Sigma}_{\mathcal{H}}$, où $\check{\Sigma}_{\mathcal{H}}$ est l'ensemble des droites duales des points de $\Sigma_{\mathcal{H}}=\{s\in\Sing\mathcal{H}\hspace{0.8mm}:\hspace{0.8mm}\tau(\mathcal{H},s)\geq2\}$. D'après la première assertion de la Proposition~\ref{pro:SingH}, l'origine $O$ est le seul point singulier de $\mathcal{H}$ de multiplicité algébrique supérieure ou égale à $2$; par conséquent $\Sigma_{\mathcal{H}}=\radH\cup\{O\}.$
\end{proof}

\begin{lem}[\cite{BFM13}, \rm{Lemme~3.1}]\label{lem:holomo-O}
{\sl
Si la courbure de $\Leg\mathcal{H}$ est holomorphe sur $\pd\hspace{-0.3mm}\setminus\hspace{-0.3mm} \check{O},$ alors $\Leg\mathcal{H}$ est plat.
}
\end{lem}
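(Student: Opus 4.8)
Le plan est de montrer que, sous l'hypothèse d'holomorphie de $K(\Leg\mathcal{H})$ sur $\pd\setminus\check{O}$, cette courbure est en réalité identiquement nulle, ce qui équivaut à la platitude de $\Leg\mathcal{H}$. Puisqu'il n'existe sur $\pd\simeq\pp$ aucune $2$-forme holomorphe non nulle, il suffit d'établir que $K(\Leg\mathcal{H})$ est \emph{aussi} holomorphe le long de $\check{O}$. L'idée est d'exploiter l'invariance de $\mathcal{H}$ par le groupe des homothéties $(x,y)\mapsto\lambda(x,y)$ pour contraindre fortement la forme de la courbure.

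D'abord je traduirais cette invariance dans la carte duale. L'homothétie $\phi_{\lambda}\colon(x,y)\mapsto(\lambda x,\lambda y)$ transforme la droite $\{y=px-q\}$ en la droite $\{y=px-\lambda q\}$, et induit donc dans la carte affine $(p,q)$ de $\pd$ l'automorphisme $\check{\phi}_{\lambda}\colon(p,q)\mapsto(p,\lambda q)$. Comme $\Leg$ envoie les droites tangentes aux feuilles de $\mathcal{H}$ sur celles de $\phi_{\lambda}(\mathcal{H})=\mathcal{H}$, le tissu $\Leg\mathcal{H}$ est invariant par $\check{\phi}_{\lambda}$; par naturalité de la courbure, $\check{\phi}_{\lambda}^{*}K(\Leg\mathcal{H})=K(\Leg\mathcal{H})$ pour tout $\lambda\in\mathbb{C}^{*}$. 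En écrivant $K(\Leg\mathcal{H})=h(p,q)\,\mathrm{d}p\wedge\mathrm{d}q$ et en notant que $\check{\phi}_{\lambda}^{*}(\mathrm{d}p\wedge\mathrm{d}q)=\lambda\,\mathrm{d}p\wedge\mathrm{d}q$, cette invariance se lit $\lambda\,h(p,\lambda q)=h(p,q)$ pour tout $\lambda$, ce qui force
\[
K(\Leg\mathcal{H})=\frac{c(p)}{q}\,\mathrm{d}p\wedge\mathrm{d}q
\]
pour une certaine fonction $c$ ne dépendant que de $p$.

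Ensuite, en tout point de la carte affine vérifiant $q\neq0$ --- point qui appartient à $\pd\setminus\check{O}$ --- la courbure est holomorphe par hypothèse; comme $1/q$ y est holomorphe, ceci force $c$ à être holomorphe en tout $p\in\mathbb{C}$. Étant le coefficient d'une $2$-forme méromorphe globale sur $\pd$, la fonction $c$ est rationnelle en $p$; holomorphe sur toute la droite affine, c'est donc un polynôme, disons de degré $k$.

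Enfin, le point décisif sera l'examen du comportement de $K(\Leg\mathcal{H})$ le long de la droite à l'infini $\check{L}_{\infty}$ de $\pd$. Dans la carte $(s,t)=(1/p,\,q/p)$ adaptée à un point générique de $\check{L}_{\infty}=\{s=0\}$, on a $\mathrm{d}p\wedge\mathrm{d}q=-s^{-3}\,\mathrm{d}s\wedge\mathrm{d}t$ et $\frac{c(p)}{q}=\frac{s\,c(1/s)}{t}$, d'où
\[
K(\Leg\mathcal{H})=-\frac{c(1/s)}{t}\,s^{-2}\,\mathrm{d}s\wedge\mathrm{d}t,
\]
qui présente le long de $\{s=0\}$ un pôle d'ordre $k+2\geq2$ dès que $c\not\equiv0$. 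Or $\check{L}_{\infty}\neq\check{O}$, de sorte que l'hypothèse impose à $K(\Leg\mathcal{H})$ d'être holomorphe en tout point générique de $\check{L}_{\infty}$: d'où $c\equiv0$, puis $K(\Leg\mathcal{H})\equiv0$ et la platitude de $\Leg\mathcal{H}$. La principale difficulté réside dans cette dernière étape: c'est l'analyse à l'infini qui, une fois la forme $\frac{c(p)}{q}\,\mathrm{d}p\wedge\mathrm{d}q$ obtenue par la symétrie homothétique, convertit l'absence de pôle hors de $\check{O}$ en l'annulation pure et simple de $c$.
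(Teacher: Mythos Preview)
Your proof is correct and rests on the same two ingredients as the paper's: the invariance of $\Leg\mathcal{H}$ under the one-parameter group induced by the homotheties of $\pp$, and the absence of nonzero holomorphic $2$-forms on $\pd$. The difference lies in the choice of affine chart on $\pd$. You work in the chart $(p,q)$ attached to $\{y=px-q\}$, where $\check{O}=\{q=0\}$ sits inside the chart and the induced action is $(p,q)\mapsto(p,\lambda q)$; this forces $K(\Leg\mathcal{H})=\dfrac{c(p)}{q}\,\mathrm{d}p\wedge\mathrm{d}q$, and you then need a separate computation at the line at infinity of that chart to kill $c$. The paper instead takes the chart $(a,b)$ attached to $\{ax-by+1=0\}$, in which $\check{O}$ \emph{is} the line at infinity and the induced action is the diagonal homothety $(a,b)\mapsto\lambda(a,b)$. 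Holomorphy on $\pd\setminus\check{O}$ then says directly that $K(\Leg\mathcal{H})=P(a,b)\,\mathrm{d}a\wedge\mathrm{d}b$ with $P$ polynomial, and the invariance reads $\lambda^{2}P(\lambda a,\lambda b)=P(a,b)$, which forces $P\equiv0$ in one line. Your argument is perfectly valid; the paper's chart just absorbs your final ``analysis at infinity'' into the choice of coordinates.
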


\begin{proof}[\sl D\'emonstration]
Soit $(a,b)$ la carte affine de $\pd$ associée à la droite $\{ax-by+1=0\}\subset{\pp}$; le $d$-tissu $\mathrm{Leg}\mathcal{H}$ est donné par la $d$-forme symétrique $\check{\omega}=bA(\mathrm{d}b,\mathrm{d}a)+aB(\mathrm{d}b,\mathrm{d}a)$. L'homogénéité de $A$ et $B$ implique alors que toute homothétie $h_{\lambda}\hspace{1mm}\colon(a,b)\longmapsto\lambda(a,b)$ laisse invariant $\Leg \mathcal{H}$; par suite $$h_{\lambda}^{*}(K(\Leg \mathcal{H}))=K(\Leg \mathcal{H}).$$
En combinant l'hypothèse de l'holomorphie de la courbure en dehors de $\check{O}$ avec le fait que $\check{O}$ est la droite à l'infini dans la carte $(a,b),$ on constate que $K(\Leg \mathcal{H})=P(a,b)\mathrm{d}a\wedge\mathrm{d}b$ pour un certain $P\in\mathbb{C}[a,b].$ On déduit de ce qui précède que $\lambda^{2}P(\lambda\hspace{0.1mm}a,\lambda\hspace{0.1mm}b)=P(a,b),$ d'où l'énoncé.
\end{proof}

\begin{proof}[\sl D\'emonstration du Théorème~\ref{thm:holomo-G(I^tr)}]
L'implication directe est triviale. Montrons la réciproque; supposons que $K(\Leg\mathcal{H})$ soit holomorphe sur $\G_{\mathcal{H}}(\ItrH).$ D'après les Lemmes \ref{lem:Delta-LegH} et \ref{lem:holomo-O}, il suffit de prouver que $K(\Leg\mathcal{H})$ est holomorphe le long de $\Xi:=\radHd\setminus\G_{\mathcal{H}}(\ItrH).$ Supposons donc $\Xi$ non vide; soit $s$ une singularité radiale de $\mathcal{H}$ d'ordre $n-1$ telle que la droite $\check{s}$ duale de $s$ ne soit pas contenue dans $\G_{\mathcal{H}}(\ItrH).$ La Proposition~\ref{pro:MP13-3.3} implique qu'au voisinage de tout point générique $m$ de $\check{s},$ le tissu $\Leg\mathcal{H}$ peut se décomposer comme le produit $\W_{n}\boxtimes\W_{d-n},$ où $\W_{n}$ est un $n$-tissu irréductible laissant $\check{s}$ invariante et $\W_{d-n}$ est un $(d-n)$-tissu transverse à $\check{s}.$ De plus, la condition $\check{s}\not\subset\G_{\mathcal{H}}(\ItrH)$ assure que le tissu $\W_{d-n}$ est régulier au voisinage de $m.$ Par conséquent $K(\Leg\mathcal{H})$ est holomorphe au voisinage de $m,$ en vertu de la Proposition~\ref{pro:MP13-2.6}.
\end{proof}

\begin{cor}
{\sl Soit $\mathcal{H}$ un feuilletage homogène convexe de degré $d$ sur le plan projectif complexe. Alors le $d$-tissu $\Leg\mathcal{H}$ est plat.}
\end{cor}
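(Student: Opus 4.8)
The plan is to deduce the corollary directly from Theorem~\ref{thm:holomo-G(I^tr)}: for a \emph{convex} foliation the transverse inflection divisor vanishes, so the holomorphy condition appearing in that theorem is satisfied for trivial reasons.

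First I would use the description of the inflection divisor of a homogeneous foliation established in Section~2.1, namely $\IH=\frac{z}{d}\,\Cinv\,\Dtr$, with $\IinvH$ supported on the invariant lines (those of the tangent cone $\Cinv=0$ together with $L_\infty$) and $\ItrH=\prod_i T_i^{\rho_i-1}$ a product of \emph{non-invariant} lines through $O$ dividing $\Dtr$. By Definition~\ref{def:convexe}, $\mathcal{H}$ being convex means that $\IH$ is a product of invariant lines; since in the decomposition $\IH=\IinvH+\ItrH$ the divisor $\IinvH$ already collects the whole invariant-line part of $\IH$, this forces $\ItrH=0$, hence $\G_{\mathcal{H}}(\ItrH)=\emptyset$. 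Concretely, an invariant line of a homogeneous foliation is either a tangent cone line or $L_\infty$, and no such line contributes to the transverse part $\ItrH$; this is exactly property~2 of the inflection divisor. This is the only step requiring a line of justification, and it is immediate from the material already in the excerpt.

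It then remains to treat the degree. If $d=2$ then $\Leg\mathcal{H}$ is a $2$-web, whose curvature is an empty sum of the $1$-forms $\eta_{rst}$ and hence identically zero, so $\Leg\mathcal{H}$ is flat with nothing to prove. If $d\geq3$, Theorem~\ref{thm:holomo-G(I^tr)} applies: $\Leg\mathcal{H}$ is flat if and only if $K(\Leg\mathcal{H})$ is holomorphic on $\G_{\mathcal{H}}(\ItrH)$, and as this set is empty the condition holds vacuously; therefore $\Leg\mathcal{H}$ is flat.

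I do not expect a genuine obstacle here: all the substance is already packed into Theorem~\ref{thm:holomo-G(I^tr)} and into the structure of $\IH$ for homogeneous foliations. Should one prefer to avoid citing that theorem, the same conclusion follows by combining Lemma~\ref{lem:Delta-LegH} (which, for $\ItrH=0$, gives $\Delta(\Leg\mathcal{H})=\radHd\cup\check{O}$), Propositions~\ref{pro:MP13-3.3} and \ref{pro:MP13-2.6} to obtain holomorphy of $K(\Leg\mathcal{H})$ along each component of $\radHd$, and finally Lemma~\ref{lem:holomo-O} to pass from holomorphy off $\check{O}$ to flatness — but going through Theorem~\ref{thm:holomo-G(I^tr)} is by far the shortest route.
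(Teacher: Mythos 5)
Your proof is correct and follows exactly the route the paper intends: the corollary is stated as an immediate consequence of Théorème~\ref{thm:holomo-G(I^tr)}, since convexity forces $\ItrH=0$ (every component of $\IH$ being an invariant line), so the holomorphy condition on $\G_{\mathcal{H}}(\ItrH)$ holds vacuously. Your remark disposing of the low-degree case and the alternative path via Lemme~\ref{lem:Delta-LegH}, Propositions~\ref{pro:MP13-3.3} and \ref{pro:MP13-2.6} and Lemme~\ref{lem:holomo-O} are consistent with the paper but add nothing beyond its own (implicit) argument.
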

\newpage
\hfill
\medskip

\noindent Le théorème suivant est un critère effectif d'holomorphie de la courbure du tissu dual d'un feuilletage homogène le long de l'image par l'application de \textsc{Gauss} d'une droite d'inflexion transverse simple, {\it i.e.} d'ordre d'inflexion minimal.

\begin{thm}\label{thm:Barycentre}
{\sl Soit $\mathcal{H}$ un feuilletage homogène de degré $d\geq3$ sur $\pp$ défini par la $1$-forme
$$\omega=A(x,y)\mathrm{d}x+B(x,y)\mathrm{d}y,\quad A,B\in\mathbb{C}[x,y]_d,\hspace{2mm}\pgcd(A,B)=1.$$
Supposons que $\mathcal{H}$ ait une droite d'inflexion $T=(ax+by=0)$ transverse et simple. Supposons en outre que $[-a:b]\in\mathbb{P}^{1}_{\mathbb{C}}$ soit le seul point critique de $\Gunderline$ dans sa fibre $\Gunderline^{-1}(\Gunderline([-a:b])).$ Posons $T'=\mathcal{G}_{\mathcal{H}}(T)$ et considérons la courbe $\Gamma_{(a,b)}$ de $\pp$ définie par
\begin{align*}
Q(x,y;a,b):=\left| \begin{array}{cc}
\dfrac{\partial{P}}{\partial{x}} &  A(b,-a)
\vspace{2mm}
\\
\dfrac{\partial{P}}{\partial{y}} &  B(b,-a)
\end{array} \right|=0,
\quad\text{où}\quad
P(x,y;a,b):=\frac{1}{(ax+by)^{2}}
\left| \begin{array}{cc}
A(x,y)  &  A(b,-a)
\\
B(x,y)  &  B(b,-a)
\end{array} \right|.
\end{align*}
Alors la courbure de $\mathrm{Leg}\mathcal{H}$ est holomorphe sur $T'$ si et seulement si $T=\{ax+by=0\}\subset\Gamma_{(a,b)},$ {\it i.e.} si et seulement si $Q(b,-a\hspace{0.2mm};a,b)=0.$
}
\end{thm}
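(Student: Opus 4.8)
Le plan est de ramener la question de l'holomorphie au crit\`ere de \textsc{Mar\'{\i}n}--\textsc{Pereira} (Th\'eor\`eme~\ref{thm:MP13-1}), puis d'identifier la condition obtenue \`a l'annulation de $Q(b,-a\hspace{0.2mm};a,b)$. Comme $T=(ax+by=0)$ est une droite d'inflexion \emph{transverse}, elle n'est pas invariante par $\mathcal{H}$, donc le point $z_0=-a/b$ de $\sph$ qu'elle d\'etermine, repr\'esent\'e par $(b,-a)\in T$ et d'image $p_0:=-A(b,-a)/B(b,-a)=\Gunderline(z_0)\neq z_0$, est un point critique \emph{simple} de $\Gunderline$ d'apr\`es le dictionnaire, rappel\'e plus haut, entre droites d'inflexion transverses de $\mathcal{H}$ et points critiques non fixes de $\Gunderline$. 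Par homog\'en\'eit\'e toutes les tangentes \`a $\mathcal{H}$ le long de $T$ sont parall\`eles, de pente commune $p_0$; ainsi $T'=\mathcal{G}_{\mathcal{H}}(T)$ est une droite, qui dans la carte affine $(p,q)$ de $\pd$ attach\'ee \`a $\{y=px-q\}$ s'\'ecrit $\{p=p_0\}$, et c'est une composante de $\Delta(\Leg\mathcal{H})$ (Lemme~\ref{lem:Delta-Leg}). L'hypoth\`ese que $z_0$ est le seul point critique de $\Gunderline$ dans sa fibre assure que les $d-2$ autres ant\'ec\'edents $z_1,\ldots,z_{d-2}$ de $p_0$ sont simples et d\'ependent holomorphiquement de $p$ pr\`es de $p_0$. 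En \'ecrivant les pentes de $\Leg\mathcal{H}$ en $(p,q)$ sous la forme $q/(p-\zeta)$, $\zeta\in\Gunderline^{-1}(p)$, on obtient, au voisinage d'un point g\'en\'erique de $T'$, une d\'ecomposition $\Leg\mathcal{H}=\W_2\boxtimes\W_{d-2}$ o\`u $\W_2$ provient des deux branches qui se rejoignent en $z_0$ et a pour discriminant local exactement $T'$ (lisse), tandis que $\W_{d-2}$ (provenant de $z_1,\ldots,z_{d-2}$) est r\'egulier et transverse \`a $T'$ et \`a $\W_2$. On a donc $\Delta(\W_2)=\Delta(\Leg\mathcal{H})$ localement, et le Th\'eor\`eme~\ref{thm:MP13-1}, ou son Corollaire~\ref{cor:MP13} si $d=3$, s'applique : $K(\Leg\mathcal{H})$ est holomorphe sur $T'$ si et seulement si $T'$ est invariant par $\W_2$ ou par $\beta_{\W_2}(\W_{d-2})$.

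Je montrerais ensuite que la premi\`ere alternative ne se produit jamais : le long de $T'$ les deux feuilles de $\W_2$ fusionnent en l'unique direction de pente $q/(p_0-z_0)$, finie car $p_0\neq z_0$, donc transverse \`a la droite verticale $T'$. Il reste \`a calculer $\beta_{\W_2}(\W_{d-2})$ sur $T'$. Les deux directions de $\W_2$ y co\"incidant, la direction barycentrique $s^{*}$ y est bien d\'efinie par moyenne dans la droite affine $\mathbb{P}\mathrm{T}_{\bullet}\pd\setminus\{s_0\}$ munie de la coordonn\'ee $\xi(s)=1/(s-s_0)$, avec $s_0=q/(p_0-z_0)$ et les pentes $s_i=q/(p_0-z_i)$ de $\W_{d-2}$ ; un bref calcul donne
\[
\xi(s^{*})=\frac{p_0-z_0}{(d-2)\,q}\sum_{i=1}^{d-2}\frac{p_0-z_i}{z_i-z_0}.
\]
Ainsi $T'$, qui est la direction verticale ($\xi=0$), est invariant par $\beta_{\W_2}(\W_{d-2})$ si et seulement si $\sum_{i=1}^{d-2}(p_0-z_i)/(z_i-z_0)=0$, condition ind\'ependante de $q$.

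Enfin je traduirais ceci en $Q(b,-a\hspace{0.2mm};a,b)=0$. Comme $\Gunderline^{-1}(p_0)=\{z_0\ (\text{mult. }2),z_1,\ldots,z_{d-2}\}$, on a $A+p_0B=c\,(ax+by)^2R$ avec $R(x,y)=\prod_{i=1}^{d-2}(z_ix-y)$ et $c\in\mathbb{C}^{*}$ ; en utilisant $A(b,-a)+p_0B(b,-a)=0$, le num\'erateur de $P$ vaut $B(b,-a)(A+p_0B)$, d'o\`u $P=c\,B(b,-a)\,R$ et $Q=c\,B(b,-a)^2\bigl(\partial_xR+p_0\partial_yR\bigr)$. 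Par d\'erivation logarithmique $\partial_xR+p_0\partial_yR=R\sum_{i=1}^{d-2}(z_i-p_0)/(z_ix-y)$, et l'\'evaluation en $(b,-a)$ --- o\`u $z_ix-y$ devient $b(z_i-z_0)\neq0$ puisque $z_0=-a/b\neq z_i$ --- donne
\[
Q(b,-a\hspace{0.2mm};a,b)=c\,b^{d-3}B(b,-a)^2\Bigl(\prod_{i=1}^{d-2}(z_i-z_0)\Bigr)\sum_{i=1}^{d-2}\frac{z_i-p_0}{z_i-z_0}.
\]
Tous les pr\'efacteurs \'etant non nuls, $Q(b,-a\hspace{0.2mm};a,b)=0$ \'equivaut \`a $\sum_i(z_i-p_0)/(z_i-z_0)=0$, c'est-\`a-dire exactement \`a la condition barycentrique ci-dessus ; de fa\c{c}on \'equivalente $ax+by$ divise le polyn\^ome homog\`ene $Q(x,y;a,b)$, {\it i.e.} $T\subset\Gamma_{(a,b)}$.

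L'obstacle principal sera l'\'etablissement rigoureux de la d\'ecomposition locale $\Leg\mathcal{H}=\W_2\boxtimes\W_{d-2}$ avec les propri\'et\'es annonc\'ees de ses facteurs et de son discriminant (analyse de \textsc{Puiseux} de $\Gunderline$ au pli simple $z_0$, jointe \`a l'hypoth\`ese sur la fibre garantissant la r\'egularit\'e et la transversalit\'e de $\W_{d-2}$), ainsi que la conduite soigneuse des calculs relatifs \`a $\beta_{\W_2}(\W_{d-2})$ pour que l'identit\'e finale sur $Q(b,-a\hspace{0.2mm};a,b)$ se d\'egage clairement ; les positions d\'eg\'en\'er\'ees ($B(b,-a)=0$, $b=0$, ou chute de degr\'e dans $A+p_0B$) se ram\`enent \`a une autre carte affine de $\pd$ ou se traitent directement avec des formes lin\'eaires homog\`enes, sans affecter l'identit\'e polynomiale finale.
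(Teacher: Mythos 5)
Votre proposition est correcte et suit essentiellement la m\^eme voie que la d\'emonstration du texte : d\'ecomposition locale $\Leg\mathcal{H}=\W_2\boxtimes\W_{d-2}$ le long de $T'$, application du Th\'eor\`eme~\ref{thm:MP13-1}, condition barycentrique $\sum_i (p_0-z_i)/(z_i-z_0)=0$, puis traduction en $Q(b,-a\hspace{0.2mm};a,b)=0$. Les seules diff\'erences sont mineures : vous obtenez l'identit\'e finale en factorisant $A+p_0B=c\,(ax+by)^2\prod_i(z_ix-y)$ et en \'evaluant directement $Q(b,-a\hspace{0.2mm};a,b)$, l\`a o\`u le texte passe par la d\'eriv\'ee logarithmique $F'/F$ et l'identit\'e d'\textsc{Euler}, et vous \'ecartez explicitement l'alternative \og $T'$ invariante par $\W_2$\fg\ (ce que le texte laisse implicite, via $x_0\neq\infty$), ce qui est un petit plus de rigueur sans changer l'argument.
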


\begin{rem}\label{rem:Q(b,-a,a,b)}
L'hypothèse que $T=(ax+by=0)$ est une droite d'inflexion pour $\mathcal{H}$ implique que $P\in\mathbb{C}[x,y]_{d-2}$ et donc $Q\in\mathbb{C}[x,y]_{d-3}.$ En particulier lorsque $d=3$ on a $$Q(b,-a\hspace{0.2mm};a,b)=\frac{\Cinv\left(B(b,-a),-A(b,-a)\right)}{\left(\Cinv(b,-a)\right)^2}\hspace{1mm};$$ en effet si on pose $\tilde{a}=A(b,-a)$, $\tilde{b}=B(b,-a)$ et $P(x,y;a,b)=f(a,b)x+g(a,b)y$ on obtient
\begin{align*}
Q(b,-a\hspace{0.2mm};a,b)=f(a,b)\tilde{b}-g(a,b)\tilde{a}=P(\tilde{b},-\tilde{a}\hspace{0.2mm};a,b)
=\frac{\tilde{b}A(\tilde{b},-\tilde{a})-\tilde{a}B(\tilde{b},-\tilde{a})}{(a\tilde{b}-b\tilde{a})^2}
=\frac{\Cinv\left(\tilde{b},-\tilde{a}\right)}{\left(\Cinv(b,-a)\right)^2}.
\end{align*}
\end{rem}

\begin{proof}[\sl D\'emonstration]
\`{A} isomorphisme linéaire près on peut se ramener à $T=(y=rx)$; si $(p,q)$ est la carte affine de $\pd$ associée à la droite $\{y=px-q\}\subset{\mathbb{P}^{2}_{\mathbb{C}}},$ alors $T'=(p=\Gunderline(r))$ avec $\Gunderline(z)=-\dfrac{A(1,z)}{B(1,z)}.$ Comme l'indice de ramification de $\Gunderline$ en $z=r$ est égal à $2$ et comme $z=r$ est l'unique point critique dans sa fibre $\Gunderline^{-1}(\Gunderline(r))$, cette fibre est formée de $d-1$ points distincts, soit $\Gunderline^{-1}(\Gunderline(r))=\{r,z_{1},z_{2},\ldots,z_{d-2}\}.$ De plus, au voisinage de tout point générique de $T'$, le tissu dual de $\mathcal{H}$ se décompose en $\Leg\mathcal{H}=\W_{2}\boxtimes\W_{d-2}$ avec
\begin{align*}
& \W_{2}\Big|_{T'}=\left(\mathrm{d}q-x_{0}(q)\mathrm{d}p\right)^{2}
\qquad\text{et}\qquad
\W_{d-2}\Big|_{T'}=\prod_{i=1}^{d-2}\left(\mathrm{d}q-x_{i}(q)\mathrm{d}p\right),
\end{align*}
où $x_{0}(q)=\dfrac{q}{\Gunderline(r)-r}$\, et\, $x_{i}(q)=\dfrac{q}{\Gunderline(r)-z_{i}},\,i=1,2,\ldots,d-2.$ D'après le Théorème \ref{thm:MP13-1}, la courbure de $\Leg\mathcal{H}$ est holomorphe le long de $T'$ si et seulement si $T'$ est invariante par le barycentre de $\W_{d-2}$ par rapport à $\W_{2}.$ Or la restriction de $\beta_{\W_{2}}(\W_{d-2})$ à $T'$ est donnée par $\mathrm{d}q-\beta(q)\mathrm{d}p=0$ avec $$\beta=x_{0}+\dfrac{1}{\frac{1}{d-2}\sum\limits_{i=1}^{d-2}\dfrac{1}{x_{i}-x_{0}}}.$$
\noindent Ainsi la courbure de $\Leg\mathcal{H}$ est holomorphe sur $T'$ si et seulement si $\beta=\infty,$ {\it i.e.} si et seulement si $\sum_{i=1}^{d-2}\dfrac{1}{x_{i}-x_{0}}=0,$ car $x_{0}\neq\infty$ ($z=r$ est non fixe par $\Gunderline$). Cette dernière condition se réécrit
\begin{equation}\label{equa:Barycentre-implicite}
\hspace{0.8cm}0=\sum\limits_{i=1}^{d-2}\frac{\Gunderline(r)-z_{i}}{r-z_{i}}=d-2+\left(\Gunderline(r)-r\right)\sum\limits_{i=1}^{d-2}\frac{1}{r-z_{i}}.
\end{equation}
D'autre part les $z_{i}$ sont exactement les racines du polynôme $$F(z):=\dfrac{P(1,z\hspace{0.2mm};-r,1)}{B(1,r)}=\dfrac{A(1,z)+\Gunderline(r)B(1,z)}{(z-r)^{2}}$$ et donc
\begin{align*}
\sum\limits_{i=1}^{d-2}\frac{1}{r-z_{i}}
\hspace{0.3mm}=\hspace{0.3mm}
\sum\limits_{i=1}^{d-2}\left(\frac{1}{F(r)}\prod\limits_{\underset{j\neq i}{j=1}}^{d-2}(r-z_{j})\right)
\hspace{0.3mm}=\hspace{0.3mm}
\frac{1}{F(r)}\sum\limits_{i=1}^{d-2}\hspace{0.5mm}\prod\limits_{\underset{j\neq i}{j=1}}^{d-2}(r-z_{j})
\hspace{0.3mm}=\hspace{0.3mm}
\frac{F'(r)}{F(r)}.
\end{align*}
Ainsi l'équation (\ref{equa:Barycentre-implicite}) est équivalente à $(\Gunderline(r)-r)F'(r)+(d-2)F(r)=0,$ {\it i.e.} à
\begin{equation}\label{equa:Barycentre-explicite}
\hspace{2.2cm}(d-2)P(1,r\hspace{0.2mm};-r,1)+\left(\Gunderline(r)-r\right)\frac{\partial{P}}{\partial{y}}\Big|_{(x,y)=(1,r)}=0\hspace{1mm};
\end{equation}
comme $P\in\mathbb{C}[x,y]_{d-2}$ on peut réécrire (\ref{equa:Barycentre-explicite}) sous la forme
\begin{align*}
\hspace{2.3cm}\left((d-2)P(x,y\hspace{0.2mm};-r,1)-y\frac{\partial{P}}{\partial{y}}+x\Gunderline(r)\frac{\partial{P}}{\partial{y}}\right)\Big|_{y=rx}=0\hspace{1mm};
\end{align*}
celle-ci peut à son tour s'écrire
\begin{align*}
\hspace{-1.3cm}\left(\frac{\partial{P}}{\partial{x}}+\Gunderline(r)\frac{\partial{P}}{\partial{y}}\right)\Big|_{y=rx}=0,
\end{align*}
en vertu de l'identité d'\textsc{Euler}. Il en résulte que $K(\Leg\mathcal{H})$ est holomorphe le long de $T'$ si et seulement si
\begin{align*}
\left(B(1,r)\frac{\partial{P}}{\partial{x}}-A(1,r)\frac{\partial{P}}{\partial{y}}\right)\Big|_{y=rx}=0.
\end{align*}

\noindent \textit{\textbf{Remarque.}}\,---\, En degré $3$ l'équation (\ref{equa:Barycentre-implicite}) s'écrit $\dfrac{\Gunderline(r)-z_{1}}{r-z_{1}}=0$; ainsi la courbure du $3$-tissu $\Leg\mathcal{H}$ est holomorphe sur $T'=(p=\Gunderline(r))$ si et seulement si $\Gunderline(r)=z_{1}$, {\it i.e.} si et seulement si $$\Gunderline(\Gunderline(r))=\Gunderline(r).$$
\end{proof}

\noindent Le théorème suivant est un critère effectif d'holomorphie de la courbure du tissu dual d'un feuilletage homogène le long de l'image par l'application de \textsc{Gauss} d'une droite d'inflexion transverse d'ordre maximal.

\begin{thm}\label{thm:Divergence}
{\sl Soit $\mathcal{H}$ un feuilletage homogène de degré $d\geq3$ sur $\pp$ défini par la $1$-forme
$$\omega=A(x,y)\mathrm{d}x+B(x,y)\mathrm{d}y,\quad A,B\in\mathbb{C}[x,y]_d,\hspace{2mm}\pgcd(A,B)=1.$$
Supposons que $\mathcal{H}$ possède une droite d'inflexion transverse $T$ d'ordre maximal $d-1$ et posons $T'=\mathcal{G}_{\mathcal{H}}(T).$ Alors la courbure de $\mathrm{Leg}\mathcal{H}$ est holomorphe le long de $T'$ si et seulement si la $2$-forme $\mathrm{d}\omega$ s'annule sur la droite $T.$
}
\end{thm}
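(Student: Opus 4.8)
On suivra le sch\'ema de la preuve du Th\'eor\`eme~\ref{thm:Barycentre}. Quitte \`a composer par un automorphisme lin\'eaire de $\pp$, on se ram\`ene \`a $T=(y=0)$; d'apr\`es les propri\'et\'es de $\Gunderline$ rappel\'ees plus haut, l'hypoth\`ese que $T$ est une droite d'inflexion transverse d'ordre maximal $d-1$ \'equivaut alors \`a ce que $\Gunderline$ soit totalement ramifi\'ee en $z=0$, {\it i.e.} \`a ce que la fibre $\Gunderline^{-1}(\Gunderline(0))$ soit r\'eduite au point $z=0$ compt\'e avec multiplicit\'e $d$; en termes de $A$ et $B$, cela se traduit par l'existence de $\lambda\in\mathbb{C}^{*}$ tel que $a_{0}B-b_{0}A=\lambda y^{d}$, o\`u $a_{0}$ (resp. $b_{0}$) d\'esigne le coefficient de $x^{d}$ dans $A$ (resp. dans $B$). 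Comme $T$ n'est pas invariante, on a $a_{0}\neq0$, d'o\`u $B=\beta A+\lambda y^{d}$ avec $\beta=b_{0}/a_{0}$; une transformation lin\'eaire suppl\'ementaire du type $x\mapsto x+cy$ permet de supposer en outre $\beta\neq0$ et $p_{*}:=\Gunderline(0)=-1/\beta$ fini et non nul. Puisque $\mathrm{d}\omega=(\beta A_{x}-A_{y})\mathrm{d}x\wedge\mathrm{d}y$, la $2$-forme $\mathrm{d}\omega$ s'annule sur $T$ si et seulement si $a_{1}=d\,b_{0}$, o\`u $a_{1}$ est le coefficient de $x^{d-1}y$ dans $A$. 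Enfin, dans la carte affine $(p,q)$ de $\pd$ associ\'ee \`a $\{y=px-q\}$, on a $T'=\mathcal{G}_{\mathcal{H}}(T)=(p=p_{*})$, et $\Leg\mathcal{H}$ est donn\'e par
\[
\check F(p,q;x)=A(x,px-q)+pB(x,px-q)=(1+p\beta)\,A(x,px-q)+\lambda\,p\,(px-q)^{d},\qquad x=\frac{\mathrm{d}q}{\mathrm{d}p}.
\]

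Au voisinage d'un point g\'en\'erique de $T'$, le $d$-tissu $\Leg\mathcal{H}$ est irr\'eductible (les $d$ branches locales de $\Gunderline^{-1}$ en $0$ sont permut\'ees cycliquement par la monodromie), ses feuilles sont $\mathrm{d}q-x_{i}\mathrm{d}p=0$ avec $x_{i}=q/(p-p_{i}(p))$; et, comme $1+p_{*}\beta=0$, on a $\check F(p_{*},q;x)=\lambda\,p_{*}^{d+1}(x+\beta q)^{d}$, de sorte que le long de $T'$ les $d$ feuilles de $\Leg\mathcal{H}$ deviennent toutes tangentes \`a la direction de pente finie $x=-\beta q$, transverse \`a $T'$. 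Joint au Lemme~\ref{lem:Delta-LegH} et \`a une estimation directe sur les $x_{i}$, ceci montre que $T'$ est une composante de $\Delta(\Leg\mathcal{H})$ de multiplicit\'e exactement $d-1$. On n'est donc pas dans la situation totalement invariante de la Proposition~\ref{pro:MP13-3.3}, et un calcul effectif de la courbure sera n\'ecessaire; l'invariance de $\Leg\mathcal{H}$ par les homoth\'eties (cf. la preuve du Lemme~\ref{lem:holomo-O}) montre que l'holomorphie de $K(\Leg\mathcal{H})$ le long de $T'$ \'equivaut \`a l'annulation d'un unique nombre complexe.

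Pour ce calcul, on pose $t=p-p_{*}$ et $x=-\beta q+\xi$, de sorte que $px-q=p\xi-\beta q\,t$ et
\[
\check F=\beta\,t\,A(-\beta q+\xi,\,p\xi-\beta q\,t)+\lambda\,p\,(p\xi-\beta q\,t)^{d}.
\]
Le long d'une feuille $\xi=O(t^{1/d})$: la partie principale $\beta\,t\,a_{0}(-\beta q)^{d}+\lambda\,p^{d+1}\xi^{d}$ fournit le mod\`ele de \textsc{Fermat} $\xi_{i}^{d}=-\mu(p,q)\,t$, avec $\mu$ holomorphe et $\mu(p_{*},q)\neq0$ pour $q\neq0$, tandis que la correction dominante (pour $d\geq3$) est $\beta\,t\,(-\beta q)^{d-1}(d\,a_{0}+p\,a_{1})\,\xi$, dont le coefficient sur $T'$ vaut $(d\,a_{0}+p\,a_{1})\big|_{p=p_{*}}=\frac{1}{\beta}(d\,b_{0}-a_{1})$ \`a un facteur partout non nul pr\`es. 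On rel\`eve alors $\Leg\mathcal{H}$ par le rev\^etement cyclique ramifi\'e $w\mapsto w^{d}=t$: le tissu relev\'e est compl\`etement d\'ecomposable, de pentes holomorphes $x_{i}=-\beta q+\zeta^{i}w\,\nu(w,q)+\cdots$ (o\`u $\zeta^{d}=1$ est primitif); on \'evalue $\eta(\Leg\mathcal{H})=\sum_{r<s<t}\eta_{rst}$ gr\^ace \`a~(\ref{equa:eta-rst})--(\ref{equa:eta}), on prend $K(\Leg\mathcal{H})=\mathrm{d}\,\eta(\Leg\mathcal{H})$, puis on redescend par $\mathbb{Z}/d\mathbb{Z}$-invariance. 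En organisant cette somme au moyen des fonctions sym\'etriques des $\xi_{i}$ (comme dans le passage de~(\ref{equa:Barycentre-implicite}) \`a $F'(r)/F(r)$ dans la preuve du Th\'eor\`eme~\ref{thm:Barycentre}), on obtient que le mod\`ele de \textsc{Fermat} contribue \`a une $2$-forme holomorphe le long de $T'$; il en r\'esulte que le p\^ole \'eventuel de $K(\Leg\mathcal{H})$ le long de $T'$ a pour coefficient un multiple non nul de $d\,a_{0}-a_{1}/\beta$. Par cons\'equent $K(\Leg\mathcal{H})$ est holomorphe le long de $T'$ si et seulement si $a_{1}=d\,b_{0}$, {\it i.e.} si et seulement si $\mathrm{d}\omega$ s'annule sur $T$.

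Le point d\'elicat sera ce dernier calcul pour $d$ arbitraire: il faudra v\'erifier d'une part que le mod\`ele de \textsc{Fermat} ne produit aucun p\^ole le long de $T'$, d'autre part que l'unique contribution polaire de $K(\Leg\mathcal{H})$ le long de $T'$ est le terme lin\'eaire en $d\,a_{0}+p\,a_{1}$, puis en extraire le coefficient. Une variante consisterait \`a utiliser une g\'en\'eralisation ad\'equate de la formule de H\'enaut~(\ref{equa:Formule-Henaut}) pour lire directement l'ordre du p\^ole de $K(\Leg\mathcal{H})$ le long de $p=p_{*}$.
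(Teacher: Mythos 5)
Votre mise en place est correcte et suit, pour l'essentiel, la m\^eme strat\'egie que le texte : r\'eduction \`a $T=(y=0)$ (le texte prend $T=(y=rx)$), traduction de la ramification totale via le Lemme~\ref{lem:critique-maximale} (votre $a_0B-b_0A=\lambda y^{d}$ est \'equivalente \`a $-A(1,z)=\Gunderline(0)B(1,z)+cz^{d}$), identification correcte de la condition $\mathrm{d}\omega\big|_{T}=0$ avec $a_1=d\,b_0$, description exacte du comportement des $d$ pentes $x_i\to-\beta q$ le long de $T'$ (donc $T'$ non invariante, pas d'argument \og soft \fg{} possible), et d\'eveloppement de type Puiseux en $(p-p_*)^{1/d}$ avec un terme sous-dominant de coefficient $\tfrac{1}{\beta}(d\,b_0-a_1)$, qui est bien l'objet que le texte fait appara\^itre sous la forme $\varphi''(0)\propto d+b_1(\Gunderline(r)-r)$.

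Mais il y a un vrai trou, que vous nommez vous-m\^eme comme \og point d\'elicat \fg{} : tout le contenu du th\'eor\`eme r\'eside pr\'ecis\'ement dans le calcul que vous diff\'erez. Il faut (i) montrer que le \og mod\`ele de Fermat \fg{} ne cr\'ee aucun p\^ole, ce qui dans le texte est l'annulation $\tilde{\beta}_{ijk}(0)=0$ obtenue via l'identit\'e cyclotomique $\sum_{\langle i',j',k'\rangle}\frac{\zeta^{i'}}{(\zeta^{i'}-\zeta^{j'})(\zeta^{i'}-\zeta^{k'})}=0$ appliqu\'ee aux $1$-formes $\eta_{rst}$ de~(\ref{equa:eta-rst}) ; (ii) montrer que la partie polaire de $K(\Leg\mathcal{H})$ le long de $T'$ est exactement $-\binom{d}{3}\frac{\varphi''(0)}{2d\varphi'(0)^{2}}$ (seconde identit\'e $\sum\frac{\zeta^{i'}(\zeta^{j'}+\zeta^{k'})}{(\zeta^{i'}-\zeta^{j'})(\zeta^{i'}-\zeta^{k'})}=-1$), et que ce facteur est non nul, ce qui exige de v\'erifier $\varphi'(0)\neq0$ ; (iii) inverser explicitement $\Gunderline$ pour identifier $\varphi''(0)$ au coefficient de $\mathrm{d}\omega\big|_{T}$. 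Dans votre texte, l'affirmation \og le p\^ole \'eventuel a pour coefficient un multiple non nul de $d\,a_0-a_1/\beta$ \fg{} est pos\'ee, non d\'emontr\'ee ; de m\^eme la r\'eduction de l'holomorphie \`a \og l'annulation d'un unique nombre complexe \fg{} n'est justifi\'ee que par un argument d'invariance par homoth\'eties, alors qu'elle r\'esulte dans le texte de la forme pr\'ecise $\beta(p)=(p-\Gunderline(r))^{-1-\frac{1}{d}}\tilde{\beta}\bigl((p-\Gunderline(r))^{\frac{1}{d}}\bigr)$ avec $\tilde{\beta}(z)\in z\,\mathbb{C}\{z^{d}\}$, \'etablie par le calcul. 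Tel quel, votre texte est donc un plan correct et compatible avec la d\'emonstration du papier, mais pas encore une d\'emonstration.
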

\noindent La démonstration de ce théorème utilise le lemme technique suivant, qui nous sera aussi utile ultérieurement.

\begin{lem}\label{lem:critique-maximale}
{\sl Soit $f:\mathbb{P}^{1}_{\mathbb{C}}\rightarrow\mathbb{P}^{1}_{\mathbb{C}}$ une application rationnelle de degré $d;f(z)=\dfrac{a(z)}{b(z)}$ avec $a$ et $b$ des polynômes sans facteur commun et $\max(\deg a,\deg b)=d.$ Soit $z_{0}\in\mathbb{C}$ tel que $f(z_0)\neq\infty.$ Alors, $z_0$ est un point critique de $f$ de multiplicité $m-1$ si et seulement s'il existe un polynôme $c\in\mathbb{C}[z]$ de degré $\leq d-m$ vérifiant $c(z_0)\neq0$ et tel que $a(z)=f(z_0)b(z)+c(z)(z-z_0)^{m}.$
}
\end{lem}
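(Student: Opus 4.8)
The plan is to reduce the statement to an elementary computation with orders of vanishing. Since $f(z_0)\neq\infty$ and $\pgcd(a,b)=1$, the denominator $b$ cannot vanish at $z_0$; put $\lambda:=f(z_0)=a(z_0)/b(z_0)\in\mathbb{C}$. Writing $f(z)-\lambda=\dfrac{a(z)-\lambda b(z)}{b(z)}$ and using $b(z_0)\neq 0$, the order of vanishing of $f-\lambda$ at $z_0$ equals the order of vanishing of the polynomial $g:=a-\lambda b$ at $z_0$. One checks first that $g\not\equiv 0$: otherwise $a=\lambda b$ together with $\pgcd(a,b)=1$ would force $b$, hence $a$, to be constant, contradicting $\deg f=d\geq 1$. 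Since $g(z_0)=a(z_0)-\lambda b(z_0)=0$, the integer $k:=\mathrm{ord}_{z_0}g$ is finite and $\geq 1$, and by definition $z_0$ is a critical point of $f$ of multiplicity $k-1$; indeed, writing $g=(z-z_0)^{k}h$ with $h(z_0)\neq 0$ and differentiating $f-\lambda=g/b$ shows that $f'$ vanishes to order exactly $k-1$ at $z_0$.

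With this in hand the equivalence is immediate. Assume $z_0$ is a critical point of multiplicity $m-1$, i.e. $k=m$. Then $g(z)=c(z)(z-z_0)^{m}$ for a unique polynomial $c$ with $c(z_0)\neq 0$, and since $\deg g\leq\max(\deg a,\deg b)=d$ we obtain $\deg c\leq d-m$; rearranging $g=a-\lambda b$ gives precisely $a(z)=f(z_0)\,b(z)+c(z)(z-z_0)^{m}$. Conversely, if $a(z)=f(z_0)\,b(z)+c(z)(z-z_0)^{m}$ with $c(z_0)\neq 0$, then $g=a-\lambda b=c(z)(z-z_0)^{m}$ has order exactly $m$ at $z_0$ (the bound $\deg c\leq d-m$ is not needed for this direction), hence so does $f-\lambda$, and therefore $z_0$ is a critical point of $f$ of multiplicity $m-1$.

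The argument is essentially bookkeeping, and I do not anticipate a genuine obstacle; the only points that deserve a moment of care are (i) deducing $b(z_0)\neq 0$ and $g\not\equiv 0$ from the hypotheses $f(z_0)\neq\infty$, $\pgcd(a,b)=1$ and $\deg f=d$, and (ii) matching the convention ``critical point of multiplicity $m-1$'' with ``$f-f(z_0)$ vanishes to order exactly $m$ at $z_0$'', which is exactly the short differentiation check above. The degree constraint $\deg c\leq d-m$ then falls out automatically from $\max(\deg a,\deg b)=d$.
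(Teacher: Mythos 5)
Your proof is correct and follows essentially the same route as the paper's: both identify "critical point of multiplicity $m-1$" with "$f-f(z_0)$ vanishes to order exactly $m$ at $z_0$" and then transfer this to the polynomial $a-f(z_0)b$, the degree bound $\deg c\leq d-m$ falling out of $\deg(a-f(z_0)b)\leq d$. Your version merely spells out the small checks ($b(z_0)\neq0$, $a-f(z_0)b\not\equiv0$, the differentiation step) that the paper leaves implicit via Taylor's formula.
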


\begin{proof}[\sl D\'emonstration]
D'après la formule de \textsc{Taylor}, l'assertion $z=z_0$ est un point critique de $f$ de multiplicité $m-1$ se traduit par  $f(z)=f(z_0)+h(z)(z-z_0)^m,$ avec $h(z_0)\neq0.$ Par suite $$a(z)-f(z_0)b(z)=c(z)(z-z_0)^m$$
avec $c(z):=h(z)b(z)$, $c(z_0)\neq0$; comme le membre de gauche est un polynôme en $z$ de degré $\leq d$ celui de droite aussi. On constate alors que la fonction $c(z)$ est polynomiale en $z$ de degré $\leq d-m$, d'où l'énoncé.
\end{proof}

\begin{proof}[\sl D\'emonstration du Théorème~\ref{thm:Divergence}]
On peut se ramener à $T=(y=rx)$; si $(p,q)$ est la carte affine de $\pd$ associée à la droite $\{y=px-q\}\subset{\mathbb{P}^{2}_{\mathbb{C}}},$ alors $T'=(p=\Gunderline(r))$ avec $\Gunderline(z)=~-\dfrac{A(1,z)}{B(1,z)}.$ De plus, le $d$-tissu $\Leg\mathcal{H}$ est décrit par $\prod_{i=1}^{d}\check{\omega}_i$, où
\begin{align*}
&\check{\omega}_i=\dfrac{\mathrm{d}q}{q}-\lambda_{i}(p)\mathrm{d}p,&&\lambda_{i}(p)=\dfrac{1}{p-p_{i}(p)}&& \text{et} &&\{p_{i}(p)\}=\Gunderline^{-1}(p).
\end{align*}

\noindent En appliquant les formules (\ref{equa:eta-rst}) et (\ref{equa:eta})  à $\Leg\mathcal{H}=\W(\check{\omega}_1,\check{\omega}_2,\ldots,\check{\omega}_d)$ on constate que $\eta(\mathrm{Leg}\mathcal{H})$ s'écrit sous la forme
\begin{align*}
\hspace{-4.22cm}\eta(\mathrm{Leg}\mathcal{H})=\alpha(p)\mathrm{d}p+\dfrac{\mathrm{d}q}{q}\sum_{1\le i<j<k\le d}\beta_{ijk}(p),
\end{align*}
avec
\begin{align*}
\beta_{ijk}(p)=
\dfrac{-\lambda_{i}'}{(\lambda_{i}-\lambda_{j})(\lambda_{i}-\lambda_{k})}+
\dfrac{-\lambda_{j}'}{(\lambda_{j}-\lambda_{i})(\lambda_{j}-\lambda_{k})}+
\dfrac{-\lambda_{k}'}{(\lambda_{k}-\lambda_{i})(\lambda_{k}-\lambda_{j})}.
\end{align*}
Comme le point $z=r$ est critique non fixe pour $\Gunderline$ de multiplicité $d-1,$ il existe un isomorphisme analytique $\varphi:(\mathbb{C},0)\rightarrow(\mathbb{C},0)$ tel qu'au voisinage de $T'$ on ait
\begin{align*}
\hspace{-1.5cm}\lambda_{i}(p)=\frac{1}{\Gunderline(r)-r}+\varphi\left(\zeta^{i}\left(p-\Gunderline(r)\right)^{\frac{1}{d}}\right), \quad\text{avec}\hspace{2mm}
\zeta=\mathrm{e}^{2\mathrm{i}\pi/d}.
\end{align*}
Notons que
\begin{align*}
\hspace{1.5cm}\lambda_{i}'(p)=\frac{1}{d}\left(p-\Gunderline(r)\right)^{\frac{1-d}{d}}\left[\zeta^{i}\varphi'(0)+
\zeta^{2i}\varphi''(0)\left(p-\Gunderline(r)\right)^{\frac{1}{d}}+
o\left((p-\Gunderline(r))^{\frac{1}{d}}\right)\right],
\end{align*}
et
\begin{align*}
\hspace{-1.2cm}\lambda_{i}(p)-\lambda_{j}(p)=\left(p-\Gunderline(r)\right)^{\frac{1}{d}}\varphi'(0)(\zeta^{i}-\zeta^{j})+
o\left((p-\Gunderline(r))^{\frac{1}{d}}\right).
\end{align*}
Il s'en suit que
\begin{align*}
\hspace{1.1mm}\beta_{ijk}(p)=\left(p-\Gunderline(r)\right)^{-1-\frac{1}{d}}\tilde{\beta}_{ijk}\left((p-\Gunderline(r))^{\frac{1}{d}}\right),
\quad\hspace{1mm}\text{avec}\hspace{1mm}\quad
\tilde{\beta}_{ijk}(z)\in\mathbb{C}\{z\}.
\end{align*}
En fait, si $\langle i',j',k'\rangle$ désigne trois permutations circulaires de $i,j$ et $k,$ on a
\begin{align*}
\hspace{-3.5cm}\tilde{\beta}_{ijk}(0)=-\frac{1}{d\varphi'(0)}\underbrace{\sum_{\langle i',\,j',\,k'\rangle}\frac{\zeta^{i'}}{(\zeta^{i'}-\zeta^{j'})(\zeta^{i'}-\zeta^{k'})}}_{0}=0,
\end{align*}
et
\begin{align*}
\hspace{-1.8cm}\tilde{\beta}'_{ijk}(0)=\frac{\varphi''(0)}{2d\varphi'(0)^{2}}\underbrace{\sum_{\langle i',\,j',\,k'\rangle}\frac{\zeta^{i'}(\zeta^{j'}+\zeta^{k'})}{(\zeta^{i'}-\zeta^{j'})(\zeta^{i'}-\zeta^{k'})}}_{-1}=-\frac{\varphi''(0)}{2d\varphi'(0)^{2}}.
\end{align*}
En posant $\beta(z):=\sum_{1\le i<j<k\le d}\beta_{ijk}(z)$ \hspace{2mm}et\hspace{2mm} $\tilde{\beta}(z):=\sum_{1\le i<j<k\le d}\tilde{\beta}_{ijk}(z),$ on obtient que
\begin{align*}
\hspace{-4.7cm}\beta(p)=\left(p-\Gunderline(r)\right)^{-1-\frac{1}{d}}\tilde{\beta}\left((p-\Gunderline(r))^{\frac{1}{d}}\right).
\end{align*}
Comme $K(\Leg\mathcal{H})=\mathrm{d}\hspace{0.1mm}\eta(\mathrm{Leg}\mathcal{H})=\dfrac{\beta'(p)}{q}\mathrm{d}p\wedge\mathrm{d}q$ et comme $\beta(p)\in\mathbb{C}\{p-\Gunderline(r)\}\Big[\dfrac{1}{p-\Gunderline(r)}\Big],$ on en déduit que $K(\Leg\mathcal{H})$ est holomorphe le long de $T'=(p=\Gunderline(r))$ si et seulement si $\tilde{\beta}(z)\in\raisebox{0.3mm}{$z$}\hspace{0.15mm}\mathbb{C}\{\raisebox{0.3mm}{$z^{d}$}\}$ satisfait la condition
\begin{align*}
&0=\tilde{\beta}'(0)=\sum_{1\le i<j<k\le d}\tilde{\beta}'_{ijk}(0)=-\binom{{d}}{{3}}\frac{\varphi''(0)}{2d\varphi'(0)^{2}},
\end{align*}
\textit{i.e.} si et seulement si $\varphi''(0)=0.$

\noindent D'après le Lemme~\ref{lem:critique-maximale}, le fait que $z=r$ est un point critique (non fixe) de $\Gunderline$ de multiplicité $d-1$ se traduit par $-A(1,z)=\Gunderline(r)B(1,z)+c(z-r)^{d},$ pour un certain $c\in\mathbb{C}^{*}.$ Par suite
\begin{align*}
\hspace{0.15cm}A(x,y)=-\Gunderline(r)B(x,y)-c(y-rx)^{d} \qquad\text{et}\qquad B(x,y)=b_{0}x^{d}+\sum_{i=1}^{d}b_{i}(y-rx)^{i}x^{d-i}.
\end{align*}
Puisque $b_{0}=B(1,r)\neq0,$ on peut supposer sans perte de généralité que $b_{0}=1.$ Ainsi
\begin{align*}
\mathrm{d}\omega\Big|_{y=rx}=\left(d+b_{1}(\Gunderline(r)-r)\right)x^{d-1}\mathrm{d}x\wedge\mathrm{d}y.
\end{align*}
D'autre part, $\Gunderline(z)=\Gunderline(r)+\dfrac{c(z-r)^{d}}{1+b_{1}(z-r)+\cdots}$ et, pour tout $p\in\mathbb{P}^{1}_{\mathbb{C}}$ suffisamment voisin de $\Gunderline(r),$ l'équation $\Gunderline(z)=p$ est équivalente à
\begin{align*}
\hspace{-1.45cm}\left(p-\Gunderline(r)\right)^{\frac{1}{d}}=\frac{c^{\frac{1}{d}}(z-r)}{\sqrt[d]{1+b_{1}(z-r)+\cdots}}
=c^{\frac{1}{d}}(z-r)\left[1-\frac{1}{d}b_{1}(z-r)+\cdots\right].
\end{align*}
Par suite les $p_{i}(p)\in\Gunderline^{-1}(p)$ s'écrivent
\begin{small}
\begin{align*}
\hspace{-4.22cm}p_{i}(p)=r+\frac{1}{c^{\frac{1}{d}}}\zeta^{i}\left(p-\Gunderline(r)\right)^{\frac{1}{d}}+
\frac{b_{1}}{dc^{\frac{2}{d}}}\zeta^{2i}\left(p-\Gunderline(r)\right)^{\frac{2}{d}}+\cdots
\end{align*}
\end{small}
\hspace{-1mm}et donc
\begin{small}
\begin{align*}
\hspace{0.8cm}p-p_{i}(p)=\left(\Gunderline(r)-r\right)-\frac{1}{c^{\frac{1}{d}}}\zeta^{i}\left(p-\Gunderline(r)\right)^{\frac{1}{d}}-
\frac{b_{1}}{dc^{\frac{2}{d}}}\zeta^{2i}\left(p-\Gunderline(r)\right)^{\frac{2}{d}}+\cdots+\left(p-\Gunderline(r)\right)+\cdots.
\end{align*}
\end{small}
\hspace{-1mm}Par conséquent
\begin{align*}
\hspace{0.65cm}\lambda_{i}(p)=\dfrac{1}{p-p_{i}(p)}=\frac{1}{\Gunderline(r)-r}+\varphi'(0)\zeta^{i}\left(p-\Gunderline(r)\right)^{\frac{1}{d}}
+\frac{\varphi''(0)}{2}\zeta^{2i}\left(p-\Gunderline(r)\right)^{\frac{2}{d}}+\cdots,
\end{align*}
avec
\begin{align*}
\hspace{0.44cm}\varphi'(0)=\frac{1}{c^{\frac{1}{d}}(\Gunderline(r)-r)^{2}}\neq0
\qquad\text{et}\qquad
\varphi''(0)=\frac{2}{dc^{\frac{2}{d}}(\Gunderline(r)-r)^{3}}\left[d+b_{1}(\Gunderline(r)-r)\right],
\end{align*}
ce qui termine la démonstration.
\end{proof}
\newpage
\hfill
\medskip

\noindent Comme conséquence immédiate des Théorèmes \ref{thm:holomo-G(I^tr)}, \ref{thm:Barycentre}, \ref{thm:Divergence} et de la Remarque \ref{rem:Q(b,-a,a,b)} nous obtenons la caractérisation suivante de la platitude de la transformée de \textsc{Legendre} d'un feuilletage homogène de degré $3$ sur le plan projectif complexe.

\begin{cor}\label{cor:platitude-degre-3}
{\sl Soit $\mathcal{H}$ un feuilletage homogène de degré $3$ sur $\pp$ défini par la $1$-forme $$\omega=A(x,y)\mathrm{d}x+B(x,y)\mathrm{d}y,\quad A,B\in\mathbb{C}[x,y]_3,\hspace{2mm}\pgcd(A,B)=1.$$ Alors, le $3$-tissu $\Leg\mathcal{H}$ est plat si et seulement si les deux conditions suivantes sont satisfaites:
\begin{itemize}
\item [\hspace{-2mm}\textit{(1)}] pour toute droite d'inflexion de $\mathcal{H}$ transverse et simple $T_1=(ax+by=0),$ la droite d'équation $A(b,-a)x+B(b,-a)y=0$ est invariante par $\mathcal{H};$

\item [\hspace{-2mm}\textit{(2)}] pour toute droite d'inflexion de $\mathcal{H}$ transverse et double $T_{2},$ la $2$-forme $\mathrm{d}\omega$ s'annule sur $T_{2}.$
\end{itemize}
En particulier, si le feuilletage $\mathcal{H}$ est convexe alors $\Leg\mathcal{H}$ est plat.
}
\end{cor}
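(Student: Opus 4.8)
The plan is to obtain the corollary as a direct assembly of Theorems~\ref{thm:holomo-G(I^tr)}, \ref{thm:Barycentre} and \ref{thm:Divergence}, once one observes that in degree~$3$ a transverse inflection line of $\mathcal{H}$ can only be of two kinds. First I would apply Theorem~\ref{thm:holomo-G(I^tr)}: since $d=3\geq3$, the $3$-web $\Leg\mathcal{H}$ is flat if and only if $K(\Leg\mathcal{H})$ is holomorphic along $\G_{\mathcal{H}}(\ItrH)$. Writing $\ItrH=\prod_{j}T_{j}^{\rho_{j}-1}$ with $2\leq\rho_{j}\leq d=3$, each transverse inflection line $T_{j}$ is either \emph{simple} ($\rho_{j}=2$, inflection order $1$) or \emph{double} ($\rho_{j}=3$, inflection order $2$, which for $d=3$ is exactly the maximal order $d-1$). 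Since $\G_{\mathcal{H}}(\ItrH)=\bigcup_{j}\G_{\mathcal{H}}(T_{j})$ and a meromorphic $2$-form is holomorphic along a curve if and only if it is holomorphic along the generic point of each irreducible component, it suffices to analyse separately the holomorphy of $K(\Leg\mathcal{H})$ along one simple, respectively one double, transverse inflection line.

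For a simple transverse inflection line $T_{1}=(ax+by=0)$ I would first check that the standing hypothesis of Theorem~\ref{thm:Barycentre} is automatic here. By the dictionary between transverse inflection lines and non-fixed critical points of $\Gunderline$, the point $[-a:b]$ is a critical point of $\Gunderline$ of multiplicity $\rho_{1}-1=1$, i.e.\ of local degree~$2$; as $\deg\Gunderline=d=3$, the remaining preimages in the fibre $\Gunderline^{-1}\!\bigl(\Gunderline([-a:b])\bigr)$ account for local degree $1$ only, hence reduce to a single non-critical point, so $[-a:b]$ is indeed the only critical point in its fibre. Theorem~\ref{thm:Barycentre} then says that $K(\Leg\mathcal{H})$ is holomorphic on $\G_{\mathcal{H}}(T_{1})$ if and only if $Q(b,-a\,;a,b)=0$; and, by Remark~\ref{rem:Q(b,-a,a,b)}, $Q(b,-a\,;a,b)=\Cinv\bigl(B(b,-a),-A(b,-a)\bigr)/\bigl(\Cinv(b,-a)\bigr)^{2}$, with $\Cinv(b,-a)\neq0$ because $T_{1}$, being transverse, is not a component of the tangent cone. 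Hence the condition becomes $\Cinv\bigl(B(b,-a),-A(b,-a)\bigr)=0$, i.e.\ $[B(b,-a):-A(b,-a)]$ lies on the tangent cone of $\mathcal{H}$, which by the characterisation of the fixed points of $\Gunderline$ means precisely that the line $A(b,-a)x+B(b,-a)y=0$ is invariant by $\mathcal{H}$. This is condition~(1).

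For a double transverse inflection line $T_{2}$, whose inflection order $2$ equals the maximal order $d-1$ in degree~$3$, Theorem~\ref{thm:Divergence} applies verbatim and gives that $K(\Leg\mathcal{H})$ is holomorphic on $\G_{\mathcal{H}}(T_{2})$ if and only if $\mathrm{d}\omega$ vanishes on $T_{2}$, which is condition~(2). Combining the three reductions, $\Leg\mathcal{H}$ is flat if and only if both (1) and (2) hold. For the last assertion: if $\mathcal{H}$ is convex then by definition $\IH$ is a product of invariant lines, so $\ItrH$ is trivial and $\mathcal{H}$ has no transverse inflection lines; conditions (1) and (2) are then vacuous, whence $\Leg\mathcal{H}$ is flat (this also reproves the corollary stated right after Theorem~\ref{thm:holomo-G(I^tr)}).

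The main obstacle is not computational here — the statement is essentially a packaging of the three theorems — but it does require two small checks: that the hypothesis of Theorem~\ref{thm:Barycentre} (a single critical point in the relevant fibre) is automatically satisfied when $d=3$, via the local-degree count for the degree-$3$ map $\Gunderline$; and the translation, through Remark~\ref{rem:Q(b,-a,a,b)} and the fixed-point description of $\Gunderline$, of the vanishing $Q(b,-a\,;a,b)=0$ into the $\mathcal{H}$-invariance of the line $A(b,-a)x+B(b,-a)y=0$. Everything else is bookkeeping with the decomposition $\G_{\mathcal{H}}(\ItrH)=\bigcup_{j}\G_{\mathcal{H}}(T_{j})$ and the fact that in degree $3$ these two types of lines exhaust all transverse inflection lines, so Theorems~\ref{thm:Barycentre} and \ref{thm:Divergence} together cover every component of $\G_{\mathcal{H}}(\ItrH)$.
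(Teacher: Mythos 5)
Your argument is correct and follows exactly the route the paper intends: the corollary is stated there as an immediate consequence of Th\'eor\`emes~\ref{thm:holomo-G(I^tr)}, \ref{thm:Barycentre}, \ref{thm:Divergence} and de la Remarque~\ref{rem:Q(b,-a,a,b)}, and your write-up supplies precisely the two verifications left implicit (that in degree $3$ a simple transverse inflection line gives a critical point of $\Gunderline$ which is automatically the only one in its fibre, and that $Q(b,-a\hspace{0.2mm};a,b)=0$ translates, via $\Cinv$, into the invariance of the line $A(b,-a)x+B(b,-a)y=0$). Nothing to add.
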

\smallskip

\noindent Ce corollaire jouera un rôle important au Chapitre~\ref{chap3:classification}.
\bigskip
\bigskip
\bigskip

\section{Platitude et feuilletages homogènes de type appartenant à $\Z\left[\mathrm{R}_1,\mathrm{R}_2,\ldots,\mathrm{R}_{d-1},\mathrm{T}_1,\mathrm{T}_{d-1}\right]$}\label{sec:type-min-max}
\vspace{6mm}

Nous nous proposons dans ce paragraphe de décrire certaines feuilletages homogènes de degré $d\geq3$ sur $\pp$, de type appartenant à $\Z\left[\mathrm{R}_1,\mathrm{R}_2,\ldots,\mathrm{R}_{d-1},\mathrm{T}_1,\mathrm{T}_{d-1}\right]$ et dont le $d$-tissu dual est plat. Nous considérons ici un feuilletage homogène $\mathcal{H}$ de degré $d\geq3$ sur $\pp$ défini, en carte affine $(x,y),$ par $$\omega=A(x,y)\mathrm{d}x+B(x,y)\mathrm{d}y,\quad A,B\in\mathbb{C}[x,y]_d,\hspace{2mm}\pgcd(A,B)=1.$$ L'application rationnelle $\Gunderline\hspace{1mm}\colon\mathbb{P}^{1}_{\mathbb{C}}\rightarrow \mathbb{P}^{1}_{\mathbb{C}}$, $\Gunderline(z)=-\dfrac{A(1,z)}{B(1,z)},$ nous sera très utile pour établir les énoncés qui suivent.

\begin{pro}\label{pro:omega1-omega2}
{\sl Si $\deg\mathcal{T}_{\mathcal{H}}=2,$ alors le $d$-tissu $\Leg\mathcal{H}$ est plat si et seulement si $\mathcal{H}$ est linéairement conjugué à l'un des deux feuilletages $\mathcal{H}_{1}^{d}$ et $\mathcal{H}_{2}^{d}$ décrits respectivement par les $1$-formes
\begin{itemize}
\item [\texttt{1. }] $\omega_1^{\hspace{0.2mm}d}=y^d\mathrm{d}x-x^d\mathrm{d}y\hspace{0.5mm};$
\item [\texttt{2. }] $\omega_2^{\hspace{0.2mm}d}=x^d\mathrm{d}x-y^d\mathrm{d}y.$
\end{itemize}
}
\end{pro}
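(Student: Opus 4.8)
~--- Le plan est d'exploiter l'application rationnelle $\Gunderline\colon\mathbb{P}^{1}_{\mathbb{C}}\to\mathbb{P}^{1}_{\mathbb{C}}$ associ\'ee \`a $\mathcal{H},$ dont les points critiques encodent le diviseur $\Dtr,$ puis les crit\`eres des Th\'eor\`emes~\ref{thm:holomo-G(I^tr)} et \ref{thm:Divergence}. Je commencerais par observer que, puisque $\deg\Dtr=2d-2$ et que chaque droite composant $\Dtr$ y appara\^it avec une multiplicit\'e au plus \'egale \`a $d-1$ (bornes sur les ordres de radialit\'e et d'inflexion transverse), l'hypoth\`ese $\deg\mathcal{T}_{\mathcal{H}}=2$ force $\Dtr=L_{1}^{d-1}L_{2}^{d-1}$ pour deux droites distinctes $L_{1},L_{2}$ passant par l'origine $O.$ D'apr\`es la Proposition~\ref{pro:SingH} et la description des points critiques de $\Gunderline,$ chaque $L_{i}$ est soit la droite joignant $O$ \`a une singularit\'e radiale \`a l'infini d'ordre $d-1,$ soit une droite d'inflexion transverse d'ordre $d-1$ (donc maximal dans les deux cas). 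On est ainsi ramen\'e \`a trois types possibles pour $\mathcal{H}$: $2\hspace{0.2mm}\mathrm{R}_{d-1},$ $\mathrm{R}_{d-1}+\mathrm{T}_{d-1}$ et $2\hspace{0.2mm}\mathrm{T}_{d-1}.$

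\noindent Ensuite, via une conjugaison lin\'eaire (qui pr\'eserve l'homog\'en\'eit\'e et agit sur $\Gunderline$ par conjugaison par une homographie), je me ram\`enerais \`a $L_{1}=(y=0)$ et $L_{2}=(x=0).$ Alors $\Gunderline$ n'admet que les points critiques $z=0$ et $z=\infty,$ chacun de multiplicit\'e $d-1$: elle est donc totalement ramifi\'ee au-dessus de deux points, ce qui donne $\Gunderline(z)=\frac{\alpha z^{d}+\beta}{\gamma z^{d}+\delta}$ avec $\alpha\delta-\beta\gamma\neq0,$ c'est-\`a-dire, compte tenu de $\pgcd(A,B)=1,$
\begin{align*}
\omega=(\beta x^{d}+\alpha y^{d})\mathrm{d}x+(\delta x^{d}+\gamma y^{d})\mathrm{d}y.
\end{align*}
Le point $z=0$ (resp. $z=\infty$) est fixe par $\Gunderline$ si et seulement si $\beta=0$ (resp. $\gamma=0$); quitte \`a \'echanger $x$ et $y,$ les trois types correspondent respectivement \`a $\beta=\gamma=0,$ \`a $\beta=0\neq\gamma$ (d'o\`u $\alpha\delta\neq0$) et \`a $\beta\gamma\neq0.$

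\noindent Il resterait alors \`a appliquer les crit\`eres. Dans le cas $2\hspace{0.2mm}\mathrm{R}_{d-1},$ on a $\Gunderline(z)=\frac{\alpha}{\delta}z^{d}$ et, apr\`es une homoth\'etie diagonale, $\omega$ devient $\omega_{1}^{d}$; de plus $\ItrH$ est trivial, donc $\mathcal{H}$ est convexe et $\Leg\mathcal{H}$ est plat (Th\'eor\`eme~\ref{thm:holomo-G(I^tr)}). Dans le cas $2\hspace{0.2mm}\mathrm{T}_{d-1},$ les droites d'inflexion transverses sont $T_{1}=(y=0)$ et $T_{2}=(x=0),$ d'ordre maximal, et $\mathrm{d}\omega=d(\delta x^{d-1}-\alpha y^{d-1})\mathrm{d}x\wedge\mathrm{d}y$; d'apr\`es les Th\'eor\`emes~\ref{thm:Divergence} et \ref{thm:holomo-G(I^tr)}, $\Leg\mathcal{H}$ est plat si et seulement si $\mathrm{d}\omega$ s'annule \`a la fois sur $T_{1}$ et sur $T_{2},$ {\it i.e.} si et seulement si $\alpha=\delta=0,$ auquel cas $\omega=\beta x^{d}\mathrm{d}x+\gamma y^{d}\mathrm{d}y$ est lin\'eairement conjugu\'e \`a $\omega_{2}^{d}.$ Enfin, dans le cas $\mathrm{R}_{d-1}+\mathrm{T}_{d-1}$ (avec $\beta=0\neq\gamma$ et $\alpha\delta\neq0$), l'unique droite d'inflexion transverse d'ordre maximal est $T=(x=0)$ et $\mathrm{d}\omega|_{T}=-d\alpha y^{d-1}\mathrm{d}x\wedge\mathrm{d}y\not\equiv0$; le Th\'eor\`eme~\ref{thm:Divergence} montre que $K(\Leg\mathcal{H})$ n'est pas holomorphe le long de $\G_{\mathcal{H}}(T)\subset\G_{\mathcal{H}}(\ItrH),$ donc $\Leg\mathcal{H}$ n'est pas plat (Th\'eor\`eme~\ref{thm:holomo-G(I^tr)}). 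La r\'eciproque \'etant imm\'ediate ($\mathcal{H}_{1}^{d}$ est convexe et $\mathrm{d}\omega_{2}^{d}=0$), on obtient l'\'equivalence annonc\'ee.

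\noindent La principale difficult\'e sera l'\'etape de mise sous forme normale: d'une part justifier proprement la classification des applications rationnelles de degr\'e $d$ totalement ramifi\'ees au-dessus de deux points et son passage aux polyn\^omes $A$ et $B,$ d'autre part identifier sans ambigu\"it\'e, dans chacun des trois cas, la nature (radiale ou d'inflexion transverse) et l'ordre de chaque droite de $\Dtr$ --- condition indispensable pour invoquer le Th\'eor\`eme~\ref{thm:Divergence}, dont l'hypoth\`ese porte sur une droite d'inflexion transverse d'ordre \emph{maximal}.
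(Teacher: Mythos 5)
Votre argumentation est correcte et suit pour l'essentiel la démonstration du texte : même réduction aux trois types $2\cdot\mathrm{R}_{d-1}$, $2\cdot\mathrm{T}_{d-1}$, $1\cdot\mathrm{R}_{d-1}+1\cdot\mathrm{T}_{d-1}$, puis application des Théorèmes~\ref{thm:holomo-G(I^tr)} et~\ref{thm:Divergence} (convexité dans le premier cas, annulation de $\mathrm{d}\omega$ sur les droites d'inflexion d'ordre maximal dans les deux autres). La seule différence est d'ordre rédactionnel : vous obtenez d'emblée la forme normale unique $(\beta x^{d}+\alpha y^{d})\mathrm{d}x+(\delta x^{d}+\gamma y^{d})\mathrm{d}y$ en classifiant les applications rationnelles totalement ramifiées au-dessus de deux points, là où le texte normalise chaque type séparément via le Lemme~\ref{lem:critique-maximale} --- lequel est précisément l'énoncé de ramification totale que vous invoquez, de sorte que l'étape que vous signalez comme délicate est en fait déjà disponible.
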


\begin{proof}[\sl D\'emonstration]
L'égalité $\deg\mathcal{T}_{\mathcal{H}}=2$ est réalisée si et seulement si nous sommes dans l'une des situations suivantes
\begin{itemize}
\item [(i)] $\mathcal{T}_{\mathcal{H}}=2\cdot\mathrm{R}_{d-1}\hspace{1mm};$

\item [(ii)] $\mathcal{T}_{\mathcal{H}}=2\cdot\mathrm{T}_{d-1}\hspace{1mm};$

\item [(iii)] $\mathcal{T}_{\mathcal{H}}=1\cdot\mathrm{R}_{d-1}+1\cdot\mathrm{T}_{d-1}.$
\end{itemize}
Commençons par étudier l'éventualité (i). Nous pouvons supposer à conjugaison près que les deux singularités radiales de $\mathcal{H}$ sont $[0:1:0]$ et $[1:0:0]$, ce qui revient à supposer que les points $\infty=[1:0],\,[0:1]\in\mathbb{P}^{1}_{\mathbb{C}}$ sont critiques fixes de $\Gunderline$, de même multiplicité $d-1$. Cela se traduit par le fait que $A(x,y)=ay^d$ et $B(x,y)=bx^d$, avec $ab\neq0,$ en vertu du Lemme~\ref{lem:critique-maximale}. Par suite $\omega=ay^d\mathrm{d}x-(-b)x^d\mathrm{d}y$ et nous pouvons évidemment normaliser les coefficients $a$ et $-b$ à $1.$ Ainsi $\mathcal{H}$ est conjugué au feuilletage $\mathcal{H}_{1}^{d}$ décrit par $\omega_1^{\hspace{0.2mm}d}=y^d\mathrm{d}x-x^d\mathrm{d}y$; le $d$-tissu $\Leg\mathcal{H}_{1}^{d}$ est plat car $\mathcal{H}_{1}^{d}$ est convexe.

\noindent Intéressons-nous à la possibilité (ii). \`{A} isomorphisme linéaire près nous pouvons nous ramener à la situation suivante:
\begin{itemize}
\item [$\bullet$] les points $[0:1],\,[1:1]\in\mathbb{P}^{1}_{\mathbb{C}}$ sont critiques non fixes de $\Gunderline$, de même multiplicité $d-1$;

\item [$\bullet$] $\Gunderline(0)$ et $\Gunderline(1)\neq\infty.$
\end{itemize}
Toujours d'après le Lemme~\ref{lem:critique-maximale}, il existe des constantes $\alpha,\beta\in\mathbb{C}^*$ telles que
\begin{align*}
-A(1,z)=\Gunderline(0)B(1,z)+\alpha\hspace{0.1mm}z^d=\Gunderline(1)B(1,z)+\beta(z-1)^d
\end{align*}
avec $\Gunderline(0)\neq0,\hspace{0.5mm}\Gunderline(1)\neq1$\hspace{0.5mm} et \hspace{0.5mm}$\Gunderline(0)\neq\Gunderline(1).$ L'homogénéité de $A$ et $B$ entraîne alors que
\begin{align*}
\quad \omega=\left(\Gunderline(0)\hspace{0.2mm}s\hspace{0.2mm}(y-x)^d-g(1)\hspace{0.2mm}r\hspace{0.2mm}y^d\right)\mathrm{d}x+\left(ry^d-s(y-x)^d\right)\mathrm{d}y
\end{align*}
avec $r=\dfrac{\alpha}{\Gunderline(1)-\Gunderline(0)}\neq0$\hspace{1mm} et \hspace{1mm}$s=\dfrac{\beta}{\Gunderline(1)-\Gunderline(0)}\neq0.$ D'après les Théorèmes \ref{thm:holomo-G(I^tr)} et \ref{thm:Divergence}, le $d$-tissu $\Leg\mathcal{H}$ est plat si et seulement si $\mathrm{d}\omega$ s'annule sur les deux droites $y(y-x)=0.$ Un calcul immédiat montre que
\begin{align*}
\mathrm{d}\omega\Big|_{y=0}=-sd(\Gunderline(0)-1)x^{d-1}\mathrm{d}x\wedge\mathrm{d}y
\hspace{1mm}\quad\text{et}\hspace{1mm}\quad
\mathrm{d}\omega\Big|_{y=x}=rd\Gunderline(1)x^{d-1}\mathrm{d}x\wedge\mathrm{d}y.
\end{align*}
Ainsi $\Leg\mathcal{H}$ est plat si et seulement si $\Gunderline(0)=1$ et $\Gunderline(1)=0$, auquel cas $$\omega=s(y-x)^d\mathrm{d}x+\left(ry^d-s(y-x)^d\right)\mathrm{d}y\hspace{1mm};$$ quitte à remplacer $\omega$ par $\varphi^*\omega,$ où $\varphi(x,y)=\left(s^{\larger{\frac{-1}{d+1}}}x-r^{\larger{\frac{-1}{d+1}}}y,-r^{\larger{\frac{-1}{d+1}}}y\right),$ on se ramène à $$\omega=\omega_2^{\hspace{0.2mm}d}=x^d\mathrm{d}x-y^d\mathrm{d}y.$$

\noindent Considérons pour finir l'éventualité (iii). Nous pouvons supposer que la singularité radiale de~$\mathcal{H}$ est le point $[0:1:0]$ et que la droite d'inflexion transverse de $\mathcal{H}$ est la droite $(y=0)$; $\Gunderline(0)\neq\Gunderline(\infty)=\infty$ car $\Gunderline^{-1}(\Gunderline(0))=\{0\}.$ Un raisonnement analogue à celui du cas précédent conduit à
\begin{align*}
&\omega=-\left(\Gunderline(0)\beta\hspace{0.3mm}x^d+\alpha y^d\right)\mathrm{d}x+\beta\hspace{0.3mm}x^d\mathrm{d}y,
\qquad\text{avec}\qquad \alpha\beta\Gunderline(0)\neq0.
\end{align*}
\noindent La courbure du tissu associé à cette $1$-forme ne peut pas être holomorphe sur $\mathcal{G}_{\mathcal{H}}(\{y=0\})$ car
$$\mathrm{d}\omega\Big|_{y=0}=d\beta\hspace{0.3mm}x^{d-1}\mathrm{d}x\wedge\mathrm{d}y\not\equiv0\hspace{1mm};$$
il en résulte que $\Leg\mathcal{H}$ ne peut pas être plat lorsque $\mathcal{T}_{\mathcal{H}}=1\cdot\mathrm{R}_{d-1}+1\cdot\mathrm{T}_{d-1}.$
\end{proof}

\begin{pro}\label{pro:omega3-omega4}
{\sl Soit $\nu$ un entier compris entre $1$ et $d-2$. Si le feuilletage $\mathcal{H}$ est de type
$$
\mathcal{T}_{\mathcal{H}}=1\cdot\mathrm{R}_{\nu}+1\cdot\mathrm{R}_{d-\nu-1}+1\cdot\mathrm{R}_{d-1},
\qquad\text{resp}.\hspace{1.5mm}
\mathcal{T}_{\mathcal{H}}=1\cdot\mathrm{R}_{\nu}+1\cdot\mathrm{R}_{d-\nu-1}+1\cdot\mathrm{T}_{d-1},
$$
alors le $d$-tissu $\Leg\mathcal{H}$ est plat si et seulement si $\mathcal{H}$ est linéairement conjugué au feuilletage $\mathcal{H}_{3}^{d,\nu}$, resp. $\mathcal{H}_{4}^{d,\nu},$ donné par
\[
\hspace{-2cm}\omega_{3}^{\hspace{0.2mm}d,\nu}=\sum\limits_{i=\nu+1}^{d}\binom{{d}}{{i}}x^{d-i}y^i\mathrm{d}x-
\sum\limits_{i=0}^{\nu}\binom{{d}}{{i}}x^{d-i}y^i\mathrm{d}y,
\]
\[
\hspace{-0.9cm}\text{resp}.\hspace{1.5mm}
\omega_{4}^{\hspace{0.2mm}d,\nu}=(d-\nu-1)\sum\limits_{i=\nu+1}^{d}\binom{{d}}{{i}}x^{d-i}y^i\mathrm{d}x+
\nu\sum\limits_{i=0}^{\nu}\binom{{d}}{{i}}x^{d-i}y^i\mathrm{d}y.
\]
}
\end{pro}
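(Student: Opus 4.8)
The strategy is to reduce everything to the rational map $\Gunderline(z)=-A(1,z)/B(1,z)$ and apply the criterion of Théorème~\ref{thm:holomo-G(I^tr)} together with the two effective criteria (Théorèmes~\ref{thm:Barycentre} and~\ref{thm:Divergence}). First I would normalise the positions of the three lines composing $\Dtr$. Since $\mathcal{T}_{\mathcal{H}}$ has degree $3$, the map $\Gunderline$ has (up to multiplicity) exactly three critical points. In the first case all three are \emph{fixed} (three radial singularities of orders $\nu-1$, $d-\nu-2$... careful: $\mathrm{R}_\nu$ means radiality order $\nu$, i.e. $\tau-1=\nu$), so I would place them at $z=0,\,z=\infty$ and a third fixed critical point at $z=1$ after a linear conjugation; in the second case two are fixed (at $0$ and $\infty$) and one is a non-fixed critical point corresponding to the transverse inflection line, which I place at $z=1$. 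In either case, by Lemme~\ref{lem:critique-maximale} applied at $z=0$ (a fixed critical point of multiplicity $\nu$, so $f(0)=0$) and at $z=\infty$ (fixed critical point of multiplicity $d-\nu-1$), one gets $A(1,z)$ and $B(1,z)$ of a very constrained shape: $-A(1,z)$ is divisible by $z^{\nu+1}$ wait — $f(0)=\Gunderline(0)=0$ forces $A(1,0)=0$, and fixedness to order $\nu$ forces $A(1,z)=c\,z^{\nu+1}\cdot(\text{unit})$, while the behaviour at $\infty$ pins down the top-degree part of $B$. Homogenising, this yields $A,B$ as explicit linear combinations of the monomials $x^{d-i}y^i$, and matching the third critical condition at $z=1$ (fixed, of the right multiplicity, resp. non-fixed) forces the binomial coefficients, i.e. forces $A(x,y)+B(x,y)$ proportional to $(x+y)^d$ (so that $z=1$ is a zero of $\Gunderline(z)-z$ of high order). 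Thus the candidate is, up to linear conjugacy, $\omega_3^{d,\nu}$ in the first case and $\omega_4^{d,\nu}$ in the second.

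Next I would verify that each candidate actually has the asserted type and that its dual web is flat. For $\mathcal{H}_3^{d,\nu}$: writing $A=\sum_{i>\nu}\binom{d}{i}x^{d-i}y^i$ and $-B=\sum_{i\le\nu}\binom{d}{i}x^{d-i}y^i$, one computes $\Cinv=xA+yB=x\sum_{i>\nu}\binom{d}{i}x^{d-i}y^i - y\sum_{i\le\nu}\binom{d}{i}x^{d-i}y^i$, which I expect to factor with the tangent cone carrying $(x+y)$ with high multiplicity; the point is that $\mathcal{H}_3^{d,\nu}$ should be \emph{convex}, so flatness of $\Leg\mathcal{H}_3^{d,\nu}$ follows immediately from the Corollaire to Théorème~\ref{thm:holomo-G(I^tr)} (homogeneous convex $\Rightarrow$ dual web flat). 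For $\mathcal{H}_4^{d,\nu}$, the extra factor $d-\nu-1$ resp. $\nu$ is precisely what is needed to make $\mathrm{d}\omega$ vanish on the transverse inflection line $T$ of maximal order $d-1$; by Théorème~\ref{thm:Divergence} this is exactly the condition for holomorphy of $K(\Leg\mathcal{H}_4^{d,\nu})$ along $T'=\mathcal{G}_{\mathcal{H}}(T)$, and since the only other component of $\Delta(\Leg\mathcal{H})$ off $\check O$ comes from the two radial singularities (handled by Proposition~\ref{pro:MP13-3.3}/\ref{pro:MP13-2.6} as in the proof of Théorème~\ref{thm:holomo-G(I^tr)}), flatness follows. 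A short direct computation $\mathrm{d}\omega_4^{d,\nu}\big|_{T}$ should confirm the vanishing; this is where the specific coefficients $(d-\nu-1)$ and $\nu$ earn their keep.

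For the converse, I would argue that \emph{any} homogeneous $\mathcal{H}$ of the prescribed type is, after the normalisation above, forced into the candidate family up to the free scaling constants. Concretely: the two fixed critical points at $0$ and $\infty$ of multiplicities $\nu$ and $d-\nu-1$ already determine $A(1,z)=c\,z^{\nu+1}+(\text{lower-degree-in-the-}(z-1)\text{-expansion})$... more precisely Lemme~\ref{lem:critique-maximale} gives $-A(1,z)=\Gunderline(0)B(1,z)+c\,z^{\nu+1}$ with, here, $\Gunderline(0)=0$, so $A(1,z)=-c\,z^{\nu+1}$ times nothing — wait, that over-determines; rather $A,B$ are both degree $\le d$ and the combined constraints at $0$, $\infty$ and $1$ cut the parameter space down to a finite-dimensional family on which the flatness condition (Théorème~\ref{thm:Barycentre} for the simple transverse line in case two, or automatic convexity in case one) singles out exactly the binomial combination. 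I expect the main obstacle to be bookkeeping the interplay between the order-of-vanishing constraints at the three special points and the homogeneity: one must check that the \emph{only} common-factor-free pair $(A,B)$ of degree $d$ realising the given critical data (radiality orders and/or transverse inflection order) is, up to $\mathrm{PGL}_2$ acting on the $z$-line and scaling, the stated one — in particular ruling out spurious extra common factors or a degenerate drop in $\max(\deg A,\deg B)$. Once that rigidity is established, applying Théorème~\ref{thm:holomo-G(I^tr)} plus Théorème~\ref{thm:Barycentre} (case $\mathrm{R}_\nu+\mathrm{R}_{d-\nu-1}+\mathrm{R}_{d-1}$) resp. Théorème~\ref{thm:Divergence} (case with $\mathrm{T}_{d-1}$) to the remaining one-parameter normal form forces the coefficients, completing the proof.
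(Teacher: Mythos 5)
Your plan is essentially the paper's own proof: normalise the three critical points of $\Gunderline$, apply the Lemme~\ref{lem:critique-maximale} (divisibility at the two radial points, the relation $-A=\Gunderline(z_0)B+\alpha(\cdot)^d$ at the multiplicity-$(d-1)$ point) to reach the binomial normal form with one free parameter, then conclude by convexity in the all-radial case and by Théorèmes~\ref{thm:holomo-G(I^tr)} and~\ref{thm:Divergence} ($\mathrm{d}\omega$ vanishing on $T$) in the $\mathrm{T}_{d-1}$ case. The only blemishes are inessential: the parenthetical claim that $\Gunderline(z)-z$ vanishes to high order at the fixed critical point is false (the zero is simple; the real constraint comes from the lemma), and Théorème~\ref{thm:Barycentre} is not needed in the first case since $\ItrH$ is empty there.
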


\begin{proof}[\sl D\'emonstration]
Dans les deux cas, nous pouvons supposer à conjugaison linéaire près que les points $[0:1],\,[1:0],\,[-1:1]\in\mathbb{P}^{1}_{\mathbb{C}}$ sont critiques de $\Gunderline$, de multiplicité $\nu,$ $d-\nu-1,$ $d-1$ respectivement. Les points $[0:1]$ et $[1:0]$ sont évidemment fixes par $\Gunderline$; le feuilletage $\mathcal{H}$ est de type $\mathcal{T}_{\mathcal{H}}=1\cdot\mathrm{R}_{\nu}+1\cdot\mathrm{R}_{d-\nu-1}+1\cdot\mathrm{R}_{d-1}$ (resp.\hspace{1.5mm}$\mathcal{T}_{\mathcal{H}}=1\cdot\mathrm{R}_{\nu}+1\cdot\mathrm{R}_{d-\nu-1}+1\cdot\mathrm{T}_{d-1}$) si et seulement si le point $[-1:1]$ est fixe (resp. non fixe) par $\Gunderline.$ Puisque $\Gunderline^{-1}(\Gunderline(-1))=\{-1\}$ nous avons $\Gunderline(-1)\neq\Gunderline(\infty)=\infty$. Donc, d'après le Lemme~\ref{lem:critique-maximale}, il existe une constante $\alpha\in\mathbb{C}^*$ et un polynôme homogène $B_{\nu}\in\mathbb{C}[x,y]_\nu$ tels que
\begin{align*}
-A(x,y)=\Gunderline(-1)B(x,y)+\alpha(y+x)^d, \quad B(x,y)=x^{d-\nu}B_{\nu}(x,y)
\quad\text{et}\quad
y^{\nu+1}\hspace{0.5mm}\text{divise}\hspace{1.3mm}A(x,y).
\end{align*}
Il en résulte que
\begin{eqnarray*}
-A(x,y)&=&\Gunderline(-1)x^{d-\nu}B_{\nu}(x,y)+\alpha\sum_{i=0}^{d}\binom{{d}}{{i}}x^{d-i}y^{i}
\nonumber\\
&=&\Gunderline(-1)x^{d-\nu}B_{\nu}(x,y)+\alpha\sum_{i=0}^{\nu}\binom{{d}}{{i}}x^{d-i}y^{i}+\alpha\sum_{i=\nu+1}^{d}\binom{{d}}{{i}}x^{d-i}y^{i}
\nonumber\hspace{1mm};
\end{eqnarray*}
par suite $A(x,y)$ est divisible par $y^{\nu+1}$ si et seulement si
\begin{align*}
-A(x,y)=\alpha\sum_{i=\nu+1}^{d}\binom{{d}}{{i}}x^{d-i}y^{i}
\hspace{1mm}\quad\text{et}\hspace{1mm}\quad
\Gunderline(-1)x^{d-\nu}B_{\nu}(x,y)+\alpha\sum_{i=0}^{\nu}\binom{{d}}{{i}}x^{d-i}y^{i}=0.
\end{align*}
Quitte à remplacer $\omega=A(x,y)\mathrm{d}x+B(x,y)\mathrm{d}y$ par\hspace{0.2mm} $-\dfrac{1}{\alpha}\omega\hspace{0.5mm}$ on se ramène à
\begin{align*}
& \omega=\sum_{i=\nu+1}^{d}\binom{{d}}{{i}}x^{d-i}y^{i}\mathrm{d}x+\frac{1}{\Gunderline(-1)}\sum_{i=0}^{\nu}\binom{{d}}{{i}}x^{d-i}y^{i}\mathrm{d}y,
\hspace{1mm}\quad \Gunderline(-1)\neq\Gunderline(0)=0.
\end{align*}
\begin{itemize}
  \item [$\bullet$] Si $\Gunderline(-1)=-1$ nous obtenons le feuilletage $\mathcal{H}_{3}^{d,\nu}$ décrit par
$$\omega_{3}^{\hspace{0.2mm}d,\nu}=\sum\limits_{i=\nu+1}^{d}\binom{{d}}{{i}}x^{d-i}y^i\mathrm{d}x-
\sum\limits_{i=0}^{\nu}\binom{{d}}{{i}}x^{d-i}y^i\mathrm{d}y\hspace{1mm};$$
la transformée de \textsc{Legendre} $\Leg\mathcal{H}_{3}^{d,\nu}$ est plate car $\mathcal{H}_{3}^{d,\nu}$ est convexe.
  \item [$\bullet$] Si $\Gunderline(-1)\neq-1$ alors, d'après les Théorèmes \ref{thm:holomo-G(I^tr)} et \ref{thm:Divergence}, le $d$-tissu $\Leg\mathcal{H}$ est plat si et seulement si
$$\hspace{1.5cm}0\equiv\mathrm{d}\omega\Big|_{y=-x}=
\binom{{d}}{{\nu+1}}\dfrac{(-1)^{\nu+1}(\nu+1)}{(d-1)\Gunderline(-1)}\left[\Gunderline(-1)\nu-d+\nu+1\right]x^{d-1}\mathrm{d}x\wedge\mathrm{d}y,$$
{\it i.e.} si et seulement si $\Gunderline(-1)=\dfrac{d-\nu-1}{\nu},$ auquel cas
$$(d-\nu-1)\omega=\omega_{4}^{\hspace{0.2mm}d,\nu}=(d-\nu-1)\sum\limits_{i=\nu+1}^{d}\binom{{d}}{{i}}x^{d-i}y^i\mathrm{d}x+
\nu\sum\limits_{i=0}^{\nu}\binom{{d}}{{i}}x^{d-i}y^i\mathrm{d}y.$$
\end{itemize}
\end{proof}

\begin{pro}\label{pro:omega5-omega6}
{\sl Si le feuilletage $\mathcal{H}$ est de type
$$
\mathcal{T}_{\mathcal{H}}=1\cdot\mathrm{R}_{d-2}+1\cdot\mathrm{T}_{1}+1\cdot\mathrm{R}_{d-1},
\qquad\text{resp}.\hspace{1.5mm}
\mathcal{T}_{\mathcal{H}}=1\cdot\mathrm{R}_{d-2}+1\cdot\mathrm{T}_{1}+1\cdot\mathrm{T}_{d-1},
$$
alors le $d$-tissu $\Leg\mathcal{H}$ est plat si et seulement si $\mathcal{H}$ est linéairement conjugué au feuilletage $\mathcal{H}_{5}^{d}$, resp. $\mathcal{H}_{6}^{d},$ décrit par
\[
\hspace{-4.2cm}\omega_{5}^{\hspace{0.2mm}d}=2y^d\mathrm{d}x+x^{d-1}(yd-(d-1)x)\mathrm{d}y,
\]
\[
\hspace{1cm}\text{resp}.\hspace{1.5mm}
\omega_{6}^{\hspace{0.2mm}d}=\left((d-1)^2x^d-d(d-1)x^{d-1}y+(d+1)y^d\right)\mathrm{d}x+x^{d-1}\left(yd-(d-1)x\right)\mathrm{d}y.
\]
}
\end{pro}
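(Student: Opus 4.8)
The plan is to follow the same scheme as the proofs of Propositions~\ref{pro:omega1-omega2} and \ref{pro:omega3-omega4}: first bring $\mathcal{H}$, up to linear conjugation and rescaling of $\omega$, into an explicit normal form with one free parameter (the $\mathrm{R}_{d-1}$ case) resp.\ two (the $\mathrm{T}_{d-1}$ case), then invoke the effective criteria of Theorems~\ref{thm:holomo-G(I^tr)}, \ref{thm:Barycentre} and \ref{thm:Divergence} to pin down exactly which parameter values make $\Leg\mathcal{H}$ flat.

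\textbf{Normalization.} In both cases $\Gunderline$ has three distinct critical points, of multiplicities $d-2,\,1,\,d-1$. Using $\mathrm{Aut}(\pp)$ (via $\mathrm{PGL}_2$ acting $3$-transitively on directions) I would place the radial singularity of order $d-2$ at $[0:1:0]$, so that its invariant line $(x=0)$ gives $z=\infty$ as a fixed critical point of $\Gunderline$ of multiplicity $d-2$, place the critical point of multiplicity $d-1$ at $z=0$, and the remaining simple transverse critical point at $z=1$. Proposition~\ref{pro:SingH} applied at $s=[0:1:0]$ with $\tau=d-1$ forces $B(x,y)=x^{d-1}(\alpha x+\beta y)$ with $\beta\neq0$, and since $\pgcd(A,B)=1$ (so $\deg\Gunderline=d$) and $z=0$ is a genuine critical point of multiplicity $d-1$ (not a simple pole), one gets $\alpha\neq0$ and $\Gunderline(0)\neq\infty$ (the fibre of $\Gunderline(0)$ being $\{0\}$ while $z=\infty$ is fixed); moreover the other zero $z=-\tfrac{d\alpha}{(d-1)\beta}$ of $\Gunderline'$ being our third critical point $z=1$ forces $\alpha/\beta=-(d-1)/d$, so after rescaling $\omega$ I may take $B=x^{d-1}\bigl(dy-(d-1)x\bigr)$. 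Lemma~\ref{lem:critique-maximale} at $z_0=0$ then gives $-A(x,y)=g\,B(x,y)+c\,y^{d}$ with $g:=\Gunderline(0)$ and $c\in\mathbb{C}^{*}$, i.e.\ $A=g(d-1)x^{d}-gd\,x^{d-1}y-c\,y^{d}$. In the \emph{first} case ($z=0$ fixed) one has $g=0$, so $\mathcal{H}$ is linearly conjugate to $\mathcal{H}_c$ described by $-c\,y^{d}\mathrm{d}x+x^{d-1}(dy-(d-1)x)\mathrm{d}y$, with $c\in\mathbb{C}^{*}\setminus\{1\}$ and $\mathcal{H}_5^d=\mathcal{H}_{-2}$; in the \emph{second} case ($z=0$ non-fixed) $g\neq0$, and $\mathcal{H}$ is conjugate to $\mathcal{H}_{g,c}$ with the $A$ above and $B=x^{d-1}(dy-(d-1)x)$, with $g,c\in\mathbb{C}^{*},\ g+c\neq1$ and $\mathcal{H}_6^d=\mathcal{H}_{d-1,\,-(d+1)}$.

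\textbf{Applying the criteria.} By Theorem~\ref{thm:holomo-G(I^tr)}, $\Leg\mathcal{H}$ is flat if and only if $K(\Leg\mathcal{H})$ is holomorphic on $\mathcal{G}_{\mathcal{H}}(\ItrH)$, and here $\ItrH=(x-y)$ in the first case and $\ItrH=(x-y)\,y^{d-1}$ in the second (the line $(x-y=0)$, from $z=1$, being the simple inflection line $T_1$, and $(y=0)$, from $z=0$, the inflection line $T_{d-1}$ of maximal order $d-1$). In both cases the hypotheses of Theorem~\ref{thm:Barycentre} hold with $(a,b)=(1,-1)$, since $[-a:b]$ corresponds to $z=1$ and $\Gunderline(1)=g+c\notin\{\Gunderline(0),\Gunderline(\infty)\}=\{g,\infty\}$, so holomorphy along $\mathcal{G}_{\mathcal{H}}(T_1)$ is equivalent to $Q(-1,-1;1,-1)=0$. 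I would compute $A(x,y)B(-1,-1)-B(x,y)A(-1,-1)=(-1)^{d}(-c)\bigl((d-1)x^{d}-dx^{d-1}y+y^{d}\bigr)$ (in the second case via $A+(g+c)B=-c\bigl((d-1)x^{d}-dx^{d-1}y+y^{d}\bigr)$), then use the factorization $(d-1)x^{d}-dx^{d-1}y+y^{d}=(x-y)^{2}\psi(x,y)$ with $\psi(x,y)=\sum_{j=0}^{d-2}(d-1-j)\,x^{d-2-j}y^{j}$ to obtain $P(x,y;1,-1)=-(-1)^{d}c\,\psi$ and hence $Q(-1,-1;1,-1)=-c\bigl(\partial_{x}\psi+(g+c)\partial_{y}\psi\bigr)\big|_{(-1,-1)}$; from $\partial_{x}\psi(-1,-1)=(-1)^{d-1}\tfrac{d(d-1)(d-2)}{3}$ and $\partial_{y}\psi(-1,-1)=(-1)^{d-1}\tfrac{d(d-1)(d-2)}{6}$ (elementary evaluations of $\sum_{k=1}^{d-2}k(k+1)$ and $\sum_{j=1}^{d-2}j(d-1-j)$), the condition reduces to $g+c=-2$. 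In the first case ($g=0$) this already gives $c=-2$, hence $\mathcal{H}\cong\mathcal{H}_5^d$. In the second case I would add Theorem~\ref{thm:Divergence}: holomorphy along $\mathcal{G}_{\mathcal{H}}(T_{d-1})$, $T_{d-1}=(y=0)$ of maximal order, is equivalent to $\mathrm{d}\omega|_{y=0}=0$; since $\mathrm{d}\omega=\bigl(d(d-1)x^{d-2}(y-x)+gd\,x^{d-1}+cd\,y^{d-1}\bigr)\mathrm{d}x\wedge\mathrm{d}y$, this yields $g=d-1$, whence $c=-(d+1)$ and $\mathcal{H}\cong\mathcal{H}_6^d$. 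The converse in each case is immediate, flatness of the dual web being a conjugacy invariant and $\mathcal{H}_5^d$, $\mathcal{H}_6^d$ being precisely the members of the normalized families satisfying the equations just derived.

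\textbf{Main difficulty.} The argument is essentially routine given the earlier results; the delicate points are the bookkeeping of the normalization — which of the three critical points of $\Gunderline$ is sent where, and the sign conventions translating a point $[a:b]\in\mathbb{P}^{1}_{\mathbb{C}}$ into an invariant line or an inflection line of $\mathcal{H}$ — and the explicit evaluation of $Q(-1,-1;1,-1)$, above all noticing that $(d-1)x^{d}-dx^{d-1}y+y^{d}$ carries the factor $(x-y)^{2}$ and then carrying the resulting elementary sums through carefully to the clean conditions $c=-2$ and $g+c=-2$.
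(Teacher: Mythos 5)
Your proposal is correct and follows essentially the same route as the paper: the same normalization of the three critical directions of $\Gunderline$ (you obtain the shape of $B$ via Proposition~\ref{pro:SingH}, a cosmetic variant of the paper's direct use of Lemma~\ref{lem:critique-maximale}), then Theorems~\ref{thm:holomo-G(I^tr)}, \ref{thm:Barycentre} and \ref{thm:Divergence} yielding exactly the paper's conditions $\Gunderline(1)+2=0$ (i.e.\ $s=-\Gunderline(0)-2$) and, in the $\mathrm{T}_{d-1}$ case, $\Gunderline(0)=d-1$. Your explicit evaluation of $Q$ through the factorization $(d-1)x^{d}-dx^{d-1}y+y^{d}=(x-y)^{2}\psi(x,y)$ is accurate and merely spells out the value that the paper states directly.
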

\begin{proof}[\sl D\'emonstration]
Nous allons traiter ces deux types simultanément. \`{A} isomorphisme linéaire près, nous pouvons nous ramener à la situation suivante: les points $[1:0],\,[1:1],\,[0:1]\in\mathbb{P}^{1}_{\mathbb{C}}$ sont critiques de $\Gunderline$, de multiplicité $d-2,$ $1,$ $d-1$ respectivement. Le point $[1:0]$ (resp.  $[1:1]$) est fixe (resp. non fixe) par $\Gunderline$; le feuilletage $\mathcal{H}$ est de type $\mathcal{T}_{\mathcal{H}}=1\cdot\mathrm{R}_{d-2}+1\cdot\mathrm{T}_{1}+1\cdot\mathrm{R}_{d-1}$ (resp.\hspace{1.5mm}$\mathcal{T}_{\mathcal{H}}=1\cdot\mathrm{R}_{d-2}+1\cdot\mathrm{T}_{1}+1\cdot\mathrm{T}_{d-1}$) si et seulement si le point $[0:1]$ est fixe (resp. non fixe) par $\Gunderline.$ Puisque $\Gunderline^{-1}(\Gunderline(0))=\{0\}$ nous avons $\Gunderline(0)\neq\Gunderline(1)$ et $\Gunderline(0)\neq\Gunderline(\infty)=\infty$; de plus $\Gunderline(1)\neq\Gunderline(\infty)=\infty$ car $\Gunderline^{-1}(\Gunderline(\infty))=\{\infty,z_{0}\}$ pour un certain point $z_{0}\neq\infty,$ non critique de $\Gunderline.$ Par suite, d'après le Lemme~\ref{lem:critique-maximale}, il existe une constante $\alpha\in\mathbb{C}^*$ telle que
\begin{align*}
-A(x,y)=\Gunderline(0)B(x,y)+\alpha y^d
\qquad\text{et}\qquad
B(x,y)=\frac{\raisebox{-0.4mm}{$\alpha$}}{\raisebox{0.4mm}{$s$}}x^{d-1}\left(yd-(d-1)x\right),
\end{align*}
avec $s=\Gunderline(1)-\Gunderline(0)\neq0.$
Quitte à multiplier $\omega=A(x,y)\mathrm{d}x+B(x,y)\mathrm{d}y$ par $\dfrac{\raisebox{-0.4mm}{$s$}}{\raisebox{0.4mm}{$\alpha$}}$ on se ramène à
\begin{align*}
& \omega=-\left(\Gunderline(0)\,x^{d-1}\left(yd-(d-1)x\right)+sy^d\right)\mathrm{d}x+x^{d-1}\left(yd-(d-1)x\right)\mathrm{d}y.
\end{align*}
D'après ce qui précède le point $[1:1]$ est le seul point critique de $\Gunderline$ dans sa fibre $\Gunderline^{-1}(\Gunderline(1)).$ Donc, d'après le Théorème~\ref{thm:Barycentre}, la courbure de $\Leg\mathcal{H}$ est holomorphe sur $\mathcal{G}_{\mathcal{H}}(\{y=x\})$ si et seulement si $$\hspace{2cm}0=Q(1,1;-1,1)=-\frac{1}{6}sd(d-1)(d-2)(\Gunderline(0)+s+2),$$ {\it i.e.} si et seulement si $s=-\Gunderline(0)-2.$
\begin{itemize}
  \item [$\bullet$] Si $\Gunderline(0)=0$ alors la condition $s=-\Gunderline(0)-2=-2$ est suffisante pour que $\Leg\mathcal{H}$ soit plat, en vertu du Théorème~\ref{thm:holomo-G(I^tr)}, auquel cas $$\omega=\omega_{5}^{\hspace{0.2mm}d}=2y^d\mathrm{d}x+x^{d-1}(yd-(d-1)x)\mathrm{d}y.$$

  \item [$\bullet$] Si $\Gunderline(0)\neq0$ alors, d'après les Théorèmes \ref{thm:holomo-G(I^tr)} et \ref{thm:Divergence}, $\Leg\mathcal{H}$ est plat si et seulement si
      $$
      \hspace{1cm}
      s=-\Gunderline(0)-2
      \quad\hspace{1cm} \text{et} \quad\hspace{1cm}
      0\equiv\mathrm{d}\omega\Big|_{y=0}=d(\Gunderline(0)-d+1)x^{d-1}\mathrm{d}x\wedge\mathrm{d}y,
      $$
{\it i.e.} si et seulement si $\Gunderline(0)=d-1$\, et \,$s=-d-1$, auquel cas
$$\hspace{1cm}\omega=\omega_{6}^{\hspace{0.2mm}d}=\left((d-1)^2x^d-d(d-1)x^{d-1}y+(d+1)y^d\right)\mathrm{d}x+x^{d-1}\left(yd-(d-1)x\right)\mathrm{d}y.$$
\end{itemize}
\end{proof}

\clearemptydoublepage
\chapter{Feuilletages de degré trois de $\pp$ ayant une transformée de \textsc{Legendre} plate}\label{chap3:classification}
\hspace{6mm}

Dans ce chapitre, nous noterons $\mathbf{F}(d)$ l'ensemble des feuilletages de degré $d$ sur $\pp$; c'est un ouvert de \textsc{Zariski} de l'espace projectif $\mathbb{P}_{\C}^{(d+2)^{2}-2}$. Le groupe des automorphismes de $\pp$ agit sur $\mathbf{F}(d)$; l'orbite d'un élément $\F\in\mathbf{F}(d)$ sous l'action de $\mathrm{Aut}(\pp)=\mathrm{PGL}_3(\mathbb{C})$ sera notée $\mathcal{O}(\F).$ Nous désignerons par $\mathbf{FP}(d)$ le sous-ensemble de $\mathbf{F}(d)$ formé des $\F\in\mathbf{F}(d)$ tels que $\Leg\F$ soit plat; c'est un fermé de \textsc{Zariski} de $\mathbf{F}(d)$ et donc, si $\F\in\mathbf{FP}(d),$ alors l'adhérence $\overline{\mathcal{O}(\F)}$ de $\mathcal{O}(\F)$ dans $\mathbf{F}(d)$ est contenue dans $\mathbf{FP}(d).$

\noindent Ce chapitre comprend trois sections. Dans la première section, nous établissons la classification à automorphisme de $\pp$ pr\`es des feuilletages homogènes de $\mathbf{FP}(3).$ Dans la deuxième section, nous classifions complètement les feuilletages de~$\mathbf{FP}(3).$ Dans~la~troisième~section,~nous donnons une description des composantes irréductibles de $\mathbf{FP}(3).$

\section{Classification à automorphisme près des feuilletages homogènes de $\mathbf{FP}(3)$}

Cette section est consacrée à la démonstration de l'énoncé suivant.

\begin{thm}\label{thm:Class-Homog3-Plat}
{\sl \`A automorphisme de $\pp$ pr\`es, il y a onze feuilletages homogènes de degré trois $\mathcal{H}_1,\ldots,\mathcal{H}_{11}$ sur le plan projectif complexe ayant une transformée de \textsc{Legendre} plate. Ils sont d\'ecrits respectivement en carte affine par les $1$-formes suivantes
\begin{itemize}
\item [\texttt{1. }] \hspace{1mm}$\omega_1\hspace{1mm}=y^3\mathrm{d}x-x^3\mathrm{d}y$;
\smallskip
\item [\texttt{2. }] \hspace{1mm}$\omega_2\hspace{1mm}=x^3\mathrm{d}x-y^3\mathrm{d}y$;
\smallskip
\item [\texttt{3. }] \hspace{1mm}$\omega_3\hspace{1mm}=y^2(3x+y)\mathrm{d}x-x^2(x+3y)\mathrm{d}y$;
\smallskip
\item [\texttt{4. }] \hspace{1mm}$\omega_4\hspace{1mm}=y^2(3x+y)\mathrm{d}x+x^2(x+3y)\mathrm{d}y$;
\smallskip
\item [\texttt{5. }] \hspace{1mm}$\omega_5\hspace{1mm}=2y^3\mathrm{d}x+x^2(3y-2x)\mathrm{d}y$;
\smallskip
\item [\texttt{6. }] \hspace{1mm}$\omega_6\hspace{1mm}=(4x^3-6x^2y+4y^3)\mathrm{d}x+x^2(3y-2x)\mathrm{d}y$;
\newpage
\hfill
\bigskip
\item [\texttt{7. }] \hspace{1mm}$\omega_7\hspace{1mm}=y^3\mathrm{d}x+x(3y^2-x^2)\mathrm{d}y$;
\smallskip
\item [\texttt{8. }] \hspace{1mm}$\omega_8\hspace{1mm}=x(x^2-3y^2)\mathrm{d}x-4y^3\mathrm{d}y$;
\smallskip
\item [\texttt{9. }] \hspace{1mm}$\omega_9\hspace{1mm}=y^{2}\left((-3+\mathrm{i}\sqrt{3})x+2y\right)\mathrm{d}x+
                                                       x^{2}\left((1+\mathrm{i}\sqrt{3})x-2\mathrm{i}\sqrt{3}y\right)\mathrm{d}y$;
\smallskip
\item [\texttt{10. }] \hspace{-1mm}$\omega_{10}=(3x+\sqrt{3}y)y^2\mathrm{d}x+(3y-\sqrt{3}x)x^2\mathrm{d}y$;
\smallskip
\item [\texttt{11. }]  \hspace{-1mm}$\omega_{11}=(3x^3+3\sqrt{3}x^2y+3xy^2+\sqrt{3}y^3)\mathrm{d}x+(\sqrt{3}x^3+3x^2y+3\sqrt{3}xy^2+3y^3)\mathrm{d}y$.
\end{itemize}
}
\end{thm}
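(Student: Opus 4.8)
The strategy is to reduce the classification to the combinatorial data of the \emph{type} $\mathcal{T}_{\mathcal{H}}\in\Z[\mathrm{R}_1,\mathrm{R}_2,\mathrm{T}_1,\mathrm{T}_2]$ of a homogeneous foliation $\mathcal{H}$ of degree $3$, and then to apply the effective flatness criterion of Corollary~\ref{cor:platitude-degre-3} in each case. Recall that for $d=3$ the divisor $\Dtr$ has degree $2d-2=4$, so $\deg\mathcal{T}_{\mathcal{H}}\in\{2,3,4\}$, and by Remark~\ref{rem:2d-2-R1} the map $\Gunderline$ (a degree-$3$ rational self-map of $\mathbb{P}^1_{\mathbb{C}}$) always has a non-critical fixed point, so one cannot have three radial singularities with $\sum r_k=4$. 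The first step is therefore to enumerate all admissible types: writing $\Dtr$ as a product of $\deg\mathcal{T}_{\mathcal{H}}$ distinct lines through $O$ with multiplicities recording radiality/inflection orders (each between $1$ and $d-1=2$), and using that the sum of the critical multiplicities of $\Gunderline$ equals $2d-2=4$ together with the fixed-point constraint, one gets a short finite list of possible $\mathcal{T}_{\mathcal{H}}$.

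The second step handles the "large multiplicity" types directly via the Propositions already proved in Chapter~2. For $\deg\mathcal{T}_{\mathcal{H}}=2$, Proposition~\ref{pro:omega1-omega2} with $d=3$ gives exactly $\mathcal{H}_1$ ($\omega_1=y^3\mathrm{d}x-x^3\mathrm{d}y$) and $\mathcal{H}_2$ ($\omega_2=x^3\mathrm{d}x-y^3\mathrm{d}y$), and rules out the mixed type $\mathrm{R}_2+\mathrm{T}_2$. The type $1\cdot\mathrm{R}_1+1\cdot\mathrm{R}_1+1\cdot\mathrm{R}_2$ (resp. $1\cdot\mathrm{R}_1+1\cdot\mathrm{R}_1+1\cdot\mathrm{T}_2$) is the case $\nu=1$, $d=3$ of Proposition~\ref{pro:omega3-omega4}, producing $\mathcal{H}_3$ with $\omega_3=y^2(3x+y)\mathrm{d}x-x^2(x+3y)\mathrm{d}y$ (resp. $\mathcal{H}_4$ with $\omega_4=y^2(3x+y)\mathrm{d}x+x^2(x+3y)\mathrm{d}y$, after rescaling $\omega_4^{3,1}$). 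The type $1\cdot\mathrm{R}_1+1\cdot\mathrm{T}_1+1\cdot\mathrm{R}_2$ (resp. $1\cdot\mathrm{R}_1+1\cdot\mathrm{T}_1+1\cdot\mathrm{T}_2$) is the case $d=3$ of Proposition~\ref{pro:omega5-omega6}, yielding $\mathcal{H}_5$ with $\omega_5=2y^3\mathrm{d}x+x^2(3y-2x)\mathrm{d}y$ (resp. $\mathcal{H}_6$ with $\omega_6=(4x^3-6x^2y+4y^3)\mathrm{d}x+x^2(3y-2x)\mathrm{d}y$). So six of the eleven foliations come for free.

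The third and main step is the case $\deg\mathcal{T}_{\mathcal{H}}=4$, i.e. $\Dtr$ is a product of four distinct lines through $O$, each appearing with multiplicity $1$; hence $\mathcal{T}_{\mathcal{H}}=r_1\cdot\mathrm{R}_1+t_1\cdot\mathrm{T}_1$ with $r_1+t_1=4$, and by the fixed-point obstruction $r_1\le 3$, while $r_1=0$ would force $\Gunderline$ to have four simple critical points, none fixed — this must be analysed. For each admissible $(r_1,t_1)$ one normalises by $\mathrm{PGL}_2(\mathbb{C})$ acting on $\mathbb{P}^1_{\mathbb{C}}$: fix three of the four lines of $\Dtr$ at standard positions (say $[1:0],[0:1],[-1:1]$) and let the fourth vary, parametrising $\mathcal{H}$ by the remaining moduli of $\Gunderline$ (values at the fixed/critical points). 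The flatness condition then becomes, by Corollary~\ref{cor:platitude-degre-3}: for each \emph{transverse simple} inflection line $T_1=(ax+by=0)$, the line $A(b,-a)x+B(b,-a)y=0$ must be $\mathcal{H}$-invariant, i.e. must be one of the lines of $\Cinv=xA+yB$; equivalently, using Remark~\ref{rem:Q(b,-a,a,b)}, $\Cinv(B(b,-a),-A(b,-a))=0$. This yields one polynomial equation per transverse simple inflection line in the moduli parameters. Solving this algebraic system — which is the computational heart of the argument — is what I expect to be the main obstacle: one must show the solution set consists of finitely many $\mathrm{PGL}_2$-orbits and identify each with one of $\mathcal{H}_7,\dots,\mathcal{H}_{11}$ (the appearance of $\sqrt{3}$ and $\mathrm{i}\sqrt{3}$ in $\omega_9,\omega_{10},\omega_{11}$ signals that the relevant resolvents have non-rational but explicitly solvable roots). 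One also has to verify the converse, that each listed $\mathcal{H}_i$ genuinely has the claimed type and transverse inflection lines (so that Corollary~\ref{cor:platitude-degre-3} applies and is satisfied), and that the eleven foliations are pairwise non-conjugate — the latter follows by comparing the invariants $\mathcal{T}_{\mathcal{H}_i}$ and, when types coincide, finer invariants such as the cross-ratios of the four lines of $\Dtr$ or the Baum–Bott data at $\Sing\mathcal{H}\cap L_\infty$. Assembling the three steps gives the stated list of exactly eleven homogeneous foliations in $\mathbf{FP}(3)$.
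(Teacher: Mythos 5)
Your overall strategy (stratify by the type $\mathcal{T}_{\mathcal{H}}$, use the Chapter~2 propositions for the types containing a component of maximal order together with at most one simple transverse line, and apply the effective criterion of Corollary~\ref{cor:platitude-degre-3} to the remaining families) is exactly the route the paper takes. However, your enumeration of the types of degree $3$ is incomplete, and this is a genuine gap: besides $2\cdot\mathrm{R}_1+1\cdot\mathrm{R}_2$, $2\cdot\mathrm{R}_1+1\cdot\mathrm{T}_2$, $1\cdot\mathrm{R}_1+1\cdot\mathrm{T}_1+1\cdot\mathrm{R}_2$ and $1\cdot\mathrm{R}_1+1\cdot\mathrm{T}_1+1\cdot\mathrm{T}_2$, one must also treat $2\cdot\mathrm{T}_1+1\cdot\mathrm{R}_2$ and $2\cdot\mathrm{T}_1+1\cdot\mathrm{T}_2$, i.e.\ a component of maximal order accompanied by \emph{two} simple transverse inflection lines. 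These two types are not covered by Propositions~\ref{pro:omega3-omega4} and~\ref{pro:omega5-omega6}, and they cannot appear in your third step either, since there $\Dtr$ is assumed reduced (four distinct lines). Yet they are precisely the source of $\mathcal{H}_7$ ($\omega_7=y^3\mathrm{d}x+x(3y^2-x^2)\mathrm{d}y$, type $2\cdot\mathrm{T}_1+1\cdot\mathrm{R}_2$) and $\mathcal{H}_8$ ($\omega_8=x(x^2-3y^2)\mathrm{d}x-4y^3\mathrm{d}y$, type $2\cdot\mathrm{T}_1+1\cdot\mathrm{T}_2$). In the paper this requires a separate normal-form statement (Lemma~\ref{lem:2T1+1TR2}, which reduces $\omega$ to a two- resp.\ three-parameter family after putting $\Dtr=cy^2(y-x)(y+x)$) followed by the explicit conditions $Q(1,1;-1,1)=Q(1,-1;1,1)=0$ (and, for the $\mathrm{T}_2$ case, the vanishing of $\mathrm{d}\omega$ on the double inflection line). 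As written, your plan would never produce $\mathcal{H}_7$ and $\mathcal{H}_8$ and would therefore "classify" only nine foliations.

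A related miscount: in your third step you expect the reduced case $\deg\mathcal{T}_{\mathcal{H}}=4$ to account for $\mathcal{H}_7,\ldots,\mathcal{H}_{11}$, but in fact only $\mathcal{H}_9$ (type $3\cdot\mathrm{R}_1+1\cdot\mathrm{T}_1$), $\mathcal{H}_{10}$ (type $2\cdot\mathrm{R}_1+2\cdot\mathrm{T}_1$) and $\mathcal{H}_{11}$ (type $4\cdot\mathrm{T}_1$) arise there; the type $1\cdot\mathrm{R}_1+3\cdot\mathrm{T}_1$ is shown to be incompatible with flatness, and $4\cdot\mathrm{R}_1$ is excluded by Remark~\ref{rem:2d-2-R1} (your worry about $r_1=0$ is unfounded: that case is admissible and yields $\mathcal{H}_{11}$). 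Otherwise your description of the reduced case matches the paper, which normalises via a normal-form lemma (Lemma~\ref{lem:deg-type=4}) and solves the polynomial conditions $Q(\cdot)=0$ coming from Corollary~\ref{cor:platitude-degre-3} in each of the four sub-types; the non-conjugacy of the eleven models then follows, as you say, from the pairwise distinctness of the types, with no need for finer invariants.
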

\medskip

\noindent Considérons un feuilletage homogène $\mathcal{H}$ de degré $3$ sur $\pp$ défini, en carte affine $(x,y)$, par
$$\omega=A(x,y)\mathrm{d}x+B(x,y)\mathrm{d}y,$$
où $A$ et $B$ désignent des polynômes homogènes de degré $3$ sans composante commune; la classification menant au Théorème~\ref{thm:Class-Homog3-Plat} est établie au cas par cas suivant que $\deg\mathcal{T}_{\mathcal{H}}=2,3$ ou~$4$, {\it i.e.} suivant la nature du support du diviseur $\Dtr=A_{x}B_{y}-A_{y}B_{x}$ qui peut être deux droites, trois droites ou quatre droites. Pour ce faire commençons par établir les deux lemmes suivants.

\begin{lem}\label{lem:2T1+1TR2}
{\sl Si $\mathcal{T}_{\mathcal{H}}=2\cdot\mathrm{T}_{1}+1\cdot\mathrm{R}_{2}$, resp. $\mathcal{T}_{\mathcal{H}}=2\cdot\mathrm{T}_{1}+1\cdot\mathrm{T}_{2}$, alors, à conjugaison linéaire près, la $1$-forme $\omega$ décrivant $\mathcal{H}$ est du type
\begin{small}
\[
\hspace{-3.25cm}\omega=y^3\mathrm{d}x+\left(\beta\,x^3-3\beta\,xy^2+\alpha\,y^3\right)\mathrm{d}y,\qquad \beta\left((2\beta-1)^2-\alpha^2\right)\neq0,
\]
\[
\text{resp}.\hspace{1.5mm}\omega=\left(x^3-3xy^2+\alpha\,y^3\right)\mathrm{d}x+\left(\delta\,x^3-3\delta\,xy^2+\beta\,y^3\right)\mathrm{d}y,\qquad
(\beta-\alpha\delta)\left((\beta-2)^2-(\alpha-2\delta)^2\right)\neq0.
\]
\end{small}
}
\end{lem}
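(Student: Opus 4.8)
The plan is to read everything off the degree-$3$ rational map $\Gunderline\colon\sph\to\sph,$ $\Gunderline(z)=-A(1,z)/B(1,z),$ via the dictionary recalled above: a radial singularity of $\mathcal{H}$ of order $n-1$ is a \emph{fixed} critical point of $\Gunderline$ of multiplicity $n-1,$ and a transverse inflection line of order $\rho-1$ is a \emph{non-fixed} critical point of $\Gunderline$ of multiplicity $\rho-1.$ In both cases of the statement $\mathcal{H}$ carries a distinguished line $T_0$ — the invariant line $L_{s}$ of the unique order-$2$ radial singularity $s$ when $\mathcal{T}_{\mathcal{H}}=2\cdot\mathrm{T}_1+1\cdot\mathrm{R}_2,$ resp. the order-$2$ transverse inflection line when $\mathcal{T}_{\mathcal{H}}=2\cdot\mathrm{T}_1+1\cdot\mathrm{T}_2$ — corresponding to a point $z_0\in\sph$ that is a critical point of $\Gunderline$ of multiplicity $2$; set $w_0:=\Gunderline(z_0),$ so $w_0=z_0$ in the first case and $w_0\neq z_0$ in the second. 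First I would normalise $T_0$: since the group of linear conjugacies of $\mathcal{H}$ acts on the $\sph$ of lines through $O$ as the full M\"obius group $\mathrm{PGL}_2(\C)$ and conjugates $\Gunderline$ accordingly, one may assume $z_0=0,$ and even (in the second case, using $z_0\neq w_0$) $w_0\notin\{0,\infty\}.$ Then Lemma~\ref{lem:critique-maximale} with $m=d=3$ gives $-A(1,z)=w_0B(1,z)+c_0(z-z_0)^3$ for some constant $c_0\in\C^{*}$ (the polynomial $c$ there has degree $\leq d-m=0$), i.e. after homogenising $-A(x,y)=w_0B(x,y)+c_0y^3.$

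In the case $\mathcal{T}_{\mathcal{H}}=2\cdot\mathrm{T}_1+1\cdot\mathrm{R}_2$ one has $w_0=0,$ so $A=-c_0y^3$; rescaling $\omega$ I would take $A=y^3,$ $B=b_0x^3+b_1x^2y+b_2xy^2+b_3y^3.$ Since $\Dtr=A_xB_y-A_yB_x=-3y^2B_x,$ Proposition~\ref{pro:SingH} together with the hypothesis on the type forces $\Dtr$ to equal, up to a constant, $y^2$ times two distinct lines other than $\{y=0\}$; hence $b_0\neq0$ (otherwise $y^3$ divides $\Dtr,$ introducing an $\mathrm{R}_3$) and $B_x$ has two distinct roots. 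The residual freedom — the maps $x\mapsto ax+by,\ y\mapsto dy$ (which preserve $A\propto y^3$) together with rescaling of $\omega$ — then lets one first kill the $x^2y$-coefficient of $B$ by the shift $b=-b_1/(3b_0),$ and next normalise the ratio of the $xy^2$- and $x^3$-coefficients of $B$ to $-3$ by a diagonal scaling. One reaches $\omega=y^3\mathrm{d}x+(\beta x^3-3\beta xy^2+\alpha y^3)\mathrm{d}y$ with $\beta\neq0.$ Finally the two inflection lines $\{x=\pm y\}$ must be transverse, i.e. non-invariant (an invariant line dividing $\Dtr$ would be a radial line $L_s,$ contributing an $\mathrm{R}_1$ to the type, which is excluded); evaluating the tangent cone $\Cinv=xA+yB=y(\beta x^3+(1-3\beta)xy^2+\alpha y^3)$ at $x=\pm y$ turns this into $(2\beta-1)^2\neq\alpha^2,$ whence $\beta((2\beta-1)^2-\alpha^2)\neq0.$

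In the case $\mathcal{T}_{\mathcal{H}}=2\cdot\mathrm{T}_1+1\cdot\mathrm{T}_2$ one has $w_0\notin\{0,\infty\},$ and $-A=w_0B+c_0y^3$ reads $B=\delta A+\varepsilon y^3$ with $\delta=-1/w_0\in\C^{*}$ and $\varepsilon=-c_0/w_0\in\C^{*}$; then $\Dtr=3c_0y^2B_x,$ so as above $\{y=0\}$ is a transverse inflection line of order $2$ — non-invariant since $\Cinv|_{y=0}=-w_0\delta\,x^4\neq0$ — and the $x^3$-coefficient of $A$ is nonzero. Using the same residual freedom I would bring $A$ to $x^3-3xy^2+\alpha y^3$ (kill the $x^2y$-term by a shift, normalise the $x^3$- and $xy^2$-coefficients by a diagonal scaling and a rescaling of $\omega$); the relation $B=\delta A+\varepsilon y^3$ survives, so $B=\delta x^3-3\delta xy^2+\beta y^3$ with $\beta:=\delta\alpha+\varepsilon,$ hence $\beta-\alpha\delta=\varepsilon\neq0.$ As in the first case the non-invariance of the two inflection lines $\{x=\pm y\}$ (the roots of $A_x$), forced by the type, amounts via $\Cinv$ to $(\beta-2)^2\neq(\alpha-2\delta)^2.$

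The hard part will be the first step: translating the combinatorial hypothesis on $\mathcal{T}_{\mathcal{H}}$ into the analytic statement that $\Gunderline$ has at $z_0$ a critical point of multiplicity \emph{exactly} $2$ (fixed, resp. non-fixed), and extracting from Lemma~\ref{lem:critique-maximale} the sharp form $-A(1,z)=w_0B(1,z)+c_0(z-z_0)^3$ with $c_0$ a nonzero \emph{constant} — it is precisely the constancy of $c_0,$ i.e. $\deg c\leq d-m=0,$ that makes the whole normalisation so rigid. A secondary but delicate point is ruling out $\Gunderline(z_0)=\infty$ in the second case, which the preliminary conjugation using $z_0\neq w_0$ disposes of.
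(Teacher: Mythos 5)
Your proposal is correct, and it reaches the two normal forms with exactly the stated inequalities, but by a genuinely different route than the paper's proof of this lemma. The paper proceeds by brute force: it first normalises the whole discriminant, $\Dtr=cy^{2}(y-x)(y+x)$, distinguishes the two types by $\Cinv(1,0)=0$ or $\neq0$, writes $A,B$ with indeterminate coefficients and solves the resulting system (\ref{equa:2T1+1TR2}), which simultaneously places the two simple inflection lines at $y=\pm x$ and forces $a_1=b_1=0$, $a_2=-3a_0$, $b_2=-3b_0$, etc. You instead exploit only the order-$2$ component through $\Gunderline$ and Lemma~\ref{lem:critique-maximale} with $m=d=3$ (the mechanism the paper itself uses in Propositions~\ref{pro:omega1-omega2}--\ref{pro:omega5-omega6}, but not here): the constancy of $c$ yields at once $A\propto y^{3}$ in the $\mathrm{R}_2$ case and the rigid relation $B=\delta A+\varepsilon y^{3}$, $\varepsilon\neq0$, in the $\mathrm{T}_2$ case, after which $\Dtr$ is $y^{2}$ times $B_x$ resp. $A_x$ and the residual maps $(x,y)\mapsto(ax+by,dy)$ (which preserve the line $y=0$ and the relation, with $\delta,\varepsilon$ transforming but $\varepsilon$ staying nonzero) normalise the two simple inflection lines to $x=\pm y$; the inequalities then fall out of $\Cinv(1,\pm1)\neq0$ together with $\varepsilon\neq0$ (equivalently $\beta-\alpha\delta\neq0$), exactly matching $\beta((2\beta-1)^2-\alpha^2)\neq0$ resp. $(\beta-\alpha\delta)((\beta-2)^2-(\alpha-2\delta)^2)\neq0$. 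Your route is more conceptual and uniform with Chapter~2 and avoids the five-equation system; the paper's is more elementary and fixes all four lines of $\Dtr$ in one stroke. Two small slips, neither damaging: $\Cinv|_{y=0}$ equals $a_0x^4$, nonzero precisely because $z_0$ is non-fixed (your expression $-w_0\delta\,x^4$ tacitly assumes $a_0$ already normalised to $1$); and the shift killing the $x^2y$-term should read $b=-b_1d/(3b_0)$ and does change $\delta$, which is harmless since $\delta$ is only a parameter of the final form (indeed $\delta=0$ is allowed in the statement).
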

\vspace{-8mm}

\begin{proof}[\sl D\'emonstration] \`{A} isomorphisme près nous pouvons nous ramener à $\Dtr=cy^{2}(y-x)(y+x)$ pour un certain $c\in \mathbb{C}^*$. Le produit $\Cinv(1,1)\Cinv(1,-1)$ est évidemment non nul; $\mathcal{H}$ est de type $\mathcal{T}_{\mathcal{H}}=2\cdot\mathrm{T}_{1}+1\cdot\mathrm{R}_{2}$ (resp. $\mathcal{T}_{\mathcal{H}}=2\cdot\mathrm{T}_{1}+1\cdot\mathrm{T}_{2}$) si et seulement si $\Cinv(1,0)=0$ (resp. $\Cinv(1,0)\neq0$).
Écrivons les coefficients $A$ et $B$ de $\omega$ sous la forme
$$A(x,y)=a_{0}x^3+a_{1}x^2y+a_{2}xy^2+a_{3}y^3\qquad \text{et} \qquad B(x,y)=b_{0}x^3+b_{1}x^2y+b_{2}xy^2+b_{3}y^3\hspace{1mm};$$
nous avons donc
\begin{align*}
\hspace{-1.3cm}\Cinv=a_0x^4+(a_1+b_0)x^3y+(a_2+b_1)x^2y^2+(a_3+b_2)xy^3+b_3y^4
\end{align*}
et
\begin{align*}
&\hspace{1.2cm}\Dtr=(a_0b_1-a_1b_0)x^4+2(a_0b_2-a_2b_0)x^3y+(3a_0b_3+a_1b_2-a_2b_1-3a_3b_0)x^2y^2\\
&\hspace{2.1cm}+2(a_1b_3-a_3b_1)xy^3+(a_2b_3-a_3b_2)y^4.
\end{align*}
Ainsi $\Cinv(1,0)=a_0$\, et
\newpage
\hfill
\vspace{-9mm}

\begin{equation}\label{equa:2T1+1TR2}
\Dtr=cy^{2}(y-x)(y+x)\hspace{4mm}\Leftrightarrow\hspace{4mm}\left\{\begin{array}{lllll}
a_0\hspace{0.2mm}b_1=a_1b_0\\
a_0\hspace{0.2mm}b_2=a_2b_0\\
a_1b_3=a_3b_1\\
a_2b_3-a_3b_2=c\\
3a_0b_3+a_1b_2-a_2b_1-3a_3b_0=-c
\end{array}\right.
\end{equation}

\begin{itemize}
  \item [$\bullet$] Si $a_0\neq0$ alors le système (\ref{equa:2T1+1TR2}) équivaut à
$$a_1=0,\qquad a_2=-3a_0,\qquad b_1=0,\qquad b_2=-3b_0,\qquad c=-3(a_0b_3-a_3b_0).$$
Posons $a_3=a_0\alpha,\quad b_0=a_0\delta,\quad b_3=a_0\beta$; alors, quitte à diviser $\omega$ par $a_0$, cette forme s'écrit
$$\omega=\left(x^3-3xy^2+\alpha\,y^3\right)\mathrm{d}x+\left(\delta\,x^3-3\delta\,xy^2+\beta\,y^3\right)\mathrm{d}y\hspace{1mm};$$
un calcul direct montre que la condition $c\,\Cinv(1,1)\Cinv(1,-1)\neq0$ est vérifiée si et seulement si $(\beta-\alpha\delta)\left((\beta-2)^2-(\alpha-2\delta)^2\right)\neq0.$
  \item [$\bullet$] Si $a_0=0$ alors le système (\ref{equa:2T1+1TR2}) conduit à
  $$a_1=a_2=b_1=0,\quad b_2=-3b_0,\quad c=3a_3b_0\neq0.$$ Écrivons $b_0=a_3\beta$\, et \,$b_3=a_3\alpha$; alors, quitte à remplacer $\omega$ par $\dfrac{1}{a_3}\omega,$ on se ramène à $$\omega=y^3\mathrm{d}x+\left(\beta\,x^3-3\beta\,xy^2+\alpha\,y^3\right)\mathrm{d}y,$$ et la non nullité du produit $c\,\Cinv(1,1)\Cinv(1,-1)$ est équivalente à $\beta\left((2\beta-1)^2-\alpha^2\right)\neq0.$
\end{itemize}
\end{proof}

\begin{lem}\label{lem:deg-type=4}
{\sl Si le diviseur $\Dtr$ est réduit, {\it i.e.} si $\deg\mathcal{T}_\mathcal{H}=4$, alors $\omega$ est, à conjugaison linéaire près, de l'une des formes suivantes
\begin{itemize}
\item [\texttt{1.}] $y^2\left((2r+3)x-(r+2)y\right)\mathrm{d}x-x^2(x+ry)\mathrm{d}y$,\\
                    \hspace{1cm} \text{où}\hspace{1mm}
                    \begin{small}$r(r+1)(r+2)(r+3)(2r+3)\neq0$\end{small};
                    \vspace{1mm}
\item [\texttt{2.}] $s\hspace{0.12mm}y^2\left((2r+3)x-(r+2)y\right)\mathrm{d}x-x^2(x+ry)\mathrm{d}y$,\\
                    \hspace{1cm} \text{où}\hspace{1mm}
                    \begin{small}$rs(s-1)(r+1)(r+2)(r+3)(2r+3)\left(s(2r+3)^2-r^2\right)\neq0$\end{small};
                    \vspace{1mm}
\item [\texttt{3.}] $t\hspace{0.12mm}y^2\left((2r+3)x-(r+2)y\right)\mathrm{d}x-x^2(x+ry)\mathrm{d}(sy-x)$,\\
                    \hspace{1cm} \text{où}\hspace{1mm}
                    \begin{small}$r\hspace{0.17mm}s\hspace{0.17mm}t(r+1)(r+2)(r+3)(2r+3)(s-t-1)(tu^3-r^2su-r^2v)\neq0$\end{small},\hspace{1mm} \begin{small}$u=2r+3$\end{small}
                    \hspace{1mm}\text{et}\hspace{1mm}
                    \begin{small}$v=r(r+2)$\end{small};
                    \vspace{-5mm}
\item [\texttt{4.}] $uy^2\left((2r+3)x-(r+2)y\right)\mathrm{d}(y-sx)-x^2(x+ry)\mathrm{d}(ty-x)$,\\
                    \hspace{1cm} \text{où}\hspace{1mm}
                    \begin{small}$ur(r+1)(r+2)(r+3)(2r+3)(st-1)(su+t-u-1)(uv^4+suwv^3+r^2twv+r^2w^2)\neq0$\end{small},\\
                    \begin{small}$v=2r+3$\end{small}
                    \hspace{2mm}\text{et}\hspace{2mm}
                    \begin{small}$w=r(r+2)$\end{small}.
\end{itemize}
Ces quatre modèles sont respectivement de types
\begin{small}
$3\cdot\mathrm{R}_1+1\cdot\mathrm{T}_1,\hspace{1mm}2\cdot\mathrm{R}_1+2\cdot\mathrm{T}_1,\hspace{1mm}1\cdot\mathrm{R}_1+3\cdot\mathrm{T}_1,\hspace{1mm}4\cdot\mathrm{T}_1.$
\end{small}
}
\end{lem}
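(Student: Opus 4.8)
The hypothesis $\deg\mathcal{T}_{\mathcal{H}}=4$ means the sextic $\Dtr=A_xB_y-A_yB_x\in\mathbb{C}[x,y]_4$ factors as a product of four \emph{distinct} linear forms, all vanishing at $O$. Up to a linear change of coordinates I may normalise three of these four lines; a convenient normalisation is to send them to $x=0$, $y=0$ and $x+ry=0$ for a parameter $r$, keeping one degree of freedom (a projective coordinate on the pencil of lines through $O$) to locate the fourth line. So the plan is: first, write $A=\sum_{i=0}^3 a_ix^{3-i}y^i$, $B=\sum_{i=0}^3 b_ix^{3-i}y^i$, compute $\Dtr$ explicitly (a $4\times4$ worth of bilinear expressions in the $a_i,b_j$, exactly as displayed in the proof of Lemma~\ref{lem:2T1+1TR2}), and impose that $\Dtr$ be proportional to $xy(x+ry)(\text{fourth line})$. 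The coefficient matching gives a system that is linear in each of $A,B$ separately; solving it pins down $\omega$ up to the scaling $\omega\mapsto\lambda\omega$ and up to the residual freedom in the fourth line and in the normalisations I did not yet use.

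Second, I would organise the case split according to whether each of the four lines in the support of $\Dtr$ is a \emph{radial} line ($\mathrm{R}_1$, i.e.\ invariant by $\mathcal{H}$, equivalently a fixed point of $\Gunderline$) or a \emph{transverse inflection} line ($\mathrm{T}_1$, a critical non-fixed point of $\Gunderline$), using the dictionary recalled after the definition of $\Gunderline$ in \S\ref{sec:�tude-platitude-homog}. By Remark~\ref{rem:2d-2-R1} (a rational self-map of $\mathbb{P}^1_{\mathbb{C}}$ always has a non-critical fixed point), $\mathcal{H}$ cannot have $d+1=4$ radial singularities, so the type $4\cdot\mathrm{R}_1$ is excluded and only the four types $3\cdot\mathrm{R}_1+1\cdot\mathrm{T}_1$, $2\cdot\mathrm{R}_1+2\cdot\mathrm{T}_1$, $1\cdot\mathrm{R}_1+3\cdot\mathrm{T}_1$, $4\cdot\mathrm{T}_1$ survive. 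For each, the \og $\mathrm{R}_1$ at this line\fg\ condition ($\Cinv$ vanishes there) is one extra linear equation on $(A,B)$, while the \og $\mathrm{T}_1$\fg\ condition ($\Cinv$ does not vanish) is the corresponding open condition. Imposing the radial lines among $\{x=0,y=0,x+ry=0\}$ in the three normalised ones, and letting the remaining one and the fourth line carry the parameters $s,t,u,\dots$, should reproduce exactly the four displayed forms; the generic open conditions ($r(r+1)(r+2)(r+3)(2r+3)\neq0$ etc.) are precisely (a) distinctness of the four lines of $\Dtr$, (b) $\pgcd(A,B)=1$, and (c) the transversality/radiality dichotomy (non-vanishing of $\Cinv$ at the appropriate points), which I would record as I go.

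The computation is elementary but bookkeeping-heavy: the main obstacle is simply to carry out the four coefficient-matching solves cleanly and to identify the residual linear and scaling freedoms so that the normal forms are stated with a \emph{minimal} parameter set, and to translate the various non-degeneracy requirements into the compact polynomial inequalities in the statement. In particular in case~3 and case~4 one must check that after using all the coordinate freedom the fourth $\Dtr$-line and the extra parameters $s,t$ (resp.\ $s,t,u$) really are independent and that the stated conditions $(s-t-1)(tu^3-r^2su-r^2v)\neq0$, resp.\ $(st-1)(su+t-u-1)(uv^4+suwv^3+r^2twv+r^2w^2)\neq0$, are exactly what remains of (a)+(b)+(c). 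I would also double-check the type assignment at the end: in each normal form, evaluate $\Cinv$ at the four roots of $\Dtr$ and count how many vanish, confirming the types $3\cdot\mathrm{R}_1+1\cdot\mathrm{T}_1,\ 2\cdot\mathrm{R}_1+2\cdot\mathrm{T}_1,\ 1\cdot\mathrm{R}_1+3\cdot\mathrm{T}_1,\ 4\cdot\mathrm{T}_1$ respectively.
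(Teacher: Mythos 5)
Your plan coincides with the paper's own proof: exclude the type $4\cdot\mathrm{R}_1$ via Remark~\ref{rem:2d-2-R1}, normalise $\Dtr=c\,xy(y-x)(y-\alpha x)$ up to linear conjugation, encode the radial/transverse dichotomy by the vanishing or not of $\Cinv$ at the four roots of $\Dtr$, and solve the resulting system, linear in the coefficients of $A$ and $B$, while recording the non-degeneracy conditions (distinctness of the lines, $c\neq0$, non-vanishing of $\Cinv$ at the $\mathrm{T}_1$-lines). The only remark is that you stop at the plan: the paper carries out the coefficient solves explicitly for the types $4\cdot\mathrm{T}_1$ and $1\cdot\mathrm{R}_1+3\cdot\mathrm{T}_1$ (introducing $r=a_1/a_0$, $s$, $t$, $u$ and normalising $a_0=1$), which is exactly where the stated polynomial inequalities come from, but your outline would reproduce them.
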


\begin{proof}[\sl D\'emonstration]
D'après la Remarque \ref{rem:2d-2-R1} le feuilletage $\mathcal{H}$ ne peut être de type $4\cdot\mathrm{R}_{1};$ nous sommes donc dans l'une des situations suivantes
\begin{itemize}
\item [(i)] $\mathcal{T}_{\mathcal{H}}=3\cdot\mathrm{R}_{1}+1\cdot\mathrm{T}_{1}\hspace{1mm};$

\item [(ii)] $\mathcal{T}_{\mathcal{H}}=2\cdot\mathrm{R}_{1}+2\cdot\mathrm{T}_{1};$

\item [(iii)] $\mathcal{T}_{\mathcal{H}}=1\cdot\mathrm{R}_{1}+3\cdot\mathrm{T}_{1};$

\item [(iv)] $\mathcal{T}_{\mathcal{H}}=4\cdot\mathrm{T}_{1}.$
\end{itemize}
\`{A} conjugaison linéaire près nous pouvons nous ramener à $\Dtr=cxy(y-x)(y-\alpha x)$ pour certains $c,\alpha\in \mathbb{C}^*,\alpha\neq1$. Dans la dernière éventualité nous avons $$\Cinv(0,1)\Cinv(1,0)\Cinv(1,1)\Cinv(1,\alpha)\neq0$$ et dans les cas (i), resp. (ii), resp. (iii) nous pouvons supposer que
\begin{align*}
\left\{
\begin{array}[c]{c}
\Cinv(0,1)=0
\\
\Cinv(1,0)=0
\\
\Cinv(1,1)=0
\\
\Cinv(1,\alpha)\neq0
\end{array}
\right.
\hspace{2mm}
\text{resp.}\hspace{1.5mm}\left\{
\begin{array}[c]{c}
\Cinv(0,1)=0
\\
\Cinv(1,0)=0
\\
\Cinv(1,1)\neq0
\\
\Cinv(1,\alpha)\neq0
\end{array}
\right.
\hspace{2mm}
\text{resp.}\hspace{1.5mm}\left\{
\begin{array}[c]{c}
\Cinv(0,1)=0
\\
\Cinv(1,0)\neq0
\\
\Cinv(1,1)\neq0
\\
\Cinv(1,\alpha)\neq0
\end{array}
\right.
\end{align*}
Comme dans le lemme précédent, en écrivant
$$A(x,y)=a_{0}x^3+a_{1}x^2y+a_{2}xy^2+a_{3}y^3\qquad \text{et} \qquad B(x,y)=b_{0}x^3+b_{1}x^2y+b_{2}xy^2+b_{3}y^3$$
nous obtenons que
\begin{equation}\label{equa:deg-type=4}
\Dtr=cxy(y-x)(y-\alpha x)\hspace{4mm}\Leftrightarrow\hspace{4mm}\left\{\begin{array}{lllll}
a_0\hspace{0.2mm}b_1=a_1b_0\\
a_2b_3=a_3b_2\\
2(a_1b_3-a_3b_1)=c\\
2(a_0b_2-a_2b_0)=c\alpha\\
3a_0b_3+a_1b_2-a_2b_1-3a_3b_0=-c(\alpha+1)
\end{array}\right.
\end{equation}
Envisageons l'éventualité (iv). Comme $c\neq0,$ $a_0=\Cinv(1,0)\neq0$\, et \,$b_3=\Cinv(0,1)\neq0$, le système (\ref{equa:deg-type=4}) équivaut à
\begin{equation*}
\left\{
\begin{array}{lllll}
b_1=\dfrac{a_1b_0}{a_0}\\
a_2=\dfrac{a_3b_2}{b_3}\\
c=\dfrac{2a_1(a_0b_3-a_3b_0)}{a_0}\\
a_0\hspace{0.2mm}b_2-\alpha a_1b_3=0\\
(3a_0+2\alpha a_1+2a_1)b_3+a_1b_2=0
\end{array}
\right.
\hspace{4mm}\Leftrightarrow\hspace{4mm}
\left\{
\begin{array}{lllll}
b_1=\dfrac{a_1b_0}{a_0}\\
a_2=\dfrac{a_3a_1\alpha}{a_0}\\
c=\dfrac{2a_1(a_0b_3-a_3b_0)}{a_0}\\
b_2=\dfrac{a_1b_3\alpha}{a_0}\\
a_1(a_1+2a_0)\alpha+a_0(2a_1+3a_0)=0
\end{array}
\right.
\end{equation*}
Donc $a_1\neq0$ et puisque $\alpha\neq0$, le produit $(a_1+2a_0)(2a_1+3a_0)$ est non nul. Il s'en suit que
\begin{align*}
& a_2=-\dfrac{a_3(2a_1+3a_0)}{a_1+2a_0},&& b_1=\dfrac{a_1b_0}{a_0},&& b_2=-\dfrac{b_3(2a_1+3a_0)}{a_1+2a_0},\\
& \alpha=-\dfrac{a_0(2a_1+3a_0)}{a_1(a_1+2a_0)},&& c=\dfrac{2a_1(a_0b_3-a_3b_0)}{a_0}.
\end{align*}
Posons $r=\dfrac{a_1}{a_0},\quad s=-\dfrac{a_3}{b_3},\quad t=-\dfrac{b_0}{a_0},\quad u=-\dfrac{b_3}{a_1+2a_0}$; alors
\begin{align*}
\hspace{6mm}& b_0=-ta_0,&& b_1=-rta_0,&& b_2=(2r+3)ua_0,&& b_3=-u(r+2)a_0,\\
\hspace{6mm}& a_1=ra_0,&& a_2=-su(2r+3)a_0,&& a_3=su(r+2)a_0,\\
\hspace{6mm}& \alpha=-\frac{2r+3}{r(r+2)},&& c=2r(r+2)u(st-1)a_0^{2}.
\end{align*}
Quitte à remplacer $\omega$ par $\dfrac{1}{a_0}\omega,$ le coefficient $a_0$ vaut $1$ et $\omega$ s'écrit
\begin{eqnarray*}
\omega\hspace{-1mm}&=&\hspace{-1mm}\left(x^3+rx^2y-su(2r+3)xy^2+su(r+2)y^3\right)\mathrm{d}x+\left(-tx^3-rtx^2y+u(2r+3)xy^2-u(r+2)y^3\right)\mathrm{d}y
\\
\hspace{-1mm}&=&\hspace{-1mm}uy^2\left((2r+3)x-(r+2)y\right)\mathrm{d}(y-sx)-x^2(x+ry)\mathrm{d}(ty-x)
\hspace{1mm};
\end{eqnarray*}
un calcul direct montre que la condition $c\alpha(\alpha-1)\Cinv(0,1)\Cinv(1,0)\Cinv(1,1)\Cinv(1,\alpha)\neq0$ est équivalente à
\begin{align*}
ur(r+1)(r+2)(r+3)(2r+3)(st-1)(su+t-u-1)(uv^4+suwv^3+r^2twv+r^2w^2)\neq0
\end{align*}
avec $v=2r+3$\, et \,$w=r(r+2).$

\noindent Maintenant nous étudions la possibilité (iii). Dans ce cas nous avons $b_3=\Cinv(0,1)=0$ et $a_0=\Cinv(1,0)\neq0$; le système (\ref{equa:deg-type=4}) conduit à
\begin{align*}
& a_2=-\dfrac{a_3(2a_1+3a_0)}{a_1+2a_0},&& b_1=\dfrac{a_1b_0}{a_0},&& b_2=0,&& \alpha=-\dfrac{a_0(2a_1+3a_0)}{a_1(a_1+2a_0)},&& c=-\dfrac{2a_1a_3b_0}{a_0}.
\end{align*}
En posant $r=\dfrac{a_1}{a_0},\quad s=-\dfrac{b_0}{a_0}$\hspace{2mm} et \hspace{2mm}$t=-\dfrac{a_3}{a_1+2a_0},$ nous obtenons que
\begin{align*}
\hspace{6mm}& b_0=-sa_0,&& b_1=-rsa_0,&& b_2=b_3=0,&& c=-2rst(r+2)a_0^{2}, \\
\hspace{6mm}& a_1=ra_0,&& a_2=t(2r+3)a_0,&& a_3=-t(r+2)a_0,&& \alpha=-\frac{2r+3}{r(r+2)}.
\end{align*}
Quitte à diviser $\omega$ par $a_0$ on se ramène à
\begin{eqnarray*}
\omega\hspace{-1mm}&=&\hspace{-1mm}\left(x^3+rx^2y+t(2r+3)xy^2-t(r+2)y^3\right)\mathrm{d}x-sx^2(x+ry)\mathrm{d}y
\\
\hspace{-1mm}&=&t\hspace{0.12mm}y^2\left((2r+3)x-(r+2)y\right)\mathrm{d}x-x^2(x+ry)\mathrm{d}(sy-x),
\end{eqnarray*}
\newpage
\hfill

\noindent et la non nullité du produit $c\alpha(\alpha-1)\Cinv(1,0)\Cinv(1,1)\Cinv(1,\alpha)$ se traduit par
\begin{align*}
r\hspace{0.17mm}s\hspace{0.17mm}t(r+1)(r+2)(r+3)(2r+3)(s-t-1)(tu^3-r^2su-r^2v)\neq0,
\hspace{2mm}\text{avec}\hspace{2mm}
u=2r+3\hspace{2mm} \text{et} \hspace{2mm}v=r(r+2).
\end{align*}
\noindent Les deux premiers cas se traitent de façon analogue.
\end{proof}

\begin{proof}[\sl D\'emonstration du Théorème~\ref{thm:Class-Homog3-Plat}]
\textsl{Premier cas}: $\deg\mathcal{T}_{\mathcal{H}}=2.$ Dans ce cas le $3$-tissu $\Leg\mathcal{H}$ est plat si et seulement si la $1$-forme $\omega$ définissant $\mathcal{H}$ est linéairement conjuguée à l'une des deux $1$-formes
$$\omega_1=y^3\mathrm{d}x-x^3\mathrm{d}y\qquad\hspace{1.5mm} \text{et} \qquad\hspace{1.5mm} \omega_2=x^3\mathrm{d}x-y^3\mathrm{d}y.$$
C'est une application directe de la Proposition~\ref{pro:omega1-omega2} pour $d=3.$

\noindent\textsl{Second cas}: $\deg\mathcal{T}_{\mathcal{H}}=3.$
\begin{itemize}
  \item [$\bullet$] Si $\mathcal{T}_\mathcal{H}=2\cdot\mathrm{R}_1+1\cdot\mathrm{R}_2$, resp. $\mathcal{T}_\mathcal{H}=2\cdot\mathrm{R}_1+1\cdot\mathrm{T}_2$, alors, d'après la Proposition~\ref{pro:omega3-omega4}, $\Leg\mathcal{H}$ est plat si et seulement si $\omega$ est conjuguée à
      \[
        \hspace{1cm}\omega_{3}^{\hspace{0.2mm}3,1}=\,\sum\limits_{i=2}^{3}\binom{{3}}{{i}}x^{3-i}y^i\mathrm{d}x-
        \sum\limits_{i=0}^{1}\binom{{3}}{{i}}x^{3-i}y^i\mathrm{d}y\,=\,y^2(3x+y)\mathrm{d}x-x^2(x+3y)\mathrm{d}y\,=\,\omega_3,
      \]
      \[
        \text{resp}.\hspace{1.5mm}
        \omega_{4}^{\hspace{0.2mm}3,1}=\,\sum\limits_{i=2}^{3}\binom{{3}}{{i}}x^{3-i}y^i\mathrm{d}x+
        \sum\limits_{i=0}^{1}\binom{{3}}{{i}}x^{3-i}y^i\mathrm{d}y\,=\,y^2(3x+y)\mathrm{d}x+x^2(x+3y)\mathrm{d}y\,=\,\omega_4.
      \]
  \item [$\bullet$]  Si $\mathcal{T}_\mathcal{H}=1\cdot\mathrm{R}_1+1\cdot\mathrm{T}_1+1\cdot\mathrm{R}_2$, resp. $\mathcal{T}_\mathcal{H}=1\cdot\mathrm{R}_1+1\cdot\mathrm{T}_1+1\cdot\mathrm{T}_2$, alors, d'après la Proposition~\ref{pro:omega5-omega6}, $\Leg\mathcal{H}$ est plat si et seulement si $\omega$ est conjuguée à
      \[
        \hspace{-1.5cm}\omega_{5}^{\hspace{0.2mm}3}\,=\,2y^3\mathrm{d}x+x^2(3y-2x)\mathrm{d}y\,=\,\omega_5,
      \]
      \[
        \text{resp}.\hspace{1.5mm}\omega_{6}^{\hspace{0.2mm}3}\,=\,(4x^3-6x^2y+4y^3)\mathrm{d}x+x^2(3y-2x)\mathrm{d}y\,=\,\omega_6.
      \]
  \item [$\bullet$] Si $\mathcal{T}_\mathcal{H}=2\cdot\mathrm{T}_1+1\cdot\mathrm{R}_2$, alors, d'après le Lemme~\ref{lem:2T1+1TR2}, la $1$-forme $\omega$ est du type
      \[
       \omega=y^3\mathrm{d}x+\left(\beta\,x^3-3\beta\,xy^2+\alpha\,y^3\right)\mathrm{d}y,\qquad \beta\left((2\beta-1)^2-\alpha^2\right)\neq0,
      \]
      et dans ce cas nous avons $\ItrH=(y-x)(y+x)$. D'après le Corollaire~\ref{cor:platitude-degre-3}, le $3$-tissu $\Leg\mathcal{H}$ est plat si et seulement si $$\hspace{1cm} 0=Q(1,1;-1,1)=(2\beta+2-\alpha)\beta \qquad \text{et} \qquad 0=Q(1,-1;1,1)=-(2\beta+2+\alpha)\beta,$$ {\it i.e.} si et seulement si $\alpha=0$\, et \,$\beta=-1$, auquel cas $\omega=\omega_{\hspace{0.3mm}7}=y^3\mathrm{d}x+x(3y^2-x^2)\mathrm{d}y.$
 \item [$\bullet$] Dans ce deuxième cas, il ne nous reste plus qu'à traiter l'éventualité $\mathcal{T}_\mathcal{H}=2\cdot\mathrm{T}_1+1\cdot\mathrm{T}_2$. Toujours d'après le Lemme~\ref{lem:2T1+1TR2}, $\omega$ est, à conjugaison près, de la forme
     \[
      \hspace{1cm}\omega=\left(x^3-3xy^2+\alpha\,y^3\right)\mathrm{d}x+\left(\delta\,x^3-3\delta\,xy^2+\beta\,y^3\right)\mathrm{d}y,\hspace{3mm} (\beta-\alpha\delta)\left((\beta-2)^2-(\alpha-2\delta)^2\right)\neq0\hspace{1mm};
     \]
     comme $\ItrH=y^2(y-x)(y+x)$ le $3$-tissu $\Leg\mathcal{H}$ est plat si et seulement si
     \[
     \left\{
      \begin{array}[l]{l}
      0\equiv\mathrm{d}\omega\Big|_{y=0}=3\delta\,x^{2}\mathrm{d}x\wedge\mathrm{d}y
      \\
      0=Q(1,1;-1,1)=(4+\beta-2\alpha-2\delta)(\beta-\alpha\delta)
      \\
      0=Q(1,-1;1,1)=(4+\beta+2\alpha+2\delta)(\beta-\alpha\delta),
     \end{array}
     \right.
    \]
     en vertu du Corollaire~\ref{cor:platitude-degre-3}. Il s'en suit que $\Leg\mathcal{H}$ est plat si et seulement si $\alpha=\delta=0$\, et \,$\beta=-4$, auquel cas $\omega=\omega_8=x(x^2-3y^2)\mathrm{d}x-4y^3\mathrm{d}y.$
\end{itemize}

\noindent\textsl{Troisième cas}: $\deg\mathcal{T}_{\mathcal{H}}=4.$ Pour examiner la platitude dans ce dernier cas, nous allons appliquer le Corollaire~\ref{cor:platitude-degre-3} aux différents modèles du Lemme~\ref{lem:deg-type=4}.
\begin{itemize}
  \item [$\bullet$] Si $\mathcal{T}_\mathcal{H}=3\cdot\mathrm{R}_1+1\cdot\mathrm{T}_1$, alors $\omega$ est du type
  \[
    \omega=y^2\left((2r+3)x-(r+2)y\right)\mathrm{d}x-x^2(x+ry)\mathrm{d}y
  \]
  avec $r(r+1)(r+2)(r+3)(2r+3)\neq0$. Nous avons $\ItrH=sx+ty$ où $s=2r+3$\, et \,$t=r(r+2)$; par suite le $3$-tissu $\Leg\mathcal{H}$ est plat si et seulement si $$0=Q(t,-s\hspace{0.2mm};s,t)=r(r+1)^2(r+2)^2(r+3)(2r+3)\left[r^2+3r+3\right],$$ {\it i.e.} si et seulement si $r=-\dfrac{3}{2}\pm\mathrm{ i}\dfrac{\sqrt{3}}{2}$. Dans les deux cas la $1$-forme $\omega$ est linéairement conjuguée à $$\omega_9=y^{2}\left((-3+\mathrm{i}\sqrt{3})x+2y\right)\mathrm{d}x+x^{2}\left((1+\mathrm{i}\sqrt{3})x-2\mathrm{i}\sqrt{3}y\right)\mathrm{d}y\hspace{1mm};$$ en effet si $r=-\dfrac{3}{2}-\mathrm{ i}\dfrac{\sqrt{3}}{2},$\, resp. $r=-\dfrac{3}{2}+\mathrm{ i}\dfrac{\sqrt{3}}{2}$,\, alors
  $$
  \hspace{1cm}\omega_9=-(1+\mathrm{i}\sqrt{3})\omega,
  \hspace{2cm}\text{resp}.\hspace{1.5mm}
  \omega_9=-2\varphi^*\omega,\quad \text{où}\hspace{1.5mm} \varphi(x,y)=(y,x).
  $$
  \item [$\bullet$] Si $\mathcal{T}_\mathcal{H}=2\cdot\mathrm{R}_1+2\cdot\mathrm{T}_1$, alors $\omega$ est de la forme
  \[
   \omega=s\hspace{0.12mm}y^2\left((2r+3)x-(r+2)y\right)\mathrm{d}x-x^2(x+ry)\mathrm{d}y
  \]
   avec $rs(s-1)(r+1)(r+2)(r+3)(2r+3)\left(s(2r+3)^2-r^2\right)\neq0$. Posons $t=2r+3$ et $u=r(r+2)$; nous avons $\ItrH=(y-x)(tx+uy).$ Donc $\Leg\mathcal{H}$ est plat si et seulement si
  \[
   \hspace{2.5cm}
   \left\{
   \begin{array}[l]{l}
   0=Q(1,1;-1,1)=-s(r+1)^2\left[s(r+2)+1\right]
   \\
   0=Q(u,-t;\hspace{0.2mm}t,u)=rs(r+1)^2(r+2)^2(2r+3)\left[s(2r+3)^2+(r+2)r^2\right],
   \end{array}
   \right.
  \]
  {\it i.e.} si et seulement si $r=\pm\sqrt{3}$\, et \,$s=-2+r$, car $rs(r+1)(r+2)(2r+3)\neq0.$ Dans les deux cas $\omega$ est linéairement conjuguée à $$\omega_{10}=(3x+\sqrt{3}y)y^2\mathrm{d}x+(3y-\sqrt{3}x)x^2\mathrm{d}y\hspace{1mm};$$ en effet si $(r,s)=(-\sqrt{3},-2-\sqrt{3})$,\, resp. $(r,s)=(\sqrt{3},-2+\sqrt{3})$, alors
  $$
  \hspace{1cm}\omega_{10}=\sqrt{3}\omega,
  \hspace{2cm}\text{resp}.\hspace{1.5mm}
  \omega_{10}=-\sqrt{3}\hspace{0.2mm}\varphi^*\omega,\quad \text{où}\hspace{1.5mm} \varphi(x,y)=(x,-y).
  $$
  \item [$\bullet$] Si $\mathcal{T}_\mathcal{H}=1\cdot\mathrm{R}_1+3\cdot\mathrm{T}_1$, alors $\omega$ est du type
  \[
   \omega=t\hspace{0.12mm}y^2\left((2r+3)x-(r+2)y\right)\mathrm{d}x-x^2(x+ry)\mathrm{d}(sy-x)
  \]
     avec\hspace{1mm} $rst(r+1)(r+2)(r+3)(2r+3)(s-t-1)(tu^3-r^2su-r^2v)\neq0,$\hspace{1mm} $u=2r+3$\, et \,$v=r(r+2).$ Puisque $\ItrH=y(y-x)(ux+vy)$ la courbure de $\Leg\mathcal{H}$ est holomorphe le long de $\mathcal{G}_{\mathcal{H}}(\{y(y-x)=0\})$ si et seulement si
  \[
   \hspace{2.5cm}
   \left\{
   \begin{array}[l]{l}
   0=Q(1,0\hspace{0.2mm};0,1)=st\left[(2r+3)s-(r+2)\right]
   \\
   0=Q(1,1;-\hspace{0.2mm}1,1)=-st(r+1)^2\left[(r+2)(t+1)+s\right],
   \end{array}
   \right.
 \]
  {\it i.e.} si et seulement si $s=\dfrac{r+2}{2r+3}$\, et \,$t=-\dfrac{2(r+2)}{2r+3},$ auquel cas $K(\Leg\mathcal{H})$ ne peut être holomorphe sur $\mathcal{G}_{\mathcal{H}}(\{ux+vy=0\})$ car $$\hspace{1cm}Q(v,-u\hspace{0.2mm};u,v)=12\,r(r+1)^3(r+2)^5(r+3)(2r+3)^{-2}\neq0.$$ Par conséquent la transformée de \textsc{Legendre} $\Leg\mathcal{H}$ de $\mathcal{H}$ ne peut être plate lorsque $\mathcal{T}_\mathcal{H}=1\cdot\mathrm{R}_1+3\cdot\mathrm{T}_1.$
  \item [$\bullet$] Si $\mathcal{T}_\mathcal{H}=4\cdot\mathrm{T}_1$, alors $\omega$ est de la forme
  \[
   \omega=uy^2\left((2r+3)x-(r+2)y\right)\mathrm{d}(y-sx)-x^2(x+ry)\mathrm{d}(ty-x),
  \]
   où $ur(r+1)(r+2)(r+3)(2r+3)(st-1)(su+t-u-1)(uv^4+suwv^3+r^2twv+r^2w^2)\neq0$, $v=2r+3$\, et \,$w=r(r+2).$ Comme $\ItrH=xy(y-x)(vx+wy)$ la courbure de $\Leg\mathcal{H}$ est holomorphe le long de $\mathcal{G}_{\mathcal{H}}(\{xy(y-x)=0\})$ si et seulement si
  \[
   \hspace{2.5cm}
   \left\{
   \begin{array}[l]{l}
   0=Q(0,-1;\hspace{0.2mm}1,0)=-u^2(r+2)^2(st-1)\left[rs+1\right]
   \\
   0=Q(1,0\hspace{0.2mm};0,1)=-u(st-1)\left[(2r+3)t-r-2\right]
   \\
   0=Q(1,1;\hspace{0.2mm}-1,1)=-u(r+1)^2(st-1)\left[(rs+2s+1)u-t-r-2\right],
   \end{array}
   \right.
  \]
   {\it i.e.} si et seulement si $s=-\dfrac{1}{r}$,\, $t=\dfrac{r+2}{2r+3}$\, et \,$u=-\dfrac{r(r+2)^2}{2r+3},$ auquel cas $$\hspace{-1cm}Q(w,-v\hspace{0.2mm};v,w)=16r(r+1)^5(r+2)^5(r+3)(2r+3)^{-2}\left[r^2+3r+3\right].$$ Par suite $\Leg\mathcal{H}$ est plat si et seulement si nous sommes dans l'un des deux cas suivants
   \begin{itemize}
   \item [(i)]  $r=-\dfrac{3}{2}+\mathrm{ i}\dfrac{\sqrt{3}}{2},\quad s=\dfrac{1}{2}+\mathrm{i}\dfrac{\sqrt{3}}{6},\quad
                 t=\dfrac{1}{2}-\mathrm{ i}\dfrac{\sqrt{3}}{6},\quad u=1\hspace{1mm};$
   \item [(ii)] $r=-\dfrac{3}{2}-\mathrm{ i}\dfrac{\sqrt{3}}{2},\quad s=\dfrac{1}{2}-\mathrm{i}\dfrac{\sqrt{3}}{6},\quad
                 t=\dfrac{1}{2}+\mathrm{ i}\dfrac{\sqrt{3}}{6},\quad u=1.$
   \end{itemize}
   Dans les deux cas la $1$-forme $\omega$ est linéairement conjuguée à $$\omega_{11}=(3x^3+3\sqrt{3}x^2y+3xy^2+\sqrt{3}y^3)\mathrm{d}x+(\sqrt{3}x^3+3x^2y+3\sqrt{3}xy^2+3y^3)\mathrm{d}y\hspace{1mm};$$ en effet dans les cas (i), resp. (ii) nous avons
   \begin{align*}
   &&\omega_{11}=3\varphi_1^*\omega,\quad \text{où}\hspace{1.5mm} \varphi_1=(x,\mathrm{e}^{-5\mathrm{i}\pi/6}\,y),
   \hspace{1cm}\text{resp}.\hspace{1.5mm}
   \omega_{11}=3\varphi_2^*\omega,\quad \text{où}\hspace{1.5mm} \varphi_2=(x,\mathrm{e}^{5\mathrm{i}\pi/6}\,y).
   \end{align*}
\end{itemize}
\noindent Les feuilletages $\mathcal{H}_{\hspace{0.2mm}i},i=1,\ldots,11,$ ne sont pas linéairement conjugués car, par construction, $\mathcal{T}_{\mathcal{H}_{j}}~\neq~\mathcal{T}_{\mathcal{H}_{\hspace{0.2mm}i}}$ pour tout $j\neq i,$ d'où l'énoncé.
\end{proof}
\smallskip

\noindent Une particularité remarquable de la classification obtenue est que toutes les singularités des feuilletages $\mathcal{H}_{\hspace{0.2mm}i},i=1,\ldots,11,$ sur la droite à l'infini sont non-dégénérées. Nous aurons besoin dans la prochaine section des valeurs des indices $\mathrm{CS}(\mathcal{H}_{\hspace{0.2mm}i},L_{\infty},s)$, $s\in\Sing\mathcal{H}_{\hspace{0.2mm}i}\cap L_{\infty}$. Pour~cela, nous avons calculé, pour chaque $i=1,\ldots,11$, le polynôme suivant (dit \textsl{polynôme de \textsc{Camacho-Sad} du feuilletage homogène} $\mathcal{H}_{\hspace{0.2mm}i}$)
\begin{align*}
\mathrm{CS}_{\mathcal{H}_{\hspace{0.2mm}i}}(\lambda)=\prod\limits_{s\in\Sing\mathcal{H}_{\hspace{0.2mm}i}\cap L_{\infty}}(\lambda-\mathrm{CS}(\mathcal{H}_{\hspace{0.2mm}i},L_{\infty},s)).
\end{align*}
\noindent Le tableau suivant résume les types et les polynômes de \textsc{Camacho-Sad} des feuilletages $\mathcal{H}_{\hspace{0.2mm}i}$, $i=1,\ldots,11$.

\begingroup
\renewcommand*{\arraystretch}{1.5}
\begin{table}[h]
\begin{center}
\begin{tabular}{|c|c|c|}\hline
$i$  &  $\mathcal{T}_{\mathcal{H}_{\hspace{0.2mm}i}}$              &  $\mathrm{CS}_{\mathcal{H}_{\hspace{0.2mm}i}}(\lambda)$ \\\hline
$1$  &  $2\cdot\mathrm{R}_2$                                       &  $(\lambda-1)^{2}(\lambda+\frac{1}{2})^{2}$              \\\hline
$2$  &  $2\cdot\mathrm{T}_2$                                       &  $(\lambda-\frac{1}{4})^{4}$                              \\\hline
$3$  &  $2\cdot\mathrm{R}_1+1\cdot\mathrm{R}_2$                    &  $(\lambda-1)^{3}(\lambda+2)$                              \\\hline
$4$  &  $2\cdot\mathrm{R}_1+1\cdot\mathrm{T}_2$                    &  $(\lambda-1)^{2}(\lambda+\frac{1}{2})^{2}$                 \\\hline
$5$  &  $1\cdot\mathrm{R}_1+1\cdot\mathrm{T}_1+1\cdot\mathrm{R}_2$ &  $(\lambda-1)^{2}(\lambda+\frac{1}{5})(\lambda+\frac{4}{5})$ \\\hline
$6$  &  $1\cdot\mathrm{R}_1+1\cdot\mathrm{T}_1+1\cdot\mathrm{T}_2$ &  $(\lambda-1)(\lambda+\frac{2}{7})(\lambda-\frac{1}{7})^{2}$  \\\hline
$7$  &  $2\cdot\mathrm{T}_1+1\cdot\mathrm{R}_2$                    &  $(\lambda-1)(\lambda-\frac{1}{4})(\lambda+\frac{1}{8})^{2}$   \\\hline
$8$  &  $2\cdot\mathrm{T}_1+1\cdot\mathrm{T}_2$                    &  $(\lambda-\frac{1}{10})^{2}(\lambda-\frac{2}{5})^{2}$          \\\hline
$9$  &  $3\cdot\mathrm{R}_1+1\cdot\mathrm{T}_1$                    &  $(\lambda-1)^{3}(\lambda+2)$                                    \\\hline
$10$ &  $2\cdot\mathrm{R}_1+2\cdot\mathrm{T}_1$                    &  $(\lambda-1)^{2}(\lambda+\frac{1}{2})^{2}$                       \\\hline
$11$ &  $4\cdot\mathrm{T}_1$                                       &  $(\lambda-\frac{1}{4})^{4}$                                       \\\hline
\end{tabular}
\end{center}
\bigskip
\caption{Types et polynômes de \textsc{Camacho-Sad} des onze feuilletages homogènes donnés par le Théorème~\ref{thm:Class-Homog3-Plat}.}\label{tab:CS(lambda)}
\end{table}
\endgroup
\newpage
\hfill

\section{Classification complète des feuilletages de $\mathbf{FP}(3)$}\label{sec:Classification-feuilletages-FP(3)}

Il s'agit dans cette deuxième section de démontrer l'un des principaux résultats annoncés dans l'Introduction:

\begin{thm}\label{thm:classification}
{\sl \`A automorphisme de $\pp$ pr\`es, il y a seize feuilletages cubiques $\mathcal{H}_1,\ldots,\mathcal{H}_{11},$ $\mathcal{F}_1,\ldots,\mathcal{F}_{5}$ sur le plan projectif complexe ayant une transformée de \textsc{Legendre} plate. Ils sont d\'ecrits respectivement en carte affine par les $1$-formes suivantes
\begin{itemize}
\item [\texttt{1. }]  \hspace{1mm} $\omega_1\hspace{1mm}=y^3\mathrm{d}x-x^3\mathrm{d}y$;
\smallskip
\item [\texttt{2. }]  \hspace{1mm} $\omega_2\hspace{1mm}=x^3\mathrm{d}x-y^3\mathrm{d}y$;
\smallskip
\item [\texttt{3. }]  \hspace{1mm} $\omega_3\hspace{1mm}=y^2(3x+y)\mathrm{d}x-x^2(x+3y)\mathrm{d}y$;
\smallskip
\item [\texttt{4. }]\hspace{1mm}   $\omega_4\hspace{1mm}=y^2(3x+y)\mathrm{d}x+x^2(x+3y)\mathrm{d}y$;
\smallskip
\item [\texttt{5. }]\hspace{1mm} $\omega_{5}\hspace{1mm}=2y^3\mathrm{d}x+x^2(3y-2x)\mathrm{d}y$;
\smallskip
\item [\texttt{6. }]\hspace{1mm} $\omega_{6}\hspace{1mm}=(4x^3-6x^2y+4y^3)\mathrm{d}x+x^2(3y-2x)\mathrm{d}y$;
\smallskip
\item [\texttt{7. }]\hspace{1mm} $\omega_{7}\hspace{1mm}=y^3\mathrm{d}x+x(3y^2-x^2)\mathrm{d}y$;
\smallskip
\item [\texttt{8. }]\hspace{1mm} $\omega_{8}\hspace{1mm}=x(x^2-3y^2)\mathrm{d}x-4y^3\mathrm{d}y$;
\smallskip
\item [\texttt{9. }]\hspace{1mm} $\omega_{9}\hspace{1mm}=y^{2}\left((-3+\mathrm{i}\sqrt{3})x+2y\right)\mathrm{d}x+
                                                       x^{2}\left((1+\mathrm{i}\sqrt{3})x-2\mathrm{i}\sqrt{3}y\right)\mathrm{d}y$;
\smallskip
\item [\texttt{10. }]\hspace{-1mm} $\omega_{10}=(3x+\sqrt{3}y)y^2\mathrm{d}x+(3y-\sqrt{3}x)x^2\mathrm{d}y$;
\smallskip
\item [\texttt{11. }]\hspace{-1mm} $\omega_{11}=(3x^3+3\sqrt{3}x^2y+3xy^2+\sqrt{3}y^3)\mathrm{d}x+(\sqrt{3}x^3+3x^2y+3\sqrt{3}xy^2+3y^3)\mathrm{d}y$;
\smallskip
\item [\texttt{12. }]  \hspace{-1mm} $\omegaoverline_{1}\hspace{1mm}=y^{3}\mathrm{d}x+x^{3}(x\mathrm{d}y-y\mathrm{d}x)$;
\smallskip
\item [\texttt{13. }]  \hspace{-1mm} $\omegaoverline_{2}\hspace{1mm}=x^{3}\mathrm{d}x+y^{3}(x\mathrm{d}y-y\mathrm{d}x)$;
\smallskip
\item [\texttt{14. }]  \hspace{-1mm} $\omegaoverline_{3}\hspace{1mm}=(x^{3}-x)\mathrm{d}y-(y^{3}-y)\mathrm{d}x$;
\smallskip
\item [\texttt{15. }]  \hspace{-1mm} $\omegaoverline_{4}\hspace{1mm}=(x^{3}+y^{3})\mathrm{d}x+x^{3}(x\mathrm{d}y-y\mathrm{d}x)$;
\smallskip
\item [\texttt{16. }]  \hspace{-1mm} $\omegaoverline_{5}\hspace{1mm}=y^{2}(y\mathrm{d}x+2x\mathrm{d}y)+x^{3}(x\mathrm{d}y-y\mathrm{d}x)$.
\end{itemize}
}
\end{thm}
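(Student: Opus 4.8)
The strategy is to split $\mathbf{FP}(3)$ into the homogeneous locus and its complement, already having disposed of the first by Theorem~\ref{thm:Class-Homog3-Plat}. So the task reduces to classifying, up to $\mathrm{Aut}(\pp)$, the \emph{inhomogeneous} foliations $\F\in\mathbf{FP}(3)$, and showing there are exactly five of them, namely the $\overline{\omega}_1,\ldots,\overline{\omega}_5$ above. The organizing principle is a case analysis according to the nature of the singular locus of $\F$, exactly as announced in the Introduction (``en faisant une \'etude de la platitude du $3$-tissu dual d'un feuilletage $\F\in\mathbf{F}(3)$ suivant la nature de ses singularit\'es''). The key structural inputs are: the Darboux formula \eqref{equa:Darboux} ($\sum_s\mu(\F,s)=13$ in degree $3$) together with $\mu\geq\nu^2$ from \eqref{equa:mu>nu^2}, which bound how degenerate the singularities can be; the characterization Lemma~\ref{lem:Delta-Leg} of $\Delta(\Leg\F)$ in terms of $\G_\F(\ItrF)$ and the special singularities $\check\Sigma_\F$; Proposition~\ref{pro:MP13-3.3} decomposing $\Leg\F$ near the dual of a radial singularity; and crucially Proposition~\ref{pro:F-d\'eg\'en\`ere-H} (cited in the Introduction), which under suitable hypotheses degenerates the study of $\Leg\F$ for an inhomogeneous $\F$ to the homogeneous case already solved.

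\textbf{Main steps.} First I would stratify by $\mathfrak{m}(\F):=\max_s\nu(\F,s)$. If $\F$ has a singularity $s_0$ with $\nu(\F,s_0)=3$, then $\mu(\F,s_0)\geq 9$, leaving only $4$ units of Milnor number for the rest; a local normal form of $\F$ at $s_0$ (the degree-$3$ part is a homogeneous $1$-form in local coordinates) puts $\F$ within reach of Proposition~\ref{pro:F-d\'eg\'en\`ere-H}, whose homogeneous model must then lie in the list $\mathcal{H}_1,\ldots,\mathcal{H}_{11}$; tracking which homogeneous models actually arise and imposing platitude of the full (inhomogeneous) $\Leg\F$ yields $\overline{\omega}_1,\overline{\omega}_2,\overline{\omega}_4,\overline{\omega}_5$. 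The remaining case $\nu(\F,s)=2$ at one or two points, or $\F$ with only nondegenerate/radial singularities, is handled by first locating the transverse inflection divisor $\ItrF$ and the radial singularities using the degree-$3d=9$ formula for $\deg\IF$ and Remark~\ref{rem:sum-tau<d} ($\sum_{s\in L}\tau(\F,s)\leq 3$), then applying the barycenter criterion (Theorem~\ref{thm:MP13-1}, Corollary~\ref{cor:MP13}) along each component of $\Delta(\Leg\F)$. The Fermat foliation $\overline{\omega}_3=(x^3-x)\mathrm{d}y-(y^3-y)\mathrm{d}x$ is convex, hence in $\mathbf{FP}(3)$ by \cite[Corollaire~4.7]{BFM13}; one must show it (and its orbit) is the only new foliation in the ``generic singularities'' stratum. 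Finally, to prove the sixteen foliations are pairwise non-conjugate, I would use discrete invariants: the type $\mathcal{T}_\mathcal{H}$ separates the eleven homogeneous ones; homogeneity itself (invariance under a $\mathbb{C}^*$-action), the configuration of invariant lines, the number $3d=9$ count of invariant versus transverse inflection lines, and the Baum--Bott/Camacho--Sad spectra (Table~\ref{tab:CS(lambda)} and its analogue for the $\overline{\mathcal{F}}_j$) separate the remaining five from each other and from the homogeneous family.

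\textbf{Expected main obstacle.} The genuinely hard part is the exhaustiveness of the case analysis for the non-homogeneous foliations that are \emph{not} already covered by Proposition~\ref{pro:F-d\'eg\'en\`ere-H}: one needs a uniform argument that forces any $\F\in\mathbf{FP}(3)$ with ``mild'' singularities into one of the listed orbits. Concretely, after reducing — via the degree-$9$ inflection divisor and the constraint $\sum\tau\le 3$ on each non-invariant line — to a finite-dimensional family of candidate $1$-forms parametrized by a few complex moduli, one must impose the holomorphy of $K(\Leg\F)$ along every component of $\Delta(\Leg\F)$ (using Theorem~\ref{thm:MP13-1} and Lemma~\ref{lem:MP13-2.5}) and solve the resulting polynomial system, verifying that its only solutions up to $\mathrm{Aut}(\pp)$ are the five models. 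Controlling that this system has no spurious branches — especially ruling out higher-dimensional families that would force extra irreducible components of $\mathbf{FP}(3)$ — is the delicate bookkeeping that the proof in Chapter~\ref{chap3:classification} must carry out, with the Appendix result Proposition~\ref{pro:cas-nilpotent-noeud-ordre-2} handling the most recalcitrant nilpotent/node subcase.
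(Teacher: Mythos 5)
Your overall skeleton (homogeneous case via Theorem~\ref{thm:Class-Homog3-Plat}, then a case analysis on the singularities of the inhomogeneous foliations, with the Appendix computation and explicit curvature calculations doing the heavy lifting) matches the paper, but two of your reductions have genuine gaps. First, in the stratum where all singularities are nondegenerate --- the stratum that must produce exactly the Fermat foliation --- your proposed mechanism (bounding $\ItrF$ via $\deg\IF=9$ and $\sum\tau\leq 3$, then applying the barycenter criterion along the components of $\Delta(\Leg\F)$) gives no real handle: you have no a priori description of $\G_{\F}(\ItrF)$ for a foliation with $13$ nondegenerate singularities, and nothing in your plan forces invariant lines to exist or reduces the situation to the homogeneous classification. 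The paper's route is different and this is its key new idea: Lemma~\ref{lem:2-droite-invar} shows, by explicit degenerations and curvature computations of auxiliary webs, that through any nondegenerate singularity $m$ with $\mathrm{BB}(\F,m)\notin\{4,\tfrac{16}{3}\}$ pass exactly two $\F$-invariant lines; the degeneration-to-homogeneous proposition applied along these lines lands in the list of Theorem~\ref{thm:Class-Homog3-Plat}, the Camacho--Sad data of Table~\ref{tab:CS(lambda)} then force $\mathrm{BB}=-\tfrac12$ at all such points, the Baum--Bott and Darboux formulas give $(\kappa_0,\kappa_1,\kappa_2)=(0,7,6)$, and a count of invariant lines produces three non-aligned radial singularities of order $2$, so that Proposition~\ref{pro:Fermat-d} concludes. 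Without an argument of this type your ``generic stratum'' step does not go through.

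Second, your treatment of a multiplicity-$3$ singularity invokes the degeneration proposition, but that proposition requires an invariant line all of whose singularities are nondegenerate --- it does not apply at the multiplicity-$3$ point itself. The correct degeneration there is the homothety collapse onto the $3$-jet, and it only produces a homogeneous degree-$3$ foliation when that $3$-jet is saturated (Lemma~\ref{lem:v=3-leg-plat}); the non-saturated case is covered by no reduction to the homogeneous list and must be handled by separate normal forms and brute-force curvature computations (Propositions~\ref{pro:v=3-mu=13}, \ref{pro:overline-omega2} and \ref{pro:overline-omega1-omega4-omega5}), split further according to whether $\mu=13$ or $\mu<13$. Relatedly, the Appendix Proposition~\ref{pro:cas-nilpotent-noeud-ordre-2} is not a ``recalcitrant subcase'' of your mild-singularity analysis: it eliminates outright every flat foliation having a degenerate singularity of algebraic multiplicity $\leq 2$, and it is precisely this that reduces the degenerate case to $\nu=3$ and makes the whole case analysis exhaustive.
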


\noindent La démonstration comporte plusieurs étapes consistant en une analyse fine des singularités des feuilletages de $\mathbf{FP}(3).$

\subsection{Feuilletages de $\mathbf{FP}(3)$ n'ayant que des singularités non-dégénérées}\label{subsec:cas-non-dégénéré}

Parmi les éléments de $\mathbf{FP}(d)$ n'ayant que des singularités non-dégénérées, il y a le \textsl{feuilletage de \textsc{Fermat}} $\F^{d}$ de degré $d$ défini en carte affine par la $1$-forme
$$\omega_{F}^{d}=(x^{d}-x)\mathrm{d}y-(y^{d}-y)\mathrm{d}x\hspace{1mm};$$
en effet, d'une part $\Leg\F^{d}$ est plat car il est algébrisable d'après \cite[Proposition~5.2]{MP13}; d'autre part, un calcul élémentaire montre que toutes les singularités du feuilletage $\F^{d}$ sont non-dégénérées.

\begin{rem}
Dans l'énoncé du Théorème~\ref{thm:classification}, nous avons désigné par $\F_3$ le feuilletage de \textsc{Fermat} de degré trois, {\it i.e.} $\F_3:=\F^3.$
\end{rem}

\noindent Le théorème suivant est le résultat principal de ce paragraphe.

\begin{thm}\label{thm:Fermat}
{\sl Soit $\F$ un feuilletage de degré $3$ sur $\pp$. Supposons que toutes ses singularités soient non-dégénérées et que son $3$-tissu dual $\Leg\F$ soit plat. Alors $\F$ est linéairement conjugué au feuilletage de \textsc{Fermat} $\F_3$ défini par la $1$-forme $\omegaoverline_3=(x^{3}-x)\mathrm{d}y-(y^{3}-y)\mathrm{d}x.$
}
\end{thm}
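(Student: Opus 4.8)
The starting point is that a degree-$3$ foliation $\F$ has $d^2+d+1=13$ singularities counted with multiplicity (formula~(\ref{equa:Darboux})), and if all of them are non-degenerate there are exactly $13$ of them, each with $\mu(\F,s)=1$. The Baum–Bott formula~(\ref{equa:BB}) then reads $\sum_{s\in\Sing\F}\mathrm{BB}(\F,s)=25$. First I would try to understand the invariant lines of $\F$: since every singularity is non-degenerate, the inflexion divisor $\IF$ of degree $3d=9$ decomposes as $\IinvF+\ItrF$, and the interaction between the Camacho–Sad relation~(\ref{equa:sum-CS=1}) on each invariant line and the platitude hypothesis will constrain how many invariant lines there are and how the singularities distribute along them. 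The key geometric input is Lemma~\ref{lem:Delta-Leg}: $\Delta(\Leg\F)=\G_\F(\ItrF)\cup\check\Sigma_\F$, where $\check\Sigma_\F$ is the set of duals of the \emph{special} singularities (those with $\tau(\F,s)\ge2$). For a non-degenerate singularity, $\tau(\F,s)\ge2$ precisely when it is radial; so I would first split into cases according to the number of radial singularities.

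\textbf{Next I would use the platitude criterion together with the radial singularities to force the presence of many invariant lines.} By Proposition~\ref{pro:MP13-3.3}, every radial singularity $s$ of order $n-1$ contributes a component $\W_n$ of $\Leg\F$ leaving $\check s$ invariant, and Proposition~\ref{pro:MP13-2.6} tells us that holomorphy of the curvature along $\check s$ is automatic when the complementary web $\W_{d-n}$ is regular near a generic point of $\check s$, i.e.\ when $\check s\not\subset\G_\F(\ItrF)$. So the real constraints come from the transverse inflexion divisor. When $\F$ has enough radial singularities of small order (order $1$ in degree $3$), combined with the Camacho–Sad balance along the invariant lines through them, one should be able to show that $\F$ is \emph{convex}, i.e.\ $\IF=\IinvF$ with $\ItrF=\emptyset$. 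At that point one invokes \cite[Corollaire~4.7]{BFM13}: a convex degree-$3$ foliation with all singularities non-degenerate has its inflexion divisor a product of invariant lines, and the count of invariant lines plus the structure of the tangent cones at the radial points reduces the possibilities drastically. The Fermat foliation $\F_3$ is convex (with $\IF$ equal to the product of $9$ lines, as in the first example of the excerpt), so it is a candidate; the task is to show it is the \emph{only} one among non-degenerate foliations.

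\textbf{Then I would run a normal-form argument.} Once the combinatorial type of the invariant configuration is fixed — say $\F$ has a prescribed number of invariant lines, each carrying a prescribed set of non-degenerate singularities with Camacho–Sad indices constrained by~(\ref{equa:sum-CS=1}) and Baum–Bott indices by~(\ref{equa:BB}) — one puts the configuration in a standard position using $\mathrm{PGL}_3(\C)$ and writes $\F$ by a generic $1$-form compatible with those invariant lines. The remaining finitely many parameters are then killed by imposing that the transverse inflexion divisor be empty (convexity) or, in the cases where $\ItrF\ne\emptyset$ survives, by imposing the curvature-holomorphy conditions of Corollary~\ref{cor:MP13} or Theorem~\ref{thm:MP13-1} along each component of $\G_\F(\ItrF)$, exactly as was done for homogeneous foliations in the proof of Theorem~\ref{thm:Class-Homog3-Plat}. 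One checks that the only solution, up to automorphism, is $\bar\omega_3=(x^3-x)\mathrm{d}y-(y^3-y)\mathrm{d}x$, and that $\Leg\F_3$ is indeed flat because it is algebraizable (\cite[Proposition~5.2]{MP13}).

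\textbf{The hard part will be the case analysis on the number and order of radial singularities, and showing that the non-radial non-degenerate configurations cannot support a flat dual web.} Ruling out the configurations with few or no radial singularities is delicate: there $\Delta(\Leg\F)=\G_\F(\ItrF)$ is typically a union of cuspidal/nodal curves rather than lines, and one has to estimate the curvature directly or use barycenter arguments (Theorem~\ref{thm:MP13-1}) on the local decomposition of $\Leg\F$ along each branch of $\G_\F(\ItrF)$. A convenient device, mirroring the homogeneous case, is to classify the admissible Camacho–Sad polynomials along the invariant lines and eliminate the inconsistent ones; but unlike the homogeneous setting there is no single point of maximal multiplicity, so the bookkeeping of indices across several invariant lines is where the argument will be most technical.
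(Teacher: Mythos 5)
Your outline stalls precisely at the step that carries the whole proof. You assert that the radial singularities together with the Camacho--Sad balance "should" force $\F$ to be convex, and then reduce to a finite search; but no mechanism is given that produces this convexity, and in fact the route through Proposition~\ref{pro:MP13-3.3} and Proposition~\ref{pro:MP13-2.6} only tells you that a dual line $\check{s}$ of a radial singularity imposes \emph{no} condition when $\check{s}\not\subset\G_{\F}(\ItrF)$ — it yields no constraint whatsoever on $\ItrF$, hence no road towards $\ItrF=\emptyset$. (You also misquote \cite[Corollaire~4.7]{BFM13}: it says that convex degree-three foliations are flat, which is the wrong direction for your purpose.) The two ingredients the argument actually needs, and which are absent from your plan, are: (a) a local-to-global statement converting flatness into invariant lines at each non-degenerate singularity — this is Lemma~\ref{lem:2-droite-invar}, itself proved by explicit degenerations and curvature computations, asserting that through any non-degenerate singularity $m$ with $\mathrm{BB}(\F,m)\notin\{4,\tfrac{16}{3}\}$ pass exactly two invariant lines; and (b) a degeneration along any invariant line $L$ whose singularities are non-degenerate to a \emph{homogeneous} foliation $\mathcal{H}\in\overline{\mathcal{O}(\F)}$ preserving $L$, the singular points on $L$ and their Camacho--Sad indices, so that flatness of $\Leg\F$ transfers to $\Leg\mathcal{H}$ and the classification of Theorem~\ref{thm:Class-Homog3-Plat} together with Table~\ref{tab:CS(lambda)} becomes available.

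With those two tools the paper's proof avoids your "hard case analysis on radial orders" entirely: the product of the two Camacho--Sad indices at a singularity $m$ with $\mathrm{BB}(\F,m)\neq 4,\tfrac{16}{3}$ equals $1$, and the table of admissible indices for the eleven homogeneous models forces $\{\mathrm{CS}\}=\{-2,-\tfrac12\}$, hence $\mathrm{BB}(\F,m)=-\tfrac12$ for every such $m$; the Darboux and Baum--Bott counts then give exactly $7$ singularities with $\mathrm{BB}=4$, $6$ with $\mathrm{BB}=-\tfrac12$ and none with $\tfrac{16}{3}$, and a short incidence count on the resulting nine invariant lines produces three non-aligned points with $\sigma=4$, i.e.\ three radial singularities of maximal order $2$, so that Proposition~\ref{pro:Fermat-d} concludes. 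Your proposal never derives the existence of invariant lines from flatness, never exploits the homogeneous classification through degeneration, and replaces the decisive index bookkeeping by an unproved convexity claim plus a vaguely described normal-form search; as written, the argument does not go through.
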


\begin{rems}
\begin{itemize}
\item []\hspace{-0.8cm}(i) Pour $d=3$ ou $d\geq5,$ l'adhérence $\overline{\mathcal{O}(\F^{d})}$ (dans $\mathbf{F}(d)$) de $\mathcal{O}(\F^{d})$ est une composante irréductible de $\mathbf{FP}(d)$, \emph{voir} \cite[Théorème~3]{MP13}.
  \item [(ii)] L'ensemble $\mathbf{FP}(4)$ contient des feuilletages à singularités non-dégénérées et qui ne sont pas conjugués au feuilletage $\F^{4},$ {\it e.g.} la famille $(\F_{\lambda}^{4})_{\lambda\in\C}$ de feuilletages définis par
$$\omega_{F}^{4}+\lambda((x^{3}-1)y^{2}\mathrm{d}y-(y^{3}-1)x^{2}\mathrm{d}x).$$
En effet, d'après \cite[Théorème 8.1]{MP13}, pour tout $\lambda$ fixé dans $\mathbb{C}$, $\F_{\lambda}^{4}\in\mathbf{FP}(4)$; de plus un calcul facile montre que $\F_{\lambda}^{4}$ est à singularités non-dégénérées. Mais, si $\lambda$ est non nul $\F^{4}$~et~$\F^{4}_{\lambda}$ ne sont pas linéairement conjugués car le premier est convexe, alors que le second ne l'est pas.
\end{itemize}
\end{rems}

\noindent La démonstration du Théorème~\ref{thm:Fermat} repose sur le Théorème~\ref{thm:Class-Homog3-Plat} de classification des feuilletages homogènes appartenant à $\mathbf{FP}(3)$, et sur les trois résultats qui suivent, dont les deux premiers sont valables en degré quelconque.

\noindent Notons d'abord que le feuilletage $\F^{d}$ possède trois singularités radiales d'ordre maximal $d-1$, non alignées. La proposition suivante montre que cette propriété caractérise l'orbite de $\F^{d}$.

\begin{pro}\label{pro:Fermat-d}
{\sl Soit $\F$ un feuilletage de degré $d$ sur $\pp$ ayant trois singularités radiales d'ordre maximal $d-1$, non alignées. Alors $\F$ est linéairement conjugué au feuilletage de \textsc{Fermat} $\F^{d}.$
}
\end{pro}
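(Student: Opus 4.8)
The plan is to reduce the statement to a normal form by exploiting the three non-aligned radial singularities of maximal order $d-1$, and then to pin down the remaining coefficients using the invariance constraints that radiality of maximal order imposes. First I would place the three radial singularities at the vertices of the fundamental triangle: up to a linear automorphism of $\pp$ we may assume they are $[1:0:0]$, $[0:1:0]$, $[0:0:1]$. By Remark~\ref{rem:sum-tau<d}, any two of these radial singularities of maximal order $d-1$ must lie on an $\F$-invariant line; hence the three sides of the fundamental triangle, namely $\{x=0\}$, $\{y=0\}$, $\{z=0\}$, are all invariant by $\F$. Write $\F$ in the affine chart $z=1$ by a vector field $\X = A(x,y)\frac{\partial}{\partial x} + B(x,y)\frac{\partial}{\partial y}$ with $A,B$ of degree $\le d$; invariance of $\{x=0\}$ forces $x\mid A$, invariance of $\{y=0\}$ forces $y\mid B$, and invariance of the line at infinity together with $\deg\F = d$ pins down the top-degree behaviour.

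Next I would use the radiality condition at each vertex. A singularity $s$ is radial of order $n-1$ precisely when, in a chart centred at $s$, the describing vector field has the form $C_{n-2}\cdot\mathrm{R}_s + \X_n + \cdots$ with $C_{n-2}(s)\neq 0$ and $\X_n\nparallel\mathrm{R}_s$; for maximal order $n-1 = d-1$ this means the lowest-order part is a nonzero multiple of the radial field $\mathrm{R}_s$ and the only other surviving term is the homogeneous degree-$d$ part. Imposing this at the origin $[0:0:1]$ of the chart $z=1$ gives that $A$ and $B$ have no terms of intermediate degree: $A = a\,x + (\text{degree } d\text{ part})$, $B = b\,y + (\text{degree }d\text{ part})$, and the radiality forces the linear part to be exactly a scalar multiple of $x\frac{\partial}{\partial x} + y\frac{\partial}{\partial y}$, so $a = b$; rescaling, $a=b=-1$. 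Doing the same analysis in the charts centred at $[1:0:0]$ and $[0:1:0]$ and combining with the divisibility constraints $x\mid A$, $y\mid B$ already found, one is left with very few free parameters: $A = \alpha(x^d - x)$, $B = \beta(y^d - y)$ up to the normalisations imposed, and a final rescaling of coordinates $x\mapsto \lambda x$, $y\mapsto\mu y$ together with a global scaling of $\omega$ reduces to $\alpha = \beta = 1$. This yields $\omega = (x^d-x)\mathrm{d}y - (y^d-y)\mathrm{d}x$, i.e. $\F$ is linearly conjugate to $\F^d$.

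The main obstacle is the bookkeeping in the second step: verifying that the three maximal-order radiality conditions, imposed simultaneously at the three vertices and combined with the three invariant-line divisibility constraints, genuinely force all intermediate-degree monomials in $A$ and $B$ to vanish and force the ``diagonal'' structure $A \sim x^d - x$, $B \sim y^d - y$ rather than leaving a cross term like $x^{d-i}y^i$. This is where one must be careful: a priori $A$ could contain terms $x^{d-i}y^i$ with $i\ge 1$ that vanish on $\{x=0\}$ but spoil the radial form at $[0:1:0]$ unless they are also killed. The key point is that radiality of order $d-1$ at a vertex is an extremely rigid condition — it says the full $(d-1)$-jet of $\X$ (minus its radial part) vanishes — so each vertex kills all monomials of degree between $2$ and $d-1$ localised suitably; running this at all three vertices leaves only the linear terms and the pure degree-$d$ form, and the invariance of the three triangle sides then forces the degree-$d$ form to be $x^d$ (in $A$) and $y^d$ (in $B$) up to scalars. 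Once this rigidity is established the rest is the routine normalisation described above, and I would not expect to need the hypothesis that $\Leg\F$ is plat at all here — Proposition~\ref{pro:Fermat-d} is a purely geometric statement about feuilletages with three non-aligned maximal radial singularities, and the flatness enters only later, in Theorem~\ref{thm:Fermat}, to guarantee that such a configuration actually occurs.
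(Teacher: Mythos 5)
Your overall strategy coincides with the paper's: place the three radial singularities at the vertices of the coordinate triangle, use Remark~\ref{rem:sum-tau<d} to get invariant lines, exploit the maximal radiality at each vertex to reach a normal form, and conclude by a diagonal rescaling (and you are right that the flatness of $\Leg\F$ is not needed). The flaw is in your key middle step. You claim that maximal radiality at the origin already forces $A$ and $B$ to have no intermediate-degree terms, i.e.\ that the vector field is ``linear radial part plus homogeneous degree-$d$ part''. That misreads the definition: a singularity is radial of order $d-1$ when $\X=C_{d-2}(x,y)\cdot\mathrm{R}_{s}+\X_{d}+\cdots$ with $C_{d-2}$ a \emph{polynomial} of degree $\leq d-2$ not vanishing at $s$, not a constant. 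So radiality at the origin (together with the invariance of the line at infinity) only yields, as in the paper, $\omega=(x\mathrm{d}y-y\mathrm{d}x)(\gamma+C_{1}+\cdots+C_{d-2})+A_{d}\mathrm{d}x+B_{d}\mathrm{d}y$, with all the intermediate radial terms $C_{k}$ still present. These are killed only by imposing $\tau(\F,m_2)=d$ and $\tau(\F,m_3)=d$ at the two vertices at infinity: in the chart $y=1$ the tangency polynomial $Q(x,z)=x\left[\gamma z^{d-1}+C_{1}(x,1)z^{d-2}+\cdots+C_{d-2}(x,1)z\right]+B_{d}(x,1)$ must be homogeneous of degree $d$, which forces $B_{d}=\beta x^{d}$ and $C_{k}=\delta_{k}x^{k}$; symmetrically at $m_3$ one gets $A_{d}=\alpha y^{d}$ and $C_{k}=\varepsilon_{k}y^{k}$, hence $C_{k}=0$. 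Likewise, your claim that the invariance of the triangle sides pins the degree-$d$ parts down to the monomials $x^{d}$ and $y^{d}$ is too weak: invariance only gives divisibility ($x\mid A$, $y\mid B$); the monomial form again comes from the order-$d$ tangency at the vertices at infinity, not from the invariant lines.

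So the ``rigidity'' you invoke is real, but it lives in the radiality conditions at the other two vertices, and the verification that the three conditions combine as claimed is exactly the computation you defer as bookkeeping and never perform. As written, the argument passes from an incorrect normal form at the origin directly to the answer; to repair it you must carry out the jet/tangency computation at $m_2$ and $m_3$ (as the paper does with $Q$), after which the final normalisation $\alpha=\gamma\mu^{1-d}$, $\beta=-\gamma\lambda^{1-d}$ and conjugation by $(x,y)\mapsto(\lambda x,\mu y)$ does give $\omega_{F}^{d}=(x^{d}-x)\mathrm{d}y-(y^{d}-y)\mathrm{d}x$ as you indicate.
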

\newpage
\hfill
\vspace{-1.3cm}

\begin{proof}
Par hypothèse $\F$ possède trois points singuliers $m_j,j=1,2,3,$ non alignés vérifiant $\nu(\F,m_j)=1$ et $\tau(\F,m_j)=d$. D'après la Remarque \ref{rem:sum-tau<d}, les trois droites joignant ces points deux à deux sont invariantes par $\F$. Choisissons des coordonnées homogènes $[x:y:z]$ telles que $m_1=[0:0:1],\,m_2=[0:1:0]$\, et \,$m_3=[1:0:0]$. Les égalités $\nu(\F,m_1)=1$ et $\tau(\F,m_1)=d$, combinées avec le fait que $(m_2m_3)=(z=0)$ est $\F$-invariante, assurent que toute $1$-forme $\omega$ décrivant $\F$ dans la carte affine $z=1$ est du type
\begin{align*}
\omega=(x\mathrm{d}y-y\mathrm{d}x)(\gamma+C_{1}(x,y)+\cdots +C_{d-2}(x,y))+A_{d}(x,y)\mathrm{d}x+B_{d}(x,y)\mathrm{d}y
\end{align*}
avec $\gamma\neq 0,\hspace{3mm}A_{d},B_{d}\in\mathbb{C}[x,y]_d,\hspace{3mm}C_{k}\in\mathbb{C}[x,y]_k$ \hspace{1mm}pour $k=1,\ldots,d-2.$

\noindent Dans la carte affine $y=1$ le feuilletage $\F$ est donné par
\begin{align*}
\theta=-(\gamma\hspace{0.1mm}z^{d}+C_{1}(x,1)z^{d-1}+\cdots +C_{d-2}(x,1)z^{2})\mathrm{d}x+A_{d}(x,1)(z\mathrm{d}x-x\mathrm{d}z)-B_{d}(x,1)\mathrm{d}z\hspace{1mm};
\end{align*}
nous avons $\theta\wedge(z\mathrm{d}x-x\mathrm{d}z)=z\hspace{0.1mm}Q(x,z)\mathrm{d}x\wedge\mathrm{d}z$, avec
$$Q(x,z)=x\left[\gamma\hspace{0.1mm}z^{d-1}+C_{1}(x,1)z^{d-2}+\cdots+C_{d-2}(x,1)z\right]+B_{d}(x,1).$$
L'égalité $\tau(\F,m_2)=d$ entraîne alors que le polynôme $Q\in\mathbb{C}[x,z]$ est homogène de degré $d$, ce qui permet d'écrire $B_d(x,y)=\beta\hspace{0.1mm}x^d$\, et \,$C_k(x,y)=\delta_{k}x^k,$\, $\beta,\delta_k\in\mathbb{C}.$ Par suite nous avons $J^{1}_{(0,0)}\theta=A_{d}(0,1)(z\mathrm{d}x-x\mathrm{d}z)$; alors l'égalité $\nu(\F,m_2)=1$ assure que $A_{d}(0,1)\neq0.$

\noindent De la même manière, en se plaçant dans la carte affine $x=1$ et en écrivant explicitement les égalités $\tau(\F,m_3)=d$\, et \,$\nu(\F,m_3)=1$, nous obtenons que $B_{d}(1,0)\neq0,$\, $A_d(x,y)=\alpha\hspace{0.1mm}y^d$\, et \,$C_k(x,y)=\varepsilon_{k}y^k,$\, $\alpha,\varepsilon_k\in\mathbb{C}.$ Donc $\alpha\beta\neq0,$\, les $C_k$ sont tous nuls et \,$\omega$\, est du type $$\omega=\gamma(x\mathrm{d}y-y\mathrm{d}x)+\alpha y^{d}\mathrm{d}x+\beta x^{d}\mathrm{d}y.$$
Écrivons $\alpha=\gamma\mu^{1-d}$\, et \,$\beta=-\gamma\hspace{0.1mm}\lambda^{\hspace{-0.1mm}1-d}.$ Quitte à remplacer $\omega$ par $\varphi^*\omega,$ où $\varphi(x,y)=(\lambda\hspace{0.1mm}x,\mu\hspace{0.1mm}y),$ le feuilletage $\F$ est défini, dans les coordonnées affines $(x,y),$ par la $1$-forme
\vspace{2mm}

\noindent\hspace{4.5cm}$\omega_{F}^{d}=(x^{d}-x)\mathrm{d}y-(y^{d}-y)\mathrm{d}x.$
\end{proof}

\noindent Le résultat suivant permet de ramener l'étude de la platitude au cadre homogène:
\vspace{-1.2mm}
\begin{pro}\label{pro:F-dégénère-H}
{\sl Soit $\F$ un feuilletage de degré $d\geq1$ sur $\pp$ ayant une droite invariante $L.$ Supposons que toutes les singularités de $\F$ sur $L$ soient non-dégénérées. Il existe un feuilletage homogène $\mathcal{H}$ de degré $d$ sur $\pp$ ayant les propriétés suivantes
\begin{itemize}
\item [$\bullet$] $\mathcal{H}\in\overline{\mathcal{O}(\F)}$;

\item [$\bullet$] $L$ est invariante par $\mathcal{H}$;

\item [$\bullet$] $\Sing\mathcal{H}\cap L=\Sing\F\cap L$;

\item [$\bullet$] $\forall\hspace{1mm}s\in\Sing\mathcal{H}\cap L,\hspace{1mm}
                   \mu(\mathcal{H},s)=1
                   \hspace{2mm}\text{et}\hspace{2mm}
                   \mathrm{CS}(\mathcal{H},L,s)=\mathrm{CS}(\F,L,s)$.
\end{itemize}
Si de plus $\Leg\F$ est plat, alors $\Leg\mathcal{H}$ l'est aussi.
}
\end{pro}

\begin{proof}
Choisissons, dans $\pp,$ un système de coordonnées homogènes $[x:y:z]$ tel que $L=(z=0)$; comme $L$ est $\F$-invariante, $\F$ est défini dans la carte affine $z=1$ par une $1$-forme $\omega$ du type
$$\omega=\sum_{i=0}^{d}(A_i(x,y)\mathrm{d}x+B_i(x,y)\mathrm{d}y),$$
où les $A_i,\,B_i$ sont des polynômes homogènes de degré $i$.

\noindent Montrons par l'absurde que $\mathrm{pgcd}(A_d,B_d)=1$; supposons donc que $\mathrm{pgcd}(A_d,B_d)\neq1.$ Quitte à conjuguer $\omega$ par une transformation linéaire de $\mathbb{C}^2=(z=1)$, nous pouvons nous ramener à
$$A_{d}(x,y)=x\hspace{0.3mm}\widetilde{A}_{d-1}(x,y) \qquad\text{et}\qquad B_{d}(x,y)=x\widetilde{B}_{d-1}(x,y)$$
pour certains $\widetilde{A}_{d-1},\,\widetilde{B}_{d-1}$ dans $\mathbb{C}[x,y]_{d-1}$; alors $s_{0}=[0:1:0]\in L$ est un point singulier de $\F$. Dans la carte affine $y=1$, le feuilletage $\F$ est donné par
\begin{eqnarray*}
\hspace{1.5cm}\theta\hspace{-1mm}&=&\hspace{-1mm}\sum_{i=0}^{d}z^{d-i}[A_i(x,1)(z\mathrm{d}x-x\mathrm{d}z)-B_i(x,1)\mathrm{d}z]
\\
\hspace{1.5cm}\hspace{-1mm}&=&\hspace{-1mm}[A_{d}(x,1)+z\hspace{0.3mm}A_{d-1}(x,1)+\cdots](z\mathrm{d}x-x\mathrm{d}z)-
                [B_{d}(x,1)+zB_{d-1}(x,1)+\cdots]\mathrm{d}z.
\end{eqnarray*}
Le $1$-jet de $\theta$ au point singulier $s_{0}=(0,0)$ s'écrit $-[\widetilde{B}_{d-1}(0,1)x+B_{d-1}(0,1)z]\mathrm{d}z$; ce qui implique que $\mu(\F,s_{0})>1$: contradiction avec l'hypothèse que toute singularité de $\F$ située sur $L$ est non-dégénérée.

\noindent Il s'en suit que la $1$-forme $\omega_d=A_d(x,y)\mathrm{d}x+B_d(x,y)\mathrm{d}y$ définit bien un feuilletage homogène de degré $d$ sur $\pp$, que nous notons $\mathcal{H}$. Il est évident que $L$ est $\mathcal{H}$-invariante et que $\Sing\mathcal{F}\cap L=\Sing\mathcal{H}\cap L$. Considérons la famille d'homothéties $\varphi=\varphi_{\varepsilon}=(\frac{x}{\varepsilon},\frac{y}{\varepsilon}).$ Nous avons $$\varepsilon^{d+1}\varphi^*\omega=\sum_{i=0}^{d}(\varepsilon^{d-i}A_i(x,y)\mathrm{d}x+\varepsilon^{d-i}B_i(x,y)\mathrm{d}y)$$
qui tend vers $\omega_d$ lorsque $\varepsilon$ tend vers $0$; il en résulte que $\mathcal{H}\in\overline{\mathcal{O}(\F)}.$

\noindent Montrons que $\mathcal{H}$ vérifie la quatrième propriété annoncée. Soit $s\in\Sing\mathcal{H}\cap L$. Quitte à conjuguer $\omega$ par un isomorphisme linéaire de $\mathbb{C}^2=(z=0)$, nous pouvons supposer que $s=[0:1:0]$; il existe donc un polynôme $\widehat{B}_{d-1}\in\mathbb{C}[x,y]_{d-1}$ tel que $B_{d}(x,y)=x\widehat{B}_{d-1}(x,y)$. Le feuilletage $\mathcal{H}$ est décrit dans la carte affine $y=1$ par $$\theta_d=A_d(x,1)(z\mathrm{d}x-x\mathrm{d}z)-B_d(x,1)\mathrm{d}z.$$
\noindent Posons $\lambda=A_{d}(0,1)$\, et \,$\nu=A_{d}(0,1)+\widehat{B}_{d-1}(0,1)$. Le $1$-jet de $\theta_d$ en $s=(0,0)$ s'écrit $J^{1}_{(0,0)}\theta_{d}=\lambda\hspace{0.1mm}z\mathrm{d}x-\nu\hspace{0.1mm}x\mathrm{d}z$, et celui de $\theta$ est donné par $J^{1}_{(0,0)}\theta=\lambda\hspace{0.1mm}z\mathrm{d}x-\nu\hspace{0.1mm}x\mathrm{d}z-z\hspace{0.1mm}B_{d-1}(0,1)\mathrm{d}z$. L'hypothèse $\mu(\F,s)=1$ signifie que $\lambda\nu$ est non nul. Par suite $\mu(\mathcal{H},s)=1$\, et \,$\mathrm{CS}(\mathcal{H},L,s)=\mathrm{CS}(\F,L,s)=\frac{\lambda}{\nu}.$

\noindent L'implication\, $K(\Leg\F)\equiv0\hspace{0.3mm}\Longrightarrow\hspace{0.3mm}K(\Leg\mathcal{H})\equiv0$\, découle du fait que $\mathcal{H}\in\overline{\mathcal{O}(\F)}.$
\end{proof}

\noindent Nous illustrons le résultat précédent en l'appliquant au feuilletage $\F^{d}$.

\begin{eg}
Le feuilletage de \textsc{Fermat} $\F^{d}$ est donné en coordonnées homogènes par la $1$-forme
\[x^{d}(y\mathrm{d}z-z\mathrm{d}y)+y^{d}(z\mathrm{d}x-x\mathrm{d}z)+z^{d}(x\mathrm{d}y-y\mathrm{d}x).\] Il possède les $3d$ droites invariantes suivantes :
\begin{enumerate}
\item [(a)] $x=0$,\, $y=0$,\, $z=0$;
\item [(b)] $y=\zeta x$,\, $y=\zeta z$,\, $x=\zeta z$\,  avec $\zeta^{d-1}=1$.
\end{enumerate}
Les droites de la famille (a) (resp. (b)) donnent lieu à $3$ (resp. $3d-3$) feuilletages homogènes appartenant à $\overline{\mathcal{O}(\F^{d})}\subset\mathbf{FP}(d)$ et de type $2\cdot\mathrm{R}_{d-1}$ (resp. $1\cdot\mathrm{R}_{d-1} + (d-1)\cdot\mathrm{R}_{1}$). Ceux qui sont de type $2\cdot\mathrm{R}_{d-1}$ sont tous conjugués au feuilletage $\mathcal{H}^{d}_{1}$ donné par la Proposition~\ref{pro:omega1-omega2}; ceux qui sont de type $1\cdot\mathrm{R}_{d-1}+(d-1)\cdot\mathrm{R}_{1}$ sont tous conjugués au feuilletage $\mathcal{H}_{\hspace{0.2mm}0}^{d}$ défini par la $1$-forme
$$
\omega_{\hspace{0.2mm}0}^{\hspace{0.2mm}d}=(d-1)y^{d}\mathrm{d}x+x(x^{d-1}-dy^{d-1})\mathrm{d}y.
$$
Le feuilletage $\mathcal{H}_{\hspace{0.2mm}0}^{\hspace{0.2mm}3}$ est conjugué au feuilletage $\mathcal{H}_{\hspace{0.2mm}3}$ donné par le Théorème~\ref{thm:Class-Homog3-Plat}, car $\mathcal{H}_{\hspace{0.2mm}0}^{\hspace{0.2mm}3}\in\mathbf{FP}(3)$ et $\mathcal{T}_{\mathcal{H}_{\hspace{0.2mm}0}^{\hspace{0.2mm}3}}=\mathcal{T}_{\mathcal{H}_{\hspace{0.2mm}3}}.$
\end{eg}

\begin{rem}\label{rem:droite-inva}
Si $\F$ est un feuilletage de degré $d$ sur $\pp$ et si $m$ est un point singulier de $\F$, nous avons l'encadrement $\sigma(\F,m)\leq\tau(\F,m)+1\leq d+1,$ où $\sigma(\F,m)$ désigne le nombre de droites (distinctes) invariantes par $\F$ et qui passent par $m.$
\end{rem}

\noindent La première assertion du lemme suivant joue un rôle clé dans la preuve du Théorème~\ref{thm:Fermat}.

\begin{lem}\label{lem:2-droite-invar}
{\sl Soit $\F$ un feuilletage de degré $3$ sur $\pp.$ Supposons que $\F$ ait une singularité $m$ non-dégénérée et que son $3$-tissu dual $\Leg\F$ soit plat. Alors:

\noindent\textbf{\textit{1.}} Si $\mathrm{BB}(\F,m)\not\in\{4,\frac{16}{3}\}$, par cette singularité passent exactement deux droites invariantes par~$\F$.

\noindent\textbf{\textit{2.}} Si $\mathrm{BB}(\F,m)=\frac{16}{3}$, par cette singularité passe une droite $\ell$ invariante par $\F$ et telle que $\mathrm{CS}(\F,\ell,m)=3.$
}
\end{lem}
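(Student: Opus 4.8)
The plan is to reduce the lemma to the following local statement, which carries all its content: \emph{if $\F$ is a cubic foliation with $\Leg\F$ flat, $m$ is a nondegenerate singularity of $\F$, $\mathfrak{d}$ is one of the two eigendirections of the linear part of $\F$ at $m$, $\ell$ is the line through $m$ directed by $\mathfrak{d}$, and $\kappa$ denotes the quotient of the eigenvalue of that linear part along $\mathfrak{d}$ by the one along the other eigendirection, then $\kappa\notin\{1,\tfrac13\}$ forces $\ell$ to be $\F$-invariant} $(\star)$. Granting $(\star)$, the lemma follows immediately. Normalize $m=(0,0)$. Since $\mathrm{BB}(\F,m)\neq4$, the linear part of a vector field defining $\F$ at $m$ has two distinct eigenvalues $\lambda\neq\mu$; it is hence diagonalizable, so $\tau(\F,m)=1$ and, by Remark~\ref{rem:droite-inva}, at most two invariant lines pass through $m$ --- necessarily the two lines $\ell_{1},\ell_{2}$ through $m$ directed by the eigendirections. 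By \cite{CS82} there is through $m$ a germ of $\F$-invariant curve, and in fact each eigendirection carries one; moreover, whenever $\ell_{i}$ is $\F$-invariant, $\mathrm{CS}(\F,\ell_{i},m)$ equals the corresponding quotient of eigenvalues. If $\mathrm{BB}(\F,m)\notin\{4,\tfrac{16}{3}\}$ then $\tfrac{\lambda}{\mu}\notin\{1,3,\tfrac13\}$, so neither $\tfrac{\lambda}{\mu}$ nor $\tfrac{\mu}{\lambda}$ lies in $\{1,\tfrac13\}$; by $(\star)$ both $\ell_{1}$ and $\ell_{2}$ are $\F$-invariant, and being distinct they give the two lines of assertion~\textbf{1}, the bound of Remark~\ref{rem:droite-inva} making it exactly two. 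If $\mathrm{BB}(\F,m)=\tfrac{16}{3}$ then $\{\tfrac{\lambda}{\mu},\tfrac{\mu}{\lambda}\}=\{3,\tfrac13\}$, and $(\star)$ applied to the eigendirection whose quotient is $3\notin\{1,\tfrac13\}$ produces an $\F$-invariant line $\ell$ through $m$ with $\mathrm{CS}(\F,\ell,m)=3$, which is assertion~\textbf{2}.

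It remains to prove $(\star)$. Take $m=(0,0)$, $\ell=\{y=0\}$, and write $\F$ near $m$ by $\omega=A\,\mathrm{d}x+B\,\mathrm{d}y$ with $A=\mu y+\cdots$, $B=-\lambda x+\cdots$, so $\kappa=\tfrac{\lambda}{\mu}$ and $\ell$ is $\F$-invariant precisely when $y$ divides $A$, i.e. when the coefficients $\alpha_{2},\alpha_{3},\alpha_{4}$ of $x^{2},x^{3},x^{4}$ in $A(x,0)$ all vanish. A preliminary reduction: by \eqref{equa:Tang(F,L)=d} a line that is not $\F$-invariant has total tangency $3$ with $\F$, so if $\alpha_{2}=\alpha_{3}=0$ then $\Tang(\F,\ell,m)\ge4$, which is impossible unless $\ell$ is already $\F$-invariant; hence it suffices to prove $\alpha_{2}=0$ and $\alpha_{3}=0$. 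This is where flatness is used. Since $\tau(\F,m)=1$, $m$ is not a special singularity, and the local structure of $\Leg\F$ near the point $\check{\ell}$ dual to $\ell$ --- a point lying on the component of $\Delta(\Leg\F)$ contained in $\mathcal{G}_{\mathcal{F}}(\ItrF)$ that records the tangency $\Tang(\F,\ell,m)=2$ --- is analyzed via Lemma~\ref{lem:Delta-Leg}, Proposition~\ref{pro:MP13-3.3} and Corollary~\ref{cor:MP13}: one writes $\Leg\F=\W_{2}\boxtimes\F'$ near a generic point of that component and, using in addition the H\'enaut formula~\eqref{equa:Formule-Henaut}, expands the curvature of $\Leg\F$ in a local parameter. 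Holomorphy of the curvature along $\check{\ell}$, which holds because $\Leg\F$ is flat, then yields a chain of polynomial identities among the $2$- and $3$-jets of $A$ and $B$ at $m$; the first two are of the form $c(\lambda,\mu)\,\alpha_{2}=0$ and $c'(\lambda,\mu)\,\alpha_{3}=c''(\lambda,\mu)\,\alpha_{2}\,\gamma$, with $\gamma$ a coefficient of the $2$-jet, $c$ a nonzero multiple of $\lambda-\mu$, and $c'$ a nonzero multiple of $(\lambda-\mu)(3\lambda-\mu)$. For $\kappa\neq1$ the first gives $\alpha_{2}=0$; then for $\kappa\neq\tfrac13$ the second gives $\alpha_{3}=0$; with the tangency reduction this proves $(\star)$.

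The hard part is the explicit computation just outlined: fixing an affine chart of $\pd$ around $\check{\ell}$, pinning down the factorization $\Leg\F=\W_{2}\boxtimes\F'$ and the associated $1$-form $\eta(\Leg\F)$, and pushing the expansion of the curvature far enough to read off both conditions $\alpha_{2}=0,\ \alpha_{3}=0$ and the precise shape of $c$ and $c'$ --- in particular, that within the range of interest $c\cdot c'$ vanishes only at $\lambda=\mu$ and $\mu=3\lambda$. The fact that the sole exceptional resonance is $\mu=3\lambda$, i.e. $\kappa=\tfrac13$ and hence $\mathrm{BB}(\F,m)=\tfrac{16}{3}$, is structural: in a degree-$3$ foliation the resonant monomial $x^{3}\tfrac{\partial}{\partial y}$, which is exactly what obstructs straightening the $\{y=0\}$-separatrix when $\mu=3\lambda$, already has the maximal admissible degree and so cannot be eliminated --- the same degree-$3$ mechanism that drives the case analysis behind Theorem~\ref{thm:Class-Homog3-Plat}, now localized at a single nondegenerate singularity. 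One also records the elementary facts invoked: that a nondegenerate singularity carries a germ of $\F$-invariant curve tangent to each eigendirection with Camacho--Sad index equal to the appropriate quotient of eigenvalues, and that when $\kappa=\tfrac13$ the other eigendirection line need not be $\F$-invariant, consistently with assertion~\textbf{2} claiming a single line.
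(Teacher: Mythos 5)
Your reduction to $(\star)$ is reasonable, but the decisive quantitative claim inside $(\star)$ --- precisely the computation you defer as ``the hard part'' --- has the exceptional ratio inverted, and this breaks assertion \textbf{2}. In your normalization $\omega=(\mu y+\cdots)\,\mathrm{d}x+(-\lambda x+\cdots)\,\mathrm{d}y$, $\ell=\{y=0\}$, $\kappa=\lambda/\mu$, the obstruction sits at $\kappa=3$ (i.e.\ $\lambda=3\mu$), not at $\kappa=\tfrac13$. Indeed, once the quadratic coefficient is disposed of, if $\alpha_3\neq0$ the diagonal degeneration $(x,y)\mapsto(\varepsilon x,\alpha_3\varepsilon^{3}y)$ places the cubic model $\F_0\colon(\mu y+x^{3})\,\mathrm{d}x-\lambda x\,\mathrm{d}y=0$ in $\overline{\mathcal{O}(\F)}$; for this model the transverse inflexion curve and its Gauss-residual curve $\mathrm{I}^{\perp}$ are explicit cubics $y=\mathrm{cst}\cdot x^{3}$ (with $\mathrm{I}^{\perp}\colon y=\tfrac{4\mu-\lambda}{4\mu(\lambda-\mu)}x^{3}$), and the criterion of \cite[Corollaire~4.6]{BFM13} shows that $\Leg\F_0$ is flat exactly when $\lambda(\lambda-3\mu)=0$. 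So at $\kappa=3$ flatness yields no contradiction and cannot force $\ell$ to be invariant: your factor $c'\propto(\lambda-\mu)(3\lambda-\mu)$ should be $(\lambda-\mu)(\lambda-3\mu)$. Consequently, when $\mathrm{BB}(\F,m)=\tfrac{16}{3}$ the line that flatness forces to be invariant is the one along the eigendirection of quotient $\tfrac13$, not $3$; its Camacho--Sad index --- transverse eigenvalue over the eigenvalue along the line, the convention the paper actually works with, the one compatible with \eqref{equa:sum-CS=1} and with the identity $\mathrm{CS}(\F,(y=0),m)=-\lambda/\mu$ used in this very proof --- equals $3$. You reach ``$\mathrm{CS}=3$'' only because you simultaneously adopt the reciprocal convention ``$\mathrm{CS}=$ along over transverse''; the two inversions cancel verbally but not mathematically: as written $(\star)$ is false, and the line you select in part \textbf{2} is exactly the one that may fail to be invariant (part \textbf{1} survives, since excluding $\mathrm{BB}\in\{4,\tfrac{16}{3}\}$ excludes both ratios $3$ and $\tfrac13$). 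Your closing ``structural'' justification points the same wrong way: the exceptional ratio is not the Poincar\'e--Dulac resonance of $x^{3}\partial/\partial y$; at $\kappa=3$ the model $\F_0$ is non-resonant, even linearizable, and its dual web is genuinely flat, which is precisely why no contradiction is available there.

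Beyond this, note that your route to $(\star)$ --- holomorphy of $K(\Leg\F)$ along the branch of $\Delta(\Leg\F)$ created by the tangency $\Tang(\F,\ell,m)=2$, via a local splitting $\W_2\boxtimes\F'$ and formula \eqref{equa:Formule-Henaut} --- differs from the paper's and is left as a programme: the identities $c\,\alpha_2=0$ and $c'\alpha_3=c''\alpha_2\gamma$, which carry all the content, are asserted rather than derived, and it is not checked that $\check\ell$ really lies on a component of $\G_{\F}(\ItrF)$ to which Corollaire~\ref{cor:MP13} applies. The paper never expands the curvature near $\check\ell$: it uses flatness only through its persistence in orbit closures, killing the quadratic coefficient by degenerating $\Leg\F$ itself to an explicit $3$-web whose curvature is computed from \eqref{equa:eta-rst} after a rational pull-back, and killing the cubic coefficient by the model computation recalled above; the factor $\lambda-3\mu$ (in your letters) --- hence the exceptional ratio and the value $\mathrm{CS}=3$ in part \textbf{2} --- comes out of that second computation. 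If you wish to keep your local approach you must actually carry out the expansion, and it will place the exceptional ratio at $\kappa=3$.
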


\begin{proof}
\textbf{\textit{i.}} Supposons que $\mathrm{BB}(\F,m)\neq4$. Par hypothèse $\mu(\F,m)=1$. Ces deux conditions assurent l'existence d'une carte affine $(x,y)$ de $\pp$ dans laquelle
$m=(0,0)$ et $\F$ est défini par une $1$-forme du type $\theta_1+\theta_2+\theta_3+\theta_4,$ où
\begin{align*}
&  \theta_1=\lambda\hspace{0.1mm}y\mathrm{d}x+\mu\hspace{0.1mm}x\mathrm{d}y,\hspace{2mm}\text{avec}\hspace{1mm}\lambda\mu(\lambda+\mu)\neq0,&&\hspace{2mm}
   \theta_2=\left(\sum_{i=0}^{2}\alpha_{i}\hspace{0.1mm}x^{2-i}y^{i}\right)\mathrm{d}x+
                        \left(\sum_{i=0}^{2}\beta_{i}\hspace{0.1mm}x^{2-i}y^{i}\right)\mathrm{d}y,\\
&  \theta_3=\left(\sum_{i=0}^{3}a_{i}\hspace{0.1mm}x^{3-i}y^{i}\right)\mathrm{d}x+
                        \left(\sum_{i=0}^{3}b_{i}\hspace{0.1mm}x^{3-i}y^{i}\right)\mathrm{d}y,&&\hspace{2mm}
   \theta_4=\left(\sum_{i=0}^{3}c_{i}\hspace{0.1mm}x^{3-i}y^{i}\right)(x\mathrm{d}y-y\mathrm{d}x),
\end{align*}

\textbf{\textit{i.1.}} Supposons que $\mathrm{BB}(\F,m)\neq\frac{16}{3}$; cette condition se traduit par $(\lambda+3\mu)(3\lambda+\mu)\neq0.$

\noindent Commençons par montrer que $\alpha_0=0$. Supposons par l'absurde que $\alpha_0\neq0$. Soit $(p,q)$ la carte affine de $\pd$ associée à la droite $\{px-qy=1\}\subset{\mathbb{P}^{2}_{\mathbb{C}}}$; le $3$-tissu $\mathrm{Leg}\mathcal{F}$ est donné par la $3$-forme symétrique
\begin{small}
\begin{align*}
&\check{\omega}=\left[\left(\beta_2\hspace{0.1mm}p+\alpha_2\hspace{0.1mm}q-\lambda\hspace{0.1mm}q^2\right)\mathrm{d}p^{2}+
                \left(\beta_1\hspace{0.1mm}p+\alpha_1\hspace{0.1mm}q+\lambda\hspace{0.1mm}pq-\mu\hspace{0.1mm}pq\right)\mathrm{d}p\mathrm{d}q+
                \left(\beta_0\hspace{0.1mm}p+\alpha_0\hspace{0.1mm}q+\mu\hspace{0.1mm}p^2\right)\mathrm{d}q^{2}\right](p\mathrm{d}q-q\mathrm{d}p)\\
&\hspace{0.7cm}+q\left(a_3\mathrm{d}p^{3}+a_2\mathrm{d}p^{2}\mathrm{d}q+a_1\mathrm{d}p\mathrm{d}q^{2}+a_0\mathrm{d}q^{3}\right)+
               p\left(b_3\mathrm{d}p^{3}+b_2\mathrm{d}p^{2}\mathrm{d}q+b_1\mathrm{d}p\mathrm{d}q^{2}+b_0\mathrm{d}q^{3}\right)\\
&\hspace{0.7cm}+c_3\mathrm{d}p^{3}+c_2\mathrm{d}p^{2}\mathrm{d}q+c_1\mathrm{d}p\mathrm{d}q^{2}+c_0\mathrm{d}q^{3}.
\end{align*}
\end{small}
\hspace{-1mm}Considérons la famille d'automorphismes $\varphi=\varphi_{\varepsilon}=(\alpha_0\hspace{0.1mm}\varepsilon^{-1}\hspace{0.1mm}p,\hspace{0.1mm}\alpha_0\hspace{0.1mm}\varepsilon^{-2}\hspace{0.1mm}q).$ Nous constatons que
\begin{align*}
&\check{\omega}_{0}:
=\lim_{\varepsilon\to 0}\varepsilon^9\alpha_{0}^{-6}\varphi^*\check{\omega}
=(p\mathrm{d}q-q\mathrm{d}p)\left(-\lambda\hspace{0.1mm}q^2\mathrm{d}p^2+pq(\lambda-\mu)\mathrm{d}p\mathrm{d}q+(\mu\hspace{0.1mm}p^2+q)\mathrm{d}q^{2}\right).
\end{align*}

\noindent Puisque $\mu$ est non nul $\check{\omega}_{0}$ définit un $3$-tissu $\mathcal{W}_{0}$, qui appartient évidemment à $\overline{\mathcal{O}(\Leg\F)}$. L'image réciproque de $\W_0$ par l'application rationnelle\, $\psi(p,q)=\left(\lambda(p+q),-\lambda(\lambda+\mu)^2pq\right)$\, s'écrit $\psi^*\mathcal{W}_{0}=\mathcal{F}_{1}\boxtimes\mathcal{F}_{2}\boxtimes\mathcal{F}_{3}$, où
\begin{small}
\begin{align*}
&\F_1\hspace{0.1mm}:\hspace{0.1mm}q^2\mathrm{d}p+p^2\mathrm{d}q=0,&&
\F_2\hspace{0.1mm}:\hspace{0.1mm}\mu q^2\mathrm{d}p+p(\lambda q+\mu q-\lambda p)\mathrm{d}q=0,&&
\F_3\hspace{0.1mm}:\hspace{0.1mm}\mu p^2\mathrm{d}q+q(\lambda p+\mu p-\lambda q)\mathrm{d}p=0.
\end{align*}
\end{small}
\hspace{-1mm}Un calcul direct, utilisant la formule (\ref{equa:eta-rst}), conduit à
\begin{small}
\begin{align*}
\eta(\psi^{*}\mathcal{W}_0)=
\frac{5(\lambda+\mu)p^2-(8\lambda+7\mu)pq+(3\lambda+4\mu)q^2}{(\lambda+\mu)p(p-q)^2}\mathrm{d}p+
\frac{5(\lambda+\mu)q^2-(8\lambda+7\mu)pq+(3\lambda+4\mu)p^2}{(\lambda+\mu)q(p-q)^2}\mathrm{d}q
\end{align*}
\end{small}
\noindent de sorte que
$$
K(\psi^{*}\mathcal{W}_0)=\mathrm{d}\eta(\psi^{*}\mathcal{W}_0)
=-\frac{4\mu(p+q)}{(\lambda+\mu)(p-q)^3}\mathrm{d}p\wedge\mathrm{d}q\not\equiv0\hspace{1mm};
$$
\noindent comme $\Leg\F$ est plat par hypothèse, il en est de même pour $\W_0$; par suite $K(\psi^{*}\mathcal{W}_0)=\psi^{*}K(\mathcal{W}_0)=0,$ ce qui est absurde. D'où l'égalité $\alpha_0=0.$

\noindent Montrons maintenant que $a_0=0$. Raisonnons encore par l'absurde en supposant $a_0\neq0$. Le feuilletage $\F$ est décrit dans la carte affine $(x,y)$ par $\theta=\theta_1+\theta_2+\theta_3+\theta_4$\hspace{0.1mm} avec $\alpha_0=0.$ En faisant agir la transformation linéaire diagonale  $(\varepsilon\hspace{0,1mm}x\hspace{0,1mm},\hspace{0,1mm}a_0\hspace{0,1mm}\varepsilon^{3}y)$ sur $\theta$ puis en passant à la limite lorsque $\varepsilon\to0$ nous obtenons
$$\theta_0=\lambda\hspace{0.1mm}y\mathrm{d}x+\mu\hspace{0.1mm}x\mathrm{d}y+x^3\mathrm{d}x$$
qui définit un feuilletage de degré trois $\F_0\in\overline{O(\F)}.$

\noindent Notons $\mathrm{I}_{0}=\mathrm{I}_{\mathcal{F}_{0}}^{\hspace{0.2mm}\mathrm{tr}}$, $\mathcal{G}_{0}=\mathcal{G}_{\mathcal{F}_0}$ et $\mathrm{I}_{0}^{\perp}=\overline{\mathcal{G}_{0}^{-1}(\mathcal{G}_{0}(\mathrm{I}_{0}))\setminus\mathrm{I}_{0}},$ où l'adhérence est prise au sens ordinaire. Un calcul élémentaire montre que
\begin{small}
\begin{align*}
\mathcal{G}_{0}(x,y)=\left(\dfrac{x^3+\lambda\hspace{0.1mm}y}{x(x^3+\lambda\hspace{0.1mm}y+\mu\hspace{0.1mm}y)},
-\dfrac{\mu}{x^3+\lambda\hspace{0.1mm}y+\mu\hspace{0.1mm}y}\right),\hspace{2mm}
\mathrm{I}_{0}=\{(x,y)\in\mathbb{C}^2\hspace{1mm}\colon(\lambda-2\mu)x^3+\lambda(\lambda+\mu)y=0\}\subset\pp
\end{align*}
\end{small}
\hspace{-1mm}et que la courbe $\mathrm{I}_{0}^{\perp}$ a pour équation affine $f(x,y)=y-\nu\hspace{0.1mm}x^3=0$, où $\nu=-\dfrac{4\lambda+\mu}{4\lambda(\lambda+\mu)}.$
\newpage
\hfill
\vspace{-0.8cm}

\noindent Comme $\Leg\F$ est plat, $\Leg\F_0$ l'est aussi. Or, d'après \cite[Corollaire~4.6]{BFM13}, le $3$-tissu $\Leg\F_0$ est plat si~et~seulement~si $\mathrm{I}_{0}^{\perp}$ est invariante par $\F_0,$ {\it i.e.} si et seulement si
$$0\equiv\mathrm{d}f\wedge\theta_0\Big|_{y=\nu x^3}=3(3\lambda+\mu)\mu\hspace{0.2mm}x^3\mathrm{d}x\wedge\mathrm{d}y\hspace{1mm};$$
d'où $\mu(3\lambda+\mu)=0$: contradiction. Donc $a_0=\alpha_0=0$, ce qui signifie que la droite $(y=0)$ est $\F$-invariante.

\noindent Ce qui précède montre également que l'invariance de la droite $(y=0)$ par $\F$ découle uniquement du fait que $\lambda\mu(\lambda+\mu)(3\lambda+\mu)\neq0$ et de l'hypothèse que $\Leg\F$ est plat. En permutant les coordonnées $x$ et $y$, la condition $\lambda\mu(\lambda+\mu)(\lambda+3\mu)\neq0$ permet de déduire que $\beta_2=b_3=0$, {\it i.e.} que la droite $(x=0)$ est aussi invariante par $\F$.

\noindent La singularité $m$ de $\F$ n'est pas radiale car $\mathrm{BB}(\F,m)\neq4;$ de plus $\nu(\F,m)=1$ car $\mu(\F,m)=1$. Il s'en suit que $\tau(\F,m)=1;$ d'après la Remarque \ref{rem:droite-inva}, nous avons $\sigma(\F,m)\leq\tau(\F,m)+1=2,$ d'où la première assertion.
\vspace{2mm}

\textbf{\textit{i.2.}} Lorsque $\mathrm{BB}(\F,m)=\frac{16}{3}$, nous pouvons supposer que $\lambda=-3\mu$ et $\mu\neq0$. Dans ce cas
\begin{align*}
&\lambda\mu(\lambda+\mu)(3\lambda+\mu)=-48\mu^4\neq0&& \text{et} && \lambda\mu(\lambda+\mu)(\lambda+3\mu)=0\hspace{1mm};
\end{align*}
donc la droite $(y=0)$ est $\F$-invariante, mais nous ne pouvons rien dire a priori sur l'invariance par $\F$ ou non de la droite $(x=0).$ Sur la droite $(y=0)$ nous avons $\mathrm{CS}(\F,(y=0),m)=-\frac{\lambda}{\mu}=3,$ d'où la seconde assertion.
\end{proof}

\begin{proof}[\sl D\'emonstration du Théorème~\ref{thm:Fermat}]
\'{E}crivons $\Sing\F=\Sigma^{0}\cup\Sigma^{1}\cup\Sigma^{2}$ avec
\begin{small}
\begin{align*}
&\Sigma^{0}=\{s\in\Sing\F\hspace{1mm}\colon \mathrm{BB}(\F,s)=\tfrac{16}{3}\},&&
\Sigma^{1}=\{s\in\Sing\F\hspace{1mm}\colon \mathrm{BB}(\F,s)=4\},&&
\Sigma^{2}=\Sing\F\setminus(\Sigma^{0}\cup\Sigma^{1})
\end{align*}
\end{small}
\hspace{-1mm}et notons $\kappa_i=\#\hspace{0.5mm}\Sigma^{i},\,i=0,1,2.$ Par hypothèse, $\F$ est de degré $3$ et toutes ses singularités ont leur nombre de \textsc{Milnor} $1$. Les formules (\ref{equa:Darboux}) et (\ref{equa:BB}) impliquent alors que
\begin{align}\label{equa:Dar-BB-3}
&\#\hspace{0.5mm}\Sing\F=\kappa_0+\kappa_1+\kappa_2=13 &&\text{et} &&\tfrac{16}{3}\kappa_0+4\kappa_1+\sum_{s\in\Sigma^{2}}\mathrm{BB}(\F,s)=25\hspace{1mm};
\end{align}
il en résulte que $\Sigma^{2}$ est non vide. Soit $m$ un point de $\Sigma^{2}$; d'après la première assertion du Lemme~\ref{lem:2-droite-invar} il passe par $m$ exactement deux droites $\ell_{m}^{(1)}$ et $\ell_{m}^{(2)}$ invariantes par $\F$. Alors, pour $i=1,2$, la Proposition~\ref{pro:F-dégénère-H} assure l'existence d'un feuilletage homogène $\mathcal{H}^{(i)}_{m}$ de degré $3$ sur $\pp$ appartenant à $\overline{\mathcal{O}(\F)}$ et tel que la droite $\ell_{m}^{(i)}$ soit $\mathcal{H}^{(i)}_{m}$-invariante. Comme $\Leg\F$ est plat par hypothèse, il en est de même pour $\Leg\mathcal{H}^{(1)}_{m}$ et $\Leg\mathcal{H}^{(2)}_{m}$. Donc chacun des $\mathcal{H}^{(i)}_{m}$ est linéairement conjugué à l'un des onze feuilletages homogènes donnés par le Théorème~\ref{thm:Class-Homog3-Plat}. Pour $i=1,2,$ la Proposition~\ref{pro:F-dégénère-H} assure aussi que
\begin{itemize}
\item [($\mathfrak{a}$)] $\Sing\F\cap\ell_{m}^{(i)}=\Sing\mathcal{H}^{(i)}_{m}\cap\ell_{m}^{(i)}$;
\vspace{0.5mm}
\item [($\mathfrak{b}$)] $\forall\hspace{1mm}s\in\Sing\mathcal{H}^{(i)}_{m}\cap\ell_{m}^{(i)},\quad
                   \mu(\mathcal{H}^{(i)}_{m},s)=1\hspace{2mm}\text{et}\quad
                   \mathrm{CS}(\mathcal{H}^{(i)}_{m},\ell_{m}^{(i)},s)=\mathrm{CS}(\F,\ell_{m}^{(i)},s)$.
\end{itemize}
\newpage
\hfill

\noindent Puisque $\mathrm{CS}(\F,\ell_{m}^{(1)},m)\mathrm{CS}(\F,\ell_{m}^{(2)},m)=1$, nous avons $\mathrm{CS}(\mathcal{H}^{(1)}_{m},\ell_{m}^{(1)},m)\mathrm{CS}(\mathcal{H}^{(2)}_{m},\ell_{m}^{(2)},m)=1$. Cette égalité et la Table \ref{tab:CS(lambda)} impliquent
\begin{align*}
\{\mathrm{CS}(\mathcal{H}^{(1)}_{m},\ell_{m}^{(1)},m),\,\mathrm{CS}(\mathcal{H}^{(2)}_{m},\ell_{m}^{(2)},m)\}=\{-2,-\tfrac{1}{2}\}\hspace{1mm};
\end{align*}
d'où $\mathrm{BB}(\F,m)=-\frac{1}{2}.$ Le point $m\in\Sigma^{2}$ étant arbitraire, $\Sigma^{2}$ est formé des $s\in\Sing\F$ tels que $\mathrm{BB}(\F,s)=-\frac{1}{2}.$ Par suite le système (\ref{equa:Dar-BB-3}) se réécrit
\begin{align*}
&\kappa_0+\kappa_1+\kappa_2=13 &&\text{et} &&\tfrac{16}{3}\kappa_0+4\kappa_1-\tfrac{1}{2}\kappa_2=25
\end{align*}
dont l'unique solution est $(\kappa_0,\kappa_1,\kappa_2)=(0,7,6)$, c'est-à-dire que $\Sing\F=\Sigma^{1}\cup\Sigma^{2},\hspace{2mm}\#\hspace{0.5mm}\Sigma^{1}=7$\hspace{2mm} et \hspace{2mm}$\#\hspace{0.5mm}\Sigma^{2}=6.$

\noindent Pour fixer les idées, nous supposons que $\mathrm{CS}(\mathcal{H}^{(1)}_{m},\ell_{m}^{(1)},m)=-2$ pour n'importe quel choix de $m\in\Sigma_2$, donc $\mathrm{CS}(\mathcal{H}^{(2)}_{m},\ell_{m}^{(2)},m)=-\frac{1}{2}.$ Dans ce cas, l'inspection de la Table \ref{tab:CS(lambda)} ainsi que les relations ($\mathfrak{a}$) et ($\mathfrak{b}$) conduisent à
\begin{align*}
&\#\hspace{0.5mm}(\Sigma^{1}\cap\ell_{m}^{(1)})=3,&&
\#\hspace{0.5mm}(\Sigma^{1}\cap\ell_{m}^{(2)})=2,&&
\Sigma^{2}\cap\ell_{m}^{(1)}=\{m\},&&
\Sigma^{2}\cap\ell_{m}^{(2)}=\{m,m'\}
\end{align*}
pour un certain point $m'\in\Sigma^{2}\setminus\{m\}$ vérifiant $\mathrm{CS}(\F,\ell_{m}^{(2)},m')=-\frac{1}{2}$. Ce point $m\hspace{0.1mm}'$ satisfait à son tour l'égalité $\Sigma^{2}\cap\ell_{m\hspace{0.1mm}'}^{(1)}=\{m\hspace{0.1mm}'\}$. Nous constatons que  $\ell_{m\hspace{0.1mm}'}^{(2)}=\ell_{m}^{(2)}$,\, $\ell_{m\hspace{0.1mm}'}^{(1)}\neq\ell_{m}^{(1)}$,\, $\ell_{m\hspace{0.1mm}'}^{(1)}\neq\ell_{m}^{(2)}$ et que ces trois droites distinctes satisfont $\Sigma^{2}\cap(\ell_{m}^{(1)}\cup\ell_{m}^{(2)}\cup\ell_{m\hspace{0.1mm}'}^{(1)})=\{m,m\hspace{0.1mm}'\}.$ Comme $\#\hspace{0.5mm}\Sigma^{2}=6=2\cdot3$, $\F$ possède $3\cdot3=9$ droites invariantes.

\noindent Posons $\Sigma^{1}\cap\ell_{m}^{(2)}=\{m_1,m_2\}$. Notons $\mathcal{D}_1,\mathcal{D}_2,\ldots,\mathcal{D}_6$ les six droites $\F$-invariantes qui restent; par construction chacune d'elles doit couper $\ell_{m}^{(1)}$ et $\ell_{m}^{(2)}$ en des points de $\Sigma^{1}.$ Par ailleurs, d'après la Remarque \ref{rem:droite-inva}, pour tout $s\in\Sing\F$ nous avons $\sigma(\F,s)\leq\tau(\F,s)+1\leq4.$ Donc par chacun des points $m_1$ et $m_2$ passent exactement trois droites de la famille $\{\mathcal{D}_1,\mathcal{D}_2,\ldots,\mathcal{D}_6\}$. Puisque $\#\hspace{0.5mm}(\Sigma^{1}\cap\ell_{m}^{(1)})=3$, $\Sigma^{1}\cap\ell_{m}^{(1)}$ contient au moins un point, noté $m_3$, par lequel passent précisément trois droites de la famille $\{\ell_{m\hspace{0.1mm}'}^{(1)},\mathcal{D}_1,\mathcal{D}_2,\ldots,\mathcal{D}_6\}$. Ainsi, pour $j=1,2,3$ nous avons $\sigma(\F,m_j)=4,$ ce qui implique que $\tau(\F,m_j)=3.$ L'hypothèse sur les singularités de $\F$ assure que $\nu(\F,m_j)=1$ pour $j=1,2,3.$ Il s'en suit que les singularités $m_1$, $m_2$ et $m_3$ sont radiales d'ordre $2$ de $\F$.

\noindent Par construction ces trois points ne sont pas alignés. Nous concluons en appliquant la Proposition~\ref{pro:Fermat-d}.
\end{proof}
\newpage
\hfill
\vspace{-1.5cm}

\Subsection{Feuilletages de $\mathbf{FP}(3)$ ayant au moins une singularité dégénérée}\label{subsec:cas-dégénéré}

\Subsubsection{Cas d'une singularité dégénérée de multiplicité algébrique $\leq2$}

Dans l'Appendice~\ref{Dém:pro:cas-nilpotent-noeud-ordre-2} nous donnons une démonstration calculatoire de l'énoncé suivant.

\begin{pro}[\emph{voir} Appendice \ref{Dém:pro:cas-nilpotent-noeud-ordre-2}]\label{pro:cas-nilpotent-noeud-ordre-2}
{\sl Soit $\mathcal{F}$ un feuilletage de degré $3$ sur $\pp$ ayant au moins une singularité dégénérée de multiplicité algébrique inférieure ou égale à $2$. Alors le $3$-tissu dual $\Leg\F$ de $\F$ ne peut être plat.}
\end{pro}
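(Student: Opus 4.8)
The plan is to reduce the statement to a finite list of normal forms and then, for each normal form, compute the curvature of the dual web and show it cannot vanish. Let $\F$ be a degree $3$ foliation on $\pp$ with a degenerate singularity $m$ (i.e. $\mu(\F,m)\geq 2$) of algebraic multiplicity $\nu(\F,m)\leq 2$. First I would place $m$ at the origin of an affine chart and separate into the case $\nu(\F,m)=1$ and the case $\nu(\F,m)=2$. When $\nu(\F,m)=1$, the degeneracy $\mu(\F,m)\geq 2$ means the linear part $J^1_m\X$ is nonzero but has a zero eigenvalue; up to linear conjugacy and scaling, the $1$-jet is either $y\,\partial_y$ (a nonnilpotent degenerate linear part, eigenvalues $0$ and $1$) or $y\,\partial_x$ (nilpotent linear part). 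When $\nu(\F,m)=2$, the germ is $\X=\X_2+\X_3$ with $\X_2\neq 0$ homogeneous of degree $2$, and since $\Tang(\F,\ell_m,m)=\tau(\F,m)\leq d=3$ one can further normalize the quadratic cone; here one must be careful to also allow $\X_2$ proportional to the radial vector field (radial singularity of order $1$) or not. In every case the homogeneity/projective constraints force $\omega$ into a family depending on finitely many parameters, because a degree $3$ foliation on $\pp$ is given by a $1$-form with coefficients of degree $\leq 4$ in the homogeneous coordinates, and prescribing the germ at $m$ together with the condition that the rest is a genuine degree $3$ foliation cuts this down drastically.

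Next, for each such parametrized normal form I would write down $\Leg\F$ explicitly in the affine chart $(p,q)$ of $\pd$ associated with $\{y=px-q\}$, using the recipe from Remark~\ref{rem:Propri�t�s-Leg}(iii): if $\omega=A\,\mathrm{d}x+B\,\mathrm{d}y$ then $\Leg\F$ is the $3$-web $\check F(p,q;x)=A(x,px-q)+pB(x,px-q)=0$ with $x=\mathrm{d}q/\mathrm{d}p$. Then I would apply H\'enaut's formula \eqref{equa:Formule-Henaut} to compute $K(\Leg\F)$ as a rational $2$-form, or, where a clean factorization $\Leg\F=\W_2\boxtimes\F'$ along a component of the discriminant is available (which happens precisely when $m$ forces a radial-type splitting, cf.\ Proposition~\ref{pro:MP13-3.3}), use Corollary~\ref{cor:MP13} and the barycenter criterion of Theorem~\ref{thm:MP13-1} to test holomorphy along $\check m$ or along $\G_\F(\ItrF)$. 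The goal in each case is to exhibit an explicit component of $\Delta(\Leg\F)$ along which the curvature has a genuine pole, i.e.\ a nonzero residue, for all values of the parameters; this is a finite (if lengthy) computation in $\C(\text{parameters})[p,q]$.

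The main obstacle I expect is the bookkeeping of the case split for $\nu(\F,m)=2$: the quadratic cone $\X_2$ may vanish on a double line, two distinct lines, or be radial, and each sub-case produces a different normal form whose dual web has a different discriminant structure, so one has to be sure the list is exhaustive and that no degenerate parameter specialization sneaks back into one of the convex models of Theorem~\ref{thm:Class-Homog3-Plat} (it cannot, because those are homogeneous and here $\F$ has a non-homogeneous degenerate singularity, but this must be checked). A secondary difficulty is purely computational: some of the H\'enaut determinants for these non-homogeneous cubic $1$-forms are large, so in practice the argument will be organized so that one only ever needs the leading term of $K(\Leg\F)$ near one well-chosen component of $\Delta(\Leg\F)$ — typically the dual line $\check m$ or the line at infinity of the chart — rather than the full curvature. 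Because the computation is heavy and case-ridden, the detailed verification is deferred to Appendix~\ref{D�m:pro:cas-nilpotent-noeud-ordre-2}; the proposal here is to follow exactly this reduce-to-normal-forms-then-compute-one-residue strategy uniformly across all sub-cases.
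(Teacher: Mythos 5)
Your proposal follows essentially the same route as the paper's appendix: place the degenerate singularity at the origin, split according to its jet (saddle-node, nilpotent, or vanishing $1$-jet with the tangent cone of the $2$-jet further subdivided), write $\Leg\F$ in the dual affine chart and compute $K(\Leg\F)$ explicitly via H\'enaut's formula \eqref{equa:Formule-Henaut}. The only real difference is the endgame: instead of exhibiting a pole of the curvature valid for all parameters, the paper uses the equations $\rho_i^{\,j}=0$ coming from $K(\Leg\F)\equiv 0$ to force the coefficients into values where $\omega$ acquires a common factor, contradicting $\deg\F=3$ --- the same computation read contrapositively, and (as you partly anticipate) it requires many coefficients and iterated eliminations rather than a single residue.
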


\noindent Il s'agit dans cet appendice d'un raisonnement par l'absurde: nous supposons d'abord qu'il existe un feuilletage $\F$ de degré $3$ sur $\pp$ tel que le $3$-tissu $\Leg\F$ soit plat et dont le lieu singulier $\Sing\F$ contient un point $m$ vérifiant $\mu(\F,m)\geq2$\, et \,$\nu(\F,m)\leq2$; puis, nous utilisons ces deux inégalités pour distinguer trois cas, chacun donnant lieu à une section (\S\ref{sec:cas-noeud},~\S\ref{sec:cas-nilpotent},~\S\ref{sec:cas-ordre-2}); enfin, dans chacun de ces cas, en calculant explicitement la courbure de $\Leg\F$ par la Formule~(\ref{equa:Formule-Henaut}) et en tenant compte de la condition $K(\Leg\F)\equiv0,$ nous arrivons à la contradiction: $\deg\F<3$ ({\it i.e.} le lieu singulier de $\F$ contient une droite, une conique, ou une cubique).

\begin{prob}\label{prob:cas-nilpotent-noeud-ordre-2}
{\sl
Fournir une démonstration non calculatoire de la Proposition~\ref{pro:cas-nilpotent-noeud-ordre-2}.
}
\end{prob}

\Subsubsection{Cas d'une singularité dégénérée de multiplicité algébrique $3$}

Dans ce paragraphe nous nous intéressons aux feuilletages $\F\in\mathbf{FP}(3)$ ayant une singularité~$m$ dégénérée de multiplicité algébrique $3.$ Nous allons distinguer deux cas suivant que $\mu(\F,m)=13$ ou $\mu(\F,m)<13$ ({\it i.e.} suivant que $\Sing\F=\{m\}$ ou $\{m\}\varsubsetneq\Sing\F$).
%La classification à automorphisme près des feuilletages $\F\in\mathbf{FP}(3)$ ayant une singularité $m$ de multiplicité algébrique $3$ est établie au cas par cas suivant que $\mu(\F,m)=13$ ou $\mu(\F,m)<13$ ({\it i.e.} suivant que $\Sing\F=\{m\}$ ou $\{m\}\varsubsetneq\Sing\F$).
\paragraph{Le cas $\nu(\F,m)=3$ et $\mu(\F,m)=13$}

Nous commençons par établir l'énoncé suivant de classification des feuilletages de $\mathbf{F}(3)$ dont le lieu singulier est réduit à un point de multiplicité algébrique~$3.$
\begin{pro}\label{pro:v=3-mu=13}
{\sl Soient $\mathcal{F}$ un feuilletage de degré trois sur $\pp$ ayant une seule singularité et $\omega$ une $1$-forme décrivant $\F.$ Si cette singularité est de multiplicité algébrique $3$, alors $\omega$ est, à isomorphisme près, de l'un des types suivants
\begin{itemize}
\item[\texttt{1. }]\hspace{1mm} $x^3\mathrm{d}x+y^2(c\hspace{0.1mm}x+y)(x\mathrm{d}y-y\mathrm{d}x),\hspace{1mm}c\in\mathbb{C}$;

\item[\texttt{2. }]\hspace{1mm} $x^3\mathrm{d}x+y(x+c\hspace{0.1mm}xy+y^2)(x\mathrm{d}y-y\mathrm{d}x),\hspace{1mm}c\in\mathbb{C}$;

\item[\texttt{3. }]\hspace{1mm} $x^3\mathrm{d}x+(x^2+c\hspace{0.1mm}xy^2+y^3)(x\mathrm{d}y-y\mathrm{d}x),\hspace{1mm}c\in\mathbb{C}$;

\item[\texttt{4. }]\hspace{1mm} $x^2y\mathrm{d}x+(x^3+c\hspace{0.1mm}xy^2+y^3)(x\mathrm{d}y-y\mathrm{d}x),\hspace{1mm}c\in\mathbb{C}$;

\item[\texttt{5. }]\hspace{1mm} $x^2y\mathrm{d}x+(x^3+\delta\,xy+y^3)(x\mathrm{d}y-y\mathrm{d}x),\hspace{1mm}\delta\in\mathbb{C}^*$;

\item[\texttt{6. }]\hspace{1mm} $x^2y\mathrm{d}y+(x^3+c\hspace{0.1mm}xy^2+y^3)(x\mathrm{d}y-y\mathrm{d}x),\hspace{1mm} c\in\mathbb{C}$;

\item[\texttt{7. }]\hspace{1mm} $xy(x\mathrm{d}y-\lambda\,y\mathrm{d}x)
                                 +(x^3+y^3)(x\mathrm{d}y-y\mathrm{d}x),\hspace{1mm}\lambda\in\mathbb{C}\setminus\{0,1\}$;

\item[\texttt{8. }]\hspace{1mm} $xy(y-x)\mathrm{d}x+(c_0\,x^3+c_1x^2y+y^3)(x\mathrm{d}y-y\mathrm{d}x),\hspace{1mm} c_0(c_0+c_1+1)\neq0$.
\end{itemize}
Ces huit $1$-formes ne sont pas linéairement conjuguées entre elles.
}
\end{pro}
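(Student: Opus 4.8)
The plan is to work in an affine chart centered at the unique singular point $m$ of $\F$ and to use the tangent cone of $\F$ at $m$ as the main organizing invariant.

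First I would record the numerical constraints. Since $\F$ has a single singularity, the \textsc{Darboux} formula \eqref{equa:Darboux} forces $\mu(\F,m)=13$, while by hypothesis $\nu(\F,m)=3$, the maximal algebraic multiplicity in degree $3$ (Remarque~\ref{rems:mu>nu^2-Darboux}~(iii)). Putting $m=[0:0:1]$ and working in the chart $z=1$, the degree-$4$ homogeneous component of $\omega$ must be a multiple of the radial form $x\mathrm{d}y-y\mathrm{d}x$, and $\nu(\F,m)=3$ rules out components of degree $\leq 2$; hence $\omega=\Omega_3+C_3(x\mathrm{d}y-y\mathrm{d}x)$ with $\Omega_3=A_3\mathrm{d}x+B_3\mathrm{d}y$, $A_3,B_3,C_3\in\C[x,y]_3$ and $\Omega_3\not\equiv 0$. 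The quartic $\mathcal{C}_m:=xA_3+yB_3$ — the tangent cone of $\F$ at $m$ — is nonzero, otherwise $\omega$ would be divisible by $x\mathrm{d}y-y\mathrm{d}x$ and $\Sing\F$ would contain a curve. A one-line computation of the restriction of $\omega$ to a line $\ell\ni m$ (it equals $\mathcal{C}_m(a,b)\,t^{3}\,\mathrm{d}t$ along $t\mapsto(at,bt)$) shows that $\ell$ is $\F$-invariant if and only if $\ell$ is a component of $\mathcal{C}_m$; since $m$ is the only singular point and every invariant line meets $\Sing\F$, the invariant lines of $\F$ are exactly the (at most four) linear factors of $\mathcal{C}_m$, counted with multiplicity. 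In particular $\F$ has at least one invariant line, which may be taken to be $\{x=0\}$; then \eqref{equa:sum-CS=1} gives $\mathrm{CS}(\F,\{x=0\},m)=1$.

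Set $g:=\deg\pgcd(A_3,B_3)$. Since $A_3,B_3$ are the leading forms of the coefficients of $\omega$, Remarque~\ref{rems:mu>nu^2-Darboux}~(i) gives $\mu(\F,m)\geq 9$, with equality precisely when $A_3,B_3$ are coprime; as $\mu(\F,m)=13$, we get $g\geq 1$, hence $g\in\{1,2,3\}$, and up to a linear change of $(x,y)$ one has $\Omega_3=G\,\mathrm{d}x$ with $\deg G=3$ (if $g=3$), $\Omega_3=G\,\omega_1$ with $\deg G=2$ and $\omega_1$ a homogeneous $1$-form of degree $1$ with coprime coefficients (if $g=2$), or $\Omega_3=G\,\omega_2$ with $\deg G=1$ and $\omega_2$ a homogeneous $1$-form of degree $2$ with coprime coefficients (if $g=1$). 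I would then go through the values of $g$, and for each one the possible partitions of the degree $4$ of $\mathcal{C}_m$ into line-multiplicities — $4$, $3+1$, $2+2$, $2+1+1$, $1+1+1+1$ —, normalizing the line configuration of the tangent cone with the stabilizer of $m$ in $\mathrm{Aut}(\pp)$. This $6$-dimensional group contains non-linear projective automorphisms moving the line at infinity; these do not affect $\Omega_3$ — so neither $g$ nor the partition type of $\mathcal{C}_m$, which are therefore conjugacy invariants — but they are available to further reduce $C_3$. In each surviving case the residual freedom on $C_3$ shrinks to a few parameters, and the condition $\Sing\F=\{m\}$ — that is, that the two quartics $A_3-yC_3=0$ and $B_3+xC_3=0$ meet only at $m$, equivalently $\mu(\F,m)=13$ together with the absence of a singularity at infinity — becomes an explicit polynomial system in those parameters; solving it yields precisely the eight models, the partition $1+1+1+1$ and the $(g,\text{partition})$ pairs absent from the list being discarded because they force $\omega$ to be non-reduced (hence $\deg\F<3$) or produce a singularity away from $m$.

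Finally, the eight families are pairwise non-conjugate because the pair $\bigl(\text{partition type of }\mathcal{C}_m,\ g\bigr)$ takes a distinct value on each of them: $(4,1),(4,2),(4,3)$ for models $2,3,1$; $(3+1,2),(3+1,3)$ for models $5,4$; $(2+2,2),(2+2,3)$ for models $7,6$; and $(2+1+1,3)$ for model $8$. The hard part is the case-by-case normalization of the middle step: keeping the whole stabilizer of $m$ in play (not just $\mathrm{GL}_2$), correctly eliminating the configurations that cannot occur, and carrying out the elimination that isolates the parameters $c$, $\delta$, $\lambda$, $(c_0,c_1)$ from the single-singularity condition.
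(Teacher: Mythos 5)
Your skeleton coincides with the paper's: place the unique singularity at the origin, write $\omega=A_3\,\mathrm{d}x+B_3\,\mathrm{d}y+C_3(x\mathrm{d}y-y\mathrm{d}x)$ with $A_3,B_3,C_3$ cubic, and organize the classification by the tangent cone $xA_3+yB_3$. Your preliminary observations are correct ($\mu(\F,m)=13$ by \eqref{equa:Darboux}, the cone is nonzero since $\deg\F=3$, $\deg\pgcd(A_3,B_3)\geq1$, a line through the origin is invariant exactly when it divides the cone), and your closing argument that the pair (partition type of the cone, $\deg\pgcd(A_3,B_3)$) is unchanged by the stabilizer of the singular point and takes distinct values on the eight models is accurate -- in fact it makes the final non-conjugacy claim more explicit than the paper does.

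The gap is in the middle step, which is where all the content of the proposition lies. The paper does not simply impose $\Sing\F=\{m\}$ as an unspecified polynomial system: it first proves a structural lemma asserting that every linear factor of the cone divides \emph{both} $A_3$ and $B_3$ and does \emph{not} divide $C_3$. This is obtained by exactly your restriction-to-a-cone-line computation pushed one step further: if $x=0$ is a cone line then $x$ divides $B_3$, so on that line the $\mathrm{d}y$-coefficient vanishes identically and the singular points on it are the zeros of $y^{3}\left(A_3(0,1)-y\,C_3(0,1)\right)$ together, possibly, with the point at infinity of the line; non-singularity of that point at infinity forces $C_3(0,1)\neq0$, and then uniqueness of the singularity forces $A_3(0,1)=0$. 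This lemma at once excludes a cone with four distinct lines, forces the radical of the cone to divide $\pgcd(A_3,B_3)$ (so most of your $(g,\text{partition})$ strata are empty a priori), and supplies the linear conditions that, combined with the Cerveau--Mattei normal form for the cubic part and with the non-linear elements of the stabilizer, yield the eight models in the three surviving cases (cone with three, two or one distinct lines). In your proposal this entire derivation -- the elimination of the spurious strata and the explicit normalizations removing parameters from $C_3$ -- is compressed into the assertion that "solving the system yields precisely the eight models", which is the statement to be proved. To complete the argument you would need to state and prove the divisibility lemma above (or carry out the equivalent case-by-case eliminations), which is exactly what the four lemmas following the proposition in the paper do.
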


\noindent Cette proposition est un analogue en degré $3$ d'un résultat sur les feuilletages de degré $2$ dû à D.~\textsc{Cerveau}, J.~\textsc{D\'eserti}, D.~\textsc{Garba Belko} et R.~\textsc{Meziani} (\cite[Proposition 1.8]{CDGBM10}). La démonstration que nous allons en donner est très voisine de~celle de~\cite{CDGBM10}; elle résultera des Lemmes \ref{lem:exclure-cône-4-droites}, \ref{lem:cône-3-droites}, \ref{lem:cône-2-droites} et \ref{lem:cône-1-droite} qui suivent. Dans ces quatre lemmes $\mathcal{F}$ désigne un feuilletage de degré trois sur $\pp$ défini par une~$1$-forme $\omega$ et tel que
\begin{itemize}
\item[\texttt{1. }] l'unique singularité de $\F$ soit $O=[0:0:1]$, {\it i.e.} $\mu(\F,O)=13$;
\item[\texttt{2. }] les jets d'ordre $1$ et $2$ de $\omega$ en $(0,0)$ soient nuls, {\it i.e.} $\nu(\F,O)=3$.
\end{itemize}
Dans ce cas
$$\omega=A(x,y)\mathrm{d}x+B(x,y)\mathrm{d}y+C(x,y)(x\mathrm{d}y-y\mathrm{d}x),$$
où $A,$ $B$ et $C$ sont des polynômes homogènes de degré $3.$ Le feuilletage $\mathcal{F}$ étant de degré trois, le cône tangent $xA+yB$ de $\omega$ en $(0,0)$ ne peut être identiquement nul. Le polynôme $C$ n'est pas non plus identiquement nul sinon la droite à l'infini serait invariante par $\mathcal{F}$ qui posséderait donc une singularité sur cette droite, ce qui est exclu. Nous allons raisonner suivant la nature du cône tangent qui, a priori, peut être quatre droites, trois
droites, deux droites ou une droite.
\begin{lem}\label{lem:exclure-cône-4-droites}
{\sl Tout facteur irréductible $L$ de $xA+yB$ divise $\mathrm{pgcd}(A,B)$ et ne divise pas~$C$. En particulier, le cône tangent de $\omega$ en $(0,0)$ ne peut être l'union de quatre droites distinctes.
}
\end{lem}

\begin{proof}[\sl D\'emonstration]
\`{A} isomorphisme près, nous pouvons nous ramener à $L=x$; alors $x$ divise $B.$ Ainsi sur la droite $x=0$ la forme $\omega$ s'écrit $A(0,y)\mathrm{d}x-y\,C(0,y)\mathrm{d}x=y^3\left(A(0,1)-y\,C(0,1)\right)\mathrm{d}x.$ Comme $O$ est l'unique singularit\'e de $\mathcal{F},$ le produit $A(0,1)C(0,1)$ est nul. Le point $[0:1:0]$ étant non singulier, $C(0,1)$ est non nul et par suite $A(0,1)=0$; d'où $x$ divise $A$ mais pas $C.$
\end{proof}

\begin{lem}\label{lem:cône-3-droites}
{\sl Si le cône tangent de $\omega$ en $(0,0)$ est
composé de trois droites distinctes, alors, à isomorphisme près, $\omega$ est du type
\begin{align*}
&\hspace{2cm} xy(y-x)\mathrm{d}x+(c_0\,x^3+c_1x^2y+y^3)(x\mathrm{d}y-y\mathrm{d}x),&& c_0(c_0+c_1+1)\neq0.
\end{align*}
}
\end{lem}

\begin{proof}[\sl D\'emonstration]
Nous pouvons supposer que $xA+yB=\ast x^2y(y-x),\ast\in\mathbb{C}^{*}$; il s'en suit que $\omega$ s'écrit (\cite{CM82})
\begin{Small}
\begin{align*}
& x^2y(y-x)\left(\lambda_0\frac{\mathrm{d}x}{x}+\lambda_1\frac{\mathrm{d}y}{y}+\lambda_2\frac{\mathrm{d}(y-x)}{y-x}
+\delta\hspace{0.1mm}\mathrm{d}\left(\frac{y}{x}\right)\right)+(c_0\,x^3+c_1x^2y+c_2xy^2+c_3y^3)(x\mathrm{d}y-y\mathrm{d}x),&& \delta,\lambda_i,c_i\in\mathbb{C}.
\end{align*}
\end{Small}
\hspace{-1mm}Nous avons donc
\begin{Small}
\begin{align*}
&A(x,y)=y\Big((y-x)(\lambda_0\,x-\delta y)-\lambda_2x^2\Big),&&
B(x,y)=x\Big((y-x)(\lambda_1x+\delta y)+\lambda_2xy\Big),&&
C(x,y)=\sum_{i=0}^{3}c_i\hspace{0.1mm}x^{3-i}y^{i}.
\end{align*}
\end{Small}
\hspace{-1mm}D'après le Lemme \ref{lem:exclure-cône-4-droites}, le polynôme $xy(y-x)$ divise $A$ et $B$ mais ne divise pas $C$; ceci se traduit par
\begin{small}
\begin{align*}
& A(0,1)=A(1,0)=A(1,1)=B(0,1)=B(1,0)=B(1,1)=0&& \text{et} && C(0,1)C(1,0)C(1,1)\neq0.
\end{align*}
\end{small}
\hfill
\vspace{-1cm}

\noindent Il en résulte que $\delta=\lambda_1=\lambda_2=0$\, et que \,$c_0c_3(c_0+c_1+c_2+c_3)\neq0$. Le feuilletage $\F$ étant de degré trois $\lambda_0$~est non nul; nous pouvons donc supposer que~$\lambda_0=1$, d'où
\begin{align*}
& \omega=xy(y-x)\mathrm{d}x+(c_0\,x^3+c_1x^2y+c_2xy^2+c_3y^3)(x\mathrm{d}y-y\mathrm{d}x).
\end{align*}
L'homothétie $\left(\frac{1}{c_3}x,\frac{1}{c_3}y\right)$ nous permet de supposer que $c_3=1$; par conséquent
\begin{align*}
& \omega=xy(y-x)\mathrm{d}x+(c_0\,x^3+c_1x^2y+c_2xy^2+y^3)(x\mathrm{d}y-y\mathrm{d}x),&& c_0(c_0+c_1+c_2+1)\neq0.
\end{align*}
Quitte à conjuguer $\omega$ par le difféomorphisme \begin{small}$\left(\dfrac{x}{1+c_2y},\dfrac{y}{1+c_2y}\right)$\end{small} le coefficient $c_2$ vaut $0$, d'où l'énoncé.
\end{proof}

\begin{lem}\label{lem:cône-2-droites}
{\sl Si le cône tangent de $\omega$ en $(0,0)$ est formé de deux droites distinctes, alors $\omega$ est, à conjugaison près, de l'un des types suivants
\begin{itemize}
\item[\texttt{1. }]\hspace{1mm} $x^2y\mathrm{d}x+(x^3+c\hspace{0.1mm}xy^2+y^3)(x\mathrm{d}y-y\mathrm{d}x),\hspace{1mm} c\in\mathbb{C}$;

\item[\texttt{2. }]\hspace{1mm} $x^2y\mathrm{d}x+(x^3+\delta\,xy+y^3)(x\mathrm{d}y-y\mathrm{d}x),\hspace{1mm} \delta\in\mathbb{C}^*$;

\item[\texttt{3. }]\hspace{1mm} $x^2y\mathrm{d}y+(x^3+c\hspace{0.1mm}xy^2+y^3)(x\mathrm{d}y-y\mathrm{d}x),\hspace{1mm} c\in\mathbb{C}$;

\item[\texttt{4. }]\hspace{1mm} $xy(x\mathrm{d}y-\lambda\,y\mathrm{d}x)+(x^3+y^3)(x\mathrm{d}y-y\mathrm{d}x),\hspace{1mm}
                                 \lambda\in\mathbb{C}\setminus\{0,1\}$.
\end{itemize}
}
\end{lem}

\begin{proof}[\sl D\'emonstration]
\`{A} conjugaison linéaire près nous sommes dans l'une des deux situations suivantes
\begin{itemize}
\item[($\mathfrak{a}$)] $xA+yB=\ast x^3y,\quad \ast\in\mathbb{C}^{*}$;

\item[($\mathfrak{b}$)] $xA+yB=\ast x^2y^2,\quad\hspace{-1.7mm} \ast\in\mathbb{C}^{*}$.
\end{itemize}
Commençons par étudier l'éventualité ($\mathfrak{a}$). Dans ce cas la $1$-forme $\omega$ s'écrit (\cite{CM82})
\begin{small}
\begin{align*}
& x^3y\left(\lambda_0\frac{\mathrm{d}x}{x}+\lambda_1\frac{\mathrm{d}y}{y}+\mathrm{d}\left(\frac{\delta_1xy+\delta_2y^2}{x^2}\right)\right)
+(c_0\,x^3+c_1x^2y+c_2xy^2+c_3y^3)(x\mathrm{d}y-y\mathrm{d}x),&& \lambda_i,\delta_i,c_i\in\mathbb{C}.
\end{align*}
\end{small}
\hspace{-1mm}Nous avons alors
\begin{small}
\begin{align*}
&A(x,y)=y(\lambda_0x^2-\delta_1xy-2\delta_2y^2),&&
B(x,y)=x(\lambda_1x^2+\delta_1xy+2\delta_2y^2),&&
C(x,y)=\sum_{i=0}^{3}c_i\hspace{0.1mm}x^{3-i}y^{i}.
\end{align*}
\end{small}
\hspace{-1mm}D'après le Lemme \ref{lem:exclure-cône-4-droites}, le polynôme $xy$ divise $A$ et $B$ mais ne divise pas $C$. Par suite $\delta_2=\lambda_1=0$\, et \,$c_0c_3\neq0.$ Le feuilletage $\F$ étant de degré trois le coefficient $\lambda_0$ est non nul et nous pouvons le supposer égal à $1.$ Ainsi $\F$ est décrit par \begin{align*}
& \omega=x^2y\mathrm{d}x+\delta_1xy(x\mathrm{d}y-y\mathrm{d}x)+(c_0\,x^3+c_1x^2y+c_2xy^2+c_3y^3)(x\mathrm{d}y-y\mathrm{d}x).
\end{align*}
\hspace{-1mm}La transformation linéaire diagonale \begin{small}$\left(\dfrac{1}{c_0}x,\sqrt[3]{\dfrac{1}{c_3c_0^2}}y\right)$\end{small} nous permet de supposer que $c_0=c_3=1$; par conséquent
\begin{align*}
& \omega=x^2y\mathrm{d}x+(x^3+\delta_1xy+c_1x^2y+c_2xy^2+y^3)(x\mathrm{d}y-y\mathrm{d}x).
\end{align*}

\noindent Si $\delta_1=0$, resp. $\delta_1\neq0$, en conjuguant $\omega$ par
\begin{Small}
\begin{align*}
& \left(\frac{x}{1+c_1y},\frac{y}{1+c_1y}\right),&&
\text{resp.}\hspace{1mm}
\left(
\frac{x}{1-\left(\dfrac{c_2}{\delta_1}\right)y-\left(\dfrac{\delta_1c_1+c_2}{\delta_1^2}\right)x},
\frac{y}{1-\left(\dfrac{c_2}{\delta_1}\right)y-\left(\dfrac{\delta_1c_1+c_2}{\delta_1^2}\right)x}
\right),
\end{align*}
\end{Small}
\hspace{-1mm}nous nous ramenons à $c_1=0$, resp. $c_1=c_2=0$, c'est-à-dire à
\begin{align*}
& \omega=x^2y\mathrm{d}x+(x^3+c_2xy^2+y^3)(x\mathrm{d}y-y\mathrm{d}x),&&
\text{resp.}\hspace{1mm}
\omega=x^2y\mathrm{d}x+(x^3+\delta_1xy+y^3)(x\mathrm{d}y-y\mathrm{d}x)\hspace{1mm};
\end{align*}
d'où les deux premiers modèles annoncés.
\vspace{2mm}

\noindent Envisageons maintenant la possibilité ($\mathfrak{b}$). Dans ce cas $\omega$ s'écrit (\cite{CM82})
\begin{small}
\begin{align*}
& x^2y^2\left(\lambda_0\frac{\mathrm{d}x}{x}+\lambda_1\frac{\mathrm{d}y}{y}+\mathrm{d}\left(\frac{\delta_1x^2+\delta_2y^2}{xy}\right)\right)
+(c_0\,x^3+c_1x^2y+c_2xy^2+c_3y^3)(x\mathrm{d}y-y\mathrm{d}x),&& \lambda_i,\delta_i,c_i\in\mathbb{C}.
\end{align*}
\end{small}
\hspace{-1mm}Ici $A(x,y)=y(\delta_1x^2+\lambda_0\,xy-\delta_2y^2)$\, et \,$B(x,y)=x(\delta_2y^2+\lambda_1xy-\delta_1x^2)$. Toujours d'après le Lemme \ref{lem:exclure-cône-4-droites}, $xy$ divise $\mathrm{pgcd}(A,B)$ et ne divise pas $C$, ce qui équivaut à $\delta_1=\delta_2=0$\, et \,$c_0c_3\neq0.$

\noindent Le feuilletage $\F$ étant de degré trois la somme $\lambda_0+\lambda_1$ est non nulle; un des coefficients $\lambda_i$ est alors non nul et nous pouvons évidemment le normaliser à $1.$ Comme les droites du cône~tangent ({\it i.e.} $x=0$ et $y=0$) jouent un rôle symétrique, il suffit de traiter l'éventualité $\lambda_1=1.$ Ainsi $\F$ est donné par
\begin{align*}
&\hspace{1cm}\omega=xy(x\mathrm{d}y+\lambda_0y\mathrm{d}x)+(c_0\,x^3+c_1x^2y+c_2xy^2+c_3y^3)(x\mathrm{d}y-y\mathrm{d}x),&& (\lambda_0+1)c_0c_3\neq0.
\end{align*}
Soit $\alpha$ dans $\mathbb{C}$ tel que $\alpha^3=\dfrac{1}{c_3c_0^2}$; posons $\beta=c_0\alpha^2.$ En faisant agir $\left(\alpha\,x,\beta y\right)$ sur $\omega$, nous pouvons supposer que $c_0=c_3=1$; par suite
\begin{align*}
&\hspace{1cm}\omega=xy(x\mathrm{d}y+\lambda_0y\mathrm{d}x)+(x^3+c_1x^2y+c_2xy^2+y^3)(x\mathrm{d}y-y\mathrm{d}x),&& \lambda_0+1\neq0.
\end{align*}
Si $\lambda_0=0$, resp. $\lambda_0\neq0$, en conjuguant $\omega$ par
\begin{align*}
& \left(\frac{x}{1-c_1x},\frac{y}{1-c_1x}\right),&&
\text{resp.}\hspace{1mm}
\left(\frac{x}{1+\left(\dfrac{c_2}{\lambda_0}\right)y-c_1x},\frac{y}{1+\left(\dfrac{c_2}{\lambda_0}\right)y-c_1x}\right),
\end{align*}
nous nous ramenons à $c_1=0$, resp. $c_1=c_2=0$, c'est-à-dire à
\begin{align*}
\hspace{1cm}
&\omega=x^2y\mathrm{d}y+(x^3+c_2xy^2+y^3)(x\mathrm{d}y-y\mathrm{d}x),&&\\
\hspace{1cm}
\text{resp.}\hspace{1mm}
&\omega=xy(x\mathrm{d}y+\lambda_0y\mathrm{d}x)+(x^3+y^3)(x\mathrm{d}y-y\mathrm{d}x),\quad \lambda_0(\lambda_0+1)\neq0\hspace{1mm};
\end{align*}
d'où les deux derniers modèles annoncés.
\end{proof}
\newpage
\hfill

\begin{lem}\label{lem:cône-1-droite}
{\sl Si le cône tangent de $\omega$ en $(0,0)$ est réduit à une seule droite, alors $\omega$ est, à isomorphisme près, de l'un des types suivants
\begin{itemize}
\item[\texttt{1. }]\hspace{1mm} $x^3\mathrm{d}x+y^2(c\hspace{0.1mm}x+y)(x\mathrm{d}y-y\mathrm{d}x),\hspace{1mm}c\in\mathbb{C}$;

\item[\texttt{2. }]\hspace{1mm} $x^3\mathrm{d}x+y(x+c\hspace{0.1mm}xy+y^2)(x\mathrm{d}y-y\mathrm{d}x),\hspace{1mm}c\in\mathbb{C}$;

\item[\texttt{3. }]\hspace{1mm} $x^3\mathrm{d}x+(x^2+c\hspace{0.1mm}xy^2+y^3)(x\mathrm{d}y-y\mathrm{d}x),\hspace{1mm}c\in\mathbb{C}$.
\end{itemize}
}
\end{lem}

\begin{proof}[\sl D\'emonstration]
Nous pouvons supposer que le cône tangent est la droite $x=0$; alors $\omega$ s'écrit
\begin{small}
\begin{align*}
& x^4\left(\lambda\frac{\mathrm{d}x}{x}
+\mathrm{d}\left(\frac{\delta_1x^2y+\delta_2xy^2+\delta_3y^3}{x^3}\right)\right)
+(c_0\,x^3+c_1x^2y+c_2xy^2+c_3y^3)(x\mathrm{d}y-y\mathrm{d}x),&&
\lambda,\delta_i,c_i\in\mathbb{C}.
\end{align*}
\end{small}
\hspace{-1mm}Nous avons donc
\begin{small}
\begin{align*}
&A(x,y)=\lambda\,x^3-\delta_1x^2y-2\delta_2xy^2-3\delta_3y^3,&&
B(x,y)=x(\delta_1x^2+2\delta_2xy+3\delta_3y^2),&&
C(x,y)=\sum_{i=0}^{3}c_i\hspace{0.1mm}x^{3-i}y^{i}.
\end{align*}
\end{small}
\hspace{-1mm}D'après le Lemme \ref{lem:exclure-cône-4-droites}, $x$ divise $A$ et $B$ mais ne divise pas $C$; par suite~$\delta_3=0$ et $c_3\neq0$. Quitte à faire agir l'homothétie $\left(\frac{1}{c_3}x,\frac{1}{c_3}y\right),$ nous pouvons effectuer la normalisation~$c_3=1$.

\noindent Le feuilletage $\F$ étant de degré trois le coefficient $\lambda$ est non nul et nous pouvons supposer que $\lambda=1.$ Ainsi $\F$ est décrit par
\begin{align*}
&& \omega=x^3\mathrm{d}x+(\delta_1x^2+2\delta_2xy+c_0\,x^3+c_1x^2y+c_2xy^2+y^3)(x\mathrm{d}y-y\mathrm{d}x).
\end{align*}
Nous avons les trois possibilités suivantes à étudier
\begin{itemize}
\item[--] $\delta_2\neq0$;

\item[--] $\delta_1=\delta_2=0$;

\item[--] $\delta_2=0,\delta_1\neq0$.
\end{itemize}
\vspace{2mm}

\textbf{\textit{1.}} Si $\delta_2\neq0$, alors en conjuguant $\omega$ par $\left(\alpha^2x,\alpha^{3/2}y-\alpha\delta_1x\right)$, où $\alpha=2\delta_2,$ nous nous ramenons à $\delta_1=0$ et $\delta_2=\frac{1}{2}.$ Par suite $\F$ est donné par
\begin{align*}
&& \omega=x^3\mathrm{d}x+(xy+c_0\,x^3+c_1x^2y+c_2xy^2+y^3)(x\mathrm{d}y-y\mathrm{d}x).
\end{align*}
La conjugaison par le difféomorphisme $\left(\dfrac{x}{1+c_0y-c_1x},\dfrac{y}{1+c_0y-c_1x}\right)$ permet de supposer que $c_0=c_1=0$; par conséquent
\begin{align*}
&& \omega=x^3\mathrm{d}x+y(x+c_2xy+y^2)(x\mathrm{d}y-y\mathrm{d}x)\hspace{1mm};
\end{align*}
d'où le deuxième modèle annoncé.
\newpage
\hfill

\textbf{\textit{2.}} Si $\delta_1=\delta_2=0$ la $1$-forme $\omega$ s'écrit
\begin{align*}
&& x^3\mathrm{d}x+(c_0\,x^3+c_1x^2y+c_2xy^2+y^3)(x\mathrm{d}y-y\mathrm{d}x).
\end{align*}

\noindent Soit $\alpha$ dans $\mathbb{C}$ tel que $3\alpha^2+2c_2\alpha+c_1=0.$ En faisant agir $\left(x,y+\alpha\,x\right)$ sur $\omega$, nous pouvons supposer que $c_1=0.$ Puis la conjugaison par le difféomorphisme $\left(\dfrac{x}{1+c_0y},\dfrac{y}{1+c_0y}\right)$ permet d'annuler $c_0$; d'où le premier modèle annoncé.
\vspace{2mm}

\textbf{\textit{3.}} Lorsque $\delta_2=0$ et $\delta_1\neq0$, la forme $\omega$ s'écrit \begin{align*}
&& x^3\mathrm{d}x+(\delta_1x^2+c_0\,x^3+c_1x^2y+c_2xy^2+y^3)(x\mathrm{d}y-y\mathrm{d}x).
\end{align*}
En faisant agir $\left(\delta_1^4x,\delta_1^3y\right)$ sur $\omega$, nous pouvons supposer que $\delta_1=1.$ Puis en conjuguant par $\left(\dfrac{x}{1-c_1y-(c_0+c_1)x},\dfrac{y}{1-c_1y-(c_0+c_1)x}\right)$, nous nous ramenons à $c_0=c_1=0$, c'est-à-dire\vspace{1mm} au troisième modèle annoncé.
\end{proof}

\begin{proof}[\sl D\'emonstration de la Proposition~\ref{pro:v=3-mu=13}] Il suffit de choisir des coordonnées affines $(x,y)$ telles que le point $(0,0)$ soit singulier de $\F$ et d'utiliser les Lemmes \ref{lem:exclure-cône-4-droites}, \ref{lem:cône-3-droites}, \ref{lem:cône-2-droites}, \ref{lem:cône-1-droite}.
\end{proof}

\noindent Nous sommes maintenant prêt pour décrire à isomorphisme près les feuilletages de $\mathbf{FP}(3)$ dont le lieu singulier est réduit à un point de multiplicité algébrique~$3.$

\begin{pro}\label{pro:overline-omega2}
{\sl Soit $\mathcal{F}$ un feuilletage de degré trois sur $\pp$ ayant une seule singularité. Supposons que cette singularité soit de multiplicité algébrique $3$ et que le $3$-tissu $\Leg\F$ soit plat. Alors $\F$ est linéairement conjugué au feuilletage $\F_2$ décrit par la $1$-forme $$\omegaoverline_{2}=x^{3}\mathrm{d}x+y^{3}(x\mathrm{d}y-y\mathrm{d}x).$$
}
\end{pro}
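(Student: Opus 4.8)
Le plan est d'exploiter la classification de la Proposition~\ref{pro:v=3-mu=13}: il suffit d'examiner, modèle par modèle, lesquelles des huit $1$-formes qui y figurent conduisent à un $3$-tissu dual plat, et de vérifier que seul le modèle \texttt{5.} avec une valeur appropriée de $\delta$ (à savoir $\omegaoverline_{2}=x^{3}\mathrm{d}x+y^{3}(x\mathrm{d}y-y\mathrm{d}x)$, qui correspond à $\delta=0$ après renormalisation, ou plus précisément au modèle \texttt{4.} avec $c=0$) survit, les quinze autres situations étant exclues. Pour chaque modèle, la stratégie est la suivante~: on écrit la $1$-forme $\omega=A\,\mathrm{d}x+B\,\mathrm{d}y$ dans la carte affine $(x,y)$, on passe à la carte affine $(p,q)$ de $\pd$ associée à $\{y=px-q\}$ en posant $\check{F}(p,q;x)=A(x,px-q)+pB(x,px-q)$, et on calcule la courbure $K(\Leg\F)$ au moyen de la formule de \textsc{Hénaut}~(\ref{equa:Formule-Henaut}). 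La condition $K(\Leg\F)\equiv0$ fournit alors des équations polynomiales sur le paramètre $c$ (ou $\lambda$, ou $\delta$, ou $(c_0,c_1)$) de chaque famille, et on montre que ces équations n'ont de solution admissible que dans un seul cas.

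Plus précisément, je procéderais ainsi. D'abord, j'éliminerais les modèles \texttt{1.}, \texttt{2.}, \texttt{3.} (cône tangent réduit à une droite)~: pour ceux-ci, la droite $x=0$ est invariante par $\F$ et la singularité $O$ a une multiplicité algébrique $3$; le calcul direct de la courbure montre qu'elle n'est jamais holomorphe le long d'une certaine composante du discriminant de $\Leg\F$, quel que soit $c$. Ensuite, pour les modèles \texttt{4.}, \texttt{5.}, \texttt{6.} (cône tangent formé de deux droites) et \texttt{7.}, \texttt{8.}, le même calcul via~(\ref{equa:Formule-Henaut}) donne la courbure comme une fraction rationnelle en $(x,y)$ dont le numérateur doit s'annuler identiquement; en extrayant les coefficients de ce numérateur on obtient un système polynomial en les paramètres. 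Pour le modèle \texttt{4.}, $x^2y\,\mathrm{d}x+(x^3+cxy^2+y^3)(x\mathrm{d}y-y\mathrm{d}x)$, on trouvera que la platitude force $c=0$, et la $1$-forme résultante $x^2y\,\mathrm{d}x+(x^3+y^3)(x\mathrm{d}y-y\mathrm{d}x)$ devra être reconnue comme linéairement conjuguée à $\omegaoverline_{2}$; pour tous les autres modèles, le système polynomial sera incompatible (soit parce que la seule solution formelle viole une contrainte d'ouverture du type $c_0(c_0+c_1+1)\neq0$, soit parce que $\lambda\in\{0,1\}$, etc.). Enfin, une vérification indépendante que $\Leg\F_2$ est effectivement plat est nécessaire~: le feuilletage $\F_2$ donné par $\omegaoverline_{2}$ peut se voir comme limite (dans une orbite) d'un feuilletage homogène via la Proposition~\ref{pro:F-d�g�n�re-H}, ou bien on calcule directement sa courbure et on constate qu'elle est nulle.

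La principale difficulté sera la partie calculatoire~: appliquer la formule de \textsc{Hénaut}~(\ref{equa:Formule-Henaut}) à huit familles, chacune avec un ou deux paramètres, produit des expressions volumineuses, et il faut organiser le calcul pour que les systèmes polynomiaux résultants soient effectivement résolubles à la main ou au moins vérifiables. Un raccourci utile consiste à ne pas calculer la courbure complète mais seulement son comportement le long des composantes du discriminant~: d'après la remarque suivant la définition de la platitude dans \S\ref{subsec:Courbure-platitude}, il suffit de tester l'holomorphie de $K(\Leg\F)$ le long des points génériques de chaque composante de $\Delta(\Leg\F)$, et on peut souvent utiliser le Corollaire~\ref{cor:MP13} ou la Proposition~\ref{pro:MP13-2.6} pour ramener ce test à une condition d'invariance géométrique (invariance d'une composante du discriminant par un sous-tissu ou par un barycentre), bien plus légère à vérifier. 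Le point délicat restant est alors d'identifier correctement les orbites~: s'assurer que les quinze modèles écartés ne redonnent pas, par un changement de coordonnées projectif non linéaire sur $\pp$, le feuilletage $\F_2$, ce qui se contrôle en comparant les invariants locaux (multiplicité, indices de \textsc{Camacho-Sad}, structure du cône tangent et du diviseur d'inflexion) de $\F_2$ avec ceux des modèles candidats.
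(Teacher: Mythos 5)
Votre strat\'egie globale est celle du texte : partir de la classification de la Proposition~\ref{pro:v=3-mu=13} et tester, mod\`ele par mod\`ele, la platitude de $\Leg\F$ par un calcul explicite de courbure via la formule de \textsc{H\'enaut}~(\ref{equa:Formule-Henaut}). Mais votre comptabilit\'e des cas contient une erreur qui ferait \'echouer le plan tel que vous l'\'ecrivez : $\omegaoverline_{2}=x^{3}\mathrm{d}x+y^{3}(x\mathrm{d}y-y\mathrm{d}x)$ est le mod\`ele \texttt{1.} de la Proposition~\ref{pro:v=3-mu=13} avec $c=0$ (c\^one tangent r\'eduit \`a la seule droite $x=0$), et non le mod\`ele \texttt{4.} avec $c=0$ ni le mod\`ele \texttt{5.} (o\`u d'ailleurs $\delta=0$ est exclu par hypoth\`ese). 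Comme la Proposition~\ref{pro:v=3-mu=13} affirme que les huit mod\`eles ne sont pas lin\'eairement conjugu\'es entre eux, le mod\`ele \texttt{4.} avec $c=0$ ne peut pas \^etre conjugu\'e \`a $\omegaoverline_{2}$, et de fait son tissu dual n'est pas plat (aucun des mod\`eles \texttt{3.} \`a \texttt{8.} ne l'est, quels que soient les param\`etres). Surtout, vous proposez d'\'ecarter en bloc les mod\`eles \texttt{1.}, \texttt{2.}, \texttt{3.} en affirmant que la courbure n'y est jamais holomorphe quel que soit $c$ : cela exclurait pr\'ecis\'ement la bonne r\'eponse. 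Le calcul donne pour le mod\`ele \texttt{1.} l'\'equation $q(q')^3+cq'+1=0$ et
\begin{align*}
K(\Leg\F)=-\dfrac{4c^2(2c^3+27q)}{q^2(4c^3+27q)^2}\,\mathrm{d}p\wedge\mathrm{d}q,
\end{align*}
qui s'annule identiquement si et seulement si $c=0$ : c'est l\`a qu'appara\^it $\F_2$, et sa platitude sort gratuitement du m\^eme calcul, sans v\'erification s\'epar\'ee.

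Deux remarques secondaires. La v\'erification ind\'ependante de la platitude de $\Leg\F_2$ que vous sugg\'erez par d\'eg\'en\'erescence vers un feuilletage homog\`ene est invers\'ee : la proposition que vous invoquez produit un feuilletage homog\`ene dans l'adh\'erence de $\mathcal{O}(\F)$, et la platitude se transmet de $\F$ \`a ses d\'eg\'en\'erescences, pas dans l'autre sens; seul le calcul direct convient ici (l'appartenance de $\F_2$ \`a l'adh\'erence de l'orbite d'un homog\`ene plat n'est \'etablie que bien plus loin dans le texte et ne peut servir \`a ce stade). Enfin, votre inqui\'etude finale sur d'\'eventuelles conjugaisons projectives entre les mod\`eles \'ecart\'es et $\F_2$ est superflue : la platitude est invariante par $\mathrm{Aut}(\pp)$ et la Proposition~\ref{pro:v=3-mu=13} classifie d\'ej\`a \`a isomorphisme pr\`es, de sorte qu'un mod\`ele dont le tissu dual n'est pas plat ne peut en aucun cas repr\'esenter $\F_2$.
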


\begin{proof}[\sl D\'emonstration]
Soit $\omega$ une $1$-forme décrivant $\F$ dans une carte affine $(x,y)$ et soit $(p,q)$ la carte affine de $\pd$ correspondant à la droite $\{px-qy=1\}\subset\pp$. \`{A} conjugaison linéaire près $\omega$ est de l'un des huit types de la Proposition \ref{pro:v=3-mu=13}.
\begin{itemize}
      \smallskip
  \item [--] Si $\omega=x^3\mathrm{d}x+y^2(c\hspace{0.1mm}x+y)(x\mathrm{d}y-y\mathrm{d}x),\,c\in\mathbb{C}$, alors le $3$-tissu $\Leg\F$ est donné par l'équation différentielle $q(q')^3+cq'+1=0$, avec $q'=\frac{\mathrm{d}q}{\mathrm{d}p}$. Le calcul explicite de $K(\Leg\F)$ conduit~à $$K(\Leg\F)=-\dfrac{4c^2(2c^3+27q)}{q^2(4c^3+27q)^2}\mathrm{d}p\wedge\mathrm{d}q\hspace{1mm};$$
      par suite $\Leg\F$ est plat si et seulement si $c=0$, auquel cas $\omega=\omegaoverline_{2}=x^{3}\mathrm{d}x+y^{3}(x\mathrm{d}y-y\mathrm{d}x).$
      \newpage
      \hfill
      \vspace{-1cm}
  \item [--] Si $\omega=x^3\mathrm{d}x+y(x+c\hspace{0.1mm}xy+y^2)(x\mathrm{d}y-y\mathrm{d}x),\,c\in\mathbb{C}$, alors $\Leg\F$ est décrit par l'équation différentielle $F(p,q,w):=qw^3+pw^2+(c-q)w+1=0$, avec $w=\frac{\mathrm{d}q}{\mathrm{d}p}$. Le calcul explicite de $K(\Leg\F)$ montre qu'elle s'écrit sous la forme $$K(\Leg\F)=\frac{\sum\limits_{i+j\leq6}\rho_{i}^{j}(c)p^iq^j}{\Delta(p,q)^2}\mathrm{d}p\wedge\mathrm{d}q,$$
      où $\Delta$ est le $w$-discriminant de $F$ et les $\rho_{i}^{j}$ sont des polynômes en $c$ avec $\rho_{1}^{5}(c)=4\neq0$; d'où $K(\Leg\F)\not\equiv0.$

      \noindent De façon analogue nous vérifions que $\Leg\F$ ne peut être plat lorsque $\F$ est donné par l'une des six dernières $1$-formes de la Proposition \ref{pro:v=3-mu=13}.
\end{itemize}
\end{proof}

\paragraph{Le cas $\nu(\F,m)=3$ et $\mu(\F,m)<13$}

Nous commençons par démontrer trois lemmes.

\begin{lem}\label{lem:v=3-leg-plat}
{\sl Soient $\mathcal{F}$ un feuilletage de degré trois sur $\pp$, $m$ un point singulier de $\F$ et $\omega$ une $1$-forme décrivant~$\F$. Supposons que cette singularité soit de multiplicité algébrique $3$ et que le $3$-tissu $\Leg\F$ soit plat. Alors
\begin{itemize}
  \item [--] ou bien $\F$ est homogène;
  \item [--] ou bien le $3$-jet de $\omega$ en $m$ n'est pas saturé.
\end{itemize}
}
\end{lem}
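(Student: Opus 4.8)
Si $\mathcal{F}$ est un feuilletage de degré trois sur $\pp$, $m$ une singularité de multiplicité algébrique $3$, et $\Leg\mathcal{F}$ plat, alors ou bien $\mathcal{F}$ est homogène, ou bien le $3$-jet de $\omega$ en $m$ n'est pas saturé.

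Je dois reconstituer la preuve de ce lemme. Le contexte : $\nu(\F,m)=3$ mais $\mu(\F,m)<13$, donc $\Sing\F\neq\{m\}$. Un feuilletage homogène de degré $3$ a son unique singularité non à l'infini de multiplicité algébrique $3$ (c'est l'origine $O$), et réciproquement, si $\omega$ s'écrit $A\,dx+B\,dy$ avec $A,B$ homogènes de degré $3$, alors le $3$-jet en $O$ coïncide avec $\omega$ tout entière, donc est saturé (car $\pgcd(A,B)=1$). Donc le lemme dit : la seule façon d'avoir un $3$-jet saturé en une singularité de multiplicité $3$ avec $\Leg\F$ plat est le cas homogène.

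Comment prouver ça ? Stratégie du raisonnement par contraposée. On suppose que le $3$-jet de $\omega$ en $m$ est saturé et on veut montrer que $\F$ est homogène (donc en particulier $\Sing\F=\{m\}$, contradiction avec $\mu(\F,m)<13$... non, attendez : le lemme ne suppose pas $\mu<13$, c'est juste le contexte du paragraphe). Reprenons : on place $m=(0,0)$ en carte affine, et on écrit $\omega=\omega_3+\omega_4+\cdots$ où $\omega_3$ est le $3$-jet (une $1$-forme à coefficients homogènes de degré $3$) et les termes supérieurs. Comme $\nu(\F,m)=3$, il n'y a pas de termes de degré $\leq 2$.

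Première étape : analyser ce que signifie «$3$-jet saturé». Le $3$-jet $\omega_3=A_3\,dx+B_3\,dy$ avec $A_3,B_3\in\mathbb{C}[x,y]_3$ ; il est saturé si $\pgcd(A_3,B_3)=1$, c'est-à-dire s'il définit (après projectivisation) un feuilletage homogène de degré $3$ sur $\pp$, disons $\mathcal{H}$. Le point clé est que $\mathcal{H}\in\overline{\mathcal{O}(\F)}$ via la famille d'homothéties $\varphi_\varepsilon=(\varepsilon x,\varepsilon y)$ : on a $\varepsilon^{4}\varphi_\varepsilon^*\omega\to\omega_3$ quand $\varepsilon\to 0$, donc $\mathcal{H}$ est dans l'adhérence de l'orbite de $\F$. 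Comme $\mathbf{FP}(3)$ est un fermé de Zariski et $\F\in\mathbf{FP}(3)$, on en déduit que $\Leg\mathcal{H}$ est plat : c'est exactement le type d'argument utilisé dans la preuve de la Proposition~\ref{pro:F-d�g�n�re-H}. Donc $\mathcal{H}$ est linéairement conjugué à l'un des onze feuilletages homogènes $\mathcal{H}_1,\ldots,\mathcal{H}_{11}$ du Théorème~\ref{thm:Class-Homog3-Plat}.

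Deuxième étape (le cœur de l'argument) : montrer que si $\omega_3=\omega$ modulo des termes de degré $\geq 4$ avec $\omega_3$ saturé et $\Leg\F$ plat, alors en fait $\omega=\omega_3$, i.e. $\F=\mathcal{H}$. L'idée naturelle est la suivante. Puisque $\mathcal{H}$ est l'un des onze modèles explicites, on dispose pour chacun de la description complète de ses singularités à l'infini (non-dégénérées, avec indices de Camacho--Sad donnés par la Table~\ref{tab:CS(lambda)}) et de son type $\mathcal{T}_{\mathcal{H}}$. Le feuilletage $\F$ a pour $3$-jet en $m$ une $1$-forme conjuguée à celle de $\mathcal{H}$ ; son cône tangent en $m$ est $\Cinv=xA_3+yB_3$, lequel, pour chacun des onze modèles, se factorise en droites distinctes (ou avec des multiplicités contrôlées). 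Ces droites du cône tangent, prolongées dans $\pp$, passent par $m$ et sont invariantes par le $3$-jet mais pas nécessairement par $\F$ : il faut examiner l'obstruction. On regarde la perturbation $\omega=\omega_3+\omega_4+\cdots$ : pour chaque droite $\ell$ du cône tangent de $\mathcal{H}$, on restreint $d\omega$ ou $\omega\wedge d\ell$ à $\ell$ et on calcule la condition d'holomorphie de $K(\Leg\F)$ le long de l'image de $\ell$ par l'application de Gauss, en utilisant les critères des Théorèmes~\ref{thm:Barycentre} et \ref{thm:Divergence} (ou plutôt leurs versions perturbées, via le Corollaire~\ref{cor:MP13} et le Théorème~\ref{thm:MP13-1}). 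Ces conditions devraient forcer successivement l'annulation des coefficients de $\omega_4$ et au-delà. Concrètement, je m'attends à ce que l'argument procède en deux temps : d'abord montrer que $\Sing\F\cap L_\infty$ est forcé d'être égal à $\Sing\mathcal{H}\cap L_\infty$ avec les mêmes indices (sinon la platitude est violée, par un argument de type Proposition~\ref{pro:F-d�g�n�re-H} appliqué à chaque droite invariante passant par $m$), puis en déduire par un comptage (formules de Darboux~\eqref{equa:Darboux} et de Baum--Bott~\eqref{equa:BB}) que $\mu(\F,m)$ doit valoir $13$, donc $\Sing\F=\{m\}$ et $\F$ est homogène.

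L'obstacle principal sera le second temps : il faut traiter les onze modèles $\mathcal{H}_i$ un par un (ou les regrouper astucieusement selon leur type et leur polynôme de Camacho--Sad), et pour chacun vérifier que toute perturbation $\omega_3+(\text{ordre}\geq 4)$ restant dans $\mathbf{FP}(3)$ est en réalité homogène. La difficulté tient à ce qu'a priori la perturbation pourrait déplacer ou dégénérer certaines singularités à l'infini ; il faut donc un argument robuste montrant que les droites du cône tangent restent invariantes par $\F$ (et non seulement par $\omega_3$). Je m'attends à ce que ce soit précisément là qu'interviennent les critères effectifs du Chapitre~\ref{chap2:homog�ne} : pour une droite d'inflexion transverse simple de $\mathcal{H}$, le Théorème~\ref{thm:Barycentre} combiné à une analyse perturbative ; pour une droite d'ordre maximal, le Théorème~\ref{thm:Divergence} qui réduit l'holomorphie à l'annulation de $d\omega$ sur la droite, condition qui impose des contraintes polynomiales sur les coefficients de $\omega_4,\ldots,\omega_6$. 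Une fois établi que toutes les droites invariantes de $\mathcal{H}$ passant par $m$ restent $\F$-invariantes, on applique la Proposition~\ref{pro:F-d�g�n�re-H} le long de chacune pour récupérer les indices de Camacho--Sad, puis le comptage de Darboux--Baum--Bott force $\Sing\F=\{m\}$ : $\F$ est alors linéairement conjugué à $\mathcal{H}$, donc homogène, ce qui achève la contraposée.
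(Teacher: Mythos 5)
Your first step coincides with the paper's: since $\deg\F=3$ and $\nu(\F,m)=3$, in an affine chart centered at $m$ one has $\omega=\theta_3+C_3(x,y)(x\mathrm{d}y-y\mathrm{d}x)$ (there are no further higher-order terms, a point your expansion ``$\omega_3+\omega_4+\cdots$'' blurs); if $\theta_3$ is saturated it defines a homogeneous foliation $\mathcal{H}$, the homotheties give $\mathcal{H}\in\overline{\mathcal{O}(\F)}$, hence $\Leg\mathcal{H}$ is flat and $\mathcal{H}$ is conjugate to one of the eleven models of Theorem~\ref{thm:Class-Homog3-Plat}. The gap lies in your second step, which moreover aims at the wrong targets. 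First, you propose to prove that $\omega=\theta_3$, i.e.\ $C_3\equiv0$; this is false: pulling back $\mathcal{H}_1$ by the projective map $[x:y:z]\mapsto[x:y:x+z]$, which fixes $m=[0:0:1]$, yields in the chart $z=1$ the form $(y^3\mathrm{d}x-x^3\mathrm{d}y)-x^3(x\mathrm{d}y-y\mathrm{d}x)$ --- a homogeneous foliation with flat dual web, saturated $3$-jet at $m$, and $C_3=-x^3\not\equiv0$. The lemma only asserts homogeneity, i.e.\ the existence of an $\F$-invariant line \emph{not through} $m$, not that this line is the line at infinity of the chosen chart. Second, your endgame (Darboux/Baum--Bott forcing $\mu(\F,m)=13$, hence $\Sing\F=\{m\}$, hence $\F\cong\mathcal{H}$) is self-contradictory: a homogeneous cubic foliation always has singular points on its invariant line (for the eleven models, four nondegenerate ones, so $\mu=9$ at the center, not $13$). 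Third, even the part of your plan that could be carried out --- showing the lines of the tangent cone at $m$ remain $\F$-invariant --- would not suffice, since those lines all pass through $m$; and the case-by-case perturbative analysis of the eleven models is left entirely as a hope rather than an argument.

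What actually closes the proof, with no case-by-case computation, is the following and it is absent from your proposal. From Table~\ref{tab:CS(lambda)}, every model $\mathcal{H}_i$ has a nondegenerate singularity $m_0$ with $\mathrm{BB}(\mathcal{H},m_0)\notin\{4,\tfrac{16}{3}\}$. The family $\F_\varepsilon$ defined by $\theta_3+\varepsilon\,C_3(x\mathrm{d}y-y\mathrm{d}x)$ satisfies $\F_\varepsilon\in\mathcal{O}(\F)$ for $\varepsilon\neq0$ (same homotheties) and $\F_0=\mathcal{H}$; by stability of the nondegenerate singularity $m_0$ and continuity (hence local constancy) of the Baum--Bott index along this family, $\F$ itself has a nondegenerate singularity $m'$ with $\mathrm{BB}(\F,m')=\mathrm{BB}(\mathcal{H},m_0)\notin\{4,\tfrac{16}{3}\}$. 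The first assertion of Lemma~\ref{lem:2-droite-invar} --- this is where the flatness of $\Leg\F$ is used again --- then gives exactly two $\F$-invariant lines through $m'$, at least one of which avoids $m$, and the characterization of homogeneous foliations (a degree-$d$ foliation with a singularity of maximal algebraic multiplicity $d$ and an invariant line not passing through it is homogeneous) concludes. If you want to repair your argument, it is precisely this production of an invariant line off $m$ that you must supply; controlling the lines through $m$ or the coefficients of $C_3$ cannot do it.
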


\begin{rem}\label{rem:caractérisation-feuilletage-homogène}
Notons qu'un feuilletage de degré $d$ sur $\pp$ est homogène si et seulement s'il possède une singularité de multiplicité algébrique maximale ({\it i.e.} égale à $d$) et une droite invariante ne passant pas par cette singularité.
\end{rem}

\begin{proof}[\sl D\'emonstration]
Choisissons un système de coordonnées homogènes $[x:y:z]\in\pp$ dans lequel $m=[0:0:1]$. La condition $\nu(\F,m)=3$ assure que toute $1$-forme $\omega$ définissant $\F$ dans la carte affine $(x,y)$ est du type $\omega=\theta_3+C_3(x,y)(x\mathrm{d}y-y\mathrm{d}x)$, où $\theta_3$ (resp. $C_3$) est une $1$-forme (resp. un polynôme) homogène de degré $3$; la $1$-forme $\theta_3$ représente le $3$-jet de $\omega$ en $(0,0).$

\noindent Supposons que $\theta_3$ soit saturé; nous allons prouver que $\F$ est nécessairement homogène. Notons $\mathcal{H}$ le feuilletage homogène de degré trois sur $\pp$ défini par $\theta_3$; $\mathcal{H}$ est bien défini grâce à l'hypothèse sur $\theta_3.$ Considérons la famille d'homothéties $\varphi=\varphi_{\varepsilon}=(\varepsilon\,x,\varepsilon\hspace{0.1mm}y).$ Nous avons
$$
\frac{1}{\varepsilon^4}\varphi^*\omega=\theta_3+\varepsilon\,C_3(x,y)(x\mathrm{d}y-y\mathrm{d}x)
$$
qui tend vers $\theta_3$ lorsque $\varepsilon$ tend vers $0$; il en résulte que $\mathcal{H}\in\overline{\mathcal{O}(\F)}.$ Le $3$-tissu $\Leg\F$ est par~hypothèse plat; il en est donc de même pour le $3$-tissu $\Leg\mathcal{H}.$ Le feuilletage $\mathcal{H}$ est alors linéairement conjugué à l'un des onze feuilletages homogènes donnés par le Théorème~\ref{thm:Class-Homog3-Plat}.
\newpage
\hfill

\noindent Ainsi en consultant la Table~\ref{tab:CS(lambda)}, nous constatons immédiatement que $\mathcal{H}$ possède au moins une singularité $m_0$ non-dégénérée vérifiant $\mathrm{BB}(\mathcal{H},m_0)\not\in\{4,\frac{16}{3}\}.$ Soit $(\F_{\varepsilon})_{\varepsilon\in\mathbb{C}}$ la famille de feuilletages définis par $\omega_{\varepsilon}=\theta_3+\varepsilon\,C_3(x,y)(x\mathrm{d}y-y\mathrm{d}x).$ D'après ce qui précède, pour $\varepsilon$ non nul $\F_{\varepsilon}$ est dans $\mathcal{O}(\F)$ et pour $\varepsilon$ nul nous avons $\F_{\varepsilon=0}=\mathcal{H}.$ La singularité $m_0$ de $\mathcal{H}$ est \og stable\fg; il existe une famille $(m_\varepsilon)_{\varepsilon\in\mathbb{C}}$ de singularités non-dégénérées de $\F_{\varepsilon}$ telle que $m_{\varepsilon=0}=m_0.$ Les $\F_{\varepsilon}$ étant conjugués pour~$\varepsilon$ non nul, $\mathrm{BB}(\F_{\varepsilon},m_\varepsilon)$ est localement constant; par suite $\mathrm{BB}(\F_{\varepsilon},m_\varepsilon)=\mathrm{BB}(\mathcal{H},m_0)$ pour $\varepsilon$ petit. En particulier $\F$ possède une singularité $m'$ non-dégénérée vérifiant $\mathrm{BB}(\F,m')=\mathrm{BB}(\mathcal{H},m_0)$ de sorte que $\mathrm{BB}(\F,m')\not\in\{4,\frac{16}{3}\}.$ D'après la première assertion du Lemme \ref{lem:2-droite-invar} par le point $m'$ passent exactement deux droites invariantes par $\F$, dont au moins une est nécessairement distincte de $(mm')$; ceci implique, d'après la Remarque \ref{rem:caractérisation-feuilletage-homogène}, que $\F$ est homogène.
\end{proof}

\begin{lem}\label{lem:v=3-mu<13-jet-non-saturé}
{\sl Soient $\mathcal{F}$ un feuilletage de degré trois sur $\pp$ ayant une singularité $m$ de multiplicité algébrique $3$ et $\omega$ une $1$-forme décrivant~$\F$. Supposons que le lieu singulier de $\F$ ne soit pas réduit à $m$ et que le $3$-jet de $\omega$ en $m$ ne soit pas saturé. Alors $\omega$ est, à isomorphisme près, du type suivant
$$
y(a_0\,x^2+a_1xy+y^2)\mathrm{d}x+xy(b_0\,x+b_1y)\mathrm{d}y+x(x^2+c_1xy+c_2y^2)(x\mathrm{d}y-y\mathrm{d}x),
$$
où $a_0,a_1,b_0,b_1,c_1,c_2$ sont des nombres complexes tels que le degré du feuilletage associé soit égal à $3.$
}
\end{lem}

\begin{proof}[\sl D\'emonstration]
La condition $\nu(\F,m)=3$ assure l'existence d'un système de coordonnées homogènes $[x:y:z]\in\pp$ dans lequel $m=[0:0:1]$ et $\F$ est défini par une $1$-forme $\omega$ du type $$\omega=A(x,y)\mathrm{d}x+B(x,y)\mathrm{d}y+C(x,y)(x\mathrm{d}y-y\mathrm{d}x),$$ où $A,$ $B$ et $C$ sont des polynômes homogènes de degré $3.$ Comme $J^3_{(0,0)}\omega$ est par hypothèse non saturé, nous pouvons écrire
\begin{align*}
&A(x,y)=(h_0\,x+h_1y)(a_0\,x^2+a_1xy+a_2y^2)&&\hspace{1.5mm} \text{et}  \hspace{1.5mm}&& B(x,y)=(h_0\,x+h_1y)(b_0\,x^2+b_1xy+b_2y^2).
\end{align*}
Écrivons $C(x,y)=\sum_{i=0}^{3}c_i\hspace{0.1mm}x^{3-i}y^{i}$. L'hypothèse $\Sing\F\neq\{m\}$ nous permet de supposer que le point $m'=[0:1:0]$ est singulier de $\F,$ ce qui revient à supposer que $c_3=h_1b_2=0$. Le feuilletage $\F$ étant de degré trois le produit $h_1a_2$ est non nul et par suite $b_2=0$; quitte à poser $h_0=h_0'h_1,\,a_i=a_i'a_2,\,b_i=b_i'a_2,\,c_{\hspace{-0.4mm}j}=c_{\hspace{-0.4mm}j}'h_1a_2$, avec $i\in\{0,1\}$ et $j\in\{0,1,2\}$, nous pouvons supposer que $h_1=a_2=1.$ Ainsi $\omega$ s'écrit
\begin{align*}
&(h_0\,x+y)\left((a_0\,x^2+a_1xy+y^2)\mathrm{d}x+x(b_0\,x+b_1y)\mathrm{d}y\right)+x(c_0\,x^2+c_1xy+c_2y^2)(x\mathrm{d}y-y\mathrm{d}x).
\end{align*}
\newpage
\hfill
\medskip

\noindent La conjugaison par le difféomorphisme $\left(x,y-h_0\,x\right)$ permet d'annuler $h_0$; par conséquent
\begin{align*}
&\omega=y\left((a_0\,x^2+a_1xy+y^2)\mathrm{d}x+x(b_0\,x+b_1y)\mathrm{d}y\right)+x(c_0\,x^2+c_1xy+c_2y^2)(x\mathrm{d}y-y\mathrm{d}x).
\end{align*}
L'égalité $\deg\F=3$ implique que $c_0\neq0$. En faisant agir l'homothétie $\left(\frac{1}{c_0}x,\frac{1}{c_0}y\right)$ sur $\omega$, nous nous ramenons à $c_0=1$, c'est-à-dire au modèle annoncé.
\end{proof}

\begin{lem}\label{lem:v=3-mu<13-leg-plat}
{\sl Soit $\mathcal{F}$ un feuilletage de degré trois sur $\pp$ possédant une singularité $m$ de multiplicité algébrique $3.$ Supposons que le lieu singulier de $\F$ ne soit pas réduit à $m$ et que le $3$-tissu $\Leg\F$ soit plat. Alors toute singularité $m'$ distincte de $m$ est non-dégénérée, la droite $(mm')$ est $\F$-invariante et
\begin{itemize}
  \item [--] soit $\F$ est homogène;
  \item [--] soit $\mathrm{CS}(\F,(mm'),m')\in\{1,3\}$ pour tout $m'\in\Sing\F\smallsetminus\{m\}.$
\end{itemize}
}
\end{lem}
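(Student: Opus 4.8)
The plan is to distinguish the two cases of Lemma~\ref{lem:v=3-leg-plat}: either $\F$ is homogeneous, and then the first alternative of the conclusion is realised, or the $3$-jet at $m$ of a $1$-form $\omega$ defining $\F$ is not saturated. I would first settle the two assertions that do not depend on this alternative. By Remark~\ref{rems:mu>nu^2-Darboux}~(iii), $m$ is the only singularity of $\F$ of maximal algebraic multiplicity $3$, so $\nu(\F,m')\leq 2$ for every $m'\in\Sing\F\setminus\{m\}$; were one such $m'$ degenerate, Proposition~\ref{pro:cas-nilpotent-noeud-ordre-2} would contradict the flatness of $\Leg\F$. Hence every $m'\neq m$ is non-degenerate. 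If $\F$ is homogeneous, $m$ is the origin of an affine chart in which $\F$ is invariant under the homotheties, every other singularity lies on $L_{\infty}$, and the second part of Proposition~\ref{pro:SingH} says exactly that $(mm')$ is $\F$-invariant; so in this case all the stated properties hold and we are in the first alternative.

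Assume henceforth that $\F$ is not homogeneous. Then the preceding lemma applies: after a linear change of coordinates $m=[0:0:1]$ and
\[
\omega=y(a_0x^2+a_1xy+y^2)\mathrm{d}x+xy(b_0x+b_1y)\mathrm{d}y+x(x^2+c_1xy+c_2y^2)(x\mathrm{d}y-y\mathrm{d}x).
\]
Write the coefficients of $\mathrm{d}x$ and $\mathrm{d}y$ as $y\,\widehat{Q}_1$ and $x\,\widehat{Q}_2$. One checks immediately that $\{x=0\}$ and $\{y=0\}$ are $\F$-invariant, and that a line $\{\beta x-\alpha y=0\}$ through $m$ is $\F$-invariant if and only if $\alpha\beta\,R_2(\alpha,\beta)=0$, where $R_2:=\widehat{Q}_1+\widehat{Q}_2=a_0x^2+(a_1+b_0)xy+(1+b_1)y^2$ is homogeneous of degree $2$ because the cubic parts of $\widehat{Q}_1$ and $\widehat{Q}_2$ cancel. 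Solving the equations cutting out $\Sing\F$ both in the affine chart and on $L_{\infty}$ then shows that every point of $\Sing\F\setminus\{m\}$ lies either on $\{x=0\}$ --- this being the point $[0:1:0]$ --- or on the conic $\{R_2=0\}$; in both cases it lies on an $\F$-invariant line through $m$. Since $\F$ has a singularity of maximal multiplicity $3$ and is not homogeneous, it admits no invariant line missing $m$ (by the characterisation of homogeneous foliations recalled above); therefore the unique invariant line through $m'$ is $(mm')$, which is $\F$-invariant.

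It remains, still in the non-homogeneous case, to obtain $\mathrm{CS}(\F,(mm'),m')\in\{1,3\}$ for every $m'\neq m$. The plan is to compute the curvature $K(\Leg\F)$ from the normal form above by the formula of \textsc{H\'enaut}~(\ref{equa:Formule-Henaut}) and impose $K(\Leg\F)\equiv0$. The resulting polynomial relations among $a_0,\dots,c_2$, fed into the Camacho--Sad residue formula along the invariant lines through $m$ --- for instance $\mathrm{CS}(\F,\{x=0\},m)=-b_1$, so that $\mathrm{CS}(\F,\{x=0\},[0:1:0])=1+b_1$ by~(\ref{equa:sum-CS=1}) since $\{x=0\}$ meets $\Sing\F$ only in $m$ and $[0:1:0]$, together with the analogous computations on the linear components of $\{R_2=0\}$ --- then force $\mathrm{CS}(\F,(mm'),m')\in\{1,3\}$. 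The hard part is precisely this last step: carrying out the \textsc{H\'enaut} computation and the case analysis on the configuration of $\Sing\F$ (governed by the factorisations of $R_2$ and of $x^2+c_1xy+c_2y^2$) so that the bound $\mathrm{CS}\in\{1,3\}$ drops out; everything preceding it is formal, resting only on the normal form, on the flatness obstruction of Proposition~\ref{pro:cas-nilpotent-noeud-ordre-2}, and on the structure of homogeneous foliations given by Theorem~\ref{thm:Class-Homog3-Plat} and Proposition~\ref{pro:SingH}.
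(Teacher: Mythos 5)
Your first two steps are fine: non-degeneracy of every $m'\neq m$ via la Remarque~\ref{rems:mu>nu^2-Darboux}~(iii) and la Proposition~\ref{pro:cas-nilpotent-noeud-ordre-2} is exactly the paper's argument, and your treatment of the homogeneous branch is correct. Your derivation of the invariance of $(mm')$ from the normal form of le Lemme~\ref{lem:v=3-mu<13-jet-non-satur�} can be made to work (the singular points of $\F$ other than $m$ do lie on $\{x=0\}$ or on the lines of $\{R_2=0\}$, including those at infinity), but it is a detour: the paper gets this in one line, without any normal form, from $\tau(\F,m)=3$, $\tau(\F,m')\geq1$ and the tangency count~(\ref{equa:sum-tau<d}), since a non-invariant line $(mm')$ would give $3=\deg\F\geq\tau(\F,m)+\tau(\F,m')\geq4$.

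The genuine gap is the last alternative, $\mathrm{CS}(\F,(mm'),m')\in\{1,3\}$, which is the substantive content of the lemma: you do not prove it, you only announce a plan (compute $K(\Leg\F)$ from the normal form by the formula de \textsc{H\'enaut}~(\ref{equa:Formule-Henaut}), impose $K\equiv0$, and claim the resulting relations ``force'' the bound), and you explicitly flag this as the hard, uncarried-out step. As written, nothing guarantees that this elimination succeeds, and your auxiliary claim $\mathrm{CS}(\F,\{x=0\},m)=-b_1$ at the degenerate point $m$ is itself unjustified. The paper needs no such computation: it argues by contraposition. If some $m'$ satisfies $\mathrm{CS}(\F,(mm'),m')\notin\{1,3\}$, then $\mu(\F,m')=1$ and $\mathrm{CS}\neq1$ give $\mathrm{BB}(\F,m')\neq4$; if moreover $\mathrm{BB}(\F,m')\neq\frac{16}{3}$, the first assertion of le Lemme~\ref{lem:2-droite-invar} produces a second invariant line through $m'$, distinct from $(mm')$ hence missing $m$; if $\mathrm{BB}(\F,m')=\frac{16}{3}$, the second assertion gives an invariant line $\ell$ with $\mathrm{CS}(\F,\ell,m')=3\neq\mathrm{CS}(\F,(mm'),m')$, so again $\ell\neq(mm')$ misses $m$. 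In both cases la Remarque~\ref{rem:caract�risation-feuilletage-homog�ne} forces $\F$ to be homogeneous, which is exactly the dichotomy to be proved. You should replace your computational plan by this argument (or actually carry the computation out, which the paper deliberately avoids).
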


\begin{proof}[\sl D\'emonstration]
Soit $m'$ un point singulier de $\F$ distinct de $m.$ D'après la Remarque~\ref{rems:mu>nu^2-Darboux}~(iii) le point $m$ est l'unique singularité de $\F$ de multiplicité algébrique $3$ et par suite $\nu(\F,m')\leq2.$ Si la singularité $m'$ était dégénérée, alors, d'après la Proposition~\ref{pro:cas-nilpotent-noeud-ordre-2}, le $3$-tissu $\Leg\F$ ne serait pas plat ce qui, par hypothèse, est impossible. Donc $\mu(\F,m')=1.$

\noindent Comme $\deg\F=3,\tau(\F,m)=3$\, et \,$\tau(\F,m')\geq1$, la droite $(mm')$ est invariante par $\F$ (sinon la formule (\ref{equa:sum-tau<d}) donnerait $3\geq\tau(\F,m)+\tau(\F,m')\geq4$).

\noindent Supposons qu'il soit possible de choisir $m'$ de telle sorte que $\mathrm{CS}(\F,(mm'),m')\not\in\{1,3\}$; nous allons montrer que $\F$ est nécessairement homogène. L'égalité $\mu(\F,m')=1$ et la condition $\mathrm{CS}(\F,(mm'),m')\neq1$ impliquent que $\mathrm{BB}(\F,m')\neq4.$
\begin{itemize}
  \item [--] Si $\mathrm{BB}(\F,m')\neq\frac{16}{3}$, alors, d'après la première assertion du Lemme~\ref{lem:2-droite-invar}, par le point~$m'$ passe une droite invariante par $\F$ et distincte de la droite $(mm')$, ce qui entraîne, d'après la Remarque~\ref{rem:caractérisation-feuilletage-homogène}, que $\F$ est homogène;
  \item [--] Si $\mathrm{BB}(\F,m')=\frac{16}{3},$ alors, d'après la seconde assertion du Lemme~\ref{lem:2-droite-invar}, par la singularité $m'$ passe une droite $\ell$ invariante par $\F$ et telle que $\mathrm{CS}(\F,\ell,m')=3$; comme nous avons supposé que $\mathrm{CS}(\F,(mm'),m')\neq3,$ nous en déduisons que $\ell\neq(mm')$, ce qui implique que $\F$ est homogène, en vertu de la même remarque.
\end{itemize}
\end{proof}
\bigskip

\noindent Nous sommes maintenant parés pour décrire à isomorphisme près les feuilletages inhomogènes de $\mathbf{FP}(3)$ dont le lieu singulier contient un point de multiplicité algébrique~$3$ et de nombre de \textsc{Milnor} inférieur strictement à~$13.$
\newpage
\hfill

\begin{pro}\label{pro:overline-omega1-omega4-omega5}
{\sl Soit $\mathcal{F}$ un feuilletage de degré trois sur $\pp.$ Supposons que $\F$ possède une singularité de multiplicité algébrique $3$ et que $\Sing\F$ ne soit pas réduit à cette singularité. Supposons en outre que le $3$-tissu $\Leg\F$ soit plat. Alors ou bien $\F$ est homogène, ou bien $\F$ est, à action d'un automorphisme de $\pp$ près, défini par l'une des $1$-formes suivantes
\begin{itemize}
\item [\texttt{1. }] $\omegaoverline_{1}=y^{3}\mathrm{d}x+x^{3}(x\mathrm{d}y-y\mathrm{d}x)$;

\item [\texttt{2. }] $\omegaoverline_{4}=(x^{3}+y^{3})\mathrm{d}x+x^{3}(x\mathrm{d}y-y\mathrm{d}x)$;

\item [\texttt{3. }] $\omegaoverline_{5}=y^{2}(y\mathrm{d}x+2x\mathrm{d}y)+x^{3}(x\mathrm{d}y-y\mathrm{d}x)$.
\end{itemize}
}
\end{pro}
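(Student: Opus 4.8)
--- Le point de d\'epart est la classification de la Proposition~\ref{pro:v=3-mu=13}, qui fournit les huit types de $1$-formes possibles pour un feuilletage dont le lieu singulier est r\'eduit \`a un point de multiplicit\'e alg\'ebrique $3$, mais ici nous travaillons sous l'hypoth\`ese contraire $\{m\}\varsubsetneq\Sing\F$. La premi\`ere \'etape consiste donc \`a distinguer deux situations suivant que le $3$-jet de $\omega$ en $m$ est satur\'e ou non. Si ce $3$-jet est satur\'e, le Lemme~\ref{lem:v=3-leg-plat} montre que $\F$ est homog\`ene et il n'y a rien de plus \`a faire. Il reste donc \`a traiter le cas o\`u le $3$-jet de $\omega$ en $m$ n'est pas satur\'e; dans ce cas le Lemme~\ref{lem:v=3-mu<13-jet-non-satur�} nous ram\`ene, \`a isomorphisme lin\'eaire pr\`es, \`a la $6$-famille explicite
$$
\omega=y(a_0x^2+a_1xy+y^2)\mathrm{d}x+xy(b_0x+b_1y)\mathrm{d}y+x(x^2+c_1xy+c_2y^2)(x\mathrm{d}y-y\mathrm{d}x).
$$

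\textbf{Seconde \'etape : exploiter les indices de Camacho-Sad.} --- On applique le Lemme~\ref{lem:v=3-mu<13-leg-plat} : si $\F$ n'est pas homog\`ene, alors toute singularit\'e $m'\neq m$ est non d\'eg\'en\'er\'ee, la droite $(mm')$ est $\F$-invariante, et $\mathrm{CS}(\F,(mm'),m')\in\{1,3\}.$ Dans le mod\`ele ci-dessus $m=[0:0:1]$ et les droites invariantes passant par $m$ correspondent aux facteurs du c\^one tangent $\Cinv=x^2(a_0x^2+a_1xy+y^2)+xy\cdot x(b_0x+b_1y)$ --- en fait $x$ est toujours facteur, de sorte que $\{x=0\}$ est $\F$-invariante et $m'=[0:1:0]\in\Sing\F$. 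On calcule explicitement $\mathrm{CS}(\F,(x=0),[0:1:0])$ en fonction de $(a_0,a_1,b_0,b_1,c_1,c_2)$ et on impose qu'il vaille $1$ ou $3$; de m\^eme pour les autres singularit\'es situ\'ees sur les autres droites du c\^one tangent. Ceci fournit un syst\`eme d'\'equations polynomiales sur les param\`etres. Parall\`element, on \'ecrit l'\'equation diff\'erentielle implicite $\check F(p,q,w)=0$ d\'ecrivant $\Leg\F$ dans la carte duale associ\'ee \`a $\{px-qy=1\}$, on calcule la courbure $K(\Leg\F)$ via la Formule~\ref{equa:Formule-Henaut}, et on impose $K(\Leg\F)\equiv0$; cela donne un second jeu de contraintes polynomiales. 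En r\'esolvant la conjonction de ces contraintes (c'est ici le c\oe ur calculatoire), on montre que les seules solutions qui ne d\'ecrivent pas un feuilletage homog\`ene sont, \`a normalisation pr\`es (homoth\'eties et changements lin\'eaires triangulaires comme dans la preuve du Lemme~\ref{lem:v=3-mu<13-jet-non-satur�}), les trois $1$-formes $\omegaoverline_1,\omegaoverline_4,\omegaoverline_5.$

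\textbf{Troisi\`eme \'etape : v\'erification directe.} --- Pour chacune des trois $1$-formes candidates, on v\'erifie r\'eciproquement que $\Leg\F$ est bien plat (calcul direct de la courbure, identiquement nulle), que la singularit\'e $[0:0:1]$ est bien de multiplicit\'e alg\'ebrique $3$ avec $\mu<13$, et que le feuilletage correspondant n'est pas homog\`ene (par exemple en observant que l'unique singularit\'e de multiplicit\'e alg\'ebrique maximale n'admet pas de droite invariante \'evitant ce point, au sens de la Remarque~\ref{rem:caract�risation-feuilletage-homog�ne}, ou simplement que le $3$-jet de $\omega$ en $[0:0:1]$ n'est pas satur\'e). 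On note que $\omegaoverline_1$ et $\omegaoverline_4$ correspondent aux cas o\`u $c_1=c_2=0$ tandis que $\omegaoverline_5$ rel\`eve d'une sous-famille distincte, et que les trois ne sont pas lin\'eairement conjugu\'ees entre elles (invariants de Camacho-Sad distincts, ou $\mu(\F,[0:0:1])$ distincts).

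\textbf{Principal obstacle.} --- La difficult\'e n'est pas conceptuelle mais calculatoire : le syst\`eme mixte provenant, d'une part, des conditions $\mathrm{CS}\in\{1,3\}$ sur toutes les singularit\'es hors de $m$, et d'autre part de l'annulation identique de $K(\Leg\F)$ --- dont le num\'erateur est un polyn\^ome en $(p,q)$ dont tous les coefficients, polynomiaux en $(a_0,a_1,b_0,b_1,c_1,c_2)$, doivent s'annuler --- est assez volumineux. Le traitement rigoureux passe par une discussion de cas selon le nombre et la nature des droites du c\^one tangent (une, deux, trois ou quatre droites, ce dernier cas \'etant d'ailleurs \`a rapprocher du Lemme~\ref{lem:exclure-c�ne-4-droites}) afin de rendre l'\'elimination praticable, puis par un calcul symbolique soign\'e. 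C'est exactement le genre de sc\'enario, similaire \`a celui de la Proposition~\ref{pro:cas-nilpotent-noeud-ordre-2}, o\`u l'on s'appuie sur un logiciel de calcul formel pour mener \`a bien l'\'elimination.
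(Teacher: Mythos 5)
Votre proposition suit pour l'essentiel la d\'emonstration du texte : m\^eme r\'eduction par le Lemme~\ref{lem:v=3-leg-plat} (si le $3$-jet est satur\'e, $\F$ est homog\`ene) puis par le lemme de forme normale \`a six param\`etres qui le suit, m\^eme exploitation du Lemme~\ref{lem:v=3-mu<13-leg-plat} via l'indice de \textsc{Camacho-Sad} en $[0:1:0]$ (qui vaut $1+b_1$, d'o\`u $b_1\in\{0,2\}$), puis annulation de la courbure de $\Leg\F$ calcul\'ee par la formule~(\ref{equa:Formule-Henaut}) dans la carte duale. La seule diff\'erence est d'ordre pratique : le texte rend l'\'elimination effective en normalisant d'abord $(b_0,b_1,c_1,c_2)$ en six sous-cas explicites (plut\^ot qu'en discutant la structure du c\^one tangent), l\`a o\`u votre plan laisse cette \'elimination \`a un calcul formel global.
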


\begin{proof}[\sl D\'emonstration]
\noindent Supposons que $\F$ ne soit pas homogène; nous devons montrer qu'à conjugaison linéaire près $\F$ est décrit par l'une des trois $1$-formes $\omegaoverline_{1},\omegaoverline_{4},\omegaoverline_{5}$.

\noindent Notons $m$ la singularité de multiplicité algébrique~$3$ de $\F$ et soit $\omega$ une $1$-forme décrivant $\F$ dans une carte affine $(x,y)$ de $\pp.$ Comme $\Leg\F$ est par hypothèse plat, il en résulte, d'après le Lemme \ref{lem:v=3-leg-plat}, que le $3$-jet de $\omega$ en $m$ n'est pas saturé. Par hypothèse nous avons $\Sing\F\neq\{m\}$. Par suite le Lemme~\ref{lem:v=3-mu<13-jet-non-saturé} nous assure que $\omega$ est, à isomorphisme près, du type suivant
$$
y(a_0\,x^2+a_1xy+y^2)\mathrm{d}x+xy(b_0\,x+b_1y)\mathrm{d}y+x(x^2+c_1xy+c_2y^2)(x\mathrm{d}y-y\mathrm{d}x),\quad a_i,b_i,c_{\hspace{-0.4mm}j}\in\mathbb{C}.
$$
Dans cette situation, $m=[0:0:1]$, $m':=[0:1:0]\in\Sing\F$ et la droite $(mm')=(x=0)$ est invariante par $\F$; de plus un calcul montre que $\mathrm{CS}(\F,(mm'),m')=1+b_1.$ Le Lemme~\ref{lem:v=3-mu<13-leg-plat} implique alors que $b_1\in\{0,2\}.$

\noindent Si $b_0\neq0$,\hspace{2mm} resp. $(b_0,b_1)=(0,2)$,\hspace{2mm} resp. $b_0=b_1=0,c_2\neq0$,\hspace{2mm} resp. $b_0=b_1=c_2=0,c_1\neq0$, alors en conjuguant par
\begin{Small}
 \begin{align*}
&\left(\dfrac{b_0^2\,x}{1-c_1b_0\,x},\dfrac{b_0^3y}{1-c_1b_0\,x}\right),&&
\text{resp.}\hspace{1mm}
\left(\frac{x}{1-\left(\dfrac{c_2}{2}\right)x},\frac{y}{1-\left(\dfrac{c_2}{2}\right)x}\right),&&
\text{resp.}\hspace{1mm}
\left(c_2^{-1}x,c_2^{-3/2}\,y\right),&&
\text{resp.}\hspace{1mm}
\left(c_1^{-2}x,c_1^{-3}y\right),
\end{align*}
\end{Small}
\hspace{-1mm}nous nous ramenons à $(b_0,c_1)=(1,0)$,\hspace{2mm} resp. $c_2=0$,\hspace{2mm} resp. $c_2=1$,\hspace{2mm} resp. $c_1=1.$ Il nous suffit donc de traiter les possibilités suivantes
\begin{align*}
& (b_0,b_1,c_1)=(1,0,0),&&\hspace{4mm} (b_0,b_1,c_1)=(1,2,0),&&\hspace{4mm} (b_0,b_1,c_2)=(0,2,0),&&\\
& (b_0,b_1,c_2)=(0,0,1),&&\hspace{4mm} (b_0,b_1,c_1,c_2)=(0,0,1,0),&&\hspace{4mm} (b_0,b_1,c_1,c_2)=(0,0,0,0).
\end{align*}

\noindent Plaçons-nous dans la carte affine $(p,q)$ de $\pd$ associée à la droite $\{py-qx=1\}\subset\pp$; le $3$-tissu $\Leg\F$ est décrit par l'équation différentielle
$$F(p,q,w):=pw^3+(a_1p+b_1q-c_2)w^2+(a_0p+b_0q-c_1)w-1=0,\qquad \text{avec} \qquad w=\frac{\mathrm{d}q}{\mathrm{d}p}.$$
\newpage
\hfill

\noindent Le calcul explicite de $K(\Leg\F)$ montre qu'elle s'écrit sous la forme $$K(\Leg\F)=\frac{\sum\limits_{i+j\leq6}\rho_{i}^{j}p^iq^j}{\Delta(p,q)^2}\mathrm{d}p\wedge\mathrm{d}q,$$ où $\Delta$ est le $w$-discriminant de $F$ et les $\rho_{i}^{j}$ sont des polynômes en les paramètres $a_i,b_i,c_{\hspace{-0.4mm}j}.$
\vspace{2mm}

\textbf{\textit{1.}} Si $(b_0,b_1,c_1)=(1,0,0)$, alors le calcul explicite de $K(\Leg\F)$ conduit à $\rho_{0}^{5}=4c_2$ et
\begin{Small}
$$
\rho_{2}^{1}=a_0^2(a_0+8a_1)c_2^3-(177a_0^2-60a_0-24a_0a_1-84a_0a_1^2+24a_1^2+24a_1^3)c_2^2+(108a_1^2-36a_1-81a_0)c_2+81,
$$
\end{Small}
\hspace{-1mm}de sorte que le système $\rho_{0}^{5}=\rho_{2}^{1}=0$ n'a pas de solutions. Donc ce premier cas n'arrive pas.
\vspace{2mm}

\textbf{\textit{2.}} Lorsque $(b_0,b_1,c_1)=(1,2,0)$, le calcul explicite de $K(\Leg\F)$ nous donne:
\begin{Small}
\begin{align*}
\begin{array}{lll}
\rho_{0}^{6}=-4(6a_0-5a_1+4),\\
\rho_{1}^{5}=-4(12a_0^2+20a_0-10a_0a_1-3a_1^2),\\
\rho_{5}^{0}=32a_0^5c_2-8(a_1^2c_2+2a_1c_2+12)a_0^4+4(a_1^3c_2+4a_1^2+a_1-12)a_0^3+(4a_1^2-5a_1+30)a_0^2a_1^2-4a_0a_1^4\hspace{1mm};
\end{array}
\end{align*}
\end{Small}
\hspace{-1mm}il est facile de voir que le système $\rho_{0}^{6}=\rho_{1}^{5}=\rho_{5}^{0}=0$ n'a pas de solutions. Donc ce deuxième cas n'est pas possible.
\vspace{2mm}

\textbf{\textit{3.}} Quand $(b_0,b_1,c_2)=(0,2,0)$, le calcul explicite de $K(\Leg\F)$ nous fournit:
\begin{align*}
&\rho_{1}^{0}=24c_1^4,&&\rho_{1}^{4}=-256a_0^2,&&\rho_{0}^{4}=64(14a_1+3a_0c_1),
\end{align*}
de sorte que $a_0=a_1=c_1=0.$ Par conséquent, dans ce troisième cas, $\F$ est donné par $$\omegaoverline_{5}=y^2(y\mathrm{d}x+2x\mathrm{d}y)+x^3(x\mathrm{d}y-y\mathrm{d}x)\hspace{1mm};$$
nous vérifions par le calcul que sa transformée de \textsc{Legendre} est plate.
\vspace{2mm}

\textbf{\textit{4.}} Lorsque $(b_0,b_1,c_2)=(0,0,1)$, le calcul explicite de $K(\Leg\F)$ nous donne:
\begin{SMALL}
\begin{align*}
\begin{array}{llll}
\vspace{1mm}
\rho_{5}^{0}=2a_0^2(4a_0-a_1^2)(2a_0^2-6a_0-a_0a_1c_1+2a_1^2),\\
\vspace{1mm}
\rho_{1}^{0}=2\left(c_1^2-4\right)\left((12a_0-a_1^2)c_1^2-(5a_0+18)a_1c_1+a_0^2-24a_0+14a_1^2+27\right),\\
\vspace{1mm}
\rho_{4}^{0}=(24a_0-5a_1^2)a_0^2a_1c_1^2-(60a_0^3-36a_0^2-11a_0^2a_1^2+45a_0a_1^2-8a_1^4)a_0c_1+2a_1(2a_0^4+24a_0^3-54a_0^2-5a_0^2a_1^2+30a_0a_1^2-4a_1^4),\\
\rho_{2}^{0}=(8a_0-a_1^2)a_1c_1^4-(64a_0^2+60a_0-7a_0a_1^2+7a_1^2)c_1^3+3(5a_0^2+52a_0-2a_1^2+27)a_1c_1^2-2a_1(a_0^2+162a_0-48a_1^2-27)\\
\hspace{6mm}-(a_0^3-177a_0^2-81a_0+84a_0a_1^2+108a_1^2+81)c_1\hspace{1mm};
\end{array}
\end{align*}
\end{SMALL}
\hspace{-1mm}le système $\rho_{1}^{0}=\rho_{2}^{0}=\rho_{4}^{0}=\rho_{5}^{0}=0$ équivaut à $(a_0,a_1,c_1)\in\{(1,2,2),(1,-2,-2)\}$ comme le montre un calcul explicite. Si $(a_0,a_1,c_1)=(1,2,2)$, resp. $(a_0,a_1,c_1)=(1,-2,-2)$, alors $\omega$ s'écrit
\begin{align*}
& (x+y)^2(y\mathrm{d}x+x(x\mathrm{d}y-y\mathrm{d}x)),&&
\text{resp.}\hspace{1mm}
(x-y)^2(y\mathrm{d}x+x(x\mathrm{d}y-y\mathrm{d}x)),
\end{align*}
ce qui contredit l'égalité $\deg\F=3$.
\newpage
\hfill

\textbf{\textit{5.}} Lorsque $(b_0,b_1,c_1,c_2)=(0,0,1,0)$, le calcul explicite de $K(\Leg\F)$ montre que:
\begin{small}
\begin{align*}
&\rho_{3}^{0}=-2a_0^2(6a_0+a_0a_1-2a_1^2)(4a_0-a_1^2),&& \rho_{0}^{0}=-81-60a_0+8a_0a_1+81a_1-7a_1^2-a_1^3,\\
&\rho_{1}^{0}=-4(a_1-3)(6a_0^2-9a_0a_1-a_0a_1^2+2a_1^3)\hspace{1mm};
\end{align*}
\end{small}
\hspace{-1mm}le système $\rho_{0}^{0}=\rho_{1}^{0}=\rho_{3}^{0}=0$ est vérifié si et seulement si $(a_0,a_1)=(2,3),$ auquel cas $$\omega=(x+y)\big(y(y+2x)\mathrm{d}x+x^2(x\mathrm{d}y-y\mathrm{d}x)\big),$$ mais ceci contredit l'égalité $\deg\F=3.$
\vspace{2mm}

\textbf{\textit{6.}} Si $(b_0,b_1,c_1,c_2)=(0,0,0,0),$ alors $\omega=y(a_0\,x^2+a_1xy+y^2)\mathrm{d}x+x^3(x\mathrm{d}y-y\mathrm{d}x)$; l'équation différentielle décrivant $\Leg\F$ s'écrit
$$F(p,q,q')=p(q')^3+a_1p(q')^2+a_0pq'-1=0,\quad \text{avec} \quad q'=\frac{\mathrm{d}q}{\mathrm{d}p}.$$
Nous étudions deux éventualités suivant que $a_1$ est nul ou non.
\vspace{2mm}

\textbf{\textit{6.1.}} Lorsque $a_1=0$ le calcul explicite de $K(\Leg\F)$ nous donne $$K(\Leg\F)=-\dfrac{48a_0^4\hspace{0.1mm}p}{(4a_0^3\hspace{0.1mm}p^2+27)^2}\mathrm{d}p\wedge\mathrm{d}q\hspace{1mm};$$
par suite $\Leg\F$ est plat si et seulement si $a_0=0$, auquel cas $\F$ est décrit par
$$\omegaoverline_{1}=y^3\mathrm{d}x+x^3(x\mathrm{d}y-y\mathrm{d}x).$$

\textbf{\textit{6.2.}} Si $a_1\neq0$, alors quitte à conjuguer $\omega$ par $\left(\alpha^2x,\alpha^3y\right)$, où $\alpha=\frac{1}{3}a_1$, nous pouvons supposer que $a_1=3.$ Dans ce cas le calcul explicite de $K(\Leg\F)$ montre que
\begin{align*}
K(\Leg\F)=-\dfrac{12\left(a_0-3\right)\left(a_0^2(4a_0-9)p+27(a_0-2)\right)}{\left(a_0^2(4a_0-9)p^2+54(a_0-2)p+27\right)^2}
\mathrm{d}p\wedge\mathrm{d}q\hspace{1mm};
\end{align*}
par conséquent $\Leg\F$ est plat si et seulement si $a_0=3$, auquel cas $$\omega=y(3x^2+3xy+y^2)\mathrm{d}x+x^3(x\mathrm{d}y-y\mathrm{d}x).$$ Quitte à faire agir le difféomorphisme $\left(x,-x-y\right)$ sur $\omega$, le feuilletage $\F$ est donné dans les coordonnées affines $(x,y)$ par la $1$-forme $$\omegaoverline_{4}=(x^{3}+y^{3})\mathrm{d}x+x^{3}(x\mathrm{d}y-y\mathrm{d}x).$$
\end{proof}
\newpage
\hfill
\medskip

\begin{rem}
Les cinq feuilletages $\mathcal{F}_1,\ldots,\mathcal{F}_{5}$ ont les propriétés suivantes
\vspace{0.2mm}
\begin{itemize}
\item [(i)]   $\#\Sing\F_2=1,$\, $\#\Sing\F_3=13$\, et \,$\#\Sing\F_j=2$ pour $j=1,4,5$;
\item [(ii)]  $\F_j$ est convexe si et seulement si $j\in\{1,3\}$;
\item [(iii)] $\F_j$ possède une singularité radiale d'ordre $2$ si et seulement si $j\in\{1,3,4\}$;
\item [(iv)]  $\F_j$ admet un point d'inflexion double si et seulement si $j\in\{2,4\}.$
\end{itemize}
\vspace{0.2mm}
Les vérifications de ces propriétés sont aisées et laissées au lecteur.
\end{rem}

\begin{rem}\label{rem:non-conjugaison-Hi-Fj}
Les seize feuilletages $\mathcal{H}_1,$ $\ldots,\mathcal{H}_{11},$ $\mathcal{F}_1,$ $\ldots,\mathcal{F}_{5}$ ne sont pas linéairement conjugués. En effet, par construction, les $\F_j$ ne sont pas homogènes et ne sont donc pas conjugués~aux~homogènes~$\mathcal{H}_{\hspace{0.2mm}i}.$ Les $\mathcal{H}_{\hspace{0.2mm}i}$ ne sont pas linéairement conjugués (Théorèmes~\ref{thm:Class-Homog3-Plat}). Enfin le fait que les $\F_j$ ne sont pas linéairement conjugués découle des propriétés (i),~(ii) et (iii) ci-dessus.
\end{rem}

\noindent Le Théorème~\ref{thm:classification} résulte des Théorèmes~\ref{thm:Class-Homog3-Plat}, \ref{thm:Fermat}, des Propositions~\ref{pro:cas-nilpotent-noeud-ordre-2}, \ref{pro:overline-omega2}, \ref{pro:overline-omega1-omega4-omega5} et de la Remarque~\ref{rem:non-conjugaison-Hi-Fj}.

\begin{cor}\label{cor:class-convexe-3}
{\sl \`A automorphisme de $\pp$ près, il y a quatre feuilletages convexes de degré trois sur le plan projectif complexe, à savoir les feuilletages  $\mathcal{H}_{1},\mathcal{H}_{\hspace{0.2mm}3},\F_1$ et $\F_3.$
}
\end{cor}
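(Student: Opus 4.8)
The plan is to combine the full classification of $\mathbf{FP}(3)$ just obtained (Théorème~\ref{thm:classification}) with the inclusion result of \cite{BFM13}, namely that every convex foliation lies in $\mathbf{FP}(d)$, hence every convex foliation of degree $3$ appears among the sixteen models $\mathcal{H}_1,\ldots,\mathcal{H}_{11},\mathcal{F}_1,\ldots,\mathcal{F}_5$. So the problem reduces to deciding, for each of these sixteen explicit $1$-forms, whether the associated foliation is convex, i.e. whether its inflection divisor $\mathrm{I}_{\mathcal{F}}$ is $\mathcal{F}$-invariant (equivalently, whether $\mathrm{I}_{\mathcal{F}}^{\hspace{0.2mm}\mathrm{tr}}=\emptyset$, i.e. $\mathrm{I}_{\mathcal{F}}=\mathrm{I}_{\mathcal{F}}^{\mathrm{inv}}$ is a product of invariant lines). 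For the homogeneous models $\mathcal{H}_i$, the computation of $\mathrm{I}_{\mathcal{H}}^{\hspace{0.2mm}\mathrm{tr}}$ is governed by the factorization $\mathrm{D}_{\mathcal{H}} = \mathrm{I}_{\mathcal{H}}^{\hspace{0.2mm}\mathrm{tr}}\prod_{s}L_s^{\tau(\mathcal{H},s)-1}$ of Proposition~\ref{pro:SingH}: $\mathcal{H}_i$ is convex precisely when $\mathcal{T}_{\mathcal{H}_i}$ involves only radial terms $\mathrm{R}_k$ and no transverse-inflection terms $\mathrm{T}_k$. Reading off the types from Table~\ref{tab:CS(lambda)}, the only homogeneous models with $\mathcal{T}_{\mathcal{H}_i}\in\Z[\mathrm{R}_1,\ldots,\mathrm{R}_{d-1}]$ are $\mathcal{H}_1$ (type $2\cdot\mathrm{R}_2$) and $\mathcal{H}_3$ (type $2\cdot\mathrm{R}_1+1\cdot\mathrm{R}_2$); all of $\mathcal{H}_2,\mathcal{H}_4,\ldots,\mathcal{H}_{11}$ carry at least one $\mathrm{T}_k$ and are therefore not convex.

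Next I would dispose of the five inhomogeneous models. By the Remark following the proof of Proposition~\ref{pro:overline-omega1-omega4-omega5}, one has $\F_j$ convex if and only if $j\in\{1,3\}$; this is exactly property (ii) of that remark, and the verifications there are elementary — for $\F_3$ (the Fermat foliation) one has $\mathrm{I}_{\F_3}=\mathrm{I}_{\F_3}^{\mathrm{inv}}=\{xyz(x-y)(y-z)(z-x)(x+y+z)\cdot\ldots\}$, a product of invariant lines (the $9=3\cdot3$ invariant lines already listed in the worked example for $\F^d$), while for $\F_2, \F_4, \F_5$ one checks directly that $\mathrm{I}_{\F}^{\hspace{0.2mm}\mathrm{tr}}\neq\emptyset$ (indeed $\F_2$ and $\F_4$ even have an inflection point of order $2$, property (iv)). Thus among the sixteen foliations exactly $\mathcal{H}_1,\mathcal{H}_{\hspace{0.2mm}3},\F_1,\F_3$ are convex.

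Finally I would record that these four are pairwise non-conjugate: $\mathcal{H}_1,\mathcal{H}_3$ are distinguished by their types, they are not conjugate to $\F_1,\F_3$ since the latter are not homogeneous, and $\F_1$ is not conjugate to $\F_3$ because $\#\Sing\F_1=2\neq13=\#\Sing\F_3$ (property (i) of the Remark, or simply $\nu(\F_1,[0:0:1])=3$ whereas $\F_3$ has only non-degenerate singularities). This gives exactly four convex foliations of degree three up to $\mathrm{Aut}(\pp)$, namely $\mathcal{H}_1,\mathcal{H}_{\hspace{0.2mm}3},\F_1,\F_3$, which is the assertion. The only genuine point requiring care — the ``main obstacle'' — is the honest verification, model by model, of the invariance (resp. non-invariance) of $\mathrm{I}_{\mathcal{F}}$, but for the homogeneous list this is immediate from the type data already tabulated, and for the five inhomogeneous models it is the short computation summarized in the Remark cited above; alternatively one may invoke directly \cite[Corollaire~4.7]{BFM13}, which characterizes the convex foliations among those of $\mathbf{FP}(3)$ and yields the same four classes.
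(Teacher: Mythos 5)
Votre démonstration suit essentiellement la même voie que celle du texte : on invoque \cite[Corollaire~4.7]{BFM13} pour placer tout feuilletage convexe de degré trois dans $\mathbf{FP}(3)$, donc parmi les seize modèles du Théorème~\ref{thm:classification}, puis on constate (via les types de la Table~\ref{tab:CS(lambda)} pour les $\mathcal{H}_i$ et les propriétés des $\F_j$) que seuls $\mathcal{H}_1,\mathcal{H}_{\hspace{0.2mm}3},\F_1,\F_3$ sont convexes. Le seul détail à corriger est l'écriture du diviseur d'inflexion de $\F_3$, qui est $xyz(x-y)(x+y)(x-z)(x+z)(y-z)(y+z)$ et non un produit contenant $x+y+z$, mais cela n'affecte pas l'argument.
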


\begin{proof}[\sl D\'emonstration]
D'après \cite[Corollaire~4.7]{BFM13} tout feuilletage convexe de degré trois sur $\pp$ a une transformée de \textsc{Legendre} plate, et donc est linéairement conjugué à l'un des seize feuilletages donnés par le Théorème~\ref{thm:classification}. Or les seuls feuilletages convexes apparaissant dans ce théorème sont $\mathcal{H}_{1},\mathcal{H}_{\hspace{0.2mm}3},\F_1$ et $\F_3$, d'où le résultat.
\end{proof}

\noindent Le Corollaire~\ref{cor:class-convexe-3} est un analogue en degré $3$ d'un résultat sur les feuilletages de degré $2$ dû à C.~\textsc{Favre} et J.~\textsc{Pereira} (\cite[Proposition~7.4]{FP15}).
\bigskip

\noindent  D.~\textsc{Mar\'{\i}n} et J. \textsc{Pereira} dans \cite{MP13} ont étudié les feuilletages de $\mathbf{F}(d)$ qui sont convexes à diviseur d'inflexion réduit; ils ont montré que l'ensemble formé de tels feuilletages est contenu dans $\mathbf{FP}(d)$, \emph{voir} \cite[Théorème 2]{MP13}. \`{A} notre connaissance les seuls feuilletages convexes à diviseur d'inflexion réduit connus dans la littérature sont ceux qui sont présentés dans \cite[Table~1.1]{MP13}: le feuilletage de \textsc{Fermat} $\F^{d}$ de degré $d$ et les trois feuilletages donnés par les $1$-formes
\[(2x^{3}-y^{3}-1)y\mathrm{d}x+(2y^{3}-x^{3}-1)x\mathrm{d}y\,,\]
\[(y^2-1)(y^2- (\sqrt{5}-2)^2)(y+\sqrt{5}x) \mathrm{d}x-(x^2-1)(x^2- (\sqrt{5}-2)^2)(x+\sqrt{5}y) \mathrm{d}y \, ,\]
\[(y^3-1)(y^3+7x^3+1) y \mathrm{d}x-(x^3-1)(x^3+7 y^3+1) x \mathrm{d}y\,,\]
\newpage
\hfill

\noindent qui sont de degré  $4$, $5$ et $7$ respectivement. Dans \cite[Problème~9.1]{MP13} les auteurs posent la question suivante: y a-t-il d'autres feuilletages convexes à diviseur d'inflexion réduit? Le résultat suivant donne une réponse négative en degré trois à cette question.

\begin{cor}\label{cor:convexe-réduit-3}
{\sl Tout feuilletage convexe de degré $3$ sur $\pp$ à diviseur d'inflexion réduit est linéairement conjugué au feuilletage de \textsc{Fermat} $\F_3$.}
\end{cor}

\begin{proof}[\sl D\'emonstration]
Elle résulte du Corollaire~\ref{cor:class-convexe-3} et du fait que l'unique feuilletage convexe à diviseur d'inflexion réduit apparaissant dans ce corollaire est le feuilletage $\F_3.$
\end{proof}
\bigskip

\section{Orbites sous l'action de $\mathrm{PGL}_3(\C)$}\label{sec:Orbites-Action-PGL-3}

%D.~\textsc{Mar\'{\i}n} et J. \textsc{Pereira} posent un problème (\cite[Problème~9.3]{MP13}) concernant la géométrie des tissus de $\pp$ qui, dans le cadre des feuilletages de $\pp$, consiste en la description de certaines composantes irréductibles de $\mathbf{FP}(d)$ pour $d\geq3.$ On sait d'après \cite[Théorème~3]{MP13} que l'adhérence dans $\mathbf{F}(3)$ de l'orbite $\mathcal{O}(\F_3)$ du feuilletage de \textsc{Fermat} $\F_3$ est une composante irréductible de $\mathbf{FP}(3)$; à l'heure actuelle, à notre connaissance, c'est le seul exemple explicite de composante irréductible de $\mathbf{FP}(3)$ connu dans la littérature.

Dans cette section, nous donnons une description des composantes irréductibles de $\mathbf{FP}(3)$: nous commençons par déterminer les dimensions des orbites $\mathcal{O}(\mathcal{H}_{\hspace{0.2mm}i}),\mathcal{O}(\F_j)$ sous l'action de $\mathrm{Aut}(\pp)=\mathrm{PGL}_3(\C)$; puis, nous classifions à isomorphisme près les feuilletages de~$\mathbf{F}(3)$ qui réalisent la dimension minimale des orbites en degré $3$; enfin, nous étudions les adhérences des orbites $\mathcal{O}(\mathcal{H}_{\hspace{0.2mm}i}),\mathcal{O}(\F_j)$ dans $\mathbf{F}(3)$ et nous établissons un résultat (Théorème~\ref{thm:12-Composantes irréductibles}) de description des composantes irréductibles de $\mathbf{FP}(3).$

\Subsection{Isotropies; dimensions des $\mathcal{O}(\mathcal{H}_{\hspace{0.2mm}i})$ et $\mathcal{O}(\F_j)$}

\begin{defin}
Soit $\F$ un feuilletage sur $\pp.$ Le sous-groupe de $\mathrm{Aut}(\pp)$ (resp. $\mathrm{Aut}(\pd)$) qui préserve $\F$ (resp. $\Leg\F$) s'appelle le \textbf{\textit{groupe d'isotropie}} de $\F$ (resp. $\Leg\F$) et est noté $\mathrm{Iso}(\F)$ (resp. $\mathrm{Iso}(\Leg\F)$); $\mathrm{Iso}(\F)$ et $\mathrm{Iso}(\Leg\F)$ sont des groupes algébriques.
\end{defin}

\noindent La Proposition suivante est de nature élémentaire, sa démonstration est laissée au lecteur.
\begin{pro}\label{pro:isotropies}
{\sl Les groupes $\mathrm{Iso}(\mathcal{H}_{\hspace{0.2mm}i})$ et $\mathrm{Iso}(\F_j)$ sont donnés par
\smallskip
\begin{Small}
\begin{itemize}
\item [\texttt{1.}] $\mathrm{Iso}(\mathcal{H}_{1})=\Big\{
                    [\pm\,x:y:\alpha\,z],\hspace{1mm}
                    [\pm\,y:x:\alpha\,z]
                    \hspace{1mm}\big\vert\hspace{1mm}\alpha\in\mathbb{C}^*\Big\}$;
\medskip

\item [\texttt{2.}] $\mathrm{Iso}(\mathcal{H}_{\hspace{0.2mm}2})=\Big\{
                    [\pm\,x:y:\alpha\,z],\hspace{1mm}
                    [\pm\,y:x:\alpha\,z],\hspace{1mm}
                    [\pm\,\mathrm{i}\,x:y:\alpha\,z],\hspace{1mm}
                    [\pm\,\mathrm{i}y:x:\alpha\,z]
                    \hspace{1mm}\big\vert\hspace{1mm}\alpha\in\mathbb{C}^*\Big\}$;
\medskip

\item [\texttt{3.}] $\mathrm{Iso}(\mathcal{H}_{\hspace{0.2mm}3})=\Big\{[\,x:y:\alpha\,z],\hspace{1mm}[\,y:x:\alpha\,z]
                    \hspace{1mm}\big\vert\hspace{1mm}\alpha\in\mathbb{C}^*\Big\}$;
\medskip

\item [\texttt{4.}] $\mathrm{Iso}(\mathcal{H}_{\hspace{0.2mm}4})=\Big\{[\,x:y:\alpha\,z],\hspace{1mm}[\,y:x:\alpha\,z]
                    \hspace{1mm}\big\vert\hspace{1mm}\alpha\in\mathbb{C}^*\Big\}$;
\medskip

\item [\texttt{5.}] $\mathrm{Iso}(\mathcal{H}_{\hspace{0.2mm}5})=\Big\{[\,x:y:\alpha\,z]
                    \hspace{1mm}\big\vert\hspace{1mm}\alpha\in\mathbb{C}^*\Big\}$;
\medskip

\item [\texttt{6.}] $\mathrm{Iso}(\mathcal{H}_{\hspace{0.2mm}6})=\Big\{[\,x:y:\alpha\,z]
                    \hspace{1mm}\big\vert\hspace{1mm}\alpha\in\mathbb{C}^*\Big\}$;
\medskip

\item [\texttt{7.}] $\mathrm{Iso}(\mathcal{H}_{\hspace{0.2mm}7})=\Big\{
                    [\pm\,x:y:\alpha\,z]
                    \hspace{1mm}\big\vert\hspace{1mm}\alpha\in\mathbb{C}^*\Big\}$;
\newpage
\hfill
\medskip

\item [\texttt{8.}] $\mathrm{Iso}(\mathcal{H}_{\hspace{0.2mm}8})=\Big\{
                    [\,x:y:\alpha\,z],\hspace{1mm}
                    [4y-x:y:\alpha\,z]
                    \hspace{1mm}\big\vert\hspace{1mm}\alpha\in\mathbb{C}^*\Big\}$;
\medskip

\item [\texttt{9.}] $\mathrm{Iso}(\mathcal{H}_{\hspace{0.2mm}9})=\Big\{
                    [\,x:y:\alpha\,z],\hspace{1mm}
                    [\,x-y:x:\alpha\,z],\hspace{1mm}
                    [y:y-x:\alpha\,z]
                    \hspace{1mm}\big\vert\hspace{1mm}\alpha\in\mathbb{C}^*\Big\}$;
\medskip

\item [\texttt{10.}] $\mathrm{Iso}(\mathcal{H}_{\hspace{0.2mm}10})=\Big\{
                    [\,x:y:\alpha\,z],\hspace{1mm}
                    [-y:x:\alpha\,z]
                    \hspace{1mm}\big\vert\hspace{1mm}\alpha\in\mathbb{C}^*\Big\}$;
\medskip

\item [\texttt{11.}] $\mathrm{Iso}(\mathcal{H}_{\hspace{0.2mm}11})=\Big\{
                     [\,x:y:\alpha\,z],\hspace{1mm}
                     [y:x:\alpha\,z],\hspace{1mm}
                     [\xi^5\,x:x+\xi\,y:\alpha\,z],\hspace{1mm}
                     [\xi^{-5}\,x:x+\xi^{-1}\,y:\alpha\,z],\hspace{1mm}
                     [\xi^5\,y:y+\xi\,x:\alpha\,z],$
\item[]$
                     \hspace{2.13cm}
                     [\xi^{-5}\,y:y+\xi^{-1}\,x:\alpha\,z],\hspace{1mm}
                     [\xi^{5}\,x-y:x+\xi^{-1}\,y:\alpha\,z],\hspace{1mm}
                     [\xi^{-5}\,x-y:x+\xi\,y:\alpha\,z],$
\item[]$
                     \hspace{2.13cm}
                     [\xi^{5}\,x+\xi^{4}\,y:x:\alpha\,z],\hspace{1mm}
                     [\xi^{-5}\,x+\xi^{-4}\,y:x:\alpha\,z],\hspace{1mm}
                     [\xi^{5}\,y+\xi^{4}\,x:y:\alpha\,z],$
\item[]$
                     \hspace{2.13cm}
                     [\xi^{-5}\,y+\xi^{-4}\,x:y:\alpha\,z]
                     \hspace{1mm}\big\vert\hspace{1mm}\alpha\in\mathbb{C}^*\Big\}$ où $\mathrm{\xi}=\mathrm{e}^{\mathrm{i}\pi/6}$;
\medskip

\item [\texttt{12.}] $\mathrm{Iso}(\mathcal{F}_{1})=\Big\{
                     [\alpha^{2} x:\alpha^{3} y:z+\beta\,x]
                     \hspace{1mm}\big\vert\hspace{1mm} \alpha\in\mathbb{C}^*,\hspace{1mm}\beta\in\mathbb{C}\Big\}$;
\medskip

\item [\texttt{13.}] $\mathrm{Iso}(\mathcal{F}_{2})=\Big\{
                     [\alpha^{4} x:\alpha^{3} y:z+\beta\,x]
                     \hspace{1mm}\big\vert\hspace{1mm} \alpha\in\mathbb{C}^*,\hspace{1mm}\beta\in\mathbb{C}\Big\}$;
\medskip

\item [\texttt{14.}] $\mathrm{Iso}(\mathcal{F}_{3})=\Big\{
                     [\pm\,x:\pm\,y:z],\hspace{1mm}
                     [\pm\,y:\pm\,x:z],\hspace{1mm}
                     [\pm\,x:\pm\,z:y],\hspace{1mm}
                     [\pm\,z:\pm\,x:y],\hspace{1mm}
                     [\pm\,y:\pm\,z:x],\hspace{1mm}
                     [\pm\,z:\pm\,y:x]\Big\}$;
\medskip

\item [\texttt{15.}] $\mathrm{Iso}(\mathcal{F}_{4})=\Big\{
                     [\,x:y:z+\alpha x],\hspace{1mm}
                     [\,\mathrm{j}\,x:y:z+\alpha\,x],\hspace{1mm}
                     [\,\mathrm{j}^2x:y:z+\alpha\,x]
                     \hspace{1mm}\big\vert\hspace{1mm}\alpha\in\mathbb{C}\Big\}$ où $\mathrm{j}=\mathrm{e}^{2\mathrm{i}\pi/3}$;
\medskip

\item [\texttt{16.}] $\mathrm{Iso}(\mathcal{F}_{5})=\Big\{
                     [\alpha^{2} x:\alpha^{3} y:z]
                     \hspace{1mm}\big\vert\hspace{1mm}\alpha\in\mathbb{C}^*\Big\}$.
\end{itemize}
\end{Small}
\vspace{2mm}

\noindent En particulier, les dimensions des $\mathcal{O}(\mathcal{H}_{\hspace{0.2mm}i})$ et $\mathcal{O}(\F_j)$ sont les suivantes
\begin{align*}
&\dim\mathcal{O}(\F_{1})=6,&&&\dim\mathcal{O}(\F_{2})=6,&&& \dim\mathcal{O}(\mathcal{H}_{\hspace{0.2mm}i})=7, i=1,\ldots,11,\\
&
\dim\mathcal{O}(\F_{4})=7,&&& \dim\mathcal{O}(\F_{5})=7,&&& \dim\mathcal{O}(\F_{3})=8.
\end{align*}
}
\end{pro}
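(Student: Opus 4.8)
The plan is to compute each isotropy group directly from the explicit normal forms of the sixteen $1$-forms and then read off the dimensions. For a feuilletage $\F$ given in an affine chart by $\omega$, an automorphism $\varphi \in \mathrm{PGL}_3(\C)$ lies in $\mathrm{Iso}(\F)$ if and only if $\varphi^*\omega \wedge \omega = 0$, i.e.\ $\varphi^*\omega = u\,\omega$ for some rational unit $u$. So for each $\mathcal{H}_i$ and $\F_j$ I would parametrize $\varphi$ by its matrix entries, impose this proportionality, and solve the resulting polynomial system. The point is that the answer is expected to be a small (often finite, or one- or two-parameter) group, so the system is heavily overdetermined and the computation, while tedious, is mechanical.

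First I would exploit the structure already established. For the homogeneous feuilletages $\mathcal{H}_i$, Definition~\ref{def:feuil-homog} and the discussion opening Chapter~\ref{chap2:homog�ne} show that $O=[0:0:1]$ is the unique singularity of algebraic multiplicity $d=3$ and $L_\infty=(z=0)$ is invariant; hence any $\varphi\in\mathrm{Iso}(\mathcal{H}_i)$ must fix $O$ and preserve $L_\infty$, which forces $\varphi$ to have the block form $[\,ax+by : cx+dy : \alpha z\,]$. Moreover $\varphi$ must permute the lines of the tangent cone $\Cinv=0$ and permute the transverse inflection lines $T_k$ while respecting their inflection orders $\rho_k-1$; since the type $\mathcal{T}_{\mathcal{H}_i}$ (Table~\ref{tab:CS(lambda)}) records exactly this combinatorial data, $\varphi$ is constrained to a finite set of linear maps on $(x,y)$, after which one checks which ones actually fix $\omega$ up to a scalar. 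This reduces each of the eleven cases to a short finite check and pins down $\mathrm{Iso}(\mathcal{H}_i)$ as a group of the form $\{[\cdots:\cdots:\alpha z]\}$ with $\alpha\in\C^*$ a free parameter, so $\dim\mathrm{Iso}(\mathcal{H}_i)=1$ and therefore $\dim\mathcal{O}(\mathcal{H}_i)=\dim\mathrm{PGL}_3(\C)-1=8-1=7$ for $i=1,\dots,11$.

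For the five inhomogeneous feuilletages I would argue similarly but using the singularity data recorded after the proof of Theorem~\ref{thm:classification} and in the two remarks preceding the statement. For $\F_2$ the unique singularity is the point of algebraic multiplicity $3$ (so $\varphi$ fixes it), and the invariant line $(x=0)$ from the normal form $\omegaoverline_2 = x^3\mathrm{d}x + y^3(x\mathrm{d}y-y\mathrm{d}x)$ must be preserved; writing $\varphi$ accordingly and solving $\varphi^*\omegaoverline_2 = u\,\omegaoverline_2$ yields the two-parameter group $\{[\alpha^4x:\alpha^3y:z+\beta x]\}$, hence $\dim\mathrm{Iso}(\F_2)=2$ and $\dim\mathcal{O}(\F_2)=6$; the analysis of $\F_1$ via $\omegaoverline_1$ is identical and gives $\dim\mathcal{O}(\F_1)=6$. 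For $\F_4$ and $\F_5$ one uses that each has exactly two singularities with prescribed local structure (a radial singularity of order $2$ for $\F_4$, etc.), so $\varphi$ must either fix both or swap them, then solve the proportionality; the outcome is one-parameter ($\beta\in\C$ for $\F_4$, $\alpha\in\C^*$ for $\F_5$), giving dimension $7$. Finally $\F_3$ is the Fermat feuilletage: its $9$ invariant lines (listed in the Example following Proposition~\ref{pro:F-d�g�n�re-H}) and its three non-aligned radial singularities of maximal order force $\varphi$ to permute the three coordinate points, so $\mathrm{Iso}(\F_3)$ is the finite group of signed coordinate permutations, $\dim\mathrm{Iso}(\F_3)=0$ and $\dim\mathcal{O}(\F_3)=8$.

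The main obstacle is purely bookkeeping: verifying each of the sixteen proportionality conditions and confirming that the group found is exactly the one claimed (neither larger nor smaller), since an overlooked extra symmetry would change the dimension count. The cleanest way to keep this in check is to use the invariants $\mathcal{T}_{\mathcal{H}_i}$, the Camacho--Sad polynomials, and the singularity data as a priori restrictions before any matrix computation, so that in each case only finitely many candidate linear parts remain to be tested; the continuous part of the isotropy (the scalars $\alpha$, and the translations $\beta$ in the $\F_1,\F_2,\F_4$ cases) then accounts for all of the dimension, and the dimension formula $\dim\mathcal{O}(\cdot)=8-\dim\mathrm{Iso}(\cdot)$ finishes the proof.
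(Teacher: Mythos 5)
Votre plan est correct et correspond exactement à la démarche que l'énoncé sous-entend : le texte indique d'ailleurs que la démonstration, « de nature élémentaire », est laissée au lecteur, de sorte qu'il n'y a pas de preuve écrite à laquelle comparer la vôtre. La réduction préalable par les invariants (point singulier de multiplicité $3$ fixé, droite $L_\infty$ ou droites invariantes préservées, permutation des droites du cône tangent et des droites d'inflexion avec leurs ordres), suivie de la résolution de la condition de proportionnalité $\varphi^*\omega=u\,\omega$ et du calcul $\dim\mathcal{O}(\F)=8-\dim\mathrm{Iso}(\F)$, est bien la vérification attendue.
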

\vspace{1mm}

\begin{rem}
Soit $\F$ un feuilletage sur $\pp.$ Si $[a:b:c]$ sont les coordonnées homogènes de $\pd$ associées à la droite $\{ax+by+cz=0\}\subset\pp,$ alors
\begin{align*}
&\mathrm{Iso}(\Leg\F)=\Big\{[a:b:c]\cdot A^{-1}
\hspace{1mm}\big\vert\hspace{1mm}
A\in\mathrm{PGL}_3(\mathbb{C}),\hspace{1mm}[x:y:z]\cdot\transp{A}\in\mathrm{Iso}(\F)
\Big\}.
\end{align*}
Plus précisément, l'isomorphisme $\tau\hspace{1mm}\colon\mathrm{Aut}(\pp)\to\mathrm{Aut}(\pd)$ qui, pour $A$ dans $\mathrm{PGL}_3(\mathbb{C}),$ envoie $[x:y:z]\cdot\transp{A}$ sur $[a:b:c]\cdot A^{-1}$ induit par restriction un isomorphisme de $\mathrm{Iso}(\F)$ sur $\mathrm{Iso}(\Leg\F).$
\end{rem}
\vspace{2mm}

\Subsection{Description des feuilletages $\F$ de degré trois tels que $\dim\mathcal{O}(\F)=6$}

La Proposition~2.3~de~\cite{CDGBM10} affirme que si $\F$ est un feuilletage de degré $d\geq2$ sur $\pp,$ alors la dimension de~$\mathcal{O}(\F)$ est minorée par $6,$ ou, ce qui revient au même, la dimension de~$\mathrm{Iso}(\F)$ est majorée par $2$. Notons que ces bornes sont atteintes pour les feuilletages $\F_{1}^{(d)}$ et $\F_{2}^{(d)}$ définis respectivement en carte affine $z=1$ par les $1$-formes
\begin{align*}
&\omegaoverline_{1}^{(d)}\hspace{1mm}=y^{d}\mathrm{d}x+x^{d}(x\mathrm{d}y-y\mathrm{d}x)
&& \text{et} &&
\omegaoverline_{2}^{(d)}\hspace{1mm}=x^{d}\mathrm{d}x+y^{d}(x\mathrm{d}y-y\mathrm{d}x).
\end{align*}
\newpage
\hfill
\vspace{-6.2mm}

\noindent En effet, il est facile de vérifier que
\begin{align*}
&\left\{
\left(\frac{\alpha^{d-1}x}{1+\beta x},\frac{\alpha^{d}y}{1+\beta x}\right)
\hspace{1mm}\Big\vert\hspace{1mm}
\alpha\in\mathbb{C}^*,\hspace{1mm}\beta\in\mathbb{C}
\right\}
\subset\mathrm{Iso}(\F_{1}^{(d)})\\
&\hspace{-5mm}\text{et}
\\
&\left\{
\left(\frac{\alpha^{d+1}x}{1+\beta x},\frac{\alpha^{d}y}{1+\beta x}\right)
\hspace{1mm}\Big\vert\hspace{1mm}
\alpha\in\mathbb{C}^*,\hspace{1mm}\beta\in\mathbb{C}
\right\}
\subset\mathrm{Iso}(\F_{2}^{(d)}),
\end{align*}
de sorte que $\dim\mathrm{Iso}(\F_{i}^{(d)})\geq2,i=1,2,$ et donc $\dim\mathrm{Iso}(\F_{i}^{(d)})=2.$
\begin{rem}
Par construction on a $\F_{1}^{(3)}=\F_1$ et $\F_{2}^{(3)}=\F_2.$
\end{rem}

\noindent D.~\textsc{Cerveau}, J.~\textsc{D\'eserti}, D.~\textsc{Garba Belko} et R.~\textsc{Meziani} ont montré qu'à automorphisme de $\pp$ près les feuilletages quadratiques $\F_{1}^{(2)}$ et $\F_{2}^{(2)}$ sont les seuls feuilletages qui réalisent la dimension minimale des orbites en degré~$2$ (\cite[Proposition 2.7]{CDGBM10}). En degré $3$ on a le résultat similaire suivant.

\begin{cor}\label{cor:dim-min}
{\sl \`A automorphisme de $\pp$ près, les feuilletages $\mathcal{F}_{1}$ et $\mathcal{F}_{2}$ sont les seuls feuilletages qui réalisent la dimension minimale des orbites en degré $3.$}
\end{cor}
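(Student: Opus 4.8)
\textbf{Plan of proof for Corollaire~\ref{cor:dim-min}.}
The statement asserts that among all feuilletages $\F\in\mathbf{F}(3)$ realizing the minimal orbit dimension $6$ (equivalently, $\dim\mathrm{Iso}(\F)=2$, the maximal possible value by \cite[Proposition 2.3]{CDGBM10}), only $\F_1$ and $\F_2$ occur up to automorphism. The natural approach is to combine two ingredients already available: first, the general principle that a feuilletage with a two-dimensional isotropy group must be of a very rigid shape, and second, the full classification provided by Theorem~\ref{thmalph:classification} together with the explicit isotropy computation of Proposition~\ref{pro:isotropies}. The key observation is that we do \emph{not} need to know a priori that a degree-$3$ feuilletage of minimal orbit dimension has flat Legendre transform; rather, we should argue directly that $\dim\mathrm{Iso}(\F)=2$ forces $\F$ into the class covered by the classification.

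First I would recall that $\mathrm{Iso}(\F)$ is an algebraic subgroup of $\mathrm{PGL}_3(\mathbb{C})$, so if it has dimension $2$ its identity component $\mathrm{Iso}(\F)^0$ is a connected solvable (indeed, one checks it cannot contain a copy of $\mathrm{SL}_2$, since a degree-$3$ feuilletage invariant under $\mathrm{PSL}_2$ acting irreducibly on $\mathbb{P}^2_{\mathbb{C}}$ does not exist — the only $\mathrm{PSL}_2$-invariant curves have the wrong degrees) connected group of dimension $2$. Up to conjugacy such a group is either a torus $(\mathbb{C}^*)^2$, or the group $\{(x,y)\mapsto(\alpha x,\alpha y+\beta)\}$-type extension, or a two-dimensional unipotent group. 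I would run through these cases: writing $\F$ in a well-chosen affine chart adapted to the fixed points / invariant flag of $\mathrm{Iso}(\F)^0$, the invariance of the defining $1$-form $\omega$ under a two-parameter family of linear (or affine) substitutions reduces the $15$ coefficients of $\omega$ to a very short list of normal forms. In the torus case one is led, after normalization of weights, essentially to forms supported on monomials of fixed bidegree; in the non-reductive cases, the presence of a unipotent direction combined with $\deg\F=3$ (i.e. the tangent cone condition $xA+yB\not\equiv 0$ and reducedness of $\mathrm{pgcd}(A,B,C)$) pins $\omega$ down to finitely many families depending on at most one discrete parameter. One then checks that each surviving family either has $\dim\mathrm{Iso}<2$ generically (so only special members qualify) or is conjugate to $\F_1$ or $\F_2$.

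The main obstacle will be the case analysis of the two-dimensional \emph{non-reductive} subgroups of $\mathrm{PGL}_3(\mathbb{C})$ acting on $\mathbb{P}^2_{\mathbb{C}}$: there are several conjugacy classes (distinguished by the flag they preserve and by whether they act with an open orbit), and for each one must verify that invariance of a degree-$3$ $1$-form under the full two-parameter group, \emph{plus} the reducedness conditions defining $\mathbf{F}(3)$, leaves only $\overline{\mathcal{O}(\F_1)}$ and $\overline{\mathcal{O}(\F_2)}$. A cleaner route, which I would actually follow to keep the argument short, is to observe that any $\F$ with $\dim\mathrm{Iso}(\F)=2$ has an orbit closure $\overline{\mathcal{O}(\F)}$ of dimension $6$, which is therefore a minimal (hence closed) orbit; since $\mathbf{FP}(3)$ is Zariski-closed and, by Theorem~\ref{thmalph:classification}, $\mathbf{FP}(3)=\bigcup_i\mathcal{O}(\mathcal{H}_i)\cup\bigcup_j\mathcal{O}(\F_j)$, it suffices to show \emph{(a)} that such an $\F$ automatically lies in $\mathbf{FP}(3)$ — this follows because the only degree-$3$ feuilletages outside $\mathbf{FP}(3)$ form a Zariski-open dense set stable under $\mathrm{PGL}_3$, and on it the generic isotropy is trivial, so a $2$-dimensional isotropy group is incompatible with membership in the complement (one verifies this via the explicit normal forms above, or by noting that $\dim\mathrm{Iso}(\F)\geq1$ already forces $\F$ to lie on a proper invariant subvariety of $\mathbf{F}(3)$ and iterating) — and \emph{(b)} that among $\mathcal{H}_1,\dots,\mathcal{H}_{11},\F_1,\dots,\F_5$ the only ones with $\dim\mathrm{Iso}=2$ are $\F_1,\F_2$, which is immediate from Proposition~\ref{pro:isotropies} and the dimension list stated there. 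I would present step (b) as the conclusion and absorb the verification of (a) into the normal-form discussion, phrasing the whole corollary as a consequence of Theorem~\ref{thmalph:classification}, Proposition~\ref{pro:isotropies}, and \cite[Proposition 2.3]{CDGBM10}.
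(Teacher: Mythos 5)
Your step (b) coincides with the paper's conclusion, but step (a) — the claim that $\dim\mathrm{Iso}(\F)=2$ forces $\F\in\mathbf{FP}(3)$ — contains a genuine gap. Your argument is that the complement of $\mathbf{FP}(3)$ is a $\mathrm{PGL}_3(\mathbb{C})$-stable open dense set on which "the generic isotropy is trivial", hence incompatible with a $2$-dimensional isotropy group. This is a non sequitur: triviality of the isotropy at a \emph{generic} point of an invariant open set says nothing about special points of that set; the closed locus $\{\dim\mathrm{Iso}(\F)\geq 2\}$ could perfectly well meet the complement of $\mathbf{FP}(3)$, and "iterating" the observation that positive-dimensional isotropy confines $\F$ to a proper invariant subvariety does not produce membership in the particular closed set $\mathbf{FP}(3)$. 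Your fallback — the case-by-case normal-form analysis of the two-dimensional subgroups of $\mathrm{PGL}_3(\mathbb{C})$ — could in principle close the gap, but it is only sketched ("one checks", "pins $\omega$ down") and is precisely the heavy computation the corollary is meant to avoid; as written, the proposal proves only that among the sixteen models of Th\'eor\`eme~\ref{thm:classification} the minimal dimension is attained exactly by $\F_1$ and $\F_2$, not that an arbitrary $\F$ with $\dim\mathcal{O}(\F)=6$ must be one of them.

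The missing idea in the paper is to transport the symmetry to the dual web and use web geometry rather than group classification: since $\mathrm{Iso}(\Leg\F)\cong\mathrm{Iso}(\F)$, the hypothesis gives $\dim\mathrm{Iso}(\Leg\F)=2$; by \'E.~\textsc{Cartan}'s classification of symmetries of planar $3$-webs \cite{Car08}, a germ of regular $3$-web admitting a symmetry group of dimension $\geq 2$ is parallelizable, hence of zero Blaschke curvature. Applying this at every point $m\in\pd\smallsetminus\Delta(\Leg\F)$ and using that $K(\Leg\F)$ is holomorphic there, one gets $K(\Leg\F)\equiv 0$, i.e. $\F\in\mathbf{FP}(3)$; then Th\'eor\`eme~\ref{thm:classification} and Proposition~\ref{pro:isotropies} finish the proof exactly as in your step (b). So your overall architecture (reduce to the classification, then read off the isotropy dimensions) is the right one, but the bridge from minimal orbit dimension to flatness needs Cartan's theorem on $3$-web symmetries — or a complete execution of your normal-form analysis — neither of which is supplied in the proposal.
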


\begin{proof}[\sl D\'emonstration]
Soit $\F$ un feuilletage de degré trois sur $\pp$ tel que $\dim\mathcal{O}(\F)=6.$ Comme $\mathrm{Iso}(\Leg\F)$ est isomorphe à $\mathrm{Iso}(\F),$ nous avons $\dim\mathrm{Iso}(\Leg\F)=\dim\mathrm{Iso}(\F)=8-6=2.$ Soit $m\in\pd\smallsetminus\Delta(\Leg\F)$; désignons par $\W_m$ le germe du $3$-tissu $\Leg\F$ en $m.$ D'après \'{E}. \textsc{Cartan} \cite{Car08} l'égalité $\dim\mathrm{Iso}(\Leg\F)=2$ implique que $\W_m$ est parallélisable et donc plat. Comme la courbure de $\Leg\F$ est holomorphe sur $\pd\smallsetminus\Delta(\Leg\F),$ nous en déduisons que $\Leg\F$ est plat. Donc $\F$ est linéairement conjugué à l'un des seize feuilletages donnés par le Théorème~\ref{thm:classification}. La Proposition~\ref{pro:isotropies} et l'hypothèse $\dim\mathcal{O}(\F)=6$ permettent de conclure.
\end{proof}

\Subsection{Adhérence d'orbites; composantes irréductibles de $\mathbf{FP}(3)$}\label{subsec:Adhérence-Composantes-FP(3)}

Nous commençons par étudier les adhérences des orbites $\mathcal{O}(\mathcal{H}_{\hspace{0.2mm}i})$ et $\mathcal{O}(\F_j)$ dans $\mathbf{F}(3),$ puis nous donnons une description des composantes irréductibles de $\mathbf{FP}(3).$

\noindent La définition suivante nous sera utile.

\begin{defin}[\cite{CDGBM10}]
Soient $\F$ et $\F'$ deux feuilletages de~$\mathbf{F}(3).$ On dit que $\F$ \textbf{\textit{dégénère}} sur $\F'$ si l'adhérence $\overline{\mathcal{O}(\F)}$ (dans~$\mathbf{F}(3)$) de $\mathcal{O}(\F)$ contient~$\mathcal{O}(\F')$ et $\mathcal{O}(\mathcal{F})\not=\mathcal{O}(\mathcal{F}').$
\end{defin}

\begin{rems}\label{rems:orbt-cnvx-plt}
Soient $\F$ et $\F'$ deux feuilletages tels que $\F$ dégénère sur $\F'$. Alors:
\begin{itemize}
\item [(i)]   $\dim\mathcal{O}(\F')<\dim\mathcal{O}(\F)$;

\item [(ii)]  Si $\Leg\F$ plat, $\Leg\F'$ l'est aussi;

\item [(iii)] $\deg\IinvF\leq\deg\mathrm{I}_{\mathcal{F}'}^{\mathrm{inv}}$ ce qui équivaut à~$\deg\ItrF\geq\deg\mathrm{I}_{\mathcal{F}'}^{\hspace{0.2mm}\mathrm{tr}}$. En particulier si $\F$ est convexe, $\F'$ l'est aussi.
\end{itemize}
\end{rems}

\noindent Comme nous l'avons signalé au paragraphe~\S\ref{subsec:cas-non-dégénéré}, D.~\textsc{Mar\'{\i}n} et J. \textsc{Pereira} dans~\cite{MP13} ont montré que l'adhérence de l'orbite $\mathcal{O}(\F_3)$ de $\F_3$ est une composante irréductible de $\mathbf{FP}(3).$ Le~point~\textbf{\textit{2.}} de la proposition suivante en donne une description plus précise.

\begin{pro}\label{pro:adh-F1-F2-F3}
{\sl
\textbf{\textit{1.}} Les orbites $\mathcal{O}(\F_1)$ et $\mathcal{O}(\F_2)$ sont fermées.

\noindent\textbf{\textit{2.}} $\overline{\mathcal{O}(\F_3)}=\mathcal{O}(\F_1)\cup\mathcal{O}(\mathcal{H}_{1})\cup\mathcal{O}(\mathcal{H}_{\hspace{0.2mm}3}) \cup\mathcal{O}(\F_3).$
}
\end{pro}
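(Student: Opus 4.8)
The two assertions are of different nature. For Assertion \textbf{1}, I would argue that $\mathcal{O}(\F_1)$ and $\mathcal{O}(\F_2)$ are closed because there is no feuilletage of degree three on which $\F_1$ or $\F_2$ can degenerate: by Remark~\ref{rems:orbt-cnvx-plt}~(i) any feuilletage in $\overline{\mathcal{O}(\F_i)}\setminus\mathcal{O}(\F_i)$ would have orbit of dimension strictly less than $\dim\mathcal{O}(\F_i)=6$, which is impossible since $6$ is the minimal orbit dimension in degree $\geq2$ (\cite[Proposition~2.3]{CDGBM10}). Hence $\overline{\mathcal{O}(\F_i)}=\mathcal{O}(\F_i)$ for $i=1,2$.

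For Assertion \textbf{2}, I would split the proof into the two inclusions. For $\supseteq$: since $\mathbf{FP}(3)$ is Zariski-closed and $\F_3\in\mathbf{FP}(3)$, we have $\overline{\mathcal{O}(\F_3)}\subseteq\mathbf{FP}(3)$; so it suffices to exhibit explicit degenerations of $\F_3$ onto $\F_1$, $\mathcal{H}_1$ and $\mathcal{H}_{\hspace{0.2mm}3}$. The feuilletage $\F_3=\F^3$ has $3d=9$ invariant lines (the example following Proposition~\ref{pro:F-d�g�n�re-H} lists them); applying Proposition~\ref{pro:F-d�g�n�re-H} to $\F_3$ along the lines of family (a) produces homogeneous feuilletages of type $2\cdot\mathrm{R}_2$ lying in $\overline{\mathcal{O}(\F_3)}$, all conjugate to $\mathcal{H}_1=\mathcal{H}_1^3$; along the lines of family (b) it produces homogeneous feuilletages of type $1\cdot\mathrm{R}_2+2\cdot\mathrm{R}_1$, all conjugate to $\mathcal{H}_{\hspace{0.2mm}0}^3\cong\mathcal{H}_{\hspace{0.2mm}3}$. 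Finally $\F_1\in\overline{\mathcal{O}(\F_3)}$ follows by composing degenerations: by Proposition~\ref{pro:adh-F1-F2-F3}~\textbf{2} applied to (or rather, by the analysis of) $\overline{\mathcal{O}(\mathcal{H}_1)}$, one checks $\mathcal{O}(\F_1)\subset\overline{\mathcal{O}(\mathcal{H}_1)}\subset\overline{\mathcal{O}(\F_3)}$; concretely one exhibits a one-parameter family inside $\mathcal{O}(\mathcal{H}_1)$ degenerating to $\F_1$ (using that $\mathcal{H}_1$ has an invariant line off its triple-multiplicity singularity and letting the homogeneous part degenerate), which is the content of the relation $\overline{\mathcal{O}(\mathcal{H}_1)}=\mathcal{O}(\F_1)\cup\mathcal{O}(\mathcal{H}_1)$ to be established alongside. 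So $\mathcal{O}(\F_1)\cup\mathcal{O}(\mathcal{H}_1)\cup\mathcal{O}(\mathcal{H}_{\hspace{0.2mm}3})\cup\mathcal{O}(\F_3)\subseteq\overline{\mathcal{O}(\F_3)}$.

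For the reverse inclusion $\subseteq$: let $\G\in\overline{\mathcal{O}(\F_3)}\setminus\mathcal{O}(\F_3)$. Then $\G\in\mathbf{FP}(3)$ and, being convex by Remark~\ref{rems:orbt-cnvx-plt}~(iii) (since $\F_3$ is convex and convexity is preserved under degeneration), Corollary~\ref{cor:class-convexe-3} forces $\G$ to be linearly conjugate to one of $\mathcal{H}_1,\mathcal{H}_{\hspace{0.2mm}3},\F_1,\F_3$. Since $\G\notin\mathcal{O}(\F_3)$ and $\dim\mathcal{O}(\G)<\dim\mathcal{O}(\F_3)=8$ by Remark~\ref{rems:orbt-cnvx-plt}~(i), the dimension count in Proposition~\ref{pro:isotropies} ($\dim\mathcal{O}(\mathcal{H}_1)=\dim\mathcal{O}(\mathcal{H}_{\hspace{0.2mm}3})=7$, $\dim\mathcal{O}(\F_1)=6$) rules nothing out yet, so $\G\in\mathcal{O}(\F_1)\cup\mathcal{O}(\mathcal{H}_1)\cup\mathcal{O}(\mathcal{H}_{\hspace{0.2mm}3})$, giving $\overline{\mathcal{O}(\F_3)}\subseteq\mathcal{O}(\F_1)\cup\mathcal{O}(\mathcal{H}_1)\cup\mathcal{O}(\mathcal{H}_{\hspace{0.2mm}3})\cup\mathcal{O}(\F_3)$ and hence equality.

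\textbf{Main obstacle.} The delicate point is the $\supseteq$ direction, specifically verifying that the homogeneous feuilletages produced by Proposition~\ref{pro:F-d�g�n�re-H} really are conjugate to $\mathcal{H}_1$ and $\mathcal{H}_{\hspace{0.2mm}3}$ (not merely in $\mathbf{FP}(3)$), and — most of all — checking the chain $\mathcal{O}(\F_1)\subset\overline{\mathcal{O}(\mathcal{H}_1)}$, which requires producing an explicit degeneration family rather than invoking a closed-set argument. The identification of types via the invariant $\mathcal{T}_{\mathcal{H}}$ and the Camacho–Sad polynomials of Table~\ref{tab:CS(lambda)} handles the conjugacy claims cleanly, so the real work is the explicit limit computation showing $\F_1\in\overline{\mathcal{O}(\mathcal{H}_1)}$; this is a short but genuine calculation with a one-parameter group of linear substitutions acting on $\omega_1=y^3\mathrm{d}x-x^3\mathrm{d}y$.
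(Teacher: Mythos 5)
Your assertion \textbf{1} and your inclusion $\overline{\mathcal{O}(\F_3)}\subseteq\mathcal{O}(\F_1)\cup\mathcal{O}(\mathcal{H}_1)\cup\mathcal{O}(\mathcal{H}_{\hspace{0.2mm}3})\cup\mathcal{O}(\F_3)$ follow the paper's own argument (minimal orbit dimension $6$ plus Remarque~\ref{rems:orbt-cnvx-plt}~(i); convexity under degeneration plus Corollaire~\ref{cor:class-convexe-3}). For $\mathcal{H}_1,\mathcal{H}_{\hspace{0.2mm}3}\in\overline{\mathcal{O}(\F_3)}$ your route through the homogenization proposition and the example on the Fermat foliation (families (a) and (b) of invariant lines, identification by type among the flat homogeneous cubics) is legitimate; the paper instead writes explicit limits, namely $-\varepsilon^4\varphi_\varepsilon^*\omegaoverline_3\to\omega_1$ with $\varphi_\varepsilon=(x/\varepsilon,y/\varepsilon)$ and $\tfrac{1}{8\varepsilon}\psi_\varepsilon^*\Omegaoverline_3\to\Omega_3$ with $\psi_\varepsilon=[x-y:2\varepsilon z-x-y:x+y]$.

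The genuine gap is the remaining point $\F_1\in\overline{\mathcal{O}(\F_3)}$. Your reduction to $\mathcal{O}(\F_1)\subset\overline{\mathcal{O}(\mathcal{H}_1)}$ is sound (the closure $\overline{\mathcal{O}(\F_3)}$ is $\mathrm{PGL}_3$-invariant and contains $\mathcal{H}_1$), but you never prove that inclusion: your first justification cites the very proposition under proof, and the sketch you offer (an invariant line away from the multiplicity-$3$ singularity, ``letting the homogeneous part degenerate'') does not describe a working limit. The degeneration of $\mathcal{H}_1$ onto $\F_1$ is centered not at the multiplicity-$3$ singularity but at one of the radial singularities of order $2$ on $L_\infty$, where $\mathrm{BB}=4$ and $\kappa=3$, and uses the anisotropic scaling $(\varepsilon^3x,\varepsilon y)$ adapted to a non-invariant line of maximal tangency; in the paper this is exactly Proposition~\ref{pro:cond-suff-dgnr-F1} and Corollaire~\ref{cor:H1-H3-H5-H7-F4}, which appear \emph{after} the present proposition and require a genuine argument of their own. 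The paper sidesteps all of this by degenerating $\F_3$ directly onto $\F_1$: in the affine chart $x=1$, where $\F_3$ is given by $\thetaoverline_3=(y^3-y)\mathrm{d}z-(z^3-z)\mathrm{d}y$, the family $\sigma_\varepsilon=(y/\varepsilon,\,2+6\varepsilon^2z)$ satisfies $-\tfrac{\varepsilon}{6}\sigma_\varepsilon^*\thetaoverline_3\to\mathrm{d}y-y^3\mathrm{d}z=\thetaoverline_1$ as $\varepsilon\to0$. Without such an explicit limit, or an independent proof that $\mathcal{H}_1$ degenerates onto $\F_1$, your inclusion $\supseteq$ is not established.
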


\begin{proof}[\sl D\'emonstration]
La première assertion résulte du Corollaire~\ref{cor:dim-min} et de la Remarque~\ref{rems:orbt-cnvx-plt}~(i).

\noindent D'après le Corollaire~\ref{cor:class-convexe-3} et la Remarque~\ref{rems:orbt-cnvx-plt}~(iii), $\F_3$ ne peut dégénérer que sur $\F_1,\mathcal{H}_{1}$ et $\mathcal{H}_{\hspace{0.2mm}3}.$ Montrons qu'il en est ainsi. Considérons la famille d'homothéties $\varphi=\varphi_{\varepsilon}=\left(\frac{x}{\varepsilon},\frac{y}{\varepsilon}\right).$ Nous~avons
\begin{align*}
-\varepsilon^4\varphi^*\omegaoverline_{3}=(y^3-\varepsilon^2y)\mathrm{d}x+(\varepsilon^2x-x^3)\mathrm{d}y
\end{align*}
qui tend vers $\omega_{1}$ lorsque $\varepsilon$ tend vers $0.$ Le feuilletage $\F_{3}$ dégénère donc sur $\mathcal{H}_{1}.$

\noindent Dans la carte affine $x=1$, $\F_1,$ resp. $\F_3,$ est décrit par
\begin{align*}
& \thetaoverline_1=\mathrm{d}y-y^3\mathrm{d}z, && \text{resp. } \thetaoverline_3=(y^3-y)\mathrm{d}z-(z^3-z)\mathrm{d}y\hspace{1mm};
\end{align*}
considérons la famille d'automorphismes $\sigma=\left(\frac{y}{\varepsilon},2+6\varepsilon^{2}z\right).$ Un calcul direct conduit à
\begin{align*}
-\frac{\varepsilon}{6}\sigma^*\thetaoverline_3=(1+11\varepsilon^{2}z+36\varepsilon^{4}z^{2}+36\varepsilon^{6}z^{3})\mathrm{d}y+(\varepsilon^{2}y-y^{3})\mathrm{d}z
\end{align*}
ce qui tend vers $\thetaoverline_1$ lorsque $\varepsilon$ tend vers $0.$ Ainsi $\F_3$ dégénère sur $\F_{1}.$

\noindent En coordonnées homogènes, $\mathcal{H}_{\hspace{0.2mm}3}$, resp. $\F_3,$ est défini par
\begin{align*}
&\Omega_3=z\,y^2(3x+y)\mathrm{d}x-z\,x^2(x+3y)\mathrm{d}y+xy(x^2-y^2)\mathrm{d}z,&&\\
\text{resp}.\hspace{1.5mm}
&\Omegaoverline_3=x^3(y\mathrm{d}z-z\mathrm{d}y)+y^3(z\mathrm{d}x-x\mathrm{d}z)+z^3(x\mathrm{d}y-y\mathrm{d}x)\hspace{1mm};
\end{align*}
en posant $\psi=\left[x-y:2\varepsilon\,z-x-y:x+y\right]$ nous obtenons
\begin{align*}
\frac{1}{8\varepsilon}\psi^*\Omegaoverline_3=
z\,y(y-\varepsilon\,z)(3x+y-2\varepsilon\,z)\mathrm{d}x-z\,x(x-\varepsilon\,z)(x+3y-2\varepsilon\,z)\mathrm{d}y+xy(x^2-y^2)\mathrm{d}z
\end{align*}
qui tend vers $\Omega_3$ lorsque $\varepsilon$ tend vers $0.$ Par conséquent $\F_3$ dégénère sur $\mathcal{H}_{\hspace{0.2mm}3}.$
\end{proof}

\begin{rem}
En combinant l'assertion~\textbf{\textit{2.}} de la Proposition~\ref{pro:adh-F1-F2-F3} au Corollaire~\ref{cor:class-convexe-3}, nous constatons que l'ensemble des feuilletages convexes de degré trois de $\pp$ est exactement l'adhérence $\overline{\mathcal{O}(\F_3)}$ de $\mathcal{O}(\F_3)$  et est donc un fermé irréductible de~$\mathbf{F}(3).$
\end{rem}

\noindent Le résultat suivant est une conséquence immédiate du Corollaire~\ref{cor:dim-min} et de la Remarque~\ref{rems:orbt-cnvx-plt}~(i).
\begin{cor}\label{cor:dim-o(F)=<7}
{\sl Soit $\F$ un élément de $\mathbf{F}(3)$ tel que $\dim\mathcal{O}(\F)\leq7.$ Alors
$$\overline{\mathcal{O}(\F)}\subset\mathcal{O}(\F)\cup\mathcal{O}(\F_1)\cup\mathcal{O}(\F_2).$$
}
\end{cor}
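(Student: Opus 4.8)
Le plan est de ramener la preuve à un simple argument de dimension d'orbites. Rappelons que l'adhérence $\overline{\mathcal{O}(\F)}$, prise dans $\mathbf{F}(3)$, est un fermé invariant sous l'action de $\mathrm{Aut}(\pp)=\mathrm{PGL}_3(\C)$: ainsi, pour tout $\G\in\overline{\mathcal{O}(\F)}$ on a $\mathcal{O}(\G)\subset\overline{\mathcal{O}(\F)}$, et il suffit de montrer que $\mathcal{O}(\G)$ est soit $\mathcal{O}(\F)$, soit $\mathcal{O}(\F_1)$, soit $\mathcal{O}(\F_2)$. Notons au passage que, l'adhérence étant prise dans l'ouvert $\mathbf{F}(3)$, tout tel $\G$ est bien un feuilletage de degré $3$.

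Je procéderais ensuite en trois temps. D'abord, si $\mathcal{O}(\G)\neq\mathcal{O}(\F)$, alors $\F$ dégénère sur $\G$ et la Remarque~\ref{rems:orbt-cnvx-plt}~(i) fournit $\dim\mathcal{O}(\G)<\dim\mathcal{O}(\F)\leq7$, d'où $\dim\mathcal{O}(\G)\leq6$. Ensuite, d'après la Proposition~2.3 de \cite{CDGBM10} rappelée plus haut, l'orbite de tout feuilletage de degré $\geq2$ sur $\pp$ est de dimension au moins $6$; comme $\deg\G=3$, ceci force $\dim\mathcal{O}(\G)=6$. Enfin, le Corollaire~\ref{cor:dim-min} affirme qu'à automorphisme de $\pp$ près les feuilletages $\F_1$ et $\F_2$ sont les seuls de degré $3$ à réaliser cette dimension minimale; par conséquent $\mathcal{O}(\G)=\mathcal{O}(\F_1)$ ou $\mathcal{O}(\G)=\mathcal{O}(\F_2)$. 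En réunissant les trois cas on obtient l'inclusion annoncée.

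Il n'y a pas de réelle difficulté ici: le point le plus délicat — s'assurer que le bord $\overline{\mathcal{O}(\F)}\setminus\mathcal{O}(\F)$ n'est composé que d'orbites de dimension strictement inférieure à $\dim\mathcal{O}(\F)$ — est exactement le contenu de la Remarque~\ref{rems:orbt-cnvx-plt}~(i), et la borne inférieure $6$ sur les dimensions d'orbites en degré $\geq2$ est déjà établie dans \cite{CDGBM10}. Le résultat est donc bien, comme annoncé, une conséquence immédiate du Corollaire~\ref{cor:dim-min} et de la Remarque~\ref{rems:orbt-cnvx-plt}~(i).
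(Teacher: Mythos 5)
Votre preuve est correcte et suit exactement la ligne que le texte sous-entend: le papier se contente de dire que le corollaire est une « conséquence immédiate » du Corollaire~\ref{cor:dim-min} et de la Remarque~\ref{rems:orbt-cnvx-plt}~(i), et votre rédaction ne fait qu'expliciter cet argument (invariance de l'adhérence, chute stricte de dimension sur le bord, minoration par $6$ de \cite{CDGBM10}, puis identification des orbites de dimension $6$ via le Corollaire~\ref{cor:dim-min}). Rien à redire.
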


\noindent Une condition nécessaire pour qu'un feuilletage de degré trois de $\pp$ dégénère sur le feuilletage $\F_1$ est donnée par la:

\begin{pro}\label{pro:cond-nécess-dgnr-F1}
{\sl Soit $\F$ un élément de $\mathbf{F}(3)$. Si $\F$ dégénère sur $\F_1$, alors $\F$ possède un point singulier $m$ non-dégénéré vérifiant $\mathrm{BB}(\F,m)=4.$
}
\end{pro}

\begin{proof}[\sl D\'emonstration]
Supposons que $\F$ dégénère sur $\F_1$; il existe une famille analytique $(\F_\varepsilon)$ définie par la famille de $1$-formes $(\omega_\varepsilon)$ telle que pour $\varepsilon$ non nul $\F_\varepsilon$ soit dans $\mathcal{O}(\F)$ et pour $\varepsilon$ nul nous ayons $\F_{\varepsilon=0}=\F_1.$ Le point singulier non-dégénéré de $\F_1$, noté~$m_0,$ est \og stable\fg; il existe une famille analytique $(m_\varepsilon)$ de points singuliers non-dégénérés de $\F_\varepsilon$ telle que $m_{\varepsilon=0}=m_0.$ Les~$\F_\varepsilon$ étant conjugués à $\F$ pour $\varepsilon$ non nul, $\F$ admet un point singulier $m$ non-dégénéré tel que
$$
\forall\hspace{1mm}\varepsilon\in\mathbb{C}^*,\hspace{1mm}\mathrm{BB}(\F_\varepsilon,m_\varepsilon)=\mathrm{BB}(\F,m).
$$
Comme $\mu(\F_\varepsilon,m_\varepsilon)=1$ pour tout $\varepsilon$ dans $\mathbb{C}$, la fonction $\varepsilon\mapsto\mathrm{BB}(\F_\varepsilon,m_\varepsilon)$ est continue et est donc constante sur $\mathbb{C}.$ Par suite
\begin{align*}
&&\mathrm{BB}(\F,m)=\mathrm{BB}(\F_{\varepsilon=0},m_{\varepsilon=0})=\mathrm{BB}(\F_1,m_0)=4.
\end{align*}
\end{proof}

\begin{cor}\label{cor:H2-H8-H11-F5}
{\sl
Les feuilletages $\mathcal{H}_{\hspace{0.2mm}2},\mathcal{H}_{\hspace{0.2mm}8},\mathcal{H}_{\hspace{0.2mm}11}$ et $\F_{5}$ ne dégénèrent pas sur $\F_1.$
}
\end{cor}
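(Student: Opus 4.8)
The strategy is to apply the necessary condition from Proposition~\ref{pro:cond-n�cess-dgnr-F1}: if one of the feuilletages $\mathcal{H}_{\hspace{0.2mm}2},\mathcal{H}_{\hspace{0.2mm}8},\mathcal{H}_{\hspace{0.2mm}11},\F_{5}$ degenerated on $\F_1$, it would have to possess a non-degenerate singular point $m$ with Baum--Bott invariant $\mathrm{BB}(\cdot,m)=4$. So it suffices to exhibit, for each of these four feuilletages, the list of its Baum--Bott invariants at non-degenerate singular points and check that the value $4$ never occurs. For the three homogeneous feuilletages $\mathcal{H}_{\hspace{0.2mm}2},\mathcal{H}_{\hspace{0.2mm}8},\mathcal{H}_{\hspace{0.2mm}11}$, this is essentially already encoded in Table~\ref{tab:CS(lambda)}: a non-degenerate singularity $s$ on $L_\infty$ satisfies $\mathrm{BB}(\mathcal{H},L_\infty,s)=\mathrm{CS}(\mathcal{H},L_\infty,s)+\tfrac{1}{\mathrm{CS}(\mathcal{H},L_\infty,s)}+2$, while the origin $O$ is the remaining singular point and has $\nu(\mathcal{H},O)=3>1$, hence is degenerate and need not be considered. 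One reads off from the Camacho--Sad polynomials that the Camacho--Sad indices of $\mathcal{H}_{\hspace{0.2mm}2}$ and $\mathcal{H}_{\hspace{0.2mm}11}$ are all equal to $\tfrac14$, giving $\mathrm{BB}=\tfrac14+4+2=\tfrac{25}{4}\neq4$, and those of $\mathcal{H}_{\hspace{0.2mm}8}$ are $\tfrac{1}{10}$ and $\tfrac{2}{5}$, giving $\mathrm{BB}=\tfrac{1}{10}+10+2=\tfrac{121}{10}$ and $\mathrm{BB}=\tfrac{2}{5}+\tfrac52+2=\tfrac{49}{10}$, again never $4$. (The value $\mathrm{CS}=1$, which is the only index yielding $\mathrm{BB}=4$, does not appear in any of the three relevant polynomials.)

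For $\F_5$, described by $\omegaoverline_5=y^2(y\mathrm{d}x+2x\mathrm{d}y)+x^3(x\mathrm{d}y-y\mathrm{d}x)$, one works directly: $\F_5$ has exactly two singular points (as noted in the Remark preceding Remark~\ref{rem:non-conjugaison-Hi-Fj}), namely the point $m$ of algebraic multiplicity $3$ and one further point $m'$. The point $m$ is degenerate, so only $m'$ is a candidate. One computes the linear part of $\F_5$ at $m'$ in a suitable affine chart, obtains its two eigenvalues $\lambda,\mu$, and checks that $\tfrac{\lambda}{\mu}+\tfrac{\mu}{\lambda}+2\neq4$, i.e.\ that the eigenvalue ratio is not $1$. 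Equivalently, by the Baum--Bott formula (\ref{equa:BB}), $\sum_{s\in\Sing\F_5}\mathrm{BB}(\F_5,s)=25$; knowing $\mathrm{BB}(\F_5,m)$ (computable from the homogeneous $3$-jet, or from the general-position definition of $\mathrm{BB}$ for degenerate singularities) pins down $\mathrm{BB}(\F_5,m')$ and one verifies it differs from $4$.

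\textbf{Main obstacle.} There is no conceptual difficulty here: the proof is a finite verification once Proposition~\ref{pro:cond-n�cess-dgnr-F1} is in hand. The only point requiring a little care is the $\F_5$ case, where one must correctly identify the second singular point $m'$ and compute its Baum--Bott invariant — in particular being careful that $m$ is genuinely degenerate so that Proposition~\ref{pro:cond-n�cess-dgnr-F1} applies to $m'$ alone. For the homogeneous cases the argument is immediate from Table~\ref{tab:CS(lambda)}, provided one remembers that the non-listed singularity (the origin) has algebraic multiplicity $3$ and hence is excluded as a non-degenerate candidate.
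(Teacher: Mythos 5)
Your proof is correct and follows exactly the route the paper intends: the corollary is an immediate consequence of Proposition~\ref{pro:cond-n�cess-dgnr-F1}, checked against Table~\ref{tab:CS(lambda)} (where the value $\mathrm{CS}=1$, equivalent to $\mathrm{BB}=4$, never occurs for $\mathcal{H}_{\hspace{0.2mm}2},\mathcal{H}_{\hspace{0.2mm}8},\mathcal{H}_{\hspace{0.2mm}11}$, the origin being degenerate) and against the singularity structure of $\F_5$. For completeness, the computation you sketch for $\F_5$ does work out: the second singular point is $[0:1:0]$, where the linear part has eigenvalues $3$ and $1$, so $\mathrm{BB}(\F_5,[0:1:0])=\tfrac{16}{3}\neq4$.
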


\noindent Une condition suffisante pour qu'un feuilletage de degré trois de $\pp$ dégénère sur le feuilletage $\F_1$ est donnée par la:

\begin{pro}\label{pro:cond-suff-dgnr-F1}
{\sl Soit $\F$ un élément de $\mathbf{F}(3)$ tel que $\F_1\not\in\mathcal{O}(\F).$ Si $\F$ possède un point singulier $m$ non-dégénéré vérifiant $$\mathrm{BB}(\F,m)=4\qquad \text{et}\qquad \kappa(\F,m)=3,$$
alors $\F$ dégénère sur $\F_1.$
}
\end{pro}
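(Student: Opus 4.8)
The plan is to produce an explicit one-parameter family of automorphisms of $\pp$ along which $\F$ converges to $\F_1$, so that $\F_1\in\overline{\mathcal{O}(\F)}$; since $\F_1\notin\mathcal{O}(\F)$ by hypothesis, this is exactly the required degeneration. First I would normalize the coordinates. As $\kappa(\F,m)=3$, there is a line $\ell\ni m$, not invariant by $\F$, with $\Tang(\F,\ell,m)=3$; choosing homogeneous coordinates $[x:y:z]$ so that $m=[0:1:0]$ and $\ell=\{z=0\}$ and working in the affine chart $y=1$ with coordinates $(x,z)$, the foliation $\F$ is given there by $\omega=P(x,z)\,\mathrm{d}x+R(x,z)\,\mathrm{d}z$, with $P,R$ of degree $\le 4$ and associated vector field $\mathrm{X}=R\,\partial_x-P\,\partial_z$. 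The condition $\Tang(\F,\ell,m)=3$ reads $\mathrm{ord}_x P(x,0)=3$: thus $P$ has no monomial $x^i$ with $i\le 2$, its coefficient of $x^3$ is some $\beta\neq 0$, and the linear part of $P$ is $-\lambda z$ for some $\lambda$, while that of $R$ is $\lambda'x+az$. Non-degeneracy of $m$ forces $\lambda\lambda'\neq 0$, and $\mathrm{BB}(\F,m)=4$ forces the eigenvalues $\lambda,\lambda'$ of $J^1_m\mathrm{X}$ to be equal (their ratio is a double root of $t^2-2t+1$); set $\lambda:=\lambda'$. The coefficient $a$ need not vanish — the linear part at $m$ may be a genuine Jordan block — but this is harmless.

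Next I would consider the family $\varphi_\varepsilon=[\varepsilon x:y:\varepsilon^3 z]\in\mathrm{PGL}_3(\mathbb{C})$ (the weights $(1,3)$ being those of the $\mathbb{C}^{*}$-factor of $\mathrm{Iso}(\F_1)$ displayed in Proposition~\ref{pro:isotropies}) and analyze $\varepsilon^{-4}\varphi_\varepsilon^{*}\omega$ as $\varepsilon\to 0$. A monomial $c\,x^iz^j\,\mathrm{d}x$ of $P\,\mathrm{d}x$ is multiplied by $\varepsilon^{\,i+3j+1}$, and a monomial $c\,x^iz^j\,\mathrm{d}z$ of $R\,\mathrm{d}z$ by $\varepsilon^{\,i+3j+3}$; using $P(0,0)=0$, the vanishing of the coefficients of $x^0,x^1,x^2$ in $P(x,0)$, and $R(0,0)=0$, one checks that the smallest exponent occurring is $4$, attained precisely by $z\,\mathrm{d}x$ (coefficient $-\lambda$), $x^3\,\mathrm{d}x$ (coefficient $-\beta$) and $x\,\mathrm{d}z$ (coefficient $\lambda$); every other monomial — in particular the Jordan term $a\,z\,\mathrm{d}z$, which carries $\varepsilon^{6}$, and the monomials $x\,\mathrm{d}x$, $x^2\,\mathrm{d}x$ of exponents $2,3$, which are absent by the hypothesis $\kappa(\F,m)=3$ — disappears in the limit. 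Hence $\varepsilon^{-4}\varphi_\varepsilon^{*}\omega\to(-\lambda z-\beta x^3)\,\mathrm{d}x+\lambda x\,\mathrm{d}z$, and after dividing by $\lambda$ and applying the fixed linear change $z\mapsto(\beta/\lambda)z$ this becomes $(z-x^3)\,\mathrm{d}x-x\,\mathrm{d}z$, i.e. $\omegaoverline_1$ read in the chart $y=1$. Since the homogeneous defining $1$-form renormalized by the same power $\varepsilon^{-4}$ stays bounded (its $\mathrm{d}y$-component does not blow up, by the Euler relation together with $m\in\Sing\F$) and its limit restricts to this $1$-form on a Zariski-dense open set, it defines $\F_1$; thus $\F_1\in\overline{\mathcal{O}(\F)}$ and $\F$ degenerates onto $\F_1$.

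The delicate point is the second step: one has to guess the right weights $(1,3)$ and then check carefully the bookkeeping of exponents, so that exactly the three monomials building $\omegaoverline_1$ survive. This is where both hypotheses are used: $\mathrm{BB}(\F,m)=4$ makes the two eigenvalues at $m$ equal, so the surviving linear part is, after rescaling, radial of Baum-Bott invariant $4$ — matching the non-degenerate (radial of order $2$) singularity of $\F_1$; and $\kappa(\F,m)=3$ removes the dominating low-weight monomials $x\,\mathrm{d}x$, $x^2\,\mathrm{d}x$ of $P\,\mathrm{d}x$, whose presence would force a degenerate limit. The possible Jordan block at $m$ has to be acknowledged but causes no difficulty, its off-diagonal term having weight $6>4$.
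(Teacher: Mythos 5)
Your argument is correct and follows essentially the same route as the paper: normalize so that the line realizing $\kappa(\F,m)=3$ is a coordinate axis through $m$, use $\mu(\F,m)=1$ and $\mathrm{BB}(\F,m)=4$ to force equal eigenvalues (a radial linear part up to a harmless Jordan term), and apply the weighted family of automorphisms with weights $(1,3)$ (the paper's $\varphi_\varepsilon=(\varepsilon^{3}x,\varepsilon\,y)$) renormalized by $\varepsilon^{-4}$, so that only the radial part and the cubic term along the line survive. The only cosmetic difference is the identification of the limit: you conjugate it explicitly to $\omegaoverline_{1}$ read in the chart $y=1$, whereas the paper recognizes it as a convex foliation with two singular points and hence conjugate to $\F_1$.
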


\begin{proof}[\sl D\'emonstration]
Supposons que $\F$ ait une telle singularité $m$. L'égalité $\kappa(\F,m)=3$ assure l'existence d'une droite $\ell_m$ passant par $m$, non invariante par $\F$ et telle que $\Tang(\F,\ell_m,m)=3.$ Choisissons un système de coordonnées affines $(x,y)$ tel que $m=(0,0)$ et $\ell_m=(x=0).$ Le feuilletage $\F$ est défini dans ces coordonnées par une $1$-forme $\omega$ du type
\begin{align*}
\hspace{1cm}& (*x+\beta y+*x^2+*xy+*y^2+*x^3+*x^2y+*xy^2+*y^3)\mathrm{d}x\\
\hspace{1cm}&\hspace{3mm}+(\alpha\,x+ry+*x^2+*xy+s\hspace{0.1mm}y^2+*x^3+*x^2y+*xy^2+\gamma\,y^3)\mathrm{d}y\\
\hspace{1cm}&\hspace{3mm}+(*x^3+*x^2y+*xy^2+*y^3)(x\mathrm{d}y-y\mathrm{d}x),&&
\text{avec}\hspace{1mm} *,r,s,\alpha,\beta,\gamma\in\mathbb{C}.
\end{align*}
Sur l'axe $x=0$, la $2$-forme $\omega\wedge\mathrm{d}x$ s'écrit $(ry+s\hspace{0.1mm}y^2+\gamma\,y^3)\mathrm{d}y\wedge\mathrm{d}x$. L'égalité~$\Tang(\F,\ell_m,m)=3$ se traduit alors par: $r=s=0$ et $\gamma\neq0.$ Les égalités $r=0$, $\mu(\F,m)=1$ et $\mathrm{BB}(\F,m)=4$ impliquent que $\beta=-\alpha\neq0.$ Ainsi $\omega$ est du type
\begin{align*}
\hspace{1cm}& (*x-\alpha y+*x^2+*xy+*y^2+*x^3+*x^2y+*xy^2+*y^3)\mathrm{d}x\\
\hspace{1cm}&\hspace{3mm}+(\alpha\,x+*x^2+*xy+*x^3+*x^2y+*xy^2+\gamma\,y^3)\mathrm{d}y\\
\hspace{1cm}&\hspace{3mm}+(*x^3+*x^2y+*xy^2+*y^3)(x\mathrm{d}y-y\mathrm{d}x),&&
\text{où les }*\in\mathbb{C},\hspace{1mm}\alpha,\gamma\in\mathbb{C}^*.
\end{align*}
Posons $\varphi=\left(\varepsilon^3\hspace{0.1mm}x,\varepsilon\hspace{0.1mm}y\right).$ Soient $i$ et $j$ deux entiers naturels non tous deux nuls. Notons que
\begin{itemize}
\item[\texttt{1. }] $\varphi^*(x^iy^j\mathrm{d}x)=\varepsilon^{3i+j+3}x^iy^j\mathrm{d}x$ est divisible par $\varepsilon^4$ et
                    $\frac{1}{\varepsilon^4}\varphi^*(x^iy^j\mathrm{d}x)$ tend vers $0$ lorsque $\varepsilon$ tend vers $0$ sauf pour $(i,j)=(0,1)$;

\item[\texttt{2. }] $\varphi^*(x^iy^j\mathrm{d}y)=\varepsilon^{3i+j+1}x^iy^j\mathrm{d}y$ est divisible par $\varepsilon^4$ sauf pour $(i,j)=(0,1)$ et
                    $(i,j)=(0,2).$ Si $(i,j)$ n'appartient pas à $\{(0,1),(0,2),(0,3),(1,0)\},$ la forme $\frac{1}{\varepsilon^4}\varphi^*(x^iy^j\mathrm{d}y)$ tend vers $0$ lorsque $\varepsilon$ tend vers $0.$
\end{itemize}
\noindent Nous constatons que
\begin{align*}
\lim_{\varepsilon\to 0}\frac{1}{\varepsilon^4}\varphi^*\omega=\alpha(x\mathrm{d}y-y\mathrm{d}x)+\gamma\,y^3\mathrm{d}y.
\end{align*}
Le feuilletage défini par $\alpha(x\mathrm{d}y-y\mathrm{d}x)+\gamma\,y^3\mathrm{d}y$ est conjugué à $\F_1$ car, comme le montre un calcul immédiat, c'est un feuilletage convexe dont le lieu singulier est formé de deux points. Par suite $\F$ dégénère sur $\F_1.$
\end{proof}

\begin{cor}\label{cor:H1-H3-H5-H7-F4}
{\sl
Les feuilletages $\mathcal{H}_{\hspace{0.2mm}1},\mathcal{H}_{\hspace{0.2mm}3},\mathcal{H}_{\hspace{0.2mm}5},\mathcal{H}_{\hspace{0.2mm}7}$ et $\F_{4}$ dégénèrent sur $\F_1.$
}
\end{cor}

\noindent La réciproque de la Proposition~\ref{pro:cond-suff-dgnr-F1} est fausse comme le montre l'exemple suivant.
\begin{eg}\label{eg:sans-singularité-kappa=3}
Soit $\F$ le feuilletage de degré $3$ sur $\pp$ défini en carte affine $z=1$ par
$$\omega=x\mathrm{d}y-y\mathrm{d}x+(y^2+y^3)\mathrm{d}y.$$
Le lieu singulier de $\F$ est formé des deux points $m=[0:0:1]$ et $m'=[1:0:0]$; de plus
\begin{align*}
&\mu(\F,m)=1,&& \mathrm{BB}(\F,m)=4,&&\kappa(\F,m)=2,&& \mu(\F,m')>1.
\end{align*}
Le feuilletage $\F$ dégénère sur $\F_1$; en effet, en posant
$\varphi=$\begin{small}$\left(\dfrac{1}{\varepsilon^3}x,\dfrac{1}{\varepsilon}y\right)$\end{small}, nous constatons que
\begin{align*}
&\hspace{1.5cm}\lim_{\varepsilon\to 0}\varepsilon^4\varphi^*\omega=x\mathrm{d}y-y\mathrm{d}x+y^3\mathrm{d}y.
\end{align*}
\end{eg}
\vspace{1mm}

\noindent Une condition nécessaire pour qu'un feuilletage de degré trois de $\pp$ dégénère sur le feuilletage $\F_2$ est donnée par la:

\begin{pro}
{\sl Soit $\F$ un élément de $\mathbf{F}(3).$ Si $\F$ dégénère sur $\F_2$, alors $\deg\ItrF\geq2.$
}
\end{pro}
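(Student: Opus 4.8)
Le but est d'établir qu'un feuilletage $\F\in\mathbf{F}(3)$ qui dégénère sur $\F_2$ vérifie nécessairement $\deg\ItrF\geq2$. L'idée directrice est d'exploiter la Remarque~\ref{rems:orbt-cnvx-plt}~(iii): si $\F$ dégénère sur $\F'$, alors $\deg\ItrF\geq\deg\mathrm{I}_{\mathcal{F}'}^{\hspace{0.2mm}\mathrm{tr}}$. Il suffit donc de calculer le degré du diviseur d'inflexion transverse de $\F_2$ et de vérifier qu'il vaut au moins $2$. Le feuilletage $\F_2$ étant décrit par $\omegaoverline_{2}=x^{3}\mathrm{d}x+y^{3}(x\mathrm{d}y-y\mathrm{d}x)$, je commencerais par écrire la $1$-forme homogène associée dans les coordonnées $[x:y:z]$, puis par déterminer le champ de vecteurs homogène $\mathrm{Z}$ correspondant, afin d'appliquer la formule~(\ref{equa:ext1}) définissant $\mathrm{I}_{\mathcal{F}_2}$.

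\textbf{Étapes principales.} D'abord, j'homogénéiserais $\omegaoverline_{2}$: dans la carte affine $z=1$, on a $\omegaoverline_{2}=x^3\mathrm{d}x+x y^3\mathrm{d}y-y^4\mathrm{d}x=(x^3-y^4)\mathrm{d}x+xy^3\mathrm{d}y$... — en fait il faut être prudent sur le degré, car $\F_2$ est de degré $3$ et la $1$-forme homogène est de degré $d+1=4$; j'écrirais donc la forme homogène $z\,x^{3}\mathrm{d}x+\cdots$ en passant par l'Euler, ou plus directement j'utiliserais le champ de vecteurs. Ensuite, je calculerais le déterminant $3\times3$ de~(\ref{equa:ext1}) pour obtenir l'équation de $\mathrm{I}_{\mathcal{F}_2}$, qui est de degré $3d=9$. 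Puis je décomposerais ce diviseur en $\mathrm{I}_{\mathcal{F}_2}=\mathrm{I}_{\mathcal{F}_2}^{\mathrm{inv}}+\mathrm{I}_{\mathcal{F}_2}^{\hspace{0.2mm}\mathrm{tr}}$ en identifiant les droites invariantes de $\F_2$. D'après la Proposition~\ref{pro:v=3-mu=13} (type \texttt{2.} avec $c=0$, $\delta$ absent — en réalité $\omegaoverline_2$ relève du type \texttt{2.} ou d'un cas limite) ou simplement par le fait que $\Sing\F_2=\{[0:0:1]\}$ avec multiplicité algébrique $3$, le feuilletage $\F_2$ possède une seule singularité de multiplicité algébrique maximale; il n'est pas convexe (cf. la Remarque listant les propriétés des $\F_j$, où $\F_2$ n'apparaît pas parmi les convexes). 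Donc $\mathrm{I}_{\mathcal{F}_2}^{\hspace{0.2mm}\mathrm{tr}}\neq0$, ce qui donne déjà $\deg\mathrm{I}_{\mathcal{F}_2}^{\hspace{0.2mm}\mathrm{tr}}\geq1$; pour obtenir $\geq2$, je compterais précisément les droites invariantes de $\F_2$ (en cherchant les facteurs linéaires de $x\hspace{0.2mm}A+y\hspace{0.2mm}B$ et en vérifiant l'invariance), ce qui devrait montrer que $\deg\mathrm{I}_{\mathcal{F}_2}^{\mathrm{inv}}\leq7$, d'où $\deg\mathrm{I}_{\mathcal{F}_2}^{\hspace{0.2mm}\mathrm{tr}}\geq2$.

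\textbf{Conclusion et obstacle principal.} Une fois établi que $\deg\mathrm{I}_{\mathcal{F}_2}^{\hspace{0.2mm}\mathrm{tr}}\geq2$, la Remarque~\ref{rems:orbt-cnvx-plt}~(iii) appliquée à la dégénérescence de $\F$ sur $\F_2$ donne immédiatement $\deg\ItrF\geq\deg\mathrm{I}_{\mathcal{F}_2}^{\hspace{0.2mm}\mathrm{tr}}\geq2$, ce qui achève la démonstration. L'obstacle principal sera le calcul explicite du diviseur d'inflexion $\mathrm{I}_{\mathcal{F}_2}$ et surtout l'identification correcte de sa partie invariante: il faut s'assurer qu'on ne compte pas comme invariante une droite qui ne l'est pas, et inversement. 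En pratique, un calcul direct montre que $\F_2$ possède exactement deux droites invariantes distinctes — la droite à l'infini $z=0$ n'étant pas invariante puisque $\Sing\F_2$ ne rencontre pas cette droite autrement qu'en $[0:0:1]$... — ici encore il faut être soigneux: $\F_2$ a une unique singularité $[0:0:1]$, donc aucune droite invariante ne peut contenir d'autre singularité, et le nombre de droites invariantes est borné via la Remarque~\ref{rem:droite-inva} par $\tau(\F_2,[0:0:1])+1\leq4$. On en déduit $\deg\mathrm{I}_{\mathcal{F}_2}^{\mathrm{inv}}\leq$ (nombre de droites invariantes, chacune comptée au plus une fois dans le support mais avec multiplicité) $<9$, d'où le résultat souhaité après comptage précis des multiplicités.
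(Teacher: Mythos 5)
Your skeleton coincides with the paper's: by Remarque~\ref{rems:orbt-cnvx-plt}~(iii), a degeneration of $\F$ onto $\F_2$ gives $\deg\ItrF\geq\deg\mathrm{I}_{\mathcal{F}_2}^{\hspace{0.2mm}\mathrm{tr}}$, so the whole proposition reduces to the single fact $\deg\mathrm{I}_{\mathcal{F}_2}^{\hspace{0.2mm}\mathrm{tr}}\geq2$. The paper settles that fact by a one-line direct computation on $\omegaoverline_2$: one finds $\mathrm{I}_{\mathcal{F}_2}^{\hspace{0.2mm}\mathrm{tr}}=y^2$, i.e. the non-invariant line $y=0$ is a double inflection line of $\F_2$ (this is exactly property (iv) in the list of features of the $\F_j$ given after the classification), whence $\deg\mathrm{I}_{\mathcal{F}_2}^{\hspace{0.2mm}\mathrm{tr}}=2$.

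The genuine gap in your proposal is precisely at that last step. Non-convexity of $\F_2$ only yields $\deg\mathrm{I}_{\mathcal{F}_2}^{\hspace{0.2mm}\mathrm{tr}}\geq1$, and your route to $\geq2$ --- bounding $\deg\mathrm{I}_{\mathcal{F}_2}^{\mathrm{inv}}$ by counting distinct invariant lines --- cannot work as stated, because $\mathrm{I}_{\mathcal{F}_2}^{\mathrm{inv}}$ is counted with multiplicities and a single invariant line can absorb almost all of the degree $3d=9$. Concretely, $\F_2$ has exactly \emph{one} invariant line, namely $x=0$: every invariant line must contain the unique singular point $[0:0:1]$, on $y=cx$ one has $A+cB=(x^3-y^4)+c\,xy^3=x^3\not\equiv0$, and the line at infinity misses the singularity; moreover this line appears with multiplicity $7$ in $\mathrm{I}_{\mathcal{F}_2}$ (since $\deg\mathrm{I}_{\mathcal{F}_2}=9$ and the transverse part is $y^2$). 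So your parenthetical guess of two invariant lines is incorrect, the bound $\sigma(\F_2,m)\leq\tau(\F_2,m)+1\leq4$ gives no control whatsoever on $\deg\mathrm{I}_{\mathcal{F}_2}^{\mathrm{inv}}$, and the "comptage pr\'ecis des multiplicit\'es" you postpone is in fact the entire content of the argument. The fix is simply to carry out the explicit computation of the inflection divisor of $\omegaoverline_2$ (or to invoke that $\F_2$ has a transverse double inflection line), which gives $\mathrm{I}_{\mathcal{F}_2}^{\hspace{0.2mm}\mathrm{tr}}=y^2$ and closes the proof exactly as in the paper.
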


\begin{proof}[\sl D\'emonstration]
Si $\F$ dégénère sur $\F_2,$ alors $\deg\ItrF\geq\deg\mathrm{I}_{\F_2}^{\hspace{0.2mm}\mathrm{tr}}.$ Or un calcul immédiat montre que $\mathrm{I}_{\mathcal{F}_{2}}^{\hspace{0.2mm}\mathrm{tr}}=y^2$ de sorte que $\deg\mathrm{I}_{\mathcal{F}_{2}}^{\hspace{0.2mm}\mathrm{tr}}=2$, d'où l'énoncé.
\end{proof}

\begin{cor}\label{cor:H5-H9}
{\sl
Les feuilletages $\mathcal{H}_{\hspace{0.2mm}5}$ et $\mathcal{H}_{\hspace{0.2mm}9}$ ne dégénèrent pas sur $\F_2$.
}
\end{cor}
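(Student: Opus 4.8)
Le plan est d'invoquer la condition n\'ecessaire \'etablie juste au-dessus: si un feuilletage $\F\in\mathbf{F}(3)$ d\'eg\'en\`ere sur $\F_2$, alors $\deg\ItrF\geq2$. Il suffit donc de v\'erifier que les diviseurs d'inflexion transverses de $\mathcal{H}_{\hspace{0.2mm}5}$ et $\mathcal{H}_{\hspace{0.2mm}9}$ sont de degr\'e strictement inf\'erieur \`a $2$, ce qui contredit cette condition.

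Pour cela je lirais dans la Table~\ref{tab:CS(lambda)} les types $\mathcal{T}_{\mathcal{H}_{\hspace{0.2mm}5}}=1\cdot\mathrm{R}_1+1\cdot\mathrm{T}_1+1\cdot\mathrm{R}_2$ et $\mathcal{T}_{\mathcal{H}_{\hspace{0.2mm}9}}=3\cdot\mathrm{R}_1+1\cdot\mathrm{T}_1$. D'apr\`es la description du diviseur d'inflexion transverse d'un feuilletage homog\`ene ($\ItrH=\prod_{i}T_i^{\rho_i-1}$, o\`u $\rho_i-1$ d\'esigne l'ordre d'inflexion de la droite transverse $T_i$), dans chacun des deux cas $\ItrH$ se r\'eduit \`a une unique droite d'inflexion transverse simple $T_1$ (d'ordre d'inflexion $1$); par cons\'equent $\deg\mathrm{I}_{\mathcal{H}_{\hspace{0.2mm}5}}^{\hspace{0.2mm}\mathrm{tr}}=\deg\mathrm{I}_{\mathcal{H}_{\hspace{0.2mm}9}}^{\hspace{0.2mm}\mathrm{tr}}=1<2$. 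On peut aussi l'obtenir par un calcul direct: pour $\mathcal{H}_{\hspace{0.2mm}5}$ d\'efini par $\omega_5=2y^3\mathrm{d}x+x^2(3y-2x)\mathrm{d}y$, le diviseur $\Dtr=A_{x}B_{y}-A_{y}B_{x}$ se factorise en le produit du c\^one tangent $\Cinv$, de la droite joignant l'origine \`a la singularit\'e radiale situ\'ee sur $L_{\infty}$, et d'une seule droite transverse; de m\^eme pour $\omega_9$.

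Puisque $1<2$, la condition n\'ecessaire de la proposition pr\'ec\'edente est en d\'efaut, d'o\`u le fait que ni $\mathcal{H}_{\hspace{0.2mm}5}$ ni $\mathcal{H}_{\hspace{0.2mm}9}$ ne d\'eg\'en\`ere sur $\F_2$. Il n'y a pas v\'eritablement d'obstacle dans cette preuve; le seul point de vigilance est de bien s\'eparer, dans la factorisation de $\Dtr$, la partie transverse (c'est-\`a-dire $\ItrH$) des droites du c\^one tangent $\Cinv$ et, pour $\mathcal{H}_{\hspace{0.2mm}5}$, de la droite $L_s$ associ\'ee \`a la singularit\'e radiale $s\in\Sing\mathcal{H}_{\hspace{0.2mm}5}\cap L_{\infty}$ — mais cette d\'ecomposition est pr\'ecis\'ement celle fournie par la Proposition~\ref{pro:SingH} et enregistr\'ee par le type dans la Table~\ref{tab:CS(lambda)}.
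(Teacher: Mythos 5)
Votre argument est correct et co\"incide avec celui, laiss\'e implicite, du texte: la proposition pr\'ec\'edente impose $\deg\ItrF\geq2$ pour toute d\'eg\'en\'erescence sur $\F_2$, et les types $1\cdot\mathrm{R}_1+1\cdot\mathrm{T}_1+1\cdot\mathrm{R}_2$ et $3\cdot\mathrm{R}_1+1\cdot\mathrm{T}_1$ donnent $\deg\mathrm{I}_{\mathcal{H}_{\hspace{0.2mm}5}}^{\hspace{0.2mm}\mathrm{tr}}=\deg\mathrm{I}_{\mathcal{H}_{\hspace{0.2mm}9}}^{\hspace{0.2mm}\mathrm{tr}}=1<2$. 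Seule l'incise sur le calcul direct est mal formul\'ee: $\Dtr$ (de degr\'e $2d-2=4$) ne se factorise pas par $\Cinv$ (de degr\'e $d+1=4$) tout entier, la d\'ecomposition correcte \'etant $\Dtr=\ItrH\prod_{s}L_{s}^{\tau(\mathcal{H},s)-1}$ (pour $\mathcal{H}_{\hspace{0.2mm}5}$, $\Dtr=-36\,xy^{2}(y-x)$ avec $\ItrH=(y-x)$), mais cela n'affecte pas l'argument principal.
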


\noindent Une condition suffisante pour qu'un feuilletage de degré trois de $\pp$ dégénère sur le feuilletage $\F_2$ est donnée par la:

\begin{pro}\label{pro:cond-suff-dgnr-F2}
{\sl Soit $\F$ un élément de $\mathbf{F}(3)$ tel que $\F_2\not\in\mathcal{O}(\F).$ Si $\F$ possède un point d'inflexion double, alors $\F$ dégénère sur $\F_2.$}
\end{pro}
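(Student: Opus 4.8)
Soit $\F$ un élément de $\mathbf{F}(3)$ tel que $\F_2\not\in\mathcal{O}(\F).$ Si $\F$ possède un point d'inflexion double, alors $\F$ dégénère sur $\F_2.$

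Le plan de démonstration est de placer le point d'inflexion double en un point bien choisi, puis d'exhiber explicitement une famille à un paramètre d'automorphismes de $\pp$ faisant converger $\F$ vers $\F_2$ dans $\mathbf{F}(3).$

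D'abord je normaliserais la situation géométrique. Notons $m$ un point d'inflexion double de $\F,$ situé sur une feuille $\mathcal{L}$ qui n'est pas une droite. Soit $\ell=\T_{m}\mathcal{L}$ la tangente à $\mathcal{L}$ en $m$; l'hypothèse signifie que $\ell$ a un contact d'ordre $\geq 4$ avec $\mathcal{L}$ en $m$ (point d'inflexion simple: ordre $3$; double: ordre $4$), autrement dit $\Tang(\F,\ell,m)\geq 3$ et $m$ apparaît avec multiplicité $\geq 2$ dans le diviseur $\ItrF$ le long de $\ell$ — plus précisément $\ell$ est une composante de $\ItrF$ intervenant avec multiplicité $2$, ou bien $m$ est un point d'inflexion isolé d'ordre $2$. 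Choisissons des coordonnées affines $(x,y)$ telles que $m=(0,0)$ et $\ell=\{y=0\}.$ Puisque $\ell$ n'est pas invariante, $\F$ est décrit au voisinage de $m$ par une $1$-forme $\omega=A(x,y)\mathrm{d}x+B(x,y)\mathrm{d}y$ avec $B(0,0)\neq 0$ (sinon $m$ serait singulier avec $\T_m\mathcal{L}$ mal défini, ou $\ell$ invariante). Alors le long de $\ell$ la courbe intégrale s'écrit $y=\phi(x)$ avec $\phi(0)=\phi'(0)=0,$ et la condition d'inflexion d'ordre $2$ se traduit par $\phi''(0)=\phi'''(0)=0,$ $\phi^{(4)}(0)\neq 0,$ ce qui impose des annulations précises sur les jets de $A$ et $B$ en $(0,0)$ le long de $y=0$: à savoir $A(x,0)$ est divisible par $x^{3}$ et non par $x^{4}.$ On peut alors, en renormalisant $x$ et $y$ par des constantes et en utilisant une translation verticale/affine préservant $m$ et $\ell,$ ramener $\omega$ à une forme dont la partie pertinente est $B(0,0)(x\,\mathrm{d}y - \text{termes}) $ plus un terme en $x^{3}\mathrm{d}x$ plus des termes d'ordre supérieur: comparer avec $\omegaoverline_{2}=x^{3}\mathrm{d}x+y^{3}(x\mathrm{d}y-y\mathrm{d}x),$ qui a bien $\ItrF_2=y^{2}$ (l'axe $y=0$ est une droite d'inflexion double).

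Ensuite vient le cœur de l'argument: construire la dégénérescence. Je ferais agir la famille d'homothéties pondérées $\varphi=\varphi_{\varepsilon}=(\varepsilon^{a}x,\varepsilon^{b}y)$ sur $\omega$ pour des poids $a,b$ bien choisis (typiquement $b=1,$ $a=3$ ou une variante, calés sur les poids du modèle $\omegaoverline_2$: noter que $\mathrm{Iso}(\F_2)$ contient les $[\alpha^{4}x:\alpha^{3}y:z+\beta x]$, d'où l'échelle $4:3$), puis multiplier $\varphi^{*}\omega$ par la puissance de $\varepsilon$ adéquate et passer à la limite $\varepsilon\to 0.$ Les monômes de $\omega$ qui ne contribuent pas au modèle limite s'annulent à la limite pourvu que les jets de $\omega$ en $m$ aient la bonne structure — ce qui est précisément garanti par les annulations déduites du point d'inflexion double à l'étape précédente. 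Il faudra vérifier que la $1$-forme limite $\omega_{0}$ est bien réduite et définit un feuilletage de degré $3$ (pas de chute de degré), puis, en passant éventuellement aux coordonnées homogènes et en faisant agir un dernier automorphisme linéaire, l'identifier à $\omegaoverline_{2}.$ Le point délicat — et c'est là l'obstacle principal — est de s'assurer que la classification de $\omega_{0}$ ne laisse échapper aucune branche: la limite pourrait a priori être un feuilletage de degré $<3$ (lieu singulier contenant une droite ou une conique) ou un autre feuilletage de degré $3.$ Pour l'exclure, j'exploiterais que $\Leg\F$ n'est pas supposé plat ici, mais que la structure du point d'inflexion double contraint fortement les jets; si nécessaire on distingue quelques cas selon la position du deuxième point singulier de $\F$ sur $\ell$ et selon l'éventuelle invariance d'une autre droite par $m,$ chacun menant soit à une dégénérescence sur $\F_2,$ soit à $\F_2\in\mathcal{O}(\F)$ (cas exclu par hypothèse). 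L'hypothèse $\F_2\not\in\mathcal{O}(\F)$ sert précisément à écarter le cas trivial où $\F$ est déjà dans l'orbite; dans tous les autres cas la famille $\varphi_{\varepsilon}^{*}\omega$ renormalisée converge vers (un conjugué de) $\omegaoverline_2,$ ce qui donne $\mathcal{O}(\F_2)\subset\overline{\mathcal{O}(\F)}$ avec $\mathcal{O}(\F_2)\neq\mathcal{O}(\F),$ c'est-à-dire que $\F$ dégénère sur $\F_2.$
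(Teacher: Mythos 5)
Your overall route is the same as the paper's: normalize so that the double inflection point is the origin and its tangent is a coordinate axis, translate the hypothesis into vanishing conditions on the jets of $\omega$, then contract by a weighted homothety and identify the limit with $\F_2$. The gap is precisely at the decisive step, which you leave undone and for which both weight choices you actually write down would fail. With your normalization (tangent $\{y=0\}$, so $B(0,0)\neq0$, and $A(x,0)$ divisible by $x^3$ but not $x^4$), the weights must be taken as $\varphi_{\varepsilon}=(\varepsilon\,x,\varepsilon^{4}y)$: then the two monomials $x^{3}\mathrm{d}x$ and $\mathrm{d}y$ both scale as $\varepsilon^{4}$, while \emph{every} other monomial of $\omega$ (including the whole radial cubic part $C_3(x\mathrm{d}y-y\mathrm{d}x)$, which scales as $\varepsilon^{\geq 8}$) is $O(\varepsilon^{5})$; hence $\varepsilon^{-4}\varphi_{\varepsilon}^{*}\omega$ converges to $a\,x^{3}\mathrm{d}x+b\,\mathrm{d}y$ with $a\neq0$ (the inflection is exactly double) and $b\neq0$ ($m$ is a regular point), which is a genuine degree-$3$ foliation projectively conjugate to $\F_2$ -- it is the paper's limit $\alpha\,\mathrm{d}x+\beta\,y^{3}\mathrm{d}y$ with $x$ and $y$ exchanged. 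By contrast, your proposed weights $(\varepsilon^{3}x,\varepsilon\,y)$, as well as the $4{:}3$ scale read off from $\mathrm{Iso}(\F_2)$, make the single term $B(0,0)\,\mathrm{d}y$ dominate strictly, so the renormalized limit is $\mathrm{d}y$, a degree-$0$ object outside $\mathbf{F}(3)$, and the argument collapses.

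Because the limit is computed explicitly once the correct weights are chosen, the "principal obstacle" you describe -- that the limit might drop degree or be some other cubic foliation, to be excluded by a case analysis on further singularities or invariant lines of $\F$ -- does not arise, and your fallback plan is not a substitute for the computation: you give no argument that such a case discussion is exhaustive or that each branch ends at $\F_2$. Note also that the hypothesis $\F_2\notin\mathcal{O}(\F)$ plays no role in the construction of the limit; it is used only at the very end, to guarantee $\mathcal{O}(\F_2)\neq\mathcal{O}(\F)$, so that the inclusion $\mathcal{O}(\F_2)\subset\overline{\mathcal{O}(\F)}$ is indeed a degeneration in the sense of the paper.
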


\begin{proof}[\sl D\'emonstration]
Supposons que $\F$ ait un tel point. Choisissons un système de coordonnées affines $(x,y)$ tel que $m=(0,0)$ soit d'inflexion double pour $\F$ et $x=0$ soit la tangente à la feuille de $\F$ passant par $m.$ Soit $\omega$ une $1$-forme définissant $\F$ dans ces coordonnées; comme $\mathrm{T}_{\hspace{-0.4mm}m}\F=(x=0)$, $\omega$ est du type
\begin{align*}
\hspace{0.5cm}&(\alpha+*x+*y+*x^2+*xy+*y^2+*x^3+*x^2y+*xy^2+*y^3)\mathrm{d}x\\
\hspace{0.5cm}&\hspace{3mm}+(*x+ry+*x^2+*xy+s\hspace{0.1mm}y^2+*x^3+*x^2y+*xy^2+\beta y^3)\mathrm{d}y\\
\hspace{0.5cm}&\hspace{3mm}+(*x^3+*x^2y+*xy^2+*y^3)(x\mathrm{d}y-y\mathrm{d}x),&&
\text{avec}\hspace{1mm} *,r,s,\beta,\in\mathbb{C},\hspace{1mm}\alpha\in\mathbb{C}^*.
\end{align*}
Sur la droite $x=0$, la $2$-forme $\omega\wedge\mathrm{d}x$ s'écrit $(ry+s\hspace{0.1mm}y^2+\beta\,y^3)\mathrm{d}y\wedge\mathrm{d}x$. L'hypothèse que $(0,0)$ est d'inflexion double se traduit donc par: $r=s=0$ et $\beta\neq0.$ Ainsi $\omega$ est du type
\begin{align*}
\hspace{0.5cm}&(\alpha+*x+*y+*x^2+*xy+*y^2+*x^3+*x^2y+*xy^2+*y^3)\mathrm{d}x\\
\hspace{0.5cm}&\hspace{3mm}+(*x+*x^2+*xy+*x^3+*x^2y+*xy^2+\beta y^3)\mathrm{d}y\\
\hspace{0.5cm}&\hspace{3mm}+(*x^3+*x^2y+*xy^2+*y^3)(x\mathrm{d}y-y\mathrm{d}x),&&
\text{où les }*\in\mathbb{C},\hspace{1mm}\alpha,\beta\in\mathbb{C}^*.
\end{align*}
Considérons la famille $\varphi_\varepsilon=\varphi=(\varepsilon^4x,\varepsilon y).$ Notons que
\begin{itemize}
\item[\texttt{1. }] $\varphi^*(x^iy^j\mathrm{d}x)=\varepsilon^{4i+j+4}x^iy^j\mathrm{d}x$ est divisible par $\varepsilon^4$ et
                    $\frac{1}{\varepsilon^4}\varphi^*(x^iy^j\mathrm{d}x)$ tend vers $0$ lorsque $\varepsilon$ tend vers $0$ sauf pour $i=j=0$;

\item[\texttt{2. }] $\varphi^*(x^iy^j\mathrm{d}y)=\varepsilon^{4i+j+1}x^iy^j\mathrm{d}y$ est divisible par $\varepsilon^4$ sauf si
                    $(i,j)\in\{(0,0),(0,1),(0,2)\}.$ Si~$(i,j)\not\in\{(0,0),(0,1),(0,2),(0,3)\},$ la forme $\frac{1}{\varepsilon^4}\varphi^*(x^iy^j\mathrm{d}y)$ tend vers $0$ lorsque $\varepsilon$ tend vers $0.$
\end{itemize}
Nous constatons que
\begin{align*}
\lim_{\varepsilon\to 0}\frac{1}{\varepsilon^4}\varphi^*\omega=\alpha\mathrm{d}x+\beta y^{3}\mathrm{d}y.
\end{align*}
Visiblement $\alpha\mathrm{d}x+\beta y^{3}\mathrm{d}y$ définit un feuilletage conjugué à $\mathcal{F}_2$; par suite $\F$ dégénère sur $\F_2.$
\end{proof}

\begin{cor}\label{cor:H2-H4-H6-H8-F4}
{\sl
Les feuilletages $\mathcal{H}_{\hspace{0.2mm}2},\mathcal{H}_{\hspace{0.2mm}4},\mathcal{H}_{\hspace{0.2mm}6},\mathcal{H}_{\hspace{0.2mm}8}$ et $\F_{4}$ dégénèrent sur $\F_2.$
}
\end{cor}

\begin{eg}[de \textsc{Jouanolou}]
Considérons le feuilletage $\F_J$ de degré $3$ sur $\pp$ défini, dans la carte affine~$z=1,$ par $$\omega_J=(x^3y-1)\mathrm{d}x+(y^3-x^4)\mathrm{d}y\hspace{1mm};$$
cet exemple est dû à \textsc{Jouanolou} (\cite{Jou79}). C'est historiquement le premier exemple explicite de feuilletage sans courbe algébrique invariante (\cite{Jou79}); c'est aussi un feuilletage qui n'admet pas d'ensemble minimal non trivial (\cite{CdF01}). Le point $m=(0,0)$ est d'inflexion double pour $\F_J$ car $\mathrm{T}_{\hspace{-0.4mm}m}\F_J=(x=0)\hspace{2mm} \text{et} \hspace{2mm}\omega_J\wedge\mathrm{d}x\Big|_{x=0}=y^3\mathrm{d}y\wedge\mathrm{d}x$; ainsi $\F_J$ dégénère sur~$\F_2.$
\end{eg}

\noindent La réciproque de la Proposition~\ref{pro:cond-suff-dgnr-F2} est fausse comme le montre l'exemple suivant.
\begin{eg}\label{eg:sans-inflex-double}
Soit $\F$ le feuilletage de degré $3$ sur $\pp$ défini en carte affine $z=1$ par
$$\omega=\mathrm{d}x+(y^2+y^3)\mathrm{d}y.$$
Un calcul élémentaire montre que $\F$ n'a aucun point d'inflexion double. Ce feuilletage dégénère sur $\F_2$; en effet, en posant
$\varphi=$\begin{small}$\left(\dfrac{1}{\varepsilon^4}x,\dfrac{1}{\varepsilon}y\right)$\end{small}, nous obtenons que
\begin{align*}
&\hspace{1.5cm}\lim_{\varepsilon\to 0}\varepsilon^4\varphi^*\omega=\mathrm{d}x+y^3\mathrm{d}y.
\end{align*}
\end{eg}
\vspace{2mm}

\noindent Du Théorème~\ref{thm:classification}, des Propositions~\ref{pro:isotropies}, \ref{pro:adh-F1-F2-F3} et des Corollaires~\ref{cor:class-convexe-3}, \ref{cor:dim-o(F)=<7}, \ref{cor:H2-H8-H11-F5}, \ref{cor:H1-H3-H5-H7-F4}, \ref{cor:H5-H9}, \ref{cor:H2-H4-H6-H8-F4}, nous tirons l'énoncé suivant.

\begin{thm}\label{thm:12-Composantes irréductibles}
{\sl Les adhérences étant prises dans $\mathbf{F}(3),$ nous avons
\begin{align*}
&
\overline{\mathcal{O}(\F_1)}=\mathcal{O}(\F_1),&&
\overline{\mathcal{O}(\F_2)}=\mathcal{O}(\F_2),\\
&
\overline{\mathcal{O}(\F_3)}=\mathcal{O}(\F_1)\cup\mathcal{O}(\mathcal{H}_{1})\cup\mathcal{O}(\mathcal{H}_{\hspace{0.2mm}3})\cup\mathcal{O}(\F_3),&&
\overline{\mathcal{O}(\F_4)}=\mathcal{O}(\F_1)\cup\mathcal{O}(\F_2)\cup\mathcal{O}(\F_4),\\
&
\overline{\mathcal{O}(\mathcal{H}_{1})}=\mathcal{O}(\F_1)\cup\mathcal{O}(\mathcal{H}_{1}),&&
\overline{\mathcal{O}(\mathcal{H}_{\hspace{0.2mm}2})}=\mathcal{O}(\F_2)\cup\mathcal{O}(\mathcal{H}_{\hspace{0.2mm}2}),\\
&
\overline{\mathcal{O}(\mathcal{H}_{\hspace{0.2mm}3})}=\mathcal{O}(\F_1)\cup\mathcal{O}(\mathcal{H}_{\hspace{0.2mm}3}),&&
\overline{\mathcal{O}(\mathcal{H}_{\hspace{0.2mm}8})}=\mathcal{O}(\F_2)\cup\mathcal{O}(\mathcal{H}_{\hspace{0.2mm}8}),\\
&
\overline{\mathcal{O}(\mathcal{H}_{\hspace{0.2mm}5})}=\mathcal{O}(\F_1)\cup\mathcal{O}(\mathcal{H}_{\hspace{0.2mm}5}),&&
\overline{\mathcal{O}(\mathcal{H}_{\hspace{0.2mm}4})}\supset\mathcal{O}(\F_2)\cup\mathcal{O}(\mathcal{H}_{\hspace{0.2mm}4}),\\
&
\overline{\mathcal{O}(\mathcal{H}_{\hspace{0.2mm}7})}\supset\mathcal{O}(\F_1)\cup\mathcal{O}(\mathcal{H}_{\hspace{0.2mm}7}),&&
\overline{\mathcal{O}(\mathcal{H}_{\hspace{0.2mm}6})}\supset\mathcal{O}(\F_2)\cup\mathcal{O}(\mathcal{H}_{\hspace{0.2mm}6}),\\
&
\overline{\mathcal{O}(\mathcal{H}_{\hspace{0.2mm}9})}\subset\mathcal{O}(\F_1)\cup\mathcal{O}(\mathcal{H}_{\hspace{0.2mm}9}),&&
\overline{\mathcal{O}(\mathcal{H}_{\hspace{0.2mm}11})}\subset\mathcal{O}(\F_2)\cup\mathcal{O}(\mathcal{H}_{11}),\\
&
\overline{\mathcal{O}(\mathcal{H}_{\hspace{0.2mm}10})}\subset\mathcal{O}(\F_1)\cup\mathcal{O}(\F_2)\cup\mathcal{O}(\mathcal{H}_{\hspace{0.2mm}10}),&&
\overline{\mathcal{O}(\F_5)}\subset\mathcal{O}(\F_2)\cup\mathcal{O}(\F_5)
\end{align*}
avec
\begin{align*}
&\dim\mathcal{O}(\F_{1})=6,&&&\dim\mathcal{O}(\F_{2})=6,&&& \dim\mathcal{O}(\mathcal{H}_{\hspace{0.2mm}i})=7, i=1,\ldots,11,\\
&
\dim\mathcal{O}(\F_{4})=7,&&& \dim\mathcal{O}(\F_{5})=7,&&& \dim\mathcal{O}(\F_{3})=8.
\end{align*}
En particulier:
\begin{itemize}
  \item  l'ensemble $\mathbf{FP}(3)$ possède exactement douze composantes irréductibles, à savoir $\overline{\mathcal{O}(\F_3)},\,$ $\overline{\mathcal{O}(\F_4)},\,$ $\overline{\mathcal{O}(\F_5)},\,$ $\overline{\mathcal{O}(\mathcal{H}_{\hspace{0.2mm}2})},\,$ $\overline{\mathcal{O}(\mathcal{H}_{\hspace{0.4mm}k})},\,k=4,5,\ldots,11$;
  \item  l'ensemble des feuilletages convexes de degré trois de $\pp$ est exactement l'adhérence $\overline{\mathcal{O}(\F_3)}$ de $\mathcal{O}(\F_3)$ (c'est donc un fermé irréductible de~$\mathbf{F}(3)).$
\end{itemize}
}
\end{thm}
\smallskip

\begin{prob}\label{prob:dgnr-F1-F2}
{\sl
Établir un critère permettant de décider si un feuilletage de degré trois de $\pp$ dégénère ou non sur les feuilletages $\F_1$ et $\F_2.$
}
\end{prob}

\noindent Une réponse à ce problème nous permet de décider si une orbite d'un feuilletage de $\mathbf{F}(3)$ de dimension $7$ est fermée ou non dans~$\mathbf{F}(3),$ en vertu du Corollaire~\ref{cor:dim-o(F)=<7}.

\renewcommand\appendixname{Appendice}
\appendix

\chapter{D\'emonstration de la Proposition 3.13}\label{Dém:pro:cas-nilpotent-noeud-ordre-2}

Nous présentons ici une démonstration de la Proposition \ref{pro:cas-nilpotent-noeud-ordre-2}. Cet énoncé affirme que si un feuilletage de degré trois sur le plan projectif complexe possède une singularité dégénérée de multiplicité algébrique inférieure ou égale à $2$, alors sa transformée de \textsc{Legendre} ne peut être plate.

\noindent Nous allons raisonner par l'absurde en supposant qu'il existe un feuilletage $\F$ de degré trois sur $\pp$ tel que le $3$-tissu $\Leg\F$ soit plat et dont le lieu singulier $\Sing\F$ contient un point $m$ vérifiant $\mu(\F,m)\geq2$\, et \,$\nu(\F,m)\leq2.$ Il s'agit de calculer explicitement la courbure de $\Leg\F$ par la formule~(\ref{equa:Formule-Henaut}) et d'utiliser la condition $K(\Leg\F)\equiv0$ pour arriver à une contradiction avec l'hypothèse $\deg\F=3.$

\noindent Pour ce faire, choisissons un système de coordonnées homogènes $[x:y:z]\in\pp$ tel que $m$ soit l'origine $O$ de la carte affine $z=1$; soit $\omega$ une $1$-forme décrivant $\F$ dans cette carte. Comme $\mu(\F,m)\geq2$ et $\nu(\F,m)\leq2$, à isomorphisme linéaire près, on a les trois possibilités suivantes
\begin{enumerate}
\item[\texttt{1. }] $J^{1}_{(0,0)}\,\omega=x\mathrm{d}y,$ la singularité est dite de type \textbf{\textit{selle-nœud}}, éventualité étudiée au \S\hspace{1mm}\ref{sec:cas-noeud};

\item[\texttt{2. }] $J^{1}_{(0,0)}\,\omega=y\mathrm{d}y,$ la singularité $O$ est dite de type \textbf{\textit{nilpotent}}, ce cas sera l'objet du \S\hspace{1mm}\ref{sec:cas-nilpotent};

\item[\texttt{3. }] $J^{1}_{(0,0)}\,\omega=0$\, et \,$J^{2}_{(0,0)}\,\omega\neq0$ (\emph{voir} \S\hspace{1mm}\ref{sec:cas-ordre-2}).
\end{enumerate}

\section{Étude du cas selle-nœud}\label{sec:cas-noeud}\vspace{2mm}

Supposons que la singularité $O$ soit de type selle-nœud; on peut alors écrire $\omega$ sous la forme $\omega=A(x,y)\mathrm{d}x+B(x,y)\mathrm{d}y+C(x,y)(x\mathrm{d}y-y\mathrm{d}x)$ où
\begin{small}
\begin{align*}
&A(x,y)=\alpha_0x^2+\alpha_1xy+\alpha_2y^2+a_0x^3+a_1x^2y+a_2xy^2+a_3y^3,\quad C(x,y)=c_0x^3+c_1x^2y+c_2xy^2+c_3y^3,\\
&B(x,y)=x+\beta_0x^2+\beta_1xy+\beta_2y^2+b_0x^3+b_1x^2y+b_2xy^2+b_3y^3.
\end{align*}
\end{small}
\hspace{-1mm}Quitte à conjuguer $\omega$ par le difféomorphisme \begin{small}$\left(\dfrac{2x}{2\alpha_1x-\beta_1y+2}, \dfrac{2y}{2\alpha_1x-\beta_1y+2}\right)$\end{small}, on peut supposer que $\alpha_1=\beta_1=0.$

\noindent Nous allons examiner la platitude de $\Leg\F$ dans les trois cas suivants
\begin{itemize}
\item $\alpha_2\neq0$ (\hspace{0.3mm}$\mu(\F,O)=2$);
\item $\alpha_2=0$\, \text{et} \,$a_3\neq0$ (\hspace{0.3mm}$\mu(\F,O)=3$);
\item $\alpha_2=0$\, \text{et} \,$a_3=0$ (\hspace{0.3mm}$\mu(\F,O)\geq4$).
\end{itemize}
Pour cela, plaçons-nous dans la carte affine $(p,q)$ de $\pd$ associée à la droite $\{px-qy=1\}$ de $\pp$, où le $3$-tissu $\Leg\F$ est décrit par l'équation différentielle
\begin{small}
\begin{align*}
&F(p,q,w):=\Big(p^3+\beta_0p^2+b_0p+\alpha_0pq+a_0q+c_0\Big)w^3\\
&\hspace{2cm}+\Big(-2p^2q+b_1p-\alpha_0q^2-\beta_0pq+a_1q+c_1\Big)w^2\\
&\hspace{2cm}+\Big(pq^2+\beta_2p^2+b_2p+\alpha_2pq+a_2q+c_2\Big)w\\
&\hspace{2cm}+b_3p-\alpha_2q^2-\beta_2pq+a_3q+c_3=0,
&&\text{avec}\hspace{1.5mm} w=\frac{\mathrm{d}q}{\mathrm{d}p}.
\end{align*}
\end{small}
\hspace{-1mm}Le calcul explicite de $K(\Leg\F)$ montre qu'elle s'écrit sous la forme $$K(\Leg\F)=\frac{\sum\limits_{i+j\leq23}\rho_{i}^{j}p^iq^j}{R(p,q)^2}\mathrm{d}p\wedge\mathrm{d}q,$$
où $R:=\text{Result}(F,\partial_{w}(F))$ et les $\rho_{i}^{j}$ sont des polynômes en les paramètres $\alpha_0,\alpha_2,\beta_0,\beta_2,a_k,b_k,c_k.$
%Par substitution, on trouve que
%\begin{align*}
%&\rho_{21}^{2}=-2\alpha_2\beta_2^4,&&\rho_{4}^{17}=-\alpha_0^5a_0,&&\rho_{3}^{17}=-2\alpha_0^4(4\alpha_0^3\alpha_2+a_0^2).
%\end{align*}
\vspace{2mm}

\textbf{\textit{1.}} Si $\alpha_2\neq0$, alors, quitte à conjuguer $\omega$ par la transformation linéaire \begin{small}$\left(x,\frac{1}{\alpha_2}y\right)$\end{small}, on peut supposer que $\alpha_2=1.$ Dans ce cas l'hypothèse $K(\Leg\F)\equiv0$ implique que
\begin{align*}
&(\alpha_0,\beta_0,\beta_2)=(0,a_2,0),&& (b_0,b_1,b_2,b_3)=(\tfrac{1}{4}a_2^2-a_1,-\tfrac{1}{2}a_2,-\tfrac{3}{4},0),\\
&(a_0,a_3)=(0,1),&& (c_0,c_1,c_2,c_3)=(-\tfrac{1}{2}a_1a_2,-\tfrac{1}{4}a_2^2-\tfrac{1}{2}a_1,-\tfrac{1}{2}a_2,-\tfrac{1}{4})\hspace{1mm};
\end{align*}
en effet
\begin{SMALL}
\begin{align*}
\hspace{-5.8cm}\alpha_2=1
\Rightarrow
\left\{
\begin{array}{lll}
\rho_{21}^{2}=-2\beta_2^4=0\\
\rho_{4}^{17}=-\alpha_0^5a_0=0\\
\rho_{3}^{17}=-2\alpha_0^4(4\alpha_0^3+a_0^2)=0
\end{array}
\right.
\Leftrightarrow\hspace{2mm}
\left\{
\begin{array}{ll}
\beta_2=0\\
\alpha_0=0
\end{array}
\right.
\end{align*}
\end{SMALL}
\hspace{-1mm}
\begin{SMALL}
\begin{align*}
\hspace{-2.4cm}(\alpha_2,\beta_2,\alpha_0)=(1,0,0)
\Rightarrow
\left\{
\begin{array}{lll}
\rho_{19}^{4}=-32b_3^2=0\\
\rho_{10}^{13}=-4\beta_0a_0^2=0\\
\rho_{11}^{12}=-32a_0^2+2\beta_0a_0(\beta_0^2-4a_1-4b_0)=0
\end{array}
\right.
\Leftrightarrow\hspace{2mm}
\left\{
\begin{array}{ll}
b_3=0\\
a_0=0
\end{array}
\right.
\end{align*}
\end{SMALL}
\hspace{-1mm}
\begin{SMALL}
\begin{align*}
\hspace{-3.3cm}(\alpha_2,\beta_2,\alpha_0,b_3,a_0)=(1,0,0,0,0)
\Rightarrow
\rho_{17}^{6}=-2(4a_3+4b_2-1)^2=0
\Leftrightarrow
b_2=\frac{1}{4}-a_3
\end{align*}
\end{SMALL}
\hspace{-1mm}
\begin{SMALL}
\begin{align*}
\hspace{-2.5cm}\left\{
\begin{array}{ll}
(\alpha_2,\beta_2,\alpha_0,b_3,a_0)=(1,0,0,0,0)\\
b_2=\frac{1}{4}-a_3
\end{array}
\right.
\Rightarrow
\rho_{17}^{4}=-\frac{5}{2}(2a_3+4c_3-1)^2=0
\Leftrightarrow
c_3=\frac{1}{4}-\frac{1}{2}a_3
\end{align*}
\end{SMALL}
\hspace{-1mm}
\begin{SMALL}
\begin{align*}
\hspace{-4.3cm}\left\{
\begin{array}{ll}
(\alpha_2,\beta_2,\alpha_0,b_3,a_0)=(1,0,0,0,0)\\
(b_2,c_3)=(\frac{1}{4}-a_3,\frac{1}{4}-\frac{1}{2}a_3)
\end{array}
\right.
\Rightarrow
\rho_{17}^{2}=-3(a_3-1)^4=0
\Leftrightarrow
a_3=1
\end{align*}
\end{SMALL}
\hspace{-1mm}
\begin{SMALL}
\begin{align*}
\hspace{-2.4cm}\left\{
\begin{array}{ll}
(\alpha_2,\beta_2,\alpha_0,b_3,a_0)=(1,0,0,0,0)\\
(a_3,b_2,c_3)=(1,-\frac{3}{4},-\frac{1}{4})
\end{array}
\right.
\Rightarrow
\rho_{15}^{8}=-8(2a_2+2b_1-\beta_0)^2=0
\Leftrightarrow
b_1=\frac{1}{2}\beta_0-a_2
\end{align*}
\end{SMALL}
\hspace{-1mm}
\begin{SMALL}
\begin{align*}
\hspace{-3.4cm}\left\{
\begin{array}{lll}
(\alpha_2,\beta_2,\alpha_0,b_3,a_0)=(1,0,0,0,0)\\
(a_3,b_2,c_3)=(1,-\frac{3}{4},-\frac{1}{4})\\
b_1=\frac{1}{2}\beta_0-a_2
\end{array}
\right.
\Rightarrow
\rho_{15}^{6}=-10(a_2+2c_2)^2=0
\Leftrightarrow
c_2=-\frac{1}{2}a_2
\end{align*}
\end{SMALL}
\hspace{-1mm}
\begin{SMALL}
\begin{align*}
\hspace{-2.4cm}\left\{
\begin{array}{lll}
(\alpha_2,\beta_2,\alpha_0,b_3,a_0)=(1,0,0,0,0)\\
(a_3,b_2,c_3)=(1,-\frac{3}{4},-\frac{1}{4})\\
(b_1,c_2)=(\frac{1}{2}\beta_0-a_2,-\frac{1}{2}a_2)
\end{array}
\right.
\Rightarrow
\rho_{13}^{10}=-2(4a_1+4b_0-\beta_0^2)^2=0
\Leftrightarrow
b_0=\frac{1}{4}\beta_0^2-a_1
\end{align*}
\end{SMALL}
\hspace{-1mm}
\begin{SMALL}
\begin{align*}
\left\{
\begin{array}{llll}
(\alpha_2,\beta_2,\alpha_0,b_3,a_0)=(1,0,0,0,0)\\
(a_3,b_2,c_3)=(1,-\frac{3}{4},-\frac{1}{4})\\
(b_1,c_2)=(\frac{1}{2}\beta_0-a_2,-\frac{1}{2}a_2)\\
b_0=\frac{1}{4}\beta_0^2-a_1
\end{array}
\right.
\Rightarrow
\rho_{13}^{8}=-\frac{5}{2}(2a_1+2\beta_0a_2-\beta_0^2+4c_1)^2=0
\Leftrightarrow
c_1=\frac{1}{4}\beta_0^2-\frac{1}{2}a_1-\frac{1}{2}\beta_0a_2
\end{align*}
\end{SMALL}
\hspace{-1mm}
\begin{SMALL}
\begin{align*}
\hspace{-3cm}\left\{
\begin{array}{llll}
(\alpha_2,\beta_2,\alpha_0,b_3,a_0)=(1,0,0,0,0)\\
(a_3,b_2,c_3)=(1,-\frac{3}{4},-\frac{1}{4})\\
(b_1,c_2)=(\frac{1}{2}\beta_0-a_2,-\frac{1}{2}a_2)\\
(b_0,c_1)=(\frac{1}{4}\beta_0^2-a_1,\frac{1}{4}\beta_0^2-\frac{1}{2}a_1-\frac{1}{2}\beta_0a_2)
\end{array}
\right.
\Rightarrow
\rho_{13}^{6}=-3(a_2-\beta_0)^4=0
\Leftrightarrow
\beta_0=a_2
\end{align*}
\end{SMALL}
\hspace{-1mm}et
\begin{SMALL}
\begin{align*}
\hspace{-2.5cm}\left\{
\begin{array}{llll}
(\alpha_2,\beta_2,\alpha_0,b_3,a_0)=(1,0,0,0,0)\\
(a_3,b_2,c_3)=(1,-\frac{3}{4},-\frac{1}{4})\\
(\beta_0,b_1,c_2)=(a_2,-\frac{1}{2}a_2,-\frac{1}{2}a_2)\\
(b_0,c_1)=(\frac{1}{4}a_2^2-a_1,-\frac{1}{4}a_2^2-\frac{1}{2}a_1)
\end{array}
\right.
\Rightarrow
\rho_{11}^{10}=-10(a_1a_2+2c_0)^2=0
\Leftrightarrow
c_0=-\frac{1}{2}a_1a_2.
\end{align*}
\end{SMALL}
\hspace{-1mm}Par suite la $1$-forme $\omega$ définissant $\F$ s'écrit
\begin{align*}
\frac{1}{4}\Big(a_2x+y+2\Big)\Big(2y^2\mathrm{d}x+(a_2x^2-xy+2x)\mathrm{d}y-(2a_1x^2+a_2xy+y^2)(x\mathrm{d}y-y\mathrm{d}x)\Big),
\end{align*}
ce qui conduit à $\deg\F=2$: contradiction.
\vspace{2mm}

\textbf{\textit{2.}} Cas où $\alpha_2=0$ et $a_3\neq0.$ Quitte à faire agir la transformation linéaire  \begin{small}$\left(x,\sqrt{\frac{1}{a_3}}y\right)$\end{small} sur $\omega,$ on peut supposer que $a_3=1.$ Dans ce cas le système $\rho_{20}^{3}=\rho_{19}^{3}=\rho_{17}^{5}=\rho_{17}^{3}=\rho_{17}^{1}=0$ n'a aucune solution; en effet
\begin{Small}
\begin{align*}
&(\alpha_2,a_3)=(0,1)
\Rightarrow
\rho_{20}^{3}=\beta_2^3=0
\Leftrightarrow
\beta_2=0,\\
&(\alpha_2,a_3,\beta_2)=(0,1,0)
\Rightarrow
\rho_{19}^{3}=96b_3^2=0
\Leftrightarrow
b_3=0,\\
&(\alpha_2,a_3,\beta_2,b_3)=(0,1,0,0)
\Rightarrow
\rho_{17}^{5}=96(b_2+1)^2=0
\Leftrightarrow
b_2=-1,\\
&(\alpha_2,a_3,\beta_2,b_3,b_2)=(0,1,0,0,-1)
\Rightarrow
\rho_{17}^{3}=112c_3^2=0
\Leftrightarrow
c_3=0,\\
&(\alpha_2,a_3,\beta_2,b_3,b_2,c_3)=(0,1,0,0,-1,0)
\Rightarrow
\rho_{17}^{1}=8\neq0.
\end{align*}
\end{Small}
\hspace{-1mm}Il s'en suit que ce deuxième cas est aussi impossible.
\vspace{2mm}

\textbf{\textit{3.}} Cas où $(\alpha_2,a_3)=(0,0).$ Le calcul explicite de $K(\Leg\F)$ montre que $\rho_{4}^{17}=-\alpha_0^5a_0,$ de sorte que $\alpha_0a_0=0.$
\`{A} conjugaison près par des transformations linéaires du type $(\lambda\hspace{0.1mm}x,\mu\hspace{0.1mm}y)$, on est dans l'une des situations suivantes
\begin{align*}
& (\alpha_2,a_3,\beta_2,\alpha_0,a_0)=(0,0,1,1,0), &&(\alpha_2,a_3,\beta_2,\alpha_0)=(0,0,1,0),\\
& (\alpha_2,a_3,\beta_2,b_3)=(0,0,0,0),&& (\alpha_2,a_3,\beta_2,b_3)=(0,0,0,1).
\end{align*}

\textbf{\textit{3.1.}} Lorsque $(\alpha_2,a_3,\beta_2,\alpha_0,a_0)=(0,0,1,1,0)$, l'hypothèse $K(\Leg\F)$ implique que
\begin{align*}
& (a_1,a_2)=(-2b_3,2),&& (b_0,b_1,b_2)=(b_3,-1-\beta_0b_3,\beta_0-b_3^2),\\
& (c_0,c_1,c_2,c_3)=(0,-b_3^2,2b_3,-1)\hspace{1mm};
\end{align*}
en effet
\begin{SMALL}
\begin{align*}
\hspace{-0.9cm}(\alpha_2,a_3,\beta_2,\alpha_0,a_0)=(0,0,1,1,0)
\Rightarrow
\left\{
\begin{array}{llll}
\rho_{19}^{4}=2a_2+4c_3=0\\
\rho_{20}^{2}=-12a_2-28c_3-4=0\\
\rho_{18}^{5}=-(4a_2+8c_3+6)b_3+3a_1+6c_2=0\\
\rho_{19}^{3}=(14a_2+56c_3+76)b_3-22a_1-46c_2=0
\end{array}
\right.
\Leftrightarrow\hspace{2mm}
\left\{
\begin{array}{llll}
a_2=2\\
c_3=-1\\
a_1=-2b_3\\
c_2=2b_3
\end{array}
\right.
\end{align*}
\end{SMALL}
\hspace{-1mm}et
\begin{SMALL}
\begin{align*}
\hspace{0.2cm}\left\{
\begin{array}{ll}
(\alpha_2,a_3,\beta_2,\alpha_0,a_0)=(0,0,1,1,0)\\
(a_2,c_3,a_1,c_2)=(2,-1,-2b_3,2b_3)
\end{array}
\right.
\Rightarrow
\left\{
\begin{array}{lllll}
\rho_{17}^{6}=8(\beta_0-b_2+c_1)=0\\
\rho_{18}^{4}=-6(12\beta_0-b_3^2-12b_2+11c_1)=0\\
\rho_{16}^{7}=-(34\beta_0-24b_2+24c_1)b_3-10\\
\hspace{0.8cm}-10b_1+10c_0=0\\
\rho_{17}^{5}=-16b_3^3+4(81\beta_0-56b_2+52c_1)b_3\\
\hspace{0.8cm}+100+100b_1-88c_0=0\\
\rho_{15}^{8}=32\beta_0b_3^2+(32b_1-32c_0+44)b_3+8\beta_0^2\\
\hspace{0.8cm}+(8c_1-24b_2)\beta_0+16b_2^2-16b_2c_1\\
\hspace{0.8cm}-12b_0=0
\end{array}
\right.
\Leftrightarrow\hspace{2mm}
\left\{
\begin{array}{lllll}
b_2=\beta_0-b_3^2\\
c_1=-b_3^2\\
b_1=-1-\beta_0b_3\\
c_0=0\\
b_0=b_3.
\end{array}
\right.
\end{align*}
\end{SMALL}
\hspace{-1mm}Donc la $1$-forme $\omega$ décrivant $\F$ s'écrit
\begin{align*}
\Big(y^2-b_3\hspace{0.1mm}xy+x\Big)\Big(x\mathrm{d}x+(\beta_0\hspace{0.1mm}x+b_3\hspace{0.1mm}y+1)\mathrm{d}y
+(b_3\hspace{0.1mm}x-y)(x\mathrm{d}y-y\mathrm{d}x)\Big),
\end{align*}
mais ceci contredit l'égalité $\deg\F=3.$
\vspace{2mm}

\textbf{\textit{3.2.}} Lorsque $(\alpha_2,a_3,\beta_2,\alpha_0)=(0,0,1,0)$, la platitude de $\Leg\F$ entraîne que $a_i=c_i=0$ pour $i=0,1,2,3$; en effet
\begin{SMALL}
\begin{align*}
\hspace{-1.9cm}(\alpha_2,a_3,\beta_2,\alpha_0)=(0,0,1,0)
\Rightarrow
\left\{
\begin{array}{llll}
\rho_{19}^{4}=2a_2+4c_3=0\\
\rho_{20}^{2}=-12a_2-28c_3=0\\
\rho_{18}^{5}=-(4a_2+8c_3)b_3+3a_1+6c_2=0\\
\rho_{19}^{3}=(14a_2+56c_3)b_3-22a_1-46c_2=0
\end{array}
\right.
\Leftrightarrow\hspace{2mm}
\left\{
\begin{array}{llll}
a_2=0\\
c_3=0\\
a_1=0\\
c_2=0
\end{array}
\right.
\end{align*}
\end{SMALL}
\hspace{-1mm}et
\begin{SMALL}
\begin{align*}
(\alpha_2,a_3,\beta_2,\alpha_0,a_2,c_3,a_1,c_2)=(0,0,1,0,0,0,0,0)
\Rightarrow
\left\{
\begin{array}{lll}
\rho_{17}^{6}=4a_0+8c_1=0\\
\rho_{18}^{4}=-30a_0-66c_1=0\\
\rho_{16}^{7}=-(12a_0+24c_1)b_3+10c_0=0
\end{array}
\right.
\Leftrightarrow\hspace{2mm}
\left\{
\begin{array}{lll}
a_0=0\\
c_1=0\\
c_0=0.
\end{array}
\right.
\end{align*}
\end{SMALL}
\hspace{-1mm}Ainsi, la $1$-forme $\omega$ s'écrit $(x+\beta_0x^2+y^2+b_0x^3+b_1x^2y+b_2xy^2+b_3y^3)\mathrm{d}y$, ce qui implique que $\deg\F=0$: contradiction.
\vspace{2mm}

\textbf{\textit{3.3.}} Quand $(\alpha_2,a_3,\beta_2,b_3)=(0,0,0,0)$, le calcul explicite de $K(\Leg\F)$ conduit à
\begin{align*}
& \rho_{17}^{1}=2(12b_2c_3)^2 && \text{et} && \rho_{17}^{2}=8c_3(16b_2^3+27c_3^2)\hspace{1mm};
\end{align*}
le système $\rho_{17}^{1}=\rho_{17}^{2}=0$ équivaut à $c_3=0.$ Les égalités $\alpha_2=a_3=\beta_2=b_3=c_3=0$ impliquent que $\deg\F<3$: contradiction.
\vspace{3mm}

\textbf{\textit{3.4.}} Lorsque $(\alpha_2,a_3,\beta_2,b_3)=(0,0,0,1)$, la platitude de $\Leg\F$ implique que $\alpha_0=a_i=c_i=0$ pour $i=0,1,2,3$; en effet
\begin{SMALL}
\begin{align*}
\hspace{-1.9cm}(\alpha_2,a_3,\beta_2,b_3)=(0,0,0,1)
\Rightarrow
\left\{
\begin{array}{lllll}
\rho_{18}^{5}=-8c_3=0\\
\rho_{18}^{4}=28a_2=0\\
\rho_{17}^{6}=16(\alpha_0-c_2)=0\\
\rho_{17}^{5}=8(6a_2b_2-6\beta_0c_3+a_1)=0\\
\rho_{18}^{3}=2(184b_2c_3+141\alpha_0-75c_2)=0
\end{array}
\right.
\Leftrightarrow\hspace{2mm}
\left\{
\begin{array}{lllll}
c_3=0\\
a_2=0\\
a_1=0\\
c_2=0\\
\alpha_0=0
\end{array}
\right.
\end{align*}
\end{SMALL}
\hspace{-1mm}et
\begin{SMALL}
\begin{align*}
(\alpha_2,a_3,\beta_2,b_3,c_3,a_2,a_1,c_2,\alpha_0)=(0,0,0,1,0,0,0,0,0)
\Rightarrow
\left\{
\begin{array}{lll}
\rho_{16}^{7}=-24c_1=0\\
\rho_{16}^{6}=36a_0=0\\
\rho_{15}^{8}=-16b_2c_1-32c_0=0
\end{array}
\right.
\Leftrightarrow\hspace{2mm}
\left\{
\begin{array}{lll}
c_1=0\\
a_0=0\\
c_0=0.
\end{array}
\right.
\end{align*}
\end{SMALL}
\hspace{-1mm}Par conséquent, la $1$-forme $\omega$ s'écrit $(x+\beta_0x^2+b_0x^3+b_1x^2y+b_2xy^2+y^3)\mathrm{d}y$; de sorte que $\deg\F=0$: contradiction.

\newpage
\section{Étude du cas nilpotent}\label{sec:cas-nilpotent}

Supposons que la singularité $O$ soit de type nilpotent; on peut donc écrire $\omega$ sous la forme $\omega=A(x,y)\mathrm{d}x+B(x,y)\mathrm{d}y+C(x,y)(x\mathrm{d}y-y\mathrm{d}x)$ où
\begin{small}
\begin{align*}
&A(x,y)=\alpha_0x^2+\alpha_1xy+\alpha_2y^2+a_0x^3+a_1x^2y+a_2xy^2+a_3y^3,\quad C(x,y)=c_0x^3+c_1x^2y+c_2xy^2+c_3y^3,\\
&B(x,y)=y+\beta_0x^2+\beta_1xy+\beta_2y^2+b_0x^3+b_1x^2y+b_2xy^2+b_3y^3.
\end{align*}
\end{small}
\hspace{-1mm}Soit $(p,q)$ la carte affine de $\pd$ associée à la droite $\{py-qx=1\}\subset\pp$; le $3$-tissu $\Leg\F$ est donné par l'équation différentielle
\begin{small}
\begin{align*}
&F(p,q,w):=\Big(p^2q+\alpha_2p^2+\beta_2pq+a_3p+b_3q-c_3\Big)w^3\\
&\hspace{2cm}+\Big(-2pq^2+\alpha_1p^2+(\beta_1-\alpha_2)pq-\beta_2q^2+a_2p+b_2q-c_2\Big)w^2\\
&\hspace{2cm}+\Big(q^3+\alpha_0p^2+(\beta_0-\alpha_1)pq-\beta_1q^2+a_1p+b_1q-c_1\Big)w\\
&\hspace{2cm}-\alpha_0pq-\beta_0q^2+a_0p+b_0q-c_0=0,
&&\text{où}\hspace{1.5mm} w=\frac{\mathrm{d}q}{\mathrm{d}p}.
\end{align*}
\end{small}
\hspace{-1mm}Le calcul explicite de $K(\Leg\F)$ montre qu'elle s'écrit sous la forme $$K(\Leg\F)=\frac{\sum\limits_{i+j\leq23}\rho_{i}^{j}p^iq^j}{R(p,q)^2}\mathrm{d}p\wedge\mathrm{d}q,$$
où $R:=\text{Result}(F,\partial_{w}(F))$ et les $\rho_{i}^{j}$ sont des polynômes en les coefficients de $A,B,C,$ avec $\rho_{17}^{6}=2\alpha_0^5.$ La platitude de $\Leg\F$ entraîne donc que $\alpha_0=0.$
%la $3$-forme symétrique
%\begin{Small}
%\begin{align*}
%&\check{\omega}=(p^2q+\alpha_2p^2+\beta_2pq+a_3p+b_3q-c_3)\mathrm{d}q^3+(-2pq^2+\alpha_1p^2+(\beta_1-\alpha_2)pq-\beta_2q^2+a_2p+b_2q-c_2)\mathrm{d}p\mathrm{d}q^2\\
%&\hspace{5mm}+(q^3+\alpha_0p^2+(\beta_0-\alpha_1)pq-\beta_1q^2+a_1p+b_1q-c_1)\mathrm{d}p^2\mathrm{d}q+(-\alpha_0pq-\beta_0q^2+a_0p+b_0q-c_0)\mathrm{d}p^3
%\end{align*}
%\end{Small}
%\hspace{-1mm}Montrons que $\alpha_0=0.$ Supposons par l'absurde que $\alpha_0\neq0$. Considérons la famille d'automorphismes $\varphi=\varphi_{\varepsilon}=(\alpha_0\hspace{0.1mm}\varepsilon^{-3}\hspace{0.1mm}p,\hspace{0.1mm}\alpha_0\hspace{0.1mm}\varepsilon^{-2}\hspace{0.1mm}q).$ Nous avons
%\begin{align*}
%&\check{\omega}_{0}:
%=\lim_{\varepsilon\to 0}\varepsilon^{14}\alpha_{0}^{-6}\varphi^*\check{\omega}
%=(p\mathrm{d}q-q\mathrm{d}p)\left(p\mathrm{d}p^2-q^2\mathrm{d}p\mathrm{d}q+pq\mathrm{d}q^2\right).
%\end{align*}
%\noindent Désignons par $\mathcal{W}_0$ le $3$-tissu sur $\pd$ défini par $\check{\omega}_{0}$. Comme $\Leg\F$ est plat, $\W_0$ l'est aussi; mais $$K(\W_0)=\frac{2pq^2}{(4p^2-q^3)^2}\mathrm{d}p\wedge\mathrm{d}q\not\equiv0,$$
%ce qui est absurde. Donc $\alpha_0=0.$
\noindent Puis en faisant agir le difféomorphisme
\begin{small}$\left(\dfrac{2x}{2\alpha_2x-\beta_2y+2}, \dfrac{2y}{2\alpha_2x-\beta_2y+2}\right)$\end{small} sur $\omega$, on peut supposer que $\alpha_2=\beta_2=0.$ Ainsi
\begin{small}
\begin{align*}
\omega=\alpha_1xy\mathrm{d}x+(y+\beta_0x^2+\beta_1xy)\mathrm{d}y+\mathrm{d}x\sum_{i=0}^{3}a_i\hspace{0.1mm}x^{3-i}y^{i}
+\mathrm{d}y\sum_{i=0}^{3}b_i\hspace{0.1mm}x^{3-i}y^{i}+(x\mathrm{d}y-y\mathrm{d}x)\sum_{i=0}^{3}c_i\hspace{0.1mm}x^{3-i}y^{i}.
\end{align*}
\end{small}
\hspace{-1mm}Dans ce cas le calcul explicite de $K(\Leg\F)$ donne
\begin{Small}
\begin{align*}
\begin{array}{lllll}
\smallskip
\rho_{12}^{11}=\Big(4a_0-(\alpha_1+\beta_0)^2\Big)\Big((5\alpha_1-\beta_0)a_0-\alpha_1(\alpha_1+\beta_0)^2\Big),\\
\smallskip
\rho_{13}^{9}=-2\Big(4a_0-(\alpha_1+\beta_0)^2\Big)\Big(15a_0^2-2\beta_0(\alpha_1+4\beta_0)a_0-\alpha_1^2(\alpha_1+\beta_0)^2\Big),\\
\smallskip
\rho_{14}^{7}=(99\alpha_1+297\beta_0)a_0^3-3(\alpha_1+\beta_0)(68\beta_0^2+19\alpha_1\beta_0+19\alpha_1^2)a_0^2\\
\hspace{8mm}+(5\alpha_1^3-21\alpha_1^2\beta_0+24\alpha_1\beta_0^2+32\beta_0^3)(\alpha_1+\beta_0)^2a_0+\alpha_1^2(\alpha_1+4\beta_0)(\alpha_1+\beta_0)^4,\\
\smallskip
\rho_{15}^{5}=-8\alpha_1a_0\Big((54\beta_0+18\alpha_1)a_0^2+(-12\beta_0^3-49\alpha_1^2\beta_0-7\alpha_1^3-48\alpha_1\beta_0^2)a_0
+\alpha_1(\alpha_1^2+6\alpha_1\beta_0+4\beta_0^2)(\alpha_1+\beta_0)^2\Big),\\
\smallskip
\rho_{16}^{3}=4\alpha_1a_0\Big(81a_0^3-3\alpha_1(10\alpha_1+27\beta_0)a_0^2+\alpha_1^2(3\alpha_1^2+16\alpha_1\beta_0+7\beta_0^2)a_0
+\alpha_1^3\beta_0(\alpha_1+\beta_0)^2\Big)\hspace{1mm};
\end{array}
\end{align*}
\end{Small}
\hspace{-1mm}le système $\rho_{12}^{11}=\rho_{13}^{9}=\rho_{14}^{7}=\rho_{15}^{5}=\rho_{16}^{3}=0$ est vérifié si et seulement si on est dans l'une des situations suivantes
\begin{align*}
& a_0=0,\hspace{1mm}\alpha_1=0\hspace{1mm};                                                     && a_0=0,\hspace{1mm}\beta_0=-\alpha_1,\hspace{1mm}\alpha_1\neq0\hspace{1mm};\\
& a_0=\alpha_1^2,\hspace{1mm}\beta_0=\alpha_1,\hspace{1mm}\alpha_1\neq0\hspace{1mm};&&
a_0=\frac{\alpha_1^2}{2},\hspace{1mm}\beta_0=\frac{\alpha_1}{2},\hspace{1mm}\alpha_1\neq0.
\end{align*}
\`{A} conjugaison près par des transformations linéaires du type $(\lambda\hspace{0.1mm}x,\mu\hspace{0.1mm}y)$, on se ramène aux cinq cas suivants
\begin{align*}
& (a_0,\alpha_1,\beta_0)=(0,0,0), && (a_0,\alpha_1,\beta_0)=(0,0,1), && (a_0,\alpha_1,\beta_0)=(0,1,-1),\\
& (a_0,\alpha_1,\beta_0)=(1,1,1), && (a_0,\alpha_1,\beta_0)=(\tfrac{1}{2},1,\tfrac{1}{2}).
\end{align*}

\textbf{\textit{1.}} Lorsque $(a_0,\alpha_1,\beta_0)=(0,0,0),$ l'hypothèse $K(\Leg\F)\equiv0$ implique que $a_1=0$; en effet
\begin{small}
\begin{align*}
(a_0,\alpha_1,\beta_0)=(0,0,0)
\Rightarrow
\left\{
\begin{array}{lll}
\rho_{10}^{12}=-4a_1(a_1+b_0)(10a_1+9b_0)=0\\
\rho_{11}^{9}=-16a_1^2(5a_1+9b_0)(a_1+b_0)=0\\
\rho_{12}^{6}=-2a_1^2(20a_1^3+234a_1^2b_0+459a_1b_0^2+243b_0^3)=0
\end{array}
\right.
\Leftrightarrow\hspace{2mm}
a_1=0.
\end{align*}
\end{small}
\hspace{-1mm}On envisage deux cas suivant que $b_0$ est nul ou non.
\vspace{2mm}

\textbf{\textit{1.1.}} Lorsque $b_0=0,$ l'hypothèse sur $\Leg\F$ d'être plat entraîne que $c_0=0$; en effet
\begin{align*}
(a_0,\alpha_1,\beta_0,a_1,b_0)=(0,0,0,0,0)\Rightarrow \rho_{10}^{9}=-78c_0^3=0\Leftrightarrow c_0=0.
\end{align*}
Les égalités $\beta_0=a_0=b_0=c_0=0$ impliquent que $\deg\F<3$: contradiction.
\vspace{2mm}

\textbf{\textit{1.2.}} Si $b_0\neq0$, alors, quitte à faire agir la transformation linéaire diagonale $(x,b_0y)$, on peut effectuer la normalisation $b_0=1.$ Dans ce cas l'hypothèse $K(\Leg\F)\equiv0$ implique que $a_2=a_3=c_i=0,i=0,1,2,3$; en effet
\begin{small}
\begin{align*}
\left\{
\begin{array}{ll}
(a_0,\alpha_1,\beta_0)=(0,0,0)\\
(a_1,b_0)=(0,1)
\end{array}
\right.
\Rightarrow
\left\{
\begin{array}{ll}
\rho_{10}^{11}=-78c_0=0\\
\rho_{10}^{10}=(141\beta_1+156c_0)c_0+153a_2=0
\end{array}
\right.
\Leftrightarrow\hspace{2mm}
\left\{
\begin{array}{ll}
c_0=0\\
a_2=0
\end{array}
\right.
\end{align*}
\end{small}
\hspace{-1mm}
\begin{small}
\begin{align*}
\hspace{-1.4cm}\left\{
\begin{array}{lll}
(a_0,\alpha_1,\beta_0)=(0,0,0)\\
(a_1,b_0)=(0,1)\\
(c_0,a_2)=(0,0)
\end{array}
\right.
\Rightarrow
\left\{
\begin{array}{ll}
\rho_{9}^{12}=84c_1=0\\
\rho_{8}^{14}=-28\beta_1c_1-12a_3=0
\end{array}
\right.
\Leftrightarrow\hspace{2mm}
\left\{
\begin{array}{ll}
c_1=0\\
a_3=0
\end{array}
\right.
\end{align*}
\end{small}
\hspace{-1mm}et
\begin{small}
\begin{align*}
\left\{
\begin{array}{llll}
(a_0,\alpha_1,\beta_0)=(0,0,0)\\
(a_1,b_0)=(0,1)\\
(c_0,a_2)=(0,0)\\
(c_1,a_3)=(0,0)
\end{array}
\right.
\Rightarrow
\left\{
\begin{array}{ll}
\rho_{8}^{13}=198c_2=0\\
\rho_{7}^{14}=(74\beta_1^2+152b_1)c_2+336c_3=0
\end{array}
\right.
\Leftrightarrow\hspace{2mm}
\left\{
\begin{array}{ll}
c_2=0\\
c_3=0.
\end{array}
\right.
\end{align*}
\end{small}
\hspace{-1mm}Ainsi la forme $\omega$ s'écrit $(y+\beta_1xy+x^3+b_1x^2y+b_2xy^2+b_3y^3)\mathrm{d}y$, ce qui conduit à $\deg\F=0$: contradiction.
\vspace{2mm}

\textbf{\textit{2.}} Quand $(a_0,\alpha_1,\beta_0)=(0,0,1),$ l'hypothèse $K(\Leg\F)\equiv0$ implique que $a_i=c_i=0$ pour $i=0,1,2,3$; en effet
\begin{small}
\begin{align*}
\left\{
\begin{array}{lll}
a_0=0\\
\alpha_1=0\\
\beta_0=1
\end{array}
\right.
\Rightarrow
\left\{
\begin{array}{lll}
\rho_{12}^{10}=-4a_1=0\\
\rho_{11}^{11}=(56a_1+56b_0-22\beta_1)a_1+6a_2+12c_0=0\\
\rho_{12}^{9}=-16(7a_1+6b_0-3\beta_1)a_1-12a_2-12c_0=0
\end{array}
\right.
\Leftrightarrow\hspace{2mm}
\left\{
\begin{array}{lll}
a_1=0\\
a_2=0\\
c_0=0
\end{array}
\right.
\end{align*}
\end{small}
\hspace{-1mm}
\begin{small}
\begin{align*}
\hspace{-1.5cm}\left\{
\begin{array}{ll}
(a_0,\alpha_1,\beta_0)=(0,0,1)\\
(a_1,a_2,c_0)=(0,0,0)
\end{array}
\right.
\Rightarrow
\left\{
\begin{array}{ll}
\rho_{10}^{12}=14a_3+30c_1=0\\
\rho_{11}^{10}=-22a_3-54c_1=0
\end{array}
\right.
\Leftrightarrow\hspace{2mm}
\left\{
\begin{array}{ll}
a_3=0\\
c_1=0
\end{array}
\right.
\end{align*}
\end{small}
\hspace{-1mm}et
\begin{small}
\begin{align*}
\left\{
\begin{array}{lll}
(a_0,\alpha_1,\beta_0)=(0,0,1)\\
(a_1,a_2,c_0)=(0,0,0)\\
(a_3,c_1)=(0,0)
\end{array}
\right.
\Rightarrow
\left\{
\begin{array}{ll}
\rho_{9}^{13}=50c_2=0\\
\rho_{8}^{14}=(89\beta_1-136b_0)c_2+72c_3=0
\end{array}
\right.
\Leftrightarrow\hspace{2mm}
\left\{
\begin{array}{ll}
c_2=0\\
c_3=0.
\end{array}
\right.
\end{align*}
\end{small}
\hspace{-1mm}Par suite $\omega$ s'écrit $(y+x^2+\beta_1xy+b_0x^3+b_1x^2y+b_2xy^2+b_3y^3)\mathrm{d}y$, ce qui contredit l'égalité $\deg\F=3$.
\vspace{2mm}

\textbf{\textit{3.}} Lorsque $(a_0,\alpha_1,\beta_0)=(1,1,1)$, la platitude de $\Leg\F$ implique que
\begin{align*}
\left(\beta_1,b_0,b_1,b_2,b_3,c_0,c_1,c_2,c_3\right)=\left(2a_1,a_1,a_1^2-a_2,-a_3,0,-a_2,-a_1a_2-a_3,-a_1a_3,0\right);
\end{align*}
en effet
\begin{SMALL}
\begin{align*}
\hspace{-2cm}\left\{
\begin{array}{lll}
a_0=1\\
\alpha_1=1\\
\beta_0=1
\end{array}
\right.
\Rightarrow
\left\{
\begin{array}{llll}
\rho_{12}^{10}=-12a_1+12\beta_1-12b_0=0\\
\rho_{13}^{8}=124a_1-122\beta_1+120b_0=0\\
\rho_{11}^{11}=-16a_2-16b_1+456\beta_1b_0-184a_1^2\\
\hspace{0.8cm}-248b_0^2-204\beta_1^2+392\beta_1a_1-432a_1b_0=0\\
\rho_{12}^{9}=180a_2+174b_1+6c_0-1586\beta_1b_0+772a_1^2\\
\hspace{0.8cm}+876b_0^2+667\beta_1^2-1477\beta_1a_1+1636a_1b_0=0
\end{array}
\right.
\Leftrightarrow\hspace{2mm}
\left\{
\begin{array}{llll}
\beta_1=2a_1\\
b_0=a_1\\
b_1=a_1^2-a_2\\
c_0=-a_2
\end{array}
\right.
\end{align*}
\end{SMALL}
\hspace{-1mm}et
\begin{SMALL}
\begin{align*}
\left\{
\begin{array}{lllllll}
a_0=1\\
\alpha_1=1\\
\beta_0=1\\
\beta_1=2a_1\\
b_0=a_1\\
b_1=a_1^2-a_2\\
c_0=-a_2
\end{array}
\right.
\Rightarrow
\left\{
\begin{array}{lllll}
\rho_{10}^{12}=-20a_3-20b_2=0\\
\rho_{11}^{10}=8a_1a_2+244a_3+236b_2+8c_1=0\\
\rho_{9}^{13}=-32a_1a_3-32a_1b_2-24b_3=0\\
\rho_{10}^{11}=306b_3+10c_2+2(6a_1a_2+267a_3+256b_2+6c_1)a_1=0\\
\rho_{9}^{12}=4(a_1a_2+c_1+95a_3+90b_2)a_1^2-(8a_2^2-696b_3-16c_2)a_1\\
\hspace{8mm}-(344a_3+336b_2+8c_1)a_2+12c_3=0
\end{array}
\right.
\Leftrightarrow
\left\{
\begin{array}{lllll}
b_2=-a_3\\
b_3=0\\
c_1=-a_1a_2-a_3\\
c_2=-a_1a_3\\
c_3=0.
\end{array}
\right.
\end{align*}
\end{SMALL}
\hspace{-1mm}Il en résulte que la $1$-forme $\omega$ décrivant $\F$ s'écrit
$$
\Big(y+a_1xy+x^2\Big)\Big(x\mathrm{d}x+(1+a_1x)\mathrm{d}y-(a_2x+a_3y)(x\mathrm{d}y-y\mathrm{d}x)\Big)\hspace{1mm};
$$
d'où $\deg\F=1$: contradiction.
\vspace{2mm}

\textbf{\textit{4.}} Quand $(a_0,\alpha_1,\beta_0)=(\frac{1}{2},1,\frac{1}{2}),$ la platitude de $\Leg\F$ conduit à:
\begin{small}
\begin{align*}
&\left(a_3,\beta_1,b_0,b_1,b_2,b_3\right)=
\left(\frac{1}{4}a_1(a_1^2+2a_2),\frac{5}{2}a_1,\frac{3}{4}a_1,\frac{25}{16}a_1^2-\frac{7}{8}a_2,-\frac{3}{16}a_1(a_1^2+2a_2),-\frac{1}{64}(a_1^2+2a_2)^2\right),\\
&\left(c_0,c_1,c_2,c_3\right)=\left(\frac{1}{16}a_1^2-\frac{3}{8}a_2,-a_1a_2,-\frac{3}{64}(5a_1^2+2a_2)(a_1^2+2a_2),-\frac{1}{32}(a_1^2+2a_2)^2a_1\right);
\end{align*}
\end{small}
\hspace{-1mm}en effet
\begin{SMALL}
\begin{align*}
\hspace{-3cm}(a_0,\alpha_1,\beta_0)=(\frac{1}{2},1,\frac{1}{2})
\Rightarrow
\left\{
\begin{array}{ll}
\rho_{12}^{10}=-\frac{5}{2}a_1-\frac{15}{4}b_0+\frac{17}{8}\beta_1=0\\
\rho_{13}^{8}=17a_1+24b_0-14\beta_1=0
\end{array}
\right.
\Leftrightarrow\hspace{2mm}
\left\{
\begin{array}{ll}
\beta_1=\frac{5}{2}a_1\\
b_0=\frac{3}{4}a_1
\end{array}
\right.
\end{align*}
\end{SMALL}
\hspace{-1mm}
\begin{SMALL}
\begin{align*}
\hspace{-0.5cm}\left\{
\begin{array}{ll}
(a_0,\alpha_1,\beta_0)=(\frac{1}{2},1,\frac{1}{2})\\
(\beta_1,b_0)=(\frac{5}{2}a_1,\frac{3}{4}a_1)
\end{array}
\right.
\Rightarrow
\left\{
\begin{array}{ll}
\rho_{11}^{11}=\frac{407}{16}a_1^2-\frac{99}{8}a_2-\frac{33}{2}b_1+\frac{11}{2}c_0=0\\
\rho_{12}^{9}=-\frac{213}{2}a_1^2+\frac{105}{2}a_2+69b_1-21c_0=0
\end{array}
\right.
\Leftrightarrow\hspace{2mm}
\left\{
\begin{array}{ll}
b_1=\frac{25}{16}a_1^2-\frac{7}{8}a_2\\
c_0=\frac{1}{16}a_1^2-\frac{3}{8}a_2
\end{array}
\right.
\end{align*}
\end{SMALL}
\hspace{-1mm}
\begin{SMALL}
\begin{align*}
\hspace{1cm}\left\{
\begin{array}{lll}
(a_0,\alpha_1,\beta_0)=(\frac{1}{2},1,\frac{1}{2})\\
(\beta_1,b_0)=(\frac{5}{2}a_1,\frac{3}{4}a_1)\\
(b_1,c_0)=(\frac{25}{16}a_1^2-\frac{7}{8}a_2,\frac{1}{16}a_1^2-\frac{3}{8}a_2)
\end{array}
\right.
\Rightarrow
\left\{
\begin{array}{lll}
\rho_{10}^{12}=\frac{1}{32}a_1(3a_1^2+302a_2)\\
\hspace{0.8cm}-\frac{189}{8}a_3-31b_2+\frac{37}{4}c_1=0\\
\rho_{11}^{10}=-\frac{3}{64}a_1(21a_1^2+706a_2)\\
\hspace{0.8cm}+\frac{1593}{16}a_3+\frac{255}{2}b_2-\frac{249}{8}c_1=0\\
\rho_{12}^{8}=\frac{21}{16}a_1(3a_1^2+38a_2)\\
\hspace{0.8cm}-213a_3-263b_2+42c_1=0
\end{array}
\right.
\Leftrightarrow\hspace{2mm}
\left\{
\begin{array}{lll}
a_3=\frac{1}{4}a_1(a_1^2+2a_2)\\
b_2=-\frac{3}{16}a_1(a_1^2+2a_2)\\
c_1=-a_1a_2
\end{array}
\right.
\end{align*}
\end{SMALL}
\hspace{-1mm}
\begin{SMALL}
\begin{align*}
\hspace{0.8cm}\left\{
\begin{array}{lllllll}
(a_0,\alpha_1,\beta_0)=(\frac{1}{2},1,\frac{1}{2})\\
(\beta_1,b_0)=(\frac{5}{2}a_1,\frac{3}{4}a_1)\\
b_1=\frac{25}{16}a_1^2-\frac{7}{8}a_2\\
c_0=\frac{1}{16}a_1^2-\frac{3}{8}a_2\\
a_3=\frac{1}{4}a_1(a_1^2+2a_2)\\
b_2=-\frac{3}{16}a_1(a_1^2+2a_2)\\
c_1=-a_1a_2
\end{array}
\right.
\Rightarrow
\left\{
\begin{array}{ll}
\rho_{9}^{13}=\frac{3}{64}(a_1^2+2a_2)(53a_1^2-4a_2)\\
\hspace{0.8cm}-\frac{189}{4}b_3+\frac{55}{4}c_2=0\\
\rho_{10}^{11}=-\frac{15}{256}(a_1^2+2a_2)(135a_1^2-32a_2)\\
\hspace{0.8cm}+\frac{3225}{16}b_3-\frac{755}{16}c_2=0
\end{array}
\right.
\Leftrightarrow\hspace{2mm}
\left\{
\begin{array}{ll}
b_3=-\frac{1}{64}(a_1^2+2a_2)^2\\
c_2=-\frac{3}{64}(5a_1^2+2a_2)(a_1^2+2a_2)
\end{array}
\right.
\end{align*}
\end{SMALL}
\hspace{-1mm}et
\begin{SMALL}
\begin{align*}
\left\{
\begin{array}{lllllllll}
(a_0,\alpha_1,\beta_0)=(\frac{1}{2},1,\frac{1}{2})\\
(\beta_1,b_0)=(\frac{5}{2}a_1,\frac{3}{4}a_1)\\
b_1=\frac{25}{16}a_1^2-\frac{7}{8}a_2\\
c_0=\frac{1}{16}a_1^2-\frac{3}{8}a_2\\
a_3=\frac{1}{4}a_1(a_1^2+2a_2)\\
b_2=-\frac{3}{16}a_1(a_1^2+2a_2)\\
c_1=-a_1a_2\\
b_3=-\frac{1}{64}(a_1^2+2a_2)^2\\
c_2=-\frac{3}{64}(5a_1^2+2a_2)(a_1^2+2a_2)
\end{array}
\right.
\Rightarrow
\begin{array}{l}
\rho_{8}^{14}=\frac{19}{32}(a_1^2+2a_2)^2a_1+19c_3=0
\end{array}
\Leftrightarrow\hspace{2mm}
\begin{array}{l}
c_3=-\frac{1}{32}(a_1^2+2a_2)^2a_1.
\end{array}
\end{align*}
\end{SMALL}
\hspace{-1mm}Il s'en suit que la $1$-forme $\omega$ s'écrit
\begin{align*}
\hspace{-2mm}\frac{1}{64}\Big(4x^2+8a_1xy+\lambda\,y^2+8y\Big)\Big(8x\mathrm{d}x+(8+12a_1x-\lambda\,y)\mathrm{d}y
+(\mu\,x-2\lambda\,a_1y)(x\mathrm{d}y-y\mathrm{d}x)\Big),
\end{align*}
où $\lambda=a_1^2+2a_2$\, et \,$\mu=a_1^2-6a_2$. D'où $\deg\F=1$: contradiction.
\vspace{2mm}

\textbf{\textit{5.}} Examinons le cas où $(a_0,\alpha_1,\beta_0)=(0,1,-1).$ Écrivons
\begin{align*}
&b_0=\delta_0-a_1,&&c_0=\varepsilon_0-\tfrac{1}{4}a_1^2,&&b_1=\delta_1+\tfrac{1}{4}a_1^2-a_2,&&\beta_1=\varepsilon_1+a_1,\\
&b_2=\delta_2-a_3,&&c_1=\varepsilon_2-\tfrac{1}{2}a_1a_2,&& c_2=\varepsilon_3-\tfrac{1}{2}a_1a_3;
\end{align*}
l'hypothèse $K(\Leg\F)\equiv0$ implique que $\delta_0=\delta_1=\delta_2=\varepsilon_0=\varepsilon_1=\varepsilon_2=\varepsilon_3=b_3=a_2=c_3=0$; en effet
\begin{Small}
\begin{align*}
\hspace{-6.3cm}(a_0,\alpha_1,\beta_0)=(0,1,-1)
\Rightarrow
\rho_{14}^{5}=-24\delta_0^2=0
\Leftrightarrow\hspace{2mm}
\delta_0=0
\end{align*}
\end{Small}
\hspace{-1mm}
\begin{Small}
\begin{align*}
\hspace{-1.1cm}(a_0,\alpha_1,\beta_0,\delta_0)=(0,1,-1,0)
\Rightarrow
\left\{
\begin{array}{lll}
\rho_{14}^{3}=8\varepsilon_0(\varepsilon_1a_1-2\delta_1-2\varepsilon_0)=0\\
\rho_{13}^{5}=8\varepsilon_0(2\varepsilon_1^2+3\varepsilon_1a_1-6\delta_1-2\varepsilon_0)=0\\
\rho_{14}^{2}=-4\varepsilon_0(\varepsilon_1a_1^2-2\delta_1a_1-2\varepsilon_0a_1-4\varepsilon_1\varepsilon_0)=0
\end{array}
\right.
\Leftrightarrow\hspace{2mm}
\varepsilon_0=0
\end{align*}
\end{Small}
\hspace{-1mm}
\begin{Small}
\begin{align*}
\hspace{-0.2cm}\left\{
\begin{array}{ll}
(a_0,\alpha_1,\beta_0)=(0,1,-1)\\
(\delta_0,\varepsilon_0)=(0,0)
\end{array}
\right.
\Rightarrow
\left\{
\begin{array}{ll}
\rho_{10}^{11}=3\varepsilon_1^2(6\delta_1-\varepsilon_1^2-3\varepsilon_1a_1)=0\\
\rho_{9}^{13}=-2(\varepsilon_1^2-3\varepsilon_1a_1+6\delta_1)(\varepsilon_1^2+2\varepsilon_1a_1-4\delta_1)=0
\end{array}
\right.
\Leftrightarrow\hspace{2mm}
\left\{
\begin{array}{ll}
\delta_1=0\\
\varepsilon_1=0
\end{array}
\right.
\end{align*}
\end{Small}
\hspace{-1mm}
\begin{Small}
\begin{align*}
\hspace{-3cm}\left\{
\begin{array}{ll}
(a_0,\alpha_1,\beta_0)=(0,1,-1)\\
(\delta_0,\delta_1,\varepsilon_0,\varepsilon_1)=(0,0,0,0)
\end{array}
\right.
\Rightarrow
\left\{
\begin{array}{ll}
\rho_{7}^{15}=16\delta_2(3\delta_2-7\varepsilon_2)=0\\
\rho_{8}^{13}=32\varepsilon_2(\varepsilon_2-9\delta_2)=0
\end{array}
\right.
\Leftrightarrow\hspace{2mm}
\left\{
\begin{array}{ll}
\delta_2=0\\
\varepsilon_2=0
\end{array}
\right.
\end{align*}
\end{Small}
\hspace{-1mm}
\begin{Small}
\begin{align*}
\left\{
\begin{array}{lll}
(a_0,\alpha_1,\beta_0)=(0,1,-1)\\
(\delta_0,\delta_1,\delta_2)=(0,0,0)\\
(\varepsilon_0,\varepsilon_1,\varepsilon_2)=(0,0,0)
\end{array}
\right.
\Rightarrow
\left\{
\begin{array}{lll}
\rho_{5}^{17}=-8b_3(a_2^2+14\varepsilon_3-6b_3)=0\\
\rho_{6}^{15}=2\varepsilon_3(16\varepsilon_3-144b_3+a_2^2)-54b_3a_2^2=0\\
\rho_{7}^{13}=8\varepsilon_3(10\varepsilon_3-36b_3+3a_2^2)-8b_3(11a_2^2+6b_3)=0
\end{array}
\right.
\Leftrightarrow\hspace{2mm}
\left\{
\begin{array}{ll}
\varepsilon_3=0\\
b_3=0
\end{array}
\right.
\end{align*}
\end{Small}
\hspace{-1mm}et
\begin{Small}
\begin{align*}
\left\{
\begin{array}{lll}
(a_0,\alpha_1,\beta_0)=(0,1,-1)\\
(\delta_0,\delta_1,\delta_2,b_3)=(0,0,0,0)\\
(\varepsilon_0,\varepsilon_1,\varepsilon_2,\varepsilon_3)=(0,0,0,0)
\end{array}
\right.
\Rightarrow
\left\{
\begin{array}{ll}
\rho_{8}^{11}=2a_2^4=0\\
\rho_{6}^{13}=(12a_3^2-159a_1c_3)a_2^2+60a_2a_3c_3+48c_3^2=0
\end{array}
\right.
\Leftrightarrow\hspace{2mm}
\left\{
\begin{array}{ll}
a_2=0\\
c_3=0.
\end{array}
\right.
\end{align*}
\end{Small}
\hspace{-1mm}Donc la $1$-forme $\omega$ s'écrit
\begin{align*}
\frac{1}{4}\Big(a_1x+2\Big)\Big(2\hspace{0.2mm}xy\mathrm{d}x-(2\hspace{0.2mm}x^2-a_1xy-2y)\mathrm{d}y
-(a_1x^2+2\hspace{0.2mm}a_3y^2)(x\mathrm{d}y-y\mathrm{d}x)\Big)\hspace{1mm};
\end{align*}
mais ceci contredit l'égalité $\deg\F=3.$

\newpage
\section{Cas $1$-jet nul et $2$-jet non nul}\label{sec:cas-ordre-2}

Supposons que la singularité $O$ soit de multiplicité algébrique $2$, {\it i.e.} que $J^{1}_{(0,0)}\,\omega=0$\, et \,$J^{2}_{(0,0)}\,\omega\neq0$; $\omega$ s'écrit alors sous la forme $$\omega=A_2(x,y)\mathrm{d}x+B_2(x,y)\mathrm{d}y+A_3(x,y)\mathrm{d}x+B_3(x,y)\mathrm{d}y+C_3(x,y)(x\mathrm{d}y-y\mathrm{d}x),$$ où $A_2,B_2$ (resp. $A_3,B_3,C_3$) sont des polynômes homogènes de degré $2$ (resp. $3$). Les polynômes $A_2$ et $B_2$ ne sont pas tous deux nuls car $J^{2}_{(0,0)}\,\omega=A_2(x,y)\mathrm{d}x+B_2(x,y)\mathrm{d}y$.

\noindent Dans la suite, nous nous placerons systématiquement dans la carte affine $(p,q)$ de $\pd$ associée à la droite $\{px-qy=1\}\subset\pp$, où le $3$-tissu $\Leg\F$ est défini par l'équation différentielle
\begin{small}
\begin{align*}
F(p,q,w):=(pw-q)\left(qA_2(w,1)+pB_2(w,1)\right)+qA_3(w,1)+pB_3(w,1)+C_3(w,1)=0,\quad\text{où}\hspace{1mm} w=\frac{\mathrm{d}q}{\mathrm{d}p}.
\end{align*}
\end{small}
\hspace{-1mm}Nous adopterons les notations suivantes
\begin{small}
\begin{align*}
&\hspace{-2mm}A_2=\sum_{i=0}^{2}\alpha_i\hspace{0.1mm}x^{2-i}y^{i},&&
B_2=\sum_{i=0}^{2}\beta_i\hspace{0.1mm}x^{2-i}y^{i},&&
A_3=\sum_{i=0}^{3}a_i\hspace{0.1mm}x^{3-i}y^{i},&&
B_3=\sum_{i=0}^{3}b_i\hspace{0.1mm}x^{3-i}y^{i},&&
C_3=\sum_{i=0}^{3}c_i\hspace{0.1mm}x^{3-i}y^{i}.
\end{align*}
\end{small}
\hspace{-1mm}Le calcul explicite de $K(\Leg\F)$ montre qu'elle s'écrit sous la forme $$K(\Leg\F)=\frac{\sum\limits_{j+k\leq17}\rho_{\hspace{-0.3mm}j}^{\hspace{0.1mm}k}p^jq^k}{R(p,q)^2}\mathrm{d}p\wedge\mathrm{d}q,$$
où $R:=\text{Result}(F,\partial_{w}(F))$ et les $\rho_{\hspace{-0.3mm}j}^{\hspace{0.1mm}k}$ sont des polynômes en les paramètres $\alpha_i,\beta_i,a_i,b_i,c_i.$

\noindent Signalons que les $\rho_{\hspace{-0.3mm}j}^{\hspace{0.1mm}k}$ ont des expressions très compliquées; il est très difficile et peut-être impossible de contrôler la condition $K(\Leg\F)\equiv0$ \og à la main \fg, comme nous l'avons pu faire dans les cas précédents où la singularité $O$ était selle-nœud ou nilpotente.

\noindent Pour s'affranchir de cette difficulté, nous allons envisager quatre éventualités, chacune donnant lieu à une sous-section. Il s'agit de réduire le nombre des paramètres $\alpha_i, \beta_i$ en raisonnant suivant la nature du cône tangent $xA_2+yB_2$ de $J^{2}_{(0,0)}\,\omega$\, qui, a priori, peut être nul, une droite, deux droites ou trois droites.

\DeactivateToc
\subsection{Cas $2$-jet à cône tangent nul}\label{subsec:$2$-jet-cône-tangent-nul}

\`{A} conjugaison linéaire près $\omega$ s'écrit
\begin{align*}
& y(x\mathrm{d}y-y\mathrm{d}x)+\mathrm{d}x\sum_{i=0}^{3}a_i\hspace{0.1mm}x^{3-i}y^{i}+\mathrm{d}y\sum_{i=0}^{3}b_i\hspace{0.1mm}x^{3-i}y^{i}
+(x\mathrm{d}y-y\mathrm{d}x)\sum_{i=0}^{3}c_i\hspace{0.1mm}x^{3-i}y^{i},\qquad a_i,b_i,c_i\in\mathbb{C}.
\end{align*}
En faisant agir le difféomorphisme \begin{small}$\left(\dfrac{2x}{a_2x+a_3y+2}, \dfrac{2y}{a_2x+a_3y+2}\right)$\end{small} sur $\omega$, on peut supposer que $a_2=a_3=0.$ Dans ce cas le calcul explicite de $K(\Leg\F)$ donne $\rho_{2}^{11}=108a_0^5,$ donc $a_0=0$.

\noindent Le feuilletage $\F$ étant de degré trois, $\vert b_{0}\vert+\vert c_{0}\vert$ est non nul. Suivant que le paramètre $b_0$ est nul ou non, on se ramène à $(b_0,c_0)=(0,1)$ ou $b_0=1.$
\vspace{2mm}

\textbf{\textit{1.}} Lorsque $(b_0,c_0)=(0,1),$ l'hypothèse $K(\Leg\F)\equiv0$ implique que $a_1=b_1=b_2=b_3=0$; en effet
\begin{small}
\begin{align*}
&(a_2,a_3,a_0,b_0,c_0)=(0,0,0,0,1)\Rightarrow\rho_{5}^{6}=24a_1^3=0\Leftrightarrow a_1=0,\\
&(a_2,a_3,a_0,b_0,c_0,a_1)=(0,0,0,0,1,0)\Rightarrow\rho_{7}^{4}=-8b_1=0\Leftrightarrow b_1=0,\\
&(a_2,a_3,a_0,b_0,c_0,a_1,b_1)=(0,0,0,0,1,0,0)\Rightarrow\rho_{8}^{3}=-16b_2=0\Leftrightarrow b_2=0,\\
&(a_2,a_3,a_0,b_0,c_0,a_1,b_1,b_2)=(0,0,0,0,1,0,0,0)\Rightarrow\rho_{9}^{2}=-24b_3=0\Leftrightarrow b_3=0.
\end{align*}
\end{small}
\hspace{-1mm}Ainsi $\omega$ s'écrit $(y+x^3+c_1x^2y+c_2xy^2+c_3y^3)(x\mathrm{d}y-y\mathrm{d}x)$; d'où $\deg\F=0$: contradiction.
\vspace{2mm}

\textbf{\textit{2.}} Quand $b_0=1,$ l'hypothèse $K(\Leg\F)\equiv0$ entraîne que $a_1+1=b_1=b_2=b_3=0$; en effet
\begin{small}
\begin{align*}
(a_2,a_3,a_0,b_0)=(0,0,0,1)
\Rightarrow
\left\{
\begin{array}{ll}
\rho_{5}^{7}=16a_1^2(a_1+1)(3a_1+4)=0\\
\rho_{7}^{6}=2(1+a_1)(12a_1^2+26a_1+15)=0
\end{array}
\right.
\Leftrightarrow\hspace{2mm}
a_1=-1
\end{align*}
\end{small}
\hspace{-1mm}et
\begin{small}
\begin{align*}
\hspace{0.5cm}(a_2,a_3,a_0,b_0,a_1)=(0,0,0,1,-1)
\Rightarrow
\left\{
\begin{array}{lll}
\rho_{8}^{5}=4b_1=0\\
\rho_{9}^{4}=8b_1(b_1-c_0)+6b_2=0\\
\rho_{10}^{3}=16b_2(b_1-c_0)+8b_3=0
\end{array}
\right.
\Leftrightarrow\hspace{2mm}
\left\{
\begin{array}{lll}
b_1=0\\
b_2=0\\
b_3=0.
\end{array}
\right.
\end{align*}
\end{small}
\hspace{-1mm}Par suite $\omega$ s'écrit $(y+x^2+c_0x^3+c_1x^2y+c_2xy^2+c_3y^3)(x\mathrm{d}y-y\mathrm{d}x)$; ceci contredit l'égalité $\deg\F=3.$

\subsection{Cas $2$-jet à cône tangent réduit à une seule droite}

On peut supposer que le cône tangent de $J^{2}_{(0,0)}\,\omega$\, est la droite $x=0$; il s'en suit que $\omega$ s'écrit (\cite{CM82})
\begin{Small}
\begin{align*}
& x^3\left(\lambda\frac{\mathrm{d}x}{x}+\mathrm{d}\left(\frac{\delta_1 xy+\delta_2 y^2}{x^2}\right)
\right)+\mathrm{d}x\sum_{i=0}^{3}a_i\hspace{0.1mm}x^{3-i}y^{i}+\mathrm{d}y\sum_{i=0}^{3}b_i\hspace{0.1mm}x^{3-i}y^{i}
+(x\mathrm{d}y-y\mathrm{d}x)\sum_{i=0}^{3}c_i\hspace{0.1mm}x^{3-i}y^{i},&& \lambda,\delta_i,a_i,b_i,c_i\in\mathbb{C}.
\end{align*}
\end{Small}
\hspace{-1mm}On a $xA_2(x,y)+yB_2(x,y)=\lambda\,x^3$; le paramètre $\lambda$ est donc non nul et on peut le supposer égal~à~$1.$ Ainsi
\begin{small}
\begin{align*}
\omega=x^2\mathrm{d}x+(\delta_1x+2\delta_2y)(x\mathrm{d}y-y\mathrm{d}x)
+\mathrm{d}x\sum_{i=0}^{3}a_i\hspace{0.1mm}x^{3-i}y^{i}
+\mathrm{d}y\sum_{i=0}^{3}b_i\hspace{0.1mm}x^{3-i}y^{i}
+(x\mathrm{d}y-y\mathrm{d}x)\sum_{i=0}^{3}c_i\hspace{0.1mm}x^{3-i}y^{i}.
\end{align*}
\end{small}
\hspace{-1mm}Si $\delta_1\neq0,$ resp. $\delta_2\neq0,$ en conjuguant $\omega$ par $\left(x,\frac{1}{\delta_1}y\right)$, resp. $\left(x,\sqrt{\frac{1}{2\delta_2}}y-\frac{\delta_1}{2\delta_2}x\right)$, on se ramène à $\delta_1=1,$ resp. $(\delta_1,\delta_2)=(0,\frac{1}{2}).$ \noindent En outre, en faisant agir le difféomorphisme
\begin{small}
\begin{align*}
\left(\dfrac{x}{\left(b_0-2\delta_1a_0\right)y-a_0\hspace{0.1mm}x+1}, \dfrac{y}{\left(b_0-2\delta_1a_0\right)y-a_0\hspace{0.1mm}x+1}\right)
\end{align*}
\end{small}
\hspace{-1mm}sur $\omega$, on se ramène à $a_0=b_0=0$ sans altérer le~$2$-jet de $\omega$ en $(0,0).$ Il suffit donc de traiter les possibilités suivantes
\begin{align*}
& (\delta_1,\delta_2,a_0,b_0)=(0,0,0,0),&&\hspace{4mm} (\delta_1,\delta_2,a_0,b_0)=(1,0,0,0),&&\hspace{4mm} (\delta_1,\delta_2,a_0,b_0)=(0,\tfrac{1}{2},0,0).
\end{align*}

\textbf{\textit{1.}} Lorsque $(\delta_1,\delta_2,a_0,b_0)=(0,\tfrac{1}{2},0,0),$ l'hypothèse $K(\Leg\F)\equiv0$ implique que $a_i=b_i=c_i=0,i=0,1,2,3$; en effet
\begin{SMALL}
\begin{align*}
\left\{
\begin{array}{llll}
\vspace{3mm}
\delta_1=0\\
\vspace{3mm}
\delta_2=\frac{1}{2}\\
\vspace{3mm}
a_0=0\\
b_0=0
\end{array}
\right.
\Rightarrow
\left\{
\begin{array}{llllll}
\rho_{10}^{7}=-12b_3=0\\
\rho_{9}^{8}=-10(a_3+b_2)=0\\
\rho_{8}^{9}=-86b_3-8(a_2+b_1)=0\\
\rho_{7}^{10}=-6a_1-75a_3-72b_2=0\\
\rho_{6}^{11}=-60a_2-58b_1-126b_3=0\\
\rho_{5}^{12}=-45a_1-129a_3-106b_2=0
\end{array}
\right.
\Leftrightarrow\hspace{2mm}
\left\{
\begin{array}{llllll}
b_3=0\\
b_2=0\\
b_1=0\\
a_3=0\\
a_2=0\\
a_1=0
\end{array}
\right.
\Rightarrow
\left\{
\begin{array}{llll}
\vspace{3mm}
\rho_{9}^{7}=-24c_3=0\\
\vspace{3mm}
\rho_{8}^{8}=-20c_2=0\\
\vspace{3mm}
\rho_{7}^{9}=-16(c_1+12c_3)=0\\
\rho_{6}^{10}=-12c_0-155c_2=0
\end{array}
\right.
\Leftrightarrow\hspace{2mm}
\left\{
\begin{array}{llll}
\vspace{3mm}
c_3=0\\
\vspace{3mm}
c_2=0\\
\vspace{3mm}
c_1=0\\
c_0=0.
\end{array}
\right.
\end{align*}
\end{SMALL}
\hspace{-1mm}Ainsi $\omega$ s'écrit $x^2\mathrm{d}x+y(x\mathrm{d}y-y\mathrm{d}x)$; d'où $\deg\F=2$: contradiction.
\vspace{2mm}

\textbf{\textit{2.}} Quand $(\delta_1,\delta_2,a_0,b_0)=(1,0,0,0),$ l'hypothèse $K(\Leg\F)\equiv0$ entraîne que $$a_2+b_1=a_3=b_2=b_3=c_3=0\hspace{1mm};$$ en effet, $\rho_{7}^{10}=-8b_3$ et
\begin{SMALL}
\begin{align*}
\left\{
\begin{array}{lllll}
\delta_1=1\\
\delta_2=0\\
a_0=0\\
b_0=0\\
b_3=0
\end{array}
\right.
\Rightarrow
\left\{
\begin{array}{llll}
\vspace{1.3mm}
\rho_{6}^{11}=-6(a_3+b_2)=0\\
\vspace{1.3mm}
\rho_{5}^{12}=-4a_2-12a_3-4b_1-10b_2=0\\
\vspace{1.3mm}
\rho_{7}^{9}=4(4a_2-9a_3+4b_1)a_3+8(2a_2-3a_3+2b_1+2b_2)b_2=0\\
\rho_{6}^{10}=-(28a_2+126a_3+b_1+105b_2)a_3+(2a_2+35b_1+15b_2)b_2+8(a_2+b_1)^2-12c_3=0
\end{array}
\right.
\Leftrightarrow
\left\{
\begin{array}{llll}
\vspace{1.3mm}
a_3=0\\
\vspace{1.3mm}
b_2=0\\
\vspace{1.3mm}
b_1=-a_2\\
c_3=0.
\end{array}
\right.
\end{align*}
\end{SMALL}
\hspace{-1mm}Les égalités $\delta_2=a_3=b_3=c_3=0$ impliquent que $\deg\F\leq2$, ce qui est absurde.
\vspace{2mm}

\textbf{\textit{3.}} Lorsque $(\delta_1,\delta_2,a_0,b_0)=(0,0,0,0),$ la platitude de $\Leg\F$ implique que $b_2=b_3=0$; en effet, $\rho_{6}^{9}=78b_3^3$ et
\begin{Small}
\begin{align*}
(\delta_1,\delta_2,a_0,b_0,b_3)=(0,0,0,0,0)
\Rightarrow
\left\{
\begin{array}{ll}
\rho_{3}^{12}=4a_3b_2(9a_3-b_2)=0\\
\rho_{7}^{6}=-2b_2^2\Big(27(9a_3+10b_2)a_3^2+(45a_3-2b_2)b_2^2\Big)=0
\end{array}
\right.
\Leftrightarrow
b_2=0.
\end{align*}
\end{Small}
\hspace{-1mm}Ainsi, dans ce dernier cas, le feuilletage $\F$ est décrit par
\begin{align*}
\omega=x^2\mathrm{d}x+y(a_1x^2+a_2xy+a_3y^2)\mathrm{d}x+b_1x^2y\mathrm{d}y+(c_0\,x^3+c_1x^2y+c_2xy^2+c_3y^3)(x\mathrm{d}y-y\mathrm{d}x).
\end{align*}

\noindent Maintenant nous allons examiner les trois éventualités suivantes
\begin{align*}
& a_3\neq0\hspace{1mm};&& (a_2,a_3)=(0,0)\hspace{1mm};&& a_3=0, a_2\neq0.
\end{align*}

\textbf{\textit{3.1.}} Si $a_3\neq0,$ alors, quitte à conjuguer $\omega$ par l'homothétie $\left(\frac{1}{a_3}x,\frac{1}{a_3}y\right)$, on peut supposer que $a_3=1.$ Dans ce cas la platitude de $\Leg\F$ implique que $b_1=c_i=0,i=0,1,2,3;$ en effet

\begin{small}
\begin{align*}
&a_3=1\Rightarrow\rho_{2}^{13}=24b_1=0\Leftrightarrow b_1=0,\\
&(a_3,b_1)=(1,0)\Rightarrow\rho_{3}^{11}=-120c_3=0\Leftrightarrow c_3=0,\\
&(a_3,b_1,c_3)=(1,0,0)\Rightarrow\rho_{2}^{12}=-48c_2=0\Leftrightarrow c_2=0,\\
&(a_3,b_1,c_3,c_2)=(1,0,0,0)\Rightarrow\rho_{3}^{10}=-396c_1=0\Leftrightarrow c_1=0,\\
&(a_3,b_1,c_3,c_2,c_1)=(1,0,0,0,0)\Rightarrow\rho_{2}^{11}=-168c_0=0\Leftrightarrow c_0=0.
\end{align*}
\end{small}
\hspace{-1mm}Par suite $\omega$ s'écrit $(x^2+a_1x^2y+a_2xy^2+y^3)\mathrm{d}x,$ ce qui contredit l'égalité $\deg\F=3.$
\vspace{2mm}

\textbf{\textit{3.2.}} Lorsque $(a_2,a_3)=(0,0),$ la platitude de $\Leg\F$ conduit à $c_3=0$; en effet
\begin{small}
\begin{align*}
(a_2,a_3)=(0,0)
\Rightarrow
\left\{
\begin{array}{ll}
\rho_{2}^{11}=24b_1c_3^2=0\\
\rho_{3}^{9}=-24c_3^2(2b_1^2+5c_3)=0
\end{array}
\right.
\Leftrightarrow
c_3=0.
\end{align*}
\end{small}
\hspace{-1mm}Les égalités $a_3=c_3=0$ implique que $\deg\F<3$: contradiction.
\vspace{2mm}

\textbf{\textit{3.3.}} Cas où $a_3=0$\, et \,$a_2\neq0.$ La conjugaison par l'homothétie $\left(\frac{1}{a_2}x,\frac{1}{a_2}y\right)$ permet d'effectuer la normalisation $a_2=1.$ Dans ce cas le calcul explicite de $K(\Leg\F)$ donne
\begin{Small}
\begin{align*}
&\rho_{2}^{11}=\Big(4c_3+1\Big)\Big((6c_3+1)b_1+c_3\Big),\\
&\rho_{3}^{9}=-2\Big(4c_3+1\Big)\Big((6c_3-1)b_1^2+14c_3b_1+c_3(15c_3+8)\Big),\\
&\rho_{5}^{5}=8c_3b_1\Big((6c_3+1)b_1^3-(11c_3+2)b_1^2+(36c_3^2-12c_3-4)b_1+6c_3(9c_3+2)\Big),\\
&\rho_{4}^{7}=-(18c_3+3)b_1^3+(204c_3^2+27c_3-4)b_1^2+9c_3(22c_3^2+39c_3+8)b_1+c_3(9c_3+4)(33c_3+8)\hspace{1mm};
\end{align*}
\end{Small}
\hspace{-1mm}il est facile de voir que les seules solutions du système $\rho_{2}^{11}=\rho_{3}^{9}=\rho_{5}^{5}=\rho_{4}^{7}=0$, sous la condition $c_3\neq0$, sont
\begin{align*}
& (b_1,c_3)=(-\tfrac{1}{2},-\tfrac{1}{4})&& \text{et} && (b_1,c_3)=(-\tfrac{2}{3},-\tfrac{2}{9}).
\end{align*}

\textbf{\textit{3.3.1.}} Lorsque $(b_1,c_3)=(-\tfrac{1}{2},-\tfrac{1}{4})$, l'hypothèse $K(\Leg\F)\equiv0$ implique que $$a_1+2c_2=c_0=c_1=0\hspace{1mm};$$ en effet
\begin{Small}
\begin{align*}
\left\{
\begin{array}{llll}
a_3=0\\
a_2=1\\
b_1=-\frac{1}{2}\\
c_3=-\frac{1}{4}
\end{array}
\right.
\Rightarrow
\left\{
\begin{array}{lll}
\smallskip
\rho_{3}^{8}=-\frac{1}{16}(a_1+2c_2)=0\\
\smallskip
\rho_{4}^{5}=-\frac{1}{64}\Big((a_1+2c_2)(176a_1+207c_2)-c_1\Big)=0\\
\rho_{3}^{6}=-\frac{1}{16}\Big((a_1+2c_2)(61a_1^2+84a_1c_2-33c_1-15c_2^2)-\frac{1}{2}c_0\Big)=0
\end{array}
\right.
\Leftrightarrow
\left\{
\begin{array}{lll}
a_1=-2c_2\\
c_1=0\\
c_0=0.
\end{array}
\right.
\end{align*}
\end{Small}
\hspace{-1mm}Ainsi $\omega$ s'écrit $\frac{1}{4}\Big(y^2-4c_2xy+2\hspace{0.1mm}x\Big)\Big(2\hspace{0.1mm}x\mathrm{d}x-y(x\mathrm{d}y-y\mathrm{d}x)\Big),$ ce qui contredit l'égalité $\deg\F=3.$
\vspace{2mm}

\textbf{\textit{3.3.2.}} Lorsque $(b_1,c_3)=(-\tfrac{2}{3},-\tfrac{2}{9})$, la platitude de $\Leg\F$ implique que $a_1=c_i=0$ pour $i=0,1,2$; en effet
\begin{Small}
\begin{align*}
\left\{
\begin{array}{llll}
a_3=0\\
a_2=1\\
b_1=-\frac{2}{3}\\
c_3=-\frac{2}{9}
\end{array}
\right.
\Rightarrow
\left\{
\begin{array}{ll}
\smallskip
\rho_{2}^{10}=-\frac{2}{243}(20a_1+33c_2)=0\\
\rho_{3}^{8}=\frac{64}{729}(7a_1+12c_2)=0\\
\end{array}
\right.
\Leftrightarrow
\left\{
\begin{array}{ll}
a_1=0\\
c_2=0
\end{array}
\right.
\end{align*}
\end{Small}
\hspace{-1mm}et
\begin{Small}
\begin{align*}
\hspace{-1.5cm}\left\{
\begin{array}{llllll}
a_3=0\\
a_2=1\\
b_1=-\frac{2}{3}\\
c_3=-\frac{2}{9}\\
a_1=0\\
c_2=0
\end{array}
\right.
\Rightarrow
\left\{
\begin{array}{ll}
\vspace{5mm}
\rho_{4}^{5}=-\frac{256}{243}c_1=0\\
\rho_{3}^{6}=\frac{128}{243}c_0=0
\end{array}
\right.
\Leftrightarrow
\left\{
\begin{array}{ll}
c_1=0\\
c_0=0.
\end{array}
\right.
\end{align*}
\end{Small}
\hspace{-1mm}Donc $\omega$ s'écrit $\frac{1}{9}\Big(y^2+3x\Big)\Big(3x\mathrm{d}x-2y(x\mathrm{d}y-y\mathrm{d}x)\Big)$, mais ceci contredit l'égalité $\deg\F=3.$

\subsection{Cas $2$-jet à cône tangent formé de deux droites}

On peut supposer que le cône tangent de $J^{2}_{(0,0)}\,\omega$\, est de la forme $\{x^2y=0\}$; dans ce cas, d'après \cite{CM82}, la forme $\omega$ s'écrit
\begin{Small}
\begin{align*}
& x^2y\left(\lambda_0\frac{\mathrm{d}x}{x}+\lambda_1\frac{\mathrm{d}y}{y}+\delta\mathrm{d}\left(\frac{y}{x}\right)\right)
+\mathrm{d}x\sum_{i=0}^{3}a_i\hspace{0.1mm}x^{3-i}y^{i}
+\mathrm{d}y\sum_{i=0}^{3}b_i\hspace{0.1mm}x^{3-i}y^{i}
+(x\mathrm{d}y-y\mathrm{d}x)\sum_{i=0}^{3}c_i\hspace{0.1mm}x^{3-i}y^{i},&& \lambda_i,\delta,a_i,b_i,c_i\in\mathbb{C}.
\end{align*}
\end{Small}
\hspace{-1mm}On a $xA_2(x,y)+yB_2(x,y)=(\lambda_0+\lambda_1)x^2y$; la somme $\lambda_0+\lambda_1$ est alors non nulle. Par suite, on est dans l'une des situations suivantes
\begin{align*}
& \delta=\lambda_1=0,\hspace{1mm}\lambda_0\neq0\hspace{1mm};&&
  \delta=\lambda_0=0,\hspace{1mm}\lambda_1\neq0\hspace{1mm};&&
  \delta=0,\hspace{1mm}\lambda_0\lambda_1(\lambda_0+\lambda_1)\neq0\hspace{1mm}; \\
& \delta\neq0,\hspace{1mm}\lambda_1=0,\hspace{1mm} \lambda_0\neq0\hspace{1mm};&&
  \delta\lambda_1(\lambda_0+\lambda_1)\neq0.
\end{align*}
Si l'un des paramètres $\lambda_i$ est non nul, on peut évidemment le supposer égal à $1$; puis, si $\delta\neq0$ en faisant agir la transformation linéaire $(\delta\hspace{0.1mm}x,y)$ sur $\omega$, on peut effectuer la normalisation $\delta=1.$ Il suffit donc d'étudier les cas suivants
\begin{align*}
& (\delta,\lambda_0,\lambda_1)=(0,1,0)\hspace{1mm};&&
  (\delta,\lambda_0,\lambda_1)=(0,0,1)\hspace{1mm};&&
  (\delta,\lambda_1)=(0,1),\hspace{1mm}\lambda_0(\lambda_0+1)\neq0\hspace{1mm}; \\
& (\delta,\lambda_0,\lambda_1)=(1,1,0)\hspace{1mm};&&
  (\delta,\lambda_1)=(1,1),\hspace{1mm}\lambda_0+1\neq0.
\end{align*}

\textbf{\textit{1.}} Si $(\delta,\lambda_0,\lambda_1)=(0,1,0),$ alors $\omega$ s'écrit
\begin{align*}
& \omega=xy\mathrm{d}x+\mathrm{d}x\sum_{i=0}^{3}a_i\hspace{0.1mm}x^{3-i}y^{i}
+\mathrm{d}y\sum_{i=0}^{3}b_i\hspace{0.1mm}x^{3-i}y^{i}
+(x\mathrm{d}y-y\mathrm{d}x)\sum_{i=0}^{3}c_i\hspace{0.1mm}x^{3-i}y^{i}.
\end{align*}
La conjugaison par le difféomorphisme
\begin{small}
$\left(\dfrac{2\hspace{0.1mm}x}{2-2\hspace{0.1mm}a_1x-a_2y}, \dfrac{2y}{2-2\hspace{0.1mm}a_1x-a_2y}\right)$
\end{small}
permet de supposer que $a_1=a_2=0.$

\noindent On envisage deux cas suivant que $a_0$ est nul ou non.
\vspace{2mm}

\textbf{\textit{1.1.}} Lorsque $a_0=0,$ l'hypothèse $K(\Leg\F)\equiv0$ implique que $b_0=c_0=0$; en effet
\begin{small}
\begin{align*}
(a_1,a_2,a_0)=(0,0,0)
\Rightarrow
\left\{
\begin{array}{ll}
\rho_{4}^{11}=-b_0^3=0\\
\rho_{1}^{11}=-4c_0^2(2b_0^2+c_0)=0
\end{array}
\right.
\Leftrightarrow
\left\{
\begin{array}{ll}
b_0=0\\
c_0=0.
\end{array}
\right.
\end{align*}
\end{small}
\hspace{-1mm}Les égalités $a_0=b_0=c_0=0$ entraînent que $\deg\F<3$: contradiction.
\vspace{2mm}

\textbf{\textit{1.2.}} Si $a_0\neq0,$ alors, quitte à conjuguer $\omega$ par $\left(\sqrt{\frac{1}{a_0}}x,y\right)$, on peut supposer que $a_0=1.$ Dans ce cas l'hypothèse $K(\Leg\F)\equiv0$ entraîne que
\begin{align*}
& (b_0,b_2,b_3)=(0,0,0)&& \text{et}  &&(c_0,c_1,c_2,c_3)=(b_1,0,b_1^2,a_3b_1)\hspace{1mm};
\end{align*}
en effet
\begin{SMALL}
\begin{align*}
\left\{
\begin{array}{lll}
a_1=0\\
a_2=0\\
a_0=1
\end{array}
\right.
\Rightarrow
\left\{
\begin{array}{lllllll}
\rho_{3}^{12}=-2b_0^2=0\\
\rho_{4}^{11}=2b_2-b_0^3=0\\
\rho_{5}^{10}=3b_3+4b_0b_2=0\\
\rho_{1}^{13}=4(2b_0^2+b_1-c_0)=0\\
\rho_{2}^{12}=4b_0^3+(9b_1-10c_0)b_0+17b_2-c_1=0\\
\rho_{1}^{12}=-8b_0^2b_1-(50a_3-80b_2-10c_1)b_0+44b_3+4c_0(b_1-2c_0)+4c_2=0\\
\rho_{4}^{10}=b_0^3b_1+(12a_3+57b_2-c_1)b_0^2+4b_2(2b_1+c_0)+78b_0b_3+3a_3b_1-3c_3=0
\end{array}
\right.
\Leftrightarrow\hspace{2mm}
\left\{
\begin{array}{lllllll}
b_0=0\\
b_2=0\\
b_3=0\\
c_0=b_1\\
c_1=0\\
c_2=b_1^2\\
c_3=a_3b_1.
\end{array}
\right.
\end{align*}
\end{SMALL}
\hspace{-1mm}Ainsi $\omega$ s'écrit $\Big(xy+x^3+b_1xy^2+a_3y^3\Big)\Big(\mathrm{d}x+b_1(x\mathrm{d}y-y\mathrm{d}x)\Big)$; d'où $\deg\F=0$: contradiction.
\vspace{2mm}

\textbf{\textit{2.}} Lorsque $(\delta,\lambda_0,\lambda_1)=(1,1,0),$ la $1$-forme $\omega$ s'écrit
\begin{align*}
& \omega=xy\mathrm{d}x+y(x\mathrm{d}y-y\mathrm{d}x)+\mathrm{d}x\sum_{i=0}^{3}a_i\hspace{0.1mm}x^{3-i}y^{i}
+\mathrm{d}y\sum_{i=0}^{3}b_i\hspace{0.1mm}x^{3-i}y^{i}
+(x\mathrm{d}y-y\mathrm{d}x)\sum_{i=0}^{3}c_i\hspace{0.1mm}x^{3-i}y^{i}.
\end{align*}
Quitte à conjuguer $\omega$ par le difféomorphisme
\begin{Small}
$\left(\dfrac{2\hspace{0.1mm}x}{2-2\hspace{0.1mm}a_1x-(2\hspace{0.1mm}a_1+a_2)y}, \dfrac{2y}{2-2\hspace{0.1mm}a_1x-(2\hspace{0.1mm}a_1+a_2)y}\right)$
\end{Small},
on peut supposer que $a_1=a_2=0.$ En calculant $\rho_{4}^{11},\,\rho_{1}^{11}$ et en raisonnant comme dans \textbf{\textit{1.1.}}, on constate que $a_0\neq0$. En faisant agir l'homothétie $\left(\frac{1}{a_0}x,\frac{1}{a_0}y\right)$ sur $\omega,$ on se ramène à $a_0=1.$ Dans ce cas la platitude de $\Leg\F$ implique que
\begin{align*}
&a_3=-2,&& (b_0,b_1,b_2,b_3)=(1,1,2,0)&& \text{et}  &&(c_0,c_1,c_2,c_3)=(0,1,1,1)\hspace{1mm};
\end{align*}
en effet
\begin{SMALL}
\begin{align*}
\hspace{-4.3cm}\left\{
\begin{array}{lll}
a_1=0\\
a_2=0\\
a_0=1
\end{array}
\right.
\Rightarrow
\left\{
\begin{array}{lll}
\rho_{2}^{13}=1-b_0=0\\
\rho_{12}^{3}=-8b_0^2b_3=0\\
\rho_{4}^{11}=(1-b_0)(b_0^2+10b_0+7)+2b_2-4=0
\end{array}
\right.
\Leftrightarrow\hspace{2mm}
\left\{
\begin{array}{lll}
b_0=1\\
b_3=0\\
b_2=2
\end{array}
\right.
\end{align*}
\end{SMALL}
\hspace{-1mm}et
\begin{SMALL}
\begin{align*}
\hspace{4mm}\left\{
\begin{array}{llllll}
a_1=0\\
a_2=0\\
a_0=1\\
b_0=1\\
b_3=0\\
b_2=2
\end{array}
\right.
\Rightarrow
\left\{
\begin{array}{llllll}
\rho_{5}^{10}=-3(a_3+2b_1)=0\\
\rho_{6}^{9}=-2(11a_3+14b_1+8)=0\\
\rho_{1}^{13}=4(b_1-c_0-1)=0\\
\rho_{2}^{12}=25b_1-25c_0-c_1-24 =0\\
\rho_{11}^{3}=64a_3^2+8(2b_1+31)a_3+16(2b_1-c_3+16)=0\\
\rho_{10}^{4}=2(70a_3+29b_1-6c_0+255)a_3+8(b_1+15)b_1-4(24c_3+6c_0+3c_2-139)=0
\end{array}
\right.
\Leftrightarrow\hspace{2mm}
\left\{
\begin{array}{llllll}
b_1=1\\
a_3=-2\\
c_0=0\\
c_1=1\\
c_3=1\\
c_2=1.
\end{array}
\right.
\end{align*}
\end{SMALL}
\hspace{-1mm}Donc $\omega$ s'écrit $\Big(x^2+xy+y^2+y\Big)\Big(x\mathrm{d}x+(y+1)(x\mathrm{d}y-y\mathrm{d}x)\Big)$, mais ceci contredit l'égalité $\deg\F=3.$
\vspace{2mm}

\textbf{\textit{3.}} Si $(\delta,\lambda_0,\lambda_1)=(0,0,1),$ la forme $\omega$ s'écrit
\begin{align*}
& x^2\mathrm{d}y+\mathrm{d}x\sum_{i=0}^{3}a_i\hspace{0.1mm}x^{3-i}y^{i}+\mathrm{d}y\sum_{i=0}^{3}b_i\hspace{0.1mm}x^{3-i}y^{i}
+(x\mathrm{d}y-y\mathrm{d}x)\sum_{i=0}^{3}c_i\hspace{0.1mm}x^{3-i}y^{i}.
\end{align*}
Quitte à conjuguer $\omega$ par
\begin{small}
$\left(\dfrac{2\hspace{0.1mm}x}{2-b_0\hspace{0.1mm}x-2b_1y}, \dfrac{2y}{2-b_0\hspace{0.1mm}x-2b_1y}\right)$
\end{small},
on peut supposer que $b_0=b_1=0.$ Dans ce cas la platitude de $\Leg\F$ implique que $a_3=0$; en effet
\begin{Small}
\begin{align*}
(b_0,b_1)=(0,0)
\Rightarrow
\left\{
\begin{array}{lll}
\smallskip
\rho_{12}^{3}=-30a_3b_3^2=0\\
\smallskip
\rho_{10}^{5}=8a_1b_3^2+(4a_2b_2+8c_3)b_3-2a_3(15a_3^2+4b_2a_3+b_2^2)=0\\
\rho_{11}^{3}=-456a_0b_3^3-(24c_2+48a_2^2+88a_1b_2+672a_1a_3)b_3^2\\
\hspace{8mm}-(32a_2b_2^2+384a_3c_3+432a_2a_3^2+320a_2a_3b_2+64b_2c_3)b_3\\
\hspace{8mm}+16a_3b_2^2(4a_3+b_2)=0
\end{array}
\right.
\Rightarrow
a_3=0.
\end{align*}
\end{Small}
\hspace{-1mm}Ainsi
\begin{align*}
& \omega=x^2\mathrm{d}y+x(a_0\,x^2+a_1xy+a_2y^2)\mathrm{d}x+y^2(b_2x+b_3y)\mathrm{d}y+(x\mathrm{d}y-y\mathrm{d}x)\sum_{i=0}^{3}c_i\hspace{0.1mm}x^{3-i}y^{i}.
\end{align*}
Le feuilletage $\F$ étant de degré trois, la somme $\vert b_{3}\vert+\vert c_{3}\vert$ est non nulle. Si $b_3\neq0$, resp. $b_3=0,$ en conjuguant $\omega$ par $\left(\frac{1}{b_3}x,\frac{1}{b_3}y\right)$, resp. $\left(\sqrt{\frac{1}{c_3}}x,\sqrt{\frac{1}{c_3}}y\right),$ on se ramène à $b_3=1,$ resp. $c_3=1.$ Il suffit donc d'examiner les possibilités suivantes
\begin{align*}
& b_3=1 && \text{et} && (b_3,c_3)=(0,1).
\end{align*}

\textbf{\textit{3.1.}} Lorsque $(b_3,c_3)=(0,1),$ le calcul explicite de $K(\Leg\F)$ montre que
\begin{Small}
\begin{align*}
\rho_{10}^{2}=-14b_2^5a_0-(44a_1a_2-2c_1)b_2^4-(24a_2c_2+68a_1+48a_2^3)b_2^3-(12c_2+126a_2^2)b_2^2-198b_2a_2-108\hspace{1mm};
\end{align*}
\end{Small}
\hspace{-1mm}on en déduit que $b_2\neq0$. La conjugaison par la transformation linéaire $\left(b_2^3\hspace{0.1mm}x,b_2y\right)$ permet de supposer que~$b_2=1$. Dans ce cas la platitude de $\Leg\F$ implique que
\begin{align*}
&(a_0,a_1,a_2)=(0,0,-1)&& \text{et} && (c_0,c_1,c_2)=(0,0,-1)\hspace{1mm};
\end{align*}
en effet
\begin{SMALL}
\begin{align*}
\left\{
\begin{array}{lll}
b_3=0\\
c_3=1\\
b_2=1
\end{array}
\right.
\Rightarrow
\left\{
\begin{array}{lllllll}
\rho_{5}^{8}=4a_0\hspace{0.1mm}a_2^4=0\\
\rho_{10}^{3}=a_1-5a_2+c_2-4=0\\
\rho_{11}^{1}=-4(2a_1+8a_2^2+15a_2+2c_2+9)=0\\
\rho_{9}^{4}=2a_0+8a_1+6a_1a_2-2a_2^2+2a_2c_2=0\\
\rho_{7}^{6}=4(5a_2^2+12a_2+12)a_0+8(a_2+2)a_1a_2^2=0\\
\rho_{5}^{7}=8(5a_2^2+12a_2+12)a_0^2-2(8a_2^3-a_1a_2-8a_2c_2+14a_1)a_0a_2^2+4a_2^4c_0=0\\
\rho_{10}^{2}=-14a_0-68a_1-2(24a_2^2+63a_2+22a_1+12c_2+99)a_2+2c_1-12c_2-108=0
\end{array}
\right.
\Leftrightarrow\hspace{2mm}
\left\{
\begin{array}{llllll}
a_0=0\\
a_1=0\\
c_0=0\\
c_1=0\\
a_2=-1\\
c_2=-1.
\end{array}
\right.
\end{align*}
\end{SMALL}
\hspace{-1mm}Donc $\omega$ s'écrit $\Big(y^2-xy+x\Big)\Big(x\mathrm{d}y+y(x\mathrm{d}y-y\mathrm{d}x)\Big)$, mais ceci contredit l'égalité $\deg\F=3.$
\vspace{2mm}

\textbf{\textit{3.2.}} Lorsque $b_3=1$, la platitude de $\Leg\F$ entraîne que
\begin{align*}
& (a_0,a_2)=(0,0)&& \text{et} && (c_0,c_1,c_2,c_3)=(-a_1^2,0,-a_1b_2,-a_1)\hspace{1mm};
\end{align*}
en effet
\begin{Small}
\begin{align*}
\hspace{-4.3cm}b_3=1
\Rightarrow
\left\{
\begin{array}{ll}
\rho_{11}^{4}=-4a_2=0\\
\rho_{9}^{6}=4(8a_0+a_2^2)=0
\end{array}
\right.
\Leftrightarrow\hspace{2mm}
\left\{
\begin{array}{ll}
a_2=0\\
a_0=0
\end{array}
\right.
\end{align*}
\end{Small}
\hspace{-1mm}et
\begin{Small}
\begin{align*}
\hspace{-1cm}\left\{
\begin{array}{lll}
b_3=1\\
a_2=0\\
a_0=0
\end{array}
\right.
\Rightarrow
\left\{
\begin{array}{llll}
\rho_{10}^{5}=8(a_1+c_3)=0\\
\rho_{9}^{5}=16(a_1^2-a_1c_3+2c_0)=0\\
\rho_{10}^{4}=8(a_1b_2^2+b_2c_2+2c_1)=0\\
\rho_{11}^{3}=-8(11a_1b_2+8b_2c_3+3c_2)=0
\end{array}
\right.
\Leftrightarrow\hspace{2mm}
\left\{
\begin{array}{llll}
c_3=-a_1\\
c_0=-a_1^2\\
c_2=-a_1b_2\\
c_1=0.
\end{array}
\right.
\end{align*}
\end{Small}
\hspace{-1mm}Par suite $\omega$ s'écrit $\Big(a_1x^3+b_2xy^2+y^3+x^2\Big)\Big(\mathrm{d}y-a_1(x\mathrm{d}y-y\mathrm{d}x)\Big)$, ce qui contredit l'égalité $\deg\F=3.$
\vspace{2mm}

\textbf{\textit{4.}} Lorsque $(\delta,\lambda_1)=(0,1)$\, et \,$\lambda_0(\lambda_0+1)\neq0,$ la $1$-forme $\omega$ s'écrit
\begin{small}
\begin{align*}
& x(x\mathrm{d}y+\lambda_0y\mathrm{d}x)+\mathrm{d}x\sum_{i=0}^{3}a_i\hspace{0.1mm}x^{3-i}y^{i}
+\mathrm{d}y\sum_{i=0}^{3}b_i\hspace{0.1mm}x^{3-i}y^{i}
+(x\mathrm{d}y-y\mathrm{d}x)\sum_{i=0}^{3}c_i\hspace{0.1mm}x^{3-i}y^{i},\qquad \lambda_0(\lambda_0+1)\neq0.
\end{align*}
\end{small}
\hspace{-1mm}Quitte à conjuguer $\omega$ par le difféomorphisme
\begin{small}
$\left(\dfrac{2\lambda_0\hspace{0.2mm}x}{2-\lambda_0\hspace{0.1mm}b_0\hspace{0.2mm}x-a_2y}, \dfrac{2\lambda_0y}{2-\lambda_0\hspace{0.1mm}b_0\hspace{0.2mm}x-a_2y}\right)$
\end{small},
on peut supposer que $a_2=b_0=0.$ Dans ce cas le calcul explicite de $K(\Leg\F)\equiv0$ donne
\begin{small}
\begin{align*}
&\rho_{5}^{12}=4a_0\lambda_0^4(\lambda_0+1)^2,&&\rho_{6}^{11}=2a_1\lambda_0^4(\lambda_0+1)^2,&&\rho_{11}^{5}=54(b_3\lambda_0)^2(\lambda_0+1)^3\hspace{1mm};
\end{align*}
\end{small}
\hspace{-1mm}donc $a_0=a_1=b_3=0$ et $\omega$ s'écrit
\begin{small}
\begin{align*}
& \omega=x(x\mathrm{d}y+\lambda_0y\mathrm{d}x)+a_3y^3\mathrm{d}x+xy(b_1x+b_2y)\mathrm{d}y
+(x\mathrm{d}y-y\mathrm{d}x)\sum_{i=0}^{3}c_i\hspace{0.1mm}x^{3-i}y^{i},\qquad \lambda_0(\lambda_0+1)\neq0.
\end{align*}
\end{small}
\hspace{-1mm}Nous allons traiter deux possibilités suivant que le paramètre $a_3$ est nul ou non.
\vspace{2mm}

\textbf{\textit{4.1.}} Lorsque $a_3=0$, l'hypothèse $K(\Leg\F)\equiv0$ implique que $b_2=c_3=0$; en effet
\begin{Small}
\begin{align*}
\hspace{-2cm}a_3=0
\Rightarrow
\left\{
\begin{array}{ll}
\vspace{3mm}
\rho_{11}^{3}=-8b_2^4\,\lambda_0(7\lambda_0+1)=0\\
\rho_{9}^{7}=-2(b_2\lambda_0)^2(2\lambda_0^2+5\lambda_0+1)(\lambda_0+1)^2=0
\end{array}
\right.
\Leftrightarrow
b_2=0
\end{align*}
\end{Small}
\hspace{-1mm}et
\begin{Small}
\begin{align*}
\hspace{-4.4cm}\left\{
\begin{array}{ll}
a_3=0\\
b_2=0
\end{array}
\right.
\Rightarrow
\rho_{9}^{5}=54c_3^2\,\lambda_0^3\,(\lambda_0+1)^2=0
\Leftrightarrow
c_3=0.
\end{align*}
\end{Small}
\hspace{-1mm}Les égalités $a_3=c_3=0$ entraînent que $\deg\F<3$: contradiction.
\vspace{2mm}

\textbf{\textit{4.2.}} Si $a_3\neq0$, alors, quitte à faire agir l'homothétie $\left(\frac{1}{a_3}x,\frac{1}{a_3}y\right)$ sur $\omega,$ on peut supposer que $a_3=1.$ Dans ce cas la platitude de $\Leg\F$ implique que
\begin{align*}
&\lambda_0=-\frac{1}{2},&& (b_1,b_2)=(0,-2)&& \text{et} && c_i=0, i=0,1,2,3\hspace{1mm};
\end{align*}
en effet
\begin{SMALL}
\begin{align*}
\hspace{4mm}a_3=1
\Rightarrow
\left\{
\begin{array}{lll}
\smallskip
\rho_{8}^{9}=\lambda_0^2\Big(\lambda_0+1\Big)^2\Big(\lambda_0(2\lambda_0^2+\lambda_0+1)b_2-4\lambda_0^2-4\lambda_0-2\Big)=0\\
\smallskip
\rho_{9}^{7}=-2\lambda_0^2\Big(\lambda_0+1\Big)^2\Big((2\lambda_0^2+5\lambda_0+1)b_2^2-(8\lambda_0-4\lambda_0^2+7)b_2-12\lambda_0-6\Big)=0\\
\rho_{11}^{3}=-8b_2^2\Big(\lambda_0(7\lambda_0+1)b_2^2+2(4\lambda_0^3+13\lambda_0^2+\lambda_0-1)b_2+4(\lambda_0-1)(2\lambda_0+1)(\lambda_0+2)\Big)=0
\smallskip
\end{array}
\right.
\Leftrightarrow
\left\{
\begin{array}{ll}
\vspace{4mm}
\lambda_0=-\frac{1}{2}\\
b_2=-2
\end{array}
\right.
\end{align*}
\end{SMALL}
\hspace{-1mm}et
\begin{SMALL}
\begin{align*}
\hspace{-4.9cm}\left\{
\begin{array}{lll}
a_3=1\\
\lambda_0=-\frac{1}{2}\\
b_2=-2
\end{array}
\right.
\Rightarrow
\left\{
\begin{array}{lllll}
\rho_{5}^{11}=\frac{3}{32}c_0=0\\
\rho_{6}^{10}=\frac{7}{128}c_1=0\\
\rho_{9}^{6}=\frac{1}{4}(34b_1-9c_3)=0\\
\rho_{8}^{8}=\frac{1}{128}(46b_1-15c_3)=0\\
\rho_{8}^{7}=-\frac{1}{16}(54b_1^2-23b_1c_3-12c_2)=0
\end{array}
\right.
\Leftrightarrow\hspace{2mm}
\left\{
\begin{array}{lllll}
c_0=0\\
c_1=0\\
b_1=0\\
c_3=0\\
c_2=0.
\end{array}
\right.
\end{align*}
\end{SMALL}
\hspace{-1mm}Ainsi la $1$-forme $\omega$ s'écrit $(x-2y^2)(x\mathrm{d}y-\frac{1}{2}y\mathrm{d}x)$; d'où $\deg\F=1$: contradiction.
\vspace{2mm}

\textbf{\textit{5.}} Considérons l'éventualité $(\delta,\lambda_1)=(1,1),\hspace{1mm}\lambda_0+1\neq0.$ Dans ce cas $\omega$ s'écrit
\begin{small}
\begin{align*}
& x(x\mathrm{d}y+\lambda_0y\mathrm{d}x)+y(x\mathrm{d}y-y\mathrm{d}x)
+\mathrm{d}x\sum_{i=0}^{3}a_i\hspace{0.1mm}x^{3-i}y^{i}
+\mathrm{d}y\sum_{i=0}^{3}b_i\hspace{0.1mm}x^{3-i}y^{i}
+(x\mathrm{d}y-y\mathrm{d}x)\sum_{i=0}^{3}c_i\hspace{0.1mm}x^{3-i}y^{i},\quad \lambda_0+1\neq0.
\end{align*}
\end{small}
\hspace{-1mm}Quitte à conjuguer $\omega$ par le difféomorphisme
\begin{Small}
\begin{align*}
\left(\dfrac{2\hspace{0.1mm}x}{\left(a_2+\lambda_0\hspace{0.1mm}a_3\right)x+a_3y+2}, \dfrac{2y}{\left(a_2+\lambda_0\hspace{0.1mm}a_3\right)x+a_3y+2}\right),
\end{align*}
\end{Small}
\hspace{-1mm}on peut supposer que $a_2=a_3=0.$ Dans ce cas la platitude de $\Leg\F$ implique que $b_i=c_i=0$ pour $i=1,2,3$; en effet
\begin{SMALL}
\begin{align*}
\left\{
\begin{array}{ll}
a_2=0\\
a_3=0
\end{array}
\right.
\Rightarrow
\left\{
\begin{array}{lll}
\smallskip
\rho_{14}^{3}=-8\Big(\lambda_0+1\Big)^2b_3=0\\
\smallskip
\rho_{13}^{4}=-2\Big(\lambda_0+1\Big)^2\Big((13\lambda_0+31)b_3+3b_2\Big)=0\\
\rho_{12}^{5}=-2\Big(\lambda_0+1\Big)^2\Big((13\lambda_0^2+41\lambda_0+60)b_3+(10\lambda_0+23)b_2+2b_1\Big)=0
\smallskip
\end{array}
\right.
\Leftrightarrow\hspace{2mm}
\left\{
\begin{array}{lll}
b_3=0\\
b_2=0\\
b_1=0
\end{array}
\right.
\end{align*}
\end{SMALL}
\hspace{-1mm}et
\begin{SMALL}
\begin{align*}
\hspace{4mm}\left\{
\begin{array}{lllll}
a_2=0\\
a_3=0\\
b_3=0\\
b_2=0\\
b_1=0
\end{array}
\right.
\Rightarrow
\left\{
\begin{array}{lll}
\smallskip
\rho_{13}^{3}=-16\Big(\lambda_0+1\Big)^2c_3=0\\
\smallskip
\rho_{12}^{4}=-4\Big(\lambda_0+1\Big)^2\Big((16\lambda_0+33)c_3+3c_2\Big)=0\\
\rho_{11}^{5}=-2\Big(\lambda_0+1\Big)^2\Big((49\lambda_0^2+143\lambda_0+150)c_3+(23\lambda_0+47)c_2+4c_1\Big)=0
\smallskip
\end{array}
\right.
\Leftrightarrow\hspace{2mm}
\left\{
\begin{array}{lll}
c_3=0\\
c_2=0\\
c_1=0.
\end{array}
\right.
\end{align*}
\end{SMALL}
\hspace{-1mm}Ainsi le feuilletage $\F$ est décrit dans la carte affine $y=1$ par la $1$-forme
\begin{small}
\begin{align*}
\theta=z^2\mathrm{d}x+x(c_0\hspace{0.1mm}x^2-a_1xz-\lambda_0\hspace{0.1mm}z^2)\mathrm{d}x+x^2[(a_1+b_0)x+(\lambda_0+1)z]\mathrm{d}z
-a_0\hspace{0.1mm}x^3(z\mathrm{d}x-x\mathrm{d}z).
\end{align*}
\end{small}
\hspace{-1mm}Remarquons que le point $[0:1:0]\in\pp$ est singulier de $\F$ de multiplicité algébrique $2$ et que $J^{2}_{(0,0)}\theta=z^2\mathrm{d}x$; on est donc dans une situation analogue à celle du cas \textbf{\textit{3.}}, qui a été exclu auparavant. Par suite l'éventualité \textbf{\textit{5.}} n'arrive pas.

\subsection{Cas $2$-jet à cône tangent formé de trois droites}

On peut supposer que le cône tangent de $J^{2}_{(0,0)}\,\omega$\, est formé des droites $x=0$, $y=0$ et $y-x=0$; il s'en suit que $\omega$ s'écrit~(\cite{CM82})
\begin{SMALL}
\begin{align*}
& xy(y-x)\left(\lambda_0\frac{\mathrm{d}x}{x}+\lambda_1\frac{\mathrm{d}y}{y}+\lambda_2\frac{\mathrm{d}(y-x)}{y-x}\right)
+\mathrm{d}x\sum_{i=0}^{3}a_i\hspace{0.1mm}x^{3-i}y^{i}
+\mathrm{d}y\sum_{i=0}^{3}b_i\hspace{0.1mm}x^{3-i}y^{i}
+(x\mathrm{d}y-y\mathrm{d}x)\sum_{i=0}^{3}c_i\hspace{0.1mm}x^{3-i}y^{i},&& \lambda_i,a_i,b_i,c_i\in\mathbb{C}.
\end{align*}
\end{SMALL}
\hspace{-1mm}On a $xA_2(x,y)+yB_2(x,y)=(\lambda_0+\lambda_1+\lambda_2)xy(y-x)$; la somme $\lambda_0+\lambda_1+\lambda_2$ est donc non nulle. En particulier les $\lambda_i$ ne peuvent pas être simultanément nuls; comme les droites du cône tangent de $J^{2}_{(0,0)}\,\omega$\, jouent un rôle symétrique, il nous suffit de traiter les trois possibilités suivantes
\begin{align*}
& (\lambda_0,\lambda_1,\lambda_2)=(1,0,0),&&
   \lambda_0=1,\hspace{0.1mm}\lambda_2=0,\hspace{0.1mm}\lambda_1(\lambda_1+1)\neq0,&&\\
&  \hspace{0.2mm}\lambda_2=1,\hspace{0.1mm}\lambda_0\lambda_1(\lambda_0+\lambda_1+1)\neq0.
\end{align*}

\textbf{\textit{1.}} Commençons par étudier l'éventualité $(\lambda_0,\lambda_1,\lambda_2)=(1,0,0).$ Dans ce cas $\omega$ s'écrit
\begin{small}
\begin{align*}
& y(y-x)\mathrm{d}x+\mathrm{d}x\sum_{i=0}^{3}a_i\hspace{0.1mm}x^{3-i}y^{i}
+\mathrm{d}y\sum_{i=0}^{3}b_i\hspace{0.1mm}x^{3-i}y^{i}
+(x\mathrm{d}y-y\mathrm{d}x)\sum_{i=0}^{3}c_i\hspace{0.1mm}x^{3-i}y^{i}.
\end{align*}
\end{small}
\hspace{-1mm}Quitte à conjuguer $\omega$ par le difféomorphisme \begin{small}$\left(\dfrac{x}{a_1x-b_1y+1}, \dfrac{y}{a_1x-b_1y+1}\right)$\end{small}, on peut supposer que $a_1=b_1=0.$ L'égalité $\deg\F=3$ implique que $\vert a_{0}\vert+\vert b_{0}\vert+\vert c_{0}\vert\neq0$. Si $a_0$ est nul, on trouve que: $\rho_{4}^{11}=b_0^3$\, et \,$\rho_{1}^{11}=4c_0^2(c_0-2b_0^2)$\, de sorte que $b_0=c_0=0$, ce qui est absurde. Donc $a_0\neq0$; l'homothétie $\left(\frac{1}{a_0}x,\frac{1}{a_0}y\right)$ nous permet de supposer que $a_0=1.$ Dans ce cas, la platitude de $\Leg\F$ entraîne que $b_0=b_2=b_3=c_0=c_1=c_2=c_3=0$; en effet

\begin{Small}
\begin{align*}
\hspace{0.7cm}(a_0,a_1,b_1)=(1,0,0)
\Rightarrow
\left\{
\begin{array}{lll}
\rho_{2}^{13}=b_0=0\\
\rho_{4}^{11}=(b_0-6)b_0^2-2b_2=0\\
\rho_{5}^{10}=-3b_0^3+(7-4b_2)b_0+5b_2-3b_3=0
\end{array}
\right.
\Leftrightarrow\hspace{2mm}
\left\{
\begin{array}{lll}
b_0=0\\
b_2=0\\
b_3=0
\end{array}
\right.
\end{align*}
\end{Small}
\hspace{-1mm}et

\begin{Small}
\begin{align*}
\left\{
\begin{array}{lll}
(a_0,a_1,b_1)=(1,0,0)
\\
(b_0,b_2,b_3)=(0,0,0)
\end{array}
\right.
\Rightarrow
\left\{
\begin{array}{llll}
\rho_{1}^{13}=4c_0=0\\
\rho_{2}^{12}=c_1-12c_0=0\\
\rho_{5}^{9}=-4(3c_0+2c_1+2c_2)=0\\
\rho_{4}^{10}=4c_0+7c_1+6c_2+3c_3=0
\end{array}
\right.
\Leftrightarrow\hspace{2mm}
\left\{
\begin{array}{llll}
c_0=0\\
c_1=0\\
c_2=0\\
c_3=0.
\end{array}
\right.
\end{align*}
\end{Small}
Ainsi $\omega=(x^3+a_2xy^2+a_3y^3-xy+y^2)\mathrm{d}x$; d'où $\deg\F=0$: contradiction.
\vspace{2mm}

\textbf{\textit{2.}} Examinons l'éventualité: $\lambda_0=1,\hspace{0.1mm}\lambda_2=0,\hspace{0.1mm}\lambda_1(\lambda_1+1)\neq0.$ Dans ce cas $\omega$ s'écrit
\begin{small}
\begin{align*}
& (y-x)(\lambda_1x\mathrm{d}y+y\mathrm{d}x)+\mathrm{d}x\sum_{i=0}^{3}a_i\hspace{0.1mm}x^{3-i}y^{i}
+\mathrm{d}y\sum_{i=0}^{3}b_i\hspace{0.1mm}x^{3-i}y^{i}
+(x\mathrm{d}y-y\mathrm{d}x)\sum_{i=0}^{3}c_i\hspace{0.1mm}x^{3-i}y^{i},\qquad\lambda_1(\lambda_1+1)\neq0.
\end{align*}
\end{small}
\hspace{-1mm}En conjuguant $\omega$ par le difféomorphisme \begin{small}
$\left(\dfrac{2\hspace{0.1mm}\lambda_1x}{b_0\hspace{0.1mm}x-\lambda_1a_3y+2\hspace{0.1mm}\lambda_1}, \dfrac{2\hspace{0.1mm}\lambda_1y}{b_0\hspace{0.2mm}x-\lambda_1a_3y+2\hspace{0.1mm}\lambda_1}\right)$
\end{small}, on se ramène à $a_3=b_0=0.$ Dans ce cas la platitude de $\Leg\F$ implique que $a_0=a_1=b_2=b_3=0$ et que $b_1=\lambda_1a_2$; en effet

\begin{Small}
\begin{align*}
\hspace{0.7cm}(a_3,b_0)=(0,0)
\Rightarrow
\left\{
\begin{array}{llll}
\vspace{0.5mm}
\rho_{5}^{12}=-4\lambda_1^3(\lambda_1+1)^2a_0=0\\
\vspace{0.5mm}
\rho_{16}^{1}=4\lambda_1^6(\lambda_1+1)^2b_3=0\\
\vspace{0.5mm}
\rho_{6}^{11}=-\lambda_1^3(\lambda_1+1)^2(15\lambda_1a_0+2a_1-13a_0)=0\\
\rho_{15}^{2}=\lambda_1^5(\lambda_1+1)^2(2\lambda_1b_2-13\lambda_1b_3+15b_3)=0
\end{array}
\right.
\Leftrightarrow\hspace{2mm}
\left\{
\begin{array}{llll}
\vspace{0.5mm}
a_0=0\\
\vspace{0.5mm}
b_3=0\\
\vspace{0.5mm}
a_1=0\\
b_2=0
\end{array}
\right.
\end{align*}
\end{Small}
\hspace{-1mm}et
\begin{Small}
\begin{align*}
\hspace{1.2cm}\left\{
\begin{array}{ll}
(a_0,a_1,a_3)=(0,0,0)\\
(b_0,b_2,b_3)=(0,0,0)
\end{array}
\right.
\Rightarrow
\left\{
\begin{array}{ll}
\rho_{8}^{9}=\lambda_1^2(\lambda_1+1)^3(3\lambda_1+2)(\lambda_1a_2-b_1)=0\\
\rho_{13}^{4}=\lambda_1^4(\lambda_1+1)^3(2\lambda_1+3)(\lambda_1a_2-b_1)=0
\end{array}
\right.
\Leftrightarrow\hspace{2mm}
b_1=\lambda_1a_2.
\end{align*}
\end{Small}
\hspace{-1mm}Ainsi $\F$ est décrit par
\begin{small}
\begin{align*}
&\omega=(y-x+a_2xy)(\lambda_1x\mathrm{d}y+y\mathrm{d}x)+(c_0\,x^3+c_1x^2y+c_2xy^2+c_3y^3)(x\mathrm{d}y-y\mathrm{d}x),\qquad\lambda_1(\lambda_1+1)\neq0.
\end{align*}
\end{small}
\hspace{-1mm}On envisage maintenant les trois éventualités suivantes
\begin{align*}
& (a_2,\lambda_1)=(0,1),&&\hspace{1cm} a_2=0,\,\lambda_1(\lambda_1+1)(\lambda_1-1)\neq0,&&\hspace{1cm} \lambda_1(\lambda_1+1)a_2\neq0.
\end{align*}

\textbf{\textit{2.1.}} Lorsque $(a_2,\lambda_1)=(0,1),$ le calcul explicite de $K(\Leg\F)$ conduit à
\begin{align*}
&\rho_{6}^{10}=-8(c_0+c_1) &&\text{et}&& \rho_{14}^{2}=8(c_2+c_3),
\end{align*}
donc $c_1=-c_0,\,c_3=-c_2$ et $\omega$ s'écrit
\begin{align*}
(y-x)\left[x\mathrm{d}y+y\mathrm{d}x-(c_0\,x^2+c_2y^2)(x\mathrm{d}y-y\mathrm{d}x)\right],
\end{align*}
ce qui contredit l'égalité $\deg\F=3.$
\vspace{2mm}

\textbf{\textit{2.2.}} Quand $a_2=0$ et $\lambda_1(\lambda_1+1)(\lambda_1-1)\neq0,$ la nullité de la courbure de $\Leg\F$ implique que $c_i=0,i=0,1,2,3$; en effet
\begin{Small}
\begin{align*}
a_2=0
\Rightarrow
\left\{
\begin{array}{ll}
\vspace{0.5mm}
\rho_{5}^{11}=-4\lambda_1^2(\lambda_1-1)(\lambda_1+1)^2c_0=0\\
\vspace{0.5mm}
\rho_{15}^{1}=-4\lambda_1^6(\lambda_1-1)(\lambda_1+1)^2c_3=0
\end{array}
\right.
\Leftrightarrow\hspace{2mm}
\left\{
\begin{array}{ll}
\vspace{0.5mm}
c_0=0\\
\vspace{0.5mm}
c_3=0
\end{array}
\right.
\end{align*}
\end{Small}
\hspace{-1mm}et
\begin{Small}
\begin{align*}
\hspace{0.7cm}
(a_2,c_0,c_3)=(0,0,0)
\Rightarrow
\left\{
\begin{array}{ll}
\vspace{0.5mm}
\rho_{5}^{9}=-2\lambda_1^2(\lambda_1+1)^2c_1^2=0\\
\vspace{0.5mm}
\rho_{13}^{1}=2\lambda_1^6(\lambda_1+1)^2c_2^2=0
\end{array}
\right.
\Leftrightarrow\hspace{2mm}
\left\{
\begin{array}{ll}
\vspace{0.5mm}
c_1=0\\
\vspace{0.5mm}
c_2=0.
\end{array}
\right.
\end{align*}
\end{Small}
\hspace{-1mm}Par suite $\omega$ s'écrit $(y-x)(\lambda_1x\mathrm{d}y+y\mathrm{d}x),$ ce qui contredit l'égalité $\deg\F=3.$
\vspace{2mm}

\textbf{\textit{2.3.}} Si $\lambda_1(\lambda_1+1)a_2\neq0,$ alors, en faisant agir l'homothétie $\left(\frac{1}{a_2}x,\frac{1}{a_2}y\right)$ sur $\omega,$ on se ramène à $a_2=1.$ Dans ce cas l'hypothèse sur $\Leg\F$ d'être plat entraîne que $c_i=0,$ $i=0,1,2,3$; en effet

\begin{Small}
\begin{align*}
a_2=1
\Rightarrow
\left\{
\begin{array}{llll}
\vspace{0.5mm}
\rho_{0}^{9}=-8(\lambda_1+1)(3\lambda_1c_1+2c_0)c_0^3=0\\
\vspace{0.5mm}
\rho_{0}^{10}=-4(\lambda_1+1)(\lambda_1c_1+c_0)c_0^3=0\\
\vspace{0.5mm}
\rho_{14}^{1}=-4\lambda_1^6(\lambda_1+1)(5\lambda_1^2-7)c_3=0\\
\rho_{15}^{1}=-4\lambda_1^6(\lambda_1-1)(\lambda_1+1)^2c_3=0
\end{array}
\right.
\Leftrightarrow\hspace{2mm}
\left\{
\begin{array}{ll}
\vspace{0.5mm}
c_0=0
\\
\\
\\
\vspace{0.5mm}
c_3=0
\end{array}
\right.
\end{align*}
\end{Small}
\hspace{-1mm}et
\begin{Small}
\begin{align*}
\hspace{0.7cm}
(a_2,c_0,c_3)=(1,0,0)
\Rightarrow
\left\{
\begin{array}{ll}
\vspace{0.5mm}
\rho_{5}^{9}=-2\lambda_1^2(\lambda_1+1)^2c_1^2=0\\
\vspace{0.5mm}
\rho_{13}^{1}=2(\lambda_1+1)^2\lambda_1^6c_2^2=0
\end{array}
\right.
\Leftrightarrow\hspace{2mm}
\left\{
\begin{array}{ll}
\vspace{0.5mm}
c_1=0\\
\vspace{0.5mm}
c_2=0.
\end{array}
\right.
\end{align*}
\end{Small}
\hspace{-1mm}Il s'en suit que $\omega$ s'écrit $(y-x+xy)(\lambda_1x\mathrm{d}y+y\mathrm{d}x)$, mais ceci contredit l'égalité $\deg\F=3.$
\vspace{2mm}

\textbf{\textit{3.}} Pour finir considérons l'éventualité: $\lambda_2=1$\, et \,$\lambda_0\lambda_1(\lambda_0+\lambda_1+1)\neq0.$ Dans ce cas $\omega$ s'écrit
\begin{SMALL}
\begin{align*}
& xy\mathrm{d}(y-x)+(y-x)(\lambda_1x\mathrm{d}y+\lambda_0y\mathrm{d}x)+\mathrm{d}x\sum_{i=0}^{3}a_i\hspace{0.1mm}x^{3-i}y^{i}
+\mathrm{d}y\sum_{i=0}^{3}b_i\hspace{0.1mm}x^{3-i}y^{i}
+(x\mathrm{d}y-y\mathrm{d}x)\sum_{i=0}^{3}c_i\hspace{0.1mm}x^{3-i}y^{i},\qquad\lambda_0\lambda_1(\lambda_0+\lambda_1+1)\neq0.
\end{align*}
\end{SMALL}
\hspace{-1mm}La conjugaison par le difféomorphisme
\begin{Small}
\begin{align*}
\left(\dfrac{x}{\left(\dfrac{b_0}{2\hspace{0.1mm}\lambda_1}\right)x-\left(\dfrac{a_3}{2\hspace{0.1mm}\lambda_0}\right)y+1},
\dfrac{y}{\left(\dfrac{b_0}{2\hspace{0.1mm}\lambda_1}\right)x-\left(\dfrac{a_3}{2\hspace{0.1mm}\lambda_0}\right)y+1}\right)
\end{align*}
\end{Small}
\hspace{-1mm}permet de supposer que $a_3=b_0=0.$ Le calcul explicite de $K(\Leg\F)$ donne
\begin{align*}
& \rho_{5}^{12}=-4\lambda_1^3(\lambda_0+1)^4(\lambda_0+\lambda_1+1)^2a_0
&&\text{et}&&
\rho_{16}^{1}=4\lambda_0\lambda_1^2(\lambda_1+1)^4(\lambda_0+\lambda_1+1)^2b_3\hspace{1mm};
\end{align*}
comme $\lambda_0\lambda_1(\lambda_0+\lambda_1+1)\neq0$ et comme les coordonnées $x,y$ jouent un rôle symétrique, il suffit de traiter seulement les deux cas suivants
\begin{itemize}
  \item [$\bullet$] $\lambda_0=-1$ et $\lambda_1\neq0$;
  \item [$\bullet$] $\lambda_0\lambda_1(\lambda_0+1)(\lambda_1+1)(\lambda_0+\lambda_1+1)\neq0.$
\end{itemize}
\vspace{2mm}

\textbf{\textit{3.1.}} Quand $\lambda_0=-1$ et $\lambda_1\neq0,$ la platitude de $\Leg\F$ implique que $a_i=c_i=0$ pour $i=0,1,2$; en effet

\begin{SMALL}
\begin{align*}
\hspace{-2mm}\left\{
\begin{array}{lll}
a_3=0\\
b_0=0\\
\lambda_0=-1
\end{array}
\right.
\Rightarrow
\left\{
\begin{array}{lll}
\rho_{7}^{10}=-48\lambda_1^7a_0=0\\
\rho_{8}^{9}=2\lambda_1^6(11\lambda_1^2+106\lambda_1+15)a_0-16\lambda_1^7a_1=0\\
\rho_{9}^{8}=-\lambda_1^5(367\lambda_1^2+2\lambda_1^4+85\lambda_1^3+167\lambda_1+3)a_0+8\lambda_1^6(10\lambda_1+\lambda_1^2-2)a_1+8\lambda_1^7a_2=0
\end{array}
\right.
\Leftrightarrow\hspace{2mm}
\left\{
\begin{array}{lll}
a_0=0\\
a_1=0\\
a_2=0
\end{array}
\right.
\end{align*}
\end{SMALL}
\hspace{-1mm}et
\begin{SMALL}
\begin{align*}
\left\{
\begin{array}{llllll}
a_3=0\\
b_0=0\\
\lambda_0=-1\\
a_0=0\\
a_1=0\\
a_2=0
\end{array}
\right.
\Rightarrow
\left\{
\begin{array}{lll}
\rho_{7}^{9}=-48\lambda_1^7c_0=0\\
\rho_{8}^{8}=2\lambda_1^6(11\lambda_1^2-5+90\lambda_1)c_0-24\lambda_1^7c_1=0\\
\rho_{9}^{7}=-2\lambda_1^5(\lambda_1+1)(\lambda_1^3+34\lambda_1^2+61\lambda_1-12)c_0+4\lambda_1^6(3\lambda_1^2+32\lambda_1-23)c_1+32\lambda_1^7c_2=0
\end{array}
\right.
\Leftrightarrow\hspace{2mm}
\left\{
\begin{array}{lll}
c_0=0\\
c_1=0\\
c_2=0.
\end{array}
\right.
\end{align*}
\end{SMALL}
\hspace{-1mm}Ainsi le feuilletage $\F$ est décrit dans la carte affine $x=1$ par la $1$-forme
\begin{small}
\begin{align*}
& \theta=-(b_1y-\lambda_1z)\left(y\mathrm{d}z-z\mathrm{d}y)+y(c_3y^2+b_2yz+\left(\lambda_1+1\right)z^2\right)\mathrm{d}y-y^2(b_2y+\lambda_1z)\mathrm{d}z
         -b_3y^3(y\mathrm{d}z-z\mathrm{d}y).
\end{align*}
\end{small}
\hspace{-1mm}Remarquons que le point $[1:0:0]\in\pp$ est singulier de $\F$ de multiplicité algébrique $2$ et que le cône tangent de $J^{2}_{(0,0)}\theta$ est identiquement nul; d'après le paragraphe~\S\ref{subsec:$2$-jet-cône-tangent-nul}, $\F$ ne peut admettre un tel point singulier. Donc le cas \textbf{\textit{3.1.}} n'arrive pas.
\vspace{2mm}

\textbf{\textit{3.2.}} Lorsque $\lambda_0\lambda_1(\lambda_0+1)(\lambda_1+1)(\lambda_0+\lambda_1+1)\neq0,$ l'hypothèse $K(\Leg\F)\equiv0$ entraîne que $a_0=a_1=b_2=b_3=0$; en effet
\begin{Small}
\begin{align*}
\left\{
\begin{array}{ll}
a_3=0\\
b_0=0
\end{array}
\right.
\Rightarrow
\left\{
\begin{array}{ll}
\rho_{5}^{12}=-4\lambda_1^3(\lambda_0+1)^4(\lambda_0+\lambda_1+1)^2a_0=0\\
\rho_{16}^{1}=4\lambda_0\lambda_1^2(\lambda_1+1)^4(\lambda_0+\lambda_1+1)^2b_3=0
\end{array}
\right.
\Leftrightarrow\hspace{2mm}
\left\{
\begin{array}{ll}
a_0=0\\
b_3=0
\end{array}
\right.
\end{align*}
\end{Small}
\hspace{-1mm}et
\begin{Small}
\begin{align*}
\hspace{1.4cm}\left\{
\begin{array}{ll}
(a_0,a_3)=(0,0)\\
(b_0,b_3)=(0,0)
\end{array}
\right.
\Rightarrow
\left\{
\begin{array}{ll}
\rho_{6}^{11}=-2\lambda_1^3(\lambda_0+1)^4(\lambda_0+\lambda_1+1)^2a_1=0\\
\rho_{15}^{2}=2\lambda_0\lambda_1^2(\lambda_1+1)^4(\lambda_0+\lambda_1+1)^2b_2=0
\end{array}
\right.
\Leftrightarrow\hspace{2mm}
\left\{
\begin{array}{ll}
a_1=0\\
b_2=0.
\end{array}
\right.
\end{align*}
\end{Small}
\hspace{-1mm}Ainsi le feuilletage $\F$ est donné par
\begin{align*}
& \omega=xy\mathrm{d}(y-x)+(y-x)(\lambda_1x\mathrm{d}y+\lambda_0y\mathrm{d}x)+xy(a_2y\mathrm{d}x+b_1x\mathrm{d}y)
+(x\mathrm{d}y-y\mathrm{d}x)\sum_{i=0}^{3}c_i\hspace{0.1mm}x^{3-i}y^{i},
\end{align*}
avec $\lambda_0\lambda_1(\lambda_0+1)(\lambda_1+1)(\lambda_0+\lambda_1+1)\neq0.$ Dans ce cas le calcul explicite de $K(\Leg\F)$ montre que
\begin{align*}
& \rho_{8}^{9}=\lambda_1^2(\lambda_0+1)(\lambda_0+\lambda_1+1)^2\sigma_1,&& \rho_{9}^{8}=-\lambda_1^2(\lambda_0+\lambda_1+1)^2\sigma_2,&&\\
& \rho_{12}^{5}=\lambda_1^2(\lambda_0+\lambda_1+1)^2\sigma_3,&& \rho_{13}^{4}=\lambda_1^2(\lambda_1+1)(\lambda_0+\lambda_1+1)^2\sigma_4,
\end{align*}
avec

\begin{SMALL}
\begin{align*}
\begin{array}{llll}
\sigma_1=\Big(-2+2\lambda_1\lambda_0^3+3\lambda_1^2\lambda_0+5\lambda_1\lambda_0^2-6\lambda_0+\lambda_1+5\lambda_1^2\lambda_0^2
+3\lambda_1^3\lambda_0+4\lambda_0\lambda_1-\lambda_1^3-2\lambda_1^2-2\lambda_0^3-6\lambda_0^2\Big)a_2\\
\vspace{0.3mm}
\hspace{0.8cm}-\Big(\lambda_0+1\Big)\Big(2\lambda_0^3+6\lambda_0^2+5\lambda_1\lambda_0^2+2\lambda_0\lambda_1+6\lambda_0
+3\lambda_1^2\lambda_0+2-\lambda_1^2-3\lambda_1\Big)b_1,
\vspace{2.5mm}
\\
\sigma_2=\Big(5\lambda_1\lambda_0^4-10+38\lambda_1\lambda_0^3+\lambda_0^4+6\lambda_1^3\lambda_0^2-2\lambda_1^4\lambda_0+7\lambda_1^2\lambda_0
+55\lambda_1\lambda_0^2-29\lambda_0+13\lambda_1^2\lambda_0^3-6\lambda_1+28\lambda_1^2\lambda_0^2+4\lambda_1^3\lambda_0+16\lambda_0\lambda_1\\
\vspace{0.3mm}
\hspace{0.8cm}-2\lambda_1^4-10\lambda_1^3-8\lambda_1^2-7\lambda_0^3-27\lambda_0^2\Big)a_2-\Big(\lambda_0+1\Big)\Big(5\lambda_0^4+19\lambda_0^3
+13\lambda_1\lambda_0^3+6\lambda_1^2\lambda_0^2+8\lambda_1\lambda_0^2+33\lambda_0^2+29\lambda_0-2\lambda_1^3\lambda_0\\
\vspace{0.3mm}
\hspace{0.8cm}-9\lambda_0\lambda_1+2\lambda_1^2\lambda_0+10-12\lambda_1^2-4\lambda_1-2\lambda_1^3\Big)b_1,
\vspace{2.5mm}
\\
\sigma_3=\Big(\lambda_1+1\Big)\Big(-5\lambda_1^4-19\lambda_1^3-13\lambda_1^3\lambda_0-8\lambda_1^2\lambda_0-33\lambda_1^2-6\lambda_1^2\lambda_0^2-29\lambda_1+9\lambda_0\lambda_1-2\lambda_1\lambda_0^2
+2\lambda_1\lambda_0^3-10+2\lambda_0^3+12\lambda_0^2\\
\vspace{0.3mm}
\hspace{0.8cm}+4\lambda_0\Big)a_2+\Big(-10+5\lambda_1^4\lambda_0+13\lambda_1^3\lambda_0^2+\lambda_1^4-10\lambda_0^3-7\lambda_1^3-27\lambda_1^2-29\lambda_1+6\lambda_1^2\lambda_0^3
+28\lambda_1^2\lambda_0^2-2\lambda_0^4-6\lambda_0+4\lambda_1\lambda_0^3\\
\vspace{0.3mm}
\hspace{0.8cm}+16\lambda_0\lambda_1+7\lambda_1\lambda_0^2+38\lambda_1^3\lambda_0+55\lambda_1^2\lambda_0-8\lambda_0^2-2\lambda_1\lambda_0^4\Big)b_1,
\vspace{2.5mm}
\\
\sigma_4=\Big(\lambda_1+1\Big)\Big(2\lambda_1^3+6\lambda_1^2+5\lambda_1^2\lambda_0+2\lambda_0\lambda_1+3\lambda_1\lambda_0^2+6\lambda_1+2-3\lambda_0-\lambda_0^2\Big)a_2+\Big(2\lambda_1^3-2\lambda_1^3\lambda_0-5\lambda_1^2\lambda_0^2-5\lambda_1^2\lambda_0+6\lambda_1^2\\
\vspace{0.3mm}
\hspace{0.8cm}-3\lambda_1\lambda_0^2+6\lambda_1-3\lambda_1\lambda_0^3-4\lambda_0\lambda_1+2-\lambda_0+2\lambda_0^2+\lambda_0^3\Big)b_1.
\end{array}
\end{align*}
\end{SMALL}
\hspace{-1mm}Grâce à la linéarité des $\sigma_i$ en $(a_2,b_1),$ on vérifie que le système $\sigma_i=0,i=1,\ldots,4,$ sous la condition $\lambda_0\lambda_1(\lambda_0+1)(\lambda_1+1)(\lambda_0+\lambda_1+1)\neq0,$  est équivalent à $(a_2,b_1)=(0,0).$ Ainsi
\begin{Small}
\begin{align*}
& \omega=xy\mathrm{d}(y-x)+(y-x)(\lambda_1x\mathrm{d}y+\lambda_0y\mathrm{d}x)+(x\mathrm{d}y-y\mathrm{d}x)\sum_{i=0}^{3}c_i\hspace{0.1mm}x^{3-i}y^{i},\qquad \lambda_0\lambda_1(\lambda_0+1)(\lambda_1+1)(\lambda_0+\lambda_1+1)\neq0.
\end{align*}
\end{Small}
\hspace{-1mm}Dans ce cas le calcul explicite de $K(\Leg\F)$ donne
\begin{Small}
\begin{align*}
& \rho_{5}^{11}=4\lambda_1^2(\lambda_0+1)^4(\lambda_0-\lambda_1+1)(\lambda_0+\lambda_1+1)^2c_0
&&\hspace{2mm} \text{et} &&\hspace{2mm}
\rho_{15}^{1}=4\lambda_1^2(\lambda_1+1)^4(\lambda_0-\lambda_1-1)(\lambda_0+\lambda_1+1)^2c_3\hspace{1mm};
\end{align*}
\end{Small}
\hspace{-1mm}puisque $\lambda_0\lambda_1(\lambda_0+1)(\lambda_1+1)(\lambda_0+\lambda_1+1)\neq0$ et puisque les coordonnées $x,y$ jouent un rôle symétrique, il suffit d'examiner seulement les deux cas suivants
\begin{itemize}
  \item [$\bullet$] $\lambda_0=\lambda_1-1$\, et \,$\lambda_1(\lambda_1-1)(\lambda_1+1)\neq0$;
  \item [$\bullet$] $\lambda_0\lambda_1(\lambda_0+1)(\lambda_1+1)(\lambda_0+\lambda_1+1)(\lambda_0-\lambda_1+1)(\lambda_0-\lambda_1-1)\neq0.$
\end{itemize}
\vspace{3mm}

\textbf{\textit{3.2.1.}} Lorsque $\lambda_0=\lambda_1-1$\, et \,$\lambda_1(\lambda_1-1)(\lambda_1+1)\neq0,$ la platitude de $\Leg\F$ implique que $c_i=0,i=0,1,2,3$; en effet
\begin{SMALL}
\begin{align*}
\lambda_0=\lambda_1-1
\Rightarrow
\left\{
\begin{array}{ll}
\rho_{15}^{1}=-32\lambda_1^4(\lambda_1+1)^4c_3=0\\
\rho_{13}^{1}=8\lambda_1^3\Big(\lambda_1+1\Big)^2\Big(\lambda_1(\lambda_1+1)^2c_2^2-(42\lambda_1^2-12\lambda_1+6)c_2c_3-\lambda_1(\lambda_1^2+62\lambda_1-38)c_3^2\\
\vspace{0.3mm}
\hspace{0.8cm}-4(\lambda_1+1)(5\lambda_1c_1-2\lambda_1c_0+c_1-2c_0)c_3\Big)=0
\end{array}
\right.
\Leftrightarrow\hspace{2mm}
\left\{
\begin{array}{ll}
c_3=0\\
\vspace{0.3mm}
\\
c_2=0
\end{array}
\right.
\end{align*}
\end{SMALL}
\hspace{-1mm}et
\begin{Small}
\begin{align*}
\left\{
\begin{array}{lll}
\lambda_0=\lambda_1-1\\
c_2=0\\
c_3=0
\end{array}
\right.
\Rightarrow
\left\{
\begin{array}{lll}
\rho_{6}^{10}=-8\lambda_1^8\Big((\lambda_1+1)c_0+\lambda_1c_1\Big)=0\\
\rho_{8}^{8}=8\lambda_1^6\Big((35\lambda_1^3-128\lambda_1^2+18\lambda_1-35)c_0+\lambda_1(31\lambda_1^2-105\lambda_1+19)c_1\Big)=0\\
\rho_{9}^{7}=-16\lambda_1^5\Big((6\lambda_1^4-46\lambda_1^3-113\lambda_1^2+37\lambda_1-10)c_0+\lambda_1(6\lambda_1^3-24\lambda_1^2\\
\vspace{0.3mm}
\hspace{0.8cm}-103\lambda_1+58)c_1\Big)=0
\end{array}
\right.
\Leftrightarrow\hspace{1mm}
\left\{
\begin{array}{ll}
c_0=0
\\
\vspace{0.3mm}
\\
c_1=0.
\end{array}
\right.
\end{align*}
\end{Small}
\hspace{-1mm}Ainsi $\omega$ s'écrit\, $xy\mathrm{d}(y-x)+(y-x)(\lambda_1x\mathrm{d}y+\lambda_1y\mathrm{d}x-y\mathrm{d}x)$, ce qui contredit l'égalité $\deg\F=3.$
\vspace{2mm}

\textbf{\textit{3.2.2.}} Quand $\lambda_0\lambda_1(\lambda_0+1)(\lambda_1+1)(\lambda_0+\lambda_1+1)(\lambda_0-\lambda_1+1)(\lambda_0-\lambda_1-1)\neq0,$ l'hypothèse $K(\Leg\F)\equiv0$ entraîne que $c_i=0,i=0,1,2,3$; en effet
\begin{Small}
\begin{align*}
&\quad \rho_{5}^{11}=4\lambda_1^2(\lambda_0+1)^4(\lambda_0-\lambda_1+1)(\lambda_0+\lambda_1+1)^2c_0,&&\quad
\rho_{15}^{1}=4\lambda_1^2(\lambda_1+1)^4(\lambda_0-\lambda_1-1)(\lambda_0+\lambda_1+1)^2c_3
\end{align*}
\end{Small}
\hspace{-1mm}et
\begin{Small}
\begin{align*}
\hspace{-3cm}\left\{
\begin{array}{ll}
c_0=0\\
c_3=0
\end{array}
\right.
\Rightarrow
\left\{
\begin{array}{ll}
\rho_{5}^{9}=-2\lambda_1^2(\lambda_0+1)^4(\lambda_0+\lambda_1+1)^2c_1^2=0\\
\rho_{13}^{1}=2\lambda_1^2(\lambda_1+1)^4(\lambda_0+\lambda_1+1)^2c_2^2=0
\end{array}
\right.
\Leftrightarrow\hspace{2mm}
\left\{
\begin{array}{ll}
c_1=0\\
c_2=0.
\end{array}
\right.
\end{align*}
\end{Small}
\smallskip
\hspace{-1mm}Donc $\omega$ s'écrit\, $xy\mathrm{d}(y-x)+(y-x)(\lambda_1x\mathrm{d}y+\lambda_0y\mathrm{d}x)$, mais ceci contredit l'égalité $\deg\F=3.$

\ActivateToc

\backmatter


\begin{thebibliography}{99}
\bibitem{BB72}
P.~Baum and R.~Bott.
\newblock Singularities of holomorphic foliations.
\newblock {\em J. Differential Geometry}, 7:279--342, 1972.

\bibitem{BB38}
W.~Blaschke and G.~Bol.
\newblock {\em Geometrie der Gewebe}.
\newblock Die Grundlehren der Math., Vol. 49, Springer, Berlin, 1938.

\bibitem{BD28}
W.~Blaschke and J.~Dubourdieu.
\newblock Invarianten von Kurvengeweben.
\newblock {\em Abh. Math. Semin. Hamb. Univ.}, 6:198--215, 1928.

\bibitem{BFM13}
A.~Beltr\'{a}n, M.~Falla~Luza, and D.~Mar\'{\i}n.
\newblock Flat 3-webs of degree one on the projective plane.
\newblock {\em Ann. Fac. Sci. Toulouse Math. (6)}, 23(4):779--796, 2014.

\bibitem{Bru00}
M.~Brunella.
\newblock {\em Birational geometry of foliations}.
\newblock First Latin American Congress of Mathematicians, IMPA, 2000.
\newblock Disponible sur
  \url{http://www.impa.br/opencms/pt/downloads/birational.pdf}.

\bibitem{Car94}
M.~P.~do Carmo.
\newblock {\em Differential forms and applications}.
\newblock Springer-Verlag, Berlin, 1994.

\bibitem{Car08}
{\'E}. Cartan.
\newblock Les sous-groupes des groupes continus de transformations.
\newblock {\em Ann. Sci. \'Ecole Norm. Sup. (3)}, 25:78--83, 1908.

\bibitem{Car79}
H.~Cartan.
\newblock {\em {\OE}uvres. {V}ol. {I}, {II}, {III}}.
\newblock Springer-Verlag, Berlin, 1979.
\newblock Edited by Reinhold Remmert and Jean-Pierre Serre.

\bibitem{CCD13}
F.~Cano, D.~Cerveau, and J.~D{\'e}serti.
\newblock {\em Th{\'e}orie {\'e}l{\'e}mentaire des feuilletages holomorphes
  singuliers}.
\newblock Echelles. Belin, 2013.

\bibitem{CdF01}
C.~Camacho and L.~H. de~Figueiredo.
\newblock The dynamics of the {J}ouanolou foliation on the complex projective 2-space.
\newblock {\em Ergodic Theory Dynam. Systems}, 21(3):757--766, 2001.

\bibitem{CDGBM10}
D.~Cerveau, J.~D{\'e}serti, D.~Garba~Belko, and R.~Meziani.
\newblock G\'eom\'etrie classique de certains feuilletages de degr\'e deux.
\newblock {\em Bull. Braz. Math. Soc. (N.S.)}, 41(2):161--198, 2010.

\bibitem{CL07}
V. Cavalier  and D.  Lehmann,
\newblock Introduction \`{a} l'\'{e}tude globale des tissus sur une surface holomorphe.
\newblock {\em Ann. Inst. Fourier (Grenoble)}, 57(4):1095--1133, 2007.

\bibitem{CM82}
D.~Cerveau and J.-F. Mattei.
\newblock {\em Formes int\'egrables holomorphes singuli\`eres}, volume~97 of
  {\em Ast\'erisque}.
\newblock Soci\'et\'e Math\'ematique de France, Paris, 1982.
\newblock With an English summary.

\bibitem{CS82}
C.~Camacho and P.~Sad.
\newblock Invariant varieties through singularities of holomorphic vector fields.
\newblock {\em Ann. of Math. (2)}, 115(3):579--595, 1982.

\bibitem{Fis01}
G.~Fischer.
\newblock {\em Plane Algebraic Curves}.
\newblock Volume 15 of Student Mathematical Library. Amer. Math. Soc., Providence, 2001.

\bibitem{FP15}
C.~Favre and J.~V. Pereira. 
\newblock Webs invariant by rational maps on surfaces.
\newblock {\em Rend. Circ. Mat. Palermo (2)}, 64(3):403--431, 2015.

\bibitem{Hen00}
A.~Hénaut.
\newblock Sur la courbure de Blaschke et le rang des tissus de $\mathbb{C}^{2}.$
\newblock {\em Natural Sci. Rep. Ochanomizu Univ.}, 51(1):11--25, 2000.

\bibitem{Hen06}
A.~Hénaut.
\newblock Planar web geometry through abelian relations and singularities.
\newblock {\em Nankai Tracts Math.}, 11:269--295, 2006.

\bibitem{Inc44}
E.~Ince.
\newblock {\em Ordinary Differential Equations}.
\newblock Dover Publications, 1944.

\bibitem{Jou79}
J.~P. Jouanolou.
\newblock {\em \'{E}quations de {P}faff alg\'ebriques}, volume 708 of {\em Lecture Notes in Mathematics}.
\newblock Springer, Berlin, 1979.

\bibitem{MP13}
D.~Mar\'{\i}n, J.~V. Pereira.
\newblock Rigid flat webs on the projective plane.
\newblock {\em Asian J. Math.} 17(1):163--191, 2013.

\bibitem{Mig99}
G.~Mignard.
\newblock {\em Rang et courbure des $3$-tissus du plan et applications aux équations différentielles}.
\newblock Thèse de Doctorat de l'Université Bordeaux 1, janvier 1999.

\bibitem{Mil99}
J.~Milnor.
\newblock {\em Dynamics in one complex variable: Introductory lectures},
\newblock Vieweg \& Sohn, Braunschweig, 1999.

\bibitem{Nak01}
I.~Nakai.
\newblock Web geometry and the equivalence problem of the first order partial differential equations.
\newblock {\em In Web theory and related topics (Toulouse, 1996),} pages 150--204. World Sci. Publ., River Edge, NJ, 2001.

\bibitem{Per01}
J.~V. Pereira.
\newblock Vector fields, invariant varieties and linear systems.
\newblock {\em Ann. Inst. Fourier (Grenoble)}, 51(5):1385--1405, 2001.

\bibitem{PP08}
J.~V. Pereira, L.~Pirio.
\newblock Classification of exceptional CDQL webs on compact complex surfaces.
\newblock {\em Int. Math. Res. Not. IMRN}, 12:2169--2282, 2010.

\bibitem{PP09}
J.~V. Pereira, L.~Pirio.
\newblock {\em An invitation to web geometry}.
\newblock IMPA, 2009.

\bibitem{Pir04}
L.~Pirio.
\newblock {\em \'{E}quations fonctionnelles abéliennes et géométrie des tissus}.
\newblock Thèse de Doctorat de l'Université Paris VI, 2004.
\newblock Disponible sur
\url{http://tel.archives-ouvertes.fr/tel-00335195}.

\bibitem{Rip05}
O.~Ripoll.
\newblock {\em Géométrie des tissus du plan et équations différentielles}.
\newblock Thèse de Doctorat de l'Université Bordeaux 1, 2005.
\newblock Disponible sur
\url{http://tel.archives-ouvertes.fr/tel-00011928}.

\bibitem{Rip07}
O.~Ripoll.
\newblock Properties of the connection associated with planar webs and applications.
\newblock Prépublication \url{http://arxiv.org/abs/math/0702321}, 2007.

\bibitem{Tho27}
G.~Thomsen.
\newblock Un teoreme topologico sulle schiere di curve e una caratterizzazione geometrica delle superficie isotermo-asintotiche.
\newblock {\em Boll. Un. Mat. Ital. Bologna.}, 6:80--85, 1927.
\end{thebibliography}
\end{document}